\newcommand{\argmax}{\mathop{\arg\!\max}}
\newcommand{\argmin}{\mathop{\arg\!\min}}
\newcommand{\Argmin}{\mathop{\operatorname{Arg}\!\min}}
\newenvironment{sqcases}{
  \matrix@check\sqcases\env@sqcases
}{
  \endarray\right.
}
\def\env@sqcases{
  \let\@ifnextchar\new@ifnextchar
  \left\lbrack
  \def\arraystretch{1.2}
  \array{@{}l@{\quad}l@{}}
}
\newcommand{\newreptheorem}[2]{
\newtheorem*{rep@#1}{\rep@title}
\newenvironment{rep#1}[1]{
\def\rep@title{#2 \ref*{##1}}
\begin{rep@#1}}{\end{rep@#1}}}
\newtheorem{theorem}{Теорема}
\newtheorem{re:theorem}{Теорема}
\newtheorem{th:corollary}{Следствие}[theorem]
\newtheorem{re:th:corollary}{Следствие}[re:theorem]
\newtheorem{lemma}{Лемма}
\newtheorem{lm:corollary}{Следствие}[lemma]
\newtheorem{assumption}{Предположение}
\renewcommand{\arraystretch}{2.0}
\newcommand\scalemath[2]{\scalebox{#1}{\mbox{\ensuremath{\displaystyle #2}}}}
\begin{document}
\renewcommand{\abstractname}{\vspace{-\baselineskip}}

$$
\\\\
$$

\begin{center}
\Large\textbf{Гибкая модификация метода Гаусса--Ньютона и его стохастическое расширение}
\end{center}

\begin{center}
\textbf{Н.\,Е.~Юдин$^{1,2}$, А.\,В.~Гасников$^{1,3}$}

\textit{
$^{1}$Московский физико-технический институт (национальный исследовательский университет), Долгопрудный, Московская обл., Россия \\
$^{2}$Федеральный исследовательский центр <<Информатика и управление>> Российской академии наук, Москва, Россия
\\
$^{3}$Институт проблем передачи информации им. А.А. Харкевича Российской академии наук, Москва, Россия
\\
{$^*$ e-mail: iudin.ne@phystech.edu}}

\end{center}

\begin{abstract}
\noindent В работе предлагается новая версия метода Гаусса--Ньютона для решения системы нелинейных уравнений, основанная на идеях использования верхней оценки нормы невязки системы уравнений и квадратичной регуляризации. В рамках данного метода получена глобальная сходимость, а при естественных предположениях установлена локальная квадратичная сходимость. В новой версии метода Гаусса--Ньютона разработаны стохастические алгоритмы оптимизации, для данных алгоритмов выведены условия сублинейной и линейной сходимости, основанные на условии слабого роста и условии Поляка--Лоясиевича. Предложенный метод объединяет в себе несколько существующих и часто используемых на практике модификаций метода Гаусса--Ньютона, позволяя получить гибкий и удобный в использовании метод, реализуемый на практике с помощью стандартных техник выпуклой оптимизации.

\textbf{Ключевые слова}: системы нелинейных уравнений, минимизация эмпирического риска, метод Гаусса--Ньютона, методы доверительной области, невыпуклая оптимизация, неточное проксимальное отображение, неточный оракул, стохастическая оптимизация, стохастическая аппроксимация, перепараметризованная модель, условие слабого роста, условие Поляка--Лоясиевича, оценка сложности
\end{abstract}

\section{Введение} \label{section_1}

\subsection{Мотивация}

Задача решения системы нелинейных уравнений является одной из наиболее фундаментальных в численных методах. В различных вариантах данная задача представлена в трудах и монографиях по численным методам и по численным методам в оптимизации \cite{Samarskii1989, NocedalWright2006, NesterovLectures, Gasnikov2018}. В общем виде \textit{система нелинейных уравнений} задаётся через многозначное отображение $F:~\mathbb{R}^{n}\rightarrow\mathbb{R}^{m}$:
\begin{equation}\label{eq:main_det_system}
    F(x) = \mathbf{0}_{m},~\mathbf{0}_{m} = (0,~\dots, 0)^{T}.
\end{equation}
В работе рассматривается следующая релаксация исходной задачи решения системы (не)линейных уравнений с помощью решения задачи безусловной минимизации (здесь и далее по умолчанию подразумевается, что $\|~\|$ --- стандартная евклидова норма в $\mathbb{R}^{m}$, если не оговорено иное, см.  раздел~\ref{euclidian_general} о возможных обобщениях):
\begin{equation}\label{eq:main_det_problem}
    \min\limits_{x\in\mathbb{R}^{n}}\left\{f_{1}(x) \overset{\operatorname{def}}{=} \left\|F(x)\right\|\right\}.
\end{equation}

Обычно введённую задачу минимизации решают через минимизацию $\frac{1}{2}\left(f_{1}(x)\right)^{2}$, дополнительно предположив достаточную гладкость получившейся функции или субдифференцируемость отображения $f_{1}$~\cite{Gratton2007, Le2020, Botev2017, Cai2019, Ren2019}. При таком подходе часто применяются \textit{методы доверительной области} и \textit{квазиньютоновские методы} для минимизации квадрата оптимизируемого функционала в \eqref{eq:main_det_problem} с применением различных стратегий \cite{Le2020, Botev2017, Cai2019, Ren2019, Thiele2020, Gargiani2020, Smietanski2020, Cartis2019}. Однако подобные преобразования потенциально могут увеличить необходимое количество итераций для достижения требуемого $\epsilon$--уровня значения функции $f_{1}$ из произвольного начального приближения $x_{0}\in\mathbb{R}^{n}$ с помощью итерационного метода оптимизации. В частности, для линейного оператора $F$ возведение в квадрат $f_{1}$ приводит к увеличению числа обусловленности задачи и к квадратичному росту количества необходимых итераций для достижения $\epsilon$--оптимального решения задачи  \eqref{eq:main_det_problem}, что крайне ощутимо на практике при больших значениях $mn$. В качестве альтернативного подхода можно рассмотреть прямое решение задачи \eqref{eq:main_det_problem} с помощью \textit{метода Гаусса--Ньютона}, заключающегося в сведении к решению последовательности вспомогательных задач оптимизации при условии гладкости функций $F_{i},~i\in\{1,~\dots, m\}$:
\begin{equation}\label{eq:main_det_relax_problem}
    \begin{aligned}
        &\min\limits_{h\in\mathbb{R}^{n}}\left\{\left\|F(x) + F^{'}(x)h\right\|:~x + h\in\operatorname{D}(x)\right\},\\
        &F^{'}(x) \overset{\operatorname{def}}{=} \left(\frac{\partial F_{i}}{\partial x_{j}}(x)\right)_{i,j = 1}^{m, n}\in\mathbb{R}^{m\times n}\text{ --- матрица Якоби,}
    \end{aligned}
\end{equation}
$\operatorname{D}(x)$ --- соответствующая окрестность точки $x\in\mathbb{R}^{n}$. Среди полезных свойств такого сведения можно отметить отсутствие необходимости вычислять производные второго и более высоких порядков, а также возможность в естественных условиях невырожденности оптимизировать \eqref{eq:main_det_problem} с помощью последовательности задач \eqref{eq:main_det_relax_problem} с локальной квадратичной скоростью сходимости \cite{NocedalWright2006}.

В данной работе предлагается другой подход к решению задачи \eqref{eq:main_det_problem}. Он заключается в замене функции, измеряющей невязку системы уравнений \eqref{eq:main_det_system}, на евклидову норму, поделённую на квадратный корень количества уравнений $m$. Полученная невязка используется в построении метода, решающего задачу \eqref{eq:main_det_problem} с помощью решения последовательности задач, в каждой из которых новое приближение решения \eqref{eq:main_det_problem} вычисляется как результат параметризованного проксимального отображения, в котором выполняется поиск точки минимума суммы верхней оценки линеаризованной по аналогии с \eqref{eq:main_det_relax_problem} невязки на основе евклидовой нормы, поделённой на квадратный корень количества координат, и квадратичного проксимального слагаемого \cite{Nesterov2007, Nesterov2020}. В рамках предложенного подхода при естественных для задач вида \eqref{eq:main_det_problem} и \eqref{eq:main_det_relax_problem} предположениях установлено монотонное уменьшение значения функции $f_{1}$, а также локальная сублинейная и линейная сходимость \cite{Nesterov2020}. В отличие от \cite{Nesterov2007, Nesterov2020} в данной работе рассматриваются оба случая соотношения значений $m$ и $n$: $m\leq n$ и $m > n$. Первый случай соответствует решению системы (не)линейных алгебраических уравнений, второй случай является задачей восстановления регрессии, решаемой методом наименьших квадратов. В данной работе представлен анализ локальной и глобальной сложности решения задачи \eqref{eq:main_det_problem} в рамках предложенного подхода, выведены условия локальной квадратичной сходимости, для случаев $m\leq n$ и $m > n$ разработаны и исследованы \textit{стохастические методы} решения задачи $\eqref{eq:main_det_problem}$.

Выполненный анализ стохастических методов применим для задач \textit{минимизации эмпирического и среднего риска}, имеющих представление в форме \eqref{eq:main_det_problem}. Для таких задач в работе использовано свойство функционалов в форме \eqref{eq:main_det_problem}, называемое \textit{условием слабого роста} \cite{Schmidt2013, Taylor2019, Ajalloeian2020}. В текущей работе оно заключается в мажорировании квадрата нормы градиента функции $\left(f_{1}(x)\right)^{2}$ значением самой функции $\left(f_{1}(x)\right)^{2}$. Условие слабого роста имеет и стохастическую форму, в которой мажорируется средний квадрат нормы стохастического градиента функции $\left(f_{1}(x)\right)^{2}$, оцененного по части функций из системы \eqref{eq:main_det_system}, значением самой функции $\left(f_{1}(x)\right)^{2}$. Наравне с условием слабого роста для задач \eqref{eq:main_det_problem} в данной работе рассматривается выполнение \textit{условия Поляка--Лоясиевича}, состоящее в мажорировании значения $\left(f_{1}(x)\right)^{2}$ квадратом нормы градиента $\left(f_{1}(x)\right)^{2}$~\cite{Polyak1963}. Для условия Поляка--Лоясиевича так же рассматривается стохастическая версия, заключающаяся в мажорировании значения $\left(f_{1}(x)\right)^{2}$ средним квадратом нормы стохастической оценки градиента $\left(f_{1}(x)\right)^{2}$, построенной с помощью части функций из системы \eqref{eq:main_det_system}. Выполнение обозначенных условий в случае $m\leq n$ приводит к совместности системы уравнений \eqref{eq:main_det_system} и позволяет решить задачу \eqref{eq:main_det_problem} с любой наперёд заданной точностью с помощью предложенных в этой работе \textit{стохастических методов оптимизации} с линейной скоростью. В машинном обучении и в статистическом моделировании подобные условия, из которых следует совместность системы уравнений \eqref{eq:main_det_system}, обычно выполняются для \textit{перепараметризованных моделей} ($m\leq n$, если за $m$ принять количество объектов в выборке), а сами условия в терминах \textit{стохастической аппроксимации} часто называются \textit{условиями интерполяции}, из которых как раз следует выполнение тождественного равенства $F(x) = \mathbf{0}_{m}$ для некоторых $x\in\mathbb{R}^{n}$~\cite{Moulines2011, Needell2014, Gower2019, Gorbunov2020, Loizou2020, Ma2018, VaswaniBach2019, Liu2018, Vaswani2019}.

\subsection{Содержание работы}

Раздел \ref{sec:main_results} содержит в себе краткое изложение основных полученных в данной работе результатов. В разделе \ref{sec:modified_gnm} разрабатывается теория модифицированного метода Гаусса--Ньютона, выводится общее правило обновления приближения решения задачи \eqref{eq:main_opt_problem}, задаются алгоритмы метода Гаусса--Ньютона с \textit{неточным оракулом} и адаптивным подбором гипрепараметров, отвечающих за длину шага метода (схема \ref{alg:gen_det_gnm}); выодится алгоритм с адаптивно настраиваемой точностью верхней оценки оптимизируемого функционала (схема \ref{alg:gen_det_flex_gnm}). В разделе \ref{sec:modified_gnm} доказывается в рамках естественных предположений сходимость любого процесса построения приближения решения задачи \eqref{eq:main_opt_problem} к стационарной точке с выполнением условий оптимальности первого порядка в случае неограниченного возрастания точности вычисления приближения решения на каждой итерации. При наличии невырожденности матрицы Якоби установлена локальная квадратичная сходимость. В условиях невырожденной сопряжённой матрицы Якоби установлена локальная линейная сходимость, а в случае достаточно точного оракула невырожденность сопряжённой матрицы Якоби приводит к глобальной линейной сходимости метода. В разделе \ref{sec:stoch_modified_gnm} представлена стохастическая версия модификации метода Гаусса--Ньютона с несколькими стратегиями обновления приближения решения задачи вида \eqref{eq:main_opt_problem}. В разделе \ref{sec:stoch_modified_gnm_analysis} исследуется сходимость предложенного метода со схемой реализации \ref{alg:gen_stoch_gnm}, выводятся условия сходимости к приближённому решению в терминах среднего. При наличии невырожденности сэмплируемой на каждой итерации сопряжённой матрицы Якоби установлена линейная сходимость к приближённому решению в среднем. Также исследуются свойства улучшенной версии схемы \ref{alg:gen_stoch_gnm} с сэмплированием двух батчей на каждой итерации для оценки шага метода (схема \ref{alg:gen_double_stoch_gnm}). Для неточных проксимальных отображений и для неограниченных функционалов с неограниченными якобианами разработана своя версия схемы \ref{alg:gen_stoch_gnm}, представленная в схеме \ref{alg:gen_stoch_unbounded_gnm}. В данном разделе предложен стохастический аналог метода Гаусса--Ньютона с адаптивно настраиваемой точностью стохастической оценки оптимизируемого функционала (схема \ref{alg:gen_stoch_flex_gnm}). В разделе \ref{sec:stoch_modified_gnm_weak_growth} представлена стратегия обновления приближения решения в стохастическом методе Гаусса--Ньютона, позволяющая при наличии условия слабого роста и невырожденности сопряжённой матрицы Якоби решить с любой наперёд заданной точностью задачу \eqref{eq:main_opt_problem} вне зависимости от размера батча сэмплируемых функций. Раздел \ref{sec:practical_considerations} посвящён вопросам реализации предложенных модификаций метода Гаусса--Ньютона на практике, предлагаются способы эффективного вычисления проксимальных отображений, рассматриваются схемы метода Гаусса--Ньютона в произвольных нормированных пространствах, демонстрируется возможность решения возникающих вопросов с помощью стандартных средств линейной алгебры и выпуклой оптимизации. В разделе \ref{sec:comparsion} описываются предложенные модификации метода Гаусса--Ньютона в классе квазиньютоновских методов, демонстрируется связь с методами оптимизации второго порядка, с методом Ньютона. В \hyperref[sec:appendix]{приложении} представлены доказательства выведенных в данной работе утверждений.

\subsection{Условные обозначения}

Введём обозначение конечномерного евклидового пространства с помощью буквы $E$ (наравне с этим обозначением будут использованы обозначения с индексацией), для этого пространства зафиксируем стандартную евклидову норму $\|~\|$. Обозначим евклидовы пространства $E_{1}$ с $\operatorname{dim}(E_{1}) = n$ и $E_{2}$ с $\operatorname{dim}(E_{2}) = m$. Определим сопряжённое евклидово пространство $E^{*}$ для пространства $E$ как пространство линейных функций над $E$. Значение в точке $x\in E$ для функции $u\in E^{*}$ определяется скалярным произведением: $\langle u, x\rangle.$ Для нормы $\|x\|,~x\in E$ имеется классическое соотношение, связывающее с нормой $\|u\|,~u\in E^{*}$:
\begin{equation*}
    \begin{cases}
        \|x\| = \max\limits_{u\in E^{*}}\left\{\langle u, x\rangle:~\|u\|\leq 1\right\};\\[5pt]
        \|u\| = \max\limits_{x\in E}\left\{\langle u, x\rangle:~\|x\|\leq 1\right\}.
    \end{cases}
\end{equation*}
Из соотношения выше следует выполнение неравенства Коши--Буняковского--Шварца: $\langle u, x\rangle\leq\|u\|\|x\|$.

Для гладкой по $x$ функции $f: E_{1}\rightarrow E_{2}$ обозначим вычисленную в точке $x\in E_{1}$ первую и вторую производную по $x$ как $\nabla_{x}f(x)$ и $\nabla_{x}^{2}f(x)$ соответственно (в случае отсутствия неоднозначности при определении переменной дифференцирования индексация у $\nabla$ опускается). Для $E_{2} \equiv \mathbb{R}$ первую и вторую производные будем называть градиентом и гессианом. Заметим, что $\nabla f(x)\in E_{1}^{*}$, $\nabla^{2}f(x): E_{1}\rightarrow E_{1}^{*}$ --- самосопряжённый линейный оператор.

Далее, введённые обозначения позволяют определить сопряжённый оператор $A^{*}:E_{2}^{*}\rightarrow E_{1}^{*}$ для оператора $A: E_{1}\rightarrow E_{2}$:
\begin{equation*}
    \left\langle u, Ax\right\rangle = \left\langle A^{*}u, x\right\rangle,~u\in E_{2}^{*}, x\in E_{1}.
\end{equation*}

Определим операторную норму для линейного оператора $A: E_{1}\rightarrow E_{2}$ как максимальное сингулярное число матрицы оператора $\sigma_{\max}(A)$:
\begin{equation*}
    \left\|A\right\| = \sigma_{\max}(A) = \max\limits_{x\in E_{1}}\left\{\left\|Ax\right\|:~\|x\|\leq 1\right\} = \sqrt{\lambda_{\max}\left(AA^{*}\right)} = \sqrt{\lambda_{\max}\left(A^{*}A\right)},
\end{equation*}
где $\lambda_{\max}(\cdot)$ --- максимальное собственное значение оператора. Дополнительно обозначим с помощью $\left\|A\right\|_{F}$ фробениусову норму оператора $A$ с матрицей $\left(a_{ij}\right)_{i,j = 1}^{m,n}$:
\begin{equation*}
    \left\|A\right\|_{F} = \sqrt{\sum\limits_{i,j = 1}^{m, n}|a_{ij}|^{2}} = \sqrt{\operatorname{Tr}\left(AA^{*}\right)} = \sqrt{\operatorname{Tr}\left(A^{*}A\right)}.
\end{equation*}
Ясно, что $\left\|A\right\|\leq\left\|A\right\|_{F}$, по свойству следа оператора $\operatorname{Tr}(\cdot)$. Также введём минимальное сингулярное число матрицы данного оператора $A$:
\begin{equation*}
    \sigma_{\min}(A) = \min\limits_{x\in E_{1}}\left\{\left\|Ax\right\|:~\|x\|\leq 1\right\}.
\end{equation*}

Для многозначного отображения $F: E_{1}\rightarrow E_{2}$ определим матрицу Якоби $F^{'}(x)$ в точке $x\in E_{1}$ как матрицу линейного оператора из $E_{1}$ в $E_{2}$:
\begin{equation*}
    F^{'}(x)h = \lim\limits_{t\rightarrow 0}\left(\frac{1}{t}\left(F(x + th) - F(x)\right)\right)\in E_{2},~h\in E_{1}.
\end{equation*}

Для линейных операторов задаётся отношение частичного порядка на конусе неотрицательно определённых матриц следующим стандартным образом:
\begin{equation*}
    A\preceq A_{1},~A_{1}\succeq A,~A: E\rightarrow E^{*},~A_{1}: E\rightarrow E^{*}\Leftrightarrow \left\langle (A_{1} - A)x,~x\right\rangle\geq 0,~\forall x\in E.
\end{equation*}
Аналогичное отношение верно и относительно сопряжённого пространства:
\begin{equation*}
    B\preceq B_{1},~B_{1}\succeq B,~B: E^{*}\rightarrow E,~B_{1}: E^{*}\rightarrow E\Leftrightarrow \left\langle u,~(B_{1} - B)u\right\rangle\geq 0,~\forall u\in E^{*}.
\end{equation*}
Заметим, что для линейного оператора $A: E_{1}\rightarrow E_{2}$ верно отношение
\begin{equation*}
    \begin{cases}
        AA^{*}\succeq\sigma_{\min}(A^{*})^{2}I_{\dim(E_{2})};\\[5pt]
        A^{*}A\succeq\sigma_{\min}(A)^{2}I_{\dim(E_{1})}.
    \end{cases}
\end{equation*}

Обозначим за $\overline{1, m}$ множество целых чисел от $1$ до $m$ включительно: $\{1,~\dots, m\}$. Обозначим через $f(x) = \operatorname{O}(h(x))$  оценку сверху функции $f$ функцией $h$ с точностью до константы и, быть может, полилогарифмических факторов. Так же через $f(x) = \Omega(h(x))$ обозначим оценку снизу функции $f$ функцией $h$ с точностью до константы и, быть может, полилогарифмических факторов. Положим также
$$f^{*} = \min\limits_{x\in E_{1}}f(x),~g^{*}(y) = \min\limits_{x\in E_{1}}g(x, y),$$
определив минимальные возможные значения по аргументу $x$ для функций $f$ и $g$ соответственно.

\section{Основные результаты}\label{sec:main_results}

В работе представлена модификация метода Гаусса--Ньютона с сильно выпуклой параметризованной локальной моделью нормы невязки системы нелинейных уравнений, сам прообраз исследуемых методов впервые предложил Юрий Евгеньевич Нестеров в своём препринте~\cite{Nesterov2020}. Используя предложенную локальную модель, построены алгоритмы детерминированной оптимизации и алгоритмы стохастической оптимизации для решения задачи \eqref{eq:main_opt_problem}. Предложены алгоритмы решения задачи \eqref{eq:main_opt_problem} с адаптивной настройкой гиперпараметров локальной модели. В анализе построенных методов заложено понятие неточного оракула, формализованное в виде отличия значения локальной модели в точке очередного приближения решения системы \eqref{eq:smooth_system} от минимального значения локальной модели на текущем шаге. В классе детерминированных методов Гаусса--Ньютона разработан с помощью предложенной локальной модели алгоритм, адаптивно учитывающий произвольное значение погрешности неточного оракула в области квадратичной сходимости. Для стохастических методов Гаусса--Ньютона выведены условия, при которых задача \eqref{eq:smooth_system} разрешима с произвольным размером батча. Среди разработанных вариаций метода Гаусса--Ньютона присутствует версия для неограниченных оптимизируемых функционалов в рамках естественных предположений. Для каждого представленного алгоритма решения задачи \eqref{eq:main_opt_problem} дан анализ сходимости с неасимптотическими оценками относительно потенциала уровня $\epsilon > 0$, используемого в качестве индикатора сходимости итеративного процесса.

Кратко результаты работы по построению и изучению модифицированного метода Гаусса--Ньютона представлены в таблице \ref{tab:methods_summary}. В ней столбец <<Схемы метода>> представляет собой собрание применимых алгоритмов в обозначенных в столбце справа теоремах. Столбец <<Условие сходимости>> содержит ссылки на потенциалы, используемые для измерения сходимости до уровня $\epsilon > 0$. Остальные столбцы описывают основные требуемые условия для успешного применения схем с указанной асимптотикой. В таблице первые четыре строки соответствуют детерминированным методам, остальные --- стохастическим. Прочерки в столбце <<Погрешность оракула>> означают использование точного направления минимизации локальной модели с некоторым масштабом на каждом шаге соответствующего строке с прочерком алгоритма.

\begin{table}[ht]
\centering
\resizebox{\columnwidth}{!}{
\begin{tabular}{c|c|c|c|c|c|c}
     \makecell{Схемы\\метода} & Теоремы & Предположения & \makecell{Условие\\сходимости} & \makecell{Количество\\итераций} &  \makecell{Размер\\батча} & \makecell{Погрешность\\оракула}\\
     \hline
     \ref{alg:gen_det_gnm}, \ref{alg:gen_det_flex_gnm} & \ref{th:DetSublinConvMain}, \ref{th:DetFlexSublinConvMain} & \ref{as:det_hat_F_der_smooth} & \eqref{eq:DetSublinConvMainConvCond} & $\operatorname{O}\left(\frac{1}{\epsilon^{2}}\right)$ & $m$ & $\operatorname{O}\left(\epsilon^{2}\right)$\\
     \hline
     \ref{alg:gen_det_gnm} & \ref{th:DetQuadConvMain} & \ref{as:det_hat_F_der_smooth} & \eqref{eq:DetQuadConvMainConvCond} & $\operatorname{O}\left(\log_{2}\left(\ln\left(\frac{1}{\alpha\epsilon}\right)\right)\right)$ & $m$ & $\geq0$\\
     \hline
     \ref{alg:gen_det_gnm}, \ref{alg:gen_det_flex_gnm} & \ref{th:2_main} & \ref{as:det_hat_F_der_smooth} & \eqref{eq:th2_main_conv_cond} & $\operatorname{O}\left(\frac{1}{\epsilon^{2}}\right)$ & $m$ & ---\\
     \hline
     \ref{alg:gen_det_gnm}, \ref{alg:gen_det_flex_gnm} & \ref{th:1_main}, \ref{lm:1_main} & \ref{as:det_hat_F_der_smooth}, \ref{as:det_hat_F_PL_condition} & \eqref{eq:1_main_conv_cond} & $\operatorname{O}\left(\ln\left(\frac{1}{\epsilon}\right)\right)$ & $m$ & ---\\
     \hline
     \ref{alg:gen_stoch_gnm}, \ref{alg:gen_stoch_unbounded_gnm}, \ref{alg:gen_stoch_flex_gnm} & \ref{th:3_main} & \ref{as:1}, \ref{as:2}, \ref{as:3}, \ref{as:4} & \eqref{eq:mean_grad_conv_cond} & $\operatorname{O}\left(\frac{1}{\epsilon^{2}}\right)$ & $\min\left\{m, \operatorname{O}\left(\frac{1}{\epsilon^{4}}\right)\right\}$ & ---\\
     \hline
     \ref{alg:gen_stoch_gnm}, \ref{alg:gen_stoch_unbounded_gnm}, \ref{alg:gen_stoch_flex_gnm} & \ref{th:4_main} & \ref{as:1}, \ref{as:2}, \ref{as:3}, \ref{as:4}, \ref{as:5} & \eqref{eq:4_main_conv_cond} & $\operatorname{O}\left(\ln\left(\frac{1}{\epsilon}\right)\right)$ & $\min\left\{m, n, \operatorname{O}\left(\frac{1}{\epsilon^{4}}\right)\right\}$ & ---\\
     \hline
     \ref{alg:gen_double_stoch_gnm} & \ref{th:double_stoch_sublin_conv_main} & \ref{as:1}, \ref{as:2}, \ref{as:3}, \ref{as:4} & \eqref{eq:mean_grad_conv_cond} & $\operatorname{O}\left(\frac{1}{\epsilon^{2}}\right)$ & $\min\left\{m, \operatorname{O}\left(\frac{1}{\epsilon^{4}}\right)\right\}$ & ---\\
     \hline
     \ref{alg:gen_double_stoch_gnm} & \ref{th:double_stoch_lin_conv_main} & \ref{as:1}, \ref{as:2}, \ref{as:3}, \ref{as:4}, \ref{as:5} & \eqref{eq:4_main_conv_cond} & $\operatorname{O}\left(\ln\left(\frac{1}{\epsilon}\right)\right)$ & $\min\left\{m, n, \operatorname{O}\left(\frac{1}{\epsilon^{4}}\right)\right\}$ & ---\\
     \hline
     \ref{alg:gen_stoch_gnm}, \ref{alg:gen_stoch_unbounded_gnm}, \ref{alg:gen_stoch_flex_gnm} & \ref{th:5_main} & \ref{as:1}, \ref{as:2}, \ref{as:3}, \ref{as:4} & \eqref{eq:mean_grad_conv_cond} & $\operatorname{O}\left(\frac{1}{\epsilon^{2}}\right)$ & $\min\left\{m, \operatorname{O}\left(\frac{1}{\epsilon^{4}}\right)\right\}$ & \eqref{eq:5_main_regime_1}, \eqref{eq:5_main_regime_2}\\
     \hline
     \ref{alg:gen_stoch_gnm}, \ref{alg:gen_stoch_unbounded_gnm}, \ref{alg:gen_stoch_flex_gnm} & \ref{th:6_main} & \ref{as:1}, \ref{as:2}, \ref{as:3}, \ref{as:4}, \ref{as:5} & \eqref{eq:4_main_conv_cond} & $\operatorname{O}\left(\ln\left(\frac{1}{\epsilon}\right)\right)$ & $\min\left\{m, n, \operatorname{O}\left(\frac{1}{\epsilon^{4}}\right)\right\}$ & \eqref{eq:6_main_regime_1}, \eqref{eq:6_main_regime_2}\\
     \hline
     \ref{alg:gen_stoch_unbounded_gnm}, \ref{alg:gen_stoch_flex_gnm} & \ref{th:gen_stoch_sublin_conv_main} & \ref{as:jacob_smoothness}, \ref{as:bounded_variance_growth} & \eqref{eq:th_stoch_prox_norm} & \eqref{eq:gen_sublin_conv_cond} & \eqref{eq:gen_sublin_conv_cond} & \eqref{eq:gen_sublin_conv_cond}\\
     \hline
     \ref{alg:gen_stoch_unbounded_gnm}, \ref{alg:gen_stoch_flex_gnm} & \ref{th:gen_stoch_lin_conv_main} & \ref{as:jacob_smoothness}, \ref{as:bounded_variance_growth}, \ref{as:prox_PL_condition} & \eqref{eq:th_stoch_f2_val} & \eqref{eq:gen_lin_conv_cond} & \eqref{eq:gen_lin_conv_cond} & \eqref{eq:gen_lin_conv_cond}\\
     \hline
     \ref{alg:gen_double_stoch_gnm} & \ref{th:weak_growth_condition_main} & \ref{as:2}, \ref{as:3}, \ref{as:5}, \ref{as:jacob_smoothness} & \eqref{eq:th_stoch_f2_val} & $\operatorname{O}\left(\ln\left(\frac{1}{\epsilon}\right)\right)$ & $\in\overline{1,~m},~m\leq n$ & ---\\
\end{tabular}
}
\caption{Основные характеристики разработанных модификаций}\label{tab:methods_summary}
\end{table}

\section{Модифицированный метод Гаусса--Ньютона}\label{sec:modified_gnm}

\subsection{Модель оптимизируемого функционала}

Вернёмся к задаче поиска решения $x^{*}\in E_{1}$ гладкой нелинейной системы уравнений:
\begin{equation}\label{eq:smooth_system}
    F(x) = \mathbf{0}_{m},
\end{equation}
где $F: E_{1}\rightarrow E_{2}$ --- гладкое многозначное отображение с матрицей Якоби $F^{'}(x),~x\in E_{1}$. Для оценки близости текущего приближения к решению системы уравнений \eqref{eq:smooth_system} рассмотрим следующую функцию невязки для системы с набором функций $\hat{F}(x) \overset{\operatorname{def}}{=} \frac{1}{\sqrt{m}}F(x)$:
$$\hat{f}_{1}(x) \overset{\operatorname{def}}{=} \frac{1}{\sqrt{m}}\left\|F(x)\right\| = \left\|\hat{F}(x)\right\|.$$

Используя функцию невязки $\hat{f}_{1}(x)$ можно решить задачу \eqref{eq:smooth_system} через сведение к задаче оптимизации без ограничений:
\begin{equation}\label{eq:main_opt_problem}
    \hat{f}_{1}^{*} = \min\limits_{x\in E_{1}}\left\{\hat{f}_{1}(x) = \frac{1}{\sqrt{m}}\left\|F(x)\right\| = \frac{1}{\sqrt{m}}\left\|\left(F_{1}(x),~\dots, F_{m}(x)\right)^{*}\right\|\right\}.
\end{equation}

Существование решения задачи \eqref{eq:smooth_system} равносильно $\hat{f}_{1}^{*} = \hat{f}_{1}(x^{*}) = 0$. В работе рассматривается итеративная процедура решения задачи \eqref{eq:main_opt_problem}, основанная на минимизации \textit{локальной модели} оптимизируемого функционала:
\begin{equation*}
    \phi(x, y) \overset{\operatorname{def}}{=} \left\|\hat{F}(x) + \hat{F}^{'}(x)(y - x)\right\|,~(x, y)\in E_{1}^{2},~\hat{F}^{'}(x) = \frac{1}{\sqrt{m}}F^{'}(x).
\end{equation*}
Для классического метода Гаусса--Ньютона на каждой итерации $k\in\mathbb{Z}_{+}$ очередное приближение решения \eqref{eq:smooth_system} вычисляется поиском точки минимума выпуклой по $y$ функции $\phi(x, y)$:
$$x_{k + 1}\in \Argmin\limits_{y\in E_{1}}\left\{\phi(x_{k}, y)\right\}.$$

Однако добавление регуляризации к классической схеме метода Гаусса--Ньютона позволяет установить свойства локальной и глобальной эффективности всего метода. В данной работе проводится анализ регуляризованного метода Гаусса--Ньютона с модифицированной локальной моделью оптимизируемого функционала, предложенного в \cite{Nesterov2020}. Для этого введём изначальные предположения о решаемой задаче. Рассмотрим $\mathcal{F}\subseteq E_{1}$ --- замкнутое выпуклое множество с непустым подмножеством внутренних точек.

\begin{assumption}\label{as:det_hat_F_der_smooth}
    Пусть многозначное отображение $\hat{F}(x)$ является гладким на $\mathcal{F}$ с Липшиц--непре\-рывной матрицей Якоби:
    \begin{equation}\label{eq:hat_F_der_Lipschitz}
        \exists L_{\hat{F}} > 0:~\left\|\hat{F}^{'}(y) - \hat{F}^{'}(x)\right\|_{F}\leq L_{\hat{F}}\|y - x\|,~\forall (x, y)\in\mathcal{F}^{2}.
    \end{equation}
\end{assumption}
Из предположения \ref{as:det_hat_F_der_smooth} по свойству соотношения операторной нормы и нормы Фробениуса следует неравенство:
$$\left\|\hat{F}^{'}(y) - \hat{F}^{'}(x)\right\|\leq L_{\hat{F}}\|y - x\|,~\forall (x, y)\in\mathcal{F}^{2}.$$
Введём понятие множества уровня $\mathcal{L}(v)$ для функции $\hat{f}_{1}$:
$$\mathcal{L}(v) \overset{\operatorname{def}}{=} \left\{x: \hat{f}_{1}(x)\leq v\right\},$$
предположив, что $$\mathcal{L}(\hat{f}_{1}(x_{0}))\subseteq\mathcal{F},~x_{0}\in\mathcal{F}\text{ --- начальное приближение решения,}$$
то есть для каждого начального приближения $x_{0}$ во всей работе размер $\mathcal{F}$ предполагается достаточно большим, чтобы вся последовательность $\left\{x_{k}:~\hat{f}_{1}(x_{k})\leq\hat{f}_{1}(x_{k - 1})\right\}_{k\in\mathbb{N}}$ принадлежала $\mathcal{F}$.
\begin{assumption}\label{as:det_hat_F_PL_condition}
    Пусть для многозначного отображения выполнено условие Поляка--Лоясиевича:
    \begin{equation}\label{eq:det_hat_F_PL_condition}
        \exists\mu >0,~\sigma_{\min}(\hat{F}^{'}(x)^{*})\geq\sqrt{\mu},~\forall x\in\mathcal{F}.
    \end{equation}
\end{assumption}
Из предположения \ref{as:det_hat_F_PL_condition} неявно следует неравенство $\dim(E_{1})\leq\dim(E_{2})$. Само предположение \ref{as:det_hat_F_PL_condition} называется условием Поляка--Лоясиевича, так как из него следует неравенство Поляка--Лоясиевича для функции $\hat{f}_{2}$:
\begin{equation*}
    \begin{aligned}
        \left\|\nabla \hat{f}_{2}(x)\right\|^{2} &= \left\|2\hat{F}^{'}(x)^{*}\hat{F}(x)\right\|^{2}\geq4\mu\left\|\hat{F}(x)\right\|^{2} = 4\mu\hat{f}_{2}(x),~x\in\mathcal{F};\\
        \hat{f}_{2}(x) &\overset{\operatorname{def}}{=} \left(\hat{f}_{1}(x)\right)^{2},~x\in E_{1}.
    \end{aligned}
\end{equation*}
В работе \cite{Nesterov2020} предложена следующая локальная регуляризованная модель оптимизируемой функции $\hat{f}_{1}(y)$:
\begin{equation}
    \hat{f}_{1}(y)\leq\frac{\hat{f}_{1}(x)}{2} + \frac{\left(\phi(x, y)\right)^{2}}{2\hat{f}_{1}(x)} + \frac{L}{2}\|y - x\|^{2},~L\geq L_{\hat{F}},~(x, y)\in\mathcal{F}^{2}.
\end{equation}
Однако сейчас рассматривается более общая форма данной модели, называемая \textit{общей локальной моделью}, вывод которой представлен в лемме \ref{lm:aux_det_upper_model}:
\begin{equation*}
    \hat{f}_{1}(y)\leq\psi_{x, L, \tau}(y) = \frac{\tau}{2} + \frac{\left(\phi(x, y)\right)^{2}}{2\tau} + \frac{L}{2}\|y - x\|^{2},~L\geq L_{\hat{F}},~\tau > 0,~(x, y)\in\mathcal{F}^{2}.
\end{equation*}
Общая локальная модель позволяет ввести правило точного обновления приближения решения $x$ в итерационной схеме регуляризованного метода Гаусса---Ньютона:
$$T_{L,\tau}(x) \overset{\operatorname{def}}{=} \argmin\limits_{y\in E_{1}}\left\{\psi_{x, L, \tau}(y)\right\}.$$

\subsection{Анализ схемы метода}

Разработанная схема обновления $x$ объединяет в себе ранее предложенные модификации метода Гаусса--Ньютона, различающиеся выбором гиперпараметра $\tau$: $\tau = \phi(x, y)$ \cite{Nesterov2007} и $\tau = \hat{f}_{1}(x)$ \cite{Nesterov2020}. В отличие от случая c $\tau = \phi(x, y)$, модель $\psi_{x, L, \hat{f}_{1}(x)}(y)$ является гладкой по $y$ и позволяет однозначно вычислять $T_{L, \hat{f}_{1}(x)}(x)$ в силу сильной выпуклости по $y$. Также в данной работе для общей локальной модели расширено множество значений $\tau$, при которых имеет место сходимость модифицированного метода Гаусса--Ньютона. Предлагаемая в работе модификация метода Гаусса--Ньютона является дальнейшим развитием метода \textit{нормализованных квадратов}, предложенного Юрием Нестеровым в своём препринте \cite{Nesterov2020}, и описана в схеме \ref{alg:gen_det_gnm}.

\RestyleAlgo{boxruled}
\begin{algorithm}[!ht]{}
\caption{\textbf{Общий метод нормализованных квадратов с неточным проксимальным отображением}}
\label{alg:gen_det_gnm}
\textbf{Вход:}
    \begin{equation*}
        \scalemath{1.0}{
        \begin{cases}
            x_{0}\in E_{1},~\mathcal{L}(\hat{f}_{1}(x_{0}))\subseteq\mathcal{F}\text{ --- начальное приближение},~x_{-1} = x_{0};\\
            \mathcal{E}(\cdot)\text{ --- функция погрешности проксимального отображения};\\
            N\in\mathbb{N}\text{ --- количество итераций метода};\\
            L\text{ --- оценка локальной постоянной Липшица},~L\in (0,~L_{\hat{F}}],~L_{0} = L;\\
            \mathcal{T}(\cdot)\text{ --- функция, определяющая значение $\tau$}.
        \end{cases}
        }
    \end{equation*}
    \vspace{0.2cm}
    \textbf{Повторять для $k = 0, 1,~\dots, N - 1$:}
    \begin{itemize}
        \item[1.] определить $\tau_{k} = \mathcal{T}(x_{k}, L_{k}, \varepsilon_{k})$,~$\varepsilon_{k} = \mathcal{E}(k, x_{k}, x_{k - 1})$;
        \item[2.] вычислить такой $x_{k + 1}\in E_{1}$, что $\psi_{x_{k}, L_{k}, \tau_{k}}(x_{k + 1}) - \psi_{x_{k}, L_{k}, \tau_{k}}(T_{L_{k}, \tau_{k}}(x_{k}))\leq\varepsilon_{k}$ и $\hat{f}_{1}(x_{k}) - \psi_{x_{k}, L_{k}, \tau_{k}}(x_{k + 1})\geq 0$;
        \item[3.] если $\hat{f}_{1}(x_{k + 1}) > \psi_{x_{k}, L_{k}, \tau_{k}}(x_{k + 1})$, то положить $L_{k} := \min\left\{2L_{k}, 2L_{\hat{F}}\right\}$ и\\вернуться к пункту 1;
        \item[4.] $L_{k + 1} = \max\left\{\frac{L_{k}}{2},~L\right\}$.
    \end{itemize}
    \vspace{0.2cm}
    \textbf{Выход:} $x_{N}$.
\end{algorithm}

В предложенной схеме рассматривается так называемый неточный оракул, который на каждой внешней итерации $k$ в качестве $x_{k + 1}$ возвращает не точку минимума локальной модели, а приближение значения $T_{L_{k}, \tau_{k}}(x_{k})$ с погрешностью $\varepsilon_{k}\geq0$. Схема \ref{alg:gen_det_gnm} обладает определённым уровнем общности, выраженным в возможности динамически менять $\tau_{k}$ и $\varepsilon_{k}$ на каждой итерации, формально это описано с помощью отображений $\mathcal{E}(\cdot)$ и $\mathcal{T}(\cdot)$, которые не всегда от всех обозначенных аргументов существенно зависят, однако позволяют обозначить формально частичную произвольность в выборе $\tau_{k}$ и $\varepsilon_{k}$, уточняемую в тексте работы в менее строгой форме, но достаточной для понимания концепции метода. Кроме общности, в разработанной схеме содержится адаптивный подбор локальной постоянной Липшица, построенный по принципу бинарного поиска на отрезке $[L,~2L_{\hat{F}}]$, причём на практике не обязательно знать верхнюю границу отрезка поиска $L_{k}$, так как для $L_{k}\geq L_{\hat{F}}$ локальная модель $\psi_{x_{k}, L_{k}, \tau_{k}}(\cdot)$ всегда корректно определена на $\mathcal{F}$ и неравенство в пункте $3$ схемы \ref{alg:gen_det_gnm} выполнено с противоположным знаком. Стоит отметить, что в схеме \ref{alg:gen_det_gnm} на практике желательно выбирать достаточно малое значение $\varepsilon_{k}\geq0$, чтобы было гарантированное уменьшение (выполнялось $\hat{f}_{1}(x_{k}) > \psi_{x_{k}, L_{k}, \tau_{k}}(x_{k + 1})$) до достижения области неоднозначности, существование которой обусловлено наличием неточности при вычислении $x_{k + 1}$ на $k$--ой итерации, причём не для каждого способа выбора $\tau_{k}$ может быть выполнено гарантированное уменьшение, и в данной работе условия теорем определяют способы, позволяющие добиться обозначенного уменьшения. Например, в схеме \ref{alg:gen_det_gnm} на практике это часто приводит к присвоению $x_{k + 1} = x_{k}$, если на $k$--ом шаге не удалось подобрать $x_{k + 1}$, для которого верно $\hat{f}_{1}(x_{k})\geq\psi_{x_{k}, L_{k}, \tau_{k}}(x_{k + 1})$ при $\tau_{k} = \hat{f}_{1}(x_{k})$. Также на практике вместо правила $\hat{f}_{1}(x_{k})\geq\psi_{x_{k}, L_{k}, \tau_{k}}(x_{k + 1})$ для обеспечения корректности схемы метода оптимизации \ref{alg:gen_det_gnm} могут применяться следующие процедуры:

\begin{itemize}
    \item выбор достаточно малого $\varepsilon_{k}\geq0$ для гарантии выполнения отношения $x_{k + 1}\in\mathcal{F}$: $\mathcal{L}(\hat{f}_{1}(x_{k}) + \varepsilon_{k})\subseteq\mathcal{F}$; 
    \item введение <<процедуры коррекции>>, например, проекции на множество $\mathcal{F}$ для каждого только что вычисленного $x_{k + 1}$.
\end{itemize}

Неточное вычисление $x_{k + 1}$ в данной работе обозначено в виде <<чёрного ящика>> и на практике может быть представлено другим итерационным методом, например, методом градиентного спуска, минимизирующим функционал $\psi_{x_{k}, L_{k}, \tau_{k}}(\cdot)$ на $k$--ом шаге метода Гаусса--Ньютона; контроль за точностью вычисления $x_{k + 1}$ представлен с помощью сравнения значения функции $\psi_{x_{k}, L_{k}, \tau_{k}}(x_{k})$ с минимальным значением $\psi_{x_{k}, L_{k}, \tau_{k}}(T_{L_{k}, \tau_{k}}(x_{k}))$, хотя эквивалентно можно сравнивать норму градиента локальной модели $\psi_{x_{k}, L_{k}, \tau_{k}}(x_{k + 1})$ (см. раздел \ref{subsec:gradient_control}).

Прежде чем перейти к оценке сходимости последовательности $\left\{x_{k}\right\}_{k\in\mathbb{Z}_{+}}$, построенной по предложенной схеме, рассмотрим две величины, оценивающие близость текущего приближения $x_{k}$ к стационароной точке:
\begin{itemize}
    \itemsep=-4pt
    \item норма обобщённого проксимального градиента --- $\left\|L_{k}\left(T_{L_{k}, \tau_{k}}(x_{k}) - x_{k}\right)\right\|$;
    \item приращение локальной модели --- $\Delta_{r}(x_{k}) \overset{\operatorname{def}}{=} \hat{f}_{2}(x_{k}) - \min\limits_{y\in E_{1}}\left\{\left(\phi(x_{k}, y)\right)^{2}:~\|y - x_{k}\|\leq r\right\},~r > 0$.
\end{itemize}
Обе величины позволяют определить множества стационарных точек, причём нетрудно установить эквивалентность данных определений:
\begin{itemize}
    \itemsep=-4pt
    \item $\left\{x^{*}:~x^{*}\in E_{1},~\|L\left(T_{L, \tau}(x^{*}) - x^{*}\right)\| = 0,~\forall L > 0,~\forall\tau > 0\right\}$;
    \item $\left\{x^{*}:~x^{*}\in E_{1},~\Delta_{r}(x^{*}) = 0,~\forall r > 0\right\}$.
\end{itemize}
С помощью введённых величин близости к стационарной точке установлена глобальная сублинейная сходимость к окрестности стационарной точки для метода, реализованного по схеме \ref{alg:gen_det_gnm}.

\begin{theorem}\label{th:DetSublinConvMain}
    Пусть выполнено предположение \ref{as:det_hat_F_der_smooth}, $k\in\mathbb{N},~r > 0$. Тогда для метода Гаусса--Ньютона, реализованного по схеме \ref{alg:gen_det_gnm} с $\tau_{k} = \hat{f}_{1}(x_{k})$, $\varepsilon_{k} = \varepsilon \geq 0$, верны следующие оценки:
    \begin{equation*}
        \begin{cases}
            &\frac{8L_{\hat{F}}^{2}}{L}\left(\varepsilon + \frac{\left(\hat{f}_{1}(x_{0}) - \hat{f}_{1}(x_{k})\right)}{k}\right)\geq\min\limits_{i\in\overline{0, k - 1}}\left\{\left\|2L_{\hat{F}}\left(T_{2L_{\hat{F}}, \hat{f}_{1}(x_{i})}(x_{i}) - x_{i}\right)\right\|^{2}\right\};\\[10pt]
            &L_{\hat{F}}\left(\varepsilon + \frac{\left(\hat{f}_{1}(x_{0}) - \hat{f}_{1}(x_{k})\right)}{k}\right)\geq\min\limits_{i\in\overline{0, k - 1}}\left\{2\left(L_{\hat{F}}r\right)^{2}\varkappa\left(\frac{\Delta_{r}(x_{i})}{4\hat{f}_{1}(x_{i})L_{\hat{F}}r^{2}}\right)\right\};
        \end{cases}
    \end{equation*}
    где $\varkappa(t) = \frac{t^{2}}{2}\mathds{1}_{\left\{t\in[0, 1]\right\}} + \left(t - \frac{1}{2}\right)\mathds{1}_{\left\{t > 1\right\}}.$
\end{theorem}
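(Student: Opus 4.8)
The plan is to establish a one‑step descent inequality for the general local model and then sum it telescopically, converting the accumulated decrease into a bound on the minimum over iterations of each stationarity measure.

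First I would record the basic consequences of the upper‑model lemma (Lemma \ref{lm:aux_det_upper_model}): for every admissible $L$ and $\tau$ one has $\hat f_1(T_{L,\tau}(x))\le \psi_{x,L,\tau}(T_{L,\tau}(x))\le \psi_{x,L,\tau}(x)=\tau/2+\hat f_1(x)^2/(2\tau)$. With the choice $\tau_k=\hat f_1(x_k)$ this collapses to $\psi_{x_k,L_k,\hat f_1(x_k)}(x_k)=\hat f_1(x_k)$, so step 2 of Algorithm \ref{alg:gen_det_gnm} together with the adaptive test in step 3 guarantees $\hat f_1(x_{k+1})\le \psi_{x_k,L_k,\tau_k}(x_{k+1})\le \psi_{x_k,L_k,\tau_k}(T_{L_k,\tau_k}(x_k))+\varepsilon$. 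Thus the per‑iteration decrease is $\hat f_1(x_k)-\hat f_1(x_{k+1})\ge \big(\hat f_1(x_k)-\psi_{x_k,L_k,\tau_k}(T_{L_k,\tau_k}(x_k))\big)-\varepsilon$, and the whole game reduces to lower‑bounding the \emph{exact} model gap $\hat f_1(x_k)-\psi_{x_k,L_k,\tau_k}(T_{L_k,\tau_k}(x_k))$ by each of the two stationarity quantities.

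For the first inequality I would exploit strong convexity of $y\mapsto\psi_{x_k,L_k,\hat f_1(x_k)}(y)$ in $\|\cdot\|$ with parameter $L_k$: since $T_{L_k,\tau_k}(x_k)$ is its minimizer, $\psi_{x_k,L_k,\tau_k}(x_k)-\psi_{x_k,L_k,\tau_k}(T_{L_k,\tau_k}(x_k))\ge \tfrac{L_k}{2}\|T_{L_k,\tau_k}(x_k)-x_k\|^2=\tfrac1{2L_k}\|L_k(T_{L_k,\tau_k}(x_k)-x_k)\|^2$. Because the adaptive rule keeps $L\le L_k\le 2L_{\hat F}$, one has $\|L_k(T_{L_k,\tau_k}(x_k)-x_k)\|\ge \tfrac{L_k}{2L_{\hat F}}\|2L_{\hat F}(T_{2L_{\hat F},\hat f_1(x_k)}(x_k)-x_k)\|$ — here I would use monotonicity of the proximal step length in $L$, which should itself follow from the optimality conditions of the model — and, after combining the constants ($1/(2L_k)\ge 1/(4L_{\hat F})$ and the squared ratio $(L_k/2L_{\hat F})^2\ge\ldots$), summing over $i=0,\dots,k-1$, dividing by $k$, and bounding the sum of decreases by $\hat f_1(x_0)-\hat f_1(x_k)$, the factor $8L_{\hat F}^2/L$ drops out. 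The telescoping is the routine part; the care is in tracking the constants through the adaptive $L_k$.

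For the second inequality I would pass from $\psi$ to the function $\Delta_r$ by splitting the minimization of $\psi_{x_k,L_k,\tau_k}$ over $E_1$ into a radial minimization: $\psi_{x_k,L_k,\hat f_1(x_k)}(T)\le \min_{\|y-x_k\|\le r}\psi_{x_k,L_k,\hat f_1(x_k)}(y)\le \tfrac{\hat f_1(x_k)}{2}+\tfrac1{2\hat f_1(x_k)}\min_{\|y-x_k\|\le r}\phi(x_k,y)^2+\tfrac{L_k}{2}r^2$, so that $\hat f_1(x_k)-\psi_{x_k,L_k,\tau_k}(T)\ge \tfrac{\Delta_r(x_k)}{2\hat f_1(x_k)}-\tfrac{L_k}{2}r^2$; then a one‑dimensional optimization in the trust radius (rescaling $t=\Delta_r(x_k)/(4\hat f_1(x_k)L_{\hat F}r^2)$ and distinguishing $t\le1$ from $t>1$, which is exactly what the piecewise function $\varkappa$ encodes) produces the claimed bound $2(L_{\hat F}r)^2\varkappa(\cdot)$. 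The main obstacle I anticipate is precisely this radius‑optimization bookkeeping together with the monotonicity‑in‑$L$ claim for the proximal step: getting the case split and the constants to line up so that both displayed inequalities come out with exactly the stated coefficients is where the real work lies; the descent and telescoping skeleton is otherwise standard.
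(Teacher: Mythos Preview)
Your overall skeleton — one-step model-gap inequality plus telescoping — is exactly what the paper does, and your treatment of the first inequality is on target up to a small constant-tracking slip: you should not use both $1/(2L_k)\ge 1/(4L_{\hat F})$ and $(L_k/2L_{\hat F})^2\ge (L/2L_{\hat F})^2$ simultaneously, since that loses a factor $2L_{\hat F}/L$. The paper simply writes $\tfrac{L_k}{2}\|T_{L_k,\hat f_1(x_k)}(x_k)-x_k\|^2\ge \tfrac{L}{2}\|T_{2L_{\hat F},\hat f_1(x_k)}(x_k)-x_k\|^2$ using $L_k\ge L$ together with the monotonicity of $\|T_{L,\tau}(x)-x\|$ in $L$, and only afterwards multiplies through by $8L_{\hat F}^2/L$; this yields the stated constant directly.

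The genuine gap is in your second inequality. Plugging the trust-region minimizer into $\psi$ gives only $\hat f_1(x_k)-\psi_{x_k,L_k,\hat f_1(x_k)}(T)\ge \tfrac{\Delta_r(x_k)}{2\hat f_1(x_k)}-\tfrac{L_k}{2}r^2$, which is precisely the $t>1$ branch of $\varkappa$ and is \emph{negative} whenever $\Delta_r(x_k)$ is small. There is no ``optimization in the trust radius'' that recovers the missing quadratic branch: $r$ is fixed in the statement, and $\Delta_{r'}$ for $r'<r$ is not expressible through $\Delta_r$. The function $\varkappa$ is not merely a case split but the value function $\varkappa(s)=\max_{t\in[0,1]}\{st-\tfrac12 t^2\}$, and the missing idea (Lemma~\ref{lm:aux_det_local_decrease_2}) is to introduce a line-search parameter $t\in[0,1]$ along the segment from $x_k$ to the ball-minimizer $x_k+h_r$ and use convexity of $\|\cdot\|^2$ to interpolate:
\[
\|\hat F(x_k)+t\hat F'(x_k)h_r\|^2=\|(1-t)\hat F(x_k)+t(\hat F(x_k)+\hat F'(x_k)h_r)\|^2\le (1-t)\hat f_2(x_k)+t\bigl(\hat f_2(x_k)-\Delta_r(x_k)\bigr).
\]
This yields $\hat f_1(x_k)-\psi_{x_k,L_k,\hat f_1(x_k)}(T)\ge L_kr^2\max_{t\in[0,1]}\bigl\{\tfrac{\Delta_r(x_k)}{2\hat f_1(x_k)L_kr^2}\,t-\tfrac12 t^2\bigr\}=L_kr^2\,\varkappa\!\left(\tfrac{\Delta_r(x_k)}{2\hat f_1(x_k)L_kr^2}\right)$, after which monotonicity of $L\mapsto Lr^2\varkappa(\cdot/L)$ replaces $L_k$ by $2L_{\hat F}$ and the telescoping proceeds as you describe.
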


В теореме \ref{th:DetSublinConv} утверждается аддитивность вклада в оценку сходимости двух факторов: погрешность вычисления проксимального отображения и количество итераций метода, из--за этого для достижения минимальной нормы проксимального градиента на уровне $\epsilon > 0$ необходимо часть от $\epsilon^{2}$ покрыть с помощью достаточно малого $\varepsilon\geq0$, а оставшуюся часть --- с помощью достаточно большого количества итераций. Это неасимптотическое условие сходимости к уровню $\epsilon$ представляет собой следующее выражение:
\begin{equation}\label{eq:DetSublinConvMainConvCond}
    \begin{aligned}
        &\min\limits_{i\in\overline{0, k - 1}}\left\{\left\|2L_{\hat{F}}\left(T_{2L_{\hat{F}},\hat{f}_{1}(x_{i})}(x_{i}) - x_{i}\right)\right\|\right\}\leq\epsilon.
    \end{aligned}
\end{equation}
Формально данное условие описывается с помощью системы неравенств:
\begin{equation*}
    \begin{cases}
        \frac{8L_{\hat{F}}^{2}\varepsilon}{L}\leq r\epsilon^{2},~r\in(0, 1);\\[5pt]
        \frac{8L_{\hat{F}}^{2}\left(\hat{f}_{1}(x_{0}) - \hat{f}_{1}(x_{k})\right)}{Lk}\leq (1 - r)\epsilon^{2}.
    \end{cases}
\end{equation*}
Из неравенств выводятся максимальное значение погрешности $\varepsilon$ и минимальное количество итераций $k$:
$$\varepsilon = \frac{r\epsilon^{2} L}{8L_{\hat{F}}^{2}} = \operatorname{O}\left(\epsilon^{2}\right),~k = \left\lceil\frac{8L_{\hat{F}}^{2}\hat{f}_{1}(x_{0})}{(1 - r)\epsilon^{2}L}\right\rceil = \operatorname{O}\left(\frac{1}{\epsilon^{2}}\right).$$
Также полученные оценки указывают на ускорение метода при сужении отрезка поиска постоянной Липшица на каждой итерации вокруг истинного значения $L_{\hat{F}}$. При выборе адаптивной стратегии вычисления $x_{k + 1}$ с постепенно уменьшающейся погрешностью $\varepsilon_{k}$ до нулевого предельного значения возможно приближение к стационарной точке с любой наперёд заданной точностью (следствие \ref{th:SubLinConvCor1}), более того, при фиксированном $x_{0}$ все получаемые стационарные точки $x^{*}$ принадлежат связному множеству (следствие \ref{th:SubLinConvCor2}), хотя не все из них являются решениями системы \eqref{eq:smooth_system}, даже возможен случай, в котором ни одна из полученных стационарных точек не будет решением системы уравнений \eqref{eq:smooth_system}. Для наличия решений системы уравнений необходима совместность системы, в следующем утверждении содержатся условия локальной сходимости схемы \ref{alg:gen_det_gnm} при наличии разрешимости \eqref{eq:smooth_system}.

\begin{theorem}\label{th:DetQuadConvMain}
    Пусть выполнено предположение \ref{as:det_hat_F_der_smooth}, пусть для метода Гаусса--Ньютона со схемой \ref{alg:gen_det_gnm} существует $x^{*}\in\mathcal{L}(\hat{f}_{1}(x_{0})),~\hat{F}(x^{*}) = \mathbf{0}_{m}$ --- решение с $\sigma_{\min}\left(\hat{F}^{'}(x^{*})\right)\geq\varsigma > 0$. Если выполнено $\varsigma > \frac{2L_{\hat{F}}}{\alpha}$ при некотором фиксированном $\alpha\in\left(0,~1\right)$ для всех $\varepsilon_{k}\geq 0$, $k\in\mathbb{Z}_{+}$ в схеме \ref{alg:gen_det_gnm} с
    \begin{equation*}
        \begin{cases}
            \|x_{k} - x^{*}\| < \frac{\varsigma}{L_{\hat{F}}} - \frac{2}{\alpha};\\[5pt]
            0 < \tau_{k}\leq\frac{\left(\left(\alpha\left(\varsigma - L_{\hat{F}}\|x_{k} - x^{*}\|\right) - \frac{3L_{\hat{F}}}{2}\right)^{2} - \frac{L_{\hat{F}}^{2}}{4}\right)\|x_{k} - x^{*}\|^{4}}{\|x_{k} - x^{*}\|^{2}L_{k} + 2\varepsilon_{k}};
        \end{cases}
    \end{equation*}
    то $x_{k + 1}\in\mathcal{L}(\hat{f}_{1}(x_{0}))$ и
    \begin{equation*}
        \begin{aligned}
            \|x_{k + 1} - x^{*}\|&\leq\frac{\frac{3L_{\hat{F}}\|x_{k} - x^{*}\|^{2}}{2} + \sqrt{\|x_{k} - x^{*}\|^{2}\left(\tau_{k}L_{k} + \frac{L_{\hat{F}}^{2}\|x_{k} - x^{*}\|^{2}}{4}\right) + 2\tau_{k}\varepsilon_{k}}}{\varsigma - L_{\hat{F}}\|x_{k} - x^{*}\|}\leq \alpha\|x_{k} - x^{*}\|^{2}.
        \end{aligned}
    \end{equation*}
Если не существует такого $\alpha\in(0,~1)$, то в схеме \ref{alg:gen_det_gnm} при выборе
\begin{equation*}
    \begin{cases}
        \tau_{k} = c_{1}\|x_{k} - x^{*}\|^{2},~c_{1} > 0;\\[5pt]
        \varepsilon_{k} = c_{2}\|x_{k} - x^{*}\|^{2},~c_{2}\geq0;
    \end{cases}
\end{equation*}
в области
$$\|x_{k} - x^{*}\|\leq\frac{\varsigma}{\frac{5L_{\hat{F}}}{2} + \sqrt{2c_{1}L_{\hat{F}} + \frac{L_{\hat{F}}^{2}}{4} + 2c_{1}c_{2}}},~k\in\mathbb{Z}_{+}$$
выполнена следующая оценка:
\begin{equation*}
    \begin{aligned}
        \|x_{k + 1} - x^{*}\|&\leq\frac{\frac{3L_{\hat{F}}\|x_{k} - x^{*}\|^{2}}{2} + \sqrt{\|x_{k} - x^{*}\|^{2}\left(\tau_{k}L_{k} + \frac{L_{\hat{F}}^{2}\|x_{k} - x^{*}\|^{2}}{4}\right) + 2\tau_{k}\varepsilon_{k}}}{\varsigma - L_{\hat{F}}\|x_{k} - x^{*}\|}\leq\|x_{k} - x^{*}\|,~x_{k + 1}\in\mathcal{L}(\hat{f}_{1}(x_{0})).
    \end{aligned}
\end{equation*}
\end{theorem}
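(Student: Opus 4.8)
The plan is to work directly from the general local model $\psi_{x, L, \tau}$ and its minimizer $T_{L,\tau}(x)$, introduced in the previous subsection, together with the hypothesis $\hat F(x^*) = \mathbf{0}_m$ and the lower bound $\sigma_{\min}(\hat F'(x^*)) \geq \varsigma$. First I would exploit the defining inequality of the inexact oracle in step~2 of scheme~\ref{alg:gen_det_gnm}: since $x_{k+1}$ is an $\varepsilon_k$--approximate minimizer of the strongly convex function $\psi_{x_k, L_k, \tau_k}$, and $T_{L_k,\tau_k}(x_k)$ minimizes it, I get $\psi_{x_k, L_k, \tau_k}(x_{k+1}) \leq \psi_{x_k, L_k, \tau_k}(T_{L_k,\tau_k}(x_k)) + \varepsilon_k \leq \psi_{x_k, L_k, \tau_k}(x^*) + \varepsilon_k$, using that $x^*$ is feasible and that $T$ is the true minimizer. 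So it suffices to bound $\psi_{x_k, L_k, \tau_k}(x^*)$ from above and $\hat f_1(x_{k+1})$ (hence $\psi_{x_k, L_k, \tau_k}(x_{k+1})$, via the model inequality $\hat f_1(y) \leq \psi_{x_k,L_k,\tau_k}(y)$ on $\mathcal F$) from below in terms of $\|x_{k+1} - x^*\|$.

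For the upper bound on $\psi_{x_k, L_k, \tau_k}(x^*)$: plug $y = x^*$ into $\psi_{x, L, \tau}(y) = \frac{\tau}{2} + \frac{(\phi(x,y))^2}{2\tau} + \frac{L}{2}\|y-x\|^2$. The term $\phi(x_k, x^*) = \|\hat F(x_k) + \hat F'(x_k)(x^* - x_k)\|$ is the linearization residual at $x^*$; since $\hat F(x^*) = 0$, the standard Lipschitz--Jacobian estimate (Assumption~\ref{as:det_hat_F_der_smooth}, in operator-norm form) gives $\phi(x_k, x^*) \leq \frac{L_{\hat F}}{2}\|x_k - x^*\|^2$. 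Hence $\psi_{x_k, L_k, \tau_k}(x^*) \leq \frac{\tau_k}{2} + \frac{L_{\hat F}^2 \|x_k - x^*\|^4}{8\tau_k} + \frac{L_k}{2}\|x_k - x^*\|^2$. For the lower bound on $\hat f_1(x_{k+1})$: write $\phi(x_k, x_{k+1}) = \|\hat F(x_k) + \hat F'(x_k)(x_{k+1} - x_k)\|$, add and subtract the linearization of $\hat F$ around $x^*$, use $\hat F(x^*) = 0$ and the reverse triangle inequality together with $\|\hat F'(x_k) v\| \geq \|\hat F'(x^*) v\| - L_{\hat F}\|x_k - x^*\|\|v\| \geq (\varsigma - L_{\hat F}\|x_k - x^*\|)\|v\|$ applied to $v = x_{k+1} - x^*$, plus the second-order remainder $\frac{L_{\hat F}}{2}\|x_k - x^*\|^2$ terms. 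This yields something like $\phi(x_k, x_{k+1}) \geq (\varsigma - L_{\hat F}\|x_k - x^*\|)\|x_{k+1} - x^*\| - L_{\hat F}\|x_k - x^*\|^2$, and since $\hat f_1(x_{k+1}) = \|\hat F(x_{k+1})\|$ differs from $\phi(x_k, x_{k+1})$ by at most $\frac{L_{\hat F}}{2}\|x_{k+1} - x_k\|^2$, with $\|x_{k+1}-x_k\| \leq \|x_{k+1}-x^*\| + \|x_k - x^*\|$, I can absorb everything into a clean lower bound of the form $\hat f_1(x_{k+1}) \geq (\varsigma - L_{\hat F}\|x_k - x^*\|)\|x_{k+1} - x^*\| - \frac{3L_{\hat F}}{2}\|x_k - x^*\|^2$ (matching the numerator structure that appears in the statement).

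Combining the two bounds with $\psi_{x_k, L_k, \tau_k}(x_{k+1}) \leq \psi_{x_k, L_k, \tau_k}(x^*) + \varepsilon_k$ and noting that in step~2 we also impose $\hat f_1(x_k) - \psi_{x_k,L_k,\tau_k}(x_{k+1}) \geq 0$ (which, via the model inequality at $x_k$ giving $\psi_{x_k,L_k,\tau_k}(x_k) = \frac{\tau_k}{2} + \frac{\hat f_1(x_k)^2}{2\tau_k}$, keeps $x_{k+1}$ in the level set $\mathcal L(\hat f_1(x_0))$ once one checks $\psi_{x_k,L_k,\tau_k}(x_{k+1}) \leq \hat f_1(x_k) \leq \hat f_1(x_0)$, hence $\hat f_1(x_{k+1}) \leq \hat f_1(x_0)$), I obtain a quadratic inequality in $t := \|x_{k+1} - x^*\|$ of the form $(\varsigma - L_{\hat F}\|x_k - x^*\|)\,t - \frac{3L_{\hat F}}{2}\|x_k - x^*\|^2 \leq \sqrt{\tau_k L_k \|x_k-x^*\|^2 + \frac{L_{\hat F}^2}{4}\|x_k - x^*\|^4 + 2\tau_k\varepsilon_k}$ after using $2ab \le a^2+b^2$-type simplifications on the $\frac{\tau_k}{2} + \frac{(\cdot)^2}{2\tau_k}$ combination. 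Solving for $t$ gives exactly the displayed bound
$$\|x_{k+1}-x^*\| \leq \frac{\frac{3L_{\hat F}\|x_k - x^*\|^2}{2} + \sqrt{\|x_k-x^*\|^2\left(\tau_k L_k + \frac{L_{\hat F}^2\|x_k-x^*\|^2}{4}\right) + 2\tau_k\varepsilon_k}}{\varsigma - L_{\hat F}\|x_k - x^*\|}.$$
The remaining work is bookkeeping: for the first regime, substitute the hypothesized bound on $\tau_k$ and the constraint $\|x_k - x^*\| < \frac{\varsigma}{L_{\hat F}} - \frac{2}{\alpha}$ to verify the radicand is dominated so that the whole right side is $\leq \alpha\|x_k-x^*\|^2$ — this is where the precise algebraic form of the $\tau_k$ bound was reverse-engineered, so it should close by construction; for the second regime, substitute $\tau_k = c_1\|x_k-x^*\|^2$, $\varepsilon_k = c_2\|x_k-x^*\|^2$, factor out $\|x_k-x^*\|^2$ from numerator and denominator-free estimate, and check that in the stated radius the contraction factor is $\leq 1$.

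The main obstacle I expect is the lower bound on $\hat f_1(x_{k+1})$: getting the constant $\frac{3L_{\hat F}}{2}$ (rather than a worse one) requires carefully organizing the three separate second-order remainder contributions — linearization of $\hat F(x_{k+1})$ vs.\ $\hat F(x_k) + \hat F'(x_k)(x_{k+1}-x_k)$, linearization error at $x^*$, and the Jacobian perturbation $\hat F'(x_k)$ vs.\ $\hat F'(x^*)$ — and bounding $\|x_{k+1} - x_k\|$ without circularity (since $x_{k+1}$ is what we are estimating). The clean way is to first establish a crude a~priori bound $\|x_{k+1} - x^*\| = O(\|x_k - x^*\|)$ from the step-2 inequality $\psi_{x_k,L_k,\tau_k}(x_{k+1}) \leq \hat f_1(x_k) = O(\|x_k - x^*\|)$ combined with strong convexity of $\psi$, and only then feed it into the sharp estimate.
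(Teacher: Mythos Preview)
Your ingredients are right --- the upper bound on $\psi_{x_k,L_k,\tau_k}(x^*)$ via $\phi(x_k,x^*)\le \tfrac{L_{\hat F}}{2}\|x_k-x^*\|^2$, and the lower bound on $\phi(x_k,x_{k+1})$ via the Jacobian--perturbation decomposition around $x^*$ --- are exactly the two pieces the paper uses. The gap is in how you combine them. Going through $\hat f_1(x_{k+1})$ is a detour that does not reproduce the square--root bound in the statement: from $\hat f_1(x_{k+1})\le \psi_{x_k,L_k,\tau_k}(x^*)+\varepsilon_k$ you only get the additive quantity $\tfrac{\tau_k}{2}+\tfrac{L_{\hat F}^2\|x_k-x^*\|^4}{8\tau_k}+\tfrac{L_k}{2}\|x_k-x^*\|^2+\varepsilon_k$ on the right, and no $2ab\le a^2+b^2$ manipulation turns this into $\sqrt{\tau_k L_k\|x_k-x^*\|^2+\tfrac{L_{\hat F}^2}{4}\|x_k-x^*\|^4+2\tau_k\varepsilon_k}$; by AM--GM the square root is in fact \emph{smaller} than that sum, so you would obtain a strictly weaker inequality than the one stated.

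The fix, which is what the paper does, is to sandwich $\phi(x_k,x_{k+1})$ itself rather than $\hat f_1(x_{k+1})$. Write out
\[
\psi_{x_k,L_k,\tau_k}(x_{k+1})=\tfrac{\tau_k}{2}+\tfrac{(\phi(x_k,x_{k+1}))^2}{2\tau_k}+\tfrac{L_k}{2}\|x_{k+1}-x_k\|^2
\]
inside the inequality $\psi_{x_k,L_k,\tau_k}(x_{k+1})\le \psi_{x_k,L_k,\tau_k}(x^*)+\varepsilon_k$, cancel $\tfrac{\tau_k}{2}$ on both sides, and simply drop the nonnegative term $\tfrac{L_k}{2}\|x_{k+1}-x_k\|^2$. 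This yields $\tfrac{(\phi(x_k,x_{k+1}))^2}{2\tau_k}\le \tfrac{L_k}{2}\|x_k-x^*\|^2+\tfrac{L_{\hat F}^2\|x_k-x^*\|^4}{8\tau_k}+\varepsilon_k$, hence the square--root upper bound on $\phi(x_k,x_{k+1})$ immediately. Combine with your lower bound $\phi(x_k,x_{k+1})\ge(\varsigma-L_{\hat F}\|x_k-x^*\|)\|x_{k+1}-x^*\|-\tfrac{3L_{\hat F}}{2}\|x_k-x^*\|^2$ (the paper gets the $\tfrac{3}{2}$ by splitting $\|x_{k+1}-x_k\|\le \|x_{k+1}-x^*\|+\|x_k-x^*\|$ inside the $\|(\hat F'(x_k)-\hat F'(x^*))(x_{k+1}-x_k)\|$ term) and solve for $\|x_{k+1}-x^*\|$. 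With this route your ``main obstacle'' disappears entirely: $\|x_{k+1}-x_k\|$ never needs to be bounded a priori, because the quadratic term in which it appears is thrown away with the correct sign.
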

Согласно теореме \ref{th:DetQuadConv}, в условиях невырожденности достаточно близкое нахождение к оптимуму позволяет решить задачу \eqref{eq:main_opt_problem} с квадратичной скоростью, затратив в худшем случае не больше
$$k = \left\lceil\frac{1}{\ln(2)}\left(\ln\left(\ln\left(\frac{1}{\alpha\epsilon}\right)\right) - \ln\left(\ln\left(\frac{L_{\hat{F}}}{\alpha\varsigma}\right)\right)\right)\right\rceil = \operatorname{O}\left(\log_{2}\left(\ln\left(\frac{1}{\alpha\epsilon}\right)\right)\right)$$
итераций для приближения к точке оптимума на расстояние
\begin{equation}\label{eq:DetQuadConvMainConvCond}
    \begin{aligned}
        &\|x_{k} - x^{*}\|\leq\epsilon
    \end{aligned}
\end{equation}
в конце итерационного процесса, если выполнено структурное ограничение $\varsigma > \frac{2L_{\hat{F}}}{\alpha}$. В утверждении теоремы \ref{th:DetQuadConv} при выполнении ограничения $\varsigma > \frac{2L_{\hat{F}}}{\alpha}$ указана произвольность выбора $\varepsilon_{k}\geq0$ и $L_{k} > 0$, которая компенсируется выбором $\tau_{k} > 0$, что означает перераспределение вклада $L_{k}$ в оптимизируемом функционале на $\tau_{k}$, при этом произвольность $\varepsilon_{k}$ для неточного оракула оставляет единственным ограничением необходимость выбора $x_{k + 1}$, для которого $\hat{f}_{1}(x_{k}) - \psi_{x_{k}, L_{k}, \tau_{k}}(x_{k + 1})\geq 0$, хотя это ограничение заложено схемой \ref{alg:gen_det_gnm}, в ходе доказательства теоремы \ref{th:DetQuadConv} не используется и может быть проигнорировано, так как увеличение $\varepsilon_{k}$ ведёт к уменьшению $\tau_{k}$, увеличивая оптимизируемый функционал и компенсируя рост погрешности поиска $x_{k + 1}$, что автоматически ведёт к выполнению $\hat{f}_{1}(x_{k}) - \psi_{x_{k}, L_{k}, \tau_{k}}(x_{k + 1})\geq 0$, однако на практике всё--таки полезно работать с не слишком большими величинами, избегая значительных погрешностей в вычислениях с плавающей точкой. Если структурное ограничение $\varsigma > \frac{2L_{\hat{F}}}{\alpha}$ не выполнено, то суперлинейная сходимость имеет место, но только в более узкой области и при непроизвольном выборе $(\tau_{k}, \varepsilon_{k})$. Условия невырожденности в теореме \ref{th:DetQuadConv} позволяют локально быстро решить задачу оптимизации, однако они неявно требуют выполнения соотношения $\dim(E_{1})\leq\dim(E_{2})$, то есть требуется при выполнении совместности системы наличие количества уравнений, не уступающего количеству параметров (следствие \ref{th:DetQuadConvCor1}). Стоит также заметить, что в случае $\varsigma > \frac{2L_{\hat{F}}}{\alpha}$ для $\tau_{k} = \phi(x_{k}, y)$ радиус локальной квадратичной сходимости в 4 раза меньше (Theorem 3.4, \cite{Nesterov2007}).

Для задач с $\dim(E_{1}) > \dim(E_{2})$ гарантия наличия $\varsigma > 0$ уже пропадает, хотя остаётся возможность совместности системы уравнений, и если предположить невырожденность системы в виде условия \ref{as:det_hat_F_PL_condition}, то имеет место локальная линейная сходимость к решению системы \eqref{eq:smooth_system}, согласно изложенному ниже утверждению.

\begin{theorem}\label{th:glob_sub_lin_and_lin_conv_main}
    Допустим выполнение предположений \ref{as:det_hat_F_der_smooth} и \ref{as:det_hat_F_PL_condition} для метода Гаусса--Ньютона со схемой реализации \ref{alg:gen_det_gnm}, в которой $\tau_{k} = \hat{f}_{1}(x_{k})$. Тогда в схеме \ref{alg:gen_det_gnm} для последовательности $\left\{x_{k}\right\}_{k\in\mathbb{Z}_{+}}$ выполняются следующие соотношения:
    \begin{equation*}
        \hat{f}_{1}(x_{k + 1})\leq\varepsilon_{k} + \begin{sqcases}
            \frac{\hat{f}_{1}(x_{k})}{2} + \frac{L_{\hat{F}}}{\mu}\hat{f}_{2}(x_{k})\leq\frac{3}{4}\hat{f}_{1}(x_{k})\text{, если }\hat{f}_{1}(x_{k})\leq\frac{\mu}{4L_{\hat{F}}};\\[5pt]
            \hat{f}_{1}(x_{k}) - \frac{\mu}{16L_{\hat{F}}}\text{, иначе}.
        \end{sqcases}
    \end{equation*}
    Если при генерации последовательности $\left\{x_{k}\right\}_{k\in\mathbb{Z}_{+}}$ была зафиксирована $L_{k} = L_{\hat{F}}$, то данные соотношения выражаются по--другому:
    \begin{equation*}
        \hat{f}_{1}(x_{k + 1})\leq\varepsilon_{k} + \begin{sqcases}
            \frac{\hat{f}_{1}(x_{k})}{2} + \frac{L_{\hat{F}}}{2\mu}\hat{f}_{2}(x_{k})\leq\frac{3}{4}\hat{f}_{1}(x_{k})\text{, если }\hat{f}_{1}(x_{k})\leq\frac{\mu}{2L_{\hat{F}}};\\[5pt]
            \hat{f}_{1}(x_{k}) - \frac{\mu}{8L_{\hat{F}}}\text{, иначе}.
        \end{sqcases}
    \end{equation*}
\end{theorem}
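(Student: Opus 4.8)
Замысел доказательства: мажорировать $\hat{f}_1(x_{k+1})$ минимумом локальной модели $\psi_{x_k,L_k,\tau_k}$, после чего оценить этот минимум сверху подстановкой пробной точки, в которой линеаризованная невязка $\phi$ мала (а при полном шаге обнуляется) благодаря сюръективности якобиана из предположения \ref{as:det_hat_F_PL_condition}. Можно считать $\hat{f}_1(x_k)>0$, иначе $x_k$ --- решение \eqref{eq:smooth_system}. Зафиксируем $k$; так как $\tau_k=\hat{f}_1(x_k)>0$, модель $\psi_{x_k,L_k,\tau_k}$ сильно выпукла по $y$, минимум $\min\limits_{y\in E_1}\psi_{x_k,L_k,\tau_k}(y)=\psi_{x_k,L_k,\tau_k}(T_{L_k,\tau_k}(x_k))$ достигается в единственной точке, а из пунктов 2--3 схемы \ref{alg:gen_det_gnm} (на выходе внутреннего цикла с адаптивным $L_k$ верно $\hat{f}_1(x_{k+1})\le\psi_{x_k,L_k,\tau_k}(x_{k+1})$, а точность оракула даёт $\psi_{x_k,L_k,\tau_k}(x_{k+1})\le\varepsilon_k+\min\limits_{y}\psi_{x_k,L_k,\tau_k}(y)$) следует
$$\hat{f}_1(x_{k+1})\le\varepsilon_k+\min\limits_{y\in E_1}\psi_{x_k,L_k,\tau_k}(y).$$
При этом бинарный поиск в схеме \ref{alg:gen_det_gnm} устроен так, что всегда $L_k\le 2L_{\hat{F}}$; случай предварительно зафиксированной $L_k=L_{\hat{F}}$ разбирается дословно так же.

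Далее надо оценить $\min\limits_{y}\psi_{x_k,L_k,\tau_k}(y)$. Предположение \ref{as:det_hat_F_PL_condition} означает инъективность $\hat{F}'(x_k)^{*}$, то есть сюръективность $\hat{F}'(x_k)$; поэтому система $\hat{F}'(x_k)s=-\hat{F}(x_k)$ разрешима, а её решение наименьшей нормы $s^{*}$ лежит в $(\ker\hat{F}'(x_k))^{\perp}$, на котором $\|\hat{F}'(x_k)s\|\ge\sqrt{\mu}\,\|s\|$, откуда $\|s^{*}\|\le\mu^{-1/2}\|\hat{F}(x_k)\|=\mu^{-1/2}\hat{f}_1(x_k)$. Для $t\in[0,1]$ подставлю в модель $y=x_k+ts^{*}$: тогда $\hat{F}(x_k)+\hat{F}'(x_k)(y-x_k)=(1-t)\hat{F}(x_k)$, т.е. $\phi(x_k,y)=(1-t)\hat{f}_1(x_k)$, и, используя $\hat{f}_2(x_k)=\hat{f}_1(x_k)^2$,
$$\min\limits_{y\in E_1}\psi_{x_k,L_k,\tau_k}(y)\le\psi_{x_k,L_k,\tau_k}(x_k+ts^{*})\le\hat{f}_1(x_k)\Bigl(1-t+\tfrac12\bigl(1+\tfrac{L_k}{\mu}\hat{f}_1(x_k)\bigr)t^2\Bigr).$$
Правая часть --- квадратный по $t$ трёхчлен с положительным старшим коэффициентом.

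Остаётся выбрать $t$ и разобрать два режима. При $t=1$ получаем $\tfrac12\hat{f}_1(x_k)+\tfrac{L_k}{2\mu}\hat{f}_2(x_k)\le\tfrac12\hat{f}_1(x_k)+\tfrac{L_{\hat{F}}}{\mu}\hat{f}_2(x_k)$; если дополнительно $\hat{f}_1(x_k)\le\tfrac{\mu}{4L_{\hat{F}}}$, то $\tfrac{L_{\hat{F}}}{\mu}\hat{f}_2(x_k)\le\tfrac14\hat{f}_1(x_k)$ и правая часть не превосходит $\tfrac34\hat{f}_1(x_k)$ --- это первая ветвь. В режиме $\hat{f}_1(x_k)>\tfrac{\mu}{4L_{\hat{F}}}$ беру точку минимума трёхчлена $t^{*}=\bigl(1+\tfrac{L_k}{\mu}\hat{f}_1(x_k)\bigr)^{-1}\in(0,1)$, в которой его значение равно $1-\tfrac12\bigl(1+\tfrac{L_k}{\mu}\hat{f}_1(x_k)\bigr)^{-1}$, так что $\min\limits_{y}\psi_{x_k,L_k,\tau_k}(y)\le\hat{f}_1(x_k)-\tfrac{\mu\hat{f}_1(x_k)}{2(\mu+L_k\hat{f}_1(x_k))}$; поскольку $\mu<4L_{\hat{F}}\hat{f}_1(x_k)$ и $L_k\le2L_{\hat{F}}$, вычитаемое не меньше $\tfrac{\mu}{16L_{\hat{F}}}$, что даёт вторую ветвь. Подстановка обеих оценок в неравенство для $\hat{f}_1(x_{k+1})$ завершает первую часть; для второй части ($L_k=L_{\hat{F}}$) те же выкладки дают $\tfrac{L_{\hat{F}}}{2\mu}\hat{f}_2(x_k)$ в первой ветви с порогом $\tfrac{\mu}{2L_{\hat{F}}}$ и убывание $\tfrac{\mu}{8L_{\hat{F}}}$ во второй.

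Главная трудность видится не в аналитике (она сводится к минимизации одномерного трёхчлена), а в аккуратном сопряжении с адаптивной настройкой $L_k$ и неточным оракулом: нужно строго обосновать, что по окончании цикла из пункта 3 действительно верно $\hat{f}_1(x_{k+1})\le\psi_{x_k,L_k,\tau_k}(x_{k+1})$ при $L_k\le 2L_{\hat{F}}$ (опираясь на лемму \ref{lm:aux_det_upper_model} о верхней модели при $L_k\ge L_{\hat{F}}$, которая гарантирует завершение цикла), а также проследить согласованность числовых констант в <<дальнем>> режиме --- порога $\tfrac{\mu}{4L_{\hat{F}}}$ с гарантированным убыванием $\tfrac{\mu}{16L_{\hat{F}}}$ (соответственно $\tfrac{\mu}{2L_{\hat{F}}}$ и $\tfrac{\mu}{8L_{\hat{F}}}$ при $L_k=L_{\hat{F}}$).
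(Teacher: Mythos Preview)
Your proposal is correct and follows essentially the same route as the paper's proof: bound $\hat{f}_1(x_{k+1})\le\varepsilon_k+\min_y\psi_{x_k,L_k,\tau_k}(y)$, construct the minimum-norm solution $s^{*}$ of $\hat{F}'(x_k)s=-\hat{F}(x_k)$ with $\|s^{*}\|\le\mu^{-1/2}\hat{f}_1(x_k)$ from Assumption~\ref{as:det_hat_F_PL_condition}, substitute $y=x_k+ts^{*}$, and optimise the resulting one-dimensional quadratic in $t\in[0,1]$, splitting into the two regimes. The only cosmetic difference is that the paper first relaxes $(1-t)^2$ to $(1-t)$ and packages the $t$-minimisation through the auxiliary function $\varkappa$ from Lemma~\ref{lm:aux_det_local_decrease_2}, whereas you keep the exact quadratic $1-t+\tfrac12\bigl(1+\tfrac{L_k}{\mu}\hat{f}_1(x_k)\bigr)t^2$ and evaluate it at $t=1$ and at its vertex; both yield the stated constants.
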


Теорема \ref{th:glob_sub_lin_and_lin_conv} утверждает глобальную сублинейную сходимость к точке оптимума с локальной линейной сходимостью на старших итерациях при малых значениях оптимизируемого функционала. Данное утверждение фиксирует главную особенность метода Гаусса--Ньютона --- наличие локальной линейной сходимости в условии Поляка--Лоясиевича с коэффициентом линейной сходимости, не зависящим от значения $\sqrt{\mu} > 0$, отделяющего минимальное сингулярное число матрицы $\hat{F}^{'}(x)^{*}$ от нуля. Если сравнить модификацию с $\tau_{k} = \hat{f}_{1}(x_{k})$ и модификацию с $\tau_{k} = \phi(x_{k}, y)$, то можно сделать вывод о том, что упрощение поиска $x_{k + 1}$ на каждой итерации в худшем случае замедлило скорость локальной линейной сходимости метода ($\frac{3}{4}$ при $\tau_{k} = \hat{f}_{1}(x_{k})$ против $\frac{1}{2}$ при $\tau_{k} = \phi(x_{k}, y)$, Theorem 4.4, \cite{Nesterov2007}), однако возможность явно выразить $x_{k + 1}$ при $\tau_{k} = \hat{f}_{1}(x_{k})$ позволяет вывести глобальную линейную сходимость \cite{Nesterov2020}. И, как в случае теоремы \ref{th:DetSublinConv}, подбор монотонно убывающей последовательности $\left\{\varepsilon_{k}\right\}_{k\in\mathbb{Z}_{+}}$ с нулевым пределом позволяет вычислить решение задачи \eqref{eq:main_opt_problem} с любой наперёд заданной точностью (следствие \ref{th:glob_sub_lin_and_lin_conv_cor}).

Кроме гарантии линейной сходимости к решению системы \eqref{eq:smooth_system}, условие Поляка--Лоясиевича позволяет оценить расстояние текущего приближения до решения задачи, неасимптотические границы на данное расстояние представлены в теореме \ref{th:track_glob_bound}.
\begin{theorem}\label{th:track_glob_bound_main}
    Пусть выполнены предположения \ref{as:det_hat_F_der_smooth} и \ref{as:det_hat_F_PL_condition} для метода Гаусса--Ньютона со схемой реализации \ref{alg:gen_det_gnm}, в которой $\tau_{k} = \hat{f}_{1}(x_{k})$, $\varepsilon_{k} = 0$, $k\in\mathbb{Z}_{+}$. Тогда существует решение $x^{*}\in\mathcal{F}$ задачи \eqref{eq:smooth_system}, такое, что $\hat{f}_{1}(x^{*}) = 0$ и $\|x_{0} - x^{*}\|\leq4\hat{f}_{1}(x_{0})\sqrt{\frac{2L_{\hat{F}}}{\mu L}}.$
\end{theorem}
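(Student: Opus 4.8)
The plan is to exploit the linear convergence established in Theorem \ref{th:glob_sub_lin_and_lin_conv_main} together with a telescoping bound on the step lengths $\|x_{k+1}-x_k\|$, each of which is controlled by the decrease $\hat f_1(x_k)-\hat f_1(x_{k+1})$ of the residual. First I would record that, since $\varepsilon_k=0$ and $\tau_k=\hat f_1(x_k)$, the point $x_{k+1}=T_{L_k,\tau_k}(x_k)$ minimizes the strongly convex model $\psi_{x_k,L_k,\hat f_1(x_k)}$, so that $\psi_{x_k,L_k,\hat f_1(x_k)}(x_{k+1})\le\psi_{x_k,L_k,\hat f_1(x_k)}(x_k)=\hat f_1(x_k)$, and moreover by $\tfrac{L_k}{2}$-strong convexity in $y$,
\begin{equation*}
  \tfrac{L_k}{2}\|x_{k+1}-x_k\|^2\le \psi_{x_k,L_k,\hat f_1(x_k)}(x_k)-\psi_{x_k,L_k,\hat f_1(x_k)}(x_{k+1})\le \hat f_1(x_k)-\hat f_1(x_{k+1}),
\end{equation*}
where the last inequality uses the relaxation step (point 3 of scheme \ref{alg:gen_det_gnm}) guaranteeing $\hat f_1(x_{k+1})\le\psi_{x_k,L_k,\hat f_1(x_k)}(x_{k+1})$. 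Since $L_k\ge L$ throughout, this gives $\|x_{k+1}-x_k\|\le\sqrt{\tfrac{2}{L}}\sqrt{\hat f_1(x_k)-\hat f_1(x_{k+1})}$.

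Next I would bound $\sqrt{\hat f_1(x_k)-\hat f_1(x_{k+1})}\le\sqrt{\hat f_1(x_k)}$ and use that, by Theorem \ref{th:glob_sub_lin_and_lin_conv_main} with $\varepsilon_k=0$, the sequence $\hat f_1(x_k)$ decreases and eventually contracts geometrically; combined with the linear-rate estimate one shows $\hat f_1(x_k)\to 0$ and, more quantitatively, that $\sum_{k\ge0}\sqrt{\hat f_1(x_k)}$ is a convergent series whose sum is bounded by a constant times $\sqrt{\hat f_1(x_0)}$. Concretely, I would split the sum at the first index where $\hat f_1(x_k)\le\tfrac{\mu}{4L_{\hat F}}$: after that index the contraction factor is $\le\tfrac34$, so $\sqrt{\hat f_1(x_{k})}$ decays at rate $\le(\sqrt3/2)^{\,\cdot}$ and the tail sums to a geometric series; before that index one uses the decrement-by-$\tfrac{\mu}{16L_{\hat F}}$ bound to see there are only finitely many such steps, each contributing at most $\sqrt{\hat f_1(x_0)}$. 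Carrying the constants carefully — the geometric factor $1/(1-\sqrt3/2)$ and the $\sqrt{2L_{\hat F}/\mu}$ coming from the length of the initial phase — yields
\begin{equation*}
  \sum_{k\ge0}\|x_{k+1}-x_k\|\le \sqrt{\tfrac{2}{L}}\sum_{k\ge0}\sqrt{\hat f_1(x_k)}\le 4\hat f_1(x_0)\sqrt{\tfrac{2L_{\hat F}}{\mu L}}.
\end{equation*}
This shows $\{x_k\}$ is Cauchy, hence converges to some $x^*\in\mathcal F$ (using $\mathcal L(\hat f_1(x_0))\subseteq\mathcal F$ and closedness of $\mathcal F$); continuity of $\hat f_1$ and $\hat f_1(x_k)\to0$ give $\hat f_1(x^*)=0$, so $x^*$ solves \eqref{eq:smooth_system}; and $\|x_0-x^*\|\le\sum_{k\ge0}\|x_{k+1}-x_k\|$ gives the claimed bound.

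The main obstacle is the constant-chasing in the second paragraph: one must verify that the worst-case arrangement of the two regimes of Theorem \ref{th:glob_sub_lin_and_lin_conv_main} — a linear "burn-in" phase of at most $O(\sqrt{L_{\hat F}/\mu}\cdot\hat f_1(x_0))$ steps followed by the geometric phase — produces exactly the stated coefficient $4\sqrt{2L_{\hat F}/(\mu L)}$ rather than some larger absolute constant, which requires being slightly clever about how the $\sqrt{\hat f_1(x_k)-\hat f_1(x_{k+1})}$ factors telescope (e.g. applying Cauchy–Schwarz or a square-root telescoping inequality $\sum\sqrt{a_k-a_{k+1}}\le$ something, rather than the crude $\sqrt{\hat f_1(x_k)}$ bound) to avoid losing a factor. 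A secondary point to handle is that $L_k$ is adaptive: one only knows $L_k\in[L,2L_{\hat F}]$, and the lower bound $L_k\ge L$ is what feeds into $\|x_{k+1}-x_k\|^2\le\tfrac{2}{L_k}(\ldots)\le\tfrac{2}{L}(\ldots)$, so no difficulty arises there, but it should be stated explicitly.
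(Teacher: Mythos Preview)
Your overall strategy --- split into a burn-in phase and a geometric phase, and sum step lengths --- is exactly the paper's, and your treatment of the burn-in phase (Cauchy--Schwarz on $\sum\sqrt{\hat f_1(x_k)-\hat f_1(x_{k+1})}$, combined with the bound $N+1\le \tfrac{16L_{\hat F}}{\mu}\bigl(\hat f_1(x_0)-\hat f_1(x_{N+1})\bigr)$) recovers precisely the paper's estimate $\|x_0-x_{N+1}\|\le 4\sqrt{\tfrac{2L_{\hat F}}{\mu L}}\bigl(\hat f_1(x_0)-\hat f_1(x_{N+1})\bigr)$ for that phase.

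The gap is in the geometric phase. Your only step-length bound is $\|x_{k+1}-x_k\|\le\sqrt{\tfrac{2}{L}}\sqrt{\hat f_1(x_k)-\hat f_1(x_{k+1})}\le\sqrt{\tfrac{2}{L}}\sqrt{\hat f_1(x_k)}$, which scales like $\sqrt{\hat f_1(x_k)}$. Summing this over the geometric tail, no matter how cleverly (Cauchy--Schwarz, telescoping, or the explicit $(\sqrt{3}/2)^k$ series you wrote), yields something of order $\sqrt{\hat f_1(x_{N+1})}$, not $\hat f_1(x_{N+1})$. In particular your displayed inequality $\sqrt{\tfrac{2}{L}}\sum_k\sqrt{\hat f_1(x_k)}\le 4\hat f_1(x_0)\sqrt{\tfrac{2L_{\hat F}}{\mu L}}$ is false as soon as $\hat f_1(x_0)$ is small enough (already the $k=0$ term violates it once $\hat f_1(x_0)<\mu/(16L_{\hat F})$), so the claimed constant is unreachable along this route.

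What the paper does instead for the tail is derive a \emph{second}, sharper step bound that is \emph{linear} in $\hat f_1(x_k)$: by comparing the model value $\psi_{x_k,L_k,\hat f_1(x_k)}(x_{k+1})$ with its value at the test point $x_k+h_k$, where $h_k=-\hat F'(x_k)^*\bigl(\hat F'(x_k)\hat F'(x_k)^*\bigr)^{-1}\hat F(x_k)$ supplied by Assumption~\ref{as:det_hat_F_PL_condition} (and $\|h_k\|\le\hat f_1(x_k)/\sqrt{\mu}$), one obtains
\[
\tfrac{\hat f_1(x_k)}{2}+\tfrac{L}{2}\|x_{k+1}-x_k\|^2\;\le\;\psi_{x_k,L_k,\hat f_1(x_k)}(x_{k+1})\;\le\;\tfrac{\hat f_1(x_k)}{2}+\tfrac{L_{\hat F}}{\mu}\hat f_2(x_k),
\]
hence $\|x_{k+1}-x_k\|\le\hat f_1(x_k)\sqrt{\tfrac{2L_{\hat F}}{\mu L}}$. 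Summing this against the $(3/4)^k$ decay of $\hat f_1$ gives the tail contribution $4\hat f_1(x_{N+1})\sqrt{\tfrac{2L_{\hat F}}{\mu L}}$, and the two phases telescope exactly to $4\hat f_1(x_0)\sqrt{\tfrac{2L_{\hat F}}{\mu L}}$. Adding this PL-based step bound to your argument closes the gap.
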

В теореме \ref{th:track_glob_bound}, как и в теореме \ref{th:glob_sub_lin_and_lin_conv}, информация о точном значении постоянной Липшица позволяет в худшем случае быстрее приближаться к решению, находиться ближе к решению на каждой итерации (следствия \ref{th:glob_sub_lin_and_lin_conv_cor_2} и \ref{th:track_glob_bound_cor}). Если дополнительно верны условия теоремы \ref{th:DetQuadConv}, то значение $\hat{f}_{1}(x_{k})$ можно использовать при подборе $\tau_{k}$ для достижения суперлинейной сходимости (лемма \ref{lm:aux_tau_schedule}) в области
$$\|x_{k} - x^{*}\|\leq\min\left\{\frac{2\varsigma}{5L_{\hat{F}}},~\frac{1}{12L_{\hat{F}}}\left(\left(3M_{\hat{F}} + 5\varsigma\right) - \sqrt{\left(3M_{\hat{F}} + 5\varsigma\right)^{2} - 24\varsigma^{2}}\right)\right\},$$
предположив ограниченность нормы матрицы Якоби:
\begin{equation*}
    \begin{aligned}
        &\text{то есть для }\left\|\hat{F}^{'}(x)\right\|\leq M_{\hat{F}},~x\in \mathcal{F},\text{ возьмём }\varepsilon_{k} = 0,~\tau_{k} = \hat{f}_{1}(x_{k})\Rightarrow\\
        &\Rightarrow\frac{2\hat{f}_{1}(x_{k})}{3M_{\hat{F}}}<\|x_{k} - x^{*}\|\leq\min\left\{\frac{2\varsigma}{5L_{\hat{F}}},~\frac{1}{12L_{\hat{F}}}\left(\left(3M_{\hat{F}} + 5\varsigma\right) - \sqrt{\left(3M_{\hat{F}} + 5\varsigma\right)^{2} - 24\varsigma^{2}}\right)\right\}<\frac{\varsigma}{L_{\hat{F}}}.
    \end{aligned}
\end{equation*}
Главным же недостатком данного способа выбора $\tau_{k}$ является необходимость точно вычислять каждое приближение решения $x_{k + 1}$. Выполнение условия Поляка--Лоясиевича также позволяет ограничить значение $\|x_{k} - x^{*}\|$ (лемма \ref{lm:aux_tau_schedule}, теорема \ref{th:track_glob_bound}) с двух сторон в случае ограниченных якобианов $\left\|\hat{F}^{'}(x)\right\|\leq M_{\hat{F}}$ и точного оракула, используя значение $\hat{f}_{1}(x_{k})$:
\begin{equation*}
    \begin{aligned}
        &\hat{f}_{1}(x_{k})\leq M_{\hat{F}}\left\|x_{k} - x^{*}\right\| + \frac{L_{\hat{F}}}{2}\left\|x_{k} - x^{*}\right\|^{2},~\|x_{k} - x^{*}\|\leq4\hat{f}_{1}(x_{k})\sqrt{\frac{2L_{\hat{F}}}{\mu L}}\Rightarrow\\
        &\Rightarrow\frac{\hat{f}_{1}(x_{k})\sqrt{\mu L}}{M_{\hat{F}}\sqrt{\mu L} + 2\hat{f}_{1}(x_{k})L_{\hat{F}}\sqrt{2L_{\hat{F}}}}\leq\|x_{k} - x^{*}\|\leq4\hat{f}_{1}(x_{k})\sqrt{\frac{2L_{\hat{F}}}{\mu L}},\\
        &\varepsilon_{k} = 0,~\tau_{k} = \hat{f}_{1}(x_{k}),~k\in\mathbb{Z}_{+}.
    \end{aligned}
\end{equation*}
При этом одно из важных свойств решения задачи, удовлетворяющей условию Поляка--Лоясиевича, состоит в единственности $x^{*}$ для данного начального приближения $x_{0}\in E_{1}$, $\mathcal{L}(\hat{f}_{1}(x_{0}))\subseteq\mathcal{F}$ с $\tau_{k} = \hat{f}_{1}(x_{k})$, $\varepsilon_{k} = 0$ (следствие \ref{th:track_glob_bound_cor}). Более того, выполнение предположения \ref{as:det_hat_F_PL_condition} для системы уравнений c $\dim(E_{1}) = \dim(E_{2})$ приводит к наличию участков сублинейной, линейной и суперлинейной сходимости при решении задачи \eqref{eq:main_opt_problem}.

В отличие от условий теоремы \ref{th:glob_sub_lin_and_lin_conv}, использование информации о явном выражении $T_{L_{k}, \tau_{k}}(x_{k})$ позволяет получить глобальную линейную сходимость для произвольного начального приближения $x_{0}\in E_{1}$, $\mathcal{L}(\hat{f}_{1}(x_{0}))\subseteq\mathcal{F}$. Согласно следствию \ref{lm:cor_aux_det_local_decrease_1}, $T_{L_{k}, \tau_{k}}(x_{k})$ имеет следующее представление:
\begin{equation*}
    T_{L_{k}, \tau_{k}}(x_{k}) = x_{k} - \left(\hat{F}^{'}(x_{k})^{*}\hat{F}^{'}(x_{k}) + \tau_{k}L_{k}I_{n}\right)^{-1}\hat{F}^{'}(x_{k})^{*}\hat{F}(x_{k}).
\end{equation*}
В данной работе рассматривается более общая форма правила обновления $x_{k + 1}$, использующая явное выражение $T_{L_{k}, \tau_{k}}(x_{k})$:
\begin{equation}\label{eq:det_expl_update_rule}
    x_{k + 1} = x_{k} - \eta_{k}\left(\hat{F}^{'}(x_{k})^{*}\hat{F}^{'}(x_{k}) + \tau_{k}L_{k}I_{n}\right)^{-1}\hat{F}^{'}(x_{k})^{*}\hat{F}(x_{k}),~\eta_{k}\in\mathbb{R}.
\end{equation}
Полученное таким образом значение $x_{k + 1}$ является субоптимальным относительно $T_{L_{k}, \tau_{k}}(x_{k})$, однако позволяет лучше рассмотреть метод Гаусса--Ньютона в классе квазиньютоновских методов. Следующее утверждение расширяет множество оценок на значение $\hat{f}_{1}(x_{k + 1})$, представленное в теореме \ref{th:glob_sub_lin_and_lin_conv}.

\begin{theorem}\label{th:1_main}
    Пусть выполнены предположения \ref{as:det_hat_F_der_smooth} и \ref{as:det_hat_F_PL_condition}. Рассмотрим последовательность $\{x_{k}\}_{k\in\mathbb{Z}_{+}}$, вычисляемую по схеме \ref{alg:gen_det_gnm} с правилом \eqref{eq:det_expl_update_rule}, в котором $\tau_{k} > 0$, $\eta_{k}\in(0, 2)$, $k\in\mathbb{Z}_{+}$. Тогда для $k\in\mathbb{Z}_{+}$:
    \begin{equation*}
        \hat{f}_{1}(x_{k + 1}) \leq \frac{\tau_{k}}{2} + \begin{sqcases}
        \frac{\hat{f}_{2}(x_{k})}{2\tau_{k}}\left(1 - \frac{\eta_{k}(2 - \eta_{k})\mu}{L_{k} \tau_{k} + \mu}\right),~\tau_{k} \geq \frac{\mu}{L_{k}};\\[10pt]
        \frac{\hat{f}_{2}(x_{k})(1 - \eta_{k})^{2}}{2\tau_{k}} + \frac{\eta_{k}(2 - \eta_{k})L_{k}\hat{f}_{2}(x_{k})}{2\mu} - \frac{\eta_{k}(2 - \eta_{k})L_{k}^{2}\hat{f}_{2}(x_{k})\tau_{k}}{2\mu^{2}(1 + \xi)^{3}}\text{, для}\\[5pt]
        \text{~~~~~некоторого }\xi\in(-1, 1]\text{ при }\tau_{k} < \frac{\mu}{L_{k}}.
        \end{sqcases}
    \end{equation*}
    При этом для $\eta_{k} = 1,~k\in\mathbb{Z}_{+}$ верно:
    \begin{equation*}
        \hat{f}_{1}(x_{k + 1}) \leq \frac{\tau_{k}}{2} + \frac{L_{\hat{F}}}{\mu}\hat{f}_{2}(x_{k}).
    \end{equation*}
\end{theorem}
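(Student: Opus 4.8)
The plan is to analyze the update rule \eqref{eq:det_expl_update_rule} by controlling the local model value $\psi_{x_k, L_k, \tau_k}(x_{k+1})$, since by the construction of schema \ref{alg:gen_det_gnm} (step 3 and the Lipschitz property following Assumption \ref{as:det_hat_F_der_smooth}) we always have $\hat{f}_1(x_{k+1}) \leq \psi_{x_k, L_k, \tau_k}(x_{k+1}) = \frac{\tau_k}{2} + \frac{(\phi(x_k, x_{k+1}))^2}{2\tau_k} + \frac{L_k}{2}\|x_{k+1} - x_k\|^2$. So the whole task reduces to estimating $(\phi(x_k, x_{k+1}))^2 + \tau_k L_k \|x_{k+1} - x_k\|^2$ when $x_{k+1} = x_k - \eta_k H_k^{-1} \hat{F}'(x_k)^* \hat{F}(x_k)$ with $H_k = \hat{F}'(x_k)^*\hat{F}'(x_k) + \tau_k L_k I_n$.

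First I would write $J = \hat{F}'(x_k)$, $g = \hat{F}(x_k)$, $\lambda = \tau_k L_k$, so that $x_{k+1} - x_k = -\eta_k (J^*J + \lambda I)^{-1} J^* g$, and then compute $\hat{F}(x_k) + \hat{F}'(x_k)(x_{k+1} - x_k) = g - \eta_k J(J^*J + \lambda I)^{-1}J^* g = \left(I - \eta_k JJ^*(JJ^* + \lambda I)^{-1}\right) g$, using the identity $J(J^*J+\lambda I)^{-1}J^* = JJ^*(JJ^*+\lambda I)^{-1}$. Passing to an SVD of $J$ (eigendecomposition of $JJ^*$ with eigenvalues $s_i \geq \mu$ by Assumption \ref{as:det_hat_F_PL_condition}, since $\sigma_{\min}(J^*)^2 \geq \mu$ gives $JJ^* \succeq \mu I$), everything diagonalizes: writing $g = \sum_i g_i e_i$ in that eigenbasis, one gets $(\phi(x_k,x_{k+1}))^2 = \sum_i g_i^2 \left(1 - \frac{\eta_k s_i}{s_i + \lambda}\right)^2 = \sum_i g_i^2 \frac{(s_i(1-\eta_k) + \lambda)^2}{(s_i+\lambda)^2}$ and $\lambda\|x_{k+1}-x_k\|^2 = \eta_k^2 \lambda \sum_i g_i^2 \frac{s_i}{(s_i+\lambda)^2}$. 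Adding these, the coefficient of $g_i^2$ becomes $\frac{(s_i(1-\eta_k)+\lambda)^2 + \eta_k^2 \lambda s_i}{(s_i+\lambda)^2}$, which after expansion equals $1 - \frac{\eta_k(2-\eta_k)s_i\lambda}{(s_i+\lambda)^2}$. Since $\sum_i g_i^2 = \|g\|^2 = \hat{f}_2(x_k)$, we obtain $\psi_{x_k,L_k,\tau_k}(x_{k+1}) = \frac{\tau_k}{2} + \frac{1}{2\tau_k}\sum_i g_i^2\left(1 - \frac{\eta_k(2-\eta_k)s_i\lambda}{(s_i+\lambda)^2}\right)$.

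The remaining step is to bound the per-mode factor $1 - \frac{\eta_k(2-\eta_k)s\lambda}{(s+\lambda)^2}$ uniformly over $s \geq \mu$, splitting on the sign of $\lambda - \mu$, i.e. whether $\tau_k \geq \mu/L_k$ or $\tau_k < \mu/L_k$. When $\tau_k \geq \mu/L_k$ (so $\lambda \geq \mu$), the function $s \mapsto \frac{s\lambda}{(s+\lambda)^2}$ on $[\mu,\infty)$ is decreasing for $s \geq \lambda$ and the worst (smallest) value of $\frac{s\lambda}{(s+\lambda)^2}$ over $s\geq\mu$ is attained at $s = \mu$, giving $\frac{\mu\lambda}{(\mu+\lambda)^2} \geq \frac{\mu}{L_k\tau_k + \mu}$ after dividing numerator and denominator by $\lambda = \tau_k L_k$ and using $\mu/\lambda \leq 1$; this yields the first branch. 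When $\tau_k < \mu/L_k$ ($\lambda < \mu$) one instead bounds $\left(1 - \frac{\eta_k s}{s+\lambda}\right)^2 \leq (1-\eta_k)^2 + (\text{correction})$ by a Taylor/convexity argument in $\lambda$ around $0$ — expanding $\frac{1}{s+\lambda}$ and keeping an exact remainder of the form $\frac{\lambda}{\mu^2(1+\xi)^3}$ for some $\xi \in (-1,1]$, which is exactly the source of the $\xi$ in the second branch — and then sum against $\sum g_i^2 s_i \leq L_{\hat F}\,\hat{f}_2(x_k)$-type estimates (using $s_i = \sigma_i(\hat F'(x_k))^2 \leq \|\hat F'(x_k)\|^2$ bounded on $\mathcal F$; more precisely the needed $\sum g_i^2 s_i / (\text{stuff})$ bounds come from $JJ^* \preceq \|J\|_F^2 I$ combined with the Lipschitz-type control). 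Finally, for $\eta_k = 1$ the factor collapses to $\frac{s\lambda}{(s+\lambda)^2}$ subtracted, and one uses the elementary bound $1 - \frac{s\lambda}{(s+\lambda)^2} = \frac{s^2 + \lambda^2 + s\lambda}{(s+\lambda)^2} \leq \dots$; combined with $s \geq \mu$ and $\lambda = \tau_k L_k \leq \tau_k L_{\hat F}$, and choosing to discard the $\frac{\tau_k}{2}$-independent part carefully, this gives the uniform clean bound $\hat f_1(x_{k+1}) \leq \frac{\tau_k}{2} + \frac{L_{\hat F}}{\mu}\hat f_2(x_k)$, consistent with the first display of Theorem \ref{th:glob_sub_lin_and_lin_conv_main}.

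The main obstacle I anticipate is the $\tau_k < \mu/L_k$ regime: getting the remainder term in exactly the stated form $-\frac{\eta_k(2-\eta_k)L_k^2 \hat f_2(x_k)\tau_k}{2\mu^2(1+\xi)^3}$ with a genuine $\xi\in(-1,1]$ requires a careful application of Taylor's theorem with Lagrange remainder to $\lambda \mapsto \frac{1}{(s+\lambda)}$ (or to the full per-mode rational function) and then arguing that the mean-value point can be taken uniform across modes — or, alternatively, absorbing the mode-dependence into a single $\xi$ by a monotonicity/intermediate-value argument. Handling the sign of the $(2-\eta_k)$ factor (positive since $\eta_k \in (0,2)$) to make sure the correction genuinely helps, and making the $\|J\|^2 \leq L_{\hat F}$-type substitutions rigorous, are the other places where care is needed; everything else is the diagonalization bookkeeping sketched above.
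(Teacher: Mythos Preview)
Your overall plan --- bound $\hat f_1(x_{k+1})$ by $\psi_{x_k,L_k,\tau_k}(x_{k+1})$ and diagonalize via the SVD of $J=\hat F'(x_k)$ --- is exactly what the paper does (the paper packages the SVD step as Lemma~\ref{lm:aux_matrix_order}). But there is a concrete algebraic error that would derail the argument as written. Your per-mode coefficient is miscomputed: from
\[
\frac{(s(1-\eta_k)+\lambda)^2+\eta_k^2\lambda s}{(s+\lambda)^2}
\]
the numerator expands so that $(s+\lambda)^2-\text{numerator}=s\,\eta_k(2-\eta_k)(s+\lambda)$, hence the coefficient is
\[
1-\frac{\eta_k(2-\eta_k)\,s}{s+\lambda},
\]
\emph{not} $1-\frac{\eta_k(2-\eta_k)\,s\lambda}{(s+\lambda)^2}$. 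This matters: with your incorrect factor, $\frac{s\lambda}{(s+\lambda)^2}\to 0$ as $s\to\infty$, so the infimum over $s\ge\mu$ is $0$ (not attained at $s=\mu$), and no useful bound follows. With the correct factor, $s\mapsto \frac{s}{s+\lambda}$ is increasing, the minimum over $s\ge\mu$ is $\frac{\mu}{\mu+\lambda}=\frac{\mu}{L_k\tau_k+\mu}$, and you get
\[
\hat f_1(x_{k+1})\le \frac{\tau_k}{2}+\frac{\hat f_2(x_k)}{2\tau_k}\Bigl(1-\frac{\eta_k(2-\eta_k)\mu}{L_k\tau_k+\mu}\Bigr)
\]
\emph{unconditionally} in $\tau_k>0$; no case split is needed at this stage.

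This also resolves the obstacle you flagged. The second branch of the statement is not a separate analysis: it is simply the Taylor expansion with Lagrange remainder $(1+y)^{-1}=1-y+\frac{y^2}{(1+\xi)^3}$ applied to the \emph{scalar} $y=L_k\tau_k/\mu$ (valid since $\tau_k<\mu/L_k$ gives $|y|<1$) in the already-obtained bound. There is no per-mode remainder and no need for a uniform-$\xi$ argument across eigenvalues. Likewise, the $\eta_k=1$ conclusion follows directly by simplifying each branch (using $\frac{1}{2\tau_k}\le\frac{L_k}{2\mu}$ when $\tau_k\ge\mu/L_k$, and dropping the nonnegative remainder term when $\tau_k<\mu/L_k$) together with $L_k\le 2L_{\hat F}$; no $\|J\|_F^2$ or $\sum g_i^2 s_i$ estimates are needed.
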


Использование явной формы $T_{L_{k}, \tau_{k}}(x_{k})$ в анализе позволяет устанавливать сходимость не только в терминах проксимальных градиентов, но ещё и относительно градиента функции $\hat{f}_{2}$, как указано в утверждении ниже.

\begin{theorem}\label{th:2_main}
    Пусть выполнено предположение \ref{as:det_hat_F_der_smooth}. Рассмотрим последовательность $\{x_{k}\}_{k\in\mathbb{Z}_{+}}$, вычисляемую по схеме \ref{alg:gen_det_gnm} c правилом \eqref{eq:det_expl_update_rule},  в котором $\tau_{k} > 0$ и $\eta_{k} > 0$, $\eta_{k}(2 - \eta_{k})\geq c > 0$, $k\in\mathbb{Z}_{+}$. Дополнительно предположим ограниченность нормы матрицы Якоби: существует $M_{\hat{F}} > 0$, для которого выполнено $\left\|\hat{F}^{'}(x)\right\|\leq M_{\hat{F}}$ при всех $x\in \mathcal{F}$. Тогда при $k\in\mathbb{N}$:
    \begin{equation*}
        \begin{aligned}
            \min\limits_{i\in\overline{0, k - 1}}\left\{\left\|\nabla\hat{f}_{2}(x_{i})\right\|^{2}\right\}&\leq\frac{8\left(2L_{\hat{F}}\max\limits_{i\in\overline{0, k - 1}}\left\{\tau_{i}\right\} + M_{\hat{F}}^{2}\right)}{\eta(2 - \eta)k}\sum\limits_{i = 0}^{k - 1}\left(\frac{1}{2}\left(\tau_{i} - \hat{f}_{1}(x_{i})\right)^{2} + \tau_{i}\left(\hat{f}_{1}(x_{i}) - \hat{f}_{1}(x_{i + 1})\right)\right),\\
            \eta&\in\Argmin\limits_{k\in\mathbb{Z}_{+}}\left\{\eta_{k}(2 - \eta_{k})\right\}.
        \end{aligned}
    \end{equation*}
\end{theorem}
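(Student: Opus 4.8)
The plan is to combine the per-iteration descent guarantee coming from the general local model with a telescoping argument, and then convert the proximal-gradient-type decrease into a bound on $\|\nabla\hat f_2(x_i)\|$ using the boundedness of the Jacobian. First I would recall from Lemma \ref{lm:aux_det_upper_model} (and the explicit form of $T_{L_k,\tau_k}(x_k)$ in Следствие \ref{lm:cor_aux_det_local_decrease_1}) that the update \eqref{eq:det_expl_update_rule} with step $\eta_k$ produces a point whose local-model value $\psi_{x_k,L_k,\tau_k}(x_{k+1})$ is controlled; since $\hat f_1(x_{k+1})\le\psi_{x_k,L_k,\tau_k}(x_{k+1})$ (this is exactly the acceptance test in step 3 of the scheme, and $L_k$ is driven up to at least $L_{\hat F}$ if it fails), one gets an inequality of the shape
\begin{equation*}
    \hat f_1(x_{k+1})\le\frac{\tau_k}{2}+\frac{(\phi(x_k,x_{k+1}))^2}{2\tau_k}+\frac{L_k}{2}\|x_{k+1}-x_k\|^2,
\end{equation*}
and plugging the explicit $x_{k+1}-x_k = -\eta_k(\hat F'(x_k)^*\hat F'(x_k)+\tau_kL_kI_n)^{-1}\hat F'(x_k)^*\hat F(x_k)$ lets me express the right-hand side through the spectral decomposition of $\hat F'(x_k)^*\hat F'(x_k)$ and the vector $\hat F(x_k)$.

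Next I would extract from that computation the key quantitative gain: after some algebra (the same algebra underlying Theorem \ref{th:1_main}), the per-step decrease in $\hat f_1$ is at least proportional to $\eta_k(2-\eta_k)$ times a quantity equivalent to $\|\hat F'(x_k)^*\hat F(x_k)\|^2 = \tfrac14\|\nabla\hat f_2(x_k)\|^2$, divided by something like $2L_{\hat F}\tau_k + M_{\hat F}^2$ (the $M_{\hat F}^2$ appears because $\|\hat F'(x_k)\|^2\le M_{\hat F}^2$ bounds the largest eigenvalue of $\hat F'(x_k)^*\hat F'(x_k)$, which enters the denominators of the resolvent). Concretely I expect to reach, for each $i$,
\begin{equation*}
    \frac{\eta_i(2-\eta_i)}{8\left(2L_{\hat F}\tau_i+M_{\hat F}^2\right)}\left\|\nabla\hat f_2(x_i)\right\|^2\le \frac12\left(\tau_i-\hat f_1(x_i)\right)^2+\tau_i\left(\hat f_1(x_i)-\hat f_1(x_{i+1})\right),
\end{equation*}
where the right-hand side is precisely the "model slack" term: $\tfrac{\tau_i}{2}+\tfrac{(\phi)^2}{2\tau_i}+\tfrac{L_i}{2}\|\cdot\|^2 - \hat f_1(x_{i+1})$ reorganized, with the $\tfrac12(\tau_i-\hat f_1(x_i))^2/\tau_i$-type piece accounting for the mismatch between $\tau_i$ and $\hat f_1(x_i)$ (recall $\phi(x_i,x_i)=\hat f_1(x_i)$, so when $\tau_i\ne\hat f_1(x_i)$ the model is not tight at $x_k$).

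Finally I would sum over $i\in\overline{0,k-1}$: using $\eta_i(2-\eta_i)\ge\eta(2-\eta)$ with $\eta\in\Argmin_i\{\eta_i(2-\eta_i)\}$ and bounding each denominator $2L_{\hat F}\tau_i+M_{\hat F}^2\le 2L_{\hat F}\max_i\tau_i+M_{\hat F}^2$ from above, the left-hand sides telescope to $\frac{\eta(2-\eta)}{8(2L_{\hat F}\max_i\tau_i+M_{\hat F}^2)}\sum_i\|\nabla\hat f_2(x_i)\|^2$, and replacing the sum by $k$ times its minimum term yields the claimed inequality. The main obstacle I anticipate is the middle step: carefully carrying out the eigenvalue-by-eigenvalue estimate of $\psi_{x_k,L_k,\tau_k}(x_{k+1})$ for general $\eta_k$ and $\tau_k$ (without the simplifying choice $\tau_k=\hat f_1(x_k)$) so that both the factor $\eta_k(2-\eta_k)$ and the exact constant $8(2L_{\hat F}\tau_i+M_{\hat F}^2)$ come out, and making sure the adaptive rule for $L_k$ (which only guarantees $L_k\le 2L_{\hat F}$ and the acceptance inequality, not $L_k=L_{\hat F}$) is handled so that $L_{\hat F}$ — not $L_k$ — appears in the final constant; this is where the boundedness assumption $\|\hat F'(x)\|\le M_{\hat F}$ does the real work.
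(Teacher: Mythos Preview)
Your proposal is correct and follows essentially the same route as the paper: starting from the identity \eqref{eq:delta_f_2} (the explicit expansion of $\hat f_1(x_k)-\psi_{x_k,L_k,\tau_k}(x_{k+1})$ for the update \eqref{eq:det_expl_update_rule}), bounding the resolvent via $\|\hat F'(x_k)\|\le M_{\hat F}$ and $L_k\le 2L_{\hat F}$ to obtain exactly your displayed per-iteration inequality, multiplying by $\tau_i$, summing over $i\in\overline{0,k-1}$, and replacing the sum of squared gradients by $k$ times the minimum. One small wording point: nothing actually telescopes --- the right-hand side stays as the explicit sum $\sum_i\bigl(\tfrac12(\tau_i-\hat f_1(x_i))^2+\tau_i(\hat f_1(x_i)-\hat f_1(x_{i+1}))\bigr)$, which is what appears in the statement, and the left-hand side is just bounded below termwise after pulling out the uniform constant.
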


Представленные теоремы \ref{th:1} и \ref{th:2} полезны для общего понимания того, как устроена процедура оптимизации в методе трёх квадратов, однако далеко не любая последовательность $\left\{\tau_{k}\right\}_{k\in\mathbb{Z}_{+}}$ обеспечивает гарантированную сходимость метода даже к стационарной точке. С целью внести дополнительную ясность в определение $\tau_{k}$ рассмотрим следующее утверждение.
\begin{theorem}\label{lm:1_main}
    Пусть условия теоремы \ref{th:1} верны, определим на каждой итерации метода нормализованных квадратов максимальный коэффициент линейной сходимости $\alpha_{k}$:
    $$\alpha_{k}\in\left[\frac{1}{2} + \frac{L_{k}
        \hat{f}_{1}(x_{k}) + (1 - \eta_{k})^{2}\mu}{2\left(L_{k}\hat{f}_{1}(x_{k}) + \mu\right)},~1\right),~k\in\mathbb{Z}_{+}.$$ Дополнительно предположим, что на каждой итерации $\tau_{k} = c_{k}\hat{f}_{1}(x_{k})$, $k\in\mathbb{Z}_{+}$:
    \begin{equation*}
        \begin{aligned}
            c_{k} \in &\left[\frac{L_{k}\hat{f}_{1}(x_{k}) + (1 - \eta_{k})^{2}\mu}{(2\alpha_{k} - 1)(L_{k}\hat{f}_{1}(x_{k}) + \mu)},~\alpha_{k} - \frac{1}{2} - \frac{\mu}{2L_{k}\hat{f}_{1}(x_{k})} +\right.\\
            &\left.~~+ \sqrt{\alpha_{k}^{2} + \alpha_{k}\left(\frac{\mu}{L_{k}\hat{f}_{1}(x_{k})} - 1\right) + \frac{1}{4}\left(\frac{\mu}{L_{k}\hat{f}_{1}(x_{k})} + 1\right)^{2} - \frac{(1 - \eta_{k})^{2}\mu}{L_{k}\hat{f}_{1}(x_{k})}}~\right].
        \end{aligned}
    \end{equation*}
    Тогда метод нормализованных квадратов с вычислением $x_{k + 1}$ по правилу \eqref{eq:det_expl_update_rule} глобально сходится не хуже, чем линейно к решению задачи \eqref{eq:main_opt_problem} $\lim\limits_{k\rightarrow+\infty}x_{k} = x^{*}: \hat{F}(x^{*}) = \mathbf{0}_{m}$ со следующей оценкой:
    $$\hat{f}_{1}(x_{k})\leq\hat{f}_{1}(x_{0})\prod_{i = 0}^{k - 1}\alpha_{i},~k\in\mathbb{Z}_{+},~\prod\limits_{i = 0}^{-1}\alpha_{i} \overset{\operatorname{def}}{=} 1.$$
\end{theorem}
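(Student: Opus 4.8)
The plan is to bootstrap from Theorem~\ref{th:1_main}, which already provides a per-iteration bound on $\hat f_1(x_{k+1})$ in terms of $\tau_k$, $\hat f_2(x_k)=(\hat f_1(x_k))^2$, $L_k$ and $\eta_k$. The substitution $\tau_k=c_k\hat f_1(x_k)$ turns the right-hand side of that bound into a multiple of $\hat f_1(x_k)$, so the whole statement reduces to showing that this multiplicative factor lies in $[\,\cdot\,,\alpha_k)$ precisely when $c_k$ lies in the stated interval. First I would take the branch $\tau_k\ge\mu/L_k$ of Theorem~\ref{th:1_main}, i.e.\ $c_k\ge\mu/(L_k\hat f_1(x_k))$, and after dividing by $\hat f_1(x_k)$ arrive at
$$
\hat f_1(x_{k+1})\le\hat f_1(x_k)\left(\frac{c_k}{2}+\frac{1}{2c_k}\left(1-\frac{\eta_k(2-\eta_k)\mu}{L_k c_k\hat f_1(x_k)+\mu}\right)\right).
$$
I would denote the bracket by $g(c_k)$ and show two things: (a) $g(c_k)\le\alpha_k$ is equivalent to $c_k$ not exceeding the upper endpoint in the displayed interval — this is a quadratic inequality in $c_k$ once the denominator $L_k c_k\hat f_1(x_k)+\mu$ is cleared, and its larger root is exactly the square-root expression written in the statement (one checks the discriminant is the quantity under the radical); (b) the lower endpoint $\frac{L_k\hat f_1(x_k)+(1-\eta_k)^2\mu}{(2\alpha_k-1)(L_k\hat f_1(x_k)+\mu)}$ is where the \emph{best achievable} contraction, namely the minimum of $g$ subject to the constraint $c_k\ge\mu/(L_k\hat f_1(x_k))$, is attained — equivalently, it is forced by requiring $\alpha_k$ to be no smaller than $\tfrac12+\tfrac{L_k\hat f_1(x_k)+(1-\eta_k)^2\mu}{2(L_k\hat f_1(x_k)+\mu)}$, which is exactly the left endpoint of the admissible range of $\alpha_k$ in the hypothesis. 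So the interval for $c_k$ is nonempty precisely under that lower bound on $\alpha_k$, and any $c_k$ in it yields $\hat f_1(x_{k+1})\le\alpha_k\hat f_1(x_k)$.

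Next I would check the other branch $\tau_k<\mu/L_k$ of Theorem~\ref{th:1_main} only to confirm consistency: the lower endpoint of the $c_k$-interval may or may not fall below $\mu/(L_k\hat f_1(x_k))$, and in the regime where it does, one uses the second (cubic-in-$\xi$) estimate; but since that estimate is dominated, for $\eta_k\in(0,2)$ and the worst $\xi$, by the simpler bound $\hat f_1(x_{k+1})\le\frac{\tau_k}{2}+\frac{L_{\hat F}}{\mu}\hat f_2(x_k)$ already recorded at the end of Theorem~\ref{th:1_main}, it suffices to verify that this simpler bound also gives a factor $\le\alpha_k$ on the stated $c_k$-range. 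I expect this to be a short monotonicity argument: $\frac{c_k}{2}+\frac{L_{\hat F}}{\mu}c_k\hat f_1(x_k)$ is increasing in $c_k$, so it is enough to evaluate at the upper endpoint and compare with $\alpha_k$, using $L_k\le 2L_{\hat F}$ and $L_k\ge L$ from the adaptive rule in scheme~\ref{alg:gen_det_gnm}.

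Once the one-step contraction $\hat f_1(x_{k+1})\le\alpha_k\hat f_1(x_k)$ is in hand for all $k$, telescoping gives $\hat f_1(x_k)\le\hat f_1(x_0)\prod_{i=0}^{k-1}\alpha_i$ immediately, with the empty-product convention. It remains to argue that $x_k$ actually converges to a genuine solution $x^*$ with $\hat F(x^*)=\mathbf 0_m$, not merely that $\hat f_1(x_k)\to 0$ if $\prod\alpha_i\to 0$. For this I would invoke Assumption~\ref{as:det_hat_F_PL_condition}: the explicit update \eqref{eq:det_expl_update_rule} gives $\|x_{k+1}-x_k\|\le\eta_k\|(\hat F'(x_k)^*\hat F'(x_k)+\tau_k L_k I_n)^{-1}\hat F'(x_k)^*\hat F(x_k)\|$, and bounding the operator via $\sigma_{\min}(\hat F'(x_k)^*)\ge\sqrt\mu$ yields $\|x_{k+1}-x_k\|\le C\,\hat f_1(x_k)$ for a constant $C$ depending on $\mu$; summing a geometrically-dominated series shows $\{x_k\}$ is Cauchy, hence convergent to some $x^*\in\mathcal F$, and continuity of $\hat f_1$ forces $\hat f_1(x^*)=0$. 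The main obstacle I anticipate is purely algebraic: correctly matching the larger root of the cleared quadratic in part (a) with the exact square-root expression in the statement, keeping track of all the $(1-\eta_k)^2\mu$ and $\mu/(L_k\hat f_1(x_k))$ terms without sign errors; the convergence-of-iterates part is routine once the contraction is established.
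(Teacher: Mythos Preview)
Your plan has a concrete algebraic gap at step (a). When you substitute $\tau_k=c_k\hat f_1(x_k)$ into the first estimate of Theorem~\ref{th:1_main} and clear the denominator $L_k c_k\hat f_1(x_k)+\mu$, you do \emph{not} get a quadratic in $c_k$: there is still the outer factor $\frac{1}{2c_k}$, and clearing both denominators produces
\[
L_k\hat f_1(x_k)\,c_k^3+\mu c_k^2 -2\alpha_k L_k\hat f_1(x_k)\,c_k^2-2\alpha_k\mu c_k+L_k\hat f_1(x_k)\,c_k+(1-\eta_k)^2\mu\le 0,
\]
a genuine cubic. Its root structure will not reproduce the stated endpoints, so the direct-substitution route cannot recover the interval in the theorem.

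The paper avoids this by a case split on the sign of $c_k-1$ combined with a monotonicity coarsening, and this is the missing idea. For $\tau_k\le\hat f_1(x_k)$ it bounds $\tau_k/2\le\hat f_1(x_k)/2$ and replaces $\tau_k$ by $\hat f_1(x_k)$ inside the fraction $\frac{\eta_k(2-\eta_k)\mu}{L_k\tau_k+\mu}$; the resulting inequality is \emph{linear} in $c_k^{-1}$ and yields exactly the left endpoint $\frac{L_k\hat f_1(x_k)+(1-\eta_k)^2\mu}{(2\alpha_k-1)(L_k\hat f_1(x_k)+\mu)}$. For $\tau_k\ge\hat f_1(x_k)$ it instead bounds $\frac{\hat f_2(x_k)}{2\tau_k}\le\frac{\hat f_1(x_k)}{2}$ while keeping $c_k$ inside the fraction; this gives a genuine quadratic in $c_k$ whose larger root is the square-root expression. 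The stated range for $\alpha_k$ is precisely what makes both sub-intervals contain the point $c_k=1$, so they glue together. Two smaller remarks: first, the base inequality $\hat f_1(x_{k+1})\le\frac{\tau_k}{2}+\frac{\hat f_2(x_k)}{2\tau_k}\bigl(1-\frac{\eta_k(2-\eta_k)\mu}{L_k\tau_k+\mu}\bigr)$ actually holds for \emph{all} $\tau_k>0$ (look at the proof of Theorem~\ref{th:1_main}, not just its statement), so your separate treatment of the branch $\tau_k<\mu/L_k$ is unnecessary; second, the fallback bound $\frac{\tau_k}{2}+\frac{L_{\hat F}}{\mu}\hat f_2(x_k)$ you invoke there is only recorded for $\eta_k=1$ and is not available for general $\eta_k$. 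Your Cauchy-sequence argument for $x_k\to x^*$ at the end is fine.
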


Утверждение выше указывает на возможные значения $\tau_{k}$, пропорциональные $\hat{f}_{1}(x_{k})$, при которых метод нормализованных квадратов имеет глобально линейную сходимость. Стоит заметить, что наименьшее допустимое значение $\alpha_{k}$ соответствует $c_{k} = 1$. Это представляет метод нормализованных квадратов с $\tau_{k} = \hat{f}_{1}(x_{k})$ как наиболее быстрый в классе методов нормализованных квадратов с гарантией линейной сходимости, отличающихся выбором $c_{k}$. Данный результат является скорее теоретико--иллюстративным, обосновывающим важность и общее удобство случая $\tau_{k} = \hat{f}_{1}(x_{k})$. Также в условии теоремы \ref{lm:1} роль $\eta_{k}$ характерно показана: близость значений $\eta_{k}$ к $0$ и к $2$ симметрично относительно $\eta_{k} = 1$ приводит к замедлению сходимости в терминах $\alpha_{k}$.
Выбор $\tau_{k} = \hat{f}_{1}(x_{k})$ в теореме \ref{th:2} приводит к упрощению оценки на квадрат нормы градиента в силу наличия сходимости ($\hat{f}_{1}(x_{k})\hat{f}_{1}(x_{k + 1})\leq\hat{f}_{2}(x_{k}),~k\in\mathbb{Z}_{+}$):
\begin{equation*}
    \begin{aligned}
        \min\limits_{i\in\overline{0, k - 1}}\left\{\left\|\nabla\hat{f}_{2}(x_{i})\right\|^{2}\right\}&\leq\frac{8\hat{f}_{2}(x_{0})\left(2L_{\hat{F}}\hat{f}_{1}(x_{0}) + M_{\hat{F}}^{2}\right)}{\eta(2 - \eta)k}.
    \end{aligned}
\end{equation*}
И для достижения минимального значения нормы градиента $\hat{f}_{2}$ на уровне $\epsilon > 0$ необходимо затратить $k = \operatorname{O}\left(\frac{1}{\epsilon^{2}}\right)$ итераций метода:
\begin{equation*}
    \begin{aligned}
        &k = \left\lceil\frac{8\hat{f}_{2}(x_{0})\left(2L_{\hat{F}}\hat{f}_{1}(x_{0}) + M_{\hat{F}}^{2}\right)}{\eta(2 - \eta)\epsilon^{2}}\right\rceil,
    \end{aligned}
\end{equation*}
формально для такого количества итераций выполнено неравенство:
\begin{equation}\label{eq:th2_main_conv_cond}
    \begin{aligned}
        &\min\limits_{i\in\overline{0, k - 1}}\left\{\left\|\nabla\hat{f}_{2}(x_{i})\right\|\right\}\leq\epsilon.
    \end{aligned}
\end{equation}
В теореме \ref{lm:1} выбор $\tau_{k} = \hat{f}_{1}(x_{k})$ соответствует следующей оценке:
\begin{equation*}
    \begin{aligned}
        \hat{f}_{1}(x_{k + 1})&\leq\hat{f}_{1}(x_{k})\left(1 - \frac{\eta_{k}(2 - \eta_{k})\mu}{2\left(L_{k}\hat{f}_{1}(x_{k}) + \mu\right)}\right)\leq\hat{f}_{1}(x_{k})\left(1 - \frac{\eta(2 - \eta)\mu}{2\left(2L_{\hat{F}}\hat{f}_{1}(x_{0}) + \mu\right)}\right)\leq\\
        &\leq\hat{f}_{1}(x_{k})\exp\left(-\frac{\eta(2 - \eta)\mu}{2\left(2L_{\hat{F}}\hat{f}_{1}(x_{0}) + \mu\right)}\right)\leq\hat{f}_{1}(x_{0})\exp\left(-\frac{(k + 1)\eta(2 - \eta)\mu}{2\left(2L_{\hat{F}}\hat{f}_{1}(x_{0}) + \mu\right)}\right),\\
        \eta&\in\Argmin\limits_{k\in\mathbb{Z}_{+}}\left\{\eta_{k}(2 - \eta_{k})\right\}.
    \end{aligned}
\end{equation*}
А благодаря строению функции $\hat{f}_{2}$ возможно установить аналогичную оценку на норму градиента:
\begin{equation*}
    \begin{aligned}
        &\left\|\nabla\hat{f}_{2}(x_{k})\right\| = \left\|2\hat{F}^{'}(x_{k})^{*}\hat{F}(x_{k})\right\|\leq2\left\|\hat{F}^{'}(x_{k})^{*}\right\|\left\|\hat{F}(x_{k})\right\| = 2\left\|\hat{F}^{'}(x_{k})\right\|\hat{f}_{1}(x_{k}).
    \end{aligned}
\end{equation*}
Оценка в теореме \ref{th:1} при $\tau_{k} = \hat{f}_{1}(x_{k})$ означает для достижения уровня
\begin{equation}\label{eq:1_main_conv_cond}
    \begin{aligned}
        &\hat{f}_{1}(x_{k})\leq\epsilon
    \end{aligned}
\end{equation}
необходимость затратить $k = \operatorname{O}\left(\ln\left(\frac{1}{\epsilon}\right)\right)$ итераций:
\begin{equation*}
    \begin{aligned}
        &k = \left\lceil\frac{2\left(2L_{\hat{F}}\hat{f}_{1}(x_{0}) + \mu\right)}{\eta(2 - \eta)\mu}\ln\left(\frac{\hat{f}_{1}(x_{0})}{\epsilon}\right)\right\rceil.
    \end{aligned}
\end{equation*}
Таким образом, теорема \ref{lm:1} устанавливает глобальную линейную сходимость при выполнении условия Поляка--Лоясиевича с гарантией отсутствия участков сублинейной сходимости.

Хотя выбор $\tau_{k} = \hat{f}_{1}(x_{k})$ очень удобен на практике, в процессе оптимизации можно значение $\tau_{k}$ на каждой итерации адаптивно настраивать, a при достаточном приближении $\tau_{k}$ к значению $\phi(x_{k}, y)$ общее поведение метода нормализованных квадратов будет похоже на поведение метода Гаусса--Ньютона, рассмотренного в работе \cite{Nesterov2007}. Само значение $\tau = \phi(x, y)$ соответствует ближайшей верхней оценке на $\hat{f}_{1}(y)$ относительно $\tau$ (лемма \ref{lm:aux_det_upper_model}) с $L\geq L_{\hat{F}}$, $\tau > 0$:
\begin{equation*}
    \begin{aligned}
        \hat{f}_{1}(y)&\leq\frac{L}{2}\|y - x\|^{2} + \underbrace{\left\|\hat{F}(x) + \hat{F}^{'}(x)(y - x)\right\|}_{=\phi(x, y)}\leq\frac{L}{2}\|y - x\|^{2} + \frac{\tau}{2} + \frac{1}{2\tau}\left\|\hat{F}(x) + \hat{F}^{'}(x)(y - x)\right\|^{2},~(x, y)\in\mathcal{F}^{2}.
    \end{aligned}
\end{equation*}
Важно заметить, что в силу неравенства для локальных моделей выше, утверждения теорем \ref{th:1} и \ref{th:2} при подстановке $\tau_{k} = \hat{f}_{1}(x_{k}),~k\in\mathbb{Z}_{+}$ в условие выполнены для метода Гаусса--Ньютона с локальной моделью $\psi_{x, L, \phi(x, y)}(y)$, относительно которой в схеме \ref{alg:gen_det_gnm} значение $x_{k + 1}$ является приближением элемента из $\Argmin\limits_{y\in\mathcal{F}}\left\{\psi_{x_{k}, L_{k}, \phi(x_{k}, y)}(y)\right\}$ \cite{Nesterov2020}.

Введём обозначение оптимального значения $\tau$:
$$\mathcal{T}_{L}(x) = \argmin\limits_{\tau > 0}\left\{\psi_{x, L, \tau}(T_{L, \tau}(x))\right\},$$
которое вытекает из свойства строгой выпуклости по $\tau$ общей локальной модели и позволяет упростить вычисление приближения точки минимума по $y$ в случае $\tau = \phi(x, y)$, $\mathcal{L}(\hat{f}_{1}(x))\subseteq\mathcal{F}$:
\begin{equation*}
    \begin{aligned}
        \hat{f}_{1}(T_{L, \mathcal{T}_{L}(x)}(x))&\leq\min\limits_{y\in\mathcal{F}}\left\{\frac{L}{2}\|y - x\|^{2} + \phi(x, y)\right\} = \min\limits_{y\in\mathcal{F}}\min\limits_{\tau > 0}\left\{\frac{L}{2}\|y - x\|^{2} + \frac{\tau}{2} + \frac{\left(\phi(x, y)\right)^{2}}{2\tau}\right\} =\\
        &= \min\limits_{\tau > 0}\left\{\frac{\tau}{2} + \min\limits_{y\in\mathcal{F}}\left\{\frac{L}{2}\|y - x\|^{2} + \frac{\left(\phi(x, y)\right)^{2}}{2\tau}\right\}\right\} = \min\limits_{\tau > 0}\left\{\psi_{x, L, \tau}(T_{L, \tau}(x))\right\}\Rightarrow\\
        &\Rightarrow T_{L, \mathcal{T}_{L}(x)}(x)\in\Argmin\limits_{y\in\mathcal{F}}\left\{\frac{L}{2}\|y - x\|^{2} + \phi(x, y)\right\},~L\geq L_{\hat{F}}.
    \end{aligned}
\end{equation*}
Обозначенная выше справедливость теорем \ref{th:1} и \ref{th:2} для локальной модели с $\tau_{k} = \mathcal{T}_{L_{k}}(x_{k}),~k\in\mathbb{Z}_{+}$ при подстановке в условия данных теорем $\tau_{k} = \hat{f}_{1}(x_{k})$, $k\in\mathbb{Z}_{+}$ наглядно следует из цепочки неравенств ниже:
\begin{equation}\label{eq:local_models_orders}
\scalemath{0.93}{
    \begin{aligned}
        \lefteqn{\underbrace{\phantom{\hat{f}_{1}(T_{L_{k}, \mathcal{T}_{L_{k}}(x_{k})}(x_{k}))\leq\psi_{x_{k}, L_{k}, \mathcal{T}_{L_{k}}(x_{k})}(T_{L_{k}, \mathcal{T}_{L_{k}}(x_{k})}(x_{k}))}}_{\psi_{x_{k}, L_{k}, \mathcal{T}_{L_{k}}(x_{k})}(\cdot)\text{ --- локальная модель }\hat{f}_{1}(\cdot)}\leq\underbrace{\phantom{\psi_{x_{k}, L_{k}, \mathcal{T}_{L_{k}}(x_{k})}(T_{L_{k}, \hat{f}_{1}(x_{k})}(x_{k}))\leq\psi_{x_{k}, L_{k}, \hat{f}_{1}(x_{k})}(T_{L_{k}, \hat{f}_{1}(x_{k})}(x_{k}))}}_{\text{по определению }\mathcal{T}_{L_{k}}(x_{k})\text{ (лемма \ref{lm:aux_det_upper_model})}}}\hat{f}_{1}(T_{L_{k}, \mathcal{T}_{L_{k}}(x_{k})}(x_{k}))&\leq\overbrace{\psi_{x_{k}, L_{k}, \mathcal{T}_{L_{k}}(x_{k})}(T_{L_{k}, \mathcal{T}_{L_{k}}(x_{k})}(x_{k}))\leq\psi_{x_{k}, L_{k}, \mathcal{T}_{L_{k}}(x_{k})}(T_{L_{k}, \hat{f}_{1}(x_{k})}(x_{k}))}^{T_{L_{k}, \mathcal{T}_{L_{k}}(x_{k})}(x_{k})\text{ минимизирует }\psi_{x_{k}, L_{k}, \mathcal{T}_{L_{k}}(x_{k})}(\cdot)}\leq\psi_{x_{k}, L_{k}, \hat{f}_{1}(x_{k})}(T_{L_{k}, \hat{f}_{1}(x_{k})}(x_{k})).
    \end{aligned}
}
\end{equation}
То есть в схеме \ref{alg:gen_det_gnm} для последовательности $\left\{x_{k}\right\}_{k\in\mathbb{Z}_{+}}$ всегда выполнено соотношение
\begin{equation}\label{eq:local_diffs_order}
    \begin{aligned}
        &\hat{f}_{1}(x_{k}) - \psi_{x_{k}, L_{k}, \mathcal{T}_{L_{k}}(x_{k})}(T_{L_{k}, \mathcal{T}_{L_{k}}(x_{k})}(x_{k}))\geq\hat{f}_{1}(x_{k}) - \psi_{x_{k}, L_{k}, \hat{f}_{1}(x_{k})}(T_{L_{k}, \hat{f}_{1}(x_{k})}(x_{k}))\geq0.
    \end{aligned}
\end{equation}

Таким образом, если обозначить процедуру получения $x_{k + 1}$ по известным $(x_{k}, L_{k}, \tau_{k})$ на каждой итерации $k\in\mathbb{Z}_{+}$ через отображение $\mathcal{X}:\mathcal{F}\times\mathbb{R}_{++}^{2}\rightarrow\mathcal{F}$, то схема оптимизации с адаптивным подбором $\tau_{k}$ на каждом шаге $k$ заключается в следующем:
\begin{itemize}
    \itemsep=-4pt
    \item[1.] Вычислить $\tau_{k}^{*}$ --- приближение оптимального значения $\mathcal{T}_{L_{k}}(x_{k})$, $\tau_{k}^{*}$ может быть получено приближённо из задачи поиска элемента множества $\Argmin\limits_{\tau > 0}\left\{\psi_{x_{k}, L_{k}, \tau}(\mathcal{X}(x_{k}, L_{k}, \tau))\right\}$;
    \item[2.] Получить значение $x_{k + 1} = \mathcal{X}(x_{k}, L_{k}, \tau_{k}^{*})$ как приближение $T_{L_{k}, \tau_{k}^{*}}(x_{k})$.
\end{itemize}
Схема \ref{alg:gen_det_flex_gnm} представляет метод Гаусса--Ньютона, в котором используется адаптивный подбор $\tau_{k}$.
\RestyleAlgo{boxruled}
\begin{algorithm}[!ht]{}
\caption{\textbf{Общий метод нормализованных квадратов с неточным проксимальным отображением и адаптивным подбором $\tau$}}
\label{alg:gen_det_flex_gnm}
\textbf{Вход:}
    \begin{equation*}
        \begin{cases}
            x_{0}\in E_{1},~\mathcal{L}(\hat{f}_{1}(x_{0}))\subseteq\mathcal{F}\text{ --- начальное приближение},~x_{-1} = x_{0};\\
            \mathcal{E}(\cdot)\text{ --- функция погрешности подбора }\tau;\\
            N\in\mathbb{N}\text{ --- количество итераций метода};\\
            L\text{ --- оценка локальной постоянной Липшица},~L\in (0,~L_{\hat{F}}],~L_{0} = L;\\
            \mathcal{X}(\cdot)\text{ --- отображение, аппроксимирующее }T_{L_{k}, \tau_{k}}(x_{k}).
        \end{cases}
    \end{equation*}
    \vspace{0.2cm}
    \textbf{Повторять для $k = 0, 1,~\dots, N - 1$:}
    \begin{itemize}
        \item[1.] определить $\varepsilon_{k} = \mathcal{E}(k, x_{k}, x_{k - 1})$;
        \item[2.] вычислить $\tau_{k}^{*} > 0$, для которого выполнено $\psi_{x_{k}, L_{k}, \tau_{k}^{*}}(\mathcal{X}(x_{k}, L_{k}, \tau_{k}^{*})) - \psi_{x_{k}, L_{k}, \mathcal{T}_{L_{k}}(x_{k})}(T_{L_{k}, \mathcal{T}_{L_{k}}(x_{k})}(x_{k}))\leq\varepsilon_{k}$ и\\$\hat{f}_{1}(x_{k}) - \psi_{x_{k}, L_{k}, \tau_{k}^{*}}(\mathcal{X}(x_{k}, L_{k}, \tau_{k}^{*}))\geq 0$;
        \item[3.] вычислить $x_{k + 1} = \mathcal{X}(x_{k}, L_{k}, \tau_{k}^{*})$;
        \item[4.] если $\hat{f}_{1}(x_{k + 1}) > \psi_{x_{k}, L_{k}, \tau_{k}^{*}}(x_{k + 1})$, то положить $L_{k} := \min\left\{2L_{k}, 2L_{\hat{F}}\right\}$ и вернуться к пункту 2;
        \item[5.] $L_{k + 1} = \max\left\{\frac{L_{k}}{2},~L\right\}$.
    \end{itemize}
    \vspace{0.2cm}

    \textbf{Выход:} $x_{N}$.
\end{algorithm}
В этой схеме на первое место выходит задача поиска гиперпараметра $\tau_{k}^{*}$, то есть вместе с поиском $x_{k + 1}$ происходит оптимизация оптимизатора, представленного в виде отображения $\mathcal{X}$, и в таком виде метод Гаусса--Ньютона ведёт себя в режиме, близком к режиму с $\tau_{k} = \phi(x_{k}, y)$, что формально отражено в теоремах \ref{th:DetFlexSublinConv} (следствия \ref{th:DetFlexSublinConvCor1} и \ref{th:DetFlexSublinConvCor2}) и \ref{th:DetFlexlinConv} (следствия \ref{th:DetFlexlinConvCor1} и \ref{th:DetFlexlinConvCor2}).
\begin{theorem}\label{th:DetFlexSublinConvMain}
    Пусть выполнено предположение \ref{as:det_hat_F_der_smooth}, $k\in\mathbb{N},~r > 0$. Рассмотрим функции
    $$\varkappa(t) = \frac{t^{2}}{2}\mathds{1}_{\left\{t\in[0, 1]\right\}} + \left(t - \frac{1}{2}\right)\mathds{1}_{\left\{t > 1\right\}}\text{ и }\tilde{\Delta}_{r}(x) \overset{\operatorname{def}}{=} \hat{f}_{1}(x) - \min\limits_{y\in E_{1}}\left\{\phi(x, y):~\|y - x\|\leq r\right\}.$$
    Тогда для метода Гаусса--Ньютона, реализованного по схеме \ref{alg:gen_det_flex_gnm} с $\varepsilon_{k} = \varepsilon \geq 0$, верны следующие оценки:
    \begin{equation*}
        \begin{cases}
            &\frac{8L_{\hat{F}}^{2}}{L}\left(\varepsilon + \frac{\left(\hat{f}_{1}(x_{0}) - \hat{f}_{1}(x_{k})\right)}{k}\right)\geq\min\limits_{i\in\overline{0, k - 1}}\left\{\left\|2L_{\hat{F}}\left(T_{2L_{\hat{F}}, \mathcal{T}_{2L_{\hat{F}}}(x_{i})}(x_{i}) - x_{i}\right)\right\|^{2}\right\};\\[10pt]
            &L_{\hat{F}}\left(\varepsilon + \frac{\left(\hat{f}_{1}(x_{0}) - \hat{f}_{1}(x_{k})\right)}{k}\right)\geq\min\limits_{i\in\overline{0, k - 1}}\left\{2\left(L_{\hat{F}}r\right)^{2}\varkappa\left(\frac{\tilde{\Delta}_{r}(x_{i})}{2L_{\hat{F}}r^{2}}\right)\right\}.
        \end{cases}
    \end{equation*}
\end{theorem}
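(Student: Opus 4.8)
The plan is to mirror the proof of Theorem~\ref{th:DetSublinConvMain}, exploiting the fact that the adaptive choice $\tau_{k}=\mathcal{T}_{L_{k}}(x_{k})$ collapses the three--term model $\psi_{x,L,\tau}$ to the two--term model $\frac{L}{2}\|y-x\|^{2}+\phi(x,y)$: by the chain \eqref{eq:local_models_orders}--\eqref{eq:local_diffs_order} and the identities $\psi_{x_{k},L_{k},\mathcal{T}_{L_{k}}(x_{k})}(T_{L_{k},\mathcal{T}_{L_{k}}(x_{k})}(x_{k}))=\min_{y}\{\frac{L_{k}}{2}\|y-x_{k}\|^{2}+\phi(x_{k},y)\}$ and $T_{L_{k},\mathcal{T}_{L_{k}}(x_{k})}(x_{k})\in\Argmin_{y}\{\frac{L_{k}}{2}\|y-x_{k}\|^{2}+\phi(x_{k},y)\}$ established just before Algorithm~\ref{alg:gen_det_flex_gnm}. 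The first goal is a per--iteration sufficient decrease of the form $\hat{f}_{1}(x_{i})-\hat{f}_{1}(x_{i+1})+\varepsilon\ge\hat{f}_{1}(x_{i})-\min_{y}\{\frac{L_{i}}{2}\|y-x_{i}\|^{2}+\phi(x_{i},y)\}$; everything after that is telescoping plus two convexity estimates.

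To obtain this decrease I would combine the safeguards of Algorithm~\ref{alg:gen_det_flex_gnm}: step~4 guarantees $\hat{f}_{1}(x_{i+1})\le\psi_{x_{i},L_{i},\tau_{i}^{*}}(x_{i+1})$ (its inner loop on $L_{i}$ terminates with $L_{i}\le2L_{\hat{F}}$, because by Lemma~\ref{lm:aux_det_upper_model} the local model is a genuine majorant once $L_{i}\ge L_{\hat{F}}$), and step~2 guarantees $\psi_{x_{i},L_{i},\tau_{i}^{*}}(x_{i+1})\le\psi_{x_{i},L_{i},\mathcal{T}_{L_{i}}(x_{i})}(T_{L_{i},\mathcal{T}_{L_{i}}(x_{i})}(x_{i}))+\varepsilon$. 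Since $g_{i}(y):=\frac{L_{i}}{2}\|y-x_{i}\|^{2}+\phi(x_{i},y)$ is $L_{i}$--strongly convex with $g_{i}(x_{i})=\phi(x_{i},x_{i})=\hat{f}_{1}(x_{i})$ and is minimized at $T_{L_{i},\mathcal{T}_{L_{i}}(x_{i})}(x_{i})$, strong convexity yields $\hat{f}_{1}(x_{i})-\min_{y}g_{i}(y)\ge\frac{L_{i}}{2}\|T_{L_{i},\mathcal{T}_{L_{i}}(x_{i})}(x_{i})-x_{i}\|^{2}$. Monotonicity of $\hat{f}_{1}$ along the iterates (step~2 forces $\hat{f}_{1}(x_{i})-\psi_{x_{i},L_{i},\tau_{i}^{*}}(x_{i+1})\ge0$, and $\psi_{x_{i},L_{i},\mathcal{T}_{L_{i}}(x_{i})}(T_{L_{i},\mathcal{T}_{L_{i}}(x_{i})}(x_{i}))\le\hat{f}_{1}(x_{i})$ by \eqref{eq:local_diffs_order}) keeps $x_{i}\in\mathcal{L}(\hat{f}_{1}(x_{0}))\subseteq\mathcal{F}$, so all of this is legitimate.

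For the first inequality I would use $L\le L_{i}\le2L_{\hat{F}}$: since $T_{L,\mathcal{T}_{L}(x)}(x)$ is the proximal point of the convex function $\phi(x,\cdot)$ with parameter $L$, the displacement $\|T_{L,\mathcal{T}_{L}(x)}(x)-x\|$ is non--increasing in $L$, whence $\frac{L_{i}}{2}\|T_{L_{i},\mathcal{T}_{L_{i}}(x_{i})}(x_{i})-x_{i}\|^{2}\ge\frac{L}{2}\|T_{2L_{\hat{F}},\mathcal{T}_{2L_{\hat{F}}}(x_{i})}(x_{i})-x_{i}\|^{2}=\frac{L}{8L_{\hat{F}}^{2}}\|2L_{\hat{F}}(T_{2L_{\hat{F}},\mathcal{T}_{2L_{\hat{F}}}(x_{i})}(x_{i})-x_{i})\|^{2}$; summing $\hat{f}_{1}(x_{i})-\hat{f}_{1}(x_{i+1})+\varepsilon\ge\frac{L}{8L_{\hat{F}}^{2}}\|2L_{\hat{F}}(\dots)\|^{2}$ over $i=0,\dots,k-1$, dividing by $k$ and bounding the average by the minimum gives the first bound. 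For the second bound I would write $\hat{f}_{1}(x)-\min_{y}\{\frac{L}{2}\|y-x\|^{2}+\phi(x,y)\}=\max_{s\ge0}\{\tilde{\Delta}_{s}(x)-\frac{L}{2}s^{2}\}$ (minimize first over $\|y-x\|=s$, then over $s\ge0$, using that $s\mapsto\min_{\|y-x\|\le s}\phi(x,y)$ is convex non--increasing); since $s\mapsto\tilde{\Delta}_{s}(x)$ is then concave non--decreasing with $\tilde{\Delta}_{0}(x)=0$, restricting to $s\in[0,r]$ and applying the chord bound $\tilde{\Delta}_{s}(x)\ge\frac{s}{r}\tilde{\Delta}_{r}(x)$ gives $\max_{s\in[0,r]}\{\frac{s}{r}\tilde{\Delta}_{r}(x)-\frac{L}{2}s^{2}\}=Lr^{2}\varkappa(\tilde{\Delta}_{r}(x)/(Lr^{2}))$ by the elementary optimization defining $\varkappa$; finally $L\mapsto L\varkappa(c/L)$ is non--increasing, so $L_{i}\le2L_{\hat{F}}$ gives $\hat{f}_{1}(x_{i})-\hat{f}_{1}(x_{i+1})+\varepsilon\ge2L_{\hat{F}}r^{2}\varkappa(\tilde{\Delta}_{r}(x_{i})/(2L_{\hat{F}}r^{2}))$, and telescoping, dividing by $k$ and multiplying by $L_{\hat{F}}$ yields the second bound.

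The main obstacle is the trust--region/Tikhonov--regularization equivalence producing the Huber--type $\varkappa$: one must carefully justify that $s\mapsto\min_{\|y-x\|\le s}\phi(x,y)$ is convex and non--increasing (so $\tilde{\Delta}_{s}$ is concave non--decreasing), that the reduction $\min_{y}\{\frac{L}{2}\|y-x\|^{2}+\phi\}=\min_{s\ge0}\{\frac{L}{2}s^{2}+\min_{\|y-x\|\le s}\phi\}$ is exact, and that the two $L$--monotonicities ($\|T_{L,\mathcal{T}_{L}(x)}(x)-x\|$ and $L\varkappa(c/L)$ both non--increasing) hold; the remaining bookkeeping --- termination of the inner $L$--loop, $x_{i+1}\in\mathcal{F}$, and the telescoping --- is routine and essentially identical to the proof of Theorem~\ref{th:DetSublinConvMain}, with $\tau_{k}=\mathcal{T}_{L_{k}}(x_{k})$ playing the role that $\tau_{k}=\hat{f}_{1}(x_{k})$ plays there.
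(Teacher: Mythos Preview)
Your proposal is correct and follows essentially the same route as the paper's own proof: both reduce to the per--iteration decrease $\hat f_1(x_i)-\hat f_1(x_{i+1})+\varepsilon\ge \hat f_1(x_i)-\psi_{x_i,L_i,\mathcal{T}_{L_i}(x_i)}(T_{L_i,\mathcal{T}_{L_i}(x_i)}(x_i))$ via steps~2 and~4 of Algorithm~\ref{alg:gen_det_flex_gnm}, bound the right--hand side from below by $\tfrac{L_i}{2}\|T_{L_i,\mathcal{T}_{L_i}(x_i)}(x_i)-x_i\|^{2}$ and by $L_ir^{2}\varkappa(\tilde\Delta_r(x_i)/(L_ir^{2}))$ respectively, and then telescope using the $L$--monotonicities. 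The only difference is cosmetic: the paper simply cites Lemmas~2.3 and~2.4 of \cite{Nesterov2007} for these two lower bounds and their monotonicity in $L$, whereas you re--derive them in place (strong convexity of $\tfrac{L}{2}\|y-x\|^{2}+\phi(x,y)$ for the first, and the trust--region reduction $\hat f_1(x)-\min_y\{\cdot\}=\max_{s\ge0}\{\tilde\Delta_s(x)-\tfrac{L}{2}s^{2}\}$ together with concavity of $s\mapsto\tilde\Delta_s(x)$ for the second), which makes your argument self--contained.
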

\begin{theorem}\label{th:DetFlexlinConvMain}
    Допустим выполнение предположений \ref{as:det_hat_F_der_smooth} и \ref{as:det_hat_F_PL_condition} для метода Гаусса--Ньютона со схемой реализации \ref{alg:gen_det_flex_gnm}. Тогда для последовательности $\left\{x_{k}\right\}_{k\in\mathbb{Z}_{+}}$ выполняются следующие соотношения:
    \begin{equation*}
        \hat{f}_{1}(x_{k + 1})\leq\varepsilon_{k} + \begin{sqcases}
            \frac{L_{\hat{F}}}{\mu}\hat{f}_{2}(x_{k})\leq\frac{1}{2}\hat{f}_{1}(x_{k})\text{, если }\hat{f}_{1}(x_{k})\leq\frac{\mu}{2L_{\hat{F}}};\\[5pt]
            \hat{f}_{1}(x_{k}) - \frac{\mu}{4L_{\hat{F}}}\text{, иначе}.
        \end{sqcases}
    \end{equation*}
    Если при генерации последовательности $\left\{x_{k}\right\}_{k\in\mathbb{Z}_{+}}$ была зафиксирована $L_{k} = L_{\hat{F}}$, то данные соотношения выражаются по--другому:
    \begin{equation*}
        \hat{f}_{1}(x_{k + 1})\leq\varepsilon_{k} + \begin{sqcases}
            \frac{L_{\hat{F}}}{2\mu}\hat{f}_{2}(x_{k})\leq\frac{1}{2}\hat{f}_{1}(x_{k})\text{, если }\hat{f}_{1}(x_{k})\leq\frac{\mu}{L_{\hat{F}}};\\[5pt]
            \hat{f}_{1}(x_{k}) - \frac{\mu}{2L_{\hat{F}}}\text{, иначе}.
        \end{sqcases}
    \end{equation*}
\end{theorem}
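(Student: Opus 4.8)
The plan is to reduce a single step of scheme~\ref{alg:gen_det_flex_gnm} to an estimate on the scalar $\min_{y\in\mathcal{F}}\{\tfrac{L_k}{2}\|y-x_k\|^{2}+\phi(x_k,y)\}$, and then to bound that minimum from above by evaluating it at a suitably damped Gauss--Newton trial point, using Assumption~\ref{as:det_hat_F_PL_condition} to control the norm of that point.

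First I would unwind one step of the scheme. After the accept test in item~4 has passed, $\hat{f}_{1}(x_{k+1})\le\psi_{x_k,L_k,\tau_k^{*}}(x_{k+1})$; items~2 and~3 give $\psi_{x_k,L_k,\tau_k^{*}}(x_{k+1})\le\varepsilon_k+\psi_{x_k,L_k,\mathcal{T}_{L_k}(x_k)}\bigl(T_{L_k,\mathcal{T}_{L_k}(x_k)}(x_k)\bigr)$; and the definition of $\mathcal{T}_{L}$ together with the identity displayed right after it --- whose relevant part is only an order-of-minimization swap and needs no restriction on $L_k$ --- shows $\psi_{x_k,L_k,\mathcal{T}_{L_k}(x_k)}\bigl(T_{L_k,\mathcal{T}_{L_k}(x_k)}(x_k)\bigr)=\min_{y\in\mathcal{F}}\{\tfrac{L_k}{2}\|y-x_k\|^{2}+\phi(x_k,y)\}$. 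Composing, $\hat{f}_{1}(x_{k+1})\le\varepsilon_k+\min_{y\in\mathcal{F}}\{\tfrac{L_k}{2}\|y-x_k\|^{2}+\phi(x_k,y)\}$. I would also record the structural facts about the scheme that enter below: the capped doubling in item~4 keeps $L_k\le 2L_{\hat{F}}$, that inner loop terminates since for $L_k\ge L_{\hat{F}}$ the accept test holds by Lemma~\ref{lm:aux_det_upper_model}, and $\hat{f}_{1}(x_{k+1})\le\hat{f}_{1}(x_k)\le\hat{f}_{1}(x_0)$; the second displayed system of the theorem is the same argument run with $L_k$ fixed to $L_{\hat{F}}$.

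Next I would choose the trial point. Assumption~\ref{as:det_hat_F_PL_condition} gives $\hat{F}'(x_k)\hat{F}'(x_k)^{*}\succeq\mu I$, so the minimal-norm Gauss--Newton direction $h_0=-\hat{F}'(x_k)^{*}\bigl(\hat{F}'(x_k)\hat{F}'(x_k)^{*}\bigr)^{-1}\hat{F}(x_k)$ is well defined, solves $\hat{F}(x_k)+\hat{F}'(x_k)h_0=\mathbf{0}_{m}$, and satisfies $\|h_0\|^{2}=\bigl\langle(\hat{F}'(x_k)\hat{F}'(x_k)^{*})^{-1}\hat{F}(x_k),\hat{F}(x_k)\bigr\rangle\le\tfrac{1}{\mu}\hat{f}_{2}(x_k)$. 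For $t\in(0,1]$ set $y_t=x_k+t\,h_0\in\mathcal{F}$ (the trial points of the analysis lie in $\mathcal{F}$ by the standing largeness convention); then $\phi(x_k,y_t)=\|(1-t)\hat{F}(x_k)\|=(1-t)\hat{f}_{1}(x_k)$ and $\|y_t-x_k\|^{2}\le\tfrac{t^{2}}{\mu}\hat{f}_{2}(x_k)$, so substituting $y_t$ into the minimum of the previous paragraph yields, for every $t\in(0,1]$, the one-parameter bound $\hat{f}_{1}(x_{k+1})-\varepsilon_k\le(1-t)\hat{f}_{1}(x_k)+\tfrac{L_k t^{2}}{2\mu}\hat{f}_{2}(x_k)$.

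Finally I would optimize in $t$ according to the size of $\hat{f}_{1}(x_k)$. If $\hat{f}_{1}(x_k)\le\tfrac{\mu}{2L_{\hat{F}}}\le\tfrac{\mu}{L_k}$, take $t=1$: the right-hand side is $\tfrac{L_k}{2\mu}\hat{f}_{2}(x_k)\le\tfrac{L_{\hat{F}}}{\mu}\hat{f}_{2}(x_k)$, and since $\hat{f}_{2}=\hat{f}_{1}^{2}$ the hypothesis turns this into $\le\tfrac12\hat{f}_{1}(x_k)$, giving the first branch. If $\hat{f}_{1}(x_k)>\tfrac{\mu}{2L_{\hat{F}}}$, take $t=\min\{1,\tfrac{\mu}{L_k\hat{f}_{1}(x_k)}\}$: in the interior case $t=\tfrac{\mu}{L_k\hat{f}_{1}(x_k)}<1$ the quadratic evaluates to $\hat{f}_{1}(x_k)-\tfrac{\mu}{2L_k}\le\hat{f}_{1}(x_k)-\tfrac{\mu}{4L_{\hat{F}}}$, while in the boundary case $\tfrac{\mu}{2L_{\hat{F}}}<\hat{f}_{1}(x_k)\le\tfrac{\mu}{L_k}$ the choice $t=1$ gives $\tfrac{L_k}{2\mu}\hat{f}_{2}(x_k)\le\tfrac12\hat{f}_{1}(x_k)\le\hat{f}_{1}(x_k)-\tfrac{\mu}{4L_{\hat{F}}}$, giving the second branch; together with $\varepsilon_k$ this is the first displayed system. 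The $L_k=L_{\hat{F}}$ statement comes from the identical case split with $L_k$ replaced by $L_{\hat{F}}$, which removes the factor-two slack and pushes the thresholds to $\tfrac{\mu}{L_{\hat{F}}}$ and $\tfrac{\mu}{2L_{\hat{F}}}$. The main obstacle is the bookkeeping of the second paragraph --- certifying that the adaptively chosen model still dominates $\hat{f}_{1}$ at $x_{k+1}$ and that $x_{k+1}$ attains $\min_{y}\{\tfrac{L_k}{2}\|y-x_k\|^{2}+\phi(x_k,y)\}$ up to $\varepsilon_k$; this is exactly what Lemma~\ref{lm:aux_det_upper_model}, the chain~\eqref{eq:local_models_orders}, and the inclusion $\mathcal{L}(\hat{f}_{1}(x_0))\subseteq\mathcal{F}$ supply, after which the remainder is elementary one-dimensional optimization.
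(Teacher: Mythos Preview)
Your proposal is correct and follows essentially the same approach as the paper: reduce one step of scheme~\ref{alg:gen_det_flex_gnm} via the accept test and the $\mathcal{T}_{L_k}$ identity to $\hat{f}_{1}(x_{k+1})\le\varepsilon_k+\min_y\{\tfrac{L_k}{2}\|y-x_k\|^2+\phi(x_k,y)\}$, substitute the damped minimal-norm Gauss--Newton direction $th_0$ with the PL bound $\|h_0\|^2\le\hat{f}_2(x_k)/\mu$, and optimize the resulting quadratic in $t\in[0,1]$. The only presentational difference is that the paper packages the optimization over $t$ through the auxiliary function $\varkappa$ from Lemma~\ref{lm:aux_det_local_decrease_2} and then uses its monotonicity in $L_k$, whereas you do the explicit case split directly (including the intermediate regime $\tfrac{\mu}{2L_{\hat F}}<\hat{f}_1(x_k)\le\tfrac{\mu}{L_k}$); both routes arrive at the same two branches.
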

В теореме \ref{th:DetFlexSublinConv} скорость сходимости не отличается от результатов в теореме \ref{th:DetSublinConv}. А теорема \ref{th:DetFlexlinConv} в своих следствиях \ref{th:DetFlexlinConvCor1} и \ref{th:DetFlexlinConvCor2} наглядно демонстрирует уменьшение необходимого количества итераций для достижения $\epsilon$--уровня функции $\hat{f}_{1}$ по сравнению с теоремой \ref{th:glob_sub_lin_and_lin_conv}, однако на практике часто это означает усложнение вычисления каждой итерации, что приходится соизмерять для определения наиболее оптимальных гиперпараметров решения задачи, тем не менее, в схеме \ref{alg:gen_det_flex_gnm} представлен универсальный способ учёта сложности каждой итерации для извлечения преимуществ из явного алгоритма вычисления точки минимума локальной модели $\psi_{x, L, \mathcal{T}_{L}(x)}(\cdot)$.

Минимизация величины $\psi_{x_{k}, L_{k}, \tau}(\mathcal{X}(x_{k}, L_{k}, \tau))$ по $\tau$ может быть достаточно трудоёмкой процедурой. Более того, отображение $\mathcal{X}(\cdot)$ может быть негладким по $\tau > 0$, а в случае дифференцируемости по $\tau$ практическая реализация может представлять собой разновидность алгоритма распространения ошибки через итерации метода оптимизации, осуществляющего аппроксимацию отображения $T_{L_{k}, \tau}(x_{k})$. При использовании правила вычисления $x_{k + 1}$ \eqref{eq:det_expl_update_rule} в качестве отображения $\mathcal{X}(\cdot)$ величина $\psi_{x_{k}, L_{k}, \tau}(\mathcal{X}(x_{k}, L_{k}, \tau))$ принимает следующий вид:
\begin{equation*}
    \scalemath{0.98}{
    \begin{aligned}
        &\psi_{x_{k}, L_{k}, \tau}(\mathcal{X}(x_{k}, L_{k}, \tau)) = \frac{\tau}{2} + \frac{\hat{f}_{2}(x_{k})}{2\tau} - \frac{\eta_{k}(2 - \eta_{k})}{2\tau}\left\langle\left(\hat{F}^{'}(x_{k})^{*}\hat{F}^{'}(x_{k}) + \tau L_{k}I_{n}\right)^{-1}\hat{F}^{'}(x_{k})^{*}\hat{F}(x_{k}),~\hat{F}^{'}(x_{k})^{*}\hat{F}(x_{k})\right\rangle.\\
    \end{aligned}
    }
\end{equation*}
Для $\eta_{k} = 1$ данная функция является строго выпуклой по $\tau$, так как локальная модель $\psi_{x, L, \tau}(y)$ строго выпукла по $\tau$ и сильно выпукла по $y$, а $\psi_{x_{k}, L_{k}, \tau}(\mathcal{X}(x_{k}, L_{k}, \tau))$ представляет собой проекцию по $y$ локальной модели $\psi_{x, L, \tau}(y)$ (Theorem 3.1.7, \cite{NesterovLectures}), поэтому в рассматриваемом случае приближение оптимального $\tau^{*}_{k} > 0$ можно эффективно найти с помощью стандартных средств линейной алгебры и выпуклой оптимизации, в частности, процедур одномерного поиска.

\section{Стохастическая модификация метода Гаусса--Ньютона}\label{sec:stoch_modified_gnm}

\subsection{Стохастическая локальная модель}

В данной работе предлагается решение задачи \eqref{eq:main_opt_problem} в режиме стохастической аппроксимации, в котором локальная модель на каждой итерации оценивается по сэмплированному подмножеству функций из системы \eqref{eq:smooth_system}. То есть природа стохастичности, возникающая в задаче \eqref{eq:main_opt_problem} при оптимизации с помощью стохастического оракула, порождена сэмплированием начального приближения $x_{0}\in E_{1}$ и батчированием функций из конечной генеральной совокупности. Как следствие, стохастичность в оценке функции $\hat{f}_{1}(x)$ заложена взятием подмножества функций из $F$ в виде батча $B$ размера $|B| = b\in\{1,~\dots, m\}$ независимо на каждой итерации и из одного и того же распределения:
\begin{equation*}
    \begin{aligned}
        B &= \left\{F_{i_{j}}(x)\middle|~j\in\{1,~\dots, b\},~i_{j}\in \left\{1,~\dots, m\right\}\right\}.
    \end{aligned}
\end{equation*}
Батч $B$ может быть и мультимножеством, если сэмплировать $i_{j}$ с возвращением. Дополнительно зададим $B_{k}$, $k\in\mathbb{Z}_{+}$ --- батч размера $b$, сэмплированный на $k$--ой итерации метода оптимизации. Рассматривается только независимое от $x_{k},~k\in\mathbb{Z}_{+}$ сэмплирование без возвращения в генеральной совокупности $\mathcal{B}$ размера $m$ с равновероятными подмножествами размера $b$:
$$\mathcal{B} = \left\{F_{i}(x)\middle|~i\in\{1,~\dots,m\}\right\}.$$
Определим соответствующие подмножеству функций $B$ вектор $\hat{G}(x, B)$ и матрицу Якоби $\hat{G}^{'}(x, B)$:
\begin{equation*}
    \begin{aligned}
        \hat{G}(x, B) &\overset{\operatorname{def}}{=} \frac{1}{\sqrt{b}}\left(F_{i_{1}}(x),~\dots, F_{i_{b}}(x)\right)^{*};\\
        \hat{G}^{'}(x, B) &\overset{\operatorname{def}}{=} \frac{1}{\sqrt{b}}(\nabla F_{i_{1}}(x),~\dots, \nabla F_{i_{b}}(x))^{*}.
    \end{aligned}
\end{equation*}
Тогда всей выборке функций соответствуют следующий вектор $\hat{F}(x)$ и его матрица Якоби $\hat{F}^{'}(x)$:
\begin{equation*}
    \begin{aligned}
        \hat{F}(x) &= \frac{1}{\sqrt{m}}\left(F_{1}(x),~\dots, F_{m}(x)\right)^{*} \overset{\operatorname{def}}{=} \hat{G}(x, \mathcal{B});\\
        \hat{F}^{'}(x) &= \frac{1}{\sqrt{m}}(\nabla F_{1}(x),~\dots, \nabla F_{m}(x))^{*} \overset{\operatorname{def}}{=} \hat{G}^{'}(x, \mathcal{B}).
    \end{aligned}
\end{equation*}
Фактически таким определением для $\hat{G}(x, \mathcal{B})$ зафиксировали единственно возможный порядок функций, хотя в результате сэмплирования без возвращения он может быть иным, но данный момент не принципиален для дальнейшего анализа в силу определения функции $\hat{f}_{1}$. В связи с этим введём оценки функций $\hat{f}_{1},~\hat{f}_{2}$ по батчу функций $B$:
\begin{equation*}
    \begin{aligned}
        &\hat{g}_{1}(x, B) \overset{\operatorname{def}}{=} \left\|\hat{G}(x, B)\right\|;\\
        &\hat{g}_{2}(x, B) \overset{\operatorname{def}}{=} \left(\hat{g}_{1}(x, B)\right)^{2}.
    \end{aligned}
\end{equation*}

Введённые функции позволяют задать \textit{стохастическую локальную модель} функции $\hat{f}_{1}$, но уже в качестве ограничивающей параболы функции $\hat{g}_{1}$ (лемма \ref{lm:aux_upper_model}):
\begin{equation*}
    \begin{aligned}
        \hat{g}_{1}(y, B)\leq&\hat{\psi}_{x, L, \tau}(y, B) \overset{\operatorname{def}}{=} \frac{\tau}{2} + \frac{L}{2}\left\|y - x\right\|^{2} + \frac{1}{2\tau}\left\|\hat{G}(x, B) + \hat{G}^{'}(x, B)(y - x)\right\|^{2},~L\geq L_{\hat{F}},\\
        &(x, y)\in E_{1}^{2},~\tau>0,~B\subseteq\mathcal{B}.
    \end{aligned}
\end{equation*}

Данная локальная модель предлагает естественный оракул для поиска приближения решения задачи \eqref{eq:main_opt_problem}:
$$x_{k + 1} = \hat{T}_{L_{k}, \tau_{k}}(x_{k}, B_{k}) \overset{\operatorname{def}}{=} \argmin\limits_{y\in E_{1}}\left\{\hat{\psi}_{x_{k}, L_{k}, \tau_{k}}(y, B_{k})\right\},~\tau_{k}>0,~L_{k} > 0,~k\in\mathbb{Z}_{+}.$$

В этой работе рассматривается случай, в котором стохастический оракул отличается от предложенного в Method of Stochastic Squares \cite{Nesterov2020}. В данном случае точка $x_{k + 1}$ целиком оценивается по батчу функций из $\hat{F}$, поэтому рассматриваемый вариант метода стохастических квадратов (Method of Stochastic Squares) и называется \textit{методом трёх стохастических квадратов}, и в этом методе рассматриваются следующие оценки на функции $\hat{f}_{1}$ и $\hat{f}_{2}$:
\begin{equation*}
    \begin{aligned}
        \hat{f}_{1}(x) &= \hat{g}_{1}(x, \mathcal{B}) = \sqrt{\mathbb{E}_{q}\left[\frac{1}{b}\sum\limits_{j = 1}^{b}\left(F_{i_{j}}(x)\right)^{2}\right]}\geq\left\{\text{неравенство Йенсена, $\sqrt{\cdot}$ --- вогнутая}\right\}\geq\\
        &\geq\mathbb{E}_{q}\left[\sqrt{\frac{1}{b}\sum\limits_{j = 1}^{b}\left(F_{i_{j}}(x)\right)^{2}}~\right] = \mathbb{E}_{q}\left[\hat{g}_{1}(x, B)\right];\\
        \hat{f}_{2}(x) &= \hat{g}_{2}(x, \mathcal{B}) =  \mathbb{E}_{q}\left[\frac{1}{b}\sum\limits_{j = 1}^{b}\left(F_{i_{j}}(x)\right)^{2}\right] = \mathbb{E}_{q}\left[\hat{g}_{2}(x, B)\right].
    \end{aligned}
\end{equation*}
Символ $q$ обозначает равномерное распределение на подмножества индексов координат вектор--функции $\hat{F}$ размера $b$. Данную стохастичность можно обобщить и на бесконечные генеральные совокупности, рассмотрев, например, следующие формы риска ($i_{j}$ и $\xi\in\left\{1,~\dots, m\right\}$ --- сэмплы, подчиняющиеся вероятностному закону $q$):
\begin{equation*}
    \begin{aligned}
        \hat{f}_{1}(x) &= \sqrt{\mathbb{E}_{q}\left[\tilde{F}_{\xi}(x)\right]}\geq\left\{\text{неравенство Йенсена}\right\}\geq \mathbb{E}_{q}\left[\sqrt{\frac{1}{b}\sum\limits_{j = 1}^{b}\tilde{F}_{i_{j}}(x)}~\right] = \mathbb{E}_{q}\left[\hat{g}_{1}(x, B)\right];\\
        \hat{f}_{2}(x) &= \mathbb{E}_{q}\left[\tilde{F}_{\xi}(x)\right] = \mathbb{E}_{q}\left[\frac{1}{b}\sum\limits_{j = 1}^{b}\tilde{F}_{i_{j}}(x)\right] = \mathbb{E}_{q}\left[\hat{g}_{2}(x, B)\right],~\tilde{F}_{\xi}(x) \overset{\operatorname{def}}{=} \left(F_{\xi}(x)\right)^{2}.
    \end{aligned}
\end{equation*}
Замена конечной суммы по компонентам $\hat{f}_{1}$ на математическое ожидание приводит задачу \eqref{eq:main_opt_problem} к разновидности задачи минимизации среднего риска с потенциально бесконечной генеральной совокупностью $\mathcal{B}$:
\begin{equation}\label{eq:main_risk_problem}
    \min\limits_{x\in E_{1}}\left\{\hat{f}_{1}(x) = \sqrt{\mathbb{E}_{q}\left[\left(F_{\xi}(x)\right)^{2}\right]}\right\}.
\end{equation}
Однако для задачи \eqref{eq:main_risk_problem} в случае бесконечной генеральной совокупности $\mathcal{B}$ не определён нормированный вектор $\hat{F}(x)$. Для решения подобного вида задач методом трёх стохастических квадратов необходимо сменить вид оптимизируемого функционала, например, на тот, в котором размерность матрицы Якоби не зависит от количества сэмплов в оценке риска.

\subsection{Использованные предположения}

Для стохастической версии метода Гаусса--Ньютона вместо предположений \ref{as:det_hat_F_der_smooth} и \ref{as:det_hat_F_PL_condition} используются следующие.

\begin{assumption}[Липшиц--непрерывность функций из $F$]\label{as:1}
    Существуют конечные $L_{\hat{F}} > 0,~l_{\hat{F}} > 0,$ для которых 
    \begin{equation*}
        \begin{aligned}
            \left\|\nabla F_{i}(x) - \nabla F_{i}(y)\right\|&\leq L_{\hat{F}}\left\|x - y\right\|,~\left|\left(F_{i}(x)\right)^{2} - \left(F_{i}(y)\right)^{2}\right|\leq l_{\hat{F}}\|x - y\|,~\forall (x, y)\in E_{1}^{2},~\forall i\in\overline{1, m}.
        \end{aligned}
    \end{equation*}
\end{assumption}
В отличие от предположений, введённых в работе \cite{Nesterov2020}, в предположении \ref{as:1} липшицевость рассматривается относительно отдельной функции из выборки. Стоит отметить, что в случае бесконечно большой генеральной совокупности $\mathcal{B}$ предположение \ref{as:1} выполняется в смысле почти наверно.

\begin{assumption}[Ограниченность нормы матрицы Якоби $\hat{G}^{'}$]\label{as:2}
    Существует конечное $M_{\hat{G}} > 0,$ для которого $\left\|\hat{G}^{'}(x, B)\right\|\leq M_{
    \hat{G}}$ при любых $x\in E_{1}$ и $B\subseteq\mathcal{B}$, $|B| = b\in\overline{1, m}$. Для $b = m$ и при сэмплировании батча функций без возвращения существует конечное $M_{\hat{F}} > 0,$ для которого $\left\|\hat{F}^{'}(x)\right\|\leq M_{
    \hat{F}}$ при всех $x\in E_{1}$.
\end{assumption}

Предположение \ref{as:2} так же ограничивает максимальное собственное значение матрицы $\hat{G}^{'}(x, B)^{*}\hat{G}^{'}(x, B)$ значением $M_{\hat{G}}^{2}$ по свойству операторной нормы, что ограничивает сверху сингулярные числа матрицы $\hat{G}^{'}(x, B)$ значением $M_{\hat{G}}$ (лемма \ref{lm:aux_matrix_power_order}).

\begin{assumption}[Ограниченность значения функции $\hat{g}_{1}$]\label{as:3}
    Существует конечное $P_{\hat{g}_{1}} > 0,$ для которого\\$\hat{g}_{1}(x, B)\leq P_{
    \hat{g}_{1}}$, при любых $x\in E_{1}$ и $B\subseteq\mathcal{B},~|B| = b\in\overline{1, m}$. Для $b = m$ и при сэмплировании батча функций без возвращения существует конечное $P_{\hat{f}_{1}} > 0,$ для которого $\left\|\hat{f}_{1}(x)\right\|\leq P_{
    \hat{f}_{1}}$ при всех $x\in E_{1}$.
\end{assumption}
Предположение \ref{as:3} явно задаёт ограниченность множества значений в функциях $\hat{f}_{1}$ и $\hat{g}_{1}$, в то время как в работе \cite{Nesterov2020} неявно предполагается, что $\hat{f}_{1}(x_{0}) < +\infty$, из--за чего все использованные значения функций были ограничены сверху $\hat{f}_{1}(x_{0})$ в силу нахождения в множестве уровня $\hat{f}_{1}(x_{0})$. Так же, как и предположение \ref{as:1}, предположение \ref{as:3} в случае бесконечно большой генеральной совокупности $\mathcal{B}$ выполняется в смысле почти наверно. Выполнение предположений \ref{as:2} и \ref{as:3} влечёт за собой липшицевость $\left(F_{i}(x)\right)^{2}$ из предположения \ref{as:1} и липшицевость функции $\hat{g}_{2}(x, B)$ (лемма \ref{lm:aux_lipschitz}). А по свойству Липшиц--непрерывности наилучшее (наименьшее) значение постоянной Липшица функции $\hat{g}_{2}(x, B)$ равно $\sup\limits_{x\in E_{1}}\left\{\left\|\nabla_{x}\hat{g}_{2}(x, B)\right\|\right\}$ \cite{Jordan2020} и ограничено:
\begin{equation}\label{eq:lipschitz_upper_bound}
    \sup\limits_{x\in E_{1}}\left\{\left\|\nabla_{x}\hat{g}_{2}(x, B)\right\|\right\}\leq\min\left\{l_{\hat{F}},~2M_{\hat{G}}P_{\hat{g}_{1}}\right\},~\forall B\subseteq\mathcal{B},
\end{equation}
так как
\begin{equation*}
    \begin{aligned}
        &\left|\hat{g}_{2}(z, B) - \hat{g}_{2}(y, B)\right|\leq\underbrace{\sup\limits_{x\in E_{1}}\left\{\left\|\nabla_{x}\hat{g}_{2}(x, B)\right\|\right\}}_{\leq l_{\hat{F}}\text{ (лемма \ref{lm:aux_lipschitz})}}\left\|z - y\right\|,~\forall (y, z)\in E_{1}^{2},~\forall B\subseteq\mathcal{B}
    \end{aligned}
\end{equation*}
и
\begin{equation*}
    \begin{aligned}
        \sup\limits_{x\in E_{1}}\left\{\left\|\nabla_{x}\hat{g}_{2}(x, B)\right\|\right\} &= \sup\limits_{x\in E_{1}}\left\{\left\|2\hat{G}^{'}(x, B)^{*}\hat{G}(x, B)\right\|\right\}\leq2\sup\limits_{x\in E_{1}}\left\{\left\|\hat{G}^{'}(x, B)\right\|\left\|\hat{G}(x, B)\right\|\right\}\leq2M_{\hat{G}}P_{\hat{g}_{1}},~\forall B\subseteq\mathcal{B},
    \end{aligned}
\end{equation*}
причём норма $\|~\|$ для $z - y$ и норма $\|~\|$ для $\nabla_{x}\hat{g}_{2}(x, B)$ являются сопряжёнными друг к другу, но в данном случае они совпадают в силу того, что они евклидовы.

\begin{assumption}[Ограниченность дисперсии значения $\hat{g}_{2},~b=1$]\label{as:4}
    Существует конечное $\sigma > 0$, для которого $\mathbb{E}_{q}\left[\left|\hat{g}_{2}(x, B) - \hat{f}_{2}(x)\right|^{2}\right]\leq\sigma^{2}$ для всех $x\in E_{1}$ и $B\subseteq\mathcal{B},~|B| = 1$.
\end{assumption}

Предположение \ref{as:4} автоматически выполняется при выполнении предположения \ref{as:3}, поэтому оно в данном случае внесено из соображений удобства для дальнейшего анализа.

\begin{assumption}[Условие Поляка--Лоясиевича для матрицы Якоби $\hat{G}^{'}$]\label{as:5}
    Существует конечное $\mu > 0$, для которого $\hat{G}^{'}(x, B)\hat{G}^{'}(x, B)^{*}\succeq\mu I_{b}$ для всех $x\in E_{1}$ и $B\subseteq\mathcal{B},~|B| = b\leq \min\left\{m,~n\right\}$. Отношение порядка <<$\succeq$>> выполнено на конусе неотрицательно определённых матриц.
\end{assumption}

Предположение \ref{as:5} ограничивает снизу операторную норму матрицы Якоби $\hat{G}^{'}(x, B)^{*}$ (лемма \ref{lm:aux_matrix_power_order}), однако выполнение данного условия не всегда возможно в случае $n < m$, так как можно построить пример, в котором сэмплированием без возвращения в один батч могли бы попасть векторы, совокупно формирующие линейно зависимую систему векторов (например, используя одновременно диагонализуемые матрицы). При этом модельный пример с $m > n,~n = 2$, в котором любая пара векторов формирует базис, позволяет из этих векторов строить матрицы $n\times b,~b\leq n$ с ненулевым минимальным сингулярным числом не меньше некоторого $\sqrt{\mu} > 0$. Важно понимать, что $\hat{G}^{'}(x, B),~\hat{g}_{2}(x, B)$ и $\hat{g}_{2}^{*}(B)$, будучи случайными величинами, построены на одном и том же батче $B$. Сама функция $\hat{g}_{2}$ может быть \textit{невыпуклой}. Также нетрудно заметить, что для выполнения условия Поляка--Лоясиевича необходимо выбирать в батчи функции сэмплированием без возвращения. Иначе у матрицы $\hat{G}^{'}(x, B)^{*}$ была бы обязательно положительная вероятность наличия нулевого сингулярного числа.

\subsection{Схема процесса оптимизации}

Согласно введённой стохастической локальной модели, в работе рассматриваются три основные стратегии выбора нового приближения $x_{k + 1}$ по известному $x_{k}$.
\begin{equation}\label{eq:stoch_direct_update_rule}
    \begin{aligned}
        &x_{k + 1} = x_{k} - \eta_{k}\left(\hat{G}^{'}(x_{k}, B_{k})^{*}\hat{G}^{'}(x_{k}, B_{k}) + \tau_{k}L_{k}I_{n}\right)^{-1}\hat{G}^{'}(x_{k}, B_{k})^{*}\hat{G}(x_{k}, B_{k}),~\eta_{k}\in\mathbb{R}.
    \end{aligned}
\end{equation}
Правило вычисления $x_{k + 1}$ \eqref{eq:stoch_direct_update_rule} выводится из прямой минимизации $\hat{\psi}_{x_{k}, L_{k}, \tau_{k}}(y, B_{k})$ по $y\in E_{1}$ и при $\eta_{k}~=~1$ значение $y = x_{k + 1}$ является точкой минимума для стохастической локальной модели. В следующей стратегии вычисления $x_{k + 1}$ уже используются два батча функций:
\begin{equation}\label{eq:double_stoch_direct_update_rule}
    \begin{aligned}
        x_{k + 1} &= x_{k} - \eta_{k}\left(\hat{G}^{'}(x_{k}, \tilde{B}_{k})^{*}\hat{G}^{'}(x_{k}, \tilde{B}_{k}) + \tilde{\tau}_{k}L_{k}I_{n}\right)^{-1}\hat{G}^{'}(x_{k}, B_{k})^{*}\hat{G}(x_{k}, B_{k}),~\eta_{k}\in\mathbb{R},\\
        &\tilde{B}_{k}\subseteq\mathcal{B}\text{ и }B_{k}\text{ независимо сэмплированы},~\tilde{\tau}_{k} > 0.
    \end{aligned}
\end{equation}
Значение $x_{k + 1}$, оценённое с помощью \eqref{eq:double_stoch_direct_update_rule}, выводится из другого представления стохастической локальной модели, в котором градиент и гессиан оценены на различных батчах, а $x_{k + 1}$ вычисляется через масштабированный в $\eta_{k}$ раз шаг метода Ньютона:
\begin{equation*}
    \begin{aligned}
        \hat{\psi}_{x_{k}, L_{k}, \tilde{\tau}_{k}}(y, \tilde{B}_{k}) &= \frac{\tilde{\tau}_{k}}{2} + \frac{L_{k}}{2}\left\|y - x_{k}\right\|^{2} + \frac{1}{2\tilde{\tau}_{k}}\left\|\hat{G}(x_{k}, \tilde{B}_{k}) + \hat{G}^{'}(x_{k}, \tilde{B}_{k})(y - x_{k})\right\|^{2} = \left(\frac{\tilde{\tau}_{k}}{2} + \frac{\hat{g}_{2}(x_{k}, \tilde{B}_{k})}{2\tilde{\tau}_{k}}\right) +\\
        &+ \left\langle \frac{\hat{G}^{'}(x_{k}, \tilde{B}_{k})^{*}\hat{G}(x_{k}, \tilde{B}_{k})}{\tilde{\tau}_{k}},~y - x_{k}\right\rangle +\\
        &+ \frac{1}{2}\left\langle\left(\frac{\hat{G}^{'}(x_{k}, \tilde{B}_{k})^{*}\hat{G}^{'}(x_{k}, \tilde{B}_{k})}{\tilde{\tau}_{k}} + L_{k}I_{n}\right)(y - x_{k}),~y - x_{k}\right\rangle\Rightarrow
    \end{aligned}
\end{equation*}

\begin{equation*}
    \begin{aligned}
        \Rightarrow\hat{\psi}_{x_{k}, L_{k}, \tilde{\tau}_{k}}(y, \tilde{B}_{k})&\approx\left(\frac{\tilde{\tau}_{k}}{2} + \frac{\hat{g}_{2}(x_{k}, \tilde{B}_{k})}{2\tilde{\tau}_{k}}\right) + \left\langle \frac{\hat{G}^{'}(x_{k}, B_{k})^{*}\hat{G}(x_{k}, B_{k})}{\tilde{\tau}_{k}},~y - x_{k}\right\rangle +\\
        &+ \frac{1}{2}\left\langle\left(\frac{\hat{G}^{'}(x_{k}, \tilde{B}_{k})^{*}\hat{G}^{'}(x_{k}, \tilde{B}_{k})}{\tilde{\tau}_{k}} + L_{k}I_{n}\right)(y - x_{k}),~y - x_{k}\right\rangle.
    \end{aligned}
\end{equation*}
При этом оценка градиента $\nabla_{y}\hat{\psi}_{x_{k}, L_{k}, \tilde{\tau}_{k}}(y, \tilde{B}_{k})$ в точке $y = x_{k}$ по батчу $B_{k}$ не смещена. Будем называть такой шаг метода в \eqref{eq:double_stoch_direct_update_rule} \textit{дважды стохастическим}. Стратегия \eqref{eq:stoch_approx_general_update_rule} является наиболее общей и включает в себя, как минимум, полностью стратегии вида \eqref{eq:stoch_direct_update_rule}:
\begin{equation}\label{eq:stoch_approx_general_update_rule}
    \begin{aligned}
        x_{k + 1}\in E_{1}:~&0\leq\hat{\psi}_{x_{k}, L_{k}, \tau_{k}}(x_{k + 1}, B_{k}) - \hat{\psi}_{x_{k}, L_{k}, \tau_{k}}(\hat{T}_{L_{k}, \tau_{k}}(x_{k}, B_{k}), B_{k})\leq\varepsilon_{k},\\
        &\hat{g}_{1}(x_{k}, B_{k}) - \hat{\psi}_{x_{k}, L_{k}, \tau_{k}}(x_{k + 1}, B_{k})\geq 0.
    \end{aligned}
\end{equation}
Как и в детерминированном случае, в способе \eqref{eq:stoch_approx_general_update_rule} оракул представлен в виде <<чёрного ящика>> и на практике может оказаться другим итерационным методом оптимизации, минимизирующим функционал $\hat{\psi}_{x_{k}, L_{k}, \tau_{k}}(\cdot, B_{k})$.

Предлагаемая в качестве стохастического метода оптимизации для решения задачи \eqref{eq:main_opt_problem} схема стохастической аппроксимации \ref{alg:gen_stoch_gnm} представляет собой прямую модификацию схемы \ref{alg:gen_det_gnm}, разработанной для детерминированного случая. Стоит заметить, что в теории данная схема корректна при $\gamma\geq1$, однако на практике сходимость оценивается при $\gamma\geq2$ в силу особенности поиска локальной постоянной Липшица: если $L_{k}\in\left[\frac{L_{\hat{F}}}{2},~L_{\hat{F}}\right]$, то для $2L_{k}\in\left[L_{\hat{F}},~2L_{\hat{F}}\right]$ локальная модель всегда корректно определена.

\RestyleAlgo{boxruled}
\begin{algorithm}[!ht]{}
\caption{\textbf{Общий метод трёх стохастических квадратов с неточным проксимальным отображением}}
\label{alg:gen_stoch_gnm}
\textbf{Вход:}
    \begin{equation*}
        \begin{cases}
            x_{0}\in E_{1}\text{ --- начальное приближение},~x_{-1} = x_{0};\\
            \mathcal{E}(\cdot)\text{ --- функция погрешности проксимального отображения};\\
            N\in\mathbb{N}\text{ --- количество итераций метода};\\
            \gamma\geq1\text{ --- фактор верхней границы поиска }L_{\hat{F}};\\
            L\text{ --- оценка локальной постоянной Липшица},~L\in(0,~\gamma L_{\hat{F}}],~L_{0} = L;\\
            \mathcal{T}(\cdot)\text{ --- функция, определяющая значение $\tau$};\\
            \mathcal{B}\text{ --- выборка функций};\\
            b\in\overline{1, m}\text{ --- размер $B_{k}\subseteq\mathcal{B},~k\in\mathbb{Z}_{+}$}.
        \end{cases}
    \end{equation*}
    \vspace{0.2cm}
    \textbf{Повторять для $k = 0, 1,~\dots, N - 1$:}
    \begin{itemize}
        \item[1.] сэмплировать батч $B_{k}$ из $\mathcal{B}$ размера $b$;
        \item[2.] определить $\tau_{k} = \mathcal{T}(x_{k}, L_{k}, B_{k})$, $\varepsilon_{k} = \mathcal{E}(k, x_{k}, x_{k - 1}, B_{k})$;
        \item[3.] вычислить $x_{k + 1}\in E_{1}$ согласно одному из выбранных изначально правил: \eqref{eq:stoch_direct_update_rule} или \eqref{eq:stoch_approx_general_update_rule};
        \item[4.] если $\hat{g}_{1}(x_{k + 1}, B_{k}) > \hat{\psi}_{x_{k}, L_{k}, \tau_{k}}(x_{k + 1}, B_{k})$, то положить $L_{k} := \min\left\{2L_{k}, \gamma L_{\hat{F}}\right\}$ и\\вернуться к пункту 2;
        \item[5.] $L_{k + 1} = \max\left\{\frac{L_{k}}{2},~L\right\}$.
    \end{itemize}
    \vspace{0.2cm}

    \textbf{Выход:} $x_{N}$.
\end{algorithm}

\section{Анализ сходимости}\label{sec:stoch_modified_gnm_analysis}

\subsection{Использование масштабированного шага}

Использование правила \eqref{eq:stoch_direct_update_rule} с $\tau_{k} = \hat{g}_{1}(x_{k}, B_{k})$ вместе с линейным поиском $L_{k}\in[L, \gamma L_{\hat{F}}],~L\in(0, \gamma L_{\hat{F}}]$, $\gamma\geq1$ позволяет добиться сходимости в терминах среднего, что формально изложено в теоремах \ref{th:3} и \ref{th:4} ниже.

\begin{theorem}\label{th:3_main}
    Пусть выполнены предположения \ref{as:1}, \ref{as:2}, \ref{as:3}, \ref{as:4}. Рассмотрим метод Гаусса--Ньютона со схемой реализации \ref{alg:gen_stoch_gnm}, в которой последовательность $\{x_{k}\}_{k\in\mathbb{Z}_{+}}$ вычисляется по правилу \eqref{eq:stoch_direct_update_rule} с $\tau_{k} = \hat{g}_{1}(x_{k}, B_{k})$,\\$\eta_{k}\in[\eta, 1]$, $\eta\in(0, 1]$. Тогда:
    \begin{equation}\label{eq:main_stoch_sub_lin_conv_1}
        \begin{aligned}
            \mathbb{E}\left[\min\limits_{i\in\overline{0, k - 1}}\left\|\nabla\hat{f}_{2}(x_{i})\right\|^{2}\right]&\leq\frac{8\left(M_{\hat{G}}^{2} + \gamma P_{\hat{g}_{1}}L_{\hat{F}}\right)}{\eta(2 - \eta)}\left(\frac{\mathbb{E}\left[\hat{f}_{2}(x_{0})\right]}{k} + 2l_{\hat{F}}\min\left\{\sqrt{\frac{2P_{\hat{g}_{1}}}{L}},~\frac{M_{\hat{G}}}{L}\right\}\mathds{1}_{\left\{b < m\right\}} + \tilde{\sigma}\sqrt{\frac{1}{b} - \frac{1}{m}}\right),\\
            k&\in\mathbb{N}.
        \end{aligned}
    \end{equation}
    Оператор математического ожидания $\mathbb{E}\left[\cdot\right]$ усредняет по всей случайности процесса оптимизации.
\end{theorem}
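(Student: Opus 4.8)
Предлагаемый план --- получить одношаговое неравенство приближённого убывания для функции $\hat{f}_{2}$ и затем просуммировать его по итерациям. Сначала вычислим значение локальной модели в точке, порождённой правилом \eqref{eq:stoch_direct_update_rule}. Обозначим $g_{k} = \hat{G}^{'}(x_{k}, B_{k})^{*}\hat{G}(x_{k}, B_{k})$, так что $\|g_{k}\|^{2} = \frac{1}{4}\|\nabla\hat{g}_{2}(x_{k}, B_{k})\|^{2}$, и $H_{k} = \hat{G}^{'}(x_{k}, B_{k})^{*}\hat{G}^{'}(x_{k}, B_{k}) + \tau_{k}L_{k}I_{n}$. Раскрывая квадратичную по $y$ модель $\hat{\psi}_{x_{k}, L_{k}, \tau_{k}}(\cdot, B_{k})$ как в разделе~\ref{sec:stoch_modified_gnm} и подставляя $y = x_{k + 1} = x_{k} - \eta_{k}H_{k}^{-1}g_{k}$, получаем
$$\hat{\psi}_{x_{k}, L_{k}, \tau_{k}}(x_{k + 1}, B_{k}) = \frac{\tau_{k}}{2} + \frac{\hat{g}_{2}(x_{k}, B_{k})}{2\tau_{k}} - \frac{\eta_{k}(2 - \eta_{k})}{2\tau_{k}}\langle g_{k}, H_{k}^{-1}g_{k}\rangle.$$
При $\tau_{k} = \hat{g}_{1}(x_{k}, B_{k})$ первые два слагаемых сворачиваются в $\hat{g}_{1}(x_{k}, B_{k})$. Линейный поиск в схеме~\ref{alg:gen_stoch_gnm} завершается при $L_{k}\leq\gamma L_{\hat{F}}$ и, поскольку при $L_{k}\geq L_{\hat{F}}$ модель мажорирует $\hat{g}_{1}$ (лемма~\ref{lm:aux_upper_model}), с выполненным неравенством $\hat{g}_{1}(x_{k + 1}, B_{k})\leq\hat{\psi}_{x_{k}, L_{k}, \tau_{k}}(x_{k + 1}, B_{k})$. Отсюда $\hat{g}_{1}(x_{k + 1}, B_{k})\leq\hat{g}_{1}(x_{k}, B_{k})$, а после умножения на $\hat{g}_{1}(x_{k}, B_{k})$ и использования оценки $\hat{g}_{2}(x_{k + 1}, B_{k})\leq\hat{g}_{1}(x_{k + 1}, B_{k})\hat{g}_{1}(x_{k}, B_{k})$ получаем
$$\hat{g}_{2}(x_{k + 1}, B_{k})\leq\hat{g}_{2}(x_{k}, B_{k}) - \frac{\eta_{k}(2 - \eta_{k})}{2}\langle g_{k}, H_{k}^{-1}g_{k}\rangle.$$

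Далее оценим $\langle g_{k}, H_{k}^{-1}g_{k}\rangle$ снизу: предположения~\ref{as:2} и \ref{as:3} вместе с $L_{k}\leq\gamma L_{\hat{F}}$ дают $H_{k}\preceq\left(M_{\hat{G}}^{2} + \gamma P_{\hat{g}_{1}}L_{\hat{F}}\right)I_{n}$, откуда $\langle g_{k}, H_{k}^{-1}g_{k}\rangle\geq\frac{\|g_{k}\|^{2}}{M_{\hat{G}}^{2} + \gamma P_{\hat{g}_{1}}L_{\hat{F}}}$; вместе с $\eta_{k}(2 - \eta_{k})\geq\eta(2 - \eta)$ (монотонность $t\mapsto t(2 - t)$ на $[0, 1]$) это превращает предыдущее неравенство в убывание по $\|\nabla\hat{g}_{2}(x_{k}, B_{k})\|^{2}$. Основное препятствие --- переход от $\hat{g}_{2}(x_{k + 1}, B_{k})$, коррелированного с $B_{k}$, к полной функции $\hat{f}_{2}(x_{k + 1})$. Предлагается воспользоваться тем, что отображение $y\mapsto\left|\hat{f}_{2}(y) - \hat{g}_{2}(y, B_{k})\right|$ является $2l_{\hat{F}}$--липшицевым (предположение~\ref{as:1}), откуда
$$\hat{f}_{2}(x_{k + 1})\leq\hat{g}_{2}(x_{k + 1}, B_{k}) + \left|\hat{f}_{2}(x_{k}) - \hat{g}_{2}(x_{k}, B_{k})\right| + 2l_{\hat{F}}\|x_{k + 1} - x_{k}\|,$$
что отделяет смещение батча <<в зафиксированной точке $x_{k}$>> от сдвига приближения. Величину $\|x_{k + 1} - x_{k}\|$ оцениваем двумя способами, оба при $\tau_{k} = \hat{g}_{1}(x_{k}, B_{k})$: из $H_{k}\succeq\tau_{k}L_{k}I_{n}$ и $\|g_{k}\|\leq M_{\hat{G}}\hat{g}_{1}(x_{k}, B_{k})$ --- неравенство $\|x_{k + 1} - x_{k}\|\leq M_{\hat{G}}/L$, а из того, что убывание модели не превосходит $\hat{\psi}(x_{k})\leq P_{\hat{g}_{1}}$, --- неравенство $\|x_{k + 1} - x_{k}\|\leq\sqrt{2P_{\hat{g}_{1}}/L}$; при $b = m$ слагаемое с батчем обнуляется тождественно, что и объясняет индикатор $\mathds{1}_{\{b < m\}}$.

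В завершение берём условное математическое ожидание при фиксированном $x_{k}$ (батч $B_{k}$ от $x_{k}$ не зависит): $\mathbb{E}\left[\hat{g}_{2}(x_{k}, B_{k})\mid x_{k}\right] = \hat{f}_{2}(x_{k})$; неравенство Йенсена для $\nabla\hat{f}_{2}(x_{k}) = \mathbb{E}\left[\nabla\hat{g}_{2}(x_{k}, B_{k})\mid x_{k}\right]$ даёт $\mathbb{E}\left[\|\nabla\hat{g}_{2}(x_{k}, B_{k})\|^{2}\mid x_{k}\right]\geq\|\nabla\hat{f}_{2}(x_{k})\|^{2}$; а оценка дисперсии (предположение~\ref{as:4}, пересчитанное на батч размера $b$ с поправкой на конечность совокупности, откуда и берётся $\tilde{\sigma}$) даёт $\mathbb{E}\left[\left|\hat{f}_{2}(x_{k}) - \hat{g}_{2}(x_{k}, B_{k})\right|\mid x_{k}\right]\leq\tilde{\sigma}\sqrt{\frac{1}{b} - \frac{1}{m}}$. Комбинируя это, переходя к безусловному ожиданию, суммируя одношаговые неравенства для индексов $0,\ldots,k - 1$, используя $\hat{f}_{2}\geq0$, деля на число итераций и ограничивая минимум средним арифметическим, получаем в точности заявленную в \eqref{eq:main_stoch_sub_lin_conv_1} оценку. По-настоящему деликатны лишь приведённое выше неравенство расщепления и проверка того, что оценка дисперсии при выборке без возвращения даёт указанный множитель $\sqrt{1/b - 1/m}$; остальное --- рутинные выкладки с операторными нормами и телескопированием.
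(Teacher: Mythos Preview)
Предложенное доказательство корректно и по существу совпадает с доказательством статьи: одношаговая оценка для $\hat{g}_{2}$ через спектральную границу $H_{k}\preceq(M_{\hat{G}}^{2}+\gamma P_{\hat{g}_{1}}L_{\hat{F}})I_{n}$, расщепление $\hat{f}_{2}(x_{k+1})-\hat{g}_{2}(x_{k+1},B_{k})$ через липшицевость и дисперсию батча, оценка шага $\|x_{k+1}-x_{k}\|$ двумя способами и телескопирование~--- всё это в точности леммы~\ref{lm:aux_update}, \ref{lm:aux_bounded_deviation}, \ref{lm:aux_bounded_variation} и \ref{lm:aux_finite_population_variance}. Единственное отличие чисто косметическое: статья сначала суммирует неравенства для $\hat{g}_{2}$, а затем оценивает возникающую сумму отклонений, тогда как вы сначала переходите к $\hat{f}_{2}$ на каждом шаге и лишь потом суммируете~--- результат идентичен.
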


В теореме \ref{th:3} оценка \eqref{eq:main_stoch_sub_lin_conv_1} сразу содержит в себе оба главных гиперпараметра оптимизатора --- количество итераций $k$ и размер батча $b$. При этом они входят в оценку аддитивно, что означает необходимость одновременного подбора всех гиперпараметров. То есть независимо от количества итераций верхняя оценка на средний минимум квадрата нормы градиента не опустится ниже уровня, явно зависящего от размера батча, а формально это означает, что для достижения уровня $\epsilon^{2}$ долю $r\in(0, 1)$ от $\epsilon^{2}$ придётся покрыть выбором подходящего размера батча, а долю $(1 - r)$ от $\epsilon^{2}$ --- подбором достаточного количества итераций. В оценке \eqref{eq:main_stoch_sub_lin_conv_1} присутствует неустранимое в стохастическом режиме слагаемое
$$\frac{16l_{\hat{F}}\left(M_{\hat{G}}^{2} + \gamma P_{\hat{g}_{1}}L_{\hat{F}}\right)}{\eta(2 - \eta)}\min\left\{\sqrt{\frac{2P_{\hat{g}_{1}}}{L}},~\frac{M_{\hat{G}}}{L}\right\}\mathds{1}_{\left\{b < m\right\}},$$
заключающее в себе верхнюю оценку на квадрат нормы градиента функции $\hat{g}_{2}$ \eqref{eq:lipschitz_upper_bound}. Поэтому для гарантии сходимости до уровня $\epsilon$ в среднем удобнее вывести фактор дисперсии за скобки, как было сделано в неравенстве \eqref{eq:deviation_upper_bound}. Этот приём сводит условия сходимости к уровню $\epsilon$ в \eqref{eq:main_stoch_sub_lin_conv_1} к следующей системе неравенств:
\begin{equation}\label{eq:sublin_batch_iter_tradeoff_1}
    \begin{cases}
         \frac{8(M_{\hat{G}}^{2} + \gamma P_{\hat{g}_{1}}L_{\hat{F}})\mathbb{E}\left[\hat{g}_{2}(x_{0}, B_{0})\right]}{k\eta(2 - \eta)}\leq (1 - r)\epsilon^{2};\\[10pt]
         \frac{8(M_{\hat{G}}^{2} + \gamma P_{\hat{g}_{1}}L_{\hat{F}})}{\eta(2 - \eta)}\left(2l_{\hat{F}}\sqrt{m(m - 1)}\min\left\{\sqrt{\frac{2P_{\hat{g}_{1}}}{L}},~\frac{M_{\hat{G}}}{L}\right\}\mathds{1}_{\left\{b < m\right\}} + \tilde{\sigma}\right)\sqrt{\frac{1}{b} - \frac{1}{m}}\leq r\epsilon^{2}.
    \end{cases}
\end{equation}
Само условие сходимости до уровня $\epsilon > 0$ в среднем выражается в виде следующего неравенства:
\begin{equation}\label{eq:mean_grad_conv_cond}
    \begin{aligned}
        &\mathbb{E}\left[\min\limits_{i\in\overline{0, k - 1}}\left\{\left\|\nabla\hat{f}_{2}(x_{i})\right\|^{2}\right\}\right]\leq\epsilon^{2}.
    \end{aligned}
\end{equation}
Неравенства \eqref{eq:sublin_batch_iter_tradeoff_1} накладывают ограничения на минимальный размер батча $b$ и минимальное количество итераций $k$:
\begin{equation}\label{eq:sublin_batch_iter_tradeoff_2}
    \begin{cases}
         k = \left\lceil\frac{8(M_{\hat{G}}^{2} + \gamma P_{\hat{g}_{1}}L_{\hat{F}})\mathbb{E}\left[\hat{g}_{2}(x_{0}, B_{0})\right]}{\epsilon^{2}(1 - r)\eta(2 - \eta)}\right\rceil,~r\in(0, 1);\\[10pt]
         b = \min\left\{m,~\left\lceil\frac{\frac{64(M_{\hat{G}}^{2} + \gamma P_{\hat{g}_{1}}L_{\hat{F}})^{2}}{\eta^{2}(2 - \eta)^{2}}\left(2l_{\hat{F}}\sqrt{m(m - 1)}\min\left\{\sqrt{\frac{2P_{\hat{g}_{1}}}{L}},~\frac{M_{\hat{G}}}{L}\right\} + \tilde{\sigma}\right)^{2}}{\epsilon^{4}r^{2} + \frac{64(M_{\hat{G}}^{2} + \gamma P_{\hat{g}_{1}}L_{\hat{F}})^{2}}{m\eta^{2}(2 - \eta)^{2}}\left(2l_{\hat{F}}\sqrt{m(m - 1)}\min\left\{\sqrt{\frac{2P_{\hat{g}_{1}}}{L}},~\frac{M_{\hat{G}}}{L}\right\} + \tilde{\sigma}\right)^{2}}\right\rceil\right\}.
    \end{cases}
\end{equation}
Выражения в \eqref{eq:sublin_batch_iter_tradeoff_2} означают следующие асимптотики для $b$ и $k$:
$$k = \operatorname{O}\left(\frac{1}{\epsilon^{2}}\right),~b = \min\left\{m,~\operatorname{O}\left(\frac{1}{\epsilon^{4}}\right)\right\},$$
что для подходящего условиям \eqref{eq:sublin_batch_iter_tradeoff_1} размера батча $b$ соответствует сублинейной сходимости в среднем к \mbox{$\epsilon$--стационарной} точке
$$x^{*}:~\left\|\nabla\hat{f}_{2}(x^{*})\right\|\leq\epsilon.$$
Следующее утверждение устанавливает уже линейную сходимость метода трёх стохастических квадратов в случае выполнения условия Поляка--Лоясиевича (предположение \ref{as:5}).

\begin{theorem}\label{th:4_main}
    Пусть выполнены предположения \ref{as:1}, \ref{as:2}, \ref{as:3}, \ref{as:4}, \ref{as:5}. Рассмотрим  метод Гаусса--Ньютона со схемой реализации \ref{alg:gen_stoch_gnm}, в котором последовательность $\{x_{k}\}_{k\in\mathbb{Z}_{+}}$ вычисляется по правилу \eqref{eq:stoch_direct_update_rule} с\\$\tau_{k}~=~\hat{g}_{1}(x_{k}, B_{k})$, $\eta_{k}\in[\eta, 1]$, $\eta\in(0, 1]$. Тогда:
    \begin{equation}\label{eq:main_stoch_lin_conv_1}
        \begin{cases}
            \begin{aligned}
                \mathbb{E}&\left[\left\|\nabla\hat{f}_{2}(x_{k})\right\|^{2}\right]\leq4M_{\hat{G}}^{2}\Delta_{k,b};\\[5pt]
                \mathbb{E}&\left[\hat{f}_{2}(x_{k})\right]\leq\hat{f}_{2}^{*} + \Delta_{k,b};\\
                &\Delta_{k,b} \overset{\operatorname{def}}{=} \mathbb{E}\left[\hat{f}_{2}(x_{0})\right]\exp\left(-\frac{k\eta(2 - \eta)\mu}{2\left(\gamma L_{\hat{F}}P_{\hat{g}_{1}} + \mu\right)}\right) +\\
                &+ 4\left(l_{\hat{F}}\min\left\{\sqrt{\frac{2P_{\hat{g}_{1}}}{L}},~\frac{M_{\hat{G}}}{L}\right\}\mathds{1}_{\left\{b < m\right\}} + \tilde{\sigma}\sqrt{\frac{1}{b} - \frac{1}{m}}\right)\left(\frac{\gamma L_{\hat{F}}P_{\hat{g}_{1}} + \mu}{\eta(2 - \eta)\mu}\right),~k\in\mathbb{Z}_{+},~b\in\overline{1,~\min\{m, n\}}.
            \end{aligned}
        \end{cases}
    \end{equation}
    Оператор математического ожидания $\mathbb{E}\left[\cdot\right]$ усредняет по всей случайности процесса оптимизации.
\end{theorem}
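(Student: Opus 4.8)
The plan is to reduce everything to a one-step contraction for the \emph{batch} residual $\hat g_2(x_k,B_k)$, then trade it for the \emph{population} residual $\hat f_2$ at the cost of a controlled error, and finally unroll the resulting scalar linear recursion. I would fix the filtration $\mathcal{F}_k=\sigma(B_0,\dots,B_{k-1})$, so that $x_k$ is $\mathcal{F}_k$-measurable and $B_k$ is independent of $\mathcal{F}_k$, and note that the adaptive search for $L_k$ only ever accepts a value in $[L,\gamma L_{\hat F}]$; for that value Lemma~\ref{lm:aux_upper_model} gives $\hat g_1(x_{k+1},B_k)\le\hat\psi_{x_k,L_k,\tau_k}(x_{k+1},B_k)$, which is the only property of the line search I use.

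First I would establish the one-step contraction. Plugging the explicit rule \eqref{eq:stoch_direct_update_rule} into $\hat\psi_{x_k,L_k,\tau_k}(\cdot,B_k)$ and diagonalizing $\hat G'(x_k,B_k)$, the very computation that proves Theorem~\ref{th:1_main} --- now for the batched objects $\hat G,\hat G'$ and with Assumption~\ref{as:5} read as ``every singular value of $\hat G'(x_k,B_k)$ is $\ge\sqrt\mu$'' --- yields
$$\hat g_1(x_{k+1},B_k)\le\frac{\tau_k}{2}+\frac{\hat g_2(x_k,B_k)}{2\tau_k}\left(1-\frac{\eta_k(2-\eta_k)\mu}{\mu+\tau_kL_k}\right).$$
With $\tau_k=\hat g_1(x_k,B_k)$ this collapses to $\hat g_1(x_{k+1},B_k)\le q_k\,\hat g_1(x_k,B_k)$, where $q_k=1-\tfrac{\eta_k(2-\eta_k)\mu}{2(\mu+\hat g_1(x_k,B_k)L_k)}\in[\tfrac12,1)$; bounding $\hat g_1(x_k,B_k)\le P_{\hat g_1}$ (Assumption~\ref{as:3}), $L_k\le\gamma L_{\hat F}$ and $\eta_k(2-\eta_k)\ge\eta(2-\eta)$ gives $q_k\le\rho:=1-\tfrac{\eta(2-\eta)\mu}{2(\gamma L_{\hat F}P_{\hat g_1}+\mu)}$. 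Squaring and using $q_k^2\le q_k$ then gives the clean bound $\hat g_2(x_{k+1},B_k)\le\rho\,\hat g_2(x_k,B_k)$.

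Next I would pass from batch to population quantities. On one hand $\mathbb{E}\bigl[\hat g_2(x_k,B_k)\mid\mathcal{F}_k\bigr]=\hat f_2(x_k)$ exactly (the uniform without-replacement sample mean of the squared components is unbiased), with the residual sampling error controlled in mean square by $\tilde\sigma\sqrt{1/b-1/m}$ through Assumption~\ref{as:4} and the hypergeometric variance scaling. On the other hand $\hat f_2(x_{k+1})$ and $\hat g_2(x_{k+1},B_k)$ differ only because $x_{k+1}$ is built from $B_k$; inserting the intermediate point $x_k$ and using that both $\hat f_2$ and $\hat g_2(\cdot,B_k)$ are $l_{\hat F}$-Lipschitz (inequality \eqref{eq:lipschitz_upper_bound}, Lemma~\ref{lm:aux_lipschitz}) together with the step bound $\|x_{k+1}-x_k\|=\eta_k\bigl\|\bigl(\hat G'(x_k,B_k)^{*}\hat G'(x_k,B_k)+\tau_kL_kI\bigr)^{-1}\hat G'(x_k,B_k)^{*}\hat G(x_k,B_k)\bigr\|\le\min\bigl\{\sqrt{2P_{\hat g_1}/L},\,M_{\hat G}/L\bigr\}$ (Assumptions~\ref{as:2},~\ref{as:3} and $\tau_k=\hat g_1(x_k,B_k)$) bounds this bias by a multiple of that quantity; it vanishes when $b=m$, whence the indicator $\mathds{1}_{\{b<m\}}$. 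Combining the two conversions with the contraction of the previous step yields the recursion $\mathbb{E}[\hat f_2(x_{k+1})]\le\rho\,\mathbb{E}[\hat f_2(x_k)]+C$ with $C=2\bigl(l_{\hat F}\min\{\sqrt{2P_{\hat g_1}/L},M_{\hat G}/L\}\mathds{1}_{\{b<m\}}+\tilde\sigma\sqrt{1/b-1/m}\bigr)$.

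Finally, unrolling gives $\mathbb{E}[\hat f_2(x_k)]\le\rho^k\,\mathbb{E}[\hat f_2(x_0)]+\tfrac{C}{1-\rho}$; since $\rho^k\le e^{-k(1-\rho)}=\exp\bigl(-\tfrac{k\eta(2-\eta)\mu}{2(\gamma L_{\hat F}P_{\hat g_1}+\mu)}\bigr)$ and $\tfrac1{1-\rho}=\tfrac{2(\gamma L_{\hat F}P_{\hat g_1}+\mu)}{\eta(2-\eta)\mu}$, the right-hand side is exactly $\hat f_2^{*}+\Delta_{k,b}$ (the sub-batch PL condition forces consistency, so $\hat f_2^{*}=0$ and serves only as a harmless floor). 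The gradient estimate follows from $\|\nabla\hat f_2(x)\|=2\|\hat F'(x)^{*}\hat F(x)\|\le2M_{\hat G}\hat f_1(x)$, hence $\|\nabla\hat f_2(x_k)\|^{2}\le4M_{\hat G}^{2}\hat f_2(x_k)$, and taking expectations. I expect the main obstacle to be the bookkeeping in the batch-to-population step: because the update at step $k$ uses the very batch $B_k$ over which one wants to average, the conditionings must be ordered so that unbiasedness is invoked only where legitimate and only the genuinely irreducible sampling error $\tilde\sigma\sqrt{1/b-1/m}$ and the step-induced bias are paid; a secondary nuisance is the ``small-residual'' regime $\tau_kL_k<\mu$, where one must fall back on the sharper branch of Theorem~\ref{th:1_main} to keep the contraction factor $\le\rho$.
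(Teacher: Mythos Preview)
Your proposal is correct and follows essentially the same route as the paper: the batched analogue of Lemma~\ref{lm:aux_matrix_order} yields the one-step contraction $\hat g_2(x_{k+1},B_k)\le\rho\,\hat g_2(x_k,B_k)$, Lemma~\ref{lm:aux_bounded_deviation} handles the batch-to-population transition, and the scalar recursion is unrolled as a geometric sum. The only bookkeeping difference is that the paper tracks $a_k=\mathbb{E}[\hat g_2(x_k,B_{k-1})-\hat f_2^{*}]$ and only at the end relates it back to $\mathbb{E}[\hat f_2(x_k)]$, whereas you go straight to the recursion $\mathbb{E}[\hat f_2(x_{k+1})]\le\rho\,\mathbb{E}[\hat f_2(x_k)]+C$ via $\mathbb{E}_{B_k}[\hat g_2(x_k,B_k)]=\hat f_2(x_k)$; your route is slightly shorter and gives the same constants. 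Two small remarks: your worry about the ``small-residual'' regime $\tau_kL_k<\mu$ is unnecessary, since Lemma~\ref{lm:aux_matrix_order} delivers the bound $\hat G'(\xi I_n+\hat G'^{*}\hat G')^{-1}\hat G'^{*}\succeq\tfrac{\mu}{\xi+\mu}I_b$ uniformly in $\xi>0$, so the first branch of Theorem~\ref{th:1_main} already covers all cases; and your reading of Assumption~\ref{as:5} should be ``$\hat G'(x_k,B_k)\hat G'(x_k,B_k)^{*}\succeq\mu I_b$'' (equivalently, the $b$ nonzero singular values are $\ge\sqrt\mu$), which is exactly the hypothesis Lemma~\ref{lm:aux_matrix_order} needs.
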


Теорема \ref{th:4} устанавливает линейную сходимость и для случая $m > n$ при условии выполнения предположения \ref{as:5} для размера батча $b\leq n$. В \eqref{eq:main_stoch_lin_conv_1} приведена оценка на среднее значение функции $\hat{f}_{2}$ в добавок к оценке на средний квадрат нормы градиента. Зависимость уровня нормы градиента от $b$ и $k$ представлена аддитивно так, что для достижения уровня $\epsilon > 0$ необходимо подобрать достаточно большой по размеру батч, из этого следует возможность достижения некоторых уровней нормы градиента на практике только при $b = m$; а при сохранении линейной сходимости данное равенство означает выполнение неравенства $m\leq n$. Поэтому условия на допустимые значения $b$ и $k$ выводятся аналогично \eqref{eq:sublin_batch_iter_tradeoff_1}:
\begin{equation}\label{eq:lin_batch_iter_tradeoff_1}
    \begin{cases}
        4M_{\hat{G}}^{2}\mathbb{E}\left[\hat{g}_{2}(x_{0}, B_{0})\right]\exp\left(-\frac{k\eta(2 - \eta)\mu}{2\left(\gamma L_{\hat{F}}P_{\hat{g}_{1}} + \mu\right)}\right)\leq (1 - r)\epsilon^{2},~r\in(0, 1);\\[10pt]
        16M_{\hat{G}}^{2}\sqrt{\frac{1}{b} - \frac{1}{m}}\left(l_{\hat{F}}\sqrt{m(m - 1)}\min\left\{\sqrt{\frac{2P_{\hat{g}_{1}}}{L}},~\frac{M_{\hat{G}}}{L}\right\}\mathds{1}_{\left\{b < m\right\}} + \tilde{\sigma}\right)\left(\frac{\gamma L_{\hat{F}}P_{\hat{g}_{1}} + \mu}{\eta(2 - \eta)\mu}\right)\leq r\epsilon^{2}.
    \end{cases}
\end{equation}
Неравенства выше означают выполнение ограничения на норму градиента:
\begin{equation}\label{eq:4_main_conv_cond}
    \begin{aligned}
        &\mathbb{E}\left[\left\|\nabla\hat{f}_{2}(x_{k})\right\|^{2}\right]\leq\epsilon^{2}.
    \end{aligned}
\end{equation}
Неравенства в \eqref{eq:lin_batch_iter_tradeoff_1} приводят к следующим минимальным значениям для $b$ и $k$:
\begin{equation}\label{eq:lin_batch_iter_tradeoff_2}
    \begin{cases}
        k = \left\lceil\frac{2\left(\gamma L_{\hat{F}}P_{\hat{g}_{1}} + \mu\right)}{\eta(2 - \eta)\mu}\ln\left(\frac{4M_{\hat{G}}^{2}\mathbb{E}\left[\hat{g}_{2}(x_{0}, B_{0})\right]}{\epsilon^{2}(1 - r)}\right)\right\rceil,~r\in(0, 1);\\[10pt]
        b = \min\left\{m,~n,~\left\lceil\frac{256M_{\hat{G}}^{4}\left(l_{\hat{F}}\sqrt{m(m - 1)}\min\left\{\sqrt{\frac{2P_{\hat{g}_{1}}}{L}},~\frac{M_{\hat{G}}}{L}\right\} + \tilde{\sigma}\right)^{2}\left(\frac{\gamma L_{\hat{F}}P_{\hat{g}_{1}} + \mu}{\eta(2 - \eta)\mu}\right)^{2}}{\epsilon^{4}r^{2} + \frac{256M_{\hat{G}}^{4}}{m}\left(l_{\hat{F}}\sqrt{m(m - 1)}\min\left\{\sqrt{\frac{2P_{\hat{g}_{1}}}{L}},~\frac{M_{\hat{G}}}{L}\right\} + \tilde{\sigma}\right)^{2}\left(\frac{\gamma L_{\hat{F}}P_{\hat{g}_{1}} + \mu}{\eta(2 - \eta)\mu}\right)^{2}}\right\rceil\right\}.
    \end{cases}
\end{equation}
Или в сокращённой асимптотической форме: 
$$k = \operatorname{O}\left(\ln\left(\frac{1}{\epsilon}\right)\right),~b = \min\left\{m,~n,~\operatorname{O}\left(\frac{1}{\epsilon^{4}}\right)\right\}.$$
Для размера батча в \eqref{eq:lin_batch_iter_tradeoff_2} ситуация такая же, как и в \eqref{eq:sublin_batch_iter_tradeoff_2}, однако размер батча огранчен ещё и количеством параметров $n$, что означает невозможность оптимизации с любой наперёд заданной точностью без замены линейной скорости сходимости на сублинейную при $n < m$. Зато теперь есть возможность для некоторых $\epsilon > 0$ решить задачу с линейной скоростью сходимости по $k$. Также стоит обратить внимание на тот факт, что выполнение предположения \ref{as:5} приводит, в среднем, к монотонному неувеличению квадрата нормы градиента в \eqref{eq:main_stoch_lin_conv_1}.

\subsection{Использование дважды стохастического шага}

В текущем подразделе представлены основные свойства метода Гаусса--Ньютона, основанном на правиле вычисления приближения решения \eqref{eq:double_stoch_direct_update_rule}. Для правила \eqref{eq:double_stoch_direct_update_rule} разработан алгоритм решения задачи \eqref{eq:main_opt_problem}, описанный в схеме \ref{alg:gen_double_stoch_gnm}. 
\RestyleAlgo{boxruled}
\begin{algorithm}[!ht]{}
\caption{\textbf{Общий метод трёх стохастических квадратов с дважды стохастическим шагом}}
\label{alg:gen_double_stoch_gnm}
\textbf{Вход:}
    \begin{equation*}
        \begin{cases}
            x_{0}\in E_{1}\text{ --- начальное приближение};\\
            N\in\mathbb{N}\text{ --- количество итераций метода};\\
            \gamma\geq1\text{ --- фактор верхней границы поиска }l_{\hat{g}_{2}};\\
            l\text{ --- оценка локальной постоянной Липшица},~l\in(0,~\gamma l_{\hat{g}_{2}}],~l_{0} = l;\\
            \mathcal{T}(\cdot)\text{ --- функция, определяющая значение произведения $\tau L$};\\
            \mathcal{B}\text{ --- выборка функций};\\
            b, \tilde{b}\in\overline{1, m}\text{ --- размеры $B_{k}, \tilde{B}_{k}\subseteq\mathcal{B}\text{ соответственно},~k\in\mathbb{Z}_{+}$}.
        \end{cases}
    \end{equation*}
    \vspace{0.2cm}
    \textbf{Повторять для $k = 0, 1,~\dots, N - 1$:}
    \begin{itemize}
        \item[1.] сэмплировать батчи $B_{k}, \tilde{B}_{k}$ из $\mathcal{B}$ размеров $b, \tilde{b}$ соответственно;
        \item[2.] определить $\tilde{\tau}_{k}L_{k} = \mathcal{T}(x_{k}, l_{k}, \tilde{B}_{k})$;
        \item[3.] вычислить $x_{k + 1}\in E_{1}$ согласно правилу \eqref{eq:double_stoch_direct_update_rule};
        \item[4.] если $\gamma = 1$ и $l = l_{\hat{g}_{2}}$, то перейти к пункту 1;
        \item[5.] если $\hat{g}_{2}(x_{k + 1}, B_{k}) > \hat{\varphi}_{x_{k}, l_{k}}(x_{k + 1}, B_{k})$, то положить $l_{k} := \min\left\{2l_{k}, \gamma l_{\hat{g}_{2}}\right\}$ и вернуться к пункту 2;
        \item[6.] $l_{k + 1} = \max\left\{\frac{l_{k}}{2},~l\right\}$.
    \end{itemize}
    \vspace{0.2cm}

    \textbf{Выход:} $x_{N}$.
\end{algorithm}
Данная схема оптимизационного процесса заключается в минимизации на каждом шаге отличной от использованной в схеме \ref{alg:gen_stoch_gnm} локальной модели $\hat{\varphi}_{x, l}$, вывод которой описан в лемме \ref{lm:aux_g2_local_model}:
\begin{equation*}
    \begin{aligned}
        \hat{g}_{2}(y, B)\leq&\hat{\varphi}_{x, l}(y, B) \overset{\operatorname{def}}{=} \hat{g}_{2}(x, B) + \left\langle\nabla_{x}\hat{g}_{2}(x, B),~y - x\right\rangle + \frac{l}{2}\|y - x\|^{2},~l\geq l_{\hat{g}_{2}} \overset{\operatorname{def}}{=} \underbrace{2\left(M_{\hat{G}}^{2} + L_{\hat{F}}P_{\hat{g}_{1}}\right)}_{\text{согласно лемме \ref{lm:aux_g2_lipschitz_gradient}}},\\
        &(x, y)\in E_{1}^{2},~B\subseteq\mathcal{B}.
    \end{aligned}
\end{equation*}
Так как в схеме \ref{alg:gen_double_stoch_gnm} используется оптимизация локальной модели $\hat{\varphi}_{x, l}$, значение $\tilde{\tau}L_{k}$ уже не играет той ключевой роли, какую могло бы играть в схеме \ref{alg:gen_stoch_gnm}, и в схеме \ref{alg:gen_double_stoch_gnm} производится бинарный поиск $l_{k}$, а от значения $\tilde{\tau}_{k}L_{k}$ требуется только положительность. При явном указании постоянной Липшица градиента функции $\hat{g}_{2}$ в схеме \ref{alg:gen_double_stoch_gnm} адаптивный поиск $l_{k}$ не производится, в отличие от схемы \ref{alg:gen_stoch_gnm}.

Основное достоинство стратегии \eqref{eq:double_stoch_direct_update_rule} по сравнению с правилом \eqref{eq:stoch_direct_update_rule} состоит в более гибких получаемых оценках сходимости, как указано в теоремах \ref{th:double_stoch_sublin_conv} и \ref{th:double_stoch_lin_conv}.

\begin{theorem}\label{th:double_stoch_sublin_conv_main}
    Пусть выполнены предположения \ref{as:1}, \ref{as:2}, \ref{as:3} и \ref{as:4}. Рассмотрим метод Гаусса--Ньютона, реализованный по схеме \ref{alg:gen_double_stoch_gnm} со стратегией вычисления $x_{k + 1}$ \eqref{eq:double_stoch_direct_update_rule}, в которой
    \begin{equation*}
    \begin{aligned}
        \eta_{k} &= \frac{2\left\langle\hat{G}^{'}(x_{k}, B_{k})^{*}\hat{G}(x_{k}, B_{k}),~\left(\hat{G}^{'}(x_{k}, \tilde{B}_{k})^{*}\hat{G}^{'}(x_{k}, \tilde{B}_{k}) + \tilde{\tau}_{k}L_{k}I_{n}\right)^{-1}\hat{G}^{'}(x_{k}, B_{k})^{*}\hat{G}(x_{k}, B_{k})\right\rangle}{l_{k}\left\langle\hat{G}^{'}(x_{k}, B_{k})^{*}\hat{G}(x_{k}, B_{k}),~\left(\hat{G}^{'}(x_{k}, \tilde{B}_{k})^{*}\hat{G}^{'}(x_{k}, \tilde{B}_{k}) + \tilde{\tau}_{k}L_{k}I_{n}\right)^{-2}\hat{G}^{'}(x_{k}, B_{k})^{*}\hat{G}(x_{k}, B_{k})\right\rangle},\\
        &\tilde{\mathcal{T}}\geq\tilde{\tau}_{k}\geq\tilde{\tau} > 0,~L_{k}\geq L > 0,~k\in\mathbb{Z}_{+}.
    \end{aligned}
\end{equation*}
Тогда при независимом сэмплировании $B_{k}$ и $\tilde{B}_{k}$:
\begin{equation}\label{eq:double_stoch_sublin_conv_main_two_batches}
    \begin{aligned}
        \mathbb{E}\left[\min\limits_{i\in\overline{0, k - 1}}\left\{\left\|\nabla\hat{f}_{2}(x_{i})\right\|^{2}\right\}\right]&\leq2\gamma l_{\hat{g}_{2}}\left(\frac{M_{\hat{G}}^{2}}{\tilde{\tau}L} + 1\right)^{2}\left(\frac{\mathbb{E}\left[\hat{f}_{2}(x_{0})\right]}{k} +\right.\\
        &\left.+ \frac{4l_{\hat{F}}M_{\hat{G}}P_{\hat{g}_{1}}}{l}\left(\frac{M_{\hat{G}}^{2}}{\tilde{\tau}L} + 1\right)^{2} + \tilde{\sigma}\sqrt{\frac{1}{b} - \frac{1}{m}}\right),~k\in\mathbb{N}.
    \end{aligned}
\end{equation}
В случае сэмплирования одного батча на каждом шаге ($B_{k}\equiv\tilde{B}_{k}$) оценка сходимости следующая:
\begin{equation}\label{eq:double_stoch_sublin_conv_main_one_batch}
    \begin{aligned}
        \mathbb{E}\left[\min\limits_{i\in\overline{0, k - 1}}\left\{\left\|\nabla\hat{f}_{2}(x_{i})\right\|^{2}\right\}\right]&\leq2\gamma l_{\hat{g}_{2}}\left(\frac{M_{\hat{G}}^{2}}{\tilde{\tau}L} + 1\right)^{2}\left(\frac{\mathbb{E}\left[\hat{f}_{2}(x_{0})\right]}{k} +\right.\\
        &\left.+ 2l_{\hat{F}}\min\left\{\sqrt{\frac{1}{L}\left(\tilde{\mathcal{T}} + \frac{P_{\hat{g}_{1}}^{2}}{\tilde{\tau}}\right)},~\frac{2M_{\hat{G}}P_{\hat{g}_{1}}}{l}\left(\frac{M_{\hat{G}}^{2}}{\tilde{\tau}L} + 1\right)^{2}\right\}\mathds{1}_{\left\{b < m\right\}} + \tilde{\sigma}\sqrt{\frac{1}{b} - \frac{1}{m}}\right),\\
        &k\in\mathbb{N}.
    \end{aligned}
\end{equation}
Оператор математического ожидания $\mathbb{E}\left[\cdot\right]$ усредняет по всей случайности процесса оптимизации.
\end{theorem}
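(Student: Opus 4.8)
The plan is to run a quadratic-majorant descent argument on the surrogate $\hat{\varphi}_{x_k,l_k}(\cdot,B_k)$, exploiting that the step factor $\eta_k$ in \eqref{eq:double_stoch_direct_update_rule} is the exact one-dimensional minimizer of that majorant along the preconditioned direction, and then to transfer the resulting per-iteration decrease from the batch functional $\hat{g}_2(\cdot,B_k)$ to the population functional $\hat{f}_2$ by a bias/deviation estimate. Set $g_k:=\hat{G}'(x_k,B_k)^{*}\hat{G}(x_k,B_k)=\tfrac12\nabla_x\hat{g}_2(x_k,B_k)$ and $H_k:=\hat{G}'(x_k,\tilde B_k)^{*}\hat{G}'(x_k,\tilde B_k)+\tilde{\tau}_kL_kI_n$, so that $x_{k+1}=x_k-\eta_kH_k^{-1}g_k$. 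Substituting $y=x_k-\eta H_k^{-1}g_k$ into the majorant $\hat{\varphi}_{x_k,l_k}(\cdot,B_k)$ of $\hat{g}_2(\cdot,B_k)$ (the adaptive loop of Algorithm \ref{alg:gen_double_stoch_gnm} forces $\hat{g}_2(x_{k+1},B_k)\le\hat{\varphi}_{x_k,l_k}(x_{k+1},B_k)$ with $l_k\le\gamma l_{\hat{g}_2}$, termination by Lemma \ref{lm:aux_g2_local_model}) gives a quadratic in $\eta$ whose unique minimizer is precisely the $\eta_k$ of the statement and whose minimal value yields $\hat{g}_2(x_{k+1},B_k)\le\hat{g}_2(x_k,B_k)-2\langle g_k,H_k^{-1}g_k\rangle^{2}/(l_k\langle g_k,H_k^{-2}g_k\rangle)$. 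Writing $v_k:=H_k^{-1}g_k$ the quotient equals $\langle H_kv_k,v_k\rangle^{2}/\|v_k\|^{2}$; Assumption \ref{as:2} with $\tilde{\tau}_kL_k\ge\tilde{\tau}L>0$ gives $\tilde{\tau}_kL_kI_n\preceq H_k\preceq(M_{\hat{G}}^{2}+\tilde{\tau}_kL_k)I_n$ (Lemma \ref{lm:aux_matrix_power_order}), so $\langle H_kv_k,v_k\rangle\ge\tilde{\tau}_kL_k\|v_k\|^{2}$ and $\|v_k\|^{2}\ge\|g_k\|^{2}/(M_{\hat{G}}^{2}+\tilde{\tau}_kL_k)^{2}$. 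With $l_k\le\gamma l_{\hat{g}_2}$ and $\|\nabla\hat{g}_2(x_k,B_k)\|=2\|g_k\|$ this produces the clean descent $\hat{g}_2(x_{k+1},B_k)\le\hat{g}_2(x_k,B_k)-\|\nabla\hat{g}_2(x_k,B_k)\|^{2}/\bigl(2\gamma l_{\hat{g}_2}(M_{\hat{G}}^{2}/(\tilde{\tau}L)+1)^{2}\bigr)$.

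Next I would bound the step length: the same spectral bounds with Assumptions \ref{as:2} and \ref{as:3} give $\eta_k\le 2(M_{\hat{G}}^{2}+\tilde{\tau}_kL_k)^{2}/(l_k\tilde{\tau}_kL_k)$ and $\|H_k^{-1}g_k\|\le\|g_k\|/(\tilde{\tau}_kL_k)\le M_{\hat{G}}P_{\hat{g}_1}/(\tilde{\tau}_kL_k)$, hence $\|x_{k+1}-x_k\|\le 2M_{\hat{G}}P_{\hat{g}_1}(M_{\hat{G}}^{2}/(\tilde{\tau}L)+1)^{2}/l$ (in the one-batch case $B_k\equiv\tilde B_k$ the iterate is an $\eta_k$-scaling of the minimizer of $\hat{\psi}_{x_k,L_k,\tilde{\tau}_k}(\cdot,B_k)$ and $\tfrac{L_k}{2}\|x_{k+1}-x_k\|^{2}\le\tfrac{\tilde{\tau}_k}{2}+\tfrac{\hat{g}_2(x_k,B_k)}{2\tilde{\tau}_k}\le\tfrac12(\tilde{\mathcal{T}}+P_{\hat{g}_1}^{2}/\tilde{\tau})$ supplies the sharper alternative entering \eqref{eq:double_stoch_sublin_conv_main_one_batch}). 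Taking expectations over all randomness and using that $B_k$ is sampled independently of $x_k$: uniform without-replacement sampling is unbiased, so $\mathbb{E}[\hat{g}_2(x_k,B_k)]=\mathbb{E}[\hat{f}_2(x_k)]$; for $\hat{g}_2(x_{k+1},B_k)$, where $x_{k+1}$ depends on $B_k$, I insert $x_k$ and write $\hat{f}_2(x_{k+1})-\hat{g}_2(x_{k+1},B_k)=[\hat{f}_2(x_{k+1})-\hat{f}_2(x_k)]-[\hat{g}_2(x_{k+1},B_k)-\hat{g}_2(x_k,B_k)]+[\hat{f}_2(x_k)-\hat{g}_2(x_k,B_k)]$: the last term has zero mean, the first two are bounded by $2l_{\hat{F}}\|x_{k+1}-x_k\|$ via the $l_{\hat{F}}$-Lipschitzness of $\hat{f}_2$ and $\hat{g}_2(\cdot,B)$ (Assumption \ref{as:1}, Lemma \ref{lm:aux_lipschitz}), and the residual batch-variance term is absorbed by the deviation estimate \eqref{eq:deviation_upper_bound}, which under Assumptions \ref{as:1}, \ref{as:3}, \ref{as:4} contributes the $\tilde{\sigma}\sqrt{1/b-1/m}$ term with its without-replacement factor.

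Finally I would telescope $\mathbb{E}[\hat{f}_2(x_{i+1})]-\mathbb{E}[\hat{f}_2(x_i)]\le(\text{deviation bound})-\mathbb{E}\|\nabla\hat{g}_2(x_i,B_i)\|^{2}/\bigl(2\gamma l_{\hat{g}_2}(M_{\hat{G}}^{2}/(\tilde{\tau}L)+1)^{2}\bigr)$ over $i=0,\dots,k-1$, drop $\mathbb{E}[\hat{f}_2(x_k)]\ge0$, and divide by $k$; it then remains to pass from the stochastic gradient back to $\nabla\hat{f}_2$, which follows from conditional Jensen and unbiasedness, $\|\nabla\hat{f}_2(x_i)\|^{2}\le\mathbb{E}[\|\nabla\hat{g}_2(x_i,B_i)\|^{2}\mid x_i]$, so that $\mathbb{E}[\min_{i\in\overline{0,k-1}}\|\nabla\hat{f}_2(x_i)\|^{2}]\le\tfrac1k\sum_{i=0}^{k-1}\mathbb{E}\|\nabla\hat{f}_2(x_i)\|^{2}\le\tfrac1k\sum_{i=0}^{k-1}\mathbb{E}\|\nabla\hat{g}_2(x_i,B_i)\|^{2}$, which yields \eqref{eq:double_stoch_sublin_conv_main_two_batches} and \eqref{eq:double_stoch_sublin_conv_main_one_batch}. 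I expect the main obstacle to be exactly this deviation control: squeezing the same-batch bias of $\hat{g}_2(x_{k+1},B_k)$ into the precise two-term (Lipschitz-plus-without-replacement-variance) form of the stated right-hand sides, handling the split between $B_k\equiv\tilde B_k$ and independent sampling, and checking that the adaptive search delivers the constant $l_{\hat{g}_2}=2(M_{\hat{G}}^{2}+L_{\hat{F}}P_{\hat{g}_1})$ of Lemma \ref{lm:aux_g2_lipschitz_gradient}.
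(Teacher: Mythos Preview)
Your proposal is correct and follows essentially the same route as the paper: it invokes Lemma~\ref{lm:aux_double_stoch_update} to get the per-step descent on $\hat g_2(\cdot,B_k)$, bounds the quotient via the spectral sandwich of Lemma~\ref{lm:aux_matrix_power_order}, controls $\|x_{k+1}-x_k\|$ through the $\eta_k$-bound \eqref{eq:double_stoch_sublin_conv_eq2} and Corollary~\ref{lm:aux_double_stoch_variation_upper_bound}, and then telescopes with the deviation estimate of Lemma~\ref{lm:aux_bounded_deviation} plus the Jensen step $\|\nabla\hat f_2(x_i)\|^2\le\mathbb{E}_{B_i}\|\nabla_{x_i}\hat g_2(x_i,B_i)\|^2$. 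The only cosmetic difference is that you telescope directly on $\mathbb{E}[\hat f_2(x_i)]$ while the paper first telescopes $\hat g_2(x_i,B_i)-\hat g_2(x_{i+1},B_i)$ and then converts via $\mathbb{E}[\hat g_2(x_{i+1},B_{i+1})]=\mathbb{E}[\hat f_2(x_{i+1})]$; both arrive at the same inequality \eqref{eq:double_stoch_sublin_conv_eq1}.
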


Теорема \ref{th:double_stoch_sublin_conv} требует тонкой настройки шага метода $\eta_{k}$ для достижения указанной оценки сходимости, однако из доказательства леммы \ref{lm:aux_double_stoch_update} выводится допустимый полуинтервал значений фактора шага метода:
$$\eta_{k}\in\left(0,~\frac{2\left\langle\hat{G}^{'}(x_{k}, B_{k})^{*}\hat{G}(x_{k}, B_{k}),~\left(\hat{G}^{'}(x_{k}, \tilde{B}_{k})^{*}\hat{G}^{'}(x_{k}, \tilde{B}_{k}) + \tilde{\tau}_{k}L_{k}I_{n}\right)^{-1}\hat{G}^{'}(x_{k}, B_{k})^{*}\hat{G}(x_{k}, B_{k})\right\rangle}{l_{k}\left\langle\hat{G}^{'}(x_{k}, B_{k})^{*}\hat{G}(x_{k}, B_{k}),~\left(\hat{G}^{'}(x_{k}, \tilde{B}_{k})^{*}\hat{G}^{'}(x_{k}, \tilde{B}_{k}) + \tilde{\tau}_{k}L_{k}I_{n}\right)^{-2}\hat{G}^{'}(x_{k}, B_{k})^{*}\hat{G}(x_{k}, B_{k})\right\rangle}\right],~k\in\mathbb{Z}_{+},$$
при которых сходимость будет наблюдаться, но с уже меньшей скоростью, при этом значение $\tilde{\tau}_{k}$ может быть случайной величиной, для которой выполнено свойство ограниченности: $\tilde{\tau}_{k}\in[\tilde{\tau},~\tilde{\mathcal{T}}~]$, что оправдывается обычно внесением неявной регуляризации в решение задачи \eqref{eq:main_opt_problem}, типично применимое в задачах минимизации риска и статистического обучения. Главное достоинство правила \eqref{eq:double_stoch_direct_update_rule} при сэмплировании одного батча на каждом шаге метода состоит в наличии слагаемого
$$4\gamma l_{\hat{g}_{2}}l_{\hat{F}}\left(\frac{M_{\hat{G}}^{2}}{\tilde{\tau}L} + 1\right)^{2}\min\left\{\sqrt{\frac{1}{L}\left(\tilde{\mathcal{T}} + \frac{P_{\hat{g}_{1}}^{2}}{\tilde{\tau}}\right)},~\frac{2M_{\hat{G}}P_{\hat{g}_{1}}}{l}\left(\frac{M_{\hat{G}}^{2}}{\tilde{\tau}L} + 1\right)^{2}\right\},$$
которое в стохастическом режиме можно сделать сколь угодно малым благодаря увеличению значения $L$, а для достаточно большого значения $L$ это слагаемое становится равным
$$4\gamma l_{\hat{g}_{2}}l_{\hat{F}}\left(\frac{M_{\hat{G}}^{2}}{\tilde{\tau}L} + 1\right)^{2}\sqrt{\frac{1}{L}\left(\tilde{\mathcal{T}} + \frac{P_{\hat{g}_{1}}^{2}}{\tilde{\tau}}\right)}.$$
Условия теоремы \ref{th:double_stoch_sublin_conv} демонстрируют полное превосходство изначальной стратегии сэмплирования по одному батчу на каждом шаге над двойной стохастикой из--за неустранимого фактора
$$\frac{8\gamma l_{\hat{g}_{2}}l_{\hat{F}}M_{\hat{G}}P_{\hat{g}_{1}}}{l}$$
в оценке сходимости дважды стохастического шага, возникающего вследствие наличия шума от сэмплирования батчей. Как и в случае правила \eqref{eq:stoch_direct_update_rule}, оценку сходимости \eqref{eq:double_stoch_sublin_conv_main_one_batch} в режиме сэмплирования по одному батчу на шаге можно разделить на три слагаемых, каждое из которых возможно уменьшить, настраивая один параметр независимо, что сводит условие получения средней минимальной нормы на уровне $\epsilon > 0$ к следующей системе неравенств:
\begin{equation*}
    \begin{cases}
        2\gamma l_{\hat{g}_{2}}\left(\frac{M_{\hat{G}}^{2}}{\tilde{\tau}L} + 1\right)^{2}\frac{\mathbb{E}\left[\hat{g}_{2}(x_{0}, B_{0})\right]}{k}\leq r_{1}\epsilon^{2},~r_{1}\in(0, 1);\\[10pt]
        4\gamma l_{\hat{g}_{2}}l_{\hat{F}}\left(\frac{M_{\hat{G}}^{2}}{\tilde{\tau}L} + 1\right)^{2}\sqrt{\frac{1}{L}\left(\tilde{\mathcal{T}} + \frac{P_{\hat{g}_{1}}^{2}}{\tilde{\tau}}\right)}\mathds{1}_{\left\{b < m\right\}}\leq r_{2}\epsilon^{2},~r_{2}\in(0, 1);\\[10pt]
        2\gamma l_{\hat{g}_{2}}\left(\frac{M_{\hat{G}}^{2}}{\tilde{\tau}L} + 1\right)^{2}\tilde{\sigma}\sqrt{\frac{1}{b} - \frac{1}{m}}\leq r_{3}\epsilon^{2},~r_{3}\in(0, 1);\\[5pt]
        r_{1} + r_{2} + r_{3} = 1;
    \end{cases}
\end{equation*}
из которой выводятся минимальные значения количества итераций, нижней границы поиска постоянной Липшица, размера батча, на котором оценивается градиент функции $\hat{g}_{2}$:
\begin{equation}\label{eq:double_stoch_sublin_conv_min_hyperparams}
    \begin{cases}
        k = \left\lceil2\gamma l_{\hat{g}_{2}}\left(\frac{M_{\hat{G}}^{2}}{\tilde{\tau}L} + 1\right)^{2}\frac{\mathbb{E}\left[\hat{g}_{2}(x_{0}, B_{0})\right]}{r_{1}\epsilon^{2}}\right\rceil,~r_{1}\in(0, 1);\\[10pt]
        L = \min\limits_{c > 1}\max\left\{\frac{M_{\hat{G}}^{2}}{\tilde{\tau}(\sqrt{c} - 1)},~\left(\tilde{\mathcal{T}} + \frac{P_{\hat{g}_{1}}^{2}}{\tilde{\tau}}\right)\left(\frac{4c\gamma l_{\hat{g}_{2}}l_{\hat{F}}}{r_{2}\epsilon^{2}}\right)^{2}\right\},~r_{2}\in(0, 1);\\[10pt]
        b = \min\left\{m,~\left\lceil\frac{\left(2\gamma l_{\hat{g}_{2}}\left(\frac{M_{\hat{G}}^{2}}{\tilde{\tau}L} + 1\right)^{2}\tilde{\sigma}\right)^{2}}{r_{3}^{2}\epsilon^{4} + \frac{1}{m}\left(2\gamma l_{\hat{g}_{2}}\left(\frac{M_{\hat{G}}^{2}}{\tilde{\tau}L} + 1\right)^{2}\tilde{\sigma}\right)^{2}}\right\rceil\right\},~r_{3}\in(0, 1);\\[10pt]
        r_{1} + r_{2} + r_{3} = 1;
    \end{cases}
\end{equation}
в асимптотической форме это означает следующие оценки:
$$k = \operatorname{O}\left(\frac{1}{\epsilon^{2}}\right),~b = \min\left\{m,~\operatorname{O}\left(\frac{1}{\epsilon^{4}}\right)\right\}.$$
Оценка на $L$ получена через введение вспомогательной переменной $c > 1$:
\begin{equation*}
    \begin{aligned}
        &\underbrace{\left(\frac{M_{\hat{G}}^{2}}{\tilde{\tau}L} + 1\right)^{2}}_{\leq c}\underbrace{4\gamma l_{\hat{g}_{2}}l_{\hat{F}}\sqrt{\frac{1}{L}\left(\tilde{\mathcal{T}} + \frac{P_{\hat{g}_{1}}^{2}}{\tilde{\tau}}\right)}}_{\leq\frac{r_{2}\epsilon^{2}}{c}}\mathds{1}_{\left\{b < m\right\}}\leq r_{2}\epsilon^{2}.
    \end{aligned}
\end{equation*}
В режиме дважды стохастического шага минимальные значения количества итераций и размера батча $B_{k}$ для оценки \eqref{eq:double_stoch_sublin_conv_main_two_batches} имеют ту же асимптотику, что и для оценки \eqref{eq:double_stoch_sublin_conv_main_one_batch}, и описываются системой уравнений ниже:
\begin{equation*}
    \begin{cases}
        k = \left\lceil2\gamma l_{\hat{g}_{2}}\left(\frac{M_{\hat{G}}^{2}}{\tilde{\tau}L} + 1\right)^{2}\frac{\mathbb{E}\left[\hat{g}_{2}(x_{0}, B_{0})\right]}{(1 - r)\epsilon^{2}}\right\rceil,~r\in(0, 1);\\[10pt]
        b = \min\left\{m,~\left\lceil\frac{\left(2\gamma l_{\hat{g}_{2}}\left(\frac{M_{\hat{G}}^{2}}{\tilde{\tau}L} + 1\right)^{2}\left(\frac{4l_{\hat{F}}M_{\hat{G}}P_{\hat{g}_{1}}}{l}\sqrt{m(m - 1)}\left(\frac{M_{\hat{G}}^{2}}{\tilde{\tau}L} + 1\right)^{2} + \tilde{\sigma}\right)\right)^{2}}{r^{2}\epsilon^{4} + \frac{1}{m}\left(2\gamma l_{\hat{g}_{2}}\left(\frac{M_{\hat{G}}^{2}}{\tilde{\tau}L} + 1\right)^{2}\left(\frac{4l_{\hat{F}}M_{\hat{G}}P_{\hat{g}_{1}}}{l}\sqrt{m(m - 1)}\left(\frac{M_{\hat{G}}^{2}}{\tilde{\tau}L} + 1\right)^{2} + \tilde{\sigma}\right)\right)^{2}}\right\rceil\right\};\\
        \tilde{b}\in\overline{1,~m}.
    \end{cases}
\end{equation*}
В теореме \ref{th:double_stoch_lin_conv} уже установлены условия линейной сходимости в методе Гаусса--Ньютона с правилом обновления \eqref{eq:double_stoch_direct_update_rule}.
\begin{theorem}\label{th:double_stoch_lin_conv_main}
    Пусть выполнены предположения \ref{as:1}, \ref{as:2}, \ref{as:3}, \ref{as:4} и \ref{as:5}. Рассмотрим метод Гаусса--Ньютона, реализованный по схеме \ref{alg:gen_double_stoch_gnm} со стратегией вычисления $x_{k + 1}$ \eqref{eq:double_stoch_direct_update_rule}, в которой
    \begin{equation*}
    \begin{aligned}
        \eta_{k} &= \frac{2\left\langle\hat{G}^{'}(x_{k}, B_{k})^{*}\hat{G}(x_{k}, B_{k}),~\left(\hat{G}^{'}(x_{k}, \tilde{B}_{k})^{*}\hat{G}^{'}(x_{k}, \tilde{B}_{k}) + \tilde{\tau}_{k}L_{k}I_{n}\right)^{-1}\hat{G}^{'}(x_{k}, B_{k})^{*}\hat{G}(x_{k}, B_{k})\right\rangle}{l_{k}\left\langle\hat{G}^{'}(x_{k}, B_{k})^{*}\hat{G}(x_{k}, B_{k}),~\left(\hat{G}^{'}(x_{k}, \tilde{B}_{k})^{*}\hat{G}^{'}(x_{k}, \tilde{B}_{k}) + \tilde{\tau}_{k}L_{k}I_{n}\right)^{-2}\hat{G}^{'}(x_{k}, B_{k})^{*}\hat{G}(x_{k}, B_{k})\right\rangle},\\
        &\tilde{\mathcal{T}}\geq\tilde{\tau}_{k}\geq\tilde{\tau} > 0,~L_{k}\geq L > 0,~k\in\mathbb{Z}_{+}.
    \end{aligned}
\end{equation*}
Тогда:
\begin{equation*}
    \begin{aligned}
        &\begin{cases}
            \mathbb{E}\left[\left\|\nabla\hat{f}_{2}(x_{k})\right\|^{2}\right]\leq4M_{\hat{G}}^{2}\hat{\Delta}_{k,b};\\[10pt]
            \mathbb{E}\left[\hat{f}_{2}(x_{k})\right]\leq\hat{f}_{2}^{*} + \hat{\Delta}_{k,b};
        \end{cases}
    \end{aligned}
\end{equation*}
где при $k\in\mathbb{Z}_{+}$ и $b\in\overline{1,~\min\{m, n\}}$ оценка $\hat{\Delta}_{k, b}$ определяется следующим образом:
\begin{equation}\label{eq:double_stoch_lin_conv_main_two_batches}
    \begin{aligned}
        \hat{\Delta}_{k, b} &= \mathbb{E}\left[\hat{f}_{2}(x_{0})\right]\exp\left(-\frac{2\mu k}{\gamma l_{\hat{g}_{2}}}\left(\frac{\tilde{\tau}L}{M_{\hat{G}}^{2} + \tilde{\tau}L}\right)^{2}\right) +\\
        &+ \frac{\gamma l_{\hat{g}_{2}}}{\mu}\left(\tilde{\sigma}\sqrt{\frac{1}{b} - \frac{1}{m}} + \frac{2l_{\hat{F}}M_{\hat{G}}P_{\hat{g}_{1}}}{l}\left(\frac{M_{\hat{G}}^{2}}{\tilde{\tau}L} + 1\right)^{2}\mathds{1}_{\left\{b < m\right\}}\right)\left(\frac{M_{\hat{G}}^{2}}{\tilde{\tau}L} + 1\right)^{2}
    \end{aligned}
\end{equation}
при независимом сэмплировании $B_{k}$ и $\tilde{B}_{k}$ и
\begin{equation}\label{eq:double_stoch_lin_conv_main_one_batch}
    \begin{aligned}
        \hat{\Delta}_{k, b} &= \mathbb{E}\left[\hat{f}_{2}(x_{0})\right]\exp\left(-\frac{2\mu k}{\gamma l_{\hat{g}_{2}}}\left(\frac{\tilde{\tau}L}{M_{\hat{G}}^{2} + \tilde{\tau}L}\right)^{2}\right) +\\
        &+ \frac{\gamma l_{\hat{g}_{2}}}{\mu}\left(\tilde{\sigma}\sqrt{\frac{1}{b} - \frac{1}{m}} + l_{\hat{F}}\min\left\{\sqrt{\frac{1}{L}\left(\tilde{\mathcal{T}} + \frac{P_{\hat{g}_{1}}^{2}}{\tilde{\tau}}\right)},~\frac{2M_{\hat{G}}P_{\hat{g}_{1}}}{l}\left(\frac{M_{\hat{G}}^{2}}{\tilde{\tau}L} + 1\right)^{2}\right\}\mathds{1}_{\left\{b < m\right\}}\right)\left(\frac{M_{\hat{G}}^{2}}{\tilde{\tau}L} + 1\right)^{2}
    \end{aligned}
\end{equation}
в случае сэмплирования на каждом шаге одного батча ($B_{k}\equiv\tilde{B}_{k}$). Оператор математического ожидания $\mathbb{E}\left[\cdot\right]$ усредняет по всей случайности процесса оптимизации.
\end{theorem}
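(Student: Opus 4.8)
The plan is to reuse the quadratic-upper-model machinery already built around $\hat{\varphi}_{x,l}$ for Algorithm \ref{alg:gen_double_stoch_gnm} — the per-step inequality I need is exactly the one obtained in the proof of the sublinear counterpart, Theorem \ref{th:double_stoch_sublin_conv_main} — and then to superimpose the Polyak--{\L}ojasiewicz argument coming from Assumption \ref{as:5}. Writing $g_{k}=\nabla_{x}\hat{g}_{2}(x_{k},B_{k})=2\hat{G}'(x_{k},B_{k})^{*}\hat{G}(x_{k},B_{k})$ and $H_{k}=\hat{G}'(x_{k},\tilde{B}_{k})^{*}\hat{G}'(x_{k},\tilde{B}_{k})+\tilde{\tau}_{k}L_{k}I_{n}$, the rule \eqref{eq:double_stoch_direct_update_rule} is $x_{k+1}=x_{k}-\tfrac{\eta_{k}}{2}H_{k}^{-1}g_{k}$, and the stated $\eta_{k}$ is precisely the exact one-dimensional minimizer of $\eta\mapsto\hat{\varphi}_{x_{k},l_{k}}(x_{k}-\tfrac{\eta}{2}H_{k}^{-1}g_{k},B_{k})$ (Lemma \ref{lm:aux_double_stoch_update}). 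Since $\hat{\varphi}_{x_{k},l_{k}}(\cdot,B_{k})$ majorizes $\hat{g}_{2}(\cdot,B_{k})$ once $l_{k}\geq l_{\hat{g}_{2}}$ (Lemma \ref{lm:aux_g2_local_model}), and the backtracking in Algorithm \ref{alg:gen_double_stoch_gnm} guarantees $\hat{g}_{2}(x_{k+1},B_{k})\leq\hat{\varphi}_{x_{k},l_{k}}(x_{k+1},B_{k})$ with $l_{k}\leq\gamma l_{\hat{g}_{2}}$, I would first obtain
$$\hat{g}_{2}(x_{k+1},B_{k})\leq\hat{g}_{2}(x_{k},B_{k})-\frac{1}{2l_{k}}\cdot\frac{\langle g_{k},H_{k}^{-1}g_{k}\rangle^{2}}{\langle g_{k},H_{k}^{-2}g_{k}\rangle},$$
then bound the Rayleigh-type quotient below by $(\lambda_{\min}(H_{k})/\lambda_{\max}(H_{k}))^{2}\|g_{k}\|^{2}$ via $\langle g_{k},H_{k}^{-1}g_{k}\rangle\geq\|g_{k}\|^{2}/\lambda_{\max}(H_{k})$ and $\langle g_{k},H_{k}^{-2}g_{k}\rangle\leq\|g_{k}\|^{2}/\lambda_{\min}(H_{k})^{2}$; Assumption \ref{as:2} and Lemma \ref{lm:aux_matrix_power_order} give $\lambda_{\max}(H_{k})\leq M_{\hat{G}}^{2}+\tilde{\tau}_{k}L_{k}$ and $\lambda_{\min}(H_{k})\geq\tilde{\tau}_{k}L_{k}\geq\tilde{\tau}L$, and monotonicity of $t\mapsto t/(M_{\hat{G}}^{2}+t)$ yields $\lambda_{\min}(H_{k})/\lambda_{\max}(H_{k})\geq\tilde{\tau}L/(M_{\hat{G}}^{2}+\tilde{\tau}L)$. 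With $l_{k}\leq\gamma l_{\hat{g}_{2}}$ this produces
$$\hat{g}_{2}(x_{k+1},B_{k})\leq\hat{g}_{2}(x_{k},B_{k})-\frac{1}{2\gamma l_{\hat{g}_{2}}}\left(\frac{\tilde{\tau}L}{M_{\hat{G}}^{2}+\tilde{\tau}L}\right)^{2}\|g_{k}\|^{2}.$$

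Next I would take $\mathbb{E}[\,\cdot\mid x_{k}]$ over the freshly sampled $B_{k},\tilde{B}_{k}$, using $\mathbb{E}[\hat{g}_{2}(x_{k},B_{k})\mid x_{k}]=\hat{f}_{2}(x_{k})$ and the batch Polyak--{\L}ojasiewicz bound forced by Assumption \ref{as:5}: $\|g_{k}\|^{2}=4\|\hat{G}'(x_{k},B_{k})^{*}\hat{G}(x_{k},B_{k})\|^{2}\geq4\mu\hat{g}_{2}(x_{k},B_{k})$, hence $\mathbb{E}[\|g_{k}\|^{2}\mid x_{k}]\geq4\mu\hat{f}_{2}(x_{k})$. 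This gives the contraction $\mathbb{E}[\hat{g}_{2}(x_{k+1},B_{k})\mid x_{k}]\leq(1-\rho)\hat{f}_{2}(x_{k})$ with $\rho=\tfrac{2\mu}{\gamma l_{\hat{g}_{2}}}(\tilde{\tau}L/(M_{\hat{G}}^{2}+\tilde{\tau}L))^{2}$, which already carries the exponential rate appearing in $\hat{\Delta}_{k,b}$; by Jensen the same step simultaneously yields a decrease measured by $\|\nabla\hat{f}_{2}(x_{k})\|^{2}$, which will feed the gradient-norm estimate.

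The crux of the argument is then passing from the \emph{empirical} value $\hat{g}_{2}(x_{k+1},B_{k})$ — evaluated on the very batch that produced $x_{k+1}$ — to the population value $\hat{f}_{2}(x_{k+1})$. I would write
$$\hat{f}_{2}(x_{k+1})-\hat{g}_{2}(x_{k+1},B_{k})=\big(\hat{f}_{2}(x_{k+1})-\hat{f}_{2}(x_{k})\big)-\big(\hat{g}_{2}(x_{k+1},B_{k})-\hat{g}_{2}(x_{k},B_{k})\big)+\big(\hat{f}_{2}(x_{k})-\hat{g}_{2}(x_{k},B_{k})\big),$$
observe that the last bracket has zero conditional expectation, and estimate the first two brackets through the $l_{\hat{g}_{2}}$-smoothness of $\hat{f}_{2}$ and of $\hat{g}_{2}(\cdot,B_{k})$ (Lemma \ref{lm:aux_g2_lipschitz_gradient}); the surviving terms are a quadratic $l_{\hat{g}_{2}}\|x_{k+1}-x_{k}\|^{2}$ and a cross term $\langle\nabla\hat{f}_{2}(x_{k})-g_{k},\,x_{k+1}-x_{k}\rangle$ handled by Cauchy--Schwarz, where the gradient batch-variance $\mathbb{E}[\|g_{k}-\nabla\hat{f}_{2}(x_{k})\|^{2}\mid x_{k}]$ scales like $\tilde{\sigma}^{2}(1/b-1/m)$ through the sampling-without-replacement estimate, and the displacement satisfies $\|x_{k+1}-x_{k}\|=\tfrac{\eta_{k}}{2}\|H_{k}^{-1}g_{k}\|\leq\|g_{k}\|/l_{k}$ (from $\langle g_{k},H_{k}^{-1}g_{k}\rangle\leq\|g_{k}\|\sqrt{\langle g_{k},H_{k}^{-2}g_{k}\rangle}$), which by \eqref{eq:lipschitz_upper_bound} is at most $\tfrac{1}{l}\min\{l_{\hat{F}},2M_{\hat{G}}P_{\hat{g}_{1}}\}$ — or, in the one-batch regime, at most $\tfrac{1}{\sqrt{L}}\sqrt{\tilde{\mathcal{T}}+P_{\hat{g}_{1}}^{2}/\tilde{\tau}}$, obtained by bounding the Levenberg--Marquardt displacement through the value of the $\hat{\psi}$-model at $x_{k}$ and Assumption \ref{as:3}. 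Since $b=m$ forces $B_{k}=\mathcal{B}$ and $\hat{g}_{2}(\cdot,\mathcal{B})\equiv\hat{f}_{2}$, the whole bias carries the factor $\mathds{1}_{\{b<m\}}$ plus the variance term, which is the origin of $C_{b}:=2\tilde{\sigma}\sqrt{1/b-1/m}+2l_{\hat{F}}\cdot(\text{step bound})\,\mathds{1}_{\{b<m\}}$; in the two-batch case the same computation applies but $x_{k+1}$ depends on $\tilde{B}_{k}$ through $H_{k}$, so one is forced to use the cruder step bound $\tfrac{2M_{\hat{G}}P_{\hat{g}_{1}}}{l}\big(\tfrac{M_{\hat{G}}^{2}}{\tilde{\tau}L}+1\big)^{2}$, giving \eqref{eq:double_stoch_lin_conv_main_two_batches} rather than \eqref{eq:double_stoch_lin_conv_main_one_batch}.

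Finally I would combine everything into $\mathbb{E}[\hat{f}_{2}(x_{k+1})\mid x_{k}]\leq(1-\rho)\hat{f}_{2}(x_{k})+C_{b}$, take total expectation, and unroll the geometric recursion: $\mathbb{E}[\hat{f}_{2}(x_{k})]\leq(1-\rho)^{k}\mathbb{E}[\hat{f}_{2}(x_{0})]+C_{b}/\rho\leq e^{-\rho k}\mathbb{E}[\hat{f}_{2}(x_{0})]+C_{b}/\rho$, which is exactly $\hat{\Delta}_{k,b}$ after substituting $\rho$ — note $C_{b}/\rho$ reproduces the prefactor $\tfrac{\gamma l_{\hat{g}_{2}}}{\mu}\big(\tfrac{M_{\hat{G}}^{2}}{\tilde{\tau}L}+1\big)^{2}$. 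The bound $\hat{f}_{2}^{*}+\hat{\Delta}_{k,b}$ is then immediate since $\hat{f}_{2}^{*}\geq0$, and the gradient estimate follows from $\|\nabla\hat{f}_{2}(x_{k})\|^{2}=4\|\hat{F}'(x_{k})^{*}\hat{F}(x_{k})\|^{2}\leq4\|\hat{F}'(x_{k})\|^{2}\hat{f}_{2}(x_{k})\leq4M_{\hat{G}}^{2}\hat{f}_{2}(x_{k})$ (the bound $\|\hat{F}'(x)\|^{2}\leq M_{\hat{G}}^{2}$ is a consequence of Assumption \ref{as:2}) upon taking expectations. The main obstacle I anticipate is precisely the third step: controlling the generalization gap $\hat{f}_{2}(x_{k+1})-\hat{g}_{2}(x_{k+1},B_{k})$ tightly enough to isolate both the $\mathds{1}_{\{b<m\}}$ factor and the $\sqrt{1/b-1/m}$ variance scaling, and to explain why a single shared batch (seemingly the riskier choice) in fact produces a sharper additive constant than two independent ones.
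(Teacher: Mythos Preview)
Your overall architecture---per-step decrease from Lemma~\ref{lm:aux_double_stoch_update}, spectral bounds on the Rayleigh quotient, the batch Polyak--{\L}ojasiewicz inequality $\|g_{k}\|^{2}\geq4\mu\,\hat{g}_{2}(x_{k},B_{k})$ from Assumption~\ref{as:5}, and a geometric recursion---is exactly the paper's route: the paper's proof explicitly invokes Theorem~\ref{th:double_stoch_sublin_conv} for the per-step inequality and then feeds it into the recursive machinery of Theorem~\ref{th:4}.

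There is one genuine gap. In bounding the bias $\hat{f}_{2}(x_{k+1})-\hat{g}_{2}(x_{k+1},B_{k})$ you appeal to $l_{\hat{g}_{2}}$-smoothness, which produces a cross term $\langle\nabla\hat{f}_{2}(x_{k})-g_{k},\,x_{k+1}-x_{k}\rangle$, and you then assert that the gradient variance $\mathbb{E}[\|g_{k}-\nabla\hat{f}_{2}(x_{k})\|^{2}\mid x_{k}]$ ``scales like $\tilde{\sigma}^{2}(1/b-1/m)$.'' That assertion is not available: $\tilde{\sigma}$ controls the variance of the \emph{function value} $\hat{g}_{2}(x,B)$ (Assumption~\ref{as:4}, Lemma~\ref{lm:aux_finite_population_variance}), not of its gradient, and nothing in Assumptions~\ref{as:1}--\ref{as:5} yields a gradient-variance bound with constant $\tilde{\sigma}$. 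This is precisely the source of the $\tilde{\sigma}$ term in your $C_{b}$, so as written that piece is underived. The paper avoids the issue entirely by never using smoothness here: it applies Lemma~\ref{lm:aux_bounded_deviation}, which rests only on the Lipschitz constant $l_{\hat{F}}$ of $\hat{g}_{2}$ and $\hat{f}_{2}$ (Assumption~\ref{as:1}, Lemma~\ref{lm:aux_lipschitz}) and the function-value variance, and delivers $\mathbb{E}[|\hat{f}_{2}(x_{k+1})-\hat{g}_{2}(x_{k+1},B_{k})|]\leq 2l_{\hat{F}}\mathbb{E}[\|x_{k+1}-x_{k}\|]\mathds{1}_{\{b<m\}}+\tilde{\sigma}\sqrt{1/b-1/m}$ directly. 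If you replace smoothness by Lipschitz continuity on your first two brackets, the cross term disappears and you land on the paper's constant $c$.

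One structural difference worth noting: you recurse directly on $\mathbb{E}[\hat{f}_{2}(x_{k})]$, whereas the paper recurses on $a_{k}=\mathbb{E}[\hat{g}_{2}(x_{k},B_{k-1})-\hat{f}_{2}^{*}]$ and only converts to $\mathbb{E}[\hat{f}_{2}(x_{k})]$ at the end. Your route is cleaner, and since your third bracket $\hat{f}_{2}(x_{k})-\hat{g}_{2}(x_{k},B_{k})$ has zero conditional mean (rather than being bounded in absolute value), a careful execution of your approach using Lipschitz continuity would in fact eliminate the $\tilde{\sigma}$ term altogether and prove a slightly sharper bound than the one stated.
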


Как и в теореме \ref{th:double_stoch_sublin_conv}, замена локальной модели $\hat{\psi}_{x_{k}, L_{k}, \tau_{k}}$ на модель $\hat{\varphi}_{x_{k}, l_{k}}$ при сэмплировании одного батча на шаге метода позволяет избавиться от слагаемого, пропорционального верхней оценке квадрата нормы градиента функции $\hat{g}_{2}$:
$$\frac{4\gamma l_{\hat{g}_{2}}l_{\hat{F}}M_{\hat{G}}^{2}}{\mu}\sqrt{\frac{1}{L}\left(\tilde{\mathcal{T}} + \frac{P_{\hat{g}_{1}}^{2}}{\tilde{\tau}}\right)}\left(\frac{M_{\hat{G}}^{2}}{\tilde{\tau}L} + 1\right)^{2}\mathds{1}_{\left\{b < m\right\}}.$$
Соответственно, аналогичным образом для оценки \eqref{eq:double_stoch_lin_conv_main_one_batch} сходимость средней нормы градиента к уровню $\epsilon > 0$ задаётся следующей системой неравенств:
\begin{equation*}
    \begin{cases}
        4M_{\hat{G}}^{2}\mathbb{E}\left[\hat{g}_{2}(x_{0}, B_{0})\right]\exp\left(-\frac{2\mu k}{\gamma l_{\hat{g}_{2}}}\left(\frac{\tilde{\tau}L}{M_{\hat{G}}^{2} + \tilde{\tau}L}\right)^{2}\right)\leq r_{1}\epsilon^{2},~r_{1}\in(0, 1);\\[10pt]
        \frac{4\gamma l_{\hat{g}_{2}}l_{\hat{F}}M_{\hat{G}}^{2}}{\mu}\sqrt{\frac{1}{L}\left(\tilde{\mathcal{T}} + \frac{P_{\hat{g}_{1}}^{2}}{\tilde{\tau}}\right)}\left(\frac{M_{\hat{G}}^{2}}{\tilde{\tau}L} + 1\right)^{2}\mathds{1}_{\left\{b < m\right\}}\leq r_{2}\epsilon^{2},~r_{2}\in(0, 1);\\[10pt]
        \frac{4\gamma l_{\hat{g}_{2}}M_{\hat{G}}^{2}}{\mu}\left(\frac{M_{\hat{G}}^{2}}{\tilde{\tau}L} + 1\right)^{2}\tilde{\sigma}\sqrt{\frac{1}{b} - \frac{1}{m}}\leq r_{3}\epsilon^{2},~r_{3}\in(0, 1);\\[5pt]
        r_{1} + r_{2} + r_{3} = 1;
    \end{cases}
\end{equation*}
порождая минимальные значения количества итераций, нижней границы поиска постоянной Липшица и размера батча, на котором оценивается градиент функции $\hat{g}_{2}$:
\begin{equation}\label{eq:double_stoch_lin_conv_min_hyperparams}
    \begin{cases}
        k = \left\lceil\frac{\gamma l_{\hat{g}_{2}}}{2\mu}\left(\frac{M_{\hat{G}}^{2}}{\tilde{\tau}L} + 1\right)^{2}\ln\left(\frac{4M_{\hat{G}}^{2}\mathbb{E}\left[\hat{g}_{2}(x_{0}, B_{0})\right]}{r_{1}\epsilon^{2}}\right)\right\rceil,~r_{1}\in(0, 1);\\[10pt]
        L = \min\limits_{c > 1}\max\left\{\frac{M_{\hat{G}}^{2}}{\tilde{\tau}(\sqrt{c} - 1)},~\left(\tilde{\mathcal{T}} + \frac{P_{\hat{g}_{1}}^{2}}{\tilde{\tau}}\right)\left(\frac{4c\gamma l_{\hat{g}_{2}}l_{\hat{F}}M_{\hat{G}}^{2}}{\mu r_{2}\epsilon^{2}}\right)^{2}\right\},~r_{2}\in(0, 1);\\[10pt]
        b = \min\left\{m,~n,~\left\lceil\frac{\left(\frac{4\gamma l_{\hat{g}_{2}}M_{\hat{G}}^{2}}{\mu}\left(\frac{M_{\hat{G}}^{2}}{\tilde{\tau}L} + 1\right)^{2}\tilde{\sigma}\right)^{2}}{r_{3}^{2}\epsilon^{4} + \frac{1}{m}\left(\frac{4\gamma l_{\hat{g}_{2}}M_{\hat{G}}^{2}}{\mu}\left(\frac{M_{\hat{G}}^{2}}{\tilde{\tau}L} + 1\right)^{2}\tilde{\sigma}\right)^{2}}\right\rceil\right\},~r_{3}\in(0, 1);\\[10pt]
        r_{1} + r_{2} + r_{3} = 1.
    \end{cases}
\end{equation}
Оценка на $L$ так же получена через введение вспомогательной переменной $c > 1$, как и в случае теоремы \ref{th:double_stoch_sublin_conv}:
\begin{equation*}
    \begin{aligned}
        &\underbrace{\left(\frac{M_{\hat{G}}^{2}}{\tilde{\tau}L} + 1\right)^{2}}_{\leq c}\underbrace{\frac{4\gamma l_{\hat{g}_{2}}l_{\hat{F}}M_{\hat{G}}^{2}}{\mu}\sqrt{\frac{1}{L}\left(\tilde{\mathcal{T}} + \frac{P_{\hat{g}_{1}}^{2}}{\tilde{\tau}}\right)}}_{\leq\frac{r_{2}\epsilon^{2}}{c}}\mathds{1}_{\left\{b < m\right\}}\leq r_{2}\epsilon^{2}.
    \end{aligned}
\end{equation*}
В асимптотической форме ясно видна линейная сходимость по количеству итераций, при этом есть зависимость от размера батча, на котором оценивается градиент локальной модели, но нет зависимости от размера батча, на котором оценивается гессиан локальной модели:
$$k = \operatorname{O}\left(\ln\left(\frac{1}{\epsilon}\right)\right),~b = \min\left\{m,~n,~\operatorname{O}\left(\frac{1}{\epsilon^{4}}\right)\right\}.$$
Для оценки сходимости с дважды стохастическим шагом \eqref{eq:double_stoch_lin_conv_main_two_batches} минимальное количество итераций и минимальный размер батча имеют аналогичную асимптотику:
\begin{equation*}
    \begin{cases}
        k = \left\lceil\frac{\gamma l_{\hat{g}_{2}}}{2\mu}\left(\frac{M_{\hat{G}}^{2}}{\tilde{\tau}L} + 1\right)^{2}\ln\left(\frac{4M_{\hat{G}}^{2}\mathbb{E}\left[\hat{g}_{2}(x_{0}, B_{0})\right]}{(1 - r)\epsilon^{2}}\right)\right\rceil,~r\in(0, 1);\\[10pt]
        b = \min\left\{m,~n,~\left\lceil\frac{\left(\frac{\gamma l_{\hat{g}_{2}}}{\mu}\left(\frac{2l_{\hat{F}}M_{\hat{G}}P_{\hat{g}_{1}}}{l}\sqrt{m(m - 1)}\left(\frac{M_{\hat{G}}^{2}}{\tilde{\tau}L} + 1\right)^{2} + \tilde{\sigma}\right)\left(\frac{M_{\hat{G}}^{2}}{\tilde{\tau}L} + 1\right)^{2}\right)^{2}}{r^{2}\epsilon^{4} + \frac{1}{m}\left(\frac{\gamma l_{\hat{g}_{2}}}{\mu}\left(\frac{2l_{\hat{F}}M_{\hat{G}}P_{\hat{g}_{1}}}{l}\sqrt{m(m - 1)}\left(\frac{M_{\hat{G}}^{2}}{\tilde{\tau}L} + 1\right)^{2} + \tilde{\sigma}\right)\left(\frac{M_{\hat{G}}^{2}}{\tilde{\tau}L} + 1\right)^{2}\right)^{2}}\right\rceil\right\};\\
        \tilde{b}\in\overline{1,~m}.
    \end{cases}
\end{equation*}

В условиях теорем \ref{th:double_stoch_sublin_conv} и \ref{th:double_stoch_lin_conv} значение размера батча, по которому оценивается гессиан стохастической локальной модели может быть произвольным, так как только границы спектра гессиана вносят вклад в оценку сходимости, а в экстремальные свойства спектра матрицы Якоби $\hat{G}^{'}(x, B)$ в предположениях данной работы не заложен явно размер батча $\tilde{b}$ и оценка
$$\left\|\hat{G}^{'}(x, B)\right\|\leq M_{\hat{G}}$$
предполагается выполненной для произвольного размера батча $|B|\in\overline{1, m}$. Оценки \eqref{eq:double_stoch_sublin_conv_min_hyperparams}, \eqref{eq:double_stoch_lin_conv_min_hyperparams} явно указывают на то, что метод будет только быстрее сходиться, если неограниченно увеличивать $\tilde{\tau}L$, сохраняя выполнение всех неравенств в условиях теорем \ref{th:double_stoch_sublin_conv} и \ref{th:double_stoch_lin_conv}. Данное действие приводит к замене правила \eqref{eq:double_stoch_direct_update_rule} на обновление градиентного спуска (следствия \ref{th:double_stoch_sublin_conv_cor1} и \ref{th:double_stoch_lin_conv_cor1}):
\begin{equation*}
    \begin{aligned}
        &x_{k + 1} = x_{k} - \frac{1}{l_{k}}\nabla_{x_{k}}\hat{g}_{2}(x_{k}, B_{k}),~k\in\mathbb{Z}_{+},
    \end{aligned}
\end{equation*}
что представляет метод стохастического градиентного спуска в оценках среднего как наиболее быстрый и предельный в стратегии обновления \eqref{eq:double_stoch_direct_update_rule} при выполнении тождества $B_{k}\equiv\tilde{B}_{k}$. Более того, двойная стохастика в теоремах \ref{th:double_stoch_sublin_conv} и \ref{th:double_stoch_lin_conv} хуже сходится относительно шума батча, и поэтому спокойно можно оценивать $x_{k + 1}$ по схеме \ref{alg:gen_double_stoch_gnm}, используя только один батч на каждом шаге метода, то есть технически положить $B_{k}\equiv\tilde{B}_{k}$ всё так же из--за того, что только границы спектра матрицы Якоби $\hat{G}^{'}(x_{k}, \tilde{B}_{k})^{*}$ используются в оценках сходимости.

Однако предельный случай  \eqref{eq:double_stoch_direct_update_rule} с $B_{k}\equiv\tilde{B}_{k}$ всё-таки не всегда быстрее правила \eqref{eq:stoch_direct_update_rule} сходится в терминах итераций при известной постоянной Липшица ($\gamma = 1$), если сравнить с соответствующими оценками из теорем \ref{th:3} и \ref{th:4}, взяв единичный масштаб шага $\eta = 1$:
\begin{equation*}
    \begin{aligned}
        \underbrace{4\left(M_{\hat{G}}^{2} + P_{\hat{g}_{1}}L_{\hat{F}}\right)\left(\frac{M_{\hat{G}}^{2}}{\tilde{\tau}L} + 1\right)^{2}\frac{\mathbb{E}\left[\hat{g}_{2}(x_{0}, B_{0})\right]}{k}}_{\text{теорема \ref{th:double_stoch_sublin_conv},}~\gamma = 1,~\tilde{\tau}L\geq\frac{M_{\hat{G}}^{2}}{\sqrt{2}- 1}}&\leq\underbrace{\frac{8(M_{\hat{G}}^{2} + P_{\hat{g}_{1}}L_{\hat{F}})\mathbb{E}\left[\hat{g}_{2}(x_{0}, B_{0})\right]}{k}}_{\text{теорема \ref{th:3},~}\gamma = 1,~\eta = 1}\leq\\
        &\leq\underbrace{4\left(M_{\hat{G}}^{2} + P_{\hat{g}_{1}}L_{\hat{F}}\right)\left(\frac{M_{\hat{G}}^{2}}{\tilde{\tau}L} + 1\right)^{2}\frac{\mathbb{E}\left[\hat{g}_{2}(x_{0}, B_{0})\right]}{k}}_{\text{теорема \ref{th:double_stoch_sublin_conv},}~\gamma = 1,~0 < \tilde{\tau}L\leq\frac{M_{\hat{G}}^{2}}{\sqrt{2}- 1}}.
    \end{aligned}
\end{equation*}
В случае сублинейной сходимости одного увеличения $\tilde{\tau}L$ достаточно для улучшения сходимости по схеме \ref{alg:gen_double_stoch_gnm} по сравнению со схемой \ref{alg:gen_stoch_gnm}, если известны постоянные Липшица $L_{\hat{F}}$ и $l_{\hat{g}_{2}}$ для задачи \eqref{eq:main_opt_problem}. Предельное значение правила \eqref{eq:double_stoch_direct_update_rule} в схеме \ref{alg:gen_double_stoch_gnm} в случае линейной сходимости может быть эффективнее правила \eqref{eq:stoch_direct_update_rule} в схеме \ref{alg:gen_stoch_gnm} только для тех задач \eqref{eq:main_opt_problem}, в которых выполнено следующее структурное свойство $M_{\hat{G}}^{2}\leq2\mu + L_{\hat{F}}P_{\hat{g}_{1}}$:
\begin{equation*}
    \begin{aligned}
        4M_{\hat{G}}^{2}\mathbb{E}\left[\hat{g}_{2}(x_{0}, B_{0})\right]\underbrace{\exp\left(-\frac{\mu k}{L_{\hat{F}}P_{\hat{g}_{1}} + M_{\hat{G}}^{2}}\right)}_{\substack{\text{теорема \ref{th:double_stoch_lin_conv},}~\gamma = 1,~\tilde{\tau}L\rightarrow+\infty,\\\mu\leq M_{\hat{G}}^{2}\leq2\mu + L_{\hat{F}}P_{\hat{g}_{1}}}}&\leq4M_{\hat{G}}^{2}\mathbb{E}\left[\hat{g}_{2}(x_{0}, B_{0})\right]\underbrace{\exp\left(-\frac{\mu k}{2\left(L_{\hat{F}}P_{\hat{g}_{1}} + \mu\right)}\right)}_{\text{теорема \ref{th:4},~}\gamma = 1,~\eta = 1}\leq\\
        &\leq4M_{\hat{G}}^{2}\mathbb{E}\left[\hat{g}_{2}(x_{0}, B_{0})\right]\underbrace{\exp\left(-\frac{\mu k}{L_{\hat{F}}P_{\hat{g}_{1}} + M_{\hat{G}}^{2}}\left(\frac{\tilde{\tau}L}{M_{\hat{G}}^{2} + \tilde{\tau}L}\right)^{2}\right)}_{\text{теорема \ref{th:double_stoch_lin_conv},}~\gamma = 1,~0 < \tilde{\tau}L\leq\frac{M_{\hat{G}}^{2}}{\sqrt{2} - 1}}.
    \end{aligned}
\end{equation*}
Это же структурное свойство важно для эффективности правила обновления \eqref{eq:double_stoch_direct_update_rule} в схеме \ref{alg:gen_double_stoch_gnm} по сравнению с правилом \eqref{eq:stoch_direct_update_rule} в схеме \ref{alg:gen_stoch_gnm} относительно дисперсии батча при выполнении условия Поляка--Лоясиевича:
\begin{equation*}
    \begin{aligned}
        \underbrace{\frac{8\tilde{\sigma}M_{\hat{G}}^{2}\left(L_{\hat{F}}P_{\hat{g}_{1}} + M_{\hat{G}}^{2}\right)}{\mu}\sqrt{\frac{1}{b} - \frac{1}{m}}}_{\substack{\text{теорема \ref{th:double_stoch_lin_conv},}~\gamma = 1,~\tilde{\tau}L\rightarrow+\infty,\\\mu\leq M_{\hat{G}}^{2}\leq 2\mu + L_{\hat{F}}P_{\hat{g}_{1}}}}&\leq\underbrace{16\tilde{\sigma}M_{\hat{G}}^{2}\left(\frac{L_{\hat{F}}P_{\hat{g}_{1}} + \mu}{\mu}\right)\sqrt{\frac{1}{b} - \frac{1}{m}}}_{\text{теорема \ref{th:4},~}\gamma = 1,~\eta = 1}\leq\\
        &\leq\underbrace{\frac{8\tilde{\sigma}M_{\hat{G}}^{2}\left(L_{\hat{F}}P_{\hat{g}_{1}} + M_{\hat{G}}^{2}\right)}{\mu}\left(\frac{M_{\hat{G}}^{2}}{\tilde{\tau}L} + 1\right)^{2}\sqrt{\frac{1}{b} - \frac{1}{m}}}_{\text{теорема \ref{th:double_stoch_lin_conv},}~\gamma = 1,~0 < \tilde{\tau}L\leq\frac{M_{\hat{G}}^{2}}{\sqrt{2} - 1}}.
    \end{aligned}
\end{equation*}
И даже при отсутствии выполнения предположения \ref{as:5} эффективность схемы \ref{alg:gen_double_stoch_gnm} по сравнению со схемой \ref{alg:gen_stoch_gnm} относительно размера батча может иметь место:
\begin{equation*}
    \begin{aligned}
        \underbrace{4\left(M_{\hat{G}}^{2} + P_{\hat{g}_{1}}L_{\hat{F}}\right)\left(\frac{M_{\hat{G}}^{2}}{\tilde{\tau}L} + 1\right)^{2}\tilde{\sigma}\sqrt{\frac{1}{b} - \frac{1}{m}}}_{\text{теорема \ref{th:double_stoch_sublin_conv},}~\gamma = 1,~\tilde{\tau}L\geq\frac{M_{\hat{G}}^{2}}{\sqrt{2} - 1}}&\leq\underbrace{8(M_{\hat{G}}^{2} + P_{\hat{g}_{1}}L_{\hat{F}})\tilde{\sigma}\sqrt{\frac{1}{b} - \frac{1}{m}}}_{\text{теорема \ref{th:3},~}\gamma = 1,~\eta = 1}\leq\\
        &\leq\underbrace{4\left(M_{\hat{G}}^{2} + P_{\hat{g}_{1}}L_{\hat{F}}\right)\left(\frac{M_{\hat{G}}^{2}}{\tilde{\tau}L} + 1\right)^{2}\tilde{\sigma}\sqrt{\frac{1}{b} - \frac{1}{m}}}_{\text{теорема \ref{th:double_stoch_sublin_conv},}~\gamma = 1,~0 < \tilde{\tau}L\leq\frac{M_{\hat{G}}^{2}}{\sqrt{2} - 1}}.
    \end{aligned}
\end{equation*}

\subsection{Использование неточного шага}\label{subsec:approximate_step}

В данном подразделе рассматривается неточный поиск $x_{k + 1}$ при вычислении последовательности
$$\left\{x_{k}\right\}_{k\in\mathbb{Z}_{+}},$$
но, в отличие от правила \eqref{eq:stoch_direct_update_rule}, вычисление $x_{k + 1}$ выполняется с некоторой погрешностью $\varepsilon_{k} > 0$ согласно правилу \eqref{eq:stoch_approx_general_update_rule}. Ясно, что при $\varepsilon_{k} = 0$ воспроизводится правило \eqref{eq:stoch_direct_update_rule} с $\eta_{k} = 1$. Стратегия \eqref{eq:stoch_approx_general_update_rule} может быть полезной в случае наличия практических трудностей с обращением матриц большого размера или в случае использования неточной оптимизации в качестве аналога механизма регуляризации при настройке робастных моделей с повышенной устойчивостью к переобучению в теории статистического обучения. Вычисление $x_{k + 1}$ также может обладать стохастической природой относительно текущей итерации, однако в условиях данного раздела $\varepsilon_{k}$--оптимальность из \eqref{eq:stoch_approx_general_update_rule} полагается выполненной в смысле почти наверно.

Для вывода схемы, обладающей сходимостью в терминах среднего, рассмотрим переменный интервал поиска приближения локальной постоянной Липшица:
\begin{equation*}
    \begin{aligned}
        L_{k}\in&\left[\max\left\{L,~\frac{L}{s_{k}}\right\},~\max\left\{\tilde{\gamma}L_{\hat{F}},~\frac{\gamma L_{\hat{F}}}{s_{k}}\right\}\right],~\gamma\geq\tilde{\gamma}\geq1,~s_{k} > 0,~L\in\left(0,~\tilde{\gamma}L_{\hat{F}}\right],~k\in\mathbb{Z}_{+},
    \end{aligned}
\end{equation*}
причём на каждом шаге метода Гаусса--Ньютона количество итераций поиска наименьшего подходящего $L_{k}$ сверху ограничено величиной
$$\operatorname{O}\left(\left\lceil\log_{2}\left(\frac{\gamma L_{\hat{F}}}{L}\right)\right\rceil + 1\right)$$
по лемме \ref{lm:aux_L_search_time_complexity}, данная оценка верна и для схемы \ref{alg:gen_stoch_gnm}. Этой же величиной ограничено количество итераций линейного поиска $L_{k}$ в схемах \ref{alg:gen_det_gnm} и \ref{alg:gen_det_flex_gnm} при $\gamma = 2$. В силу единообразия рассматриваемой стратегии бинарного поиска локальной постоянной Липшица для схемы \ref{alg:gen_double_stoch_gnm} количество итераций линейного поиска ограничено сверху величиной
$$\operatorname{O}\left(\left\lceil\log_{2}\left(\frac{\gamma l_{\hat{g}_{2}}}{l}\right)\right\rceil + 1\right).$$

Соответственно, для введённой стратегии подбора $L_{k}$ схема метода является модификацией схемы \ref{alg:gen_stoch_gnm} и представлена в листинге \ref{alg:gen_stoch_unbounded_gnm}.
\RestyleAlgo{boxruled}
\begin{algorithm}[!ht]{}
\caption{\textbf{Общий метод трёх стохастических квадратов с неточным проксимальным отображением и переменным отрезком линейного поиска}}
\label{alg:gen_stoch_unbounded_gnm}
\textbf{Вход:}
    \begin{equation*}
        \begin{cases}
            x_{0}\in E_{1}\text{ --- начальное приближение},~x_{-1} = x_{0};\\
            \mathcal{E}(\cdot)\text{ --- функция погрешности проксимального отображения};\\
            N\in\mathbb{N}\text{ --- количество итераций метода};\\
            \gamma\geq\tilde{\gamma}\geq1\text{ --- факторы верхней границы поиска }L_{\hat{F}};\\
            L\text{ --- оценка локальной постоянной Липшица},~L\in(0,~\tilde{\gamma}L_{\hat{F}}];\\
            \mathcal{T}(\cdot)\text{ --- функция, определяющая значение $\tau$};\\
            \mathcal{B}\text{ --- выборка функций};\\
            b\in\overline{1, m}\text{ --- размер $B_{k}\subseteq\mathcal{B},~k\in\mathbb{Z}_{+}$}.
        \end{cases}
    \end{equation*}
    \vspace{0.2cm}
    \textbf{Повторять для $k = 0, 1,~\dots, N - 1$:}
    \begin{itemize}
        \item[1.] сэмплировать батч $B_{k}$ из $\mathcal{B}$ размера $b$;
        \item[2.] определить $\tau_{k} = \mathcal{T}(x_{k}, B_{k})$, $\varepsilon_{k} = \mathcal{E}(k, x_{k}, x_{k - 1}, B_{k})$, $L_{0} = \max\left\{L,~\frac{L}{\tau_{0}}\right\}$;
        \item[3.] вычислить $x_{k + 1}\in E_{1}$ согласно одному из выбранных изначально правил: \eqref{eq:stoch_direct_update_rule} или \eqref{eq:stoch_approx_general_update_rule};
        \item[4.] если $\hat{g}_{1}(x_{k + 1}, B_{k}) > \hat{\psi}_{x_{k}, L_{k}, \tau_{k}}(x_{k + 1}, B_{k})$, то положить $L_{k} := \min\left\{2L_{k},~\max\left\{\tilde{\gamma}L_{\hat{F}},~\frac{\gamma L_{\hat{F}}}{\tau_{k}}\right\}\right\}$ и вернуться к пункту 3;
        \item[5.] $L_{k + 1} = \max\left\{\frac{L_{k}}{2},~\max\left\{L,~\frac{L}{\tau_{k}}\right\}\right\}$.
    \end{itemize}
    \vspace{0.2cm}

    \textbf{Выход:} $x_{N}$.
\end{algorithm}
Следующие утверждения устанавливают сохранение сходимости при использовании стратегии \eqref{eq:stoch_approx_general_update_rule} с такой же асимптотикой, что и при использовании стратегии \eqref{eq:stoch_direct_update_rule}, если на каждой итерации метода трёх стохастических квадратов осуществлять поиск $x_{k + 1}$ с достаточно малой погрешностью $\varepsilon_{k} > 0$.

\begin{theorem}\label{th:5_main}
    Пусть выполнены предположения \ref{as:1}, \ref{as:2}, \ref{as:3}, \ref{as:4}. Рассмотрим метод Гаусса--Ньютона со схемой реализации \ref{alg:gen_stoch_unbounded_gnm}, в котором последовательность $\{x_{k}\}_{k\in\mathbb{Z}_{+}}$ вычисляется по правилу \eqref{eq:stoch_approx_general_update_rule} с $\tau_{k} = \hat{g}_{1}(x_{k}, B_{k})$. Если в схеме \ref{alg:gen_stoch_unbounded_gnm} выбрать следующий отрезок погрешностей $\varepsilon_{k}$:
    \begin{equation}\label{eq:5_main_regime_1}
        \begin{aligned}
            &0\leq\varepsilon_{k}\leq\frac{\varepsilon}{\hat{g}_{1}(x_{k}, B_{k})},
        \end{aligned}
    \end{equation}
    то выполнено:
    \begin{equation}\label{eq:main_stoch_approx_sub_lin_conv}
        \begin{aligned}
            \mathbb{E}\left[\min\limits_{i\in\overline{0, k - 1}}\left\{\left\|\nabla\hat{f}_{2}(x_{i})\right\|^{2}\right\}\right]&\leq8\left(M_{\hat{G}}^{2} + \max\left\{\tilde{\gamma}P_{\hat{g}_{1}}L_{\hat{F}},~\gamma L_{\hat{F}}\right\}\right)\left(\frac{\mathbb{E}\left[\hat{f}_{2}(x_{0})\right]}{k} + \varepsilon +\right.\\
            &\left.+ 2l_{\hat{F}}\left(\sqrt{\frac{2\varepsilon}{L}} + \sqrt{\frac{2P_{\hat{g}_{1}}}{L}}\right)\mathds{1}_{\left\{b < m\right\}} + \tilde{\sigma}\sqrt{\frac{1}{b} - \frac{1}{m}}\right),~k\in\mathbb{N}.
        \end{aligned}
    \end{equation}
    Если в схеме \ref{alg:gen_stoch_unbounded_gnm} ограничения на $\varepsilon_{k}$ выбрать такие:
    \begin{equation}\label{eq:5_main_regime_2}
        \begin{aligned}
            &0\leq\varepsilon_{k}\leq\frac{\delta\left\|\nabla_{x_{k}}\hat{g}_{2}(x_{k}, B_{k})\right\|^{2}}{8\hat{g}_{1}(x_{k}, B_{k})\left(M_{\hat{G}}^{2} + \hat{g}_{1}(x_{k}, B_{k})L_{k}\right)},~\delta\in[0, 1),
        \end{aligned}
    \end{equation}
    то оценка сходимости будет следующей:
    \begin{equation}\label{eq:main_stoch_approx_sub_lin_conv_1}
        \begin{aligned}
            \mathbb{E}\left[\min\limits_{i\in\overline{0, k - 1}}\left\|\nabla\hat{f}_{2}(x_{i})\right\|^{2}\right]&\leq\frac{8\left(M_{\hat{G}}^{2} + \max\left\{\tilde{\gamma}P_{\hat{g}_{1}}L_{\hat{F}},~\gamma L_{\hat{F}}\right\}\right)}{(1 - \delta)}\left(\frac{\mathbb{E}\left[\hat{f}_{2}(x_{0})\right]}{k} +\right.\\
            &\left.+ 2l_{\hat{F}}\left(\sqrt{\frac{\delta P_{\hat{g}_{1}}}{L}} + \sqrt{\frac{2P_{\hat{g}_{1}}}{L}}\right)\mathds{1}_{\left\{b < m\right\}} + \tilde{\sigma}\sqrt{\frac{1}{b} - \frac{1}{m}}\right),~k\in\mathbb{N}.
        \end{aligned}
    \end{equation}
    Оператор математического ожидания $\mathbb{E}\left[\cdot\right]$ усредняет по всей случайности процесса оптимизации.
\end{theorem}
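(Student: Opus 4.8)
Схема рассуждения повторяет доказательство теоремы \ref{th:3} с аккуратным учётом погрешности $\varepsilon_k$ неточного оракула \eqref{eq:stoch_approx_general_update_rule}. Сначала я выведу поитерационное неравенство убывания для стохастической локальной модели. Поскольку $x_{k+1}$ приближает минимум $\hat{T}_{L_k, \tau_k}(x_k, B_k)$ с точностью $\varepsilon_k$, а линейный поиск в схеме \ref{alg:gen_stoch_unbounded_gnm} гарантирует $\hat{g}_1(x_{k+1}, B_k) \leq \hat{\psi}_{x_k, L_k, \tau_k}(x_{k+1}, B_k)$, получаем $\hat{g}_1(x_{k+1}, B_k) \leq \hat{\psi}_{x_k, L_k, \tau_k}(\hat{T}_{L_k, \tau_k}(x_k, B_k), B_k) + \varepsilon_k$. При $\tau_k = \hat{g}_1(x_k, B_k)$ сумма $\tfrac{\tau_k}{2} + \tfrac{\hat{g}_2(x_k, B_k)}{2\tau_k}$ равна $\hat{g}_1(x_k, B_k)$, а оценка $\hat{G}'(x_k, B_k)^{*}\hat{G}'(x_k, B_k) \preceq M_{\hat{G}}^2 I_n$ (предположение \ref{as:2}) вместе с явной формулой для $\hat{T}_{L_k,\tau_k}$ даёт
\begin{equation*}
    \hat{g}_1(x_{k+1}, B_k) \leq \hat{g}_1(x_k, B_k) - \frac{\left\|\nabla_{x_k}\hat{g}_2(x_k, B_k)\right\|^2}{8\hat{g}_1(x_k, B_k)\left(M_{\hat{G}}^2 + \hat{g}_1(x_k, B_k)L_k\right)} + \varepsilon_k.
\end{equation*}

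Далее умножаю это неравенство на $\hat{g}_1(x_k, B_k)$ и пользуюсь монотонностью $\hat{g}_1(x_{k+1}, B_k) \leq \hat{g}_1(x_k, B_k)$ внутри батча (откуда $\hat{g}_1(x_k, B_k)\hat{g}_1(x_{k+1}, B_k) \geq \hat{g}_2(x_{k+1}, B_k)$), а также тем, что переменный отрезок поиска в схеме \ref{alg:gen_stoch_unbounded_gnm} вместе с $\tau_k = \hat{g}_1(x_k, B_k)$ даёт $\hat{g}_1(x_k, B_k)L_k \leq \max\{\tilde{\gamma} P_{\hat{g}_1}L_{\hat{F}},~\gamma L_{\hat{F}}\}$ (здесь существенна именно возможность масштабирования $L_k$ как $1/\hat{g}_1$, позволяющая работать с неограниченным функционалом). В итоге
\begin{equation*}
    \frac{\left\|\nabla_{x_k}\hat{g}_2(x_k, B_k)\right\|^2}{8\left(M_{\hat{G}}^2 + \max\left\{\tilde{\gamma}P_{\hat{g}_1}L_{\hat{F}},~\gamma L_{\hat{F}}\right\}\right)} \leq \hat{g}_2(x_k, B_k) - \hat{g}_2(x_{k+1}, B_k) + \hat{g}_1(x_k, B_k)\varepsilon_k.
\end{equation*}
Член с погрешностью обрабатывается различно в двух режимах: при \eqref{eq:5_main_regime_1} сразу $\hat{g}_1(x_k, B_k)\varepsilon_k \leq \varepsilon$; при \eqref{eq:5_main_regime_2} величина $\hat{g}_1(x_k, B_k)\varepsilon_k$ не превосходит $\delta$-доли левой части, и перенос в левую часть порождает множитель $\tfrac{1}{1-\delta}$, как в \eqref{eq:main_stoch_approx_sub_lin_conv_1}.

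Затем беру условное математическое ожидание по $B_k$. Так как $x_k$ не зависит от $B_k$, имеем несмещённость $\mathbb{E}_{B_k}[\hat{g}_2(x_k, B_k)] = \hat{f}_2(x_k)$ и по неравенству Йенсена (выпуклость $\|\cdot\|^2$) оценку $\mathbb{E}_{B_k}[\|\nabla_{x_k}\hat{g}_2(x_k, B_k)\|^2] \geq \|\nabla\hat{f}_2(x_k)\|^2$. Главная трудность --- оценка $\mathbb{E}_{B_k}[\hat{g}_2(x_{k+1}, B_k)]$, поскольку $x_{k+1}$ зависит от того же батча $B_k$, которым оно оценивается. Здесь применяется оценка отклонения \eqref{eq:deviation_upper_bound}: пользуясь липшицевостью $\hat{g}_2(\cdot, B_k)$ и $\hat{f}_2(\cdot)$ с константой $l_{\hat{F}}$ (см. \eqref{eq:lipschitz_upper_bound}) и оценкой длины шага $\tfrac{L_k}{2}\|x_{k+1}-x_k\|^2 \leq \hat{\psi}_{x_k,L_k,\tau_k}(x_{k+1},B_k) \leq \hat{g}_1(x_k,B_k) \leq P_{\hat{g}_1}$, то есть $\|x_{k+1} - x_k\| \leq \sqrt{2P_{\hat{g}_1}/L}$, с дополнительным вкладом неточности через $\tfrac{L_k}{2}\|x_{k+1}-\hat{T}_{L_k,\tau_k}(x_k,B_k)\|^2 \leq \varepsilon_k$ (дающим $\sqrt{2\varepsilon/L}$ в режиме \eqref{eq:5_main_regime_1} и $\sqrt{\delta P_{\hat{g}_1}/L}$ в режиме \eqref{eq:5_main_regime_2}), получаю корреляционный член $2l_{\hat{F}}(\cdots)\mathds{1}_{\{b<m\}}$. Собственно флуктуация выборки без возвращения контролируется предположением \ref{as:4} и даёт слагаемое $\tilde{\sigma}\sqrt{1/b - 1/m}$; при $b = m$ оба слагаемых обнуляются ($\mathds{1}_{\{b<m\}}$ и $\sqrt{1/b-1/m}$).

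Остаётся просуммировать полученное неравенство по $i \in \overline{0, k-1}$, взять полное математическое ожидание (последовательно снимая условные ожидания по башенному свойству) и разделить на $k$: сумма $\mathbb{E}[\hat{g}_2(x_i, B_i) - \hat{g}_2(x_{i+1}, B_i)]$ после замены $\hat{g}_2(x_{i+1}, B_i)$ на $\hat{f}_2(x_{i+1})$ с указанными поправками телескопируется к $\mathbb{E}[\hat{f}_2(x_0)]$, а $\min_i \|\nabla\hat{f}_2(x_i)\|^2$ не превосходит среднего, что и завершает вывод \eqref{eq:main_stoch_approx_sub_lin_conv} и \eqref{eq:main_stoch_approx_sub_lin_conv_1}. Основным препятствием я считаю именно лемму об отклонении \eqref{eq:deviation_upper_bound}, декоррелирующую неточно вычисленную точку $x_{k+1}(B_k)$ и её собственный батч $B_k$: нужно показать, что замена полного функционала $\hat{f}_2$ на батчевую оценку $\hat{g}_2(\cdot, B_k)$ в случайной точке $x_{k+1}$ стоит в среднем не больше, чем произведение константы Липшица $l_{\hat{F}}$ на длину шага (с поправкой на неточность), плюс конечно-популяционная дисперсия; все прочие шаги сводятся к рутинным алгебраическим преобразованиям и телескопированию.
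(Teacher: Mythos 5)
Ваше предложение корректно и по существу совпадает с доказательством в статье: то же поитерационное неравенство убывания через явную формулу проксимального отображения с оценкой $\hat{g}_{1}(x_{k}, B_{k})L_{k}\leq\max\{\tilde{\gamma}P_{\hat{g}_{1}}L_{\hat{F}},~\gamma L_{\hat{F}}\}$, та же обработка двух режимов погрешности (прямое ограничение $\hat{g}_{1}\varepsilon_{k}\leq\varepsilon$ против поглощения $\delta$--доли градиентного члена с множителем $\frac{1}{1-\delta}$), та же декорреляция $\hat{g}_{2}(x_{k+1}, B_{k})$ через лемму об отклонении \eqref{eq:deviation_upper_bound} с оценкой длины шага $\sqrt{2\varepsilon_{k}/L_{k}}+\sqrt{2\hat{g}_{1}/L_{k}}$ (следствие \ref{lm:aux_approx_solution} и лемма \ref{lm:aux_bounded_variation}), и то же телескопирование с неравенством Йенсена для градиента.
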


В теореме \ref{th:5} представлены два режима неточного поиска $x_{k + 1}$. В рамках первого режима при уменьшении значения сэмпла оптимизируемой функции точность поиска $x_{k + 1}$ может уменьшиться в силу увеличения верхней границы допустимого значения $\varepsilon_{k}$ ($0\leq\varepsilon_{k}\leq\frac{\varepsilon}{\hat{g}_{1}(x_{k}, B_{k})}$), а условия оптимальности для оценки \eqref{eq:main_stoch_approx_sub_lin_conv} в терминах средней минимальной нормы градиента на уровне $\epsilon$ выглядят следующим образом:
\begin{equation*}
    \begin{cases}
        8\left(M_{\hat{G}}^{2} + \max\left\{\tilde{\gamma}P_{\hat{g}_{1}}L_{\hat{F}},~\gamma L_{\hat{F}}\right\}\right)\frac{\mathbb{E}\left[\hat{g}_{2}(x_{0}, B_{0})\right]}{k}\leq r_{1}\epsilon^{2},~r_{1}\in(0, 1);\\[10pt]
        8\left(M_{\hat{G}}^{2} + \max\left\{\tilde{\gamma}P_{\hat{g}_{1}}L_{\hat{F}},~\gamma L_{\hat{F}}\right\}\right)\left(\varepsilon + 2l_{\hat{F}}\sqrt{\frac{2\varepsilon}{L}}\mathds{1}_{\{b < m\}}\right)\leq r_{2}\epsilon^{2},~r_{2}\in(0, 1);\\[10pt]
        8\left(M_{\hat{G}}^{2} + \max\left\{\tilde{\gamma}P_{\hat{g}_{1}}L_{\hat{F}},~\gamma L_{\hat{F}}\right\}\right)\left(2l_{\hat{F}}\sqrt{\frac{2m(m - 1)P_{\hat{g}_{1}}}{L}}\mathds{1}_{\left\{b < m\right\}} + \tilde{\sigma}\right)\sqrt{\frac{1}{b} - \frac{1}{m}}\leq r_{3}\epsilon^{2},~r_{3}\in(0, 1);\\[5pt]
        r_{1} + r_{2} + r_{3} = 1.
    \end{cases}
\end{equation*}
Из системы неравенств выводятся минимальное количество итераций, минимальный размер батча и максимальная погрешность поиска $x_{k + 1}$:
\begin{equation*}
    \begin{cases}
        k = \left\lceil8\left(M_{\hat{G}}^{2} + \max\left\{\tilde{\gamma}P_{\hat{g}_{1}}L_{\hat{F}},~\gamma L_{\hat{F}}\right\}\right)\frac{\mathbb{E}\left[\hat{g}_{2}(x_{0}, B_{0})\right]}{r_{1}\epsilon^{2}}\right\rceil,~r_{1}\in(0, 1);\\[10pt]
        \varepsilon = \left(-l_{\hat{F}}\sqrt{\frac{2}{L}}\mathds{1}_{\{b < m\}} + \frac{1}{2}\sqrt{\left(\frac{8l_{\hat{F}}^{2}}{L}\right)\mathds{1}_{\{b < m\}} + \frac{r_{2}\epsilon^{2}}{2\left(M_{\hat{G}}^{2} + \max\left\{\tilde{\gamma}P_{\hat{g}_{1}}L_{\hat{F}},~\gamma L_{\hat{F}}\right\}\right)}}\right)^{2},~r_{2}\in(0, 1);\\[10pt]
        b = \min\left\{m,~\left\lceil\frac{64\left(M_{\hat{G}}^{2} + \max\left\{\tilde{\gamma}P_{\hat{g}_{1}}L_{\hat{F}},~\gamma L_{\hat{F}}\right\}\right)^{2}\left(2l_{\hat{F}}\sqrt{\frac{2m(m - 1)P_{\hat{g}_{1}}}{L}} + \tilde{\sigma}\right)^{2}}{r_{3}^{2}\epsilon^{4} + \frac{64}{m}\left(M_{\hat{G}}^{2} + \max\left\{\tilde{\gamma}P_{\hat{g}_{1}}L_{\hat{F}},~\gamma L_{\hat{F}}\right\}\right)^{2}\left(2l_{\hat{F}}\sqrt{\frac{2m(m - 1)P_{\hat{g}_{1}}}{L}} + \tilde{\sigma}\right)^{2}}\right\rceil\right\},~r_{3}\in(0, 1);\\[5pt]
        r_{1} + r_{2} + r_{3} = 1.
    \end{cases}
\end{equation*}
Те же самые величины в асимптотической форме демонстрируют необходимость на каждой итерации решать задачу поиска $x_{k + 1}$ асимптотически точнее, чем требуемый уровень нормы градиента функции $\hat{f}_{2}$:
$$k = \operatorname{O}\left(\frac{1}{\epsilon^{2}}\right),~\varepsilon = \operatorname{O}\left(\epsilon^{2}\right),~b = \min\left\{m,~\operatorname{O}\left(\frac{1}{\epsilon^{4}}\right)\right\}.$$
Во втором режиме неточного поиска $x_{k + 1}$ точность вычисления на каждой итерации может быть потенциально сколь угодно малой в силу пропорциональности верхней грани квадрата нормы градиента значению функции:
\begin{equation*}
    \begin{aligned}
        &0\leq\varepsilon_{k}\leq\frac{\delta\left\|\nabla_{x_{k}}\hat{g}_{2}(x_{k}, B_{k})\right\|^{2}}{8\hat{g}_{1}(x_{k}, B_{k})\left(M_{\hat{G}}^{2} + \hat{g}_{1}(x_{k}, B_{k})L_{k}\right)}\leq\\
        &\leq\left\{\left\|\nabla_{x_{k}}\hat{g}_{2}(x_{k}, B_{k})\right\|^{2}\leq4\underbrace{\left\|\hat{G}^{'}(x_{k}, B_{k})\right\|^{2}}_{\leq M_{\hat{G}}^{2}}\underbrace{\left\|\hat{G}(x_{k}, B_{k})\right\|^{2}}_{=\hat{g}_{2}(x_{k}, B_{k})}\right\}\leq\frac{\delta M_{\hat{G}}^{2}\hat{g}_{1}(x_{k}, B_{k})}{2\left(M_{\hat{G}}^{2} + \hat{g}_{1}(x_{k}, B_{k})L_{k}\right)}.
    \end{aligned}
\end{equation*}
Соответственно, в этом режиме нет необходимости отдельно вычислять максимальное значение $\varepsilon_{k}$, однако вместо $\varepsilon_{k}$ требуется определить $\delta$:
\begin{equation*}
    \begin{cases}
        \frac{8\left(M_{\hat{G}}^{2} + \max\left\{\tilde{\gamma}P_{\hat{g}_{1}}L_{\hat{F}},~\gamma L_{\hat{F}}\right\}\right)\mathbb{E}\left[\hat{g}_{2}(x_{0}, B_{0})\right]}{(1 - \delta)k}\leq r_{1}\epsilon^{2},~r_{1}\in(0, 1);\\[5pt]
        \frac{16l_{\hat{F}}\left(M_{\hat{G}}^{2} + \max\left\{\tilde{\gamma}P_{\hat{g}_{1}}L_{\hat{F}},~\gamma L_{\hat{F}}\right\}\right)}{1 - \delta}\sqrt{\frac{\delta P_{\hat{g}_{1}}}{L}}\mathds{1}_{\{b < m\}}\leq r_{2}\epsilon^{2},~r_{2}\in(0, 1);\\
        \frac{8\left(M_{\hat{G}}^{2} + \max\left\{\tilde{\gamma}P_{\hat{g}_{1}}L_{\hat{F}},~\gamma L_{\hat{F}}\right\}\right)}{1 - \delta}\left(2l_{\hat{F}}\sqrt{\frac{2m(m - 1)P_{\hat{g}_{1}}}{L}}\mathds{1}_{\left\{b < m\right\}} + \tilde{\sigma}\right)\sqrt{\frac{1}{b} - \frac{1}{m}}\leq r_{3}\epsilon^{2},~r_{3}\in(0, 1);\\
        r_{1} + r_{2} + r_{3} = 1.
    \end{cases}
\end{equation*}
Получаются следующие минимальные значения количества итераций и размера батча:
\begin{equation*}
    \begin{cases}
        k = \left\lceil\frac{8\left(M_{\hat{G}}^{2} + \max\left\{\tilde{\gamma}P_{\hat{g}_{1}}L_{\hat{F}},~\gamma L_{\hat{F}}\right\}\right)\mathbb{E}\left[\hat{g}_{2}(x_{0}, B_{0})\right]}{(1 - \delta)r_{1}\epsilon^{2}}\right\rceil,~r_{1}\in(0, 1);\\[10pt]
        \scalemath{0.88}{
        \begin{aligned}
            \delta &= \min\left\{\alpha,~\left(\frac{8l_{\hat{F}}\left(M_{\hat{G}}^{2} + \max\left\{\tilde{\gamma}P_{\hat{g}_{1}}L_{\hat{F}},~\gamma L_{\hat{F}}\right\}\right)}{r_{2}\epsilon^{2}}\sqrt{\frac{P_{\hat{g}_{1}}}{L}} - \sqrt{\underbrace{\left(\frac{8l_{\hat{F}}\left(M_{\hat{G}}^{2} + \max\left\{\tilde{\gamma}P_{\hat{g}_{1}}L_{\hat{F}},~\gamma L_{\hat{F}}\right\}\right)}{r_{2}\epsilon^{2}}\right)^{2}\frac{P_{\hat{g}_{1}}}{L}}_{\text{с достаточно малым $r_{2}\epsilon^{2}$, чтобы дробь была не меньше }1} - 1}\right)^{2}\right\},\\
            &\alpha\in[0, 1),~r_{2}\in(0, 1);\\
            \vspace{1pt}
        \end{aligned}
        }\\
        b = \min\left\{m,~\left\lceil\frac{\frac{64\left(M_{\hat{G}}^{2} + \max\left\{\tilde{\gamma}P_{\hat{g}_{1}}L_{\hat{F}},~\gamma L_{\hat{F}}\right\}\right)^{2}}{(1 - \delta)^{2}}\left(2l_{\hat{F}}\sqrt{\frac{2m(m - 1)P_{\hat{g}_{1}}}{L}} + \tilde{\sigma}\right)^{2}}{r_{3}^{2}\epsilon^{4} + \frac{64\left(M_{\hat{G}}^{2} + \max\left\{\tilde{\gamma}P_{\hat{g}_{1}}L_{\hat{F}},~\gamma L_{\hat{F}}\right\}\right)^{2}}{m(1 - \delta)^{2}}\left(2l_{\hat{F}}\sqrt{\frac{2m(m - 1)P_{\hat{g}_{1}}}{L}} + \tilde{\sigma}\right)^{2}}\right\rceil\right\},~r_{3}\in(0, 1);\\[10pt]
        r_{1} + r_{2} + r_{3} = 1.
    \end{cases}
\end{equation*}
или в сокращённой асимптотической форме:
$$k = \operatorname{O}\left(\frac{1}{\epsilon^{2}}\right),~b = \min\left\{m,~\operatorname{O}\left(\frac{1}{\epsilon^{4}}\right)\right\}.$$
Если же дополнительно выполнено предположение \ref{as:5}, то необходимое количество итераций значительно уменьшается, как указано в теореме \ref{th:6}.
\begin{theorem}\label{th:6_main}
    Пусть выполнены предположения \ref{as:1}, \ref{as:2}, \ref{as:3}, \ref{as:4}, \ref{as:5}. Рассмотрим метод Гаусса--Ньютона со схемой реализации \ref{alg:gen_stoch_unbounded_gnm}, в котором последовательность $\{x_{k}\}_{k\in\mathbb{Z}_{+}}$ вычисляется по правилу \eqref{eq:stoch_approx_general_update_rule} с\\$\tau_{k} = \hat{g}_{1}(x_{k}, B_{k})$. Тогда:
    \begin{equation*}
        \begin{cases}
            \begin{aligned}
                &\mathbb{E}\left[\left\|\nabla\hat{f}_{2}(x_{k})\right\|^{2}\right]\leq4M_{\hat{G}}^{2}\tilde{\Delta}_{k,b};\\[5pt]
                &\mathbb{E}\left[\hat{f}_{2}(x_{k})\right]\leq\hat{f}_{2}^{*} + \tilde{\Delta}_{k,b};
            \end{aligned}
        \end{cases}
    \end{equation*}
    где 
    \begin{equation}\label{eq:main_stoch_approx_lin_conv}
        \begin{aligned}
            \tilde{\Delta}_{k,b} &= \mathbb{E}\left[\hat{f}_{2}(x_{0})\right]\exp\left(-\frac{k\mu}{2\left(\max\left\{\tilde{\gamma}L_{\hat{F}}P_{\hat{g}_{1}},~\gamma L_{\hat{F}}\right\} + \mu\right)}\right) + 2\left(\varepsilon + 2\left(l_{\hat{F}}\left(\sqrt{\frac{2\varepsilon}{L}} + \sqrt{\frac{2P_{\hat{g}_{1}}}{L}}\right)\mathds{1}_{\left\{b < m\right\}} +\right.\right.\\
            &\left.\left.+ \tilde{\sigma}\sqrt{\frac{1}{b} - \frac{1}{m}}\right)\right)\left(\frac{\max\left\{\tilde{\gamma}L_{\hat{F}}P_{\hat{g}_{1}},~\gamma L_{\hat{F}}\right\}}{\mu} + 1\right),~k\in\mathbb{Z}_{+},~b\in\overline{1,~\min\{m, n\}}
        \end{aligned}
    \end{equation}
    в случае 
    \begin{equation}\label{eq:6_main_regime_1}
        \begin{aligned}
            &0\leq\varepsilon_{k}\leq\frac{\varepsilon}{\hat{g}_{1}(x_{k}, B_{k})},
        \end{aligned}
    \end{equation}
    и
    \begin{equation}\label{eq:main_stoch_approx_lin_conv_1}
        \begin{aligned}
            \tilde{\Delta}_{k,b} &= \mathbb{E}\left[\hat{f}_{2}(x_{0})\right]\exp\left(-\frac{k(1 - \delta)\mu}{2\left(\max\left\{\tilde{\gamma}L_{\hat{F}}P_{\hat{g}_{1}},~\gamma L_{\hat{F}}\right\} + \mu\right)}\right) + 4\left(l_{\hat{F}}\left(\sqrt{\frac{\delta P_{\hat{g}_{1}}}{L}} + \sqrt{\frac{2P_{\hat{g}_{1}}}{L}}\right)\mathds{1}_{\left\{b < m\right\}} +\right.\\
            &\left.+ \tilde{\sigma}\sqrt{\frac{1}{b} - \frac{1}{m}}\right)\left(\frac{\max\left\{\tilde{\gamma}L_{\hat{F}}P_{\hat{g}_{1}},~\gamma L_{\hat{F}}\right\} + \mu}{(1 - \delta)\mu}\right),~k\in\mathbb{Z}_{+},~b\in\overline{1,~\min\{m, n\}}
        \end{aligned}
    \end{equation}
    в случае
    \begin{equation}\label{eq:6_main_regime_2}
        \begin{aligned}
            &0\leq\varepsilon_{k}\leq\frac{\delta\hat{g}_{1}(x_{k}, B_{k})\mu}{2\left(L_{k}\hat{g}_{1}(x_{k}, B_{k}) + \mu\right)},~\delta\in[0, 1).
        \end{aligned}
    \end{equation}
    Оператор математического ожидания $\mathbb{E}\left[\cdot\right]$ усредняет по всей случайности процесса оптимизации.
\end{theorem}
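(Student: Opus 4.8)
The plan is to replay the deterministic argument behind Theorem~\ref{th:glob_sub_lin_and_lin_conv_main}, but on the sampled functionals $\hat g_1(\cdot,B_k),\hat g_2(\cdot,B_k)$, and then de-randomise. First I would extract a one-step contraction on the sampled residual. The exit test in step~4 of scheme~\ref{alg:gen_stoch_unbounded_gnm} gives $\hat g_1(x_{k+1},B_k)\le\hat\psi_{x_k,L_k,\tau_k}(x_{k+1},B_k)$, and the inexactness clause of \eqref{eq:stoch_approx_general_update_rule} gives $\hat\psi_{x_k,L_k,\tau_k}(x_{k+1},B_k)\le\hat\psi_{x_k,L_k,\tau_k}(\hat T_{L_k,\tau_k}(x_k,B_k),B_k)+\varepsilon_k$. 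Inserting the explicit minimiser formula for $\hat T_{L_k,\tau_k}$ (from the cited lemma) and the push-through identity yields $\hat\psi_{x_k,L_k,\tau_k}(\hat T_{L_k,\tau_k}(x_k,B_k),B_k)=\tfrac{\tau_k}{2}+\tfrac{L_k}{2}\langle\hat G(x_k,B_k),(\tau_kL_kI_b+\hat G'(x_k,B_k)\hat G'(x_k,B_k)^*)^{-1}\hat G(x_k,B_k)\rangle$; assumption~\ref{as:5} then bounds the inner product by $\hat g_2(x_k,B_k)/(\tau_kL_k+\mu)$. With the prescribed $\tau_k=\hat g_1(x_k,B_k)=:g_k$ this simplifies to $\hat g_1(x_{k+1},B_k)\le g_k\bigl(1-\tfrac{\mu}{2(L_kg_k+\mu)}\bigr)+\varepsilon_k$.

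Second, I would uniformise the rate and move to $\hat g_2$. Assumption~\ref{as:3} gives $g_k\le P_{\hat g_1}$, and the binary-search cap $L_k\le\max\{\tilde\gamma L_{\hat F},\gamma L_{\hat F}/\tau_k\}$ gives $L_kg_k\le M:=\max\{\tilde\gamma L_{\hat F}P_{\hat g_1},\gamma L_{\hat F}\}$, hence $\tfrac{\mu}{2(L_kg_k+\mu)}\ge\beta:=\tfrac{\mu}{2(M+\mu)}$. In regime \eqref{eq:6_main_regime_2}, the choice $\varepsilon_k\le\tfrac{\delta g_k\mu}{2(L_kg_k+\mu)}$ is precisely what lets me fold $\varepsilon_k$ into the contraction: $\hat g_1(x_{k+1},B_k)\le(1-(1-\delta)\beta)g_k$, and squaring together with $(1-t)^2\le 1-t$ on $[0,1]$ gives $\hat g_2(x_{k+1},B_k)\le(1-(1-\delta)\beta)\hat g_2(x_k,B_k)$. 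In regime \eqref{eq:6_main_regime_1} the weaker $\varepsilon_k\le\varepsilon/g_k$ instead leaves an additive $\mathcal O(\varepsilon)$ after squaring, which is why the two displayed $\tilde\Delta_{k,b}$ expressions differ only in whether $\varepsilon$ or the $(1-\delta)$-factor appears.

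Third comes the de-randomisation, which I expect to be the main obstacle. Taking the conditional expectation $\mathbb E[\cdot\mid x_k]$, I use $\mathbb E_{B_k}[\hat g_2(x_k,B_k)]=\hat f_2(x_k)$ (valid for sampling without replacement), but I must tie $\hat g_2(x_{k+1},B_k)$ back to $\hat f_2(x_{k+1})$ since $x_{k+1}$ is coupled to $B_k$. Writing $\hat f_2(x_{k+1})-\hat g_2(x_{k+1},B_k)=(\hat f_2(x_{k+1})-\hat f_2(x_k))-(\hat g_2(x_{k+1},B_k)-\hat g_2(x_k,B_k))+(\hat f_2(x_k)-\hat g_2(x_k,B_k))$, the first two brackets are each bounded by $l_{\hat F}\|x_{k+1}-x_k\|$ via assumption~\ref{as:1} and the Lipschitz estimate \eqref{eq:lipschitz_upper_bound}, with $\|x_{k+1}-x_k\|\le\sqrt{2\hat g_1(x_k,B_k)/L_k}+\sqrt{2\varepsilon_k/L_k}\le\sqrt{2P_{\hat g_1}/L}+\sqrt{2\varepsilon/L}$ from $L_k$-strong convexity of $\hat\psi$ in $y$ and the feasibility clause $\hat g_1(x_k,B_k)-\hat\psi_{x_k,L_k,\tau_k}(x_{k+1},B_k)\ge 0$; all of this carries the indicator $\mathds 1_{\{b<m\}}$ and collapses to zero when $b=m$. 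The third bracket has zero conditional mean, and its residual contribution is handled by the deviation inequality \eqref{eq:deviation_upper_bound} — a Cauchy–Schwarz step on top of the without-replacement variance factor $\tfrac1b-\tfrac1m$ controlled by assumption~\ref{as:4} — which is what produces the $\tilde\sigma\sqrt{1/b-1/m}$ term. Collecting everything gives the scalar recursion $\mathbb E[\hat f_2(x_{k+1})]\le(1-(1-\delta)\beta)\,\mathbb E[\hat f_2(x_k)]+C$ with $C$ aggregating the $\varepsilon$-, Lipschitz- and variance-terms.

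Finally, unrolling this geometric recursion gives $\mathbb E[\hat f_2(x_k)]\le(1-(1-\delta)\beta)^k\mathbb E[\hat f_2(x_0)]+C\sum_{j\ge0}(1-(1-\delta)\beta)^j\le e^{-(1-\delta)\beta k}\mathbb E[\hat f_2(x_0)]+\tfrac{C}{(1-\delta)\beta}$, and substituting $\tfrac1{(1-\delta)\beta}=\tfrac{2(M+\mu)}{(1-\delta)\mu}$ matches $\tilde\Delta_{k,b}$ in \eqref{eq:main_stoch_approx_lin_conv} and \eqref{eq:main_stoch_approx_lin_conv_1}. The gradient estimate is then immediate from $\|\nabla\hat f_2(x)\|^2=4\|\hat F'(x)^*\hat F(x)\|^2\le 4M_{\hat G}^2\hat f_2(x)$ by assumption~\ref{as:2}; since assumption~\ref{as:5} forces consistency of the system (so $\hat f_2^*=0$), this gives $\mathbb E[\|\nabla\hat f_2(x_k)\|^2]\le 4M_{\hat G}^2\tilde\Delta_{k,b}$. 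The genuinely delicate point is the third step: uncoupling $x_{k+1}$ from $B_k$ while preserving the $\sqrt{1/b-1/m}$ scaling of the noise floor, which is exactly the role of \eqref{eq:deviation_upper_bound}; the rest is careful bookkeeping of the two regimes \eqref{eq:6_main_regime_1}–\eqref{eq:6_main_regime_2}.
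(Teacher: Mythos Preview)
Your plan follows the paper's own route quite closely: the one-step contraction via the local model and the PL condition (your push-through computation is exactly the content of Lemma~\ref{lm:aux_matrix_order}), the uniformisation $L_kg_k\le M=\max\{\tilde\gamma L_{\hat F}P_{\hat g_1},\gamma L_{\hat F}\}$, the de-randomisation via Lemma~\ref{lm:aux_bounded_deviation}, and the geometric unrolling all match the proof of Theorem~\ref{th:6}. Two minor points of comparison: the paper recurses on $a_k=\mathbb E[\hat g_2(x_k,B_{k-1})-\hat f_2^*]$ rather than directly on $\mathbb E[\hat f_2(x_k)]$, which is why its noise term $c$ picks up \emph{two} copies of the deviation bound (one from $|\hat g_2(x_k,B_k)-\hat f_2(x_k)|$ and one from $|\hat f_2(x_k)-\hat g_2(x_k,B_{k-1})|$); and your remark that the third bracket has zero conditional mean, while true, is not what produces the $\tilde\sigma$ term---that comes from bounding $\mathbb E|\hat f_2(x_k)-\hat g_2(x_k,B_k)|$ via Cauchy--Schwarz and the without-replacement variance, exactly Corollary~\ref{lm:aux_finite_population_deviation}.

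There is one genuine slip. In regime~\eqref{eq:6_main_regime_1} you write that ``squaring'' the bound $\hat g_1(x_{k+1},B_k)\le(1-\beta)g_k+\varepsilon/g_k$ leaves an additive $\mathcal O(\varepsilon)$; it does not. Squaring produces the cross-term $\varepsilon^2/g_k^2$, which is unbounded as $g_k\to 0$. The paper avoids this by \emph{multiplying} by $g_k$ rather than squaring: the feasibility clause $\hat g_1(x_k,B_k)\ge\hat\psi_{x_k,L_k,\tau_k}(x_{k+1},B_k)\ge\hat g_1(x_{k+1},B_k)$ in~\eqref{eq:stoch_approx_general_update_rule} ensures $\hat g_1(x_{k+1},B_k)\le g_k$, whence
\[
\hat g_2(x_{k+1},B_k)=\hat g_1(x_{k+1},B_k)^2\le g_k\cdot\hat g_1(x_{k+1},B_k)\le g_k\bigl[(1-\beta)g_k+\varepsilon_k\bigr]\le(1-\beta)\hat g_2(x_k,B_k)+\varepsilon,
\]
which is the clean additive-$\varepsilon$ recursion you claimed. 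You already invoke this feasibility clause later for the step-size bound, so the ingredient is present; only the word ``squaring'' is wrong. For regime~\eqref{eq:6_main_regime_2} your squaring argument is fine as written, since there is no additive term to worry about.
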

Оценка сходимости \eqref{eq:main_stoch_approx_lin_conv} из теоремы выше для первого режима вычисления $x_{k + 1}$ порождает следующие условия сходимости к уровню $\epsilon$ средней нормы градиента функции $\hat{f}_{2}$:
\begin{equation*}
    \begin{cases}
        4M_{\hat{G}}^{2}\mathbb{E}\left[\hat{g}_{2}(x_{0}, B_{0})\right]\exp\left(-\frac{k\mu}{2\left(\max\left\{\tilde{\gamma}L_{\hat{F}}P_{\hat{g}_{1}},~\gamma L_{\hat{F}}\right\} + \mu\right)}\right)\leq r_{1}\epsilon^{2},~r_{1}\in(0, 1);\\[10pt]
        8M_{\hat{G}}^{2}\left(\frac{\max\left\{\tilde{\gamma}L_{\hat{F}}P_{\hat{g}_{1}},~\gamma L_{\hat{F}}\right\} + \mu}{\mu}\right)\left(\varepsilon + 2l_{\hat{F}}\sqrt{\frac{2\varepsilon}{L}}\mathds{1}_{\{b < m\}}\right)\leq r_{2}\epsilon^{2},~r_{2}\in(0, 1);\\[10pt]
        16M_{\hat{G}}^{2}\left(l_{\hat{F}}\sqrt{\frac{2m(m - 1)P_{\hat{g}_{1}}}{L}}\mathds{1}_{\left\{b < m\right\}} + \tilde{\sigma}\right)\left(\frac{\max\left\{\tilde{\gamma}L_{\hat{F}}P_{\hat{g}_{1}},~\gamma L_{\hat{F}}\right\}}{\mu} + 1\right)\sqrt{\frac{1}{b} - \frac{1}{m}}\leq r_{3}\epsilon^{2},~r_{3}\in(0, 1);\\[5pt]
        r_{1} + r_{2} + r_{3} = 1.
    \end{cases}
\end{equation*}
Условия сходимости задают минимальное количество итераций, максимальную погрешность вычисления и минимальный размер батча в точной форме:
\begin{equation*}
    \begin{cases}
        k = \left\lceil\frac{2\left(\max\left\{\tilde{\gamma}L_{\hat{F}}P_{\hat{g}_{1}},~\gamma L_{\hat{F}}\right\} + \mu\right)}{\mu}\ln\left(\frac{4M_{\hat{G}}^{2}\mathbb{E}\left[\hat{g}_{2}(x_{0}, B_{0})\right]}{r_{1}\epsilon^{2}}\right)\right\rceil,~r_{1}\in(0, 1);\\[10pt]
        \varepsilon = \left(-l_{\hat{F}}\sqrt{\frac{2}{L}}\mathds{1}_{\{b < m\}} + \frac{1}{2}\sqrt{\left(\frac{8l_{\hat{F}}^{2}}{L}\right)\mathds{1}_{\{b < m\}} + \frac{\mu r_{2}\epsilon^{2}}{2M_{\hat{G}}^{2}\left(\max\left\{\tilde{\gamma}P_{\hat{g}_{1}}L_{\hat{F}},~\gamma L_{\hat{F}}\right\} + \mu\right)}}\right)^{2},~r_{2}\in(0, 1);\\[10pt]
        b = \min\left\{m,~n,~\left\lceil\frac{256M_{\hat{G}}^{4}\left(l_{\hat{F}}\sqrt{\frac{2m(m - 1)P_{\hat{g}_{1}}}{L}} + \tilde{\sigma}\right)^{2}\left(\frac{\max\left\{\tilde{\gamma}L_{\hat{F}}P_{\hat{g}_{1}},~\gamma L_{\hat{F}}\right\}}{\mu} + 1\right)^{2}}{r_{3}^{2}\epsilon^{4} + \frac{256M_{\hat{G}}^{4}}{m}\left(l_{\hat{F}}\sqrt{\frac{2m(m - 1)P_{\hat{g}_{1}}}{L}} + \tilde{\sigma}\right)^{2}\left(\frac{\max\left\{\tilde{\gamma}L_{\hat{F}}P_{\hat{g}_{1}},~\gamma L_{\hat{F}}\right\}}{\mu} + 1\right)^{2}}\right\rceil\right\},~r_{3}\in(0, 1);\\[5pt]
        r_{1} + r_{2} + r_{3} = 1;
    \end{cases}
\end{equation*}
и в асимптотической форме:
$$k = \operatorname{O}\left(\ln\left(\frac{1}{\epsilon}\right)\right),~\varepsilon = \operatorname{O}\left(\epsilon^{2}\right),~b = \min\left\{m,~n,~\operatorname{O}\left(\frac{1}{\epsilon^{4}}\right)\right\}.$$
Для второго режима условия задаются аналогичным образом:
\begin{equation*}
    \begin{cases}
        4M_{\hat{G}}^{2}\mathbb{E}\left[\hat{g}_{2}(x_{0}, B_{0})\right]\exp\left(-\frac{k(1 - \delta)\mu}{2\left(\max\left\{\tilde{\gamma}L_{\hat{F}}P_{\hat{g}_{1}},~\gamma L_{\hat{F}}\right\} + \mu\right)}\right)\leq r_{1}\epsilon^{2},~r_{1}\in(0, 1);\\[10pt]
        16 M_{\hat{G}}^{2}\left(l_{\hat{F}}\sqrt{\frac{\delta P_{\hat{g}_{1}}}{L}}\mathds{1}_{\{b < m\}}\right)\left(\frac{\max\left\{\tilde{\gamma}L_{\hat{F}}P_{\hat{g}_{1}},~\gamma L_{\hat{F}}\right\} + \mu}{(1 - \delta)\mu}\right)\leq r_{2}\epsilon^{2},~r_{2}\in(0, 1);\\[10pt]
        16M_{\hat{G}}^{2}\left(l_{\hat{F}}\sqrt{\frac{2m(m - 1)P_{\hat{g}_{1}}}{L}}\mathds{1}_{\left\{b < m\right\}} + \tilde{\sigma}\right)\left(\frac{\max\left\{\tilde{\gamma}L_{\hat{F}}P_{\hat{g}_{1}},~\gamma L_{\hat{F}}\right\} + \mu}{(1 - \delta)\mu}\right)\sqrt{\frac{1}{b} - \frac{1}{m}}\leq r_{3}\epsilon^{2},~r_{3}\in(0, 1);\\
        r_{1} + r_{2} + r_{3} = 1.
    \end{cases}
\end{equation*}
порождая минимальные значения количества итераций и размера батча:
\begin{equation*}
    \begin{cases}
        k = \left\lceil\frac{2\left(\max\left\{\tilde{\gamma}L_{\hat{F}}P_{\hat{g}_{1}},~\gamma L_{\hat{F}}\right\} + \mu\right)}{(1 - \delta)\mu}\ln\left(\frac{4M_{\hat{G}}^{2}\mathbb{E}\left[\hat{g}_{2}(x_{0}, B_{0})\right]}{r_{1}\epsilon^{2}}\right)\right\rceil,~r_{1}\in(0, 1);\\[10pt]
        \scalemath{0.88}{
        \begin{aligned}
            \delta &= \min\left\{\alpha,~\left(\frac{8M_{\hat{G}}^{2}l_{\hat{F}}\left(\max\left\{\tilde{\gamma}L_{\hat{F}}P_{\hat{g}_{1}},~\gamma L_{\hat{F}}\right\} + \mu\right)}{\mu r_{2}\epsilon^{2}}\sqrt{\frac{P_{\hat{g}_{1}}}{L}} - \sqrt{\underbrace{\left(\frac{8M_{\hat{G}}^{2}l_{\hat{F}}\left(\max\left\{\tilde{\gamma}L_{\hat{F}}P_{\hat{g}_{1}},~\gamma L_{\hat{F}}\right\} + \mu\right)}{\mu r_{2}\epsilon^{2}}\right)^{2}\frac{P_{\hat{g}_{1}}}{L}}_{\text{с достаточно малым $r_{2}\epsilon^{2}$, чтобы дробь была не меньше }1} - 1}\right)^{2}\right\},\\
            &\alpha\in[0, 1),~r_{2}\in(0, 1);\\
            \vspace{1pt}
        \end{aligned}
        }\\
        b = \min\left\{m,~n,~\left\lceil\frac{256M_{\hat{G}}^{4}\left(l_{\hat{F}}\sqrt{\frac{2m(m - 1)P_{\hat{g}_{1}}}{L}} + \tilde{\sigma}\right)^{2}\left(\frac{\max\left\{\tilde{\gamma}L_{\hat{F}}P_{\hat{g}_{1}},~\gamma L_{\hat{F}}\right\} + \mu}{(1 - \delta)\mu}\right)^{2}}{r_{3}^{2}\epsilon^{4} + \frac{256M_{\hat{G}}^{4}}{m}\left(l_{\hat{F}}\sqrt{\frac{2m(m - 1)P_{\hat{g}_{1}}}{L}} + \tilde{\sigma}\right)^{2}\left(\frac{\max\left\{\tilde{\gamma}L_{\hat{F}}P_{\hat{g}_{1}},~\gamma L_{\hat{F}}\right\} + \mu}{(1 - \delta)\mu}\right)^{2}}\right\rceil\right\},~r_{3}\in(0, 1);\\[10pt]
        r_{1} + r_{2} + r_{3} = 1.
    \end{cases}
\end{equation*}
что асимптотически означает:
$$k = \operatorname{O}\left(\ln\left(\frac{1}{\epsilon}\right)\right),~b = \min\left\{m,~n,~\operatorname{O}\left(\frac{1}{\epsilon^{4}}\right)\right\}.$$

Оценки \eqref{eq:main_stoch_approx_sub_lin_conv_1}, \eqref{eq:main_stoch_approx_lin_conv_1} в теоремах \ref{th:5} и \ref{th:6} принципиально отличаются от аналогичных в теоремах \ref{th:3} и \ref{th:4} именно заменой $\eta(2 - \eta)$ на $(1 - \delta)$ и добавления слагаемого, зависящего от $\delta$, в оценку шума батча. Этот факт качественно описывает общность модели неточного вычисления $x_{k + 1}$ по правилу \eqref{eq:stoch_direct_update_rule} и модели неточного вычисления $x_{k + 1}$ \eqref{eq:stoch_approx_general_update_rule}, подтверждая тот факт, что специализированный способ \eqref{eq:stoch_direct_update_rule} не хуже общего \eqref{eq:stoch_approx_general_update_rule} решает поставленную задачу. Поэтому оценки на минимальные значения $b$ и $k$ для получения в среднем уровня $\epsilon$ у нормы градиента функции $\hat{f}_{2}$ вычисляются способами, похожими на использованные при вычислении \eqref{eq:sublin_batch_iter_tradeoff_2} и \eqref{eq:lin_batch_iter_tradeoff_2}.

В приведённых схемах \ref{alg:gen_stoch_gnm} и \ref{alg:gen_stoch_unbounded_gnm} вместо явного указания точности вычисления $x_{k + 1}$ можно задать количество необходимых внутренних итераций для случая, в котором стратегия \eqref{eq:stoch_approx_general_update_rule} имеет представление в виде итерационного метода минимизации $\hat{\psi}_{x_{k}, L_{k}, \hat{g}_{1}(x_{k}, B_{k})}(x, B_{k})$ по $x$, использующего одинаковое количество шагов на каждой внешней итерации $k$. Как было замечено в лемме \ref{lm:aux_bounded_variation} (следствие \ref{lm:aux_bounded_variation_term_cond}), вместо прогона метода через все $N$ внешних итераций можно добавить в качестве шестого шага в схемах \ref{alg:gen_stoch_gnm} и \ref{alg:gen_stoch_unbounded_gnm} условие раннего останова по значению нормы разности приближений оптимальных параметров с соседних итераций.

\subsection{Использование произвольной стратегии выбора шага}

В этом подразделе представлена стохастическая версия метода Гаусса--Ньютона для неограниченных функционалов с неограниченными якобианами. Вместо предположений \ref{as:1}, \ref{as:2}, \ref{as:3}, \ref{as:4}, \ref{as:5} рассматриваются новые три предположения, описывающие липшицевость якобианов, ограниченность дисперсии функции и новое условие класса Поляка--Лоясиевича.
\begin{assumption}[Липшиц--непрерывность отображения $F^{'}$]\label{as:jacob_smoothness}
    Существует конечная $L_{\hat{F}} > 0$, для которой
    \begin{equation*}
        \begin{aligned}
            \left\|\nabla F_{i}(x) - \nabla F_{i}(y)\right\|&\leq L_{\hat{F}}\left\|x - y\right\|,~\forall (x, y)\in E_{1}^{2},~\forall i\in\overline{1, m}.
        \end{aligned}
    \end{equation*}
\end{assumption}
В отличие от предположения \ref{as:1}, в данном случае рассмотрена липшицевость только производных отдельных функций.
\begin{assumption}[Ограниченность отклонения]\label{as:bounded_variance_growth}
    Для любых $\gamma\geq\tilde{\gamma}\geq1$ существуют $c_{i}\geq0$, $i\in\overline{1, 3}$, при которых выполнено
    \begin{equation*}
        \begin{aligned}
            \mathbb{E}\left[\left|\hat{g}_{2}(x_{k + 1}, B_{k}) - \hat{f}_{2}(x_{k + 1})\right|\right]&\leq\left.\sqrt{\frac{1}{b} - \frac{1}{m}}\right(c_{1} + c_{2}\mathbb{E}\left[\hat{g}_{1}(x_{k}, B_{k})\varepsilon_{k}\right] +\\
            &\left.+ c_{3}\mathbb{E}\left[\left\|\hat{T}_{\max\left\{\tilde{\gamma}L_{\hat{F}},~\frac{\gamma L_{\hat{F}}}{\hat{g}_{1}(x_{k}, B_{k})}\right\}, \hat{g}_{1}(x_{k}, B_{k})}(x_{k}, B_{k}) - x_{k}\right\|^{2}\right]\right)
        \end{aligned}
    \end{equation*}
    для всех $\varepsilon_{k}\geq0,~k\in\mathbb{Z}_{+}$ и $b\in\overline{1,~m}$ в рамках схемы \ref{alg:gen_stoch_unbounded_gnm} со стратегией \eqref{eq:stoch_approx_general_update_rule} и схемы \ref{alg:gen_stoch_flex_gnm}. Оператор математического ожидания $\mathbb{E}\left[\cdot\right]$ усредняет по всей случайности процесса оптимизации.
\end{assumption}
Предположение \ref{as:bounded_variance_growth} обобщает сразу несколько предположений, ограничивающих рост значения
$$\mathbb{E}\left[\left|\hat{g}_{2}(x_{k + 1}, B_{k}) - \hat{f}_{2}(x_{k + 1})\right|\right],$$
в частности, в класс оценок, который описывается данным предположением, входит результат леммы \ref{lm:aux_bounded_deviation} для ограниченных функционалов \eqref{eq:deviation_upper_bound} при $x_{k + 1} = \hat{T}_{\max\left\{\tilde{\gamma}L_{\hat{F}},~\frac{\gamma L_{\hat{F}}}{\hat{g}_{1}(x_{k}, B_{k})}\right\}, \hat{g}_{1}(x_{k}, B_{k})}(x_{k}, B_{k}):$
\begin{equation}\label{eq:bounded_g2_bounded_growth}
    \begin{aligned}
        \mathbb{E}\left[\left|\hat{f}_{2}(x_{k + 1}) - \hat{g}_{2}(x_{k + 1}, B_{k})\right|\right]&\leq\sqrt{\frac{1}{b} - \frac{1}{m}}\left(2l_{\hat{F}}\sqrt{m(m - 1)}\mathds{1}_{\left\{b < m\right\}}\mathbb{E}\left[\left\|x_{k + 1} - x_{k}\right\|\right] + \tilde{\sigma}\right)\leq\\
        &\leq\left\{\|v\|\leq\frac{\|v\|^{2}}{2a} + \frac{a}{2},~a > 0,~v\in E_{1}\Rightarrow\mathbb{E}\left[\|v\|\right]\leq\frac{1}{2a}\mathbb{E}\left[\|v\|^{2}\right] + \frac{a}{2}\right\}\leq\\
        &\leq\sqrt{\frac{1}{b} - \frac{1}{m}}\left(\underbrace{\frac{l_{\hat{F}}\sqrt{m(m - 1)}\mathds{1}_{\left\{b < m\right\}}}{a}}_{=c_{3}}\mathbb{E}\left[\left\|x_{k + 1} - x_{k}\right\|^{2}\right] +\right.\\
        &\left.+ \underbrace{\tilde{\sigma} + al_{\hat{F}}\sqrt{m(m - 1)}\mathds{1}_{\left\{b < m\right\}}}_{=c_{1}}\right),~c_{2} = 0,~a > 0,~k\in\mathbb{Z}_{+}.
    \end{aligned}
\end{equation}
Что характерно, в данном примере $c_{i},~i\in\overline{1,~3}$ не зависят от значений $\tilde{\gamma}$ и $\gamma$.
\begin{assumption}[Условие Поляка--Лоясиевича для проксимального градиента]\label{as:prox_PL_condition}
    Для любых конечных $\gamma\geq\tilde{\gamma}\geq1$ существует $\nu > 0$, при котором выполнено
    \begin{equation*}
        \begin{aligned}
            &\mathbb{E}\left[\left\|\gamma L_{\hat{F}}\left(\hat{T}_{\max\left\{\tilde{\gamma}L_{\hat{F}},~\frac{\gamma L_{\hat{F}}}{\hat{g}_{1}(x_{k}, B_{k})}\right\}, \hat{g}_{1}(x_{k}, B_{k})}(x_{k}, B_{k}) - x_{k}\right)\right\|^{2}\right]\geq\nu\left(\mathbb{E}\left[\hat{f}_{2}(x_{k})\right] - \hat{f}_{2}^{*}\right)
        \end{aligned}
    \end{equation*}
    для всех $k\in\mathbb{Z}_{+}$ в рамках схемы \ref{alg:gen_stoch_unbounded_gnm} со стратегией \eqref{eq:stoch_approx_general_update_rule} и схемы \ref{alg:gen_stoch_flex_gnm}. Оператор математического ожидания $\mathbb{E}\left[\cdot\right]$ усредняет по всей случайности процесса оптимизации.
\end{assumption}
В предположении \ref{as:prox_PL_condition} для ограниченных по предположениям \ref{as:2} и \ref{as:3} функционалов возможно выразить $\nu > 0$ явно, согласно лемме \ref{lm:aux_bounded_variation} при $\eta_{k} = 1$, $L_{k} = \max\left\{\tilde{\gamma}L_{\hat{F}},~\frac{\gamma L_{\hat{F}}}{\hat{g}_{1}(x_{k}, B_{k})}\right\}$ и\\$x_{k + 1} = \hat{T}_{\max\left\{\tilde{\gamma}L_{\hat{F}},~\frac{\gamma L_{\hat{F}}}{\hat{g}_{1}(x_{k}, B_{k})}\right\}, \hat{g}_{1}(x_{k}, B_{k})}(x_{k}, B_{k})$:
\begin{equation*}
    \begin{aligned}
        \mathbb{E}\left[\left\|\gamma L_{\hat{F}}(x_{k + 1} - x_{k})\right\|^{2}\right]&\geq\mathbb{E}\left[\left(\frac{\gamma L_{\hat{F}}\left\|\nabla_{x}\hat{g}_{2}(x_{k}, B_{k})\right\|}{2\left(M_{\hat{G}}^{2} + \hat{g}_{1}(x_{k}, B_{k})\max\left\{\tilde{\gamma}L_{\hat{F}},~\frac{\gamma L_{\hat{F}}}{\hat{g}_{1}(x_{k}, B_{k})}\right\}\right)}\right)^{2}\right]\geq\left\{\text{предположение \ref{as:5}}\right\}\geq\\
        &\geq\left(\frac{\gamma L_{\hat{F}}\sqrt{\mu}}{M_{\hat{G}}^{2} + \max\left\{\tilde{\gamma}L_{\hat{F}}P_{\hat{g}_{1}},~\gamma L_{\hat{F}}\right\}}\right)^{2}\mathbb{E}\left[\hat{f}_{2}(x_{k})\right]\geq\\
        &\geq\underbrace{\left(\frac{\gamma L_{\hat{F}}\sqrt{\mu}}{M_{\hat{G}}^{2} + \max\left\{\tilde{\gamma}L_{\hat{F}}P_{\hat{g}_{1}},~\gamma L_{\hat{F}}\right\}}\right)^{2}}_{=\nu\text{,~может быть }> 1}\left(\mathbb{E}\left[\hat{f}_{2}(x_{k})\right] - \hat{f}_{2}^{*}\right)\Rightarrow
    \end{aligned}
\end{equation*}
\begin{equation}\label{eq:nu_value}
    \begin{aligned}
        &\Rightarrow\lim\limits_{\gamma\rightarrow+\infty}\mathbb{E}\left[\left\|\gamma L_{\hat{F}}(x_{k + 1} - x_{k})\right\|^{2}\right] = \frac{1}{4}\mathbb{E}\left[\left\|\nabla_{x_{k}}\hat{g}_{2}(x_{k}, B_{k})\right\|^{2}\right]\geq\mu\left(\mathbb{E}\left[\hat{f}_{2}(x_{k})\right] - \hat{f}_{2}^{*}\right),
    \end{aligned}
\end{equation}
так как проксимальное отображение выглядит следующим образом:
\begin{equation*}
    \begin{aligned}
        \hat{T}_{\max\left\{\tilde{\gamma}L_{\hat{F}},~\frac{\gamma L_{\hat{F}}}{\hat{g}_{1}(x_{k}, B_{k})}\right\}, \hat{g}_{1}(x_{k}, B_{k})}(x_{k}, B_{k}) &= x_{k} - \left(\hat{G}^{'}(x_{k}, B_{k})^{*}\hat{G}^{'}(x_{k}, B_{k}) +\right.\\
        &\left.+ \max\left\{\tilde{\gamma}L_{\hat{F}}\hat{g}_{1}(x_{k}, B_{k}),~\gamma L_{\hat{F}}\right\}I_{n}\right)^{-1}\hat{G}^{'}(x_{k}, B_{k})^{*}\hat{G}(x_{k}, B_{k}).
    \end{aligned}
\end{equation*}
То есть выполнение предположения \ref{as:5} приводит к тому, что любая, в среднем, стационарная точка устанавливает уровень глобального минимума $\hat{f}_{2}^{*} = 0$, а это эквивалентно совместности системы $\eqref{eq:smooth_system}$.

В следующем утверждении установлена сходимость в терминах среднего к стационарной точке для последовательности $\left\{x_{k}\right\}_{k\in\mathbb{Z}_{+}}$, построенной по схеме \ref{alg:gen_stoch_unbounded_gnm} с правилом \eqref{eq:stoch_approx_general_update_rule}.

\begin{theorem}\label{th:gen_stoch_sublin_conv_main}
Допустим выполнение предположений \ref{as:jacob_smoothness} и \ref{as:bounded_variance_growth}. Рассмотрим метод Гаусса--Ньютона, реализованный по схеме \ref{alg:gen_stoch_unbounded_gnm} со стратегией обновления приближения решения \eqref{eq:stoch_approx_general_update_rule}, $\tau_{k} = \hat{g}_{1}(x_{k}, B_{k})$, $k\in\mathbb{Z}_{+}$. Тогда выполнена следующая оценка:
\begin{equation*}
    \begin{aligned}
        \left(\gamma L_{\hat{F}}\right)^{2}&\left(\frac{L}{2} - c_{3}\sqrt{\frac{1}{b} - \frac{1}{m}}\right)^{-1}\left(\frac{\mathbb{E}\left[\hat{f}_{2}(x_{0})\right]}{k} + \frac{1}{k}\sum\limits_{i = 0}^{k - 1}\mathbb{E}\left[\varepsilon_{i}\hat{g}_{1}(x_{i}, B_{i})\right]\left(1 + c_{2}\sqrt{\frac{1}{b} - \frac{1}{m}}\right) + c_{1}\sqrt{\frac{1}{b} - \frac{1}{m}}\right)\geq\\
        &\geq\mathbb{E}\left[\min\limits_{i\in\overline{0, k - 1}}\left\{\left\|\gamma L_{\hat{F}}\left(\hat{T}_{\max\left\{\tilde{\gamma}L_{\hat{F}},\frac{\gamma L_{\hat{F}}}{\hat{g}_{1}(x_{i}, B_{i})}\right\}, \hat{g}_{1}(x_{i}, B_{i})}(x_{i}, B_{i}) - x_{i}\right)\right\|^{2}\right\}\right],\\[10pt]
        b&\in\overline{\min\left\{m, \left\lceil\frac{4c_{3}^{2}m}{L^{2}m + 4c_{3}^{2}}\right\rceil + 1\right\},~m},~\tilde{\gamma}L_{\hat{F}}\geq L > 2c_{3}\sqrt{\frac{1}{b} - \frac{1}{m}},~\tilde{\gamma}>\max\left\{1,~\frac{2c_{3}}{L_{\hat{F}}}\sqrt{\frac{1}{b} - \frac{1}{m}}\right\},~\gamma\geq\tilde{\gamma}.
    \end{aligned}
\end{equation*}
\end{theorem}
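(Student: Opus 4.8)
The plan is to establish a one-step descent on the batch objective $\hat g_2(\cdot, B_k)$, bridge it to the full-sample objective $\hat f_2$ via Assumption \ref{as:bounded_variance_growth}, telescope over iterations, and finally bound the running average of the squared proximal gradient from below by its minimum. The two key mechanisms are a strong-convexity estimate for the local quadratic model and the cancellation of the deviation term against the descent term.

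First I would fix an iteration and exploit the defining properties of the update \eqref{eq:stoch_approx_general_update_rule}: the accepted point satisfies $\hat g_1(x_{k+1}, B_k) \le \hat\psi_{x_k, L_k, \tau_k}(x_{k+1}, B_k)$ (the line-search acceptance test, cf. Lemma \ref{lm:aux_upper_model}) and $\hat\psi_{x_k, L_k, \tau_k}(x_{k+1}, B_k) \le \hat\psi_{x_k, L_k, \tau_k}(\hat T_{L_k, \tau_k}(x_k, B_k), B_k) + \varepsilon_k$, together with $\hat g_1(x_{k+1}, B_k) \le \hat g_1(x_k, B_k)$. With $\tau_k = \hat g_1(x_k, B_k)$ one has $\hat\psi_{x_k, L_k, \tau_k}(x_k, B_k) = \hat g_1(x_k, B_k)$; multiplying the model by $2\tau_k$ turns it into the quadratic $g(y) = \tau_k L_k\|y-x_k\|^2 + \|\hat G(x_k,B_k) + \hat G'(x_k,B_k)(y-x_k)\|^2$, which is $2\tau_k L_k$-strongly convex, whence its minimizer gives $\hat\psi(\hat T) \le \hat g_1(x_k,B_k) - \frac{L_k}{2}\|\hat T - x_k\|^2$. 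Writing $\hat g_1(x_{k+1},B_k) \le \hat g_1(x_k,B_k) - \frac{L_k}{2}\|\hat T - x_k\|^2 + \varepsilon_k$, multiplying by $\hat g_1(x_{k+1},B_k)$, and majorizing the leading factor by $\hat g_1(x_k,B_k)$ yields the batch descent $\hat g_2(x_{k+1},B_k) \le \hat g_2(x_k,B_k) - \hat g_1(x_k,B_k)\frac{L_k}{2}\|\hat T - x_k\|^2 + \hat g_1(x_k,B_k)\varepsilon_k$.

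Next I would take expectations, use the independence of $B_k$ from $x_k$ to get $\mathbb{E}[\hat g_2(x_k,B_k)] = \mathbb{E}[\hat f_2(x_k)]$, and pass from $\hat g_2(x_{k+1},B_k)$ (where $x_{k+1}$ is correlated with $B_k$) to $\hat f_2(x_{k+1})$ exactly through Assumption \ref{as:bounded_variance_growth}. The crucial cancellation is that the deviation bound contributes a term $c_3\sqrt{\tfrac1b-\tfrac1m}\,\mathbb{E}[\|\hat T - x_k\|^2]$ of the same shape as the descent term. Identifying the proximal operator of the statement, whose $L$-parameter is $\bar L_k = \max\{\tilde\gamma L_{\hat F}, \gamma L_{\hat F}/\hat g_1(x_k,B_k)\}$ so that $\hat g_1(x_k,B_k)\bar L_k \ge \gamma L_{\hat F} \ge L$, rewrites the descent term as $\frac{\hat g_1(x_k,B_k)\bar L_k}{2(\gamma L_{\hat F})^2}\|\gamma L_{\hat F}(\hat T - x_k)\|^2$. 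Collecting the two $\|\gamma L_{\hat F}(\hat T-x_k)\|^2$ contributions leaves the net coefficient $(\tfrac L2 - c_3\sqrt{\tfrac1b-\tfrac1m})(\gamma L_{\hat F})^{-2}$, which is positive precisely under $L > 2c_3\sqrt{\tfrac1b-\tfrac1m}$; solving this for $b$ reproduces the admissible range $b \ge \lceil \tfrac{4c_3^2 m}{L^2 m + 4 c_3^2}\rceil + 1$.

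Finally I would telescope $\mathbb{E}[\hat f_2(x_{i+1})] \le \mathbb{E}[\hat f_2(x_i)] - (\tfrac L2 - c_3\sqrt{\tfrac1b-\tfrac1m})(\gamma L_{\hat F})^{-2}\mathbb{E}[\|\gamma L_{\hat F}(\hat T_i - x_i)\|^2] + \mathbb{E}[\varepsilon_i\hat g_1(x_i,B_i)](1 + c_2\sqrt{\tfrac1b-\tfrac1m}) + c_1\sqrt{\tfrac1b-\tfrac1m}$ over $i = 0,\dots,k-1$, discard the nonnegative terminal term $\mathbb{E}[\hat f_2(x_k)]$, divide by $k$, and use $\tfrac1k\sum_i \|\gamma L_{\hat F}(\hat T_i-x_i)\|^2 \ge \min_i \|\gamma L_{\hat F}(\hat T_i - x_i)\|^2$ before averaging; multiplying through by $(\gamma L_{\hat F})^2(\tfrac L2 - c_3\sqrt{\tfrac1b-\tfrac1m})^{-1}$ gives the claim. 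The main obstacle I anticipate is reconciling the adaptively chosen $L_k$ from the variable-interval search in scheme \ref{alg:gen_stoch_unbounded_gnm} with the fixed $\bar L_k$ that parametrizes the proximal-gradient potential in the statement and in Assumption \ref{as:bounded_variance_growth}: one must verify that the search terminates at a value for which the model remains a valid majorant (so $x_{k+1}$ is admissible) while the decrease coefficient stays bounded below by $\tfrac{\hat g_1(x_k,B_k)\bar L_k}{2}\ge \tfrac{\gamma L_{\hat F}}{2}$, which is exactly what makes the deviation term absorbable. A secondary care point is controlling the squared-residual remainder when $\tfrac{L_k}{2}\|\hat T - x_k\|^2 < \varepsilon_k$.
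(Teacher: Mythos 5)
Your proposal follows the paper's own proof almost step for step: the one-step decrease of the stochastic model (your strong-convexity computation is exactly Lemma \ref{lm:aux_det_local_decrease_1} specialized to $\tau_k=\hat{g}_{1}(x_k,B_k)$, where $\frac{\tau_k}{2}+\frac{\hat{g}_{2}(x_k,B_k)}{2\tau_k}=\hat{g}_{1}(x_k,B_k)$), multiplication by $\hat{g}_{1}(x_k,B_k)$, passage to expectations with the bridge $\hat{g}_{2}(x_{i+1},B_i)\to\hat{f}_{2}(x_{i+1})$ supplied by Assumption \ref{as:bounded_variance_growth}, absorption of the $c_3$-term into the descent term, telescoping, and bounding the average below by the minimum. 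The only substantive point is the one you flag but leave open: transferring the descent, which the algorithm guarantees at the adaptively found $L_k$, to the fixed parameter $\bar{L}_k=\max\{\tilde{\gamma}L_{\hat{F}},~\gamma L_{\hat{F}}/\hat{g}_{1}(x_k,B_k)\}$ that parametrizes the proximal operator in the statement and in Assumption \ref{as:bounded_variance_growth}. The paper closes this gap with Corollary \ref{lm:cor_aux_det_local_decrease_1}: $\|\hat{T}_{L,\tau}(x,B)-x\|$ is nonincreasing in $L$, so from $L_k\leq\bar{L}_k$ and $\hat{g}_{1}(x_k,B_k)L_k\geq L$ (the lower endpoint of the search interval) one gets $\hat{g}_{1}(x_k,B_k)\frac{L_k}{2}\|\hat{T}_{L_k,\tau_k}(x_k,B_k)-x_k\|^2\geq\frac{L}{2}\|\hat{T}_{\bar{L}_k,\tau_k}(x_k,B_k)-x_k\|^2$, landing directly on the coefficient $\frac{L}{2}$ of the statement. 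Your proposed target, a descent bounded below by $\frac{\hat{g}_{1}(x_k,B_k)\bar{L}_k}{2}\|\hat{T}_{\bar{L}_k,\tau_k}-x_k\|^2\geq\frac{\gamma L_{\hat{F}}}{2}\|\hat{T}_{\bar{L}_k,\tau_k}-x_k\|^2$, is also provable, but by a different one-line observation that you do not supply: $\hat{\psi}_{x,L,\tau}(y,B)$ is pointwise nondecreasing in $L$, hence so is its minimal value, so that $\hat{g}_{1}(x_k,B_k)-\hat{\psi}_{x_k,L_k,\tau_k}(\hat{T}_{L_k,\tau_k},B_k)\geq\hat{g}_{1}(x_k,B_k)-\hat{\psi}_{x_k,\bar{L}_k,\tau_k}(\hat{T}_{\bar{L}_k,\tau_k},B_k)\geq\frac{\bar{L}_k}{2}\|\hat{T}_{\bar{L}_k,\tau_k}-x_k\|^2$; this route in fact yields the sharper prefactor $\frac{\gamma L_{\hat{F}}}{2}-c_3\sqrt{\frac{1}{b}-\frac{1}{m}}$, which you then relax via $\gamma L_{\hat{F}}\geq L$ to the stated one. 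Some such argument is genuinely required --- the descent term cannot simply be ``rewritten'' at $\bar{L}_k$, since $L\|\hat{T}_{L,\tau}(x,B)-x\|^{2}$ has no useful monotonicity in $L$ (the displacement decreases while $L^{2}\|\hat{T}_{L,\tau}(x,B)-x\|^{2}$ increases). Finally, your ``secondary care point'' is vacuous: the one-step inequality holds as written regardless of the sign of $\frac{L_k}{2}\|\hat{T}-x_k\|^{2}-\varepsilon_k$.
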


Согласно теореме \ref{th:gen_stoch_sublin_conv} условие сходимости для достижения уровня $\epsilon > 0$ минимальной нормы проксимального градиента в терминах среднего при ограничении значения погрешности вычисления $x_{k + 1}$ неравенством $0\leq\varepsilon_{k}\leq\frac{\varepsilon}{\hat{g}_{1}(x_{k}, B_{k})}$ задаётся следующей системой неравенств:
\begin{equation*}
    \begin{cases}
        \left(\gamma L_{\hat{F}}\right)^{2}\left(\frac{L}{2} - c_{3}\sqrt{\frac{1}{b} - \frac{1}{m}}\right)^{-1}\frac{\mathbb{E}\left[\hat{g}_{2}(x_{0}, B_{0})\right]}{k}\leq r_{1}\epsilon^{2},~r_{1}\in(0, 1);\\[5pt]
        \left(\gamma L_{\hat{F}}\right)^{2}\left(\frac{L}{2} - c_{3}\sqrt{\frac{1}{b} - \frac{1}{m}}\right)^{-1}\left(1 + c_{2}\sqrt{\frac{1}{b} - \frac{1}{m}}\right)\varepsilon\leq r_{2}\epsilon^{2},~r_{2}\in(0, 1);\\[5pt]
        \left(\gamma L_{\hat{F}}\right)^{2}\left(\frac{L}{2} - c_{3}\sqrt{\frac{1}{b} - \frac{1}{m}}\right)^{-1}c_{1}\sqrt{\frac{1}{b} - \frac{1}{m}}\leq r_{3}\epsilon^{2},~r_{3}\in(0, 1);\\[5pt]
        r_{1} + r_{2} + r_{3} = 1;
    \end{cases}
\end{equation*}
из которой выводятся минимальное количество итераций, максимальное значение величины $\varepsilon$ и минимальный размер батча:
\begin{equation}\label{eq:gen_sublin_conv_cond}
    \begin{cases}
        k = \left\lceil\left(\frac{L}{2} - c_{3}\sqrt{\frac{1}{b} - \frac{1}{m}}\right)^{-1}\frac{\left(\gamma L_{\hat{F}}\right)^{2}\mathbb{E}\left[\hat{g}_{2}(x_{0}, B_{0})\right]}{r_{1}\epsilon^{2}}\right\rceil,~r_{1}\in(0, 1);\\[10pt]
        \varepsilon = \frac{r_{2}\epsilon^{2}}{\left(\gamma L_{\hat{F}}\right)^{2}}\left(\frac{L}{2} - c_{3}\sqrt{\frac{1}{b} - \frac{1}{m}}\right)\left(1 + c_{2}\sqrt{\frac{1}{b} - \frac{1}{m}}\right)^{-1},~r_{2}\in(0, 1);\\[10pt]
        b = \min\left\{m,~\left\lceil\frac{4c_{3}^{2}m\left(\frac{c_{1}}{c_{3}r_{3}}\left(\frac{\gamma L_{\hat{F}}}{\epsilon}\right)^{2} + 1\right)^{2}}{L^{2}m + 4c_{3}^{2}\left(\frac{c_{1}}{c_{3}r_{3}}\left(\frac{\gamma L_{\hat{F}}}{\epsilon}\right)^{2} + 1\right)^{2}}\right\rceil\right\},~r_{3}\in(0, 1);\\[5pt]
        r_{1} + r_{2} + r_{3} = 1.
    \end{cases}
\end{equation}
Неравенства из \eqref{eq:gen_sublin_conv_cond} задают ограничение на среднюю минимальную норму проксимального градиента:
\begin{equation}\label{eq:th_stoch_prox_norm}
    \begin{aligned}
        &\mathbb{E}\left[\min\limits_{i\in\overline{0, k - 1}}\left\{\left\|\gamma L_{\hat{F}}\left(\hat{T}_{\max\left\{\tilde{\gamma}L_{\hat{F}},\frac{\gamma L_{\hat{F}}}{\hat{g}_{1}(x_{i}, B_{i})}\right\}, \hat{g}_{1}(x_{i}, B_{i})}(x_{i}, B_{i}) - x_{i}\right)\right\|^{2}\right\}\right]\leq\epsilon^{2}.
    \end{aligned}
\end{equation}
Среди гиперпараметров оценки сходимости в теореме \ref{th:gen_stoch_sublin_conv} наиболее существенными на практике являются размер батча $b$ и нижняя оценка локальной постоянной Липшица $L$, так как их требуется явно задать при решении задачи \eqref{eq:main_opt_problem} по схеме \ref{alg:gen_stoch_unbounded_gnm}, в то время как значения гиперпараметров $\gamma$ и $\tilde{\gamma}$ можно более свободно варьировать, чтобы все неравенства в условии теоремы выполнялись, однако всё равно остаётся зависимость постоянных $c_{i} \geq 0,~i\in\overline{1,~3}$ и $\nu > 0$ от значений $\tilde{\gamma}$ и $\gamma$, и важность выбора значений $b$ и $L$ дополнительно обосновывается наличием этих зависимостей в предположениях \ref{as:bounded_variance_growth} и \ref{as:prox_PL_condition}. В самом условии \eqref{eq:gen_sublin_conv_cond} возможно увеличенем отношения $\frac{L}{\gamma L_{\hat{F}}}$ до $1$ добиться снижения нижней границы на размер батча, увеличения значения $\varepsilon$ и уменьшения $k$. Следующее утверждение уже демонстрирует, как выполнение предположения \ref{as:prox_PL_condition} вдобавок к условиям теоремы \ref{th:gen_stoch_sublin_conv} приводит к линейной сходимости.
\begin{theorem}\label{th:gen_stoch_lin_conv_main}
Предположим выполнение предположений \ref{as:jacob_smoothness}, \ref{as:bounded_variance_growth} и \ref{as:prox_PL_condition}. Рассмотрим метод Гаусса--Ньютона, реализованный по схеме \ref{alg:gen_stoch_unbounded_gnm} со стратегией обновления приближения решения \eqref{eq:stoch_approx_general_update_rule}, $\tau_{k} = \hat{g}_{1}(x_{k}, B_{k})$, $k\in\mathbb{Z}_{+}$. Тогда выполнена следующая оценка
\begin{equation*}
    \begin{aligned}
        \frac{\left(\gamma L_{\hat{F}}\right)^{2}}{\nu}&\left(\frac{L}{2} - c_{3}\sqrt{\frac{1}{b} - \frac{1}{m}}\right)^{-1}\left(c_{1}\sqrt{\frac{1}{b} - \frac{1}{m}} + \left(1 + c_{2}\sqrt{\frac{1}{b} - \frac{1}{m}}\right)\mathbb{E}\left[\max\limits_{i\in\overline{0, k - 1}}\left\{\varepsilon_{i}\hat{g}_{1}(x_{i}, B_{i})\right\}\right]\right) +\\
        &+ \exp\left(-\frac{k\nu}{\left(\gamma L_{\hat{F}}\right)^{2}}\left(\frac{L}{2} - c_{3}\sqrt{\frac{1}{b} - \frac{1}{m}}\right)\right)\mathbb{E}\left[\hat{f}_{2}(x_{0}) - \hat{f}_{2}^{*}\right]\geq\mathbb{E}\left[\hat{f}_{2}(x_{k}) - \hat{f}_{2}^{*}\right],\\[10pt]
        b&\in\overline{\min\left\{m,~\left\lceil\frac{4c_{3}^{2}m}{L^{2}m + 4c_{3}^{2}}\right\rceil + 1\right\},~m},~\tilde{\gamma}L_{\hat{F}}\geq L>2c_{3}\sqrt{\frac{1}{b} - \frac{1}{m}},\\
        \gamma&\geq\max\left\{\tilde{\gamma},~\frac{1}{L_{\hat{F}}}\sqrt{\nu\left(\frac{L}{2} - c_{3}\sqrt{\frac{1}{b} - \frac{1}{m}}\right)}\right\},~\tilde{\gamma} > \max\left\{1,~\frac{2c_{3}}{L_{\hat{F}}}\sqrt{\frac{1}{b} - \frac{1}{m}}\right\}.
    \end{aligned}
\end{equation*}
\end{theorem}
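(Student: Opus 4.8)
Мысль состоит в том, чтобы вывести теорему \ref{th:gen_stoch_lin_conv_main} из теоремы \ref{th:gen_stoch_sublin_conv_main}, точнее — из промежуточного рекуррентного неравенства, которое устанавливается в ходе её доказательства, добавив к нему предположение \ref{as:prox_PL_condition}. Первым делом я бы вернулся к доказательству теоремы \ref{th:gen_stoch_sublin_conv_main} (а точнее к лемме \ref{lm:aux_bounded_variation} с её следствиями и к предположению \ref{as:bounded_variance_growth}) и выделил из него \emph{однашаговое} неравенство спуска вида
\begin{equation*}
    \mathbb{E}\left[\hat{f}_{2}(x_{k + 1})\right]\leq\mathbb{E}\left[\hat{f}_{2}(x_{k})\right] - \left(\frac{L}{2} - c_{3}\sqrt{\frac{1}{b} - \frac{1}{m}}\right)\frac{1}{\left(\gamma L_{\hat{F}}\right)^{2}}\mathbb{E}\left[\left\|\gamma L_{\hat{F}}\left(\hat{T}_{\ast}(x_{k}, B_{k}) - x_{k}\right)\right\|^{2}\right] + (\text{члены с }\varepsilon_{k}, c_{1}, c_{2}),
\end{equation*}
где $\hat{T}_{\ast}$ — сокращённое обозначение фигурирующего в условии проксимального отображения; именно суммирование этого неравенства по $k$ давало сублинейную оценку в теореме \ref{th:gen_stoch_sublin_conv_main}. Ограничение $L > 2c_{3}\sqrt{1/b - 1/m}$ гарантирует положительность коэффициента при квадрате нормы проксимального шага, что и делает это неравенство настоящим неравенством спуска.

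Далее я бы применил предположение \ref{as:prox_PL_condition}: оно позволяет заменить $\mathbb{E}\left[\|\gamma L_{\hat{F}}(\hat{T}_{\ast}(x_{k}, B_{k}) - x_{k})\|^{2}\right]$ снизу на $\nu\left(\mathbb{E}[\hat{f}_{2}(x_{k})] - \hat{f}_{2}^{*}\right)$. Вычитая $\hat{f}_{2}^{*}$ из обеих частей однашагового неравенства и обозначив $r_{k} = \mathbb{E}[\hat{f}_{2}(x_{k})] - \hat{f}_{2}^{*}$, получаем рекуррентность вида $r_{k + 1}\leq(1 - \theta)r_{k} + \delta_{k}$, где $\theta = \frac{\nu}{(\gamma L_{\hat{F}})^{2}}\left(\frac{L}{2} - c_{3}\sqrt{1/b - 1/m}\right)$, а $\delta_{k}$ собирает аддитивный шум (член с $c_{1}\sqrt{1/b - 1/m}$ и член с $\varepsilon_{k}\hat{g}_{1}(x_{k}, B_{k})(1 + c_{2}\sqrt{1/b - 1/m})$). Условие $\gamma\geq\frac{1}{L_{\hat{F}}}\sqrt{\nu\left(\frac{L}{2} - c_{3}\sqrt{1/b - 1/m}\right)}$ ровно гарантирует $\theta\leq 1$, то есть корректность геометрического сжатия. Разворачивая рекуррентность и оценивая $\delta_{i}$ сверху через $\max_{i}\{\varepsilon_{i}\hat{g}_{1}(x_{i}, B_{i})\}$ и сумму геометрической прогрессии $\sum_{j\geq 0}(1 - \theta)^{j} = 1/\theta$, а также пользуясь $1 - \theta\leq e^{-\theta}$, приходим в точности к заявленной оценке с множителем $\frac{(\gamma L_{\hat{F}})^{2}}{\nu}\left(\frac{L}{2} - c_{3}\sqrt{1/b - 1/m}\right)^{-1}$ перед шумовым слагаемым и экспонентой $\exp\left(-\frac{k\nu}{(\gamma L_{\hat{F}})^{2}}\left(\frac{L}{2} - c_{3}\sqrt{1/b - 1/m}\right)\right)$ перед начальным зазором $\mathbb{E}[\hat{f}_{2}(x_{0}) - \hat{f}_{2}^{*}]$.

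Отдельно нужно аккуратно обосновать оценки на диапазон $b$: нижняя граница $b\geq\min\{m, \lceil 4c_{3}^{2}m/(L^{2}m + 4c_{3}^{2})\rceil + 1\}$ — это просто переформулировка неравенства $L > 2c_{3}\sqrt{1/b - 1/m}$, разрешённого относительно $b$, и она наследуется из теоремы \ref{th:gen_stoch_sublin_conv_main} без изменений. Главным техническим препятствием я ожидаю именно чистое извлечение однашагового неравенства спуска из доказательства теоремы \ref{th:gen_stoch_sublin_conv_main}: нужно проследить, что предположение \ref{as:bounded_variance_growth} применяется к $|\hat{g}_{2}(x_{k + 1}, B_{k}) - \hat{f}_{2}(x_{k + 1})|$ именно с тем $x_{k + 1}$, который выдаёт правило \eqref{eq:stoch_approx_general_update_rule}, и что связь между фактическим шагом $x_{k + 1} - x_{k}$ и «идеальным» проксимальным шагом $\hat{T}_{\ast}(x_{k}, B_{k}) - x_{k}$ (с учётом погрешности $\varepsilon_{k}$) контролируется через неравенство $\hat{g}_{1}(x_{k}, B_{k}) - \hat{\psi}_{x_{k}, L_{k}, \tau_{k}}(x_{k + 1}, B_{k})\geq 0$ из \eqref{eq:stoch_approx_general_update_rule} и сильную выпуклость $\hat{\psi}$ по первому аргументу. После того как это однашаговое неравенство зафиксировано, переход к линейной сходимости через PL-условие уже стандартен и сводится к развёртыванию геометрической рекуррентности.
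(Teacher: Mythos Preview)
Your proposal is correct and follows essentially the same route as the paper's proof: extract the one-step descent inequality from the argument of theorem~\ref{th:gen_stoch_sublin_conv_main} (multiply by $\hat{g}_{1}(x_{k},B_{k})$, pass to $\hat{f}_{2}$ via assumption~\ref{as:bounded_variance_growth}), apply the proximal PL condition~\ref{as:prox_PL_condition} to obtain a geometric recurrence in $\mathbb{E}[\hat{f}_{2}(x_{k}) - \hat{f}_{2}^{*}]$, and unroll it with $1 - \theta \le e^{-\theta}$ and $\sum_{j}(1-\theta)^{j} = 1/\theta$. The paper organises the bookkeeping slightly differently (it adds $\hat{f}_{2}(x_{k+1}) - \hat{f}_{2}^{*}$ to both sides before applying~\ref{as:bounded_variance_growth}, then~\ref{as:prox_PL_condition}), but the substance and all the technical conditions on $b$, $L$, $\tilde{\gamma}$, $\gamma$ arise exactly as you describe.
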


В случае $0\leq\varepsilon_{k}\leq\frac{\varepsilon}{\hat{g}_{1}(x_{k}, B_{k})}$ для теоремы \ref{th:gen_stoch_lin_conv} условие сходимости к уровню $\epsilon^{2} > 0$ функции $\hat{f}_{2}(x) - \hat{f}_{2}^{*}$ в терминах среднего выглядит следующим образом:
\begin{equation*}
    \begin{cases}
        \exp\left(-\frac{k\nu}{\left(\gamma L_{\hat{F}}\right)^{2}}\left(\frac{L}{2} - c_{3}\sqrt{\frac{1}{b} - \frac{1}{m}}\right)\right)\mathbb{E}\left[\hat{g}_{2}(x_{0}, B_{0})\right]\leq r_{1}\epsilon^{2},~r_{1}\in(0, 1);\\[10pt]
        \frac{\left(\gamma L_{\hat{F}}\right)^{2}}{\nu}\left(\frac{L}{2} - c_{3}\sqrt{\frac{1}{b} - \frac{1}{m}}\right)^{-1}\left(1 + c_{2}\sqrt{\frac{1}{b} - \frac{1}{m}}\right)\varepsilon\leq r_{2}\epsilon^{2},~r_{2}\in(0, 1);\\[10pt]
        \frac{\left(\gamma L_{\hat{F}}\right)^{2}}{\nu}\left(\frac{L}{2} - c_{3}\sqrt{\frac{1}{b} - \frac{1}{m}}\right)^{-1}c_{1}\sqrt{\frac{1}{b} - \frac{1}{m}}\leq r_{3}\epsilon^{2},~r_{3}\in(0, 1);\\[5pt]
        r_{1} + r_{2} + r_{3} = 1.
    \end{cases}
\end{equation*}
Неравенства выше определяют минимальное количество итераций, максимальное значение $\varepsilon$, минимальный размер батча:
\begin{equation}\label{eq:gen_lin_conv_cond}
    \begin{cases}
        k = \left\lceil\frac{\left(\gamma L_{\hat{F}}\right)^{2}}{\nu}\left(\frac{L}{2} - c_{3}\sqrt{\frac{1}{b} - \frac{1}{m}}\right)^{-1}\ln\left(\frac{\mathbb{E}\left[\hat{g}_{2}(x_{0}, B_{0})\right]}{r_{1}\epsilon^{2}}\right)\right\rceil,~r_{1}\in(0, 1);\\
        \varepsilon = \frac{\nu r_{2}\epsilon^{2}}{\left(\gamma L_{\hat{F}}\right)^{2}}\left(\frac{L}{2} - c_{3}\sqrt{\frac{1}{b} - \frac{1}{m}}\right)\left(1 + c_{2}\sqrt{\frac{1}{b} - \frac{1}{m}}\right)^{-1},~r_{2}\in(0, 1);\\
        b = \min\left\{m,~\left\lceil\frac{4c_{3}^{2}m\left(\frac{c_{1}}{c_{3}\nu r_{3}}\left(\frac{\gamma L_{\hat{F}}}{\epsilon}\right)^{2} + 1\right)^{2}}{L^{2}m + 4c_{3}^{2}\left(\frac{c_{1}}{c_{3}\nu r_{3}}\left(\frac{\gamma L_{\hat{F}}}{\epsilon}\right)^{2} + 1\right)^{2}}\right\rceil\right\},~r_{3}\in(0, 1);\\
        r_{1} + r_{2} + r_{3} = 1.
    \end{cases}
\end{equation}
Система неравенств \eqref{eq:gen_lin_conv_cond} ограничивает среднее значение $\hat{f}_{2}(x_{k})$:
\begin{equation}\label{eq:th_stoch_f2_val}
    \begin{aligned}
        &\mathbb{E}\left[\hat{f}_{2}(x_{k}) - \hat{f}_{2}^{*}\right]\leq\epsilon^{2}.
    \end{aligned}
\end{equation}
Как и в условии \eqref{eq:gen_sublin_conv_cond} увеличение соотношения $\frac{L}{\gamma L_{\hat{F}}}\in\left(0,~1\right]$ в \eqref{eq:gen_lin_conv_cond} приводит к ускорению сходимости, ослаблению требований на точность вычисления $x_{k + 1}$ и расширению отрезка значений размера батча. В полученной оценке особое место занимает случай $b = m$, в котором оценка стохастического метода становится следующей оценкой детерминированного метода:
\begin{equation*}
    \begin{aligned}
        &\frac{2\left(\gamma L_{\hat{F}}\right)^{2}}{\nu L}\left(\max\limits_{i\in\overline{0, k - 1}}\left\{\varepsilon_{i}\hat{f}_{1}(x_{i})\right\}\right) + \exp\left(-\frac{k\nu L}{2\left(\gamma L_{\hat{F}}\right)^{2}}\right)\left(\hat{f}_{2}(x_{0}) - \hat{f}_{2}^{*}\right) + \hat{f}_{2}^{*}\geq\hat{f}_{2}(x_{k}).
    \end{aligned}
\end{equation*}
Предположив точное вычисление $x_{k + 1}$ ($\varepsilon_{k} = 0$), в случае ограниченных по предположениям \ref{as:2} и \ref{as:3} функционалов получаем явное выражение $\nu$ \eqref{eq:nu_value} (в рамках предположения \ref{as:5}) и уточнение оценки выше:
\begin{equation*}
    \begin{aligned}
        &\exp\left(-\frac{k\mu L}{2}\left(\frac{1}{M_{\hat{G}}^{2} + \max\left\{\tilde{\gamma}L_{\hat{F}}P_{\hat{g}_{1}},~\gamma L_{\hat{F}}\right\}}\right)^{2}\right)\left(\hat{f}_{2}(x_{0}) - \hat{f}_{2}^{*}\right) + \hat{f}_{2}^{*}\geq\hat{f}_{2}(x_{k}).\\
    \end{aligned}
\end{equation*}
А так как в \eqref{eq:nu_value} и в доказательстве оценки сходимости из теоремы \ref{th:gen_stoch_lin_conv} значение $\hat{f}_{2}^{*}$ возможно заменить на произвольное неотрицательное число, то предположение \ref{as:5} для ограниченных функционалов означает разрешимость системы уравнений \eqref{eq:smooth_system}:
\begin{equation*}
    \begin{aligned}
        &\exp\left(-\frac{k\mu L}{2}\left(\frac{1}{M_{\hat{G}}^{2} + \max\left\{\tilde{\gamma}L_{\hat{F}}P_{\hat{g}_{1}},~\gamma L_{\hat{F}}\right\}}\right)^{2}\right)\hat{f}_{2}(x_{0})\geq\hat{f}_{2}(x_{k}),
    \end{aligned}
\end{equation*}
которая обычно выполняется при $m\leq n,~\dim(E_{2})\leq\dim(E_{1})$.

Согласно выводам подраздела \ref{subsec:approximate_step}, схема \ref{alg:gen_stoch_unbounded_gnm} также может быть использована и для ограниченных по предположениям \ref{as:2} и \ref{as:3} функционалов. Более того, можно для данной схемы доказать соответствующие варианты теорем \ref{th:3}, \ref{th:4}. В схемах доказательства нижняя оценка на значения $L_{k}$ не изменится, так как $L\leq\max\left\{L,~\frac{L}{\tilde{\tau}_{k}}\right\}$, зато изменится верхняя оценка на $L_{k}$ и в оценках сходимости произойдёт следующая замена:
\begin{equation*}
    \begin{aligned}
        &\hat{g}_{1}(x_{k}, B_{k})L_{k}\leq\max\left\{\tilde{\gamma}P_{\hat{g}_{1}}L_{\hat{F}},\gamma L_{\hat{F}}\right\}\text{ вместо }\hat{g}_{1}(x_{k}, B_{k})L_{k}\leq\gamma P_{\hat{g}_{1}}L_{\hat{F}}.
    \end{aligned}
\end{equation*}

По аналогии с детерминированным случаем можно ввести адаптивную настройку $\tau_{k}$ на каждой итерации, одна из возможных реализаций которой представлена на схеме \ref{alg:gen_stoch_flex_gnm}.
\RestyleAlgo{boxruled}
\begin{algorithm}[!ht]{}
\caption{\textbf{Общий метод трёх стохастических квадратов с неточным проксимальным отображением и адаптивным поиском $\tau$}}
\label{alg:gen_stoch_flex_gnm}
\textbf{Вход:}
    \begin{equation*}
        \begin{cases}
            x_{0}\in E_{1}\text{ --- начальное приближение},~x_{-1} = x_{0};\\
            \mathcal{E}(\cdot)\text{ --- функция погрешности подбора }\tau;\\
            N\in\mathbb{N}\text{ --- количество итераций метода};\\
            \gamma\geq\tilde{\gamma}\geq1\text{ --- факторы верхней границы поиска }L_{\hat{F}};\\
            L\text{ --- оценка локальной постоянной Липшица},~L\in(0,~\tilde{\gamma} L_{\hat{F}}];\\
            \mathcal{B}\text{ --- выборка функций},~b\in\overline{1, m}\text{ --- размер $B_{k}\subseteq\mathcal{B},~k\in\mathbb{Z}_{+}$};\\
            \hat{\mathcal{X}}(\cdot)\text{ --- отображение, аппроксимирующее }\hat{T}_{L_{k}, \tau_{k}}(x_{k}, B_{k}).
        \end{cases}
    \end{equation*}
    \vspace{0.2cm}
    \textbf{Повторять для $k = 0, 1,~\dots, N - 1$:}
    \begin{itemize}
        \item[1.] сэмплировать батч $B_{k}$ из $\mathcal{B}$ размера $b$;
        \item[2.] определить $\varepsilon_{k} = \mathcal{E}(k, x_{k}, x_{k - 1}, B_{k})$, $L_{0} = \max\left\{L, \frac{L}{\hat{g}_{1}(x_{0}, B_{0})}\right\}$;
        \item[3.] вычислить $\tau_{k}^{*} > 0$, для которого выполнено $\hat{g}_{1}(x_{k}, B_{k})\geq\hat{\psi}_{x_{k}, L_{k}, \tau_{k}^{*}}(\hat{\mathcal{X}}(x_{k}, B_{k}, L_{k}, \tau_{k}^{*}), B_{k})$ и $\hat{\psi}_{x_{k}, L_{k}, \tau_{k}^{*}}(\hat{\mathcal{X}}(x_{k}, B_{k}, L_{k}, \tau_{k}^{*}), B_{k}) - \hat{\psi}_{x_{k}, L_{k}, \hat{\mathcal{T}}_{L_{k}}(x_{k}, B_{k})}(\hat{T}_{L_{k}, \hat{\mathcal{T}}_{L_{k}}(x_{k}, B_{k})}(x_{k}, B_{k}), B_{k})\leq\varepsilon_{k}$;
        \item[4.] вычислить $x_{k + 1} = \hat{\mathcal{X}}(x_{k}, B_{k}, L_{k}, \tau_{k}^{*})$;
        \item[5.] если $\hat{g}_{1}(x_{k + 1}, B_{k}) > \hat{\psi}_{x_{k}, L_{k}, \tau_{k}^{*}}(x_{k + 1}, B_{k})$, то положить $L_{k} := \min\left\{2L_{k},~\max\left\{\tilde{\gamma}L_{\hat{F}},~\frac{\gamma L_{\hat{F}}}{\hat{g}_{1}(x_{k}, B_{k})}\right\}\right\}$ и вернуться к пункту 3;
        \item[6.] $L_{k + 1} = \max\left\{\frac{L_{k}}{2},~\max\left\{L,~\frac{L}{\hat{g}_{1}(x_{k}, B_{k})}\right\}\right\}$.
    \end{itemize}
    \vspace{0.2cm}
    
    \textbf{Выход:} $x_{N}$.
\end{algorithm}
В данной схеме похожим на детерминированный случай способом введено оптимальное значение $\tau$:
$$\hat{\mathcal{T}}_{L}(x, B) = \argmin\limits_{\tau > 0}\left\{\hat{\psi}_{x, L, \tau}(\hat{T}_{L, \tau}(x, B), B)\right\},~x\in E_{1},~L > 0,~B\subseteq\mathcal{B},$$
которое вытекает из свойства строгой выпуклости по $\tau$ стохастической локальной модели и позволяет получить представление точки минимума по $y$ при $L\geq L_{\hat{F}}$ для негладкой локальной модели:
\begin{equation*}
    \begin{aligned}
        \hat{g}_{1}(\hat{T}_{L, \hat{\mathcal{T}}_{L}(x, B)}(x, B), B)&\leq\min\limits_{y\in E_{1}}\left\{\frac{L}{2}\|y - x\|^{2} + \left\|\hat{G}(x, B) + \hat{G}^{'}(x, B)(y - x)\right\|\right\} =\\
        &= \min\limits_{y\in E_{1}}\min\limits_{\tau > 0}\left\{\frac{L}{2}\|y - x\|^{2} + \frac{\tau}{2} + \frac{1}{2\tau}\left\|\hat{G}(x, B) + \hat{G}^{'}(x, B)(y - x)\right\|^{2}\right\} =\\
        &= \min\limits_{\tau > 0}\left\{\frac{\tau}{2} + \min\limits_{y\in E_{1}}\left\{\frac{L}{2}\|y - x\|^{2} + \frac{1}{2\tau}\left\|\hat{G}(x, B) + \hat{G}^{'}(x, B)(y - x)\right\|^{2}\right\}\right\} =\\
        &= \min\limits_{\tau > 0}\left\{\hat{\psi}_{x, L, \tau}(\hat{T}_{L, \tau}(x, B), B)\right\}\Rightarrow\\
        &\Rightarrow \hat{T}_{L, \hat{\mathcal{T}}_{L}(x, B)}(x, B)\in\Argmin\limits_{y\in E_{1}}\left\{\frac{L}{2}\|y - x\|^{2} + \left\|\hat{G}(x, B) + \hat{G}^{'}(x, B)(y - x)\right\|\right\}.
    \end{aligned}
\end{equation*}
Как следствие, цепочка неравенств ниже позволяет утверждать справедливость теорем \ref{th:3}, \ref{th:4}, \ref{th:5}, \ref{th:6}, \ref{th:gen_stoch_sublin_conv}, \ref{th:gen_stoch_lin_conv} при выборе $\tau_{k} \approx \hat{\mathcal{T}}_{L_{k}}(x_{k}, B_{k})$ в условии и замене схем \ref{alg:gen_stoch_gnm}, \ref{alg:gen_stoch_unbounded_gnm} на схему \ref{alg:gen_stoch_flex_gnm}, как и в детерминированном случае по аналогии с неравенствами \eqref{eq:local_models_orders} и \eqref{eq:local_diffs_order}:
\begin{equation*}
    \begin{aligned}
        \hat{g}_{1}(x_{k}, B_{k}) - \hat{g}_{1}(\hat{T}_{L_{k}, \hat{\mathcal{T}}_{L_{k}}(x_{k}, B_{k})}(x_{k}, B_{k}), B_{k})&\geq\hat{g}_{1}(x_{k}, B_{k}) - \hat{\psi}_{x_{k}, L_{k}, \hat{\mathcal{T}}_{L_{k}}(x_{k}, B_{k})}(\hat{T}_{L_{k}, \hat{\mathcal{T}}_{L_{k}}(x_{k}, B_{k})}(x_{k}, B_{k}), B_{k})\geq\\
        &\geq\hat{g}_{1}(x_{k}, B_{k}) - \hat{\psi}_{x_{k}, L_{k}, \hat{g}_{1}(x_{k}, B_{k})}(\hat{T}_{L_{k}, \hat{g}_{1}(x_{k}, B_{k})}(x_{k}, B_{k}), B_{k})\geq0.
    \end{aligned}
\end{equation*}
В этой цепочке неравенств последовательность $\left\{x_{k}\right\}_{k\in\mathbb{Z}_{+}}$ порождена по схеме \ref{alg:gen_stoch_flex_gnm}. В данной схеме процедура получения $x_{k + 1}$ по известным $(x_{k}, B_{k}, L_{k}, \tau_{k})$ на каждой итерации $k\in\mathbb{Z}_{+}$ обозначена через отображение
$$\hat{\mathcal{X}}:E_{1}\times\sigma(\mathcal{B})\times\mathbb{R}_{++}^{2}\rightarrow E_{1},$$
где $\sigma(\mathcal{B})$ --- сигма--алгебра конечного набора функций $F_{i},~i\in\overline{1,~m}$, а процесс оптимизации с адаптивным подбором $\tau_{k}$ на каждом шаге $k$ заключается в следующем:
\begin{itemize}
    \itemsep=-4pt
    \item[1.] Вычислить $\tau_{k}^{*}$ --- приближение оптимального значения $\hat{\mathcal{T}}_{L_{k}}(x_{k} ,B_{k})$, $\tau_{k}^{*}$ может быть получено приближённо из задачи поиска элемента множества $\Argmin\limits_{\tau > 0}\left\{\hat{\psi}_{x_{k}, L_{k}, \tau}(\hat{\mathcal{X}}(x_{k}, B_{k}, L_{k}, \tau), B_{k})\right\}$;
    \item[2.] Получить значение $x_{k + 1} = \hat{\mathcal{X}}(x_{k}, B_{k}, L_{k}, \tau_{k}^{*})$ как приближение $\hat{T}_{L_{k}, \tau_{k}^{*}}(x_{k}, B_{k})$.
\end{itemize}
Как и в детерминированном случае, минимизация по $\tau_{k}$ в схеме \ref{alg:gen_stoch_flex_gnm} может быть такой же трудоёмкой процедурой. И при использовании правила \eqref{eq:stoch_direct_update_rule} в качестве представления $\hat{\mathcal{X}}$ значение\\$\hat{\psi}_{x_{k}, L_{k}, \tau}(\hat{\mathcal{X}}(x_{k}, B_{k}, L_{k}, \tau), B_{k})$ выражается явно:
\begin{equation*}
    \begin{aligned}
        \hat{\psi}_{x_{k}, L_{k}, \tau}(&\hat{\mathcal{X}}(x_{k}, B_{k}, L_{k}, \tau), B_{k}) = \frac{\tau}{2} + \frac{\hat{g}_{2}(x_{k}, B_{k})}{2\tau} -\\
        &- \frac{\eta_{k}(2 - \eta_{k})}{2\tau}\left\langle\left(\hat{G}^{'}(x_{k}, B_{k})^{*}\hat{G}^{'}(x_{k}, B_{k}) + \tau L_{k}I_{n}\right)^{-1}\hat{G}^{'}(x_{k}, B_{k})^{*}\hat{G}(x_{k}, B_{k}),~\hat{G}^{'}(x_{k}, B_{k})^{*}\hat{G}(x_{k}, B_{k})\right\rangle.
    \end{aligned}
\end{equation*}
Установка $\eta_{k} = 1$ делает функцию $\hat{\psi}_{x_{k}, L_{k}, \tau}(\hat{\mathcal{X}}(x_{k}, B_{k}, L_{k}, \tau), B_{k})$ выпуклой по $\tau$ по тем же причинам, что и в детерминированном случае, позволяя эффективно вычислять приближение точки минимума $\tau_{k}^{*}$ с помощью стандартных методов выпуклой оптимизации.

\section{Условие слабого роста в методе Гаусса--Ньютона}\label{sec:stoch_modified_gnm_weak_growth}

Специфика решаемой задачи \eqref{eq:main_opt_problem} позволяет сверху оценить квадрат нормы градиента функции $\hat{f}_{2}$ с помощью значения функции $\hat{f}_{2}$:
$$\left\|\nabla\hat{f}_{2}(x)\right\|^{2} = \left\|2\hat{F}^{'}(x)^{*}\hat{F}(x)\right\|^{2}\leq4\left\|\hat{F}^{'}(x)\right\|^{2}\hat{f}_{2}(x).$$
Если предположить ограниченность матрицы Якоби в стохастическом режиме оптимизации согласно предположению \ref{as:2}, то для функции $\hat{g}_{2}$ квадрат нормы градиента можно представить как $\operatorname{O}\left(\hat{g}_{2}(x, B)\right)$:
$$\left\|\nabla_{x}\hat{g}_{2}(x, B)\right\|^{2} = \left\|2\hat{G}^{'}(x, B)^{*}\hat{G}(x, B)\right\|^{2}\leq4 M_{\hat{G}}^{2}\hat{g}_{2}(x, B).$$
Полученное неравенство называется условием слабого роста. В текущем разделе рассматривается соотношение размерностей задачи \eqref{eq:smooth_system}, типично возникающее в системах нелинейных уравнений:
$$\dim(E_{1})\geq\dim(E_{2}),~n\geq m,$$
то есть когда параметров в задаче больше, чем условий, что в терминах теории статистического обучения соответствует перепараметризованным моделям. Дополнительно к условию слабого роста предположим доминирование квадрата нормы градиента над значением функции по предположению \ref{as:5}:
$$\left\|\nabla_{x}\hat{g}_{2}(x, B)\right\|^{2} = 4\left\|\hat{G}^{'}(x, B)^{*}\hat{G}(x, B)\right\|^{2}\geq4\mu\hat{g}_{2}(x, B)$$
по свойству сингулярного спектра матрицы $\hat{G}^{'}(x, B)^{*}$. Таким образом, в текущих условиях квадрат нормы градиента ограничен значением функции с двух сторон:
$$4\mu\hat{g}_{2}(x, B)\leq\left\|\nabla_{x}\hat{g}_{2}(x, B)\right\|^{2}\leq4 M_{\hat{G}}^{2}\hat{g}_{2}(x, B).$$
При усреднении по $B\subseteq\mathcal{B}$ границы уже зависят от $\hat{f}_{2}$:
\begin{equation}\label{eq:bounded_stochastic_gradients}
    4\mu\hat{f}_{2}(x)\leq\mathbb{E}_{B}\left[\left\|\nabla_{x}\hat{g}_{2}(x, B)\right\|^{2}\right]\leq4 M_{\hat{G}}^{2}\hat{f}_{2}(x).
\end{equation}
Это означает тождественное равенство нулю сэмплированных градиентов $\nabla_{x^{*}}\hat{g}_{2}(x^{*}, B) = \mathbf{0}_{n},~B\subseteq\mathcal{B}$ в точке минимума $x^{*}:~F(x^{*}) = \mathbf{0}_{m}$. Более того, одновременное выполнение условия слабого роста и условия Поляка--Лоясиевича в случае $m\leq n$ приводит к выполнению \textit{условия сильного роста}, заключающегося в доминировании квадрата нормы градиента функции $\hat{f}_{2}$ над средним квадратом нормы стохастической оценки градиента функции $\hat{f}_{2}$:
\begin{equation*}
    \begin{aligned}
        \left\|\nabla\hat{f}_{2}(x)\right\|^{2} &= \left\|2\hat{F}^{'}(x)^{*}\hat{F}(x)\right\|^{2}\geq\left\{\text{предположение \ref{as:5}}\right\}\geq4\mu\hat{f}_{2}(x)\Rightarrow\left\{\text{\eqref{eq:bounded_stochastic_gradients}}\right\}\Rightarrow\\
        &\Rightarrow\mathbb{E}_{B}\left[\left\|\nabla_{x}\hat{g}_{2}(x, B)\right\|^{2}\right]\leq \frac{M_{\hat{G}}^{2}}{\mu}\left\|\nabla\hat{f}_{2}(x)\right\|^{2}.
    \end{aligned}
\end{equation*}
Из такого условия сильного роста следует, что любая стационарная точка $x^{*}\in E_{1}:~\left\|\nabla\hat{f}_{2}(x^{*})\right\| = 0$ является также точкой глобального минимума.

В данной работе дополнительно установлено, что выполнение условия слабого роста вместе с условием Поляка--Лоясиевича позволяет решить задачу \eqref{eq:main_opt_problem} с любой наперёд заданной точностью и при любом размере батча сэмплированных на каждой итерации функций, используя метод Гаусса--Ньютона по схеме \ref{alg:gen_double_stoch_gnm} с правилом \eqref{eq:double_stoch_direct_update_rule}. В самой схеме \ref{alg:gen_double_stoch_gnm} значение $l_{\hat{g}_{2}}$ заменяется на $l_{\hat{f}_{2}}$, а батчи $B_{k}$ и $\tilde{B}_{k}$ принципиально независимо сэмплируются при оценке сходимости по теореме \ref{th:weak_growth_condition}, при этом пункты 5 и 6 в схеме \ref{alg:gen_double_stoch_gnm} пропускаются, так как условие теоремы \ref{th:weak_growth_condition} означает выбор $l = l_{\hat{g}_{2}}$ и $\gamma = 1$.
\begin{theorem}\label{th:weak_growth_condition_main}
Пусть выполнены предположения \ref{as:2}, \ref{as:3}, \ref{as:5}, \ref{as:jacob_smoothness}. Рассмотрим метод Гаусса--Ньютона со схемой реализации \ref{alg:gen_double_stoch_gnm}, в котором последовательность $\{x_{k}\}_{k\in\mathbb{Z}_{+}}$ вычисляется по правилу \eqref{eq:double_stoch_direct_update_rule} с $\tilde{\tau}_{k}\geq\tilde{\tau} > 0$,\\$L_{k} \geq L > 0$ и $l_{k} \equiv l_{\hat{f}_{2}} = 2\left(L_{\hat{F}}P_{\hat{f}_{1}} + M_{\hat{F}}^{2}\right)$. Тогда для последовательности
\begin{equation*}
    \begin{aligned}
        &\eta_{k} = \frac{\mu\left(\tilde{\tau}_{k}L_{k}\right)^{2}}{\left(M_{\hat{G}}^{2} + \tilde{\tau}_{k}L_{k}\right)\left(L_{\hat{F}}P_{\hat{f}_{1}} + M_{\hat{F}}^{2}\right)M_{\hat{G}}^{2}},~k\in\mathbb{Z}_{+}
    \end{aligned}
\end{equation*}
    верна следующая оценка
    \begin{equation*}
        \begin{aligned}
            &\mathbb{E}\left[\hat{f}_{2}(x_{k})\right]\leq\mathbb{E}\left[\hat{f}_{2}(x_{0})\right]\exp\left(-\frac{k}{\left(L_{\hat{F}}P_{\hat{f}_{1}} + M_{\hat{F}}^{2}\right)M_{\hat{G}}^{2}}\left(\frac{\mu\tilde{\tau}L}{M_{\hat{G}}^{2} + \tilde{\tau}L}\right)^{2}\right),~k\in\mathbb{Z}_{+}.
        \end{aligned}
    \end{equation*}
    В случае $\eta_{k} = 1,~k\in\mathbb{Z}_{+}$ оценка сходимости при использовании правила \eqref{eq:double_stoch_direct_update_rule} в условиях данной теоремы не лучше
    \begin{equation*}
        \begin{cases}
            \mathbb{E}\left[\hat{f}_{2}(x_{k})\right]&\leq\mathbb{E}\left[\hat{f}_{2}(x_{0})\right]\exp\left(-\frac{k\mu^{2}}{M_{\hat{G}}^{2}}\left(\frac{2}{\mu + \left(L_{\hat{F}}P_{\hat{f}_{1}} + M_{\hat{F}}^{2}\right)c} - \frac{1}{\left(L_{\hat{F}}P_{\hat{f}_{1}} + M_{\hat{F}}^{2}\right)c^{2}}\right)\right);\\[10pt]
            &c \overset{\operatorname{def}}{=} \frac{1}{3}\left(1 + 7\sqrt[3]{\frac{2}{47 + 3\sqrt{93}}} + \sqrt[3]{\frac{47 + 3\sqrt{93}}{2}}\right),~k\in\mathbb{Z}_{+}.
        \end{cases}
    \end{equation*}
    Оператор математического ожидания $\mathbb{E}\left[\cdot\right]$ усредняет по всей случайности процесса оптимизации.    
\end{theorem}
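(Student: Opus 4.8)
The plan is to establish a one-step contraction for $\mathbb{E}\left[\hat{f}_2\right]$ and then telescope it. First I would use the smoothness of $\hat{f}_2$: under предположения \ref{as:2}, \ref{as:3} and \ref{as:jacob_smoothness} the gradient $\nabla\hat{f}_2$ is Lipschitz with constant $l_{\hat{f}_2}=2\left(L_{\hat{F}}P_{\hat{f}_1}+M_{\hat{F}}^2\right)$ (the full-sample analogue of леммы \ref{lm:aux_g2_lipschitz_gradient}), which gives the descent inequality
\begin{equation*}
    \hat{f}_2(x_{k+1}) \leq \hat{f}_2(x_k) + \left\langle\nabla\hat{f}_2(x_k),\,x_{k+1}-x_k\right\rangle + \frac{l_{\hat{f}_2}}{2}\|x_{k+1}-x_k\|^2.
\end{equation*}
Writing the doubly stochastic step \eqref{eq:double_stoch_direct_update_rule} as $x_{k+1}-x_k=-\eta_k H_k^{-1}\hat{G}^{'}(x_k,B_k)^{*}\hat{G}(x_k,B_k)$ with $H_k=\hat{G}^{'}(x_k,\tilde{B}_k)^{*}\hat{G}^{'}(x_k,\tilde{B}_k)+\tilde{\tau}_k L_k I_n$, I would then average over the independently sampled batches $B_k$ and $\tilde{B}_k$ conditionally on $x_k$.

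The crucial step is the conditional averaging over $B_k$. Since $\nabla_x\hat{g}_2(x,B)=2\hat{G}^{'}(x,B)^{*}\hat{G}(x,B)$ and $\hat{f}_2=\mathbb{E}_B\left[\hat{g}_2(\cdot,B)\right]$, we have $\mathbb{E}_{B_k}\left[\hat{G}^{'}(x_k,B_k)^{*}\hat{G}(x_k,B_k)\right]=\tfrac12\nabla\hat{f}_2(x_k)$, so the cross term becomes $-\tfrac{\eta_k}{2}\left\langle\nabla\hat{f}_2(x_k),H_k^{-1}\nabla\hat{f}_2(x_k)\right\rangle$, bounded below by $\tfrac{1}{M_{\hat{G}}^2+\tilde{\tau}_k L_k}\|\nabla\hat{f}_2(x_k)\|^2$ because $H_k\preceq(M_{\hat{G}}^2+\tilde{\tau}_k L_k)I_n$ by предположение \ref{as:2}. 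For the quadratic term I use $\|H_k^{-1}\|\leq(\tilde{\tau}_k L_k)^{-1}$ together with the weak growth bound $\|\hat{G}^{'}(x_k,B_k)^{*}\hat{G}(x_k,B_k)\|^2\leq M_{\hat{G}}^2\hat{g}_2(x_k,B_k)$ and $\mathbb{E}_{B_k}\left[\hat{g}_2(x_k,B_k)\right]=\hat{f}_2(x_k)$. Here weak growth turns the second moment of the stochastic gradient directly into $M_{\hat{G}}^2\hat{f}_2(x_k)$ with no residual variance term, so no $\sqrt{1/b-1/m}$ factor survives — this is precisely why the estimate holds for any размер батча. Combining the two terms and applying предположение \ref{as:5} in the form $\|\nabla\hat{f}_2(x_k)\|^2\geq4\mu\hat{f}_2(x_k)$ yields
\begin{equation*}
    \mathbb{E}\left[\hat{f}_2(x_{k+1})\mid x_k\right] \leq \hat{f}_2(x_k)\left(1 - \frac{2\mu\eta_k}{M_{\hat{G}}^2+\tilde{\tau}_k L_k} + \frac{l_{\hat{f}_2}M_{\hat{G}}^2\eta_k^2}{2(\tilde{\tau}_k L_k)^2}\right).
\end{equation*}

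The bracket is a convex parabola in $\eta_k$, and the step size stated in the theorem is exactly its vertex $\eta_k=\mu(\tilde{\tau}_k L_k)^2/\left[(M_{\hat{G}}^2+\tilde{\tau}_k L_k)\left(L_{\hat{F}}P_{\hat{f}_1}+M_{\hat{F}}^2\right)M_{\hat{G}}^2\right]$, at which the bracket equals $1-\tfrac{1}{\left(L_{\hat{F}}P_{\hat{f}_1}+M_{\hat{F}}^2\right)M_{\hat{G}}^2}\left(\tfrac{\mu\tilde{\tau}_k L_k}{M_{\hat{G}}^2+\tilde{\tau}_k L_k}\right)^2$. Because $t\mapsto t/(M_{\hat{G}}^2+t)$ is increasing and $\tilde{\tau}_k L_k\geq\tilde{\tau}L$, I replace $\tilde{\tau}_k L_k$ by $\tilde{\tau}L$, take total expectation, use $1-u\leq e^{-u}$ and telescope over $k$ to get the claimed linear rate. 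For the case $\eta_k=1$ I substitute $\eta_k=1$ into the same bracket and maximize the per-step decrease $h(t)=\tfrac{2\mu}{M_{\hat{G}}^2+t}-\tfrac{aM_{\hat{G}}^2}{t^2}$ in $t=\tilde{\tau}_k L_k$, where $a=L_{\hat{F}}P_{\hat{f}_1}+M_{\hat{F}}^2$; the first-order condition $\mu t^3=aM_{\hat{G}}^2(M_{\hat{G}}^2+t)^2$ is a cubic whose relevant dimensionless root $c$ solves $c^3-c^2-2c-1=0$ (the Cardano form written in the statement), and substituting the maximizer $t=M_{\hat{G}}^2ac/\mu$ produces the stated expression, which is no better than the tuned rate. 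The main obstacle is the careful decoupling of the two independent batches: the preconditioner depends on $\tilde{B}_k$ while the unbiased gradient estimate depends on $B_k$, and one must average in the right order so that the noise is controlled by weak growth rather than by a batch-size-dependent variance bound — it is exactly this decoupling that eliminates any dependence of the rate on $b$.
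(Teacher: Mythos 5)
Ваше предложение верно и по существу повторяет авторское доказательство теоремы \ref{th:weak_growth_condition}: та же оценка спуска через липшицев градиент $\hat{f}_{2}$ с константой $l_{\hat{f}_{2}} = 2\left(L_{\hat{F}}P_{\hat{f}_{1}} + M_{\hat{F}}^{2}\right)$, то же условное усреднение с расцеплением независимых батчей $B_{k}$ (несмещённый градиент) и $\tilde{B}_{k}$ (предобуславливатель), те же спектральные границы из предположения \ref{as:2}, то же использование слабого роста для второго момента и условия \ref{as:5} для перекрёстного члена, та же оптимизация параболы по $\eta_{k}$ с последующей монотонностью по $\tilde{\tau}_{k}L_{k}$ и телескопированием, и тот же анализ кубического уравнения при $\eta_{k} = 1$. Отличия чисто нотационные (ваше $H_{k}$ --- сама матрица, а не половина её обратной), вплоть до воспроизведения авторской нормировки корня $t^{*} = \left(L_{\hat{F}}P_{\hat{f}_{1}} + M_{\hat{F}}^{2}\right)M_{\hat{G}}^{2}c/\mu$.
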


В теореме \ref{th:weak_growth_condition} установлена линейная скорость сходимости при выборе настраиваемого шага $\eta_{k}$, и минимальное количество итераций, необходимое для достижения уровня $\epsilon > 0$ функции $\hat{f}_{1}$ в среднем асимптотически выражается как $\operatorname{O}\left(\ln\left(\frac{1}{\epsilon}\right)\right)$:
\begin{equation*}
    \begin{aligned}
        k = \left\lceil\frac{M_{\hat{G}}^{2}\left(L_{\hat{F}}P_{\hat{f}_{1}} + M_{\hat{F}}^{2}\right)}{\mu^{2}}\left(\frac{M_{\hat{G}}^{2}}{\tilde{\tau}L} + 1\right)^{2}\ln\left(\frac{\mathbb{E}\left[\hat{g}_{2}(x_{0}, B_{0})\right]}{\epsilon^{2}}\right)\right\rceil.
    \end{aligned}
\end{equation*}
Как и в теореме \ref{th:double_stoch_sublin_conv}, данная оценка демонстрирует оптимальность выбора $\tilde{\tau}L\rightarrow+\infty$, приближая шаг метода трёх стохастических квадратов к шагу метода стохастического градиентного спуска. Если же зафиксировать $\eta_{k} = 1$, то в доказательстве теоремы \ref{th:weak_growth_condition} вычислено оптимальное значение $\tilde{\tau}_{k}L_{k}$, при котором будет линейная сходимость, но уже более медленная, чем в случае стохастического градиентного спуска, со следующим количеством необходимых итераций для достижения уровня $\epsilon > 0$ в среднем:
\begin{equation*}
    \begin{cases}
        k = \left\lceil\frac{M_{\hat{G}}^{2}}{\mu^{2}}\left(\frac{2}{\mu + \left(L_{\hat{F}}P_{\hat{f}_{1}} + M_{\hat{F}}^{2}\right)c} - \frac{1}{\left(L_{\hat{F}}P_{\hat{f}_{1}} + M_{\hat{F}}^{2}\right)c^{2}}\right)^{-1}\ln\left(\frac{\mathbb{E}\left[\hat{g}_{2}(x_{0}, B_{0})\right]}{\epsilon^{2}}\right)\right\rceil;\\
        c = \frac{1}{3}\left(1 + 7\sqrt[3]{\frac{2}{47 + 3\sqrt{93}}} + \sqrt[3]{\frac{47 + 3\sqrt{93}}{2}}\right).
    \end{cases}
\end{equation*}
И в случае оптимального выбора $\eta_{k}$ и при $\eta_{k} = 1$ в роли главного ограничителя в применении теоремы \ref{th:weak_growth_condition} на практике выступает необходимость знать заранее постоянные $M_{\hat{G}}$, $P_{\hat{f}_{1}}$, $L_{\hat{F}}$ или их верхние оценки. Если же рассматривать метод Гаусса--Ньютона с правилами обновления \eqref{eq:stoch_direct_update_rule} и \eqref{eq:stoch_approx_general_update_rule} в схеме \ref{alg:gen_stoch_gnm}, то при неограниченном уменьшении $\epsilon\rightarrow 0$ в рамках предположений данной работы стохастический режим оптимизации перейдёт в детерминированный, то есть условие слабого роста для этих стратегий не позволяет гарантированно решить задачу с любой наперёд заданной точностью без использования всей генеральной совокупности функций $\mathcal{B}$.

Теорема \ref{th:weak_growth_condition} устанавливает в случае $m\leq n$ условия для получения наименьшей сложности решения задачи относительно количества вызовов оракула, так как в теореме \ref{th:weak_growth_condition} можно положить $b = \operatorname{O}\left(1\right)$ и $\tilde{b} = \operatorname{O}\left(1\right)$ в силу произвольности размеров батчей:

\begin{itemize}
    \item $bk = \min\left\{\operatorname{O}\left(\frac{m}{\epsilon^{2}}\right),~\operatorname{O}\left(\frac{1}{\epsilon^{6}}\right)\right\}$ согласно теореме \ref{th:3};
    \item $bk = \min\left\{\operatorname{O}\left(m\ln\left(\frac{1}{\epsilon}\right)\right),~\operatorname{O}\left(\frac{1}{\epsilon^{4}}\ln\left(\frac{1}{\epsilon}\right)\right)\right\}$ согласно теореме \ref{th:4};
    \item $\left(\tilde{b} + b\right)k = \operatorname{O}\left(\ln\left(\frac{1}{\epsilon}\right)\right)$ согласно теореме \ref{th:weak_growth_condition}.
\end{itemize}

При использовании схем \ref{alg:gen_stoch_unbounded_gnm} и \ref{alg:gen_stoch_flex_gnm} в рамках предположений \ref{as:jacob_smoothness}, \ref{as:bounded_variance_growth} и \ref{as:prox_PL_condition} для решения задач \eqref{eq:main_opt_problem}, в которых выполнено структурное ограничение из предположения \ref{as:bounded_variance_growth} $c_{1} = 0$, гипотетически существует возможность решить систему \eqref{eq:smooth_system} с помощью метода Гаусса--Ньютона с произвольной точностью полностью в режиме стохастической аппроксимации, согласно следствиям \ref{th:gen_stoch_sublin_conv_cor2} и \ref{th:gen_stoch_lin_conv_cor2}. Эта гипотеза дополнительно мотивируется примером \eqref{eq:bounded_g2_bounded_growth}, в котором значения постоянных из предположения \ref{as:bounded_variance_growth} не зависят от гиперпараметров $\tilde{\gamma}$ и $\gamma$.

\section{Особенности реализации на практике}\label{sec:practical_considerations}

\subsection{Эффективное вычисление производных}

При решении задачи \eqref{eq:main_opt_problem} с помощью процедуры неточного поиска приближения решения $x_{k + 1}$ на каждой итерации полезно достаточно быстро производить любой этап данной процедуры. В случае решения вспомогательной задачи
$$\argmin\limits_{y\in E_{1}}\left\{\psi_{x_{k}, L_{k}, \tau_{k}}(y)\right\}$$
градиентными методами оптимизации возможно затратить линейное от размерности задачи количество памяти при вычислении производной локальной модели по параметрам. Ниже представлен один из возможных способов осуществления такого эффективного вычисления значений $\psi_{x_{k}, L_{k}, \tau_{k}}(y)$ и $\nabla_{y}\psi_{x_{k}, L_{k}, \tau_{k}}(y)$:
\begin{equation*}
    \begin{aligned}
        \psi_{x_{k}, L_{k}, \tau_{k}}(y) &= \left(\frac{\tau_{k}}{2} + \frac{\hat{f}_{2}(x_{k})}{2\tau_{k}}\right) + \left\langle\frac{\hat{F}^{'}(x_{k})^{*}\hat{F}(x_{k})}{\tau_{k}},~y - x_{k}\right\rangle +
    \end{aligned}
\end{equation*}
\begin{equation*}
    \begin{aligned}
        &+ \frac{1}{2}\left\langle\left(\frac{\hat{F}^{'}(x_{k})^{*}\hat{F}^{'}(x_{k})}{\tau_{k}} + L_{k} I_{n}\right)(y - x_{k}),~y - x_{k}\right\rangle = \left\{\text{введём }\tilde{x} := x_{k},~\hat{x} := x_{k}\right\} =\\
        &= \left(\frac{\tau_{k}}{2} + \frac{\hat{f}_{2}(x_{k})}{2\tau_{k}}\right) + \frac{1}{\tau_{k}}\left\langle\nabla_{\tilde{x}}\left\langle\hat{F}(x_{k}),~\hat{F}(\tilde{x})\right\rangle,~y - x_{k}\right\rangle +\\
        &+ \frac{1}{2\tau_{k}}\left\langle\nabla_{\hat{x}}\left\langle\nabla_{\tilde{x}}\left\langle\hat{F}(\hat{x}),~\hat{F}(\tilde{x})\right\rangle,~y - x_{k}\right\rangle,~y - x_{k}\right\rangle + \frac{L_{k}}{2}\|y - x_{k}\|^{2}\Rightarrow\\
        &\Rightarrow\nabla_{y}\psi_{x_{k}, L_{k}, \tau_{k}}(y) = \frac{1}{\tau_{k}}\nabla_{\tilde{x}}\left\langle\hat{F}(x_{k}),~\hat{F}(\tilde{x})\right\rangle + \frac{1}{\tau_{k}}\nabla_{\hat{x}}\left\langle\nabla_{\tilde{x}}\left\langle\hat{F}(\hat{x}),~\hat{F}(\tilde{x})\right\rangle,~y - x_{k}\right\rangle + L(y - x_{k}).
    \end{aligned}
\end{equation*}
Сразу видно, в преобразованиях выше не требуется в явном виде хранить матрицу $\hat{F}^{'}(x_{k})$, что приводит к значительной экономии по памяти: $\operatorname{O}(m + n)$ хранимых элементов вместо $\operatorname{O}(mn + m + n)$. Само вычисление $\psi_{x_{k}, L_{k}, \tau_{k}}(y)$ и его производной состоит из суперпозиции произведений матрицы Якоби на вектор, реализуемое за $\operatorname{O}(m + n)$ операций вместо $\operatorname{O}(mn)$ операций прямого вычисления матричного произведения. Дальнейшая экономия времени вычисления заключается в представлении $\psi_{x_{k}, L_{k}, \tau_{k}}(y)$ через $\nabla_{y}\psi_{x_{k}, L_{k}, \tau_{k}}(y)$:
$$\psi_{x_{k}, L_{k}, \tau_{k}}(y) = \frac{\tau_{k}}{2} + \frac{\hat{f}_{2}(x_{k})}{2\tau_{k}} + \frac{1}{2}\left\langle\nabla_{y}\psi_{x_{k}, L_{k}, \tau_{k}}(y) + \frac{1}{\tau_{k}}\nabla_{\tilde{x}}\left\langle\hat{F}(x_{k}),~\hat{F}(\tilde{x})\right\rangle,~y - x_{k}\right\rangle.$$
Такие условия эффективного вычисления обычно выполнены в вычислительных системах с автоматическим дифференцированием, в которых при вычислении самого значения функции одновременно производится вычисление производной функции без хранения избыточного количества переменных \cite{Baydin2017}, применение неточного поиска с автоматическим дифференцированием тем более является оправданным, если на каждом шаге метода Гаусса--Ньютона количество итераций поиска $(x_{k + 1}, L_{k})$ не превышает $\min\left\{m ,~n\right\}$. Описанный выше способ эффективного вычисления производной локальной модели работает и в стохастическом случае, для локальной модели $\hat{\psi}_{x_{k}, L_{k}, \tau_{k}}(y, B_{k})$; аппарат автоматического дифференцирования непосредственно применим в схемах \ref{alg:gen_det_gnm}, \ref{alg:gen_det_flex_gnm}, \ref{alg:gen_stoch_flex_gnm} и в схемах \ref{alg:gen_stoch_gnm}, \ref{alg:gen_stoch_unbounded_gnm} со стратегией поиска $x_{k + 1}$ \eqref{eq:stoch_approx_general_update_rule}.

\subsection{Линейный поиск в локальной модели}

Если же решать задачу \eqref{eq:main_opt_problem} по схеме \ref{alg:gen_det_gnm}, \ref{alg:gen_stoch_gnm}, \ref{alg:gen_double_stoch_gnm}, \ref{alg:gen_stoch_unbounded_gnm} или даже по схемам \ref{alg:gen_det_flex_gnm} и \ref{alg:gen_stoch_flex_gnm} с использованием стратегий \eqref{eq:det_expl_update_rule}, \eqref{eq:stoch_direct_update_rule}, \eqref{eq:double_stoch_direct_update_rule}, то возникает вопрос об эффективности вычисления каждой итерации поиска подходящей оценки локальной постоянной Липшица. Наиболее дорогой операцией при вычислении $x_{k + 1}$ является обращение матрицы размера $n\times n$. Для наиболее оптимальной организации итерации необходимо рассмотреть два случая соотношения размеров задачи $\dim(E_{1}) = n$ и $\dim(E_{2}) = m$. В целях удобства выкладки представлены для детерминированного метода Гаусса--Ньютона, однако они непосредственно перекладываются на стохастический случай при замене локальной модели $\psi_{x_{k}, L_{k}, \tau_{k}}(y)$ на $\hat{\psi}_{x_{k}, L_{k}, \tau_{k}}(y, B_{k})$ и значения размерности $m$ на $b$.

Первый случай соответствует ситуации, возникающей в задаче восстановления регрессии, когда данных больше, чем параметров в задаче: $m > n$. Для того, чтобы при обновлении $L$ в задаче вычисления
$$\argmin\limits_{y\in E_{1}}\left\{\psi_{x_{k}, L_{k}, \tau_{k}}(y)\right\}$$
значение приближения точки минимума быстрее обновлялось, выполним спектральное разложение матрицы $\hat{F}^{'}(x_{k})^{*}\hat{F}^{'}(x_{k})$:
\begin{equation*}
    \begin{aligned}
        \hat{F}^{'}(x_{k})^{*}\hat{F}^{'}(x_{k}) &= Q_{n}\Lambda_{n} Q_{n}^{*},~Q_{n}^{*}Q_{n} = I_{n},~\Lambda_{n}\text{ --- диагональная матрица};\\
        &\hat{F}^{'}(x_{k})^{*}\hat{F}^{'}(x_{k}) + \tau_{k} L_{k} I_{n} = Q_{n}\Lambda_{n} Q_{n}^{*} + \tau_{k} L_{k} I_{n} = Q_{n}\left(\Lambda_{n} + \tau_{k} L_{k} I_{n}\right)Q_{n}^{*}\Rightarrow\\
        &\Rightarrow\left(\hat{F}^{'}(x_{k})^{*}\hat{F}^{'}(x_{k}) + \tau_{k} L_{k} I_{n}\right)^{-1} = Q_{n}\left(\Lambda_{n} + \tau_{k} L_{k} I_{n}\right)^{-1}Q_{n}^{*},~m > n\Rightarrow\\
        &\Rightarrow x_{k + 1} = x_{k} - \eta_{k} Q_{n}\left(\Lambda_{n} + \tau_{k} L_{k} I_{n}\right)^{-1}Q_{n}^{*}\hat{F}^{'}(x_{k})^{*}\hat{F}(x_{k}).
    \end{aligned}
\end{equation*}
Если выбрать $\eta_{k} = 1$, то $x_{k + 1}$ будет точкой минимума локальной модели $\psi_{x_{k}, L_{k}, \tau_{k}}(y)$. В представленном выражении для $x_{k + 1}$ обновление значения $L_{k}$ на каждой итерации за $\operatorname{O}(n)$ операций записывается в обратную матрицу. Само спектральное разложение требует $\operatorname{O}(n^{3})$ операций и $\operatorname{O}(n^{2} + n)$ дополнительно хранимых элементов в виде ортогональной матрицы $Q_{n}$ и диагональной матрицы $\Lambda_{n}$, разложение с данной асимптотикой осуществимо через тридиагонализацию с помощью принципа <<разделяй и властвуй>> и последующее спектральное разложение трёхдиагональной матрицы~\cite{Householder1958, Cuppen1980, BurdenFaires1989}. Неизменный на каждой итерации вектор из $n$ элементов $Q_{n}^{*}\hat{F}^{'}(x_{k})^{*}\hat{F}(x_{k})$ можно вычислить за $\operatorname{O}(n^{2} + nm)$ операций, на каждой итерации диагональная матрица $\left(\Lambda_{n} + \tau_{k} L_{k} I_{n}\right)^{-1}$ обращается за $\operatorname{O}(n)$ операций, а произведение матрицы $Q_{n}$ на вектор $\left(\Lambda_{n} + \tau_{k} L_{k} I_{n}\right)^{-1}Q_{n}^{*}\hat{F}^{'}(x_{k})^{*}\hat{F}(x_{k})$ вычислимо за $\operatorname{O}(n^{2})$ операций. Таким образом, оценивая максимальное число итераций подбора $L_{k}$ сверху как $$\left\lceil\log_{2}\left(\frac{\gamma L_{\hat{F}}}{L}\right)\right\rceil + 1,$$ получаем следующую стоимость каждого шага метода Гаусса--Ньютона:
$$\operatorname{O}\left(n^{3} + n^{2} + mn + n\left(n + 1\right)\left(\left\lceil\log_{2}\left(\frac{\gamma L_{\hat{F}}}{L}\right)\right\rceil + 1\right)\right).$$
Количество итераций подбора $L_{k}$ оценивается из неравенства $\gamma L_{\hat{F}}\geq 2^{i}L$, $i\in\mathbb{Z}_{+}$.
Хотя на практике после $(k - 1)$--го шага с
$$\operatorname{O}\left(\left\lceil\log_{2}\left(\frac{\gamma L_{\hat{F}}}{L}\right)\right\rceil + 1\right)$$
итерациями поиска локальной постоянной Липшица при $\gamma = 2$ и $L\in(0,~L_{\hat{F}}]$ получается $L_{k}:~2L_{k}\geq L_{\hat{F}}$, что эффективно означает не больше двух итераций поиска $L_{k}$. Полученная оценка сложности внешней итерации обобщается на стохастический случай заменой $m$ на $b$ и на $\max\{\tilde{b},~b\}$ для дважды стохастического шага.

В случае  $m\leq n$, соответствующем ситуации, обычно возникающей при решении системы нелинейных уравнений, воспользуемся тождеством Шермана--Моррисона--Вудбери для обращения матрицы меньшего размера и спектральным разложением матрицы $\hat{F}^{'}(x_{k})\hat{F}^{'}(x_{k})^{*}$ для обращения диагональной матрицы $\Lambda_{m}$ на каждой итерации за $\operatorname{O}(m)$ операций с предварительным вычислением матриц $\Lambda_{m}$ и $Q_{m}$ за $\operatorname{O}\left(m^{3}\right)$ операций, затратив дополнительную память для $\operatorname{O}\left(m^{2} + m\right)$ элементов:
\begin{equation*}
    \begin{aligned}
        \hat{F}^{'}(x_{k})\hat{F}^{'}(x_{k})^{*} &= Q_{m}\Lambda_{m} Q_{m}^{*},~Q_{m}^{*}Q_{m} = I_{m},~\Lambda_{m}\text{ --- диагональная матрица};\\
        &\left(\hat{F}^{'}(x_{k})^{*}\hat{F}^{'}(x_{k}) + \tau_{k} L_{k} I_{n}\right)^{-1} = \frac{1}{\tau_{k} L_{k}}I_{n} - \frac{1}{\tau_{k} L_{k}}\hat{F}^{'}(x_{k})^{*}\left(\tau_{k} L_{k} I_{m} + \hat{F}^{'}(x_{k})\hat{F}^{'}(x_{k})^{*}\right)^{-1}\hat{F}^{'}(x_{k}) =\\
        &= \frac{1}{\tau_{k} L_{k}}I_{n} - \frac{1}{\tau_{k} L_{k}}\hat{F}^{'}(x_{k})^{*}Q_{m}\left(\tau_{k} L_{k} I_{m} + \Lambda_{m}\right)^{-1}Q_{m}^{*}\hat{F}^{'}(x_{k}),~m\leq n\Rightarrow\\
        \Rightarrow x_{k + 1} &= x_{k} - \eta_{k}\left(\frac{1}{\tau_{k} L_{k}}I_{n} - \frac{1}{\tau_{k} L_{k}}\hat{F}^{'}(x_{k})^{*}Q_{m}\left(\tau_{k} L_{k} I_{m} + \Lambda_{m}\right)^{-1}Q_{m}^{*}\hat{F}^{'}(x_{k})\right)\hat{F}^{'}(x_{k})^{*}\hat{F}(x_{k}) = \\
        &= x_{k} - \frac{\eta_{k}}{\tau_{k} L_{k}}\left(\hat{F}^{'}(x_{k})^{*}\hat{F}(x_{k}) - \hat{F}^{'}(x_{k})^{*}Q_{m}\left(\tau_{k} L_{k} I_{m} + \Lambda_{m}\right)^{-1}\Lambda_{m} Q_{m}^{*}\hat{F}(x_{k})\right) = \\
        &= x_{k} - \frac{\eta_{k}}{\tau_{k} L_{k}}\hat{F}^{'}(x_{k})^{*}\left(\hat{F}(x_{k}) - Q_{m}\left(\tau_{k} L_{k} I_{m} + \Lambda_{m}\right)^{-1}\Lambda_{m} Q_{m}^{*}\hat{F}(x_{k})\right).
    \end{aligned}
\end{equation*}
Перед началом поиска подходящей $L_{k}$ производится вычисление $m$--мерного вектора $\Lambda_{m} Q_{m}^{*}\hat{F}(x_{k})$ за\\$\operatorname{O}\left(m^{2} + m\right)$ операций. На каждой итерации подбора $L_{k}$ диагональная матрица $\left(\tau_{k} L_{k} I_{m} + \Lambda_{m}\right)^{-1}$ обновляется за $\operatorname{O}\left(m\right)$ операций, сам вектор $x_{k + 1}$ вычисляется суммарно за $\operatorname{O}\left(m^{2} + mn + m + n\right)$ операций. Таким образом, каждый шаг метода Гаусса--Ньютона использует следующее количество операций:
$$\operatorname{O}\left(m^{3} + m^{2} + m + \left(m^{2} + mn + m + n\right)\left(\left\lceil\log_{2}\left(\frac{\gamma L_{\hat{F}}}{L}\right)\right\rceil + 1\right)\right).$$

Для схемы \ref{alg:gen_double_stoch_gnm} стоимость каждого шага в худшем случае будет такой же, только вместо отношения $\frac{\gamma L_{\hat{F}}}{L}$ в асимптотических оценках будет величина $\frac{\gamma l_{\hat{g}_{2}}}{l}$. Полученная оценка сложности расширяется на стохастический случай заменой $m$ на $b$. Но для дважды стохастического шага правило эффективного вычисления $x_{k + 1}$ видоизменяется:
\begin{equation*}
    \begin{aligned}
        x_{k + 1} &= x_{k} - \eta_{k}\left(\frac{1}{\tilde{\tau}_{k} L_{k}}I_{n} -\right.\\
        &\left.- \frac{1}{\tilde{\tau}_{k} L_{k}}\hat{G}^{'}(x_{k}, \tilde{B}_{k})^{*}Q_{\tilde{b}}\left(\tilde{\tau}_{k} L_{k} I_{\tilde{b}} + \Lambda_{\tilde{b}}\right)^{-1}Q_{\tilde{b}}^{*}\hat{G}^{'}(x_{k}, \tilde{B}_{k})\right)\hat{G}^{'}(x_{k}, B_{k})^{*}\hat{G}(x_{k}, B_{k}) =\\
        &= x_{k} - \frac{\eta_{k}}{\tilde{\tau}_{k}L_{k}}\left(\hat{G}^{'}(x_{k}, B_{k})^{*}\hat{G}(x_{k}, B_{k}) -\right.\\
        &\left.- \hat{G}^{'}(x_{k}, \tilde{B}_{k})^{*}Q_{\tilde{b}}\left(\tilde{\tau}_{k} L_{k} I_{\tilde{b}} + \Lambda_{\tilde{b}}\right)^{-1}Q_{\tilde{b}}^{*}\hat{G}^{'}(x_{k}, \tilde{B}_{k})\hat{G}^{'}(x_{k}, B_{k})^{*}\hat{G}(x_{k}, B_{k})\right);\\
        &\hat{G}^{'}(x_{k}, \tilde{B}_{k})\hat{G}^{'}(x_{k}, \tilde{B}_{k})^{*} = Q_{\tilde{b}}\Lambda_{\tilde{b}} Q_{\tilde{b}}^{*},~Q_{\tilde{b}}^{*}Q_{\tilde{b}} = I_{\tilde{b}},~\Lambda_{\tilde{b}}\text{ диагональная матрица},
    \end{aligned}
\end{equation*}
для этого правила сложность внешней итерации следующая:
$$\operatorname{O}\left(\tilde{b}^{3} + \tilde{b}^{2} + (\tilde{b} + b)n + \tilde{b} + n\left(\left\lceil\log_{2}\left(\frac{\gamma l_{\hat{g}_{2}}}{l}\right)\right\rceil + 1\right)\right),$$
при использовании бинарного поиска $l_{k}$, в случае отсутствия процедуры поиска $l_{k}$ сложность итерации меньше:
$$\operatorname{O}\left(\tilde{b}^{3} + \tilde{b}^{2} + (\tilde{b} + b)n + \tilde{b} + n\right),$$
также эта оценка для $n\gg1$ меньше, чем получаемая аналогичная оценка при непосредственном вычислении $x_{k + 1}$ без применения процедур оптимизации, так как $\max\{\tilde{b},~b\}\leq n$. В обоих случаях можно использовать более вычислительно устойчивую процедуру тридиагонализации с помощью преобразований Хаусхолдера, сохраняя прежнюю вычислительную асимптотику \cite{Householder1958}.

\subsection{Неточный поиск минимума локальной модели}\label{subsec:gradient_control}

В схемах \ref{alg:gen_det_gnm}, \ref{alg:gen_stoch_gnm} и \ref{alg:gen_stoch_unbounded_gnm} можно рассмотреть более удобный функционал контроля за точностью вычисления $x_{k + 1}$, основанный на норме градиента локальной модели. А так как структурно в детерминированном и в стохастическом случае локальные модели строятся на похожих принципах, то достаточно рассмотреть детерминированный случай в рамках предположений \ref{as:det_hat_F_der_smooth} и \ref{as:2}, потому что он обобщается на стохастический заменой локальной модели $\psi_{x, L, \tau}(y)$ на стохастическую локальную модель $\hat{\psi}_{x, L, \tau}(y, B)$. И прежде чем вывести критерий, основанный на норме градиента локальной модели, рассмотрим функцию $\psi_{x_{k}, L_{k}, \tau_{k}}(\cdot),~k\in\mathbb{Z}_{+}$ и её производные:
\begin{equation*}
    \begin{aligned}
         \nabla_{x}\psi_{x_{k}, L_{k}, \tau_{k}}(x) &= L_{k}(x - x_{k}) + \frac{1}{\tau_{k}}\hat{F}^{'}(x_{k})^{*}\left(\hat{F}(x_{k}) + \hat{F}^{'}(x_{k})(x - x_{k})\right);\\
         \nabla_{x}^{2}\psi_{x_{k}, L_{k}, \tau_{k}}(x) &= L_{k} + \frac{1}{\tau_{k}}\hat{F}^{'}(x_{k})^{*}\hat{F}^{'}(x_{k})\Rightarrow
    \end{aligned}
\end{equation*}
\begin{equation*}
    \begin{aligned}
         &\Rightarrow\left\langle\nabla_{x}^{2}\psi_{x_{k}, L_{k}, \tau_{k}}(x)v,~v\right\rangle\leq\left(L_{k} + \frac{M_{\hat{F}}^{2}}{\tau_{k}}\right)\left\|v\right\|^{2},~\forall v\in E_{1},~v\neq\mathbf{0}_{n}\Rightarrow\\
         &\Rightarrow L_{\psi} \overset{\operatorname{def}}{=} L_{k} + \frac{M_{\hat{F}}^{2}}{\tau_{k}}\text{ --- постоянная Липшица для }\nabla_{x}\psi_{x_{k}, L_{k}, \tau_{k}}(x).
    \end{aligned}
\end{equation*}
Поэтому для $\psi_{x_{k}, L_{k}, \tau_{k}}(\cdot)$ верно выражение \eqref{eq:lm_aux_det_model_bound} из леммы \ref{lm:aux_det_upper_model}:
\begin{equation*}
    \begin{aligned}
        &\left|\psi_{x_{k}, L_{k}, \tau_{k}}(y) - \psi_{x_{k}, L_{k}, \tau_{k}}(x) - \left\langle\nabla_{x}\psi_{x_{k}, L_{k}, \tau_{k}}(x),~y - x\right\rangle\right|\leq\frac{L_{\psi}}{2}\left\|y - x\right\|^{2}\Rightarrow\\
        &\Rightarrow\left\{\text{оценка снизу на выражение под модулем}\right\}\Rightarrow\\
        &\Rightarrow\psi_{x_{k}, L_{k}, \tau_{k}}(y)\geq\psi_{x_{k}, L_{k}, \tau_{k}}(x) + \left\langle\nabla_{x}\psi_{x_{k}, L_{k}, \tau_{k}}(x),~y - x\right\rangle - \frac{L_{\psi}}{2}\left\|y - x\right\|^{2}.
    \end{aligned}
\end{equation*}
Возьмём в качестве $y\in E_{1}$ точку минимума $\psi_{x_{k}, L_{k}, \tau_{k}}(\cdot)$, а в качестве $x\in E_{1}$ достаточно близкую к $y$ точку:
\begin{equation*}
    x \overset{\operatorname{def}}{=} y + \beta\nabla_{x}\psi_{x_{k}, L_{k}, \tau_{k}}(x),~\text{для некоторого }\beta\in\left[0,~\frac{1}{L_{\psi}}\right],
\end{equation*}
где максимальное значение $\beta$ соответствует максимальной длине шага в методе градиентного спуска для функции с липшицевым градиентом. Получается, квадрат нормы градиента функции оценивает сверху разность значений функции:
\begin{equation*}
    \begin{aligned}
        &\psi_{x_{k}, L_{k}, \tau_{k}}(y)\geq\psi_{x_{k}, L_{k}, \tau_{k}}(x) - \left(\beta + \frac{\beta^{2}L_{\psi}}{2}\right)\left\|\nabla_{x}\psi_{x_{k}, L_{k}, \tau_{k}}(x)\right\|^{2}\Rightarrow\\
        &\Rightarrow\left\|\nabla_{x}\psi_{x_{k}, L_{k}, \tau_{k}}(x)\right\|^{2}\geq\left(\beta + \frac{\beta^{2}L_{\psi}}{2}\right)^{-1}\left(\psi_{x_{k}, L_{k}, \tau_{k}}(x) - \min\limits_{y\in E_{1}}\left(\psi_{x_{k}, L_{k}, \tau_{k}}(y)\right)\right).
    \end{aligned}
\end{equation*}
То есть для $\varepsilon_{k} > 0$ стало удобнее контролировать точность вычисления $x = x_{k + 1}$:
\begin{equation*}
    \begin{aligned}
        &\left\|\nabla_{x}\psi_{x_{k}, L_{k}, \tau_{k}}(x)\right\|^{2}\leq\varepsilon_{k}\left(\beta + \frac{\beta^{2}L_{\psi}}{2}\right)^{-1}\Rightarrow0\leq\psi_{x_{k}, L_{k}, \tau_{k}}(x) - \min\limits_{y\in E_{1}}\left(\psi_{x_{k}, L_{k}, \tau_{k}}(y)\right)\leq\varepsilon_{k}.
    \end{aligned}
\end{equation*}
И если вывод выше позволяет взять наперёд заданное $\beta$, и, вообще говоря, $\varepsilon_{k}$ зависит от $\beta$ в силу определения $x$, то всё равно на практике необходимо оценить $M_{\hat{F}}$ для вычисления $L_{\psi}$, что не всегда просто осуществить. То же верно и для других констант из предположений. Однако зависимость $\varepsilon_{k}$ от $\beta$ не мешает на практике зафиксировать одно из значений $\varepsilon_{k}$, чтобы найденная точка $x_{k + 1}$ была $\varepsilon_{k}$--оптимальной по разности значений $\psi_{x_{k}, L_{k}, \tau_{k}}(x) - \min\limits_{y\in E_{1}}\left(\psi_{x_{k}, L_{k}, \tau_{k}}(y)\right)$, обладая $\operatorname{O}\left(\sqrt{\varepsilon_{k}}\right)$ значением нормы градиента $\left\|\nabla_{x}\psi_{x_{k}, L_{k}, \tau_{k}}(x)\right\|$.

\subsection{Использование произвольной евклидовой нормы}\label{euclidian_general}

Метод нормализованных квадратов можно расширить на произвольные евклидовы нормы:
\begin{equation*}
    \begin{aligned}
        \|x\|_{W_{1}} = &\sqrt{\left\langle W_{1}x,~x\right\rangle},~x\in E_{1},~W_{1}: E_{1}\rightarrow E_{1}^{*}\text{ --- линейный оператор},~W_{1}\succ \mathbf{0}_{\dim(E_{1})\times\dim(E_{1})},~W_{1} = W_{1}^{*};\\
        \|u\|_{W_{2}} = &\sqrt{\left\langle W_{2}u,~u\right\rangle},~u\in E_{2},~W_{2}: E_{2}\rightarrow E_{2}^{*}\text{ --- линейный оператор},~W_{2}\succ \mathbf{0}_{\dim(E_{2})\times\dim(E_{2})},~W_{2} = W_{2}^{*}.
    \end{aligned}
\end{equation*}
Для таких норм общая локальная модель выглядит следующим образом:
\begin{equation*}
    \begin{aligned}
        \psi_{x, L, \tau}(y) &= \frac{\tau}{2} + \frac{1}{2\tau}\left\|\hat{F}(x) + \hat{F}^{'}(x)(y - x)\right\|_{W_{2}}^{2} + \frac{L}{2}\left\|y - x\right\|_{W_{1}}^{2},~(x, y)\in E_{1}^{2},~L = \Omega(L_{\hat{F}}) > 0,~\tau>0.
    \end{aligned}
\end{equation*}
Нижняя граница для $L$ связана с необходимостью выполнения всех предположений для вывода формулы модели $\psi_{x, L, \tau}(y)$. Соответственно, для данной модели точка минимума имеет явное выражение:
\begin{equation*}
    \begin{aligned}
        &y = x - \left(\hat{F}^{'}(x)^{*}W_{2}\hat{F}^{'}(x) + \tau LW_{1}\right)^{-1}\hat{F}^{'}(x)^{*}W_{2}\hat{F}(x).
    \end{aligned}
\end{equation*}

Представленный в работе метод Гаусса--Ньютона применим также для невязок, основанных не на евклидовых нормах, и наиболее удобная форма локальной модели для неевклидовых норм соответствует $\tau = \phi(x, y)$ \cite{Nesterov2007}:
\begin{equation*}
    \begin{aligned}
        &\psi_{x, L}(y) \overset{\operatorname{def}}{=} \underbrace{\left\|\hat{F}(x) + \hat{F}^{'}(x)(y - x)\right\|_{\kappa}}_{\text{неевклидова норма}} + \frac{L}{2}\underbrace{\left\|y - x\right\|^{2}}_{\substack{\text{евклидова}\\\text{норма}}},~L = \Omega\left(L_{\kappa}\right) > 0.
    \end{aligned}
\end{equation*}
Стоит заметить, что в модели $\psi_{x, L}(y)$ необходима липшицевость матрицы Якоби в терминах $\kappa$--нормы и евклидовой нормы, чтобы установить по лемме \ref{lm:aux_det_upper_model} формулу модели $\psi_{x, L}(y)$:
$$\exists L_{\kappa} > 0:~\left\|\hat{F}^{'}(y) - \hat{F}^{'}(x)\right\|_{\kappa}\leq L_{\kappa}\|y - x\|,~\forall (x, y)\in\mathcal{F}^{2}.$$
При этом нижняя граница значений $L$ в модели $\psi_{x, L}(y)$ отмасштабирована относительно $L_{\kappa}$, так как требуется согласование $\kappa$--нормы и евклидовой нормы для выполнения свойства субмультипликативности норм, использованного в доказательстве леммы \ref{lm:aux_det_upper_model}. Для локальной модели $\psi_{x, L}(y)$ сам минимум и его точка определяются через решение следующей задачи оптимизации:
\begin{equation*}
    \begin{aligned}
        \min\limits_{y\in E_{1}}&\left\{\left\|\hat{F}(x) + \hat{F}^{'}(x)(y - x)\right\|_{\kappa} + \frac{L}{2}\left\|y - x\right\|^{2}\right\} = \min\limits_{y\in E_{1}}\max\limits_{\substack{u\in E_{2}^{*}\\\left\|u\right\|_{\kappa^{*}}\leq1}}\left\{\left\langle u,~\hat{F}(x) + \hat{F}^{'}(x)(y - x)\right\rangle + \frac{L}{2}\left\|y - x\right\|^{2}\right\} =\\
        &= \left\{h := y - x\right\} = \max\limits_{\substack{u\in E_{2}^{*}\\\left\|u\right\|_{\kappa^{*}}\leq1}}\min\limits_{h\in E_{1}}\left\{\left\langle u,~\hat{F}(x) + \hat{F}^{'}(x)h\right\rangle + \frac{L}{2}\left\|h\right\|^{2}\right\} =\\
        &= \max\limits_{\substack{u\in E_{2}^{*}}}\left\{\left\langle u,~\hat{F}(x)\right\rangle - \frac{1}{2L}\left\|\hat{F}^{'}(x)^{*}u\right\|^{2}:~\left\|u\right\|_{\kappa^{*}}\leq1\right\},
    \end{aligned}
\end{equation*}
где $\|\cdot\|_{\kappa^{*}}$ --- норма, сопряжённая к норме $\|\cdot\|_{\kappa}$, сама задача поиска $u\in E_{2}^{*}$ является задачей максимизации квадратичного функционала с простым ограничением, эффективно решаемой стандартными методами выпуклой оптимизации. Точка минимума модели $\psi_{x, L}(y)$ явно выражается через решение задачи поиска $u\in E_{2}^{*}$:
\begin{equation*}
    \begin{cases}
        y = x - \frac{1}{L}\hat{F}^{'}(x)^{*}u_{\operatorname{opt}};\\
        u_{\operatorname{opt}} = \argmax\limits_{u\in E_{2}^{*}}\left\{\left\langle u,~\hat{F}(x)\right\rangle - \frac{1}{2L}\left\|\hat{F}^{'}(x)^{*}u\right\|^{2}:~\left\|u\right\|_{\kappa^{*}}\leq1\right\}.
    \end{cases}
\end{equation*}
Локальная модель $\psi_{x, L}(y)$ имеет аналогичное стохастическое расширение, получаемое заменой отображения $\hat{F}$ на $\hat{G}$ в приведённых выкладках, связанных с моделью $\psi_{x, L}(y)$. Более того, представленные в данной работе утверждения можно переформулировать относительно неевклидовых норм согласно принципам, изложенным в текущем подразделе. А сам грамотный подбор нормированных пространств позволяет улучшить коэффициенты в оценке сходимости для ускорения поиска решения поставленной задачи \eqref{eq:main_opt_problem} уже на этапе вывода структурных свойств задачи.

\section{Модифицированный метод Гаусса--Ньютона в классе квазиньютоновских методов}\label{sec:comparsion}

Используя при решении задачи минимизации нормы невязки информацию о первых производных, представленный в этой работе метод Гаусса--Ньютона принадлежит классу квазиньютоновских методов, аппроксимирующих на каждом шаге метода шаг метода Ньютона, об этом нам явно говорят формы вычисления приближений решения, использующие точное вычисление точки минимума общей локальной модели:
\begin{equation*}
    \begin{aligned}
        &x_{k + 1} = x_{k} - \left(\hat{F}^{'}(x_{k})^{*}\hat{F}^{'}(x_{k}) + \tau_{k}L_{k}I_{n}\right)^{-1}\underbrace{\hat{F}^{'}(x_{k})^{*}\hat{F}(x_{k})}_{ = \frac{1}{2}\nabla\hat{f}_{2}(x_{k})},~k\in\mathbb{Z}_{+}.
    \end{aligned}
\end{equation*}
При этом матрицу $\hat{F}^{'}(x_{k})^{*}\hat{F}^{'}(x_{k}) + \tau_{k}L_{k}I_{n}$ можно рассматривать как приближение гессиана функции $\hat{f}_{2}$, однако лучше всего данная матрица приближает гессиан $\hat{f}_{2}$ в областях выпуклости функции $\hat{f}_{2}$, возникновение которых типично около точки минимума. Для большей наглядности связи рассматриваемого метода с методом Ньютона рассмотрим следующую верхнюю оценку локальной модели $\psi_{x, L, \tau}(y)$:
\begin{equation*}
    \begin{aligned}
        &\psi_{x, L, \tau}^{W_{1}}(y) \overset{\operatorname{def}}{=} \frac{\tau}{2} + \frac{1}{2\tau}\left\|\hat{F}(x) + \hat{F}^{'}(x)(y - x)\right\|^{2} + \frac{L}{2}\|y - x\|^{2}_{W_{1}},~W_{1}\succeq I_{n},~W_{1} = W_{1}^{*}.
    \end{aligned}
\end{equation*}
Для модели $\psi_{x, L, \tau}^{W_{1}}(y)$ точка минимума имеет следующее выражение:
\begin{equation}\label{eq:modified_gauss_newton_step}
    \begin{aligned}
        &y = x - \left(\hat{F}^{'}(x)^{*}\hat{F}^{'}(x) + \tau L W_{1}\right)^{-1}\hat{F}^{'}(x)^{*}\hat{F}(x).
    \end{aligned}
\end{equation}
И если при решении задачи \eqref{eq:main_opt_problem} использовать модель $\psi_{x_{k}, L_{k}, \tau_{k}}^{W_{1}^{k}}(x_{k + 1})$, то для строго (сильно) выпуклой функции $\hat{f}_{2}$ можно шаг модифицированного метода Гаусса--Ньютона свести к шагу метода Ньютона:
\begin{equation*}
    \begin{aligned}
        &x_{k + 1} = x_{k} - \underbrace{\left(\hat{F}^{'}(x_{k})^{*}\hat{F}^{'}(x_{k}) + \sum\limits_{i = 1}^{m}\hat{F}_{i}(x_{k})\nabla^{2}\hat{F}_{i}(x_{k})\right)^{-1}}_{=\left(\frac{1}{2}\nabla^{2}\hat{f}_{2}(x_{k})\right)^{-1}}\underbrace{\hat{F}^{'}(x_{k})^{*}\hat{F}(x_{k})}_{=\frac{1}{2}\nabla\hat{f}_{2}(x_{k})},~k\in\mathbb{Z}_{+},\\
        &\tau_{k}L_{k} = \lambda_{\min}\left(\sum\limits_{i = 1}^{m}\hat{F}_{i}(x_{k})\nabla^{2}\hat{F}_{i}(x_{k})\right) > 0,~W_{1}^{k} = \frac{1}{\tau_{k}L_{k}}\sum\limits_{i = 1}^{m}\hat{F}_{i}(x_{k})\nabla^{2}\hat{F}_{i}(x_{k})\succeq I_{n}.
    \end{aligned}
\end{equation*}
При этом выпуклость не обязательна для каждой функции $\hat{F}_{i},~i\in\overline{1, m}$, этот факт скорее интересен с точки зрения теории, так как свойство выпуклости для функции $\hat{f}_{2}$ является сильно ограничивающим, а вычисление модели $\psi_{x, L, \tau}^{W_{1}}(y)$ более трудоёмкое, нежели чем $\psi_{x, L, \tau}(y)$. Приведённые рассуждения верны и для стохастической версии метода Гаусса--Ньютона с локальной моделью $\hat{\psi}_{x, L, \tau}(y, B)$.

Для предложенного нестохастического метода Гаусса--Ньютона с точным проксимальным отображением в данной работе установлена  локальная суперлинейная сходимость, и в худшем случае по теореме \ref{th:glob_sub_lin_and_lin_conv} для достижения области суперлинейной сходимости необходимо затратить $\left\lceil\frac{16L_{\hat{F}}}{\mu}\hat{f}_{1}(x_{0})\right\rceil$ итераций. Введём следующую характеристику близости к решению задачи \eqref{eq:smooth_system} в случае разрешимости системы уравнений, называемую радиусом множества уровня $\hat{f}_{1}(x_{0})$:
$$\operatorname{R} = \min\limits_{x\in E_{1}}\left\{\|x_{0} - x\|:~x\in\mathcal{L}(\hat{f}_{1}(x_{0})),~F(x) = \mathbf{0}_{m}\right\}.$$
По лемме \ref{lm:aux_det_psi_upper_bound} значение $\hat{f}_{1}(x_{1})$ ограничено для метода в теореме \ref{th:glob_sub_lin_and_lin_conv}:
\begin{equation*}
    \begin{aligned}
        \hat{f}_{1}(x_{1})&\leq\frac{\hat{f}_{1}(x_{0})}{2} + \frac{L_{0}\|x_{0} - x^{*}\|^{2}}{2} + \frac{L_{\hat{F}}^{2}\|x_{0} - x^{*}\|^{4}}{8\hat{f}_{1}(x_{0})}\leq\frac{\hat{f}_{1}(x_{0})}{2} + L_{\hat{F}}\operatorname{R}^{2} + \frac{L_{\hat{F}}^{2}\operatorname{R}^{4}}{8\hat{f}_{1}(x_{0})}.
    \end{aligned}
\end{equation*}
Также есть другая оценка значения $\hat{f}_{1}(x_{1})$ по лемме \ref{lm:aux_tau_schedule}:
\begin{equation*}
    \begin{aligned}
        &\hat{f}_{1}(x_{1})\leq\hat{f}_{1}(x_{0})\leq M_{\hat{F}}\left\|x_{0} - x^{*}\right\| + \frac{L_{\hat{F}}}{2}\left\|x_{0} - x^{*}\right\|^{2} = M_{\hat{F}}\operatorname{R} + \frac{L_{\hat{F}}\operatorname{R}^{2}}{2}.
    \end{aligned}
\end{equation*}
Объединим обе оценки:
\begin{equation*}
    \begin{aligned}
        &\hat{f}_{1}(x_{1})\leq\min\left\{\frac{\hat{f}_{1}(x_{0})}{2} + L_{\hat{F}}\operatorname{R}^{2} + \frac{L_{\hat{F}}^{2}\operatorname{R}^{4}}{8\hat{f}_{1}(x_{0})},~M_{\hat{F}}\operatorname{R} + \frac{L_{\hat{F}}\operatorname{R}^{2}}{2}\right\}.
    \end{aligned}
\end{equation*}
То есть необходимое количество итераций для достижения области суперлинейной сходимости ограничено сверху:
\begin{equation}\label{eq:upper_quad_iter_bound_gnm}
    \begin{aligned}
        &\left\lceil1 + \frac{16L_{\hat{F}}}{\mu}\min\left\{\frac{\hat{f}_{1}(x_{0})}{2} + L_{\hat{F}}\operatorname{R}^{2} + \frac{L_{\hat{F}}^{2}\operatorname{R}^{4}}{8\hat{f}_{1}(x_{0})},~M_{\hat{F}}\operatorname{R} + \frac{L_{\hat{F}}\operatorname{R}^{2}}{2}\right\}\right\rceil.
    \end{aligned}
\end{equation}
Представленная оценка для задач в рамках предположений \ref{as:det_hat_F_der_smooth} и \ref{as:det_hat_F_PL_condition} является улучшаемой и в следующем случае на примере метода Ньютона с кубической регуляризацией \cite{Nesterov2006} имеется улучшение. Рассмотрим задачу выпуклой оптимизации без ограничений:
\begin{equation*}
    \begin{aligned}
        \min\limits_{x\in E_{1}}f(x),
    \end{aligned}
\end{equation*}
$f(\cdot)$ --- дважды дифференцируемая и сильно выпуклая функция с липшицевым гессианом. А так как в данном примере можно ввести $\nabla f(x) \equiv \hat{F}(x)$ по свойству эквивалентности решения системы уравнений через оптимизацию нормы невязки и решения задачи оптимизации без ограничений через решение системы уравнений, описывающей условия оптимальности первого порядка, то формально липшицевость и сильную выпуклость можно описать следующим образом \cite{NesterovLectures}:
\begin{equation*}
    \begin{aligned}
        &\left\|\nabla^{2}f(y) - \nabla^{2}f(x)\right\|\leq L_{\hat{F}}\left\|y - x\right\|,~\forall (x, y)\in E_{1}^{2};\\
        &\left\langle\nabla^{2}f(x)y, y\right\rangle\geq\sqrt{\mu}\left\|y\right\|^{2},~\forall (x, y)\in E_{1}^{2}.
    \end{aligned}
\end{equation*}
Тогда для метода Ньютона с кубической регуляризацией количество необходимых итераций для достижения области квадратичной сходимости ограничено сверху величиной $\left\lceil6.25\sqrt{\frac{L_{\hat{F}}\operatorname{R}}{\sqrt{\mu}}}~\right\rceil$ \cite{Nesterov2006, NesterovLectures}. Данная оценка улучшает представленную выше оценку для метода Гаусса--Ньютона \eqref{eq:upper_quad_iter_bound_gnm}, что как раз подтверждает часто применимый на практике тезис: специализированный метод справляется не хуже общего. Однако представленные оценки являются верхними, а для метода Гаусса--Ньютона в классе задач \eqref{eq:main_opt_problem}, порождённом предположениями \ref{as:det_hat_F_der_smooth} и \ref{as:det_hat_F_PL_condition}, нижние \textit{оценки сложности} решения ещё не получены. И, по сравнению с методом Ньютона с кубической регуляризацей, представленные вариации метода Гаусса--Ньютона корректны для произвольных нормированных пространств, а не только в случае евклидова пространства.

Разработанные для евклидовых норм стратегии вычисления приближения решения \eqref{eq:det_expl_update_rule}, \eqref{eq:stoch_direct_update_rule}, \eqref{eq:double_stoch_direct_update_rule} по форме напоминают обновление параметров в алгоритме Левенберга--Марквардта \cite{Levenberg1944, Marquardt1963, More1978}. Однако, в отличие от алгоритма Левенберга--Марквардта, изложенный метод Гаусса--Ньютона автоматически обобщается на случай неевклидовых произвольных норм, и в этих условиях метод всё так же однозначно интерпретируется, позволяя устанавливать глобальные и локальные свойства процесса построения $\left\{x_{k}\right\}_{k\in\mathbb{Z}_{+}}$.

\section*{Благодарности}

Мы выражаем особую благодарность Юрию Евгеньевичу Нестерову за постановку проблемы, благодаря которой появилась на свет данная работа. Также выражаем огромную благодарность Дмитрию Камзолову и Павлу Двуреченскому за ценные замечания, сделанные на разных этапах написания статьи. Наша работа приурочена к 65--летнему юбилею Юрия Евгеньевича Нестерова.

\section*{{\,\,\,\,\,\,\,\,\,\,\,\,\,\,\,\,\,\,\,\,\,\,\,\,\,\,\,\,\,\,\,\,\,\,\,\,\,\,\,\,\,\,\,\,\,\,\,\,\,\,\,\,\,\,\,\,\,\,\,\,\,\,\,\,\,\,\,\,Приложение} }\label{sec:appendix}

\subsection*{Модифицированный метод Гаусса--Ньютона}

\subsubsection*{Вспомогательные утверждения}

В лемме \ref{lm:aux_det_upper_model} выводится формула локальной модели для оптимизируемого функционала в задаче \eqref{eq:main_opt_problem}.
\begin{lemma}[\cite{Nesterov2020}]\label{lm:aux_det_upper_model}
    Пусть $(x, y)\in\mathcal{F}^{2}, L\geq L_{\hat{F}}, \tau > 0$ и выполнено предположение \ref{as:det_hat_F_der_smooth}. Тогда $\hat{f}_{1}(y)\leq\psi_{x, L, \tau}(y).$
\end{lemma}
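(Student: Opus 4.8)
План состоит в том, чтобы объединить оценку остаточного члена второго порядка для векторного отображения $\hat F$ с элементарным неравенством, связывающим неотрицательное число с его «нормированным квадратом». Никакой глубокой идеи тут не требуется --- всё сводится к стандартной «лемме о спуске» для отображения с липшицевым якобианом плюс неравенству вида $a\leq\frac{\tau}{2}+\frac{a^2}{2\tau}$.

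Сначала я бы установил ключевую оценку гладкости: для любых $(x, y)\in\mathcal F^2$
\[
\left\|\hat F(y) - \hat F(x) - \hat F'(x)(y - x)\right\|\leq\frac{L_{\hat F}}{2}\|y - x\|^2.
\]
Так как множество $\mathcal F$ выпукло, отрезок $\{x + t(y - x):~t\in[0, 1]\}$ целиком лежит в $\mathcal F$, и в силу гладкости $\hat F$ справедливо представление Ньютона--Лейбница $\hat F(y) - \hat F(x) = \int_0^1\hat F'(x + t(y - x))(y - x)\,dt$. Вычитая $\hat F'(x)(y - x) = \int_0^1\hat F'(x)(y - x)\,dt$ и оценивая евклидову норму под знаком интеграла, получаем требуемое неравенство; здесь используется операторная форма предположения \ref{as:det_hat_F_der_smooth} (вытекающая из $\|\cdot\|\leq\|\cdot\|_F$, как отмечено сразу после предположения), дающая $\|(\hat F'(x + t(y - x)) - \hat F'(x))(y - x)\|\leq L_{\hat F}t\|y - x\|^2$, после чего $\int_0^1 L_{\hat F}t\,dt = L_{\hat F}/2$.

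Далее я бы применил неравенство треугольника в $E_2$:
\[
\hat f_1(y) = \|\hat F(y)\|\leq\left\|\hat F(x) + \hat F'(x)(y - x)\right\| + \left\|\hat F(y) - \hat F(x) - \hat F'(x)(y - x)\right\|\leq\phi(x, y) + \frac{L_{\hat F}}{2}\|y - x\|^2\leq\phi(x, y) + \frac{L}{2}\|y - x\|^2,
\]
пользуясь определением $\phi(x, y) = \|\hat F(x) + \hat F'(x)(y - x)\|$ и условием $L\geq L_{\hat F}$. Наконец, я бы воспользовался скалярным неравенством $a\leq\frac{\tau}{2} + \frac{a^2}{2\tau}$, верным для всех $a\in\R$ и $\tau > 0$, поскольку $\frac{\tau}{2} + \frac{a^2}{2\tau} - a = \frac{(a - \tau)^2}{2\tau}\geq 0$, взяв $a = \phi(x, y)\geq 0$. Это даёт $\hat f_1(y)\leq\frac{\tau}{2} + \frac{(\phi(x, y))^2}{2\tau} + \frac{L}{2}\|y - x\|^2 = \psi_{x, L, \tau}(y)$, что и требовалось.

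Основная (хотя и небольшая) техническая тонкость --- обоснование интегрального представления $\hat F(y) - \hat F(x)$ через $\hat F'$ при указанной гладкости отображения и выпуклости $\mathcal F$ (используется непрерывность $t\mapsto\hat F'(x + t(y - x))$ и принадлежность отрезка множеству $\mathcal F$), а также переход от липшицевости якобиана в норме Фробениуса к липшицевости в операторной норме; оба момента обеспечены замечаниями, сделанными непосредственно после формулировки предположения \ref{as:det_hat_F_der_smooth}.
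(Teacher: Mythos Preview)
Your proof is correct and follows essentially the same route as the paper: integral remainder bound for the Lipschitz Jacobian, triangle inequality to pass to $\phi(x,y)+\frac{L}{2}\|y-x\|^2$, and then the scalar inequality $a\le\frac{\tau}{2}+\frac{a^2}{2\tau}$ (the paper writes the latter as $(\sqrt{\tau/2}-a/\sqrt{2\tau})^2\ge0$, which is the same identity). Your additional remark on the convexity of $\mathcal F$ needed for the integral representation is a nice touch that the paper leaves implicit.
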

\begin{proof}
    Выведем неравенство для $\left\|\hat{F}(y) - \hat{F}(x) - \hat{F}^{'}(x)(y - x)\right\|$:
    \begin{equation}\label{eq:lm_aux_det_model_bound}
        \begin{aligned}
            &\left\|\hat{F}(y) - \hat{F}(x) - \hat{F}^{'}(x)(y - x)\right\| = \left\{\hat{F}(y) = \hat{F}(x) + \int\limits_{0}^{1}\hat{F}^{'}(x + t(y - x))(y - x)\operatorname{d}t\right\} = \\
            &=\left\|\int\limits_{0}^{1}\left(\hat{F}^{'}(x + t(y - x)) - \hat{F}^{'}(x)\right)(y - x)\operatorname{d}t\right\|\leq\left\{\text{$\|\cdot\|$ --- выпукла, неравенство Йенсена}\right\}\leq\\
            &\leq\int\limits_{0}^{1}\left\|\left(\hat{F}^{'}(x + t(y - x)) - \hat{F}^{'}(x)\right)(y - x)\right\|\operatorname{d}t\leq\int\limits_{0}^{1}\left\|\hat{F}^{'}(x + t(y - x)) - \hat{F}^{'}(x)\right\|\left\|y - x\right\|\operatorname{d}t\leq\\
            &\leq\left\{\text{предположение \ref{as:det_hat_F_der_smooth}}\right\}\leq\int\limits_{0}^{1}L_{\hat{F}}\|y - x\|^{2}t\operatorname{d}t = \frac{L_{\hat{F}}}{2}\|y - x\|^{2}.
        \end{aligned}
    \end{equation}
    Рассмотрим вспомогательное неравенство:
    \begin{equation}\label{eq:lm_aux_det_model_square}
        \begin{aligned}
            \left(\sqrt{\frac{\tau}{2}} - \frac{1}{\sqrt{2\tau}}\left\|\hat{F}(x) + \hat{F}^{'}(x)(y - x)\right\|\right)^{2} &= \frac{\tau}{2} + \frac{1}{2\tau}\left\|\hat{F}(x) + \hat{F}^{'}(x)(y - x)\right\|^{2} - \left\|\hat{F}(x) + \hat{F}^{'}(x)(y - x)\right\|\geq 0\Rightarrow\\
            &\Rightarrow\frac{\tau}{2} + \frac{1}{2\tau}\left\|\hat{F}(x) + \hat{F}^{'}(x)(y - x)\right\|^{2}\geq\left\|\hat{F}(x) + \hat{F}^{'}(x)(y - x)\right\|.
        \end{aligned}
    \end{equation}
    Тогда для $\hat{f}_{1}$ выполнено
    \begin{equation*}
        \begin{aligned}
            \hat{f}_{1}(y) &= \left\|\hat{F}(y)\right\| = \left\|\hat{F}(y) - \hat{F}(x) - \hat{F}^{'}(x)(y - x) + \hat{F}(x) + \hat{F}^{'}(x)(y - x)\right\|\leq\left\|\hat{F}(y) - \hat{F}(x) - \hat{F}^{'}(x)(y - x)\right\| +\\
            &+ \left\|\hat{F}(x) + \hat{F}^{'}(x)(y - x)\right\|\leq\left\{\text{неравенство из \eqref{eq:lm_aux_det_model_bound}}\right\}\leq\frac{L_{\hat{F}}}{2}\|y - x\|^{2} + \left\|\hat{F}(x) + \hat{F}^{'}(x)(y - x)\right\|\leq\\
            &\leq\left\{\text{неравенство из \eqref{eq:lm_aux_det_model_square}}\right\}\leq\frac{\tau}{2} + \frac{L_{\hat{F}}}{2}\|y - x\|^{2} + \frac{1}{2\tau}\left\|\hat{F}(x) + \hat{F}^{'}(x)(y - x)\right\|^{2} = \psi_{x,L_{\hat{F}},\tau}(y)\leq \psi_{x,L,\tau}(y).
        \end{aligned}
    \end{equation*}
\end{proof}
\begin{lm:corollary}
При $\tau = \phi(x, y)$ зазор между $\hat{f}_{1}(y)$ и $\psi_{x, L, \tau}(y)$ минимален, согласно неравенству из \eqref{eq:lm_aux_det_model_square}.
\end{lm:corollary}

Следующая лемма задаёт общую формулу для измерения убывания значения оптимизируемого функционала в \eqref{eq:main_opt_problem} при минимизации локальной модели. Величина убывания оценивается с помощью нормы проксимального градиента, в следствиях указаны основные свойства введённого проксимального отображения.

\begin{lemma}\label{lm:aux_det_local_decrease_1}
    Пусть выполнено предположение \ref{as:det_hat_F_der_smooth} и $x\in\mathcal{F},~T_{L, \tau}(x)\in\mathcal{F}$, $\tau > 0,~L\geq L_{\hat{F}}$. Тогда выполняется соотношение $$\frac{\tau}{2} + \frac{\hat{f}_{2}(x)}{2\tau} - \hat{f}_{1}(T_{L, \tau}(x))\geq\frac{L}{2}\left\|T_{L, \tau}(x) - x\right\|^{2}.$$
\end{lemma}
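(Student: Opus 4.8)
The plan is to exploit the strong convexity of the local model $\psi_{x,L,\tau}(\cdot)$ in $y$ together with the upper bound from Lemma \ref{lm:aux_det_upper_model}. First I would observe that, for fixed $x$ and $\tau>0$, the map $y\mapsto \left(\phi(x,y)\right)^{2} = \left\|\hat{F}(x) + \hat{F}^{'}(x)(y-x)\right\|^{2}$ is a convex quadratic (the square of the Euclidean norm of an affine function of $y$), hence smooth and convex; adding $\frac{L}{2}\|y-x\|^{2}$ makes $\psi_{x,L,\tau}(\cdot)$ an $L$-strongly convex smooth function of $y$. Consequently $T_{L,\tau}(x) = \argmin_{y} \psi_{x,L,\tau}(y)$ is well defined, and $\nabla_{y}\psi_{x,L,\tau}(T_{L,\tau}(x)) = \mathbf{0}_{n}$.

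Next I would apply the standard strong-convexity lower bound at the minimizer: for any $z\in E_{1}$,
$$\psi_{x,L,\tau}(z)\geq\psi_{x,L,\tau}(T_{L,\tau}(x)) + \left\langle\nabla_{y}\psi_{x,L,\tau}(T_{L,\tau}(x)),~z - T_{L,\tau}(x)\right\rangle + \frac{L}{2}\left\|z - T_{L,\tau}(x)\right\|^{2},$$
and I would specialize to $z = x$, where the gradient term vanishes. This gives $\psi_{x,L,\tau}(x)\geq\psi_{x,L,\tau}(T_{L,\tau}(x)) + \frac{L}{2}\|T_{L,\tau}(x) - x\|^{2}$. Then I would evaluate the left-hand side directly: since $\phi(x,x) = \|\hat{F}(x)\| = \hat{f}_{1}(x)$, we have $\psi_{x,L,\tau}(x) = \frac{\tau}{2} + \frac{\hat{f}_{2}(x)}{2\tau}$, using $\hat{f}_{2}(x) = \left(\hat{f}_{1}(x)\right)^{2}$.

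Finally, to control the right-hand side from below I would invoke Lemma \ref{lm:aux_det_upper_model}: since $L\geq L_{\hat{F}}$ and both $x$ and $T_{L,\tau}(x)$ lie in $\mathcal{F}$, it yields $\psi_{x,L,\tau}(T_{L,\tau}(x))\geq\hat{f}_{1}(T_{L,\tau}(x))$. Chaining the three facts,
$$\frac{\tau}{2} + \frac{\hat{f}_{2}(x)}{2\tau} = \psi_{x,L,\tau}(x)\geq\hat{f}_{1}(T_{L,\tau}(x)) + \frac{L}{2}\left\|T_{L,\tau}(x) - x\right\|^{2},$$
and rearranging proves the claim. There is no serious obstacle here; the only points requiring a little care are the verification that the squared-residual term is genuinely a smooth convex quadratic (so that $\psi$ is $L$-strongly convex and differentiable, legitimizing the first-order optimality condition) and the bookkeeping check that the hypotheses of Lemma \ref{lm:aux_det_upper_model} are met, namely $L\geq L_{\hat{F}}$ and $x, T_{L,\tau}(x)\in\mathcal{F}$, both of which are assumed.
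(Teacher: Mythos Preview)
Your proof is correct, and it is considerably more direct than the paper's. You and the paper both arrive at the same intermediate inequality
\[
\frac{\tau}{2} + \frac{\hat{f}_{2}(x)}{2\tau} \;=\; \psi_{x,L,\tau}(x) \;\geq\; \psi_{x,L,\tau}(T_{L,\tau}(x)) + \frac{L}{2}\left\|T_{L,\tau}(x) - x\right\|^{2},
\]
and then both invoke Lemma~\ref{lm:aux_det_upper_model} to replace $\psi_{x,L,\tau}(T_{L,\tau}(x))$ by $\hat{f}_{1}(T_{L,\tau}(x))$. The difference is in how that intermediate inequality is obtained. You use the $L$-strong convexity of $\psi_{x,L,\tau}(\cdot)$ in $y$ and the first-order optimality condition at the minimizer, which is a one-line argument. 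The paper instead reparametrizes by $t = L^{-1}$, defines the marginal function $h(t) = \min_{y}\psi_{x,t^{-1},\tau}(y)$, argues that $h$ is convex in $t$ via a projection-of-a-convex-set argument, computes $h'(t) = -\frac{1}{2t^{2}}\|T_{t^{-1},\tau}(x) - x\|^{2}$ by the envelope theorem, and applies the tangent inequality $h(0) \geq h(t) - th'(t)$. Your route is cleaner for the lemma as stated; the paper's Moreau-envelope-style argument, while heavier, surfaces the dependence of the bound on $t = L^{-1}$ and the explicit derivative formula, which feed into the subsequent corollaries on monotonicity in $L$ and the $L\to+\infty$ asymptotics.
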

\begin{proof}
    Рассмотрим функцию $$h(t) = \min\limits_{y\in E_{1}}\left\{\frac{\tau}{2} + \frac{1}{2\tau}\left\|\hat{F}(x) + \hat{F}^{'}(x)(y - x)\right\|^{2} + \frac{1}{2t}\|y - x\|^{2}\right\}.$$
    Находящаяся под $\min$ локальная модель $\psi_{x, t^{-1}, \tau}(y)$ выпукла по $(\tau, y, t)$ на выпуклом множестве
    $$\left\{(y, \tau, t, \alpha)\in E_{1}\times\mathbb{R}_{+}^{3}: \|y - x\|^{2}\leq\alpha t\right\}.$$
    То есть у функции $h(t)$ выпуклый надграфик, так как он получен с помощью проектирования выпуклого множества, что влечёт выпуклость $h(t)$ (Theorem 3.1.7, \cite{NesterovLectures}). Для выпуклой функции верно следующее представление:
    \begin{equation*}
        \begin{aligned}
            h(0)&\geq h(t) + h^{'}(t)(0 - t) = h(t) - h^{'}(t)t;\\
            h^{'}(t) &= \left\langle\underbrace{\frac{1}{\tau}\hat{F}^{'}(x)^{*}\left(\hat{F}(x) + \hat{F}^{'}(x)\left(T_{t^{-1}, \tau}(x) - x\right)\right) + \frac{1}{t}\left(T_{t^{-1}, \tau}(x) - x\right)}\limits_{=\nabla_{y}\psi_{x, t^{-1}, \tau}(y) = \mathbf{0}_{n}\text{ из--за взятия минимума по }y}, \frac{\partial T_{t^{-1}, \tau}(x)}{\partial t}\right\rangle - \frac{1}{2t^{2}}\left\|T_{t^{-1}, \tau}(x) - x\right\|^{2} =\\
            &= -\frac{1}{2t^{2}}\left\|T_{t^{-1}, \tau}(x) - x\right\|^{2}.
        \end{aligned}
    \end{equation*}
    По свойству проксимального отображения $\lim\limits_{t\rightarrow 0}\argmin\limits_{y\in E_{1}}\left\{\psi_{x, t^{-1}, \tau}(y)\right\} = x\Rightarrow h(0) = \frac{\tau}{2} + \frac{\left\|\hat{F}(x)\right\|^{2}}{2\tau} = \frac{\tau}{2} + \frac{\hat{f}_{2}(x)}{2\tau}$. Значит,
    \begin{equation*}
        \begin{aligned}
            \frac{\tau}{2} + \frac{\hat{f}_{2}(x)}{2\tau}&\geq\psi_{x, t^{-1}, \tau}(T_{t^{-1}, \tau}(x)) + \frac{1}{2t}\left\|T_{t^{-1}, \tau}(x) - x\right\|^{2}\geq\left\{\text{лемма \ref{lm:aux_det_upper_model}}\right\}\geq\hat{f}_{1}(T_{t^{-1}, \tau}(x)) + \frac{1}{2t}\left\|T_{t^{-1}, \tau}(x) - x\right\|^{2}\Rightarrow\\
            &\Rightarrow\left\{t^{-1} = L\right\}\Rightarrow\frac{\tau}{2} + \frac{\hat{f}_{2}(x)}{2\tau} - \hat{f}_{1}(T_{L, \tau}(x))\geq\frac{L}{2}\left\|T_{L, \tau}(x) - x\right\|^{2}.
        \end{aligned}
    \end{equation*}
\end{proof}
\begin{lm:corollary}
Результаты утверждения содержат в себе неравенство
$$\frac{\tau}{2} + \frac{\hat{f}_{2}(x)}{2\tau} - \psi_{x, L, \tau}(T_{L, \tau}(x))\geq\frac{L}{2}\left\|T_{L, \tau}(x) - x\right\|^{2},$$
которое верно для $L > 0$ и $x\in E_{1}$.
\end{lm:corollary}
\begin{lm:corollary}\label{lm:cor_aux_det_local_decrease_1}
$T_{L, \tau}(x) = \argmin\limits_{y\in E_{1}}\left\{\psi_{x, L, \tau}(y)\right\}$ имеет явное выражение при $L > 0$:
$$T_{L, \tau}(x) = x - \left(\hat{F}^{'}(x)^{*}\hat{F}^{'}(x) + \tau L I_{n}\right)^{-1}\hat{F}^{'}(x)^{*}\hat{F}(x).$$
Поэтому $\lim\limits_{L\rightarrow+\infty}T_{L, \tau}(x) = x$ и
\begin{equation*}
    \begin{aligned}
        \frac{\tau}{2} + \frac{\hat{f}_{2}(x)}{2\tau} - \hat{f}_{1}(x)&\geq\frac{1}{2}\lim\limits_{L\rightarrow+\infty}\left(L\left\|T_{L, \tau}(x) - x\right\|^{2}\right) = \frac{1}{2}\lim\limits_{L\rightarrow+\infty}\left(L\left\|\left(\hat{F}^{'}(x)^{*}\hat{F}^{'}(x) + \tau L I_{n}\right)^{-1}\hat{F}^{'}(x)^{*}\hat{F}(x)\right\|^{2}\right) =\\
        &= \frac{1}{2}\lim\limits_{L\rightarrow+\infty}\left\|\left(\frac{1}{\sqrt{L}}\hat{F}^{'}(x)^{*}\hat{F}^{'}(x) + \tau \sqrt{L} I_{n}\right)^{-1}\hat{F}^{'}(x)^{*}\hat{F}(x)\right\|^{2} = 0.
    \end{aligned}
\end{equation*}
Однако величина $\left\|L\left(T_{L,\tau}(x) - x\right)\right\|$ сходится к норме градиента $\psi_{x, L, \tau}(y)$ по $y$ в точке $y = x$ при $L\rightarrow+\infty$:
\begin{equation*}
    \begin{aligned}
        \lim\limits_{L\rightarrow+\infty}\left\|L\left(T_{L, \tau}(x) - x\right)\right\| &= \lim\limits_{L\rightarrow+\infty}\left\|L\left(\hat{F}^{'}(x)^{*}\hat{F}^{'}(x) + \tau L I_{n}\right)^{-1}\hat{F}^{'}(x)^{*}\hat{F}(x)\right\| =\\
        &= \lim\limits_{L\rightarrow+\infty}\left\|\left(\frac{1}{L}\hat{F}^{'}(x)^{*}\hat{F}^{'}(x) + \tau I_{n}\right)^{-1}\hat{F}^{'}(x)^{*}\hat{F}(x)\right\| = \left\|\frac{1}{\tau}\hat{F}^{'}(x)^{*}\hat{F}(x)\right\|.
    \end{aligned}
\end{equation*}
Сама функция $\left\|T_{L, \tau}(x) - x\right\|^{2}$ является монотонно убывающей по $L$ и по $\tau$.
\end{lm:corollary}
\begin{lm:corollary}\label{lm:cor_2_aux_det_local_decrease_1}
При выборе $\tau = \hat{f}_{1}(x) > 0$ из полученной оценки следует, что если $x\in\mathcal{L}(\hat{f}_{1}(x))\subseteq\mathcal{F}$, то и $T_{L, \hat{f}_{1}(x)}(x)\in\mathcal{L}(\hat{f}_{1}(x))$:
\begin{equation*}
    \begin{aligned}
        \frac{\tau}{2} + \frac{\hat{f}_{2}(x)}{2\tau} - \hat{f}_{1}(T_{L, \tau}(x))&\geq\frac{L}{2}\left\|T_{L, \tau}(x) - x\right\|^{2}\Rightarrow\left\{\tau = \hat{f}_{1}(x)\right\}\Rightarrow\hat{f}_{1}(x) - \hat{f}_{1}(T_{L, \hat{f}_{1}(x)}(x))\geq\\
        &\geq\frac{L}{2}\left\|T_{L, \hat{f}_{1}(x)}(x) - x\right\|^{2}\geq 0\Rightarrow\hat{f}_{1}(x)\geq\hat{f}_{1}(T_{L, \hat{f}_{1}(x)}(x))\Rightarrow\\
        &\Rightarrow T_{L, \hat{f}_{1}(x)}(x)\in\mathcal{L}(\hat{f}_{1}(T_{L, \hat{f}_{1}(x)}(x)))\subseteq\mathcal{L}(\hat{f}_{1}(x)).
    \end{aligned}
\end{equation*}
\end{lm:corollary}

В лемме ниже оценивается убывание оптимизируемого функционала $\hat{f}_{1}$ при решении задачи \eqref{eq:main_opt_problem} уже с помощью приращения локальной модели $\Delta_{r}(x)$, вводится полезная для дальнейшего анализа вспомогательная функция $\varkappa(\cdot)$.

\begin{lemma}\label{lm:aux_det_local_decrease_2}
    Пусть выполнено предположение  \ref{as:det_hat_F_der_smooth} и $x\in\mathcal{F},~T_{L, \tau}(x)\in\mathcal{F}$, $\tau > 0,~L\geq L_{\hat{F}}$. Тогда для любого $r > 0$ выполняется соотношение
    $$\frac{\tau}{2} + \frac{\hat{f}_{2}(x)}{2\tau} - \hat{f}_{1}(T_{L, \tau}(x))\geq Lr^{2}\varkappa\left(\frac{\Delta_{r}(x)}{2\tau Lr^{2}}\right),$$
    где
    \begin{equation*}
        \begin{cases}
            \Delta_{r}(x) \overset{\operatorname{def}}{=} \hat{f}_{2}(x) - \min\limits_{y\in E_{1}}\left\{\left(\phi(x, y)\right)^{2}: \|y - x\|\leq r\right\};\\
            \varkappa(t) \overset{\operatorname{def}}{=} \begin{sqcases}
                \frac{t^{2}}{2},~t\in[0,~1];\\
                t - \frac{1}{2},~t > 1.
            \end{sqcases}
        \end{cases}
    \end{equation*}
\end{lemma}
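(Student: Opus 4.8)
The plan is to reduce the claim to a single one-dimensional concave maximization whose truncation to $[0,1]$ produces the two branches of $\varkappa$ automatically. First I would bound the left-hand side below by the gap between $\tfrac{\tau}{2} + \tfrac{\hat{f}_2(x)}{2\tau}$ and the minimal value of the local model. By Lemma \ref{lm:aux_det_upper_model} together with the definition $T_{L,\tau}(x) = \argmin_{y\in E_1}\psi_{x,L,\tau}(y)$, one has $\hat{f}_1(T_{L,\tau}(x)) \le \psi_{x,L,\tau}(T_{L,\tau}(x)) = \min_{y\in E_1}\psi_{x,L,\tau}(y)$. Substituting $\psi_{x,L,\tau}(y) = \tfrac{\tau}{2} + \tfrac{(\phi(x,y))^2}{2\tau} + \tfrac{L}{2}\|y-x\|^2$ and using $\phi(x,x) = \hat{f}_1(x)$, so that $(\phi(x,x))^2 = \hat{f}_2(x)$, the constant $\tfrac{\tau}{2}$ cancels and I obtain
$$\frac{\tau}{2} + \frac{\hat{f}_2(x)}{2\tau} - \hat{f}_1(T_{L,\tau}(x)) \ge \max_{y\in E_1}\left\{\frac{\hat{f}_2(x) - (\phi(x,y))^2}{2\tau} - \frac{L}{2}\|y-x\|^2\right\}.$$
Writing $g(y) = \hat{f}_2(x) - (\phi(x,y))^2$, the quantity inside the maximum is exactly the function whose constrained maximum over the ball of radius $r$ equals $\Delta_r(x)$.

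Next I would exploit the structure of $g$. Since $(\phi(x,y))^2 = \|\hat{F}(x) + \hat{F}^{'}(x)(y-x)\|^2$ is a convex quadratic in $y$ (its Hessian is $\hat{F}^{'}(x)^{*}\hat{F}^{'}(x)\succeq 0$), the function $g$ is concave, and $g(x) = 0$. Let $y_r$ attain $\max_{\|y-x\|\le r} g(y) = \Delta_r(x)$, with $\|y_r - x\| \le r$; note $\Delta_r(x)\ge g(x)=0$. By concavity and $g(x)=0$, along the segment $y(t) = x + t(y_r - x)$, $t\in[0,1]$, one has $g(y(t)) \ge t\,\Delta_r(x)$, while $\|y(t)-x\| = t\|y_r - x\| \le tr$. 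Restricting the maximization over $y$ to this segment and replacing the penalty coefficient $\|y_r-x\|^2$ by the larger $r^2$ (which only weakens the bound) yields
$$\max_{y\in E_1}\left\{\frac{g(y)}{2\tau} - \frac{L}{2}\|y-x\|^2\right\} \ge \max_{t\in[0,1]}\left\{\frac{t\,\Delta_r(x)}{2\tau} - \frac{Lr^2}{2}\,t^2\right\}.$$

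Finally I would solve the one-dimensional problem in closed form. The objective $q(t) = \tfrac{\Delta_r(x)}{2\tau}\,t - \tfrac{Lr^2}{2}\,t^2$ is concave with unconstrained maximizer $t^{*} = \tfrac{\Delta_r(x)}{2\tau L r^2} \ge 0$. If $t^{*}\le 1$, the maximum over $[0,1]$ is attained at $t^{*}$ and equals $q(t^{*}) = \tfrac{(\Delta_r(x))^2}{8\tau^2 L r^2} = Lr^2\cdot\tfrac{(t^{*})^2}{2}$; if $t^{*} > 1$, it is attained at $t=1$ and equals $q(1) = \tfrac{\Delta_r(x)}{2\tau} - \tfrac{Lr^2}{2} = Lr^2\left(t^{*} - \tfrac{1}{2}\right)$. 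In both regimes the value is precisely $Lr^2\,\varkappa\left(\tfrac{\Delta_r(x)}{2\tau L r^2}\right)$, which is the asserted bound. The main obstacle will be the concavity/interpolation step: it is what licenses the affine-in-$t$ lower estimate on $g$ along the segment, and the truncation of $t^{*}$ to the interval $[0,1]$ is exactly what generates the quadratic branch ($t^{*}\le 1$) and the linear branch ($t^{*}>1$) of $\varkappa$; care is also needed that the substitution $\|y_r-x\|^2\le r^2$ is used in the direction that preserves the inequality.
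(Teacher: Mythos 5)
Your proof is correct and follows essentially the same route as the paper: both restrict the minimization of the local model $\psi_{x,L,\tau}$ to the segment joining $x$ and the constrained minimizer of $(\phi(x,\cdot))^{2}$ on the ball of radius $r$, use convexity of $\|\cdot\|^{2}$ (your concavity of $g$ is the same fact), replace $\|y_{r}-x\|^{2}$ by $r^{2}$ in the penalty, and solve the resulting one-dimensional concave quadratic over $[0,1]$, whose two regimes produce exactly the two branches of $\varkappa$.
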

\begin{proof}
Введём $h_{r} = \argmin\limits_{h\in E_{1}}\left\{\left(\phi(x, x + h)\right)^{2}: \|h\|\leq r\right\}$. Распишем локальную модель в точке $T_{L, \tau}(x)$:
\begin{equation*}
    \begin{aligned}
        &\hat{f}_{1}(T_{L, \tau}(x))\leq\left\{\text{лемма \ref{lm:aux_det_upper_model}}\right\}\leq\psi_{x, L, \tau}(T_{L, \tau}(x))\leq\\
        &\leq\min\limits_{t\in[0,~1]}\left\{\frac{\tau}{2} + \frac{1}{2\tau}\left\|\hat{F}(x) + t\hat{F}^{'}(x)h_{r}\right\|^{2} + \frac{L}{2}(tr)^{2}\right\}= \frac{\tau}{2} +\\
        &+ \min\limits_{t\in[0,~1]}\left\{\frac{1}{2\tau}\left\|(1 - t)\hat{F}(x) + t\left(\hat{F}(x) + \hat{F}^{'}(x)h_{r}\right)\right\|^{2} + \frac{L}{2}(tr)^{2}\right\}\leq\left\{\|\cdot\|^{2}\text{ выпукла}\right\}\leq\frac{\tau}{2} +\\
        &+ \min\limits_{t\in[0,~1]}\left\{\frac{(1 - t)}{2\tau}\hat{f}_{2}(x) + \frac{t}{2\tau}\left(\phi(x, x + h_{r})\right)^{2} + \frac{L}{2}(tr)^{2}\right\}=\frac{\tau}{2} + \frac{\hat{f}_{2}(x)}{2\tau} + \min\limits_{t\in[0,~1]}\left\{\frac{-t}{2\tau}\Delta_{r}(x) + \frac{L}{2}(tr)^{2}\right\}\Rightarrow\\
        &\Rightarrow\frac{\tau}{2} + \frac{\hat{f}_{2}(x)}{2\tau} - \hat{f}_{1}(T_{L,\tau}(x))\geq Lr^{2}\max\limits_{t\in[0,~1]}\left\{\frac{\Delta_{r}(x)}{2\tau Lr^{2}}t - \frac{1}{2}t^{2}\right\}.
    \end{aligned}
\end{equation*}
Выражение в правой части получившегося неравенства представляет собой полином второй степени с отрицательным коэффициентом у старшей степени и с корнями $t\in\left\{0,~\frac{\Delta_{r}(x)}{\tau Lr^{2}}\right\}$, что означает для выражения точки условного максимума $t^{*}$ необходимость рассмотреть два случая: $\frac{\Delta_{r}(x)}{2r^{2}\tau L}\leq 1$ и $\frac{\Delta_{r}(x)}{2r^{2}\tau L} > 1$. В первом случае $t^{*} = \frac{\Delta_{r}(x)}{2\tau Lr^{2}}$, во втором $t^{*} = 1$. Полученная оценка имеет следующее представление:
\begin{equation}\label{eq:aux_det_local_decrease_2_eq_1}
    \frac{\tau}{2} + \frac{\hat{f}_{2}(x)}{2\tau} - \hat{f}_{1}(T_{L,\tau}(x))\geq Lr^{2}\cdot\begin{sqcases}
        \frac{1}{2}\left(\frac{\Delta_{r}(x)}{2\tau Lr^{2}}\right)^{2},~\text{при }t^{*} = \frac{\Delta_{r}(x)}{2\tau Lr^{2}};\\[10pt]
        \frac{\Delta_{r}(x)}{2\tau Lr^{2}} - \frac{1}{2},~\text{при }t^{*} = 1.
    \end{sqcases}
\end{equation}
Введём функцию $\varkappa(t) = \begin{sqcases}
    \frac{t^{2}}{2},~t\in[0, 1];\\
    t - \frac{1}{2},~t > 1.
\end{sqcases}$ Перепишем с её помощью оценку \eqref{eq:aux_det_local_decrease_2_eq_1}:
$$\frac{\tau}{2} + \frac{\hat{f}_{2}(x)}{2\tau} - \hat{f}_{1}(T_{L, \tau}(x))\geq Lr^{2}\varkappa\left(\frac{\Delta_{r}(x)}{2\tau Lr^{2}}\right).$$
В этой оценке $\hat{f}_{2}(x)\geq\Delta_{\infty}(x)\geq\Delta_{r}(x)\geq\Delta_{0}(x) = 0$ и $\varkappa(t)\geq 0$ по построению.
\end{proof}
\begin{lm:corollary}
Результаты утверждения содержат в себе неравенство
$$\frac{\tau}{2} + \frac{\hat{f}_{2}(x)}{2\tau} - \psi_{x, L, \tau}(T_{L, \tau}(x))\geq Lr^{2}\varkappa\left(\frac{\Delta_{r}(x)}{2\tau Lr^{2}}\right),$$
которое верно для $L > 0$ и $x\in E_{1}$. Более того, для достаточно малых значений $Lr^{2}$, таких, что $\frac{\Delta_{r}(x)}{2\tau Lr^{2}}\geq 1$, оценка следующая:
$$\frac{\tau}{2} + \frac{\hat{f}_{2}(x)}{2\tau} - \psi_{x, L, \tau}(T_{L, \tau}(x))\geq\frac{\Delta_{r}(x)}{2\tau} - \frac{Lr^{2}}{2}.$$
Для больших значений $Lr^{2}$, при которых $\frac{\Delta_{r}(x)}{2\tau Lr^{2}}\leq 1$, эта оценка имеет другую форму:
$$\frac{\tau}{2} + \frac{\hat{f}_{2}(x)}{2\tau} - \psi_{x, L, \tau}(T_{L, \tau}(x))\geq\frac{\left(\Delta_{r}(x)\right)^{2}}{8\tau^{2} Lr^{2}}.$$
\end{lm:corollary}
\begin{lm:corollary}\label{lm:cor_aux_det_local_decrease_2}
Для достаточно больших значений $Lr^{2}$, таких, что $\frac{\Delta_{r}(x)}{2\tau Lr^{2}}\leq 1$, выведенная оценка упрощается:
$$\frac{\tau}{2} + \frac{\hat{f}_{2}(x)}{2\tau} - \hat{f}_{1}(T_{L, \tau}(x))\geq\frac{\left(\Delta_{r}(x)\right)^{2}}{8\tau^{2} Lr^{2}}.$$
При достаточно малых значениях $r$, для которых $\frac{\Delta_{r}(x)}{2\tau Lr^{2}}\geq 1$, верно другое неравенство:
$$\frac{\tau}{2} + \frac{\hat{f}_{2}(x)}{2\tau} - \hat{f}_{1}(T_{L, \tau}(x))\geq\frac{\Delta_{r}(x)}{2\tau} - \frac{Lr^{2}}{2}.$$
В полученных оценках функция $Lr^{2}\varkappa\left(\frac{\Delta_{r}(x)}{2\tau Lr^{2}}\right)$ является монотонно убывающей по $L$ и по $\tau$.
\end{lm:corollary}
\begin{lm:corollary}\label{lm:cor_2_aux_det_local_decrease_2}
При выборе $\tau = \hat{f}_{1}(x) > 0$ из полученной оценки следует, что если $x\in\mathcal{L}(\hat{f}_{1}(x))\subseteq\mathcal{F}$, то и $T_{L, \hat{f}_{1}(x)}(x)\in\mathcal{L}(\hat{f}_{1}(x))$:
\begin{equation*}
    \begin{aligned}
        \frac{\tau}{2} + \frac{\hat{f}_{2}(x)}{2\tau} - \hat{f}_{1}(T_{L, \tau}(x))&\geq Lr^{2}\varkappa\left(\frac{\Delta_{r}(x)}{2\tau Lr^{2}}\right)\Rightarrow\left\{\tau = \hat{f}_{1}(x)\right\}\Rightarrow\hat{f}_{1}(x) - \hat{f}_{1}(T_{L, \hat{f}_{1}(x)}(x))\geq\\
        &\geq Lr^{2}\varkappa\left(\frac{\Delta_{r}(x)}{2\hat{f}_{1}(x) Lr^{2}}\right)\geq 0\Rightarrow\hat{f}_{1}(x)\geq\hat{f}_{1}(T_{L, \hat{f}_{1}(x)}(x))\Rightarrow\\
        &\Rightarrow T_{L, \hat{f}_{1}(x)}(x)\in\mathcal{L}(\hat{f}_{1}(T_{L, \hat{f}_{1}(x)}(x)))\subseteq\mathcal{L}(\hat{f}_{1}(x)).
    \end{aligned}
\end{equation*}
\end{lm:corollary}

В лемме \ref{lm:aux_det_psi_upper_bound} вводится верхняя оценка уже самой локальной модели, не только функции $\hat{f}_{1}$, данная оценка понадобится для установки в теореме \ref{th:DetQuadConv} локальной квадратичной сходимости метода Гаусса--Ньютона, описанного схемой \ref{alg:gen_det_gnm}.

\begin{lemma}\label{lm:aux_det_psi_upper_bound}
    Пусть $x\in\mathcal{F},~T_{L, \tau}(x)\in\mathcal{F},~L > 0,~\tau > 0$. Тогда
    \begin{equation*}
        \begin{aligned}
            \psi_{x, L, \tau}(T_{L, \tau}(x))&\leq\min\limits_{y\in\mathcal{F}}\left\{\frac{\tau}{2} + \frac{L\|y - x\|^{2}}{2} + \frac{\hat{f}_{2}(y)}{2\tau} + \frac{\hat{f}_{1}(y)L_{\hat{F}}\|y - x\|^{2}}{2\tau} + \frac{L_{\hat{F}}^{2}\|y - x\|^{4}}{8\tau}\right\}.
        \end{aligned}
    \end{equation*}
\end{lemma}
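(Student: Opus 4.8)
The plan is to exploit that $T_{L, \tau}(x)$ is, by its very definition, the minimizer of the strongly convex (in $y$) function $\psi_{x, L, \tau}(\cdot)$ over all of $E_{1}$, so that in particular $\psi_{x, L, \tau}(T_{L, \tau}(x)) = \min_{z\in E_{1}}\psi_{x, L, \tau}(z) \leq \psi_{x, L, \tau}(y)$ for every $y\in\mathcal{F}\subseteq E_{1}$. It then suffices to bound $\psi_{x, L, \tau}(y)$ from above, for an arbitrary fixed $y\in\mathcal{F}$, by the expression under the minimum in the statement, and to pass to the infimum over $y\in\mathcal{F}$ at the very end.

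Recalling that $\psi_{x, L, \tau}(y) = \frac{\tau}{2} + \frac{(\phi(x, y))^{2}}{2\tau} + \frac{L}{2}\|y - x\|^{2}$ with $\phi(x, y) = \|\hat{F}(x) + \hat{F}^{'}(x)(y - x)\|$, and observing that the terms $\frac{\tau}{2}$ and $\frac{L}{2}\|y-x\|^{2}$ already appear verbatim on the right-hand side, the whole claim reduces to the pointwise estimate $(\phi(x, y))^{2} \leq \hat{f}_{2}(y) + \hat{f}_{1}(y)L_{\hat{F}}\|y - x\|^{2} + \frac{L_{\hat{F}}^{2}}{4}\|y - x\|^{4}$ (divide by $2\tau$ to recover the last three terms). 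To get this I would write $\hat{F}(x) + \hat{F}^{'}(x)(y - x) = \hat{F}(y) - \bigl(\hat{F}(y) - \hat{F}(x) - \hat{F}^{'}(x)(y - x)\bigr)$, apply the triangle inequality, and invoke inequality \eqref{eq:lm_aux_det_model_bound} from the proof of Lemma \ref{lm:aux_det_upper_model} (legitimate since $(x, y)\in\mathcal{F}^{2}$ and Assumption \ref{as:det_hat_F_der_smooth} holds), which yields $\phi(x, y) \leq \hat{f}_{1}(y) + \frac{L_{\hat{F}}}{2}\|y - x\|^{2}$. Squaring this nonnegative bound and using $\hat{f}_{1}(y)^{2} = \hat{f}_{2}(y)$ gives exactly the required pointwise inequality.

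Combining the two steps, for every $y\in\mathcal{F}$ one obtains $\psi_{x, L, \tau}(T_{L, \tau}(x)) \leq \psi_{x, L, \tau}(y) \leq \frac{\tau}{2} + \frac{L\|y - x\|^{2}}{2} + \frac{\hat{f}_{2}(y)}{2\tau} + \frac{\hat{f}_{1}(y)L_{\hat{F}}\|y - x\|^{2}}{2\tau} + \frac{L_{\hat{F}}^{2}\|y - x\|^{4}}{8\tau}$, and taking the minimum of the right-hand side over $y\in\mathcal{F}$ completes the argument. I do not expect a genuine obstacle here: the proof uses only the optimality of $T_{L, \tau}(x)$ for $\psi_{x, L, \tau}(\cdot)$ and the already-established remainder bound \eqref{eq:lm_aux_det_model_bound}; the only mildly delicate point is bookkeeping the hypotheses of \eqref{eq:lm_aux_det_model_bound} — it must be applied with both points in $\mathcal{F}$, which is why the minimization is restricted to $y\in\mathcal{F}$ and why this restriction appears in the statement.
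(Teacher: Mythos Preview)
Your proposal is correct and follows essentially the same route as the paper: use the optimality of $T_{L,\tau}(x)$ for $\psi_{x,L,\tau}(\cdot)$, rewrite $\hat{F}(x)+\hat{F}^{'}(x)(y-x)=\hat{F}(y)-\bigl(\hat{F}(y)-\hat{F}(x)-\hat{F}^{'}(x)(y-x)\bigr)$, apply the triangle inequality together with the remainder bound \eqref{eq:lm_aux_det_model_bound}, and expand the square $\bigl(\hat{f}_{1}(y)+\frac{L_{\hat{F}}}{2}\|y-x\|^{2}\bigr)^{2}$. The only cosmetic difference is that the paper carries the $\min_{y\in\mathcal{F}}$ from the first line (using $T_{L,\tau}(x)\in\mathcal{F}$ to write an equality there) while you bound at a fixed $y$ and take the minimum at the end; the content is identical.
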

\begin{proof}
По определению $\psi_{x, L, \tau}(\cdot)$:
\begin{equation*}
\begin{aligned}
    \psi_{x, L, \tau}(T_{L, \tau}(x)) &= \min\limits_{y\in\mathcal{F}}\left\{\frac{\tau}{2} + \frac{1}{2\tau}\left\|\hat{F}(x) + \hat{F}^{'}(x)(y - x)\right\|^{2} + \frac{L}{2}\|y - x\|^{2}\right\}=\frac{\tau}{2} +\\
    &+ \min\limits_{y\in\mathcal{F}}\left\{\frac{1}{2\tau}\left(\left\|\hat{F}(y) - \left(\hat{F}(y) - \hat{F}(x) - \hat{F}^{'}(x)(y - x)\right)\right\|\right)^{2} + \frac{L}{2}\|y - x\|^{2}\right\}\leq\frac{\tau}{2} +\\
    &+ \min\limits_{y\in\mathcal{F}}\left\{\frac{1}{2\tau}\left(\hat{f}_{1}(y) + \left\|\hat{F}(y) - \hat{F}(x) - \hat{F}^{'}(x)(y - x)\right\|\right)^{2} + \frac{L}{2}\|y - x\|^{2}\right\}\leq\left\{\text{неравенство \eqref{eq:lm_aux_det_model_bound}}\right\}\leq\\
    &\leq\frac{\tau}{2} + \min\limits_{y\in\mathcal{F}}\left\{\frac{1}{2\tau}\left(\hat{f}_{1}(y) + \frac{L_{\hat{F}}}{2}\|y - x\|^{2}\right)^{2} + \frac{L}{2}\|y - x\|^{2}\right\}\leq\frac{\tau}{2} +\\
    &+ \min\limits_{y\in\mathcal{F}}\left\{\frac{L\|y - x\|^{2}}{2} + \frac{\hat{f}_{2}(y)}{2\tau} + \frac{\hat{f}_{1}(y)L_{\hat{F}}\|y - x\|^{2}}{2\tau} + \frac{L_{\hat{F}}^{2}\|y - x\|^{4}}{8\tau}\right\}.
\end{aligned}
\end{equation*}
\end{proof}
\begin{lm:corollary}\label{lm:cor_aux_det_psi_upper_bound}
Пусть $x^{*}\in\mathcal{F}$ --- решение задачи \eqref{eq:smooth_system}: $\hat{F}(x^{*}) = \mathbf{0}_{m}$, $\mathcal{L}(\hat{f}_{1}(x))\subseteq\mathcal{F}$. Тогда
\begin{equation*}
    \begin{aligned}
        \psi_{x, L, \tau}(T_{L, \tau}(x))&\leq\min\limits_{y\in\mathcal{F}}\left\{\frac{\tau}{2} + \frac{L\|y - x\|^{2}}{2} + \frac{\hat{f}_{2}(y)}{2\tau} + \frac{\hat{f}_{1}(y)L_{\hat{F}}\|y - x\|^{2}}{2\tau} + \frac{L_{\hat{F}}^{2}\|y - x\|^{4}}{8\tau}\right\}\leq\frac{\tau}{2} + \frac{L\|y - x\|^{2}}{2} +\\
        &+ \frac{\hat{f}_{2}(y)}{2\tau} + \frac{\hat{f}_{1}(y)L_{\hat{F}}\|y - x\|^{2}}{2\tau} + \frac{L_{\hat{F}}^{2}\|y - x\|^{4}}{8\tau} = \left\{y = x^{*}\right\} = \frac{\tau}{2} + \frac{L\|x - x^{*}\|^{2}}{2} + \frac{L_{\hat{F}}^{2}\|x - x^{*}\|^{4}}{8\tau}.
    \end{aligned}
\end{equation*}
\end{lm:corollary}

Лемма \ref{lm:aux_matrix_order} устанавливает важное матричное отношение частичного порядка для вывода линейной сходимости в условии Поляка--Лоясиевича.

\begin{lemma}[\cite{Nesterov2020}]\label{lm:aux_matrix_order}
    Пусть линейный оператор $A: E_{1}\rightarrow E_{2}$, с $\dim(E_{1}) = n$, $\dim(E_{2}) = m$, $m\leq n$ обладает матрицей, не вырожденной по строкам:
    $$AA^{*}\succeq\mu I_{m}$$
    для определённого $\mu > 0$. Тогда для любого $\xi > 0$ выполнено
    $$A\left(\xi I_{n} + A^{*}A\right)^{-t}A^{*}\succeq\frac{\mu}{\left(\xi + \mu\right)^{t}}I_{m},~t\in[0,~1].$$
\end{lemma}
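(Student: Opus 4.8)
The plan is to diagonalize everything by a singular value decomposition and thereby reduce the operator inequality to a one-dimensional scalar inequality. I would write $A = U\Sigma V^{*}$, where $U: E_{2}\to E_{2}$ and $V: E_{1}\to E_{1}$ are orthogonal and $\Sigma\in\mathbb{R}^{m\times n}$ carries the singular values $\sigma_{1},\dots,\sigma_{m}$ of $A$ on its main diagonal. Then $AA^{*} = U\,\mathrm{diag}(\sigma_{1}^{2},\dots,\sigma_{m}^{2})\,U^{*}$, so the hypothesis $AA^{*}\succeq\mu I_{m}$ is exactly equivalent to $\sigma_{i}^{2}\geq\mu$ for every $i\in\overline{1, m}$. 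Likewise $A^{*}A = V\Sigma^{*}\Sigma V^{*}$ has eigenvalues $\sigma_{1}^{2},\dots,\sigma_{m}^{2}$ together with $n-m$ further zeros.

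First I would substitute the decomposition into the left-hand side. Using $V^{*}V = I_{n}$ and the functional-calculus identity $(\xi I_{n} + A^{*}A)^{-t} = V(\xi I_{n} + \Sigma^{*}\Sigma)^{-t}V^{*}$ (legitimate because $\xi>0$ keeps the matrix positive definite, so the fractional power is well defined), the inner orthogonal factors collapse and I obtain
\begin{equation*}
    A(\xi I_{n} + A^{*}A)^{-t}A^{*} = U\,\Sigma(\xi I_{n} + \Sigma^{*}\Sigma)^{-t}\Sigma^{*}\,U^{*}.
\end{equation*}
The inner $m\times m$ matrix $\Sigma(\xi I_{n} + \Sigma^{*}\Sigma)^{-t}\Sigma^{*}$ is diagonal with entries $\sigma_{i}^{2}/(\xi+\sigma_{i}^{2})^{t}$, the $n-m$ zero columns of $\Sigma$ being annihilated by the outer factors. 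Since conjugation by the orthogonal operator $U$ preserves the order $\succeq$ on the cone of positive semidefinite matrices, it suffices to establish the scalar bound $\sigma_{i}^{2}/(\xi+\sigma_{i}^{2})^{t}\geq\mu/(\xi+\mu)^{t}$ for each $i$.

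For this last step I would show that $g(s)=s(\xi+s)^{-t}$ is nondecreasing on $s>0$, so that $\sigma_{i}^{2}\geq\mu$ forces $g(\sigma_{i}^{2})\geq g(\mu)$. A direct differentiation yields
\begin{equation*}
    g'(s) = \frac{\xi + (1-t)s}{(\xi+s)^{t+1}},
\end{equation*}
which is strictly positive precisely because $\xi>0$, $s>0$ and $1-t\geq0$ when $t\in[0,1]$. This monotonicity is the crux of the argument, and it simultaneously explains the restriction to $t\in[0,1]$: the numerator $\xi+(1-t)s$ need no longer stay positive once $t>1$, so the bound would fail outside this range. Reassembling the $m$ scalar inequalities into a diagonal matrix inequality and conjugating back by $U$ finishes the proof. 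I expect the only genuine obstacle to be the bookkeeping around the fractional power $(\cdot)^{-t}$ and the verification that the degenerate directions do not interfere, rather than any substantial analytic difficulty.
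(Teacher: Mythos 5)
Your proof is correct and takes essentially the same approach as the paper's: both diagonalize $A$ by a singular value decomposition, translate the hypothesis $AA^{*}\succeq\mu I_{m}$ into $\sigma_{i}^{2}\geq\mu$ for every singular value, and reduce the claim to the scalar inequality $s\left(\xi+s\right)^{-t}\geq\mu\left(\xi+\mu\right)^{-t}$ for $s\geq\mu$, $t\in[0,1]$. The differences are only presentational: the paper uses a reduced SVD with an explicit orthogonal complement $W$ to handle the $n-m$ degenerate directions, while you use a full SVD with rectangular $\Sigma$, and you verify explicitly (via the derivative of $g(s)=s\left(\xi+s\right)^{-t}$, which is where $t\leq1$ enters) the monotonicity that the paper leaves implicit in its final matrix-power step.
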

Отношение порядка <<$\succeq$>> выполнено на конусе неотрицательно определённых матриц.
\begin{proof}
Рассмотрим сингулярное разложение матрицы оператора $A$:
\begin{equation*}
    A = U\Lambda V^{*},~U^{*}U = I_{m},~V^{*}V = I_{m},
\end{equation*}
где $\Lambda$ --- диагональная матрица, $\Lambda\succeq\sqrt{\mu}I_{m}$ (по условию). Введём матрицу $W$ со столбцами, ортогонально дополняющими столбцы из $V$ до полного базиса в $E_{1}$:
\begin{equation*}
    VV^{*} + WW^{*} = I_{n},~W^{*}V = \mathbf{0}_{(n - m)\times m}.
\end{equation*}
Пользуясь блочной структурой из-за $W^{*}V = \mathbf{0}_{(n - m)\times m}$, получаем:
\begin{equation*}
    \begin{aligned}
        A\left(\xi I_{n} + A^{*}A\right)^{-t}A^{*} &= U\Lambda V^{*}\left(\xi\left(VV^{*} + WW^{*}\right) + V\Lambda^{2}V^{*}\right)^{-t}V\Lambda U^{*} =\\
        &= U\Lambda V^{*}\left(V\left(\xi I_{m} + \Lambda^{2}\right)V^{*} + \xi WW^{*}\right)^{-t}V\Lambda U^{*} =\\
        &= U\Lambda V^{*}\left(V\left(\xi I_{m} + \Lambda^{2}\right)^{-t}V^{*} + \frac{1}{\xi^{t}}WW^{*}\right)V\Lambda U^{*} = U\Lambda\left(\xi I_{m} + \Lambda^{2}\right)^{-t}\Lambda U^{*} =\\
        &= U\left(\xi\Lambda^{-\frac{2}{t}} + \Lambda^{2 - \frac{2}{t}}\right)^{-t}U^{*}\succeq\frac{1}{\left(\xi\mu^{-\frac{1}{t}} + \mu^{1 - \frac{1}{t}}\right)^{t}}I_{m} = \frac{\mu}{\left(\xi + \mu\right)^{t}}I_{m}.
    \end{aligned}
\end{equation*}
\end{proof}

\subsubsection*{Основные утверждения}

В теореме \ref{th:DetSublinConvMain} выведена сходимость к стационарной точке в терминах нормы проксимального градиента и приращения локальной модели.

\begin{re:theorem}\label{th:DetSublinConv}
    Пусть выполнено предположение \ref{as:det_hat_F_der_smooth}, $k\in\mathbb{N},~r > 0$. Тогда для метода Гаусса--Ньютона, реализованного по схеме \ref{alg:gen_det_gnm} с $\tau_{k} = \hat{f}_{1}(x_{k})$, $\varepsilon_{k} = \varepsilon \geq 0$, верны следующие оценки:
    \begin{equation*}
        \begin{cases}
            &\frac{8L_{\hat{F}}^{2}}{L}\left(\varepsilon + \frac{\left(\hat{f}_{1}(x_{0}) - \hat{f}_{1}(x_{k})\right)}{k}\right)\geq\min\limits_{i\in\overline{0, k - 1}}\left\{\left\|2L_{\hat{F}}\left(T_{2L_{\hat{F}}, \hat{f}_{1}(x_{i})}(x_{i}) - x_{i}\right)\right\|^{2}\right\};\\[10pt]
            &L_{\hat{F}}\left(\varepsilon + \frac{\left(\hat{f}_{1}(x_{0}) - \hat{f}_{1}(x_{k})\right)}{k}\right)\geq\min\limits_{i\in\overline{0, k - 1}}\left\{2\left(L_{\hat{F}}r\right)^{2}\varkappa\left(\frac{\Delta_{r}(x_{i})}{4\hat{f}_{1}(x_{i})L_{\hat{F}}r^{2}}\right)\right\};
        \end{cases}
    \end{equation*}
    где $\varkappa(t) = \frac{t^{2}}{2}\mathds{1}_{\left\{t\in[0, 1]\right\}} + \left(t - \frac{1}{2}\right)\mathds{1}_{\left\{t > 1\right\}}.$
\end{re:theorem}
\begin{proof}
Согласно леммам \ref{lm:aux_det_local_decrease_1}, \ref{lm:aux_det_local_decrease_2} и следствиям \ref{lm:cor_2_aux_det_local_decrease_1}, \ref{lm:cor_2_aux_det_local_decrease_2} для $\tau = \hat{f}_{1}(x_{k}),~L = L_{k},~x = x_{k}$ имеем следующее:
\begin{equation*}
    \begin{cases}
        \hat{f}_{1}(x_{k}) - \psi_{x_{k}, L_{k}, \hat{f}_{1}(x_{k})}(T_{L_{k}, \hat{f}_{1}(x_{k})}(x_{k}))\geq\frac{L_{k}}{2}\left\|T_{L_{k}, \hat{f}_{1}(x_{k})}(x_{k}) - x_{k}\right\|^{2};\\[10pt]
        \hat{f}_{1}(x_{k}) - \psi_{x_{k}, L_{k}, \hat{f}_{1}(x_{k})}(T_{L_{k}, \hat{f}_{1}(x_{k})}(x_{k}))\geq L_{k}r^{2}\varkappa\left(\frac{\Delta_{r}(x_{k})}{2\hat{f}_{1}(x_{k})L_{k}r^{2}}\right).
    \end{cases}
\end{equation*}
Добавим и вычтем $\psi_{x_{k}, L_{k}, \hat{f}_{1}(x_{k})}(x_{k + 1})$:
\begin{equation*}
    \begin{cases}
        \begin{aligned}
            \hat{f}_{1}(x_{k}) &+ \left(\psi_{x_{k}, L_{k}, \hat{f}_{1}(x_{k})}(x_{k + 1}) - \psi_{x_{k}, L_{k}, \hat{f}_{1}(x_{k})}(T_{L_{k}, \hat{f}_{1}(x_{k})}(x_{k}))\right) - \psi_{x_{k}, L_{k}, \hat{f}_{1}(x_{k})}(x_{k + 1})\geq\\
            &\geq\frac{L_{k}}{2}\left\|T_{L_{k}, \hat{f}_{1}(x_{k})}(x_{k}) - x_{k}\right\|^{2};\\
            \vspace{1pt}
        \end{aligned}\\
        \begin{aligned}
            \hat{f}_{1}(x_{k}) &+ \left(\psi_{x_{k}, L_{k}, \hat{f}_{1}(x_{k})}(x_{k + 1}) - \psi_{x_{k}, L_{k}, \hat{f}_{1}(x_{k})}(T_{L_{k}, \hat{f}_{1}(x_{k})}(x_{k}))\right) - \psi_{x_{k}, L_{k}, \hat{f}_{1}(x_{k})}(x_{k + 1})\geq\\
            &\geq L_{k}r^{2}\varkappa\left(\frac{\Delta_{r}(x_{k})}{2\hat{f}_{1}(x_{k})L_{k}r^{2}}\right).
        \end{aligned}
    \end{cases}
\end{equation*}
Используем условия $\psi_{x_{k}, L_{k}, \tau_{k}}(x_{k + 1}) - \psi_{x_{k}, L_{k}, \tau_{k}}(T_{L_{k}, \tau_{k}}(x_{k}))\leq\varepsilon_{k} = \varepsilon$ и $-\psi_{x_{k}, L_{k}, \hat{f}_{1}(x_{k})}(x_{k + 1})\leq -\hat{f}_{1}(x_{k + 1})$:
\begin{equation*}
    \begin{cases}
        \begin{aligned}
            \hat{f}_{1}(x_{k}) + \varepsilon - \hat{f}_{1}(x_{k + 1})\geq\frac{L_{k}}{2}\left\|T_{L_{k}, \hat{f}_{1}(x_{k})}(x_{k}) - x_{k}\right\|^{2};
        \end{aligned}\\[10pt]
        \begin{aligned}
            \hat{f}_{1}(x_{k}) + \varepsilon - \hat{f}_{1}(x_{k + 1})\geq L_{k}r^{2}\varkappa\left(\frac{\Delta_{r}(x_{k})}{2\hat{f}_{1}(x_{k})L_{k}r^{2}}\right).
        \end{aligned}
    \end{cases}
\end{equation*}
Усредним обе части неравенств по первым $k$ итерациям:
\begin{equation}\label{eq:det_sub_lin_approx_conv_1}
    \begin{cases}
        \begin{aligned}
            \varepsilon + \frac{\hat{f}_{1}(x_{0}) - \hat{f}_{1}(x_{k})}{k}\geq\frac{1}{k}\sum\limits_{i = 0}^{k - 1}\frac{L_{i}}{2}\left\|T_{L_{i}, \hat{f}_{1}(x_{i})}(x_{i}) - x_{i}\right\|^{2};
        \end{aligned}\\[10pt]
        \begin{aligned}
            \varepsilon + \frac{\hat{f}_{1}(x_{0}) - \hat{f}_{1}(x_{k})}{k}\geq\frac{1}{k}\sum\limits_{i = 0}^{k - 1}L_{i}r^{2}\varkappa\left(\frac{\Delta_{r}(x_{i})}{2\hat{f}_{1}(x_{i})L_{i}r^{2}}\right).
        \end{aligned}
    \end{cases}
\end{equation}
Воспользуемся тем, что в схеме \ref{alg:gen_det_gnm} $L_{k}\geq L$, функции $\left\|T_{L_{i}, \hat{f}_{1}(x_{i})}(x_{i}) - x_{i}\right\|^{2}$ и $L_{i}r^{2}\varkappa\left(\frac{\Delta_{r}(x_{i})}{2\hat{f}_{1}(x_{i})L_{i}r^{2}}\right)$ монотонно убывают по $L_{i}$ (следствия \ref{lm:cor_aux_det_local_decrease_1} и \ref{lm:cor_aux_det_local_decrease_2}):
\begin{equation*}
    \begin{cases}
        \begin{aligned}
            \varepsilon &+ \frac{\hat{f}_{1}(x_{0}) - \hat{f}_{1}(x_{k})}{k}\geq\frac{1}{k}\sum\limits_{i = 0}^{k - 1}\frac{L_{i}}{2}\left\|T_{L_{i}, \hat{f}_{1}(x_{i})}(x_{i}) - x_{i}\right\|^{2}\geq\\
            &\geq\frac{1}{k}\sum\limits_{i = 0}^{k - 1}\frac{L}{2}\left\|T_{2L_{\hat{F}}, \hat{f}_{1}(x_{i})}(x_{i}) - x_{i}\right\|^{2}\geq\min\limits_{i\in\overline{0, k - 1}}\left\{\frac{L}{2}\left\|T_{2L_{\hat{F}}, \hat{f}_{1}(x_{i})}(x_{i}) - x_{i}\right\|^{2}\right\};
        \end{aligned}\\[10pt]
        \begin{aligned}
            \varepsilon &+ \frac{\hat{f}_{1}(x_{0}) - \hat{f}_{1}(x_{k})}{k}\geq\frac{1}{k}\sum\limits_{i = 0}^{k - 1}L_{i}r^{2}\varkappa\left(\frac{\Delta_{r}(x_{i})}{2\hat{f}_{1}(x_{i})L_{i}r^{2}}\right)\geq\\
            &\geq\frac{1}{k}\sum\limits_{i = 0}^{k - 1}2L_{\hat{F}}r^{2}\varkappa\left(\frac{\Delta_{r}(x_{i})}{4\hat{f}_{1}(x_{i})L_{\hat{F}}r^{2}}\right)\geq\min\limits_{i\in\overline{0, k - 1}}\left\{2L_{\hat{F}}r^{2}\varkappa\left(\frac{\Delta_{r}(x_{i})}{4\hat{f}_{1}(x_{i})L_{\hat{F}}r^{2}}\right)\right\}.
        \end{aligned}
    \end{cases}
\end{equation*}
Приведём домножением на константы правые части к формату \textit{обобщённых проксимальных градиентов}:
\begin{equation*}
        \begin{cases}
            &\frac{8L_{\hat{F}}^{2}}{L}\left(\varepsilon + \frac{\left(\hat{f}_{1}(x_{0}) - \hat{f}_{1}(x_{k})\right)}{k}\right)\geq\min\limits_{i\in\overline{0, k - 1}}\left\{\left\|2L_{\hat{F}}\left(T_{2L_{\hat{F}}, \hat{f}_{1}(x_{i})}(x_{i}) - x_{i}\right)\right\|^{2}\right\};\\
            &L_{\hat{F}}\left(\varepsilon + \frac{\left(\hat{f}_{1}(x_{0}) - \hat{f}_{1}(x_{k})\right)}{k}\right)\geq\min\limits_{i\in\overline{0, k - 1}}\left\{2\left(L_{\hat{F}}r\right)^{2}\varkappa\left(\frac{\Delta_{r}(x_{i})}{4\hat{f}_{1}(x_{i})L_{\hat{F}}r^{2}}\right)\right\}.
        \end{cases}
    \end{equation*}
\end{proof}
\begin{re:th:corollary}\label{th:SubLinConvCor1}
В случае такой адаптивной стратегии подбора точности вычисления $x_{k + 1}$, как\\$\varepsilon_{0} = \varepsilon\hat{f}_{1}(x_{0})$, $\varepsilon_{k} = \varepsilon\left(\hat{f}_{1}(x_{k - 1}) - \hat{f}_{1}(x_{k})\right),~k\in\mathbb{N}$, $\varepsilon\geq0$, возможно получить приближение решения задачи \eqref{eq:smooth_system} с любой наперёд заданной точностью при условии неограниченного количества итераций. Для доказательства этого факта рассмотрим \eqref{eq:det_sub_lin_approx_conv_1} и применим обозначенное правило вычисления $\varepsilon_{k}$:
\begin{equation*}
    \begin{cases}
        \begin{aligned}
            \frac{\varepsilon\left(2\hat{f}_{1}(x_{0}) - \hat{f}_{1}(x_{k - 1})\right)}{k} + \frac{\hat{f}_{1}(x_{0}) - \hat{f}_{1}(x_{k})}{k}\geq\frac{1}{k}\sum\limits_{i = 0}^{k - 1}\frac{L_{i}}{2}\left\|T_{L_{i}, \hat{f}_{1}(x_{i})}(x_{i}) - x_{i}\right\|^{2};
        \end{aligned}\\[10pt]
        \begin{aligned}
            \frac{\varepsilon\left(2\hat{f}_{1}(x_{0}) - \hat{f}_{1}(x_{k - 1})\right)}{k} + \frac{\hat{f}_{1}(x_{0}) - \hat{f}_{1}(x_{k})}{k}\geq\frac{1}{k}\sum\limits_{i = 0}^{k - 1}L_{i}r^{2}\varkappa\left(\frac{\Delta_{r}(x_{i})}{2\hat{f}_{1}(x_{i})L_{i}r^{2}}\right).
        \end{aligned}
    \end{cases}
\end{equation*}
Применяя весь дальнейший ход доказательства теоремы получаем:
\begin{equation*}
    \begin{cases}
        \begin{aligned}
            \frac{8L_{\hat{F}}^{2}}{kL}&\left((1 + 2\varepsilon)\hat{f}_{1}(x_{0}) -\varepsilon\hat{f}_{1}(x_{k - 1}) - \hat{f}_{1}(x_{k})\right)\geq\min\limits_{i\in\overline{0, k - 1}}\left\{\left\|2L_{\hat{F}}\left(T_{2L_{\hat{F}}, \hat{f}_{1}(x_{i})}(x_{i}) - x_{i}\right)\right\|^{2}\right\};
        \end{aligned}\\[10pt]
        \begin{aligned}
            \frac{L_{\hat{F}}}{k}&\left((1 + 2\varepsilon)\hat{f}_{1}(x_{0}) - \varepsilon\hat{f}_{1}(x_{k - 1}) - \hat{f}_{1}(x_{k})\right)\geq\min\limits_{i\in\overline{0, k - 1}}\left\{2\left(L_{\hat{F}}r\right)^{2}\varkappa\left(\frac{\Delta_{r}(x_{i})}{4\hat{f}_{1}(x_{i})L_{\hat{F}}r^{2}}\right)\right\}.
        \end{aligned}
    \end{cases}
\end{equation*}
\end{re:th:corollary}
\begin{re:th:corollary}\label{th:SubLinConvCor2}
Из оценок в предыдущем следствии заменим нулевую итерацию на $k$--ую, а $k$--ую итерацию сдвинем на $(N + 1)\in\mathbb{N}$ итерацию вперёд, получим оценку на хвост суммы неравенств, $k\in\mathbb{Z}_{+}$:
\begin{equation*}
    \begin{cases}
        \begin{aligned}
            \frac{8L_{\hat{F}}^{2}}{(N + 1)L}&\left(\varepsilon\left(\hat{f}_{1}(x_{k - 1}) - \hat{f}_{1}(x_{k + N})\right) + \hat{f}_{1}(x_{k}) - \hat{f}_{1}(x_{k + N + 1})\right)\geq\\
            &\geq\min\limits_{i\in\overline{k, k + N}}\left\{\left\|2L_{\hat{F}}\left(T_{2L_{\hat{F}}, \hat{f}_{1}(x_{i})}(x_{i}) - x_{i}\right)\right\|^{2}\right\};
        \end{aligned}\\[10pt]
        \begin{aligned}
            \frac{L_{\hat{F}}}{N + 1}&\left(\varepsilon\left(\hat{f}_{1}(x_{k - 1}) - \hat{f}_{1}(x_{k + N})\right) + \hat{f}_{1}(x_{k}) - \hat{f}_{1}(x_{k + N + 1})\right)\geq\\
            &\geq\min\limits_{i\in\overline{k, k + N}}\left\{2\left(L_{\hat{F}}r\right)^{2}\varkappa\left(\frac{\Delta_{r}(x_{i})}{4\hat{f}_{1}(x_{i})L_{\hat{F}}r^{2}}\right)\right\}.
        \end{aligned}
    \end{cases}
\end{equation*}
Разворачивая заново цепочку доказательства теоремы для начальной итерации $k > 0$ и финальной итерации $k + N$ получаем следующую оценку на суммы неравенств в \eqref{eq:det_sub_lin_approx_conv_1}:
\begin{equation*}
    \begin{cases}
        \begin{aligned}
            \hat{f}_{1}(x_{k}) &- \hat{f}_{1}(x_{k + N + 1}) + \varepsilon\left(\hat{f}_{1}(x_{k - 1}) - \hat{f}_{1}(x_{k + N})\right)\geq\frac{L}{2}\sum\limits_{i = k}^{k + N}\left\|T_{2L_{\hat{F}}, \hat{f}_{1}(x_{i})}(x_{i}) - x_{i}\right\|^{2}\geq\\
            &\geq\frac{L}{2}\left\|T_{2L_{\hat{F}}, \hat{f}_{1}(x_{k})}(x_{k}) - x_{k}\right\|^{2};\\
            \vspace{1pt}
        \end{aligned}\\
        \begin{aligned}
            \hat{f}_{1}(x_{k}) &- \hat{f}_{1}(x_{k + N + 1}) + \varepsilon\left(\hat{f}_{1}(x_{k - 1}) - \hat{f}_{1}(x_{k + N})\right)\geq\sum\limits_{i = k}^{k + N}2L_{\hat{F}}r^{2}\varkappa\left(\frac{\Delta_{r}(x_{i})}{4\hat{f}_{1}(x_{i})L_{\hat{F}}r^{2}}\right)\geq\\
            &\geq2L_{\hat{F}}r^{2}\varkappa\left(\frac{\Delta_{r}(x_{k})}{4\hat{f}_{1}(x_{k})L_{\hat{F}}r^{2}}\right).
        \end{aligned}
    \end{cases}
\end{equation*}
В пределе при устремлении $N\rightarrow+\infty$ получаем следующее:
\begin{equation}\label{eq:det_sub_lin_approx_conv_2}
    \begin{cases}
        \hat{f}_{1}(x_{k}) - \hat{f}_{1}^{*} + \varepsilon\left(\hat{f}_{1}(x_{k - 1}) - \hat{f}_{1}^{*}\right)\geq\frac{L}{2}\left\|T_{2L_{\hat{F}}, \hat{f}_{1}(x_{k})}(x_{k}) - x_{k}\right\|^{2};\\[10pt]
        \hat{f}_{1}(x_{k}) - \hat{f}_{1}^{*} + \varepsilon\left(\hat{f}_{1}(x_{k - 1}) - \hat{f}_{1}^{*}\right)\geq2L_{\hat{F}}r^{2}\varkappa\left(\frac{\Delta_{r}(x_{k})}{4\hat{f}_{1}(x_{k})L_{\hat{F}}r^{2}}\right).
    \end{cases}
\end{equation}
Неравенства в \eqref{eq:det_sub_lin_approx_conv_2} при условии $\lim\limits_{k\rightarrow+\infty}\varepsilon_{k} = \lim\limits_{k\rightarrow+\infty}\varepsilon\left(\hat{f}_{1}(x_{k - 1}) - \hat{f}_{1}(x_{k})\right) = 0$ означают 
$$\lim\limits_{k\rightarrow+\infty}x_{k + 1} = \lim\limits_{k\rightarrow+\infty}T_{2L_{\hat{F}}, \hat{f}_{1}(x_{k})}(x_{k}) = x^{*}$$
и
\begin{equation}\label{eq:det_sub_lin_approx_conv_3}
    \begin{cases}
        \lim\limits_{k\rightarrow+\infty}\left\|x_{k + 1} - x_{k}\right\| = 0;\\
        \lim\limits_{k\rightarrow+\infty}\frac{\Delta_{r}(x_{k})}{\hat{f}_{1}(x_{k})} = 0.
    \end{cases}
\end{equation}
Пределы в \eqref{eq:det_sub_lin_approx_conv_3} получены как следствие рассмотрения предельных значений неравенств в \eqref{eq:det_sub_lin_approx_conv_2} при\\$k\rightarrow+\infty$, они наглядно демонстрируют ограниченность вариации последовательности $\left\{x_{k}\right\}_{k\in\mathbb{Z}_{+}}$ и связность множества стационарных точек $\left\{x^{*}:~x^{*}\in E_{1},~\Delta_{r}(x^{*}) = 0\right\}$ для данной последовательности.
\end{re:th:corollary}

В теореме \ref{th:DetQuadConvMain} устанавливается локальная квадратичная сходимость и определяются условия размерности задачи \eqref{eq:main_opt_problem}, при которых обычно возможна квадратичная сходимость метода Гаусса--Ньютона со схемой \ref{alg:gen_det_gnm}.

\begin{re:theorem}\label{th:DetQuadConv}
    Пусть выполнено предположение \ref{as:det_hat_F_der_smooth}, пусть для метода Гаусса--Ньютона со схемой \ref{alg:gen_det_gnm} существует $x^{*}\in\mathcal{L}(\hat{f}_{1}(x_{0})),~\hat{F}(x^{*}) = \mathbf{0}_{m}$ --- решение с $\sigma_{\min}\left(\hat{F}^{'}(x^{*})\right)\geq\varsigma > 0$. Если выполнено $\varsigma > \frac{2L_{\hat{F}}}{\alpha}$ при некотором фиксированном $\alpha\in\left(0,~1\right)$ для всех $\varepsilon_{k}\geq 0$, $k\in\mathbb{Z}_{+}$ в схеме \ref{alg:gen_det_gnm} с
    \begin{equation*}
        \begin{cases}
            \|x_{k} - x^{*}\| < \frac{\varsigma}{L_{\hat{F}}} - \frac{2}{\alpha};\\[5pt]
            0 < \tau_{k}\leq\frac{\left(\left(\alpha\left(\varsigma - L_{\hat{F}}\|x_{k} - x^{*}\|\right) - \frac{3L_{\hat{F}}}{2}\right)^{2} - \frac{L_{\hat{F}}^{2}}{4}\right)\|x_{k} - x^{*}\|^{4}}{\|x_{k} - x^{*}\|^{2}L_{k} + 2\varepsilon_{k}};
        \end{cases}
    \end{equation*}
    то $x_{k + 1}\in\mathcal{L}(\hat{f}_{1}(x_{0}))$ и
    \begin{equation*}
        \begin{aligned}
            \|x_{k + 1} - x^{*}\|&\leq\frac{\frac{3L_{\hat{F}}\|x_{k} - x^{*}\|^{2}}{2} + \sqrt{\|x_{k} - x^{*}\|^{2}\left(\tau_{k}L_{k} + \frac{L_{\hat{F}}^{2}\|x_{k} - x^{*}\|^{2}}{4}\right) + 2\tau_{k}\varepsilon_{k}}}{\varsigma - L_{\hat{F}}\|x_{k} - x^{*}\|}\leq \alpha\|x_{k} - x^{*}\|^{2}.
        \end{aligned}
    \end{equation*}
Если не существует такого $\alpha\in(0,~1)$, то в схеме \ref{alg:gen_det_gnm} при выборе
\begin{equation*}
    \begin{cases}
        \tau_{k} = c_{1}\|x_{k} - x^{*}\|^{2},~c_{1} > 0;\\[5pt]
        \varepsilon_{k} = c_{2}\|x_{k} - x^{*}\|^{2},~c_{2}\geq0;
    \end{cases}
\end{equation*}
в области
$$\|x_{k} - x^{*}\|\leq\frac{\varsigma}{\frac{5L_{\hat{F}}}{2} + \sqrt{2c_{1}L_{\hat{F}} + \frac{L_{\hat{F}}^{2}}{4} + 2c_{1}c_{2}}},~k\in\mathbb{Z}_{+}$$
выполнена следующая оценка:
\begin{equation*}
    \begin{aligned}
        \|x_{k + 1} - x^{*}\|&\leq\frac{\frac{3L_{\hat{F}}\|x_{k} - x^{*}\|^{2}}{2} + \sqrt{\|x_{k} - x^{*}\|^{2}\left(\tau_{k}L_{k} + \frac{L_{\hat{F}}^{2}\|x_{k} - x^{*}\|^{2}}{4}\right) + 2\tau_{k}\varepsilon_{k}}}{\varsigma - L_{\hat{F}}\|x_{k} - x^{*}\|}\leq\|x_{k} - x^{*}\|,~x_{k + 1}\in\mathcal{L}(\hat{f}_{1}(x_{0})).
    \end{aligned}
\end{equation*}
\end{re:theorem}
\begin{proof}
Согласно лемме \ref{lm:aux_det_psi_upper_bound} (следствие \ref{lm:cor_aux_det_psi_upper_bound}) $\psi_{x_{k}, L_{k}, \tau_{k}}(T_{L_{k}, \tau_{k}}(x_{k}))$ имеет оценку сверху:
\begin{equation*}
    \begin{aligned}
        \psi_{x_{k}, L_{k}, \tau_{k}}(T_{L_{k}, \tau_{k}}(x_{k}))&\leq\frac{\tau_{k}}{2} + \frac{L_{k}\|x_{k} - x^{*}\|^{2}}{2} + \frac{L_{\hat{F}}^{2}\|x_{k} - x^{*}\|^{4}}{8\tau_{k}}\Rightarrow\\
        &\Rightarrow\left\{\text{прибавим }\psi_{x_{k}, L_{k}, \tau_{k}}(x_{k + 1}) - \psi_{x_{k}, L_{k}, \tau_{k}}(T_{L_{k}, \tau_{k}}(x_{k}))\leq\varepsilon_{k}\right\}\Rightarrow\\
        &\Rightarrow\psi_{x_{k}, L_{k}, \tau_{k}}(x_{k + 1})\leq\frac{\tau_{k}}{2} + \frac{L_{k}\|x_{k} - x^{*}\|^{2}}{2} + \frac{L_{\hat{F}}^{2}\|x_{k} - x^{*}\|^{4}}{8\tau_{k}} + \varepsilon_{k}\Rightarrow\\
        \Rightarrow\psi_{x_{k}, L_{k}, \tau_{k}}(x_{k + 1})& = \frac{\tau_{k}}{2} + \frac{\left(\phi(x_{k}, x_{k + 1})\right)^{2}}{2\tau_{k}} + \frac{L_{k}}{2}\|x_{k + 1} - x_{k}\|^{2}\leq\\
        &\leq\frac{\tau_{k}}{2} + \frac{L_{k}\|x_{k} - x^{*}\|^{2}}{2} + \frac{L_{\hat{F}}^{2}\|x_{k} - x^{*}\|^{4}}{8\tau_{k}} + \varepsilon_{k}\Rightarrow\\
        \Rightarrow\frac{\left(\phi(x_{k}, x_{k + 1})\right)^{2}}{2\tau_{k}}&\leq\frac{L_{k}\|x_{k} - x^{*}\|^{2}}{2} + \frac{L_{\hat{F}}^{2}\|x_{k} - x^{*}\|^{4}}{8\tau_{k}} + \varepsilon_{k}\Rightarrow\sqrt{\tau_{k}L_{k}\|x_{k} - x^{*}\|^{2} + \frac{L_{\hat{F}}^{2}\|x_{k} - x^{*}\|^{4}}{4} + 2\tau_{k}\varepsilon_{k}}\geq\\
        &\geq\phi(x_{k}, x_{k + 1})\Rightarrow\sqrt{\|x_{k} - x^{*}\|^{2}\left(\tau_{k}L_{k} + \frac{L_{\hat{F}}^{2}\|x_{k} - x^{*}\|^{2}}{4}\right) + 2\tau_{k}\varepsilon_{k}}\geq\\
        &\geq\left\|\hat{F}(x_{k}) + \hat{F}^{'}(x_{k})(x_{k + 1} - x_{k})\right\|.
    \end{aligned}
\end{equation*}
Перепишем $\phi(x_{k}, x_{k + 1})$ по--другому:
\begin{equation*}
    \begin{aligned}
        \left\|\hat{F}(x_{k}) + \hat{F}^{'}(x_{k})(x_{k + 1} - x_{k})\right\|& = \left\|\underbrace{\hat{F}^{'}(x^{*})\left(x_{k + 1} - x^{*}\right)}_{\overset{\operatorname{def}}{=} A} + \underbrace{\left(\hat{F}(x_{k}) - \hat{F}(x^{*}) - \hat{F}^{'}(x^{*})(x_{k} - x^{*})\right)}_{\overset{\operatorname{def}}{=} B} +\right.\\
        &\left.+ \underbrace{\left(\hat{F}^{'}(x_{k}) - \hat{F}^{'}(x^{*})\right)(x_{k + 1} - x_{k})}_{\overset{\operatorname{def}}{=} C}\right\|.
    \end{aligned}
\end{equation*}
По неравенству треугольника для нормы $\|\cdot\|$:
\begin{equation*}
    \begin{aligned}
        \left\|A\right\| &= \left\|A + B + C - B - C\right\|\leq\left\|A + B + C\right\| + \left\|-B\right\| + \left\|-C\right\|\Rightarrow\left\|A + B + C\right\|\geq\left\|A\right\| - \left\|B\right\| - \left\|C\right\|;\\
        \left\|A\right\|&\geq\left\{\text{из определения минимального сингулярного числа}\right\}\geq\varsigma\|x_{k + 1} - x^{*}\|;\\
        \left\|B\right\|&\leq\left\{\text{неравенство \eqref{eq:lm_aux_det_model_bound}}\right\}\leq\frac{L_{\hat{F}}}{2}\|x_{k} - x^{*}\|^{2};\\
        \left\|C\right\|&\leq\left\{\text{субмультипликативность нормы}\right\}\leq\left\|\hat{F}^{'}(x_{k}) - \hat{F}^{'}(x^{*})\right\|\left\|x_{k + 1} - x_{k}\right\|\leq\left\{\text{предположение \ref{as:det_hat_F_der_smooth}}\right\}\leq\\
        &\leq L_{\hat{F}}\|x_{k} - x^{*}\|\|x_{k + 1} - x^{*} + x^{*} - x_{k}\|\leq L_{\hat{F}}\|x_{k} - x^{*}\|^{2} + L_{\hat{F}}\|x_{k} - x^{*}\|\|x_{k + 1} - x^{*}\|.
    \end{aligned}
\end{equation*}
Собрав вместе неравенства, получаем оценку снизу на $\phi(x_{k}, x_{k + 1}):$
\begin{equation*}
    \phi(x_{k}, x_{k + 1})\geq\left(\varsigma - L_{\hat{F}}\|x_{k} - x^{*}\|\right)\|x_{k + 1} - x^{*}\| - \frac{3L_{\hat{F}}}{2}\|x_{k} - x^{*}\|^{2}.
\end{equation*}
Свяжем нижнюю и верхнюю оценку на $\phi(x_{k}, x_{k + 1})$ в единое неравенство:
\begin{equation*}
    \begin{aligned}
        &\sqrt{\|x_{k} - x^{*}\|^{2}\left(\tau_{k}L_{k} + \frac{L_{\hat{F}}^{2}\|x_{k} - x^{*}\|^{2}}{4}\right) + 2\tau_{k}\varepsilon_{k}}\geq\left(\varsigma - L_{\hat{F}}\|x_{k} - x^{*}\|\right)\|x_{k + 1} - x^{*}\| - \frac{3L_{\hat{F}}\|x_{k} - x^{*}\|^{2}}{2}\Rightarrow\\
    \end{aligned}
\end{equation*}
\begin{equation*}
    \begin{aligned}
        &\Rightarrow\frac{\sqrt{\|x_{k} - x^{*}\|^{2}\left(\tau_{k}L_{k} + \frac{L_{\hat{F}}^{2}\|x_{k} - x^{*}\|^{2}}{4}\right) + 2\tau_{k}\varepsilon_{k}} + \frac{3L_{\hat{F}}\|x_{k} - x^{*}\|^{2}}{2}}{\varsigma - L_{\hat{F}}\|x_{k} - x^{*}\|}\geq\|x_{k + 1} - x^{*}\|.
    \end{aligned}
\end{equation*}
Предположим существование $\alpha$ из условия. Оценим получившееся неравенство сверху выражением\\\mbox{$\alpha\|x_{k} - x^{*}\|^{2}$} и выведем условия на $\tau_{k}$ и $\|x_{k} - x^{*}\|$, при которых данная оценка выполняется всегда.\\Найдём допустимые значения $\tau_{k}$:
\begin{equation*}
    \begin{aligned}
        &\alpha\left(\varsigma - L_{\hat{F}}\|x_{k} - x^{*}\|\right)\|x_{k} - x^{*}\|^{2}\geq\frac{3L_{\hat{F}}\|x_{k} - x^{*}\|^{2}}{2} + \sqrt{\|x_{k} - x^{*}\|^{2}\left(\tau_{k}L_{k} + \frac{L_{\hat{F}}^{2}\|x_{k} - x^{*}\|^{2}}{4}\right) + 2\tau_{k}\varepsilon_{k}}\Rightarrow\\
        &\Rightarrow\left(\underbrace{\alpha\left(\varsigma - L_{\hat{F}}\|x_{k} - x^{*}\|\right) - \frac{3L_{\hat{F}}}{2}}_{\geq0\text{ по условию на }\varsigma}\right)^{2}\|x_{k} - x^{*}\|^{4}\geq\tau_{k}\left(\|x_{k} - x^{*}\|^{2}L_{k} + 2\varepsilon_{k}\right) + \frac{L_{\hat{F}}^{2}\|x_{k} - x^{*}\|^{4}}{4}\Rightarrow\\
    \end{aligned}
\end{equation*}
\begin{equation*}
    \begin{aligned}
        &\Rightarrow0 < \tau_{k}\leq\frac{\left(\left(\alpha\left(\varsigma - L_{\hat{F}}\|x_{k} - x^{*}\|\right) - \frac{3L_{\hat{F}}}{2}\right)^{2} - \frac{L_{\hat{F}}^{2}}{4}\right)\|x_{k} - x^{*}\|^{4}}{\|x_{k} - x^{*}\|^{2}L_{k} + 2\varepsilon_{k}}.
    \end{aligned}
\end{equation*}
Выведем ограничения на $\|x_{k} - x^{*}\|$, чтобы выполнялось $\tau_{k} > 0$:
\begin{equation*}
    \begin{aligned}
        &\left(\alpha\left(\varsigma - L_{\hat{F}}\|x_{k} - x^{*}\|\right) - \frac{3L_{\hat{F}}}{2}\right)^{2} - \frac{L_{\hat{F}}^{2}}{4} > 0\text{ по условию на }\varsigma\Rightarrow\left\|x_{k} - x^{*}\right\| < \frac{\varsigma}{L_{\hat{F}}} - \frac{2}{\alpha}.
    \end{aligned}
\end{equation*}
Докажем оставшуюся часть теоремы. Введём переменную $t_{k} = \|x_{k} - x^{*}\|$, перепишем оценку на $t_{k + 1}$ с подстановкой $\tau_{k}$ и $\varepsilon_{k}$:
\begin{equation*}
    \begin{aligned}
        t_{k + 1}&\leq\left(\frac{\frac{1}{t_{k}}\sqrt{t_{k}^{2}\left(c_{1}t_{k}^{2}L_{k} + \frac{L_{\hat{F}}^{2}}{4}t_{k}^{2}\right) + 2c_{1}c_{2}t_{k}^{4}} + \frac{3L_{\hat{F}}}{2}t_{k}}{\varsigma - L_{\hat{F}}t_{k}}\right)t_{k}\leq\left\{L_{k}\leq2L_{\hat{F}}\right\}\leq\\
        &\leq\underbrace{\left(\frac{\sqrt{2c_{1}L_{\hat{F}} + \frac{L_{\hat{F}}^{2}}{4} + 2c_{1}c_{2}} + \frac{3L_{\hat{F}}}{2}}{\varsigma - L_{\hat{F}}t_{k}}t_{k}\right)}_{\in[0,~1]\text{ --- необходимое условие}}t_{k}\Rightarrow t_{k} = \|x_{k} - x^{*}\|\leq\frac{\varsigma}{\frac{5L_{\hat{F}}}{2} + \sqrt{2c_{1}L_{\hat{F}} + \frac{L_{\hat{F}}^{2}}{4} + 2c_{1}c_{2}}}.
    \end{aligned}
\end{equation*}
\end{proof}
\begin{re:th:corollary}\label{th:DetQuadConvCor1}
Условия теоремы неявно задают ограничения на размерность задачи:
\begin{itemize}
    \itemsep=-4pt
    \item невырожденность системы уравнений \eqref{eq:smooth_system} в точке минимума $\sigma_{\min}\left(\hat{F}^{'}(x^{*})\right)\geq\varsigma > 0$ означает $\dim(E_{2})\geq\dim(E_{1})$;
    \item совместность системы \eqref{eq:smooth_system} $\hat{F}(x^{*}) = \mathbf{0}_{m}$ обычно выполняется в системах при доминировании количества параметров над количеством условий: $\dim(E_{2})\leq\dim(E_{1})$.
\end{itemize}
Таким образом, локальная квадратичная сходимость в условии теоремы обычно возможна на системах с $\dim(E_{1}) = \dim(E_{2})$.
\end{re:th:corollary}

В теореме \ref{th:glob_sub_lin_and_lin_conv_main} выводятся оценки сходимости для метода нормализованных квадратов с выбором\\$\tau_{k} = \hat{f}_{1}(x_{k})$, в оценках содержится явное разделение на область сублинейной сходимости и область линейной сходимости. 

\begin{re:theorem}\label{th:glob_sub_lin_and_lin_conv}
    Допустим выполнение предположений \ref{as:det_hat_F_der_smooth} и \ref{as:det_hat_F_PL_condition} для метода Гаусса--Ньютона со схемой реализации \ref{alg:gen_det_gnm}, в которой $\tau_{k} = \hat{f}_{1}(x_{k})$. Тогда в схеме \ref{alg:gen_det_gnm} для последовательности $\left\{x_{k}\right\}_{k\in\mathbb{Z}_{+}}$ выполняются следующие соотношения:
    \begin{equation*}
        \hat{f}_{1}(x_{k + 1})\leq\varepsilon_{k} + \begin{sqcases}
            \frac{\hat{f}_{1}(x_{k})}{2} + \frac{L_{\hat{F}}}{\mu}\hat{f}_{2}(x_{k})\leq\frac{3}{4}\hat{f}_{1}(x_{k})\text{, если }\hat{f}_{1}(x_{k})\leq\frac{\mu}{4L_{\hat{F}}};\\[5pt]
            \hat{f}_{1}(x_{k}) - \frac{\mu}{16L_{\hat{F}}}\text{, иначе}.
        \end{sqcases}
    \end{equation*}
    Если при генерации последовательности $\left\{x_{k}\right\}_{k\in\mathbb{Z}_{+}}$ была зафиксирована $L_{k} = L_{\hat{F}}$, то данные соотношения выражаются по--другому:
    \begin{equation*}
        \hat{f}_{1}(x_{k + 1})\leq\varepsilon_{k} + \begin{sqcases}
            \frac{\hat{f}_{1}(x_{k})}{2} + \frac{L_{\hat{F}}}{2\mu}\hat{f}_{2}(x_{k})\leq\frac{3}{4}\hat{f}_{1}(x_{k})\text{, если }\hat{f}_{1}(x_{k})\leq\frac{\mu}{2L_{\hat{F}}};\\[5pt]
            \hat{f}_{1}(x_{k}) - \frac{\mu}{8L_{\hat{F}}}\text{, иначе}.
        \end{sqcases}
    \end{equation*}
\end{re:theorem}
\begin{proof}
Рассмотрим систему линейных уравнений $\hat{F}(x) + \hat{F}^{'}(x)h = 0,~x\in\mathcal{F}$. В условиях данной теоремы существует $h\in E_{1}$: $\hat{F}(x) + \hat{F}^{'}(x)h = 0$, $x\in\mathcal{F}$ в силу выполнения условия Поляка--Лоясиевича, при этом
$$h = -\hat{F}^{'}(x)^{*}\left(\hat{F}^{'}(x)\hat{F}^{'}(x)^{*}\right)^{-1}\hat{F}(x).$$
Тогда, согласно предположению \ref{as:det_hat_F_PL_condition},
\begin{equation}\label{eq:th_glob_lin_conv_eq_1}
    \begin{aligned}
        \|h\| &= \left\|\hat{F}^{'}(x)^{*}\left(\hat{F}^{'}(x)\hat{F}^{'}(x)^{*}\right)^{-1}\hat{F}(x)\right\| = \sqrt{\left\langle \left(\hat{F}^{'}(x)\hat{F}^{'}(x)^{*}\right)^{-1}\hat{F}(x),~\hat{F}(x)\right\rangle}\leq\frac{\left\|\hat{F}(x)\right\|}{\sqrt{\mu}} = \frac{\hat{f}_{1}(x)}{\sqrt{\mu}}.
    \end{aligned}
\end{equation}
По определению локальной модели для $x_{k + 1},~k\in\mathbb{Z}_{+}$:
\begin{equation*}
    \begin{aligned}
        \hat{f}_{1}(x_{k + 1})&\leq\psi_{x_{k}, L_{k}, \hat{f}_{1}(x_{k})}(x_{k + 1}) = \psi_{x_{k}, L_{k}, \hat{f}_{1}(x_{k})}(T_{L_{k}, \hat{f}_{1}(x_{k})}(x_{k})) + \left(\psi_{x_{k}, L_{k}, \hat{f}_{1}(x_{k})}(x_{k + 1}) -\right.\\
        &\left.- \psi_{x_{k}, L_{k}, \hat{f}_{1}(x_{k})}(T_{L_{k}, \hat{f}_{1}(x_{k})}(x_{k}))\right)\leq\varepsilon_{k} + \psi_{x_{k}, L_{k}, \hat{f}_{1}(x_{k})}(T_{L_{k}, \hat{f}_{1}(x_{k})}(x_{k})) = \varepsilon_{k} +\\
        &+ \min\limits_{y\in E_{1}}\left\{\frac{\hat{f}_{1}(x_{k})}{2} + \frac{\left(\phi(x_{k}, x_{k} + y)\right)^{2}}{2\hat{f}_{1}(x_{k})} + \frac{L_{k}}{2}\|y\|^{2}\right\}\leq\\
        &\leq\left\{\text{вместо $y$ подставим }th_{k} = -t\hat{F}^{'}(x_{k})^{*}\left(\hat{F}^{'}(x_{k})\hat{F}^{'}(x_{k})^{*}\right)^{-1}\hat{F}(x_{k}),~t\in[0,~1]\right\}\leq\varepsilon_{k} + \frac{\hat{f}_{1}(x_{k})}{2} +\\
        &+ \min\limits_{t\in [0,~1]}\left\{\frac{1}{2\hat{f}_{1}(x_{k})}\left\|\hat{F}(x_{k}) + t\hat{F}^{'}(x_{k})h_{k}\right\|^{2} + \frac{t^{2}L_{k}}{2}\|h_{k}\|^{2}\right\}\leq\left\{\text{неравенство \eqref{eq:th_glob_lin_conv_eq_1}}\right\}\leq\varepsilon_{k} + \frac{\hat{f}_{1}(x_{k})}{2} +\\
        &+ \min\limits_{t\in[0,~1]}\left\{\frac{\left\|(1 - t)\hat{F}(x_{k})\right\|^{2}}{2\hat{f}_{1}(x_{k})} + \frac{t^{2}L_{k}}{2\mu}\hat{f}_{2}(x_{k})\right\}\leq\left\{\|\cdot\|^{2}\text{ --- выпуклая}\right\}\leq\varepsilon_{k} + \frac{\hat{f}_{1}(x_{k})}{2} +\\
        &+ \min\limits_{t\in[0,~1]}\left\{\frac{1 - t}{2}\hat{f}_{1}(x_{k}) + \frac{t^{2}L_{k}}{2\mu}\hat{f}_{2}(x_{k})\right\} = \varepsilon_{k} + \hat{f}_{1}(x_{k}) + \frac{\hat{f}_{2}(x_{k})L_{k}}{\mu}\min\limits_{t\in[0,~1]}\left\{\frac{-t\mu}{2\hat{f}_{1}(x_{k})L_{k}} + \frac{t^{2}}{2}\right\} = \varepsilon_{k} +\\
        &+ \hat{f}_{1}(x_{k}) - \frac{\hat{f}_{2}(x_{k})L_{k}}{\mu}\max\limits_{t\in[0,~1]}\left\{\frac{t\mu}{2\hat{f}_{1}(x_{k})L_{k}} - \frac{t^{2}}{2}\right\} = \left\{\text{\eqref{eq:aux_det_local_decrease_2_eq_1}, лемма \ref{lm:aux_det_local_decrease_2}}\right\} = \varepsilon_{k} + \hat{f}_{1}(x_{k}) -\\
        &- \frac{\hat{f}_{2}(x_{k})L_{k}}{\mu}\varkappa\left(\frac{\mu}{2\hat{f}_{1}(x_{k})L_{k}}\right)\leq\left\{\text{монотонное убывание по }L_{k}\right\}\leq\varepsilon_{k} + \hat{f}_{1}(x_{k}) -\\
        &- \frac{2\hat{f}_{2}(x_{k})L_{\hat{F}}}{\mu}\varkappa\left(\frac{\mu}{4\hat{f}_{1}(x_{k})L_{\hat{F}}}\right).
    \end{aligned}
\end{equation*}
Явно запишем получившееся неравенство в зависимости от $\varkappa(\cdot)$, учитывая монотонное убывание\\$\frac{\hat{f}_{2}(x_{k})L_{k}}{\mu}\varkappa\left(\frac{\mu}{2\hat{f}_{1}(x_{k})L_{k}}\right)$ по $L_{k}$ (следствие \ref{lm:cor_aux_det_local_decrease_2}):
\begin{equation*}
    \begin{aligned}
        &\hat{f}_{1}(x_{k + 1})\leq\varepsilon_{k} + \begin{sqcases}
            \hat{f}_{1}(x_{k}) - \frac{\mu}{16L_{\hat{F}}}\text{, если }\hat{f}_{1}(x_{k})\geq\frac{\mu}{4L_{\hat{F}}};\\[10pt]
            \frac{\hat{f}_{1}(x_{k})}{2} + \frac{\hat{f}_{2}(x_{k})L_{\hat{F}}}{\mu}\leq\frac{3}{4}\hat{f}_{1}(x_{k})\text{, если }\hat{f}_{1}(x_{k})\leq\frac{\mu}{4L_{\hat{F}}}.
        \end{sqcases}
    \end{aligned}
\end{equation*}
Для ограничения на $\hat{f}_{1}(x_{k + 1})$ при $L_{k}\equiv L_{\hat{F}}$ представление в зависимости от $\varkappa(\cdot)$ задаётся иначе:
$$\hat{f}_{1}(x_{k + 1})\leq\varepsilon_{k} + \hat{f}_{1}(x_{k}) - \frac{\hat{f}_{2}(x_{k})L_{\hat{F}}}{\mu}\varkappa\left(\frac{\mu}{2\hat{f}_{1}(x_{k})L_{\hat{F}}}\right).$$
В явном виде это означает следующее:
\begin{equation*}
    \begin{aligned}
        &\hat{f}_{1}(x_{k + 1})\leq\varepsilon_{k} + \begin{sqcases}
            \hat{f}_{1}(x_{k}) - \frac{\mu}{8L_{\hat{F}}}\text{, если }\hat{f}_{1}(x_{k})\geq\frac{\mu}{2L_{\hat{F}}};\\[10pt]
            \frac{\hat{f}_{1}(x_{k})}{2} + \frac{\hat{f}_{2}(x_{k})L_{\hat{F}}}{2\mu}\leq\frac{3}{4}\hat{f}_{1}(x_{k})\text{, если }\hat{f}_{1}(x_{k})\leq\frac{\mu}{2L_{\hat{F}}}.
        \end{sqcases}
    \end{aligned}
\end{equation*}
\end{proof}
\begin{re:th:corollary}\label{th:glob_sub_lin_and_lin_conv_cor}
Адаптивный подбор $\varepsilon_{k}\geq 0$ позволяет точно решить задачу \eqref{eq:main_opt_problem}.
Для этого введём определяющую погрешности последовательность величин $\left\{\delta_{k}\right\}_{k\in\mathbb{Z}_{+}}:~\frac{3}{4}\delta_{k} > \delta_{k + 1} > 0$, $\delta_{-1} = \frac{8}{3}\delta_{0}$, $\lim\limits_{k\rightarrow+\infty}\delta_{k} = 0$.
Дополнительно определим:
\begin{equation*}
    \begin{aligned}
        &\begin{sqcases}
            1.~\hat{f}_{1}(x_{-1}) \overset{\operatorname{def}}{=} \frac{\mu}{4L_{\hat{F}}},~d = 16\text{, для }L_{k}\in[L,~L_{\hat{F}}];\\[10pt]
            2.~\hat{f}_{1}(x_{-1}) \overset{\operatorname{def}}{=} \frac{\mu}{2L_{\hat{F}}},~d = 8\text{, для }L_{k}\equiv L_{\hat{F}}.
        \end{sqcases}
    \end{aligned}
\end{equation*}
Обозначим за $N\in\mathbb{Z}_{+}\cup\left\{-1\right\}$ минимальный номер итерации, на которой выполнена одна из двух цепочек неравенств (положим $N = -1$ в случае отсутствия такой итерации):
\begin{equation*}
    \begin{aligned}
        &\begin{sqcases}
            1.~\hat{f}_{1}(x_{N})\geq\frac{\mu}{4L_{\hat{F}}}\geq\hat{f}_{1}(x_{N + 1})\text{, для }L_{k}\in[L,~L_{\hat{F}}];\\[10pt]
            2.~\hat{f}_{1}(x_{N})\geq\frac{\mu}{2L_{\hat{F}}}\geq\hat{f}_{1}(x_{N + 1})\text{, для }L_{k}\equiv L_{\hat{F}}.
        \end{sqcases}
    \end{aligned}
\end{equation*}
Следующая стратегия выбора $\varepsilon_{k}$ позволяет получить сколь угодно точное приближение решения \eqref{eq:smooth_system}:
\begin{equation*}
    \begin{aligned}
        &\varepsilon_{k} = \begin{sqcases}
            \delta_{0}:~\delta_{0} < \frac{\mu}{dL_{\hat{F}}}\text{, при }k = 0;\\
            \delta_{k - 1} - \delta_{k}\text{, если }0 < k\leq N + 1;\\
            \frac{3}{4}\delta_{k - 1} - \delta_{k}\text{, если }k > N + 1.
        \end{sqcases}
    \end{aligned}
\end{equation*}
То есть с увеличением номера итерации погрешность поиска $x_{k + 1}$ убывает:
\begin{equation*}
    \begin{aligned}
        &\begin{sqcases}
            \hat{f}_{1}(x_{k})\leq2\delta_{0} - \delta_{k - 1} + \hat{f}_{1}(x_{0}) - \frac{k\mu}{dL_{\hat{F}}},\text{ если }0 < k\leq N + 1;\\[15pt]
            \hat{f}_{1}(x_{k})\leq\left(\frac{3}{4}\right)^{k - N - 1}\hat{f}_{1}(x_{N + 1}) + \delta_{N}\left(\frac{3}{4}\right)^{k - N - 1} - \delta_{k - 1},\text{ если }k > N + 1.
        \end{sqcases}
    \end{aligned}
\end{equation*}
Данные оценки выведены с помощью сложения соответствующих неравенств друг с другом из условия теоремы для $k\in\overline{0, N + 1}$ и раскрытия рекуррентной зависимости для $k > N + 1$, к полученным выражениям применены значения $\varepsilon_{k}$.
\end{re:th:corollary}
\begin{re:th:corollary}\label{th:glob_sub_lin_and_lin_conv_cor_2}
В случае постоянной погрешности $\varepsilon_{k} = \varepsilon > 0$ для достижения уровня функции\\$\hat{f}_{1}(x_{k})\leq\epsilon$ количество необходимых в худшем случае итераций и максимальное значение погрешности зависят от стратегии поиска $L_{k}$. Для $L_{k}\in[L,~2L_{\hat{F}}]$ условия следующие:
\begin{itemize}
    \item если $\epsilon\geq\frac{\mu}{4L_{\hat{F}}}$, то $k\geq\left\lceil\left(\frac{\mu}{16L_{\hat{F}}} - \varepsilon\right)^{-1}\left(\hat{f}_{1}(x_{0}) - \epsilon\right)\mathds{1}_{\left\{\hat{f}_{1}(x_{0}) > \epsilon\right\}}\right\rceil$, $\varepsilon < \frac{\mu}{16L_{\hat{F}}}$;
    \item если $\epsilon < \frac{\mu}{4L_{\hat{F}}}$, то $k\geq\left\lceil\left(\frac{\mu}{16L_{\hat{F}}} - \varepsilon\right)^{-1}\left(\hat{f}_{1}(x_{0}) - \frac{\mu}{4L_{\hat{F}}}\right)\mathds{1}_{\left\{\hat{f}_{1}(x_{0}) > \frac{\mu}{4L_{\hat{F}}}\right\}} + \log_{\frac{4}{3}}\left(\frac{\mu}{4r\epsilon L_{\hat{F}}}\right)\right\rceil$, $\varepsilon\leq\frac{(1 - r)\epsilon}{4}$,\\$r\in(0, 1)$.
\end{itemize}
Для точно известного значения $L_{k} = L_{\hat{F}}$ количество необходимых итераций меньше и допустимая погрешность больше:
\begin{itemize}
    \item если $\epsilon\geq\frac{\mu}{2L_{\hat{F}}}$, то $k\geq\left\lceil\left(\frac{\mu}{8L_{\hat{F}}} - \varepsilon\right)^{-1}\left(\hat{f}_{1}(x_{0}) - \epsilon\right)\mathds{1}_{\left\{\hat{f}_{1}(x_{0}) > \epsilon\right\}}\right\rceil$, $\varepsilon < \frac{\mu}{8L_{\hat{F}}}$;
    \item если $\epsilon < \frac{\mu}{2L_{\hat{F}}}$, то $k\geq\left\lceil\left(\frac{\mu}{8L_{\hat{F}}} - \varepsilon\right)^{-1}\left(\hat{f}_{1}(x_{0}) - \frac{\mu}{2L_{\hat{F}}}\right)\mathds{1}_{\left\{\hat{f}_{1}(x_{0}) > \frac{\mu}{2L_{\hat{F}}}\right\}} + \log_{\frac{4}{3}}\left(\frac{\mu}{2r\epsilon L_{\hat{F}}}\right)\right\rceil$, $\varepsilon\leq\frac{(1 - r)\epsilon}{4}$,\\$r\in(0, 1)$.
\end{itemize}
\end{re:th:corollary}

В теореме \ref{th:track_glob_bound_main} рассматривается точный оракул, выдающий при каждом вызове $x_{k + 1}$ с нулевой погрешностью, для которого установлена оценка на радиус множества уровня $\hat{f}_{1}(x_{k})$ в случае решения совместной системы уравнений \eqref{eq:smooth_system}.

\begin{re:theorem}\label{th:track_glob_bound}
    Пусть выполнены предположения \ref{as:det_hat_F_der_smooth} и \ref{as:det_hat_F_PL_condition} для метода Гаусса--Ньютона со схемой реализации \ref{alg:gen_det_gnm}, в которой $\tau_{k} = \hat{f}_{1}(x_{k})$, $\varepsilon_{k} = 0$, $k\in\mathbb{Z}_{+}$. Тогда существует решение $x^{*}\in\mathcal{F}$ задачи \eqref{eq:smooth_system}, такое, что $\hat{f}_{1}(x^{*}) = 0$ и $\|x_{0} - x^{*}\|\leq4\hat{f}_{1}(x_{0})\sqrt{\frac{2L_{\hat{F}}}{\mu L}}.$
\end{re:theorem}
\begin{proof}
Предположим, что $\hat{f}_{1}(x_{0}) > \frac{\mu}{4L_{\hat{F}}}$. Это означает, что $\hat{f}_{1}(x_{k + 1})\leq\hat{f}_{1}(x_{k}) - \frac{\mu}{16L_{\hat{F}}}$, ровно до той итерации, на которой $\hat{f}_{1}(x_{k}) \leq \frac{\mu}{4L_{\hat{F}}}$. Обозначим наименьшую по номеру такую итерацию как $N\in\mathbb{Z}_{+}$:
$$\hat{f}_{1}(x_{N})\geq\frac{\mu}{4L_{\hat{F}}}\geq\hat{f}_{1}(x_{N + 1}).$$
По теореме \ref{th:glob_sub_lin_and_lin_conv}:
\begin{equation}\label{eq:track_glob_bound_eq1}
    \hat{f}_{1}(x_{0}) - \hat{f}_{1}(x_{N + 1})\geq\frac{(N + 1)\mu}{16L_{\hat{F}}}\Rightarrow\frac{16L_{\hat{F}}}{\mu}\left(\hat{f}_{1}(x_{0}) - \hat{f}_{1}(x_{N + 1})\right)\geq N + 1.
\end{equation}
По лемме \ref{lm:aux_det_local_decrease_1} (следствие \ref{lm:cor_2_aux_det_local_decrease_1}):
\begin{equation}\label{eq:track_glob_bound_eq3}
    \begin{aligned}
        \hat{f}_{1}(x_{k}) - \hat{f}_{1}(x_{k + 1})&\geq\frac{L_{k}}{2}\left\|x_{k + 1} - x_{k}\right\|^{2}\geq\frac{L}{2}\left\|x_{k + 1} - x_{k}\right\|^{2}\Rightarrow\hat{f}_{1}(x_{0}) - \hat{f}_{1}(x_{N + 1})\geq\frac{L}{2}\sum\limits_{k = 0}^{N}\|x_{k + 1} - x_{k}\|^{2}\geq\\
        &\geq\left\{\text{неравенство Йенсена}\right\}\geq\frac{L}{2(N + 1)}\left(\sum\limits_{k = 0}^{N}\|x_{k + 1} - x_{k}\|\right)^{2}\geq\frac{L}{2(N + 1)}\|x_{0} - x_{N + 1}\|^{2}\Rightarrow\\
        &\Rightarrow\|x_{0} - x_{N + 1}\|\leq\sqrt{\frac{2(N + 1)}{L}\left(\hat{f}_{1}(x_{0}) - \hat{f}_{1}(x_{N + 1})\right)}\leq\left\{\text{\eqref{eq:track_glob_bound_eq1}}\right\}\leq\\
        &\leq4\sqrt{\frac{2L_{\hat{F}}}{\mu L}}\left(\hat{f}_{1}(x_{0}) - \hat{f}_{1}(x_{N + 1})\right).
    \end{aligned}
\end{equation}
Соответственно, для $N + 1$ и более поздних итераций выполняется неравенство:
\begin{equation}\label{eq:track_glob_bound_eq2}
    \begin{aligned}
        &\hat{f}_{1}(x_{N + k + 2})\leq\frac{\hat{f}_{1}(x_{N + k + 1})}{2} + \frac{L_{\hat{F}}\hat{f}_{2}(x_{N + k + 1})}{\mu}\leq\left(\frac{3}{4}\right)^{k + 1}\hat{f}_{1}(x_{N + 1}),~k\in\mathbb{Z}_{+}.
    \end{aligned}
\end{equation}
Рассмотрим локальную модель для точки $x_{k + 1} = T_{L_{k}, \hat{f}_{1}(x_{k})}(x_{k}),~k\in\mathbb{Z}_{+}$:
\begin{equation}\label{eq:track_glob_bound_eq4}
    \begin{aligned}
        \frac{\hat{f}_{1}(x_{k})}{2} + \frac{L}{2}\|x_{k + 1} - x_{k}\|^{2}&\leq\frac{\hat{f}_{1}(x_{k})}{2} + \frac{L_{k}}{2}\|x_{k + 1} - x_{k}\|^{2}\leq\psi_{x_{k}, L_{k}, \hat{f}_{1}(x_{k})}(x_{k + 1})\leq\\
        &\leq\left\{\text{заменим }T_{L_{k}, \hat{f}_{1}(x_{k})}(x_{k}) - x_{k}\text{ на }h = -\hat{F}^{'}(x_{k})^{*}\left(\hat{F}^{'}(x_{k})\hat{F}^{'}(x_{k})^{*}\right)^{-1}\hat{F}(x_{k})\right\}\leq\\
        &\leq\frac{\hat{f}_{1}(x_{k})}{2} + \frac{L_{k}}{2}\|h\|^{2}\leq\left\{\text{неравенство \eqref{eq:th_glob_lin_conv_eq_1}},~L_{k}\leq 2L_{\hat{F}}\right\}\leq\frac{\hat{f}_{1}(x_{k})}{2} + \frac{L_{\hat{F}}\hat{f}_{2}(x_{k})}{\mu}\Rightarrow\\
        &\Rightarrow\left\{\text{\eqref{eq:track_glob_bound_eq2}},~k := N + k + 1\right\}\Rightarrow\|x_{N + k + 2} - x_{N + k + 1}\|\leq\hat{f}_{1}(x_{N + k + 1})\sqrt{\frac{2L_{\hat{F}}}{\mu L}}\leq\\
        &\leq\left(\frac{3}{4}\right)^{k}\hat{f}_{1}(x_{N + 1})\sqrt{\frac{2L_{\hat{F}}}{\mu L}}.
    \end{aligned}
\end{equation}
Объединим результаты в \eqref{eq:track_glob_bound_eq3} и \eqref{eq:track_glob_bound_eq4}:
\begin{equation*}
    \begin{aligned}
        \|x_{0} - x^{*}\|&\leq\|x_{0} - x_{N + 1}\| + \sum\limits_{k = 0}^{+\infty}\|x_{N + k + 1} - x_{N + k + 2}\|\leq\\
        &\leq4\sqrt{\frac{2L_{\hat{F}}}{\mu L}}\left(\hat{f}_{1}(x_{0}) - \hat{f}_{1}(x_{N + 1}) + \frac{\hat{f}_{1}(x_{N + 1})}{4}\sum\limits_{k = 0}^{+\infty}\left(\frac{3}{4}\right)^{k}\right)= 4\hat{f}_{1}(x_{0})\sqrt{\frac{2L_{\hat{F}}}{\mu L}}.
    \end{aligned}
\end{equation*}
Если же изначально было так, что $\hat{f}_{1}(x_{0})\leq\frac{\mu}{4L_{\hat{F}}}$, то положим $N = -1$ и оценим сверху $\|x_{0} - x^{*}\|$:
\begin{equation*}
    \begin{aligned}
        &\|x_{0} - x^{*}\|\leq\sum\limits_{k = 0}^{+\infty}\|x_{k} - x_{k + 1}\|\leq\left\{\text{\eqref{eq:track_glob_bound_eq4}}\right\}\leq\hat{f}_{1}(x_{0})\sqrt{\frac{2L_{\hat{F}}}{\mu L}}\sum\limits_{k = 0}^{+\infty}\left(\frac{3}{4}\right)^{k} = 4\hat{f}_{1}(x_{0})\sqrt{\frac{2L_{\hat{F}}}{\mu L}}.
    \end{aligned}
\end{equation*}
\end{proof}
\begin{re:th:corollary}\label{th:track_glob_bound_cor}
В условии теоремы начальную итерацию можно заменить на $k$--ую, что позволяет установить единственность $x^{*}$ для данной $\left\{x_{k}\right\}_{k\in\mathbb{Z}_{+}}$: пусть существуют $x_{1}^{*}$ и $x_{2}^{*}$, такие, что\\$\hat{F}(x_{1}^{*}) = \mathbf{0}_{m}$ и $\hat{F}(x_{2}^{*}) = \mathbf{0}_{m}$, но $x_{1}^{*}\neq x_{2}^{*}$. Имеем для $\left\{x_{k}\right\}_{k\in\mathbb{Z}_{+}}$ в силу доказанной сходимости (теоремы \ref{th:DetSublinConv} и \ref{th:glob_sub_lin_and_lin_conv})
\begin{equation*}
    \begin{cases}
        &\lim\limits_{k\rightarrow+\infty}\|x_{k} - x_{1}^{*}\|\leq\lim\limits_{k\rightarrow+\infty}4\hat{f}_{1}(x_{k})\sqrt{\frac{2L_{\hat{F}}}{\mu L}} = 0;\\[10pt]
        &\lim\limits_{k\rightarrow+\infty}\|x_{k} - x_{2}^{*}\|\leq\lim\limits_{k\rightarrow+\infty}4\hat{f}_{1}(x_{k})\sqrt{\frac{2L_{\hat{F}}}{\mu L}} = 0.
    \end{cases}
\end{equation*}
Однако выражения выше устанавливают равномерную ограниченность расстояния до $x_{1}^{*}$ и $x_{2}^{*}$ от $x_{k}$ с монотонно невозрастающим значением расстояния, в пределе равном $0$, что означает $x_{1}^{*} = x_{2}^{*}$. Более того, в силу произвольности последовательности $\left\{x_{k}\right\}_{k\in\mathbb{Z}_{+}}$ для любого $x_{0}\in\mathcal{F}$ в пределе будет единственное решение при использовании метода Гаусса--Ньютона по схеме \ref{alg:gen_det_gnm} с $\tau_{k} = \hat{f}_{1}(x_{k})$, $\varepsilon_{k} = 0$, но для каждого $x_{0}$ это решение $x^{*}$ своё.
\end{re:th:corollary}

Следующая лемма предлагает алгоритм подбора значения $\tau_{k}$ в области суперлинейной сходимости, не требующий явно знание расстояния до решения, согласно теореме \ref{th:DetQuadConv}.

\begin{lemma}\label{lm:aux_tau_schedule}
    Пусть выполнены условия теоремы \ref{th:DetQuadConv}, дополнительно предположим ограниченность нормы матрицы Якоби: существует $ M_{\hat{F}} > 0$, для которого выполнено $\left\|\hat{F}^{'}(x)\right\|\leq M_{\hat{F}}$ при всех $x\in \mathcal{F}$. Тогда для
    $$\varepsilon_{k} = 0,~\tau_{k} = \hat{f}_{1}(x_{k}),~k\in\mathbb{Z}_{+}$$
    в области
    \begin{equation*}
        \begin{aligned}
            \|x_{k} - x^{*}\|\leq\min\left\{\frac{2\varsigma}{5L_{\hat{F}}},~\frac{1}{12L_{\hat{F}}}\left(\left(3M_{\hat{F}} + 5\varsigma\right) - \sqrt{\left(3M_{\hat{F}} + 5\varsigma\right)^{2} - 24\varsigma^{2}}\right)\right\}
        \end{aligned}
    \end{equation*}
    выполнена оценка:
    \begin{equation*}
        \begin{aligned}
            \|x_{k + 1} - x^{*}\|&\leq\frac{\frac{3L_{\hat{F}}\|x_{k} - x^{*}\|^{2}}{2} + \|x_{k} - x^{*}\|\sqrt{\tau_{k}L_{k} + \frac{L_{\hat{F}}^{2}\|x_{k} - x^{*}\|^{2}}{4}}}{\varsigma - L_{\hat{F}}\|x_{k} - x^{*}\|} < \|x_{k} - x^{*}\|.
        \end{aligned}
    \end{equation*}
\end{lemma}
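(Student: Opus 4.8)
The plan is to read off the first inequality directly from Theorem~\ref{th:DetQuadConv} and then to reduce the strict bound $<\|x_k-x^*\|$ to a single scalar quadratic inequality. Since the hypotheses of Theorem~\ref{th:DetQuadConv} are assumed, its general estimate for $\|x_{k+1}-x^*\|$ holds verbatim; putting $\varepsilon_k=0$ kills the term $2\tau_k\varepsilon_k$ and lets the radical factor as $\sqrt{\|x_k-x^*\|^2(\tau_kL_k+\tfrac{L_{\hat F}^2\|x_k-x^*\|^2}{4})}=\|x_k-x^*\|\sqrt{\tau_kL_k+\tfrac{L_{\hat F}^2\|x_k-x^*\|^2}{4}}$, which is exactly the first displayed inequality. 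Hence only the comparison with $\|x_k-x^*\|$ needs genuine work.

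Writing $t_k=\|x_k-x^*\|$ as in the proof of Theorem~\ref{th:DetQuadConv} and dividing the numerator by $t_k>0$, the desired strict inequality is equivalent to
$$\frac{3L_{\hat F}t_k}{2}+\sqrt{\tau_kL_k+\frac{L_{\hat F}^2t_k^2}{4}}<\varsigma-L_{\hat F}t_k,$$
that is, to $\sqrt{\tau_kL_k+\tfrac{L_{\hat F}^2t_k^2}{4}}<\varsigma-\tfrac{5L_{\hat F}t_k}{2}$. The first term of the minimum, $t_k\le\tfrac{2\varsigma}{5L_{\hat F}}$, is precisely the constraint making the right-hand side nonnegative, so squaring is an equivalence and the goal becomes $\tau_kL_k<\varsigma^2-5\varsigma L_{\hat F}t_k+6L_{\hat F}^2t_k^2$.

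Next I would bound $\tau_kL_k$. Since $\tau_k=\hat f_1(x_k)=\|\hat F(x_k)-\hat F(x^*)\|$, the triangle inequality combined with \eqref{eq:lm_aux_det_model_bound} and the new hypothesis $\|\hat F'(x)\|\le M_{\hat F}$ gives $\hat f_1(x_k)\le M_{\hat F}t_k+\tfrac{L_{\hat F}}{2}t_k^2$; together with $L_k\le 2L_{\hat F}$ (guaranteed in scheme~\ref{alg:gen_det_gnm}) and the elementary bound $L_{\hat F}t_k^2\le M_{\hat F}t_k$ valid for $t_k\le M_{\hat F}/L_{\hat F}$, this yields the crude but clean estimate $\tau_kL_k\le 3M_{\hat F}L_{\hat F}t_k$. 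Substituting it turns the goal into $6L_{\hat F}^2t_k^2-(5\varsigma+3M_{\hat F})L_{\hat F}t_k+\varsigma^2>0$, whose discriminant $(5\varsigma+3M_{\hat F})^2-24\varsigma^2\ge 25\varsigma^2-24\varsigma^2>0$ is positive and whose smaller root is exactly $\tfrac{(3M_{\hat F}+5\varsigma)-\sqrt{(3M_{\hat F}+5\varsigma)^2-24\varsigma^2}}{12L_{\hat F}}$, the second term of the minimum. The upward parabola is therefore positive on the stated region, which proves the strict inequality.

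The main point requiring care is the internal consistency of the two thresholds: the first is forced by nonnegativity before squaring, the second by the quadratic, and I must check that on the second-term region the simplification $\tau_kL_k\le 3M_{\hat F}L_{\hat F}t_k$ (i.e.\ $t_k\le M_{\hat F}/L_{\hat F}$) is actually available. This is where the key structural observation enters: because $\varsigma\le\sigma_{\min}(\hat F'(x^*))\le\|\hat F'(x^*)\|\le M_{\hat F}$, one always has $M_{\hat F}\ge\varsigma$, and a short computation shows that under $M_{\hat F}\ge\varsigma$ the second threshold never exceeds $M_{\hat F}/L_{\hat F}$, so the crude bound is legitimate throughout the region. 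With that, the chain of bounds closes, and the membership $x_{k+1}\in\mathcal L(\hat f_1(x_0))$ is inherited from Theorem~\ref{th:DetQuadConv}.
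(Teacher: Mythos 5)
Your proposal takes essentially the same route as the paper's proof: the general estimate of Theorem~\ref{th:DetQuadConv} with $\varepsilon_{k}=0$, the reduction (after using the first threshold to guarantee nonnegativity before squaring) to the scalar inequality $\sqrt{\tau_{k}L_{k}+L_{\hat{F}}^{2}t_{k}^{2}/4}<\varsigma-\tfrac{5}{2}L_{\hat{F}}t_{k}$, the bound $\tau_{k}L_{k}\leq 3M_{\hat{F}}L_{\hat{F}}t_{k}$ obtained from $\hat{f}_{1}(x_{k})\leq M_{\hat{F}}t_{k}+\tfrac{L_{\hat{F}}}{2}t_{k}^{2}$, $L_{k}\leq2L_{\hat{F}}$ and $M_{\hat{F}}\geq\varsigma$, and the identification of the second threshold as the smaller root of $6L_{\hat{F}}^{2}t^{2}-(3M_{\hat{F}}+5\varsigma)L_{\hat{F}}t+\varsigma^{2}$.

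There is, however, a genuine defect in your bookkeeping of strict versus non-strict inequalities. You keep the bound $\tau_{k}L_{k}\leq 3M_{\hat{F}}L_{\hat{F}}t_{k}$ non-strict, and therefore you need the parabola $6L_{\hat{F}}^{2}t_{k}^{2}-(3M_{\hat{F}}+5\varsigma)L_{\hat{F}}t_{k}+\varsigma^{2}$ to be \emph{strictly} positive on the region. But the region is closed, and its right endpoint is precisely the smaller root of this parabola: since $M_{\hat{F}}\geq\varsigma$, the second threshold equals $\frac{2\varsigma^{2}}{L_{\hat{F}}\left((3M_{\hat{F}}+5\varsigma)+\sqrt{(3M_{\hat{F}}+5\varsigma)^{2}-24\varsigma^{2}}\right)}\leq\frac{\varsigma}{4L_{\hat{F}}}<\frac{2\varsigma}{5L_{\hat{F}}}$, so the minimum defining the region is always attained by the second threshold, and that boundary point belongs to the region. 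At that point the parabola vanishes, your chain delivers only $\|x_{k+1}-x^{*}\|\leq\|x_{k}-x^{*}\|$, and the claimed strict inequality is not established there. The repair is to place the strictness where the paper puts it: inside the region one has $t_{k}\leq\frac{2\varsigma}{5L_{\hat{F}}}<\frac{M_{\hat{F}}}{L_{\hat{F}}}$ (and also $t_{k}<\frac{\varsigma}{L_{\hat{F}}}$), so for $t_{k}>0$ the estimate $L_{\hat{F}}t_{k}^{2}<M_{\hat{F}}t_{k}$ is strict, hence $\tau_{k}L_{k}<3M_{\hat{F}}L_{\hat{F}}t_{k}$ strictly, and then only the non-strict condition (parabola $\geq0$, equivalently the bracketed fraction lying in $[0,1]$) is needed on the closed region. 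This is exactly how the paper argues: its estimate \eqref{eq:distance_lower_bound} reads $\hat{f}_{1}(x_{k})<\tfrac{3M_{\hat{F}}}{2}\|x_{k}-x^{*}\|$ with a strict sign, and the quadratic condition enters only through the non-strict containment of the fraction in $[0,1]$.
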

\begin{proof}
Для начала выведем оценку сверху у значения функции $\hat{f}_{1}(x_{k})$ с помощью локальной модели $\psi_{x^{*}, L_{\hat{F}}, \phi(x^{*}, x_{k})}(x_{k})$:
\begin{equation}\label{eq:distance_lower_bound}
    \begin{aligned}
        \hat{f}_{1}(x_{k})&\leq\left\{\text{лемма \ref{lm:aux_det_upper_model}}\right\}\leq\left\|\underbrace{\hat{F}(x^{*})}_{=0} + \hat{F}^{'}(x^{*})(x_{k} - x^{*})\right\| + \frac{L_{\hat{F}}}{2}\left\|x_{k} - x^{*}\right\|^{2}\leq\underbrace{\left\|\hat{F}^{'}(x^{*})(x_{k} - x^{*})\right\|}_{\leq M_{\hat{F}}\|x_{k} - x^{*}\|} + \frac{L_{\hat{F}}}{2}\|x_{k} - x^{*}\|^{2}\leq\\
        &\leq M_{\hat{F}}\|x_{k} - x^{*}\| + \frac{L_{\hat{F}}}{2}\|x_{k} - x^{*}\|^{2}<\left\{\substack{\text{верхняя граница  области}\\\text{суперлинейной сходимости:}}~\|x_{k} - x^{*}\| < \frac{\varsigma}{L_{\hat{F}}}\right\}<\\
        &< \left(M_{\hat{F}} + \frac{\varsigma}{2}\right)\|x_{k} - x^{*}\|\leq\left\{\varsigma\leq\sigma_{\min}(\hat{F}^{'}(x^{*}))\leq\sigma_{\max}(\hat{F}^{'}(x^{*}))\leq M_{\hat{F}}\right\}\leq\frac{3M_{\hat{F}}}{2}\|x_{k} - x^{*}\|.
    \end{aligned}
\end{equation}
Подставим значения $\tau_{k}$ и $\varepsilon_{k}$ в оценку сходимости, положив $t_{k} = \|x_{k} - x^{*}\|$:
\begin{equation*}
    \begin{aligned}
        t_{k + 1}&\leq\frac{\frac{3L_{\hat{F}}t_{k}^{2}}{2} + t_{k}\sqrt{\hat{f}_{1}(x_{k})L_{k} + \frac{L_{\hat{F}}^{2}t_{k}^{2}}{4}}}{\varsigma - L_{\hat{F}}\|x_{k} - x^{*}\|}<\left\{L_{k}\leq2L_{\hat{F}},~\text{оценка \eqref{eq:distance_lower_bound}}\right\}<\\
        &<t_{k}\underbrace{\left(\frac{\frac{3L_{\hat{F}}t_{k}}{2} + \sqrt{3M_{\hat{F}}L_{\hat{F}}t_{k} + \frac{L_{\hat{F}}^{2}t_{k}^{2}}{4}}}{\varsigma - L_{\hat{F}}t_{k}}\right)}_{\in[0, 1]\text{ --- по условию леммы}}\leq t_{k}.
    \end{aligned}
\end{equation*}
Выведем границы допустимых значений $t_{k}$ из ограничения на дробь выше:
\begin{equation*}
    \begin{aligned}
        &0\leq\frac{3L_{\hat{F}}t_{k}}{2} + \sqrt{3M_{\hat{F}}L_{\hat{F}}t_{k} + \frac{L_{\hat{F}}^{2}t_{k}^{2}}{4}}\leq\varsigma - L_{\hat{F}}t_{k}\Rightarrow0\leq\sqrt{3M_{\hat{F}}L_{\hat{F}}t_{k} + \frac{L_{\hat{F}}^{2}t_{k}^{2}}{4}}\leq\varsigma - \frac{5L_{\hat{F}}t_{k}}{2}\Rightarrow t_{k}\leq\frac{2\varsigma}{5L_{\hat{F}}},
    \end{aligned}
\end{equation*}
получено первое ограничение. Для вывода оставшихся ограничений возведём в квадрат неравенство выше:
\begin{equation*}
    \begin{aligned}
        &3M_{\hat{F}}L_{\hat{F}}t_{k} + \frac{L_{\hat{F}}^{2}t_{k}^{2}}{4}\leq\left(\varsigma - \frac{5L_{\hat{F}}t}{2}\right)^{2}\Rightarrow-6L_{\hat{F}}^{2}t_{k}^{2} + \left(3M_{\hat{F}}L_{\hat{F}} + 5L_{\hat{F}}\varsigma\right)t_{k} - \varsigma^{2}\leq0.
    \end{aligned}
\end{equation*}
Из квадратного уравнения выводится необходимый отрезок значений $t_{k}\geq0$, согласованный с полученным выше первым ограничением:
\begin{equation*}
    \begin{aligned}
        0&\leq t_{k}\leq\frac{1}{12L_{\hat{F}}}\left(\left(3M_{\hat{F}} + 5\varsigma\right) - \sqrt{\left(3M_{\hat{F}} + 5\varsigma\right)^{2} - 24\varsigma^{2}}\right)\Rightarrow\\
        &\Rightarrow\left\|x_{k} - x^{*}\right\|\leq\min\left\{\frac{2\varsigma}{5L_{\hat{F}}},~\frac{1}{12L_{\hat{F}}}\left(\left(3M_{\hat{F}} + 5\varsigma\right) - \sqrt{\left(3M_{\hat{F}} + 5\varsigma\right)^{2} - 24\varsigma^{2}}\right)\right\}.
    \end{aligned}
\end{equation*}
Таким образом, получена нижняя оценка на радиус сходимости, при котором будет суперлинейная сходимость с обозначенным выбором $\varepsilon_{k}$, $\tau_{k}$, $k\in\mathbb{Z}_{+}$.
\end{proof}

Теорема \ref{th:1_main} использует явную формулу проксимального отображения для вывода оценки на убывание оптимизируемого функционала в процессе, организованном по схеме \ref{alg:gen_det_gnm}.

\begin{re:theorem}\label{th:1}
    Пусть выполнены предположения \ref{as:det_hat_F_der_smooth} и \ref{as:det_hat_F_PL_condition}. Рассмотрим последовательность $\{x_{k}\}_{k\in\mathbb{Z}_{+}}$, вычисляемую по схеме \ref{alg:gen_det_gnm} с правилом \eqref{eq:det_expl_update_rule}, в котором $\tau_{k} > 0$, $\eta_{k}\in(0, 2)$, $k\in\mathbb{Z}_{+}$. Тогда для $k\in\mathbb{Z}_{+}$:
    \begin{equation}\label{eq:det_lin_conv_1}
        \hat{f}_{1}(x_{k + 1}) \leq \frac{\tau_{k}}{2} + \begin{sqcases}
        \frac{\hat{f}_{2}(x_{k})}{2\tau_{k}}\left(1 - \frac{\eta_{k}(2 - \eta_{k})\mu}{L_{k} \tau_{k} + \mu}\right),~\tau_{k} \geq \frac{\mu}{L_{k}};\\[10pt]
        \frac{\hat{f}_{2}(x_{k})(1 - \eta_{k})^{2}}{2\tau_{k}} + \frac{\eta_{k}(2 - \eta_{k})L_{k}\hat{f}_{2}(x_{k})}{2\mu} - \frac{\eta_{k}(2 - \eta_{k})L_{k}^{2}\hat{f}_{2}(x_{k})\tau_{k}}{2\mu^{2}(1 + \xi)^{3}}\text{, для}\\[5pt]
        \text{~~~~~некоторого }\xi\in(-1, 1]\text{ при }\tau_{k} < \frac{\mu}{L_{k}}.
        \end{sqcases}
    \end{equation}
    При этом для $\eta_{k} = 1,~k\in\mathbb{Z}_{+}$ верно:
    \begin{equation}\label{eq:det_lin_conv_2}
        \hat{f}_{1}(x_{k + 1}) \leq \frac{\tau_{k}}{2} + \frac{L_{\hat{F}}}{\mu}\hat{f}_{2}(x_{k}).
    \end{equation}
\end{re:theorem}
\begin{proof}
По определению $\psi_{x,L,\tau}(y)$ (лемма \ref{lm:aux_det_upper_model}):
\begin{equation}\label{eq:delta_f_1}
    \begin{aligned}
        \hat{f}_{1}(x_{k}) - \hat{f}_{1}(x_{k + 1})&\geq\hat{f}_{1}(x_{k}) - \psi_{x_{k},L_{k},\tau_{k}}(x_{k + 1}) = \hat{f}_{1}(x_{k}) - \frac{\tau_{k}}{2} - \frac{L_{k}}{2}\|x_{k + 1} - x_{k}\|^{2} -\\
        &- \frac{1}{2\tau_{k}}\left\|\hat{F}(x_{k}) + \hat{F}^{'}(x_{k})(x_{k + 1} - x_{k})\right\|^{2}.
    \end{aligned}
\end{equation}
Подставим выражение $x_{k + 1}$ в \eqref{eq:delta_f_1}:
\begin{equation*}
    \begin{aligned}
        &\hat{f}_{1}(x_{k}) - \hat{f}_{1}(x_{k + 1})\geq\hat{f}_{1}(x_{k}) - \frac{\tau_{k}}{2} - \frac{1}{2\tau_{k}}\left\|\hat{F}(x_{k}) + \hat{F}^{'}(x_{k})(x_{k + 1} - x_{k})\right\|^{2} - \frac{L_{k}}{2}\left\|x_{k + 1}- x_{k}\right\|^{2} = \hat{f}_{1}(x_{k}) - \frac{\tau_{k}}{2} -\\
        &- \frac{1}{2\tau_{k}}\left\|\hat{F}(x_{k}) - \eta_{k}\hat{F}^{'}(x_{k})\left(L_{k}\tau_{k}I_{n} + \hat{F}^{'}(x_{k})^{*}\hat{F}^{'}(x_{k})\right)^{-1}\hat{F}^{'}(x_{k})^{*}\hat{F}(x_{k})\right\|^{2} -\\
        &-\frac{L_{k}}{2}\left\|\eta_{k}\left(L_{k}\tau_{k}I_{n} + \hat{F}^{'}(x_{k})^{*}\hat{F}^{'}(x_{k})\right)^{-1}\hat{F}^{'}(x_{k})^{*}\hat{F}(x_{k})\right\|^{2} = \hat{f}_{1}(x_{k}) - \frac{\tau_{k}}{2} - \frac{\hat{f}_{2}(x_{k})}{2\tau_{k}} +\\
        &+ \frac{1}{2\tau_{k}}\left(2\left\langle \eta_{k}\left(L_{k}\tau_{k}I_{n} + \hat{F}^{'}(x_{k})^{*}\hat{F}^{'}(x_{k})\right)^{-1}\hat{F}^{'}(x_{k})^{*}\hat{F}(x_{k}), \hat{F}^{'}(x_{k})^{*}\hat{F}(x_{k})\right\rangle -\right.\\
        &\left.-\left\langle\eta_{k}\left(L_{k}\tau_{k}I_{n} + \hat{F}^{'}(x_{k})^{*}\hat{F}^{'}(x_{k})\right)^{-1}\hat{F}^{'}(x_{k})^{*}\hat{F}(x_{k}),\right.\right.\\
        &\left.\left.~~~~~~~~\hat{F}^{'}(x_{k})^{*}\hat{F}^{'}(x_{k})\eta_{k}\left(L_{k}\tau_{k}I_{n} + \hat{F}^{'}(x_{k})^{*}\hat{F}^{'}(x_{k})\right)^{-1}\hat{F}^{'}(x_{k})^{*}\hat{F}(x_{k})\right\rangle -\right.\\
    \end{aligned}
\end{equation*}
\begin{equation}\label{eq:delta_f_2}
    \begin{aligned}
        &\left.-L_{k}\tau_{k}\left\langle\eta_{k}\left(L_{k}\tau_{k}I_{n} + \hat{F}^{'}(x_{k})^{*}\hat{F}^{'}(x_{k})\right)^{-1}\hat{F}^{'}(x_{k})^{*}\hat{F}(x_{k}),~\eta_{k}\left(L_{k}\tau_{k}I_{n} + \hat{F}^{'}(x_{k})^{*}\hat{F}^{'}(x_{k})\right)^{-1}\hat{F}^{'}(x_{k})^{*}\hat{F}(x_{k})\right\rangle\right) =\\
        &= \hat{f}_{1}(x_{k}) - \frac{\tau_{k}}{2} - \frac{\hat{f}_{2}(x_{k})}{2\tau_{k}} + \frac{\eta_{k}(2 - \eta_{k})}{2\tau_{k}}\left\langle\left(L_{k}\tau_{k}I_{n} + \hat{F}^{'}(x_{k})^{*}\hat{F}^{'}(x_{k})\right)^{-1}\hat{F}^{'}(x_{k})^{*}\hat{F}(x_{k}),~\hat{F}^{'}(x_{k})^{*}\hat{F}(x_{k})\right\rangle =\\
        &= \hat{f}_{1}(x_{k}) - \frac{\tau_{k}}{2} - \frac{\hat{f}_{2}(x_{k})}{2\tau_{k}} + \frac{\eta_{k}(2 - \eta_{k})}{2\tau_{k}}\left\langle \hat{F}^{'}(x_{k})\left(L_{k}\tau_{k}I_{n} + \hat{F}^{'}(x_{k})^{*}\hat{F}^{'}(x_{k})\right)^{-1}\hat{F}^{'}(x_{k})^{*}\hat{F}(x_{k}),~\hat{F}(x_{k})\right\rangle.
    \end{aligned}
\end{equation}
Тогда $\hat{f}_{1}(x_{k + 1})$ имеет верхнюю оценку по \eqref{eq:delta_f_2}:
\begin{equation*}
    \begin{aligned}
        \hat{f}_{1}(x_{k + 1})&\leq\frac{\tau_{k}}{2} + \frac{\hat{f}_{2}(x_{k})}{2\tau_{k}} - \frac{\eta_{k}(2 - \eta_{k})}{2\tau_{k}}\left\langle \hat{F}^{'}(x_{k})\left(L_{k}\tau_{k}I_{n} + \hat{F}^{'}(x_{k})^{*}\hat{F}^{'}(x_{k})\right)^{-1}\hat{F}^{'}(x_{k})^{*}\hat{F}(x_{k}), \hat{F}(x_{k})\right\rangle\leq\\
        &\leq\left\{\text{лемма \ref{lm:aux_matrix_order}}\right\}\leq\frac{\tau_{k}}{2} + \frac{\hat{f}_{2}(x_{k})}{2\tau_{k}} - \frac{\eta_{k}(2 - \eta_{k})}{2\tau_{k}}\left(\frac{\|\hat{F}(x_{k})\|^{2}\mu}{L_{k}\tau_{k} + \mu}\right) = \frac{\tau_{k}}{2} + \frac{\hat{f}_{2}(x_{k})}{2\tau_{k}}\left(1 - \frac{\eta_{k}(2 - \eta_{k})\mu}{L_{k}\tau_{k} + \mu}\right).
    \end{aligned}
\end{equation*}
Первая часть \eqref{eq:det_lin_conv_1} выведена, рассмотрим  $\tau_{k} < \frac{\mu}{L_{k}}$:
\begin{equation*}
    \begin{aligned}
        \hat{f}_{1}(x_{k + 1})&\leq\frac{\tau_{k}}{2} + \frac{\hat{f}_{2}(x_{k})}{2\tau_{k}}\left(1 - \frac{\eta_{k}(2 - \eta_{k})\mu}{L_{k}\tau_{k} + \mu}\right) = \frac{\tau_{k}}{2} + \frac{\hat{f}_{2}(x_{k})}{2\tau_{k}}\left(1 - \eta_{k}(2 - \eta_{k})\left(1 + \frac{L_{k}\tau_{k}}{\mu}\right)^{-1}\right) =\\
        &=\left\{(1 + y)^{-1} = 1 - y + \frac{y^{2}}{(1 + \xi)^{3}},~|y| < 1,\text{ для некоторого }\xi\in(-1, 1]\right\} = \frac{\tau_{k}}{2} + \frac{\hat{f}_{2}(x_{k})(1 - \eta_{k})^{2}}{2\tau_{k}} +\\
        &+ \frac{\eta_{k}(2 - \eta_{k})L_{k}\hat{f}_{2}(x_{k})}{2\mu} - \frac{\eta_{k}(2 - \eta_{k})L_{k}^{2}\hat{f}_{2}(x_{k})\tau_{k}}{2\mu^{2}(1 + \xi)^{3}},
    \end{aligned}
\end{equation*}
вторая часть \eqref{eq:det_lin_conv_1} по формуле Тейлора с остаточным членом в форме Лагранжа выведена.
Докажем \eqref{eq:det_lin_conv_2}, согласно доказанному неравенству \eqref{eq:det_lin_conv_1}:
\begin{equation*}
    \begin{aligned}
        \hat{f}_{1}(x_{k + 1}) &\leq \frac{\tau_{k}}{2} + \begin{sqcases}
            \begin{aligned}
                \frac{\hat{f}_{2}(x_{k})}{2\tau_{k}}\underbrace{\left(1 - \frac{\eta_{k}(2 - \eta_{k})\mu}{L_{k} \tau_{k} + \mu}\right)}_{\in(0,~1)}&\leq\frac{\hat{f}_{2}(x_{k})}{2\tau_{k}}\leq\frac{\hat{f}_{2}(x_{k})L_{k}}{2\mu}\leq\left\{L_{k}\leq2L_{\hat{F}}\right\}\leq\\
                &\leq\frac{\hat{f}_{2}(x_{k})L_{\hat{F}}}{\mu},~\tau_{k} \geq \frac{\mu}{L_{k}};\\
                \vspace{1pt}
            \end{aligned}\\
            \begin{aligned}
                \frac{\hat{f}_{2}(x_{k})(1 - \eta_{k})^{2}}{2\tau_{k}} &+ \frac{\eta_{k}(2 - \eta_{k})L_{k}\hat{f}_{2}(x_{k})}{2\mu} - \underbrace{\frac{\eta_{k}(2 - \eta_{k})L_{k}^{2}\hat{f}_{2}(x_{k})\tau_{k}}{2\mu^{2}(1 + \xi)^{3}}}_{\geq0}\leq\left\{\eta_{k} = 1,~L_{k}2L_{\hat{F}}\right\}\leq\\
                &\leq\frac{\hat{f}_{2}(x_{k})L_{\hat{F}}}{\mu}\text{, иначе.}
            \end{aligned}
        \end{sqcases}
    \end{aligned}
\end{equation*}
\end{proof}

Теорема \ref{th:2_main} использует явную формулу проксимального отображения для вывода условий сходимости к стационарной точке.

\begin{re:theorem}\label{th:2}
    Пусть выполнено предположение \ref{as:det_hat_F_der_smooth}. Рассмотрим последовательность $\{x_{k}\}_{k\in\mathbb{Z}_{+}}$, вычисляемую по схеме \ref{alg:gen_det_gnm} c правилом \eqref{eq:det_expl_update_rule},  в котором $\tau_{k} > 0$ и $\eta_{k} > 0$, $\eta_{k}(2 - \eta_{k})\geq c > 0$, $k\in\mathbb{Z}_{+}$. Дополнительно предположим ограниченность нормы матрицы Якоби: существует $M_{\hat{F}} > 0$, для которого выполнено $\left\|\hat{F}^{'}(x)\right\|\leq M_{\hat{F}}$ при всех $x\in \mathcal{F}$. Тогда при $k\in\mathbb{N}$:
    \begin{equation*}
        \begin{aligned}
            \min\limits_{i\in\overline{0, k - 1}}\left\{\left\|\nabla\hat{f}_{2}(x_{i})\right\|^{2}\right\}&\leq\frac{8\left(2L_{\hat{F}}\max\limits_{i\in\overline{0, k - 1}}\left\{\tau_{i}\right\} + M_{\hat{F}}^{2}\right)}{\eta(2 - \eta)k}\sum\limits_{i = 0}^{k - 1}\left(\frac{1}{2}\left(\tau_{i} - \hat{f}_{1}(x_{i})\right)^{2} + \tau_{i}\left(\hat{f}_{1}(x_{i}) - \hat{f}_{1}(x_{i + 1})\right)\right),\\
            \eta&\in\Argmin\limits_{k\in\mathbb{Z}_{+}}\left\{\eta_{k}(2 - \eta_{k})\right\}.
        \end{aligned}
    \end{equation*}
\end{re:theorem}
\begin{proof}
Используя рассуждения из теоремы \ref{th:1} получаем следующее:
\begin{equation}\label{eq:der_f_1}
    \begin{aligned}
        \hat{f}_{1}(x_{k}) - \hat{f}_{1}(x_{k + 1})&\geq\hat{f}_{1}(x_{k}) - \frac{\tau_{k}}{2} - \frac{\hat{f}_{2}(x_{k})}{2\tau_{k}} +\\
        &+ \frac{\eta_{k}(2 - \eta_{k})}{2\tau_{k}}\left\langle\left(L_{k}\tau_{k}I_{n} + \hat{F}^{'}(x_{k})^{*}\hat{F}^{'}(x_{k})\right)^{-1}\hat{F}^{'}(x_{k})^{*}\hat{F}(x_{k}), \hat{F}^{'}(x_{k})^{*}\hat{F}(x_{k})\right\rangle\geq\\
        &\geq\left\{\text{ограничение спектра матрицы снизу},~\nabla\hat{f}_{2}(x_{k}) = 2\hat{F}^{'}(x_{k})^{*}\hat{F}(x_{x})\right\}\geq\\
        &\geq\hat{f}_{1}(x_{k}) - \frac{\tau_{k}}{2} - \frac{\hat{f}_{2}(x_{k})}{2\tau_{k}} + \frac{\eta_{k}(2 - \eta_{k})}{2\tau_{k}}\left(\frac{\left\|\nabla\hat{f}_{2}(x_{k})\right\|^{2}}{4(L_{k}\tau_{k} + M_{\hat{F}}^{2})}\right).
    \end{aligned}
\end{equation}
Из \eqref{eq:der_f_1} следует:
\begin{equation*}
    \hat{f}_{1}(x_{k + 1})\leq\frac{\tau_{k}}{2} + \frac{\hat{f}_{2}(x_{k})}{2\tau_{k}} - \frac{\eta_{k}(2 - \eta_{k})\left\|\nabla\hat{f}_{2}(x_{k})\right\|^{2}}{8\tau_{k}(L_{k}\tau_{k} + M_{\hat{F}}^{2})}\leq\frac{\tau_{k}}{2} + \frac{\hat{f}_{2}(x_{k})}{2\tau_{k}} - \frac{\eta(2 - \eta)\left\|\nabla\hat{f}_{2}(x_{k})\right\|^{2}}{8\tau_{k}\left(2L_{\hat{F}}\tau_{k} + M_{\hat{F}}^{2}\right)}.
\end{equation*}
Тогда, оценивая сверху минимальное значение квадрата нормы градиента $\nabla\hat{f}_{2}(x_{k})$ для $k\in\mathbb{N}$, выведем соотношение:
\begin{equation*}
    \begin{aligned}
        &\frac{\eta(2 - \eta)\left\|\nabla\hat{f}_{2}(x_{k})\right\|^{2}}{8\tau_{k}\left(2L_{\hat{F}}\max\limits_{i\in\overline{0, k - 1}}\left\{\tau_{i}\right\} + M_{\hat{F}}^{2}\right)}\leq\frac{\eta(2 - \eta)\left\|\nabla\hat{f}_{2}(x_{k})\right\|^{2}}{8\tau_{k}\left(2L_{\hat{F}}\tau_{k} + M_{\hat{F}}^{2}\right)}\leq\frac{\tau_{k}}{2} + \frac{\hat{f}_{2}(x_{k})}{2\tau_{k}} - \hat{f}_{1}(x_{k + 1})\Rightarrow\\
        &\Rightarrow\frac{\eta(2 - \eta)\left\|\nabla\hat{f}_{2}(x_{k})\right\|^{2}}{8\left(2L_{\hat{F}}\max\limits_{i\in\overline{0, k - 1}}\left\{\tau_{i}\right\} + M_{\hat{F}}^{2}\right)}\leq\frac{\tau_{k}^{2}}{2} + \frac{\hat{f}_{2}(x_{k})}{2} - \tau_{k}\hat{f}_{1}(x_{k + 1})\Rightarrow\\
        &\Rightarrow\frac{\eta(2 - \eta)k}{8\left(2L_{\hat{F}}\max\limits_{i\in\overline{0, k - 1}}\left\{\tau_{i}\right\} + M_{\hat{F}}^{2}\right)}\min\limits_{i\in\overline{0,k - 1}}\left\{\left\|\nabla\hat{f}_{2}(x_{i})\right\|^{2}\right\}\leq\\
        &\leq\frac{\eta(2 - \eta)}{8\left(2L_{\hat{F}}\max\limits_{i\in\overline{0, k - 1}}\left\{\tau_{i}\right\} + M_{\hat{F}}^{2}\right)}\sum\limits_{i = 0}^{k - 1}\left\|\nabla\hat{f}_{2}(x_{i})\right\|^{2}\leq\sum\limits_{i = 0}^{k - 1}\left(\frac{\tau_{i}^{2}}{2} + \frac{\hat{f}_{2}(x_{i})}{2} - \tau_{i}\hat{f}_{1}(x_{i + 1})\right) =\\
        &= \sum\limits_{i = 0}^{k - 1}\left(\frac{1}{2}\left(\tau_{i} - \hat{f}_{1}(x_{i})\right)^{2} + \tau_{i}\left(\hat{f}_{1}(x_{i}) - \hat{f}_{1}(x_{i + 1})\right)\right)\Rightarrow\min\limits_{i\in\overline{0, k - 1}}\left\{\left\|\nabla\hat{f}_{2}(x_{i})\right\|^{2}\right\}\leq\\
        &\leq\frac{8\left(2L_{\hat{F}}\max\limits_{i\in\overline{0, k - 1}}\left\{\tau_{i}\right\} + M_{\hat{F}}^{2}\right)}{\eta(2 - \eta)k}\sum\limits_{i = 0}^{k - 1}\left(\frac{1}{2}\left(\tau_{i} - \hat{f}_{1}(x_{i})\right)^{2} + \tau_{i}\left(\hat{f}_{1}(x_{i}) - \hat{f}_{1}(x_{i + 1})\right)\right).
    \end{aligned}
\end{equation*}
\end{proof}

В теореме \ref{lm:1_main} используется стратегия выбора $\tau_{k} = \operatorname{O}\left(\hat{f}_{1}(x_{k})\right)$ для вывода линейной сходимости к решению системы \eqref{eq:smooth_system} при выполнении условия Поляка--Лоясиевича.

\begin{re:theorem}\label{lm:1}
    Пусть условия теоремы \ref{th:1} верны, определим на каждой итерации метода нормализованных квадратов максимальный коэффициент линейной сходимости $\alpha_{k}$:
    $$\alpha_{k}\in\left[\frac{1}{2} + \frac{L_{k}
        \hat{f}_{1}(x_{k}) + (1 - \eta_{k})^{2}\mu}{2\left(L_{k}\hat{f}_{1}(x_{k}) + \mu\right)},~1\right),~k\in\mathbb{Z}_{+}.$$ Дополнительно предположим, что на каждой итерации $\tau_{k} = c_{k}\hat{f}_{1}(x_{k})$, $k\in\mathbb{Z}_{+}$:
    \begin{equation}\label{eq:det_const_cases_l1}
        \begin{aligned}
            c_{k} \in &\left[\frac{L_{k}\hat{f}_{1}(x_{k}) + (1 - \eta_{k})^{2}\mu}{(2\alpha_{k} - 1)(L_{k}\hat{f}_{1}(x_{k}) + \mu)},~\alpha_{k} - \frac{1}{2} - \frac{\mu}{2L_{k}\hat{f}_{1}(x_{k})} +\right.\\
            &\left.~~+ \sqrt{\alpha_{k}^{2} + \alpha_{k}\left(\frac{\mu}{L_{k}\hat{f}_{1}(x_{k})} - 1\right) + \frac{1}{4}\left(\frac{\mu}{L_{k}\hat{f}_{1}(x_{k})} + 1\right)^{2} - \frac{(1 - \eta_{k})^{2}\mu}{L_{k}\hat{f}_{1}(x_{k})}}~\right].
        \end{aligned}
    \end{equation}
    Тогда метод нормализованных квадратов с вычислением $x_{k + 1}$ по правилу \eqref{eq:det_expl_update_rule} глобально сходится не хуже, чем линейно к решению задачи \eqref{eq:main_opt_problem} $\lim\limits_{k\rightarrow+\infty}x_{k} = x^{*}: \hat{F}(x^{*}) = \mathbf{0}_{m}$ со следующей оценкой:
    $$\hat{f}_{1}(x_{k})\leq\hat{f}_{1}(x_{0})\prod_{i = 0}^{k - 1}\alpha_{i},~k\in\mathbb{Z}_{+},~\prod\limits_{i = 0}^{-1}\alpha_{i} \overset{\operatorname{def}}{=} 1.$$
\end{re:theorem}
\begin{proof}
Для удобства рассмотрим следующую декомпозицию задачи поиска области определения $c_{k},~k\in\mathbb{Z}_{+}$:
\begin{equation*}
    \begin{sqcases}
    1.~~\tau_{k}\in\left[\frac{\hat{f}_{1}(x_{k})}{c^{-1}_{k}},~\hat{f}_{1}(x_{k})\right],~c^{-1}_{k}\geq 1;\\[10pt]
    2.~~\tau_{k}\in\left[\hat{f}_{1}(x_{k}),~c_{k}\hat{f}_{1}(x_{k})\right],~c_{k}\geq 1.
    \end{sqcases}
\end{equation*}
Выведем границы значений $c_{k}$ для первого случая. Воспользуемся результатом теоремы \ref{th:1}:
\begin{equation*}
    \begin{aligned}
        \hat{f}_{1}(x_{k + 1})&\leq\frac{\tau_{k}}{2} + \frac{\hat{f}_{2}(x_{k})}{2\tau_{k}}\left(1 - \frac{\eta_{k}(2 - \eta_{k})\mu}{L_{k} \tau_{k} + \mu}\right)\leq\frac{\hat{f}_{1}(x_{k})}{2} + \frac{c^{-1}_{k}\hat{f}_{1}(x_{k})}{2}\left(1 - \frac{\eta_{k}(2 - \eta_{k})\mu}{L_{k} \hat{f}_{1}(x_{k}) + \mu}\right) =\\
        &= \hat{f}_{1}(x_{k})\left(\frac{c^{-1}_{k} + 1}{2} - \frac{\eta_{k}(2 - \eta_{k})c^{-1}_{k}\mu}{2\left(L_{k}\hat{f}_{1}(x_{k}) + \mu\right)}\right)
    \end{aligned}
\end{equation*}
По определению линейной сходимости:
$$\left(\frac{c^{-1}_{k} + 1}{2} - \frac{\eta_{k}(2 - \eta_{k})c^{-1}_{k}\mu}{2\left(L_{k}\hat{f}_{1}(x_{k}) + \mu\right)}\right)\in\left[0,~\alpha_{k}\right],~\alpha_{k}\in(0, 1).$$
Тогда:
\begin{equation*}
    \begin{aligned}
        &0\leq \frac{c^{-1}_{k} + 1}{2} - \frac{\eta_{k}(2 - \eta_{k})c^{-1}_{k}\mu}{2\left(L_{k}\hat{f}_{1}(x_{k}) + \mu\right)} \leq \alpha_{k} < 1\Rightarrow-1\leq c^{-1}_{k} - \frac{\eta_{k}(2 - \eta_{k})c^{-1}_{k}\mu}{L_{k}\hat{f}_{1}(x_{k}) + \mu} \leq 2\alpha_{k} - 1\Rightarrow\\
        &\Rightarrow-1\leq c^{-1}_{k}\left(1 - \frac{\eta_{k}(2 - \eta_{k})\mu}{L_{k}\hat{f}_{1}(x_{k}) + \mu}\right) \leq 2\alpha_{k} - 1\Rightarrow\\
        &\Rightarrow-\left(1 - \frac{\eta_{k}(2 - \eta_{k})\mu}{L_{k}\hat{f}_{1}(x_{k}) + \mu}\right)^{-1} \leq c^{-1}_{k} \leq (2\alpha_{k} - 1)\left(1 - \frac{\eta_{k}(2 - \eta_{k})\mu}{L_{k}\hat{f}_{1}(x_{k}) + \mu}\right)^{-1}\Rightarrow\\
        &\Rightarrow0 \leq c^{-1}_{k} \leq (2\alpha_{k} - 1)\left(1 - \frac{\eta_{k}(2 - \eta_{k})\mu}{L_{k}\hat{f}_{1}(x_{k}) + \mu}\right)^{-1} = (2\alpha_{k} - 1)\left(\frac{L_{k}\hat{f}_{1}(x_{k}) + \mu}{L_{k}\hat{f}_{1}(x_{k}) + (1 - \eta_{k}(2 - \eta_{k}))\mu}\right).
    \end{aligned}
\end{equation*}
Применим ограничения на $c_{k}^{-1}$ из условия:
$$1 \leq c^{-1}_{k} \leq (2\alpha_{k} - 1)\left(\frac{L_{k}\hat{f}_{1}(x_{k}) + \mu}{L_{k}\hat{f}_{1}(x_{k}) + (1 - \eta_{k})^{2}\mu}\right).$$
Следовательно,
$$c_{k}\in\left[\frac{L_{k}\hat{f}_{1}(x_{k}) + (1 - \eta_{k})^{2}\mu}{(2\alpha_{k} - 1)(L_{k}\hat{f}_{1}(x_{k}) + \mu)},~1\right].$$
Определим для таких $c_{k}$ допустимые значения $\alpha_{k}$:
\begin{equation}\label{eq:th1_first_conditions}
    \begin{aligned}
        &c^{-1}_{k} = (2\alpha_{k} - 1)\left(\frac{L_{k}\hat{f}_{1}(x_{k}) + \mu}{L_{k}\hat{f}_{1}(x_{k}) + (1 - \eta_{k})^{2}\mu}\right)\geq 1\Rightarrow\alpha_{k}\in\left[\frac{1}{2} + \frac{L_{k}
        \hat{f}_{1}(x_{k}) + (1 - \eta_{k})^{2}\mu}{2\left(L_{k}\hat{f}_{1}(x_{k}) + \mu\right)},~1\right).
    \end{aligned}
\end{equation}
Теперь рассмотрим второй случай:
$$\tau_{k}\in\left[\hat{f}_{1}(x_{k}),~c_{k}\hat{f}_{1}(x_{k})\right],~c_{k}\geq 1.$$
По теореме \ref{th:1}:
\begin{equation*}
    \begin{aligned}
        \hat{f}_{1}(x_{k + 1})&\leq\frac{\tau_{k}}{2} + \frac{\hat{f}_{2}(x_{k})}{2\tau_{k}}\left(1 - \frac{\eta_{k}(2 - \eta_{k})\mu}{L_{k} \tau_{k} + \mu}\right)\leq\frac{c_{k}\hat{f}_{1}(x_{k})}{2} + \frac{\hat{f}_{1}(x_{k})}{2}\left(1 - \frac{\eta_{k}(2 - \eta_{k})\mu}{L_{k} c_{k}\hat{f}_{1}(x_{k}) + \mu}\right) =\\
        &= \hat{f}_{1}(x_{k})\left(\frac{c_{k} + 1}{2} - \frac{\eta_{k}(2 - \eta_{k})\mu}{2\left(L_{k}\hat{f}_{1}(x_{k})c_{k} + \mu\right)}\right).
    \end{aligned}
\end{equation*}
По определению линейной сходимости:
$$\left(\frac{c_{k} + 1}{2} - \frac{\eta_{k}(2 - \eta_{k})\mu}{2\left(L_{k}\hat{f}_{1}(x_{k})c_{k} + \mu\right)}\right)\in\left[0,~\alpha_{k}\right],~\alpha_{k}\in(0, 1).$$
Распишем подробнее данные неравенства:
\begin{equation*}
    \begin{aligned}
        &0\leq \frac{c_{k} + 1}{2} - \frac{\eta_{k}(2 - \eta_{k})\mu}{2\left(L_{k}\hat{f}_{1}(x_{k})c_{k} + \mu\right)} \leq \alpha_{k} < 1\Rightarrow
        -1\leq c_{k} - \frac{\eta_{k}(2 - \eta_{k})\mu}{L_{k}\hat{f}_{1}(x_{k})c_{k} + \mu} \leq 2\alpha_{k} - 1\Rightarrow\\
        &\Rightarrow-\mu - L_{k}\hat{f}_{1}(x_{k})c_{k}\leq (c_{k})^{2}L_{k}\hat{f}_{1}(x_{k}) + c_{k}\mu - \eta_{k}(2 - \eta_{k})\mu \leq (2\alpha_{k} - 1)\left(L_{k}\hat{f}_{1}(x_{k})c_{k} + \mu\right)\Rightarrow\\
        &\Rightarrow0\leq (c_{k})^{2}L_{k}\hat{f}_{1}(x_{k}) + c_{k}\left(\mu + L_{k}\hat{f}_{1}(x_{k})\right) + (1 - \eta_{k})^{2}\mu\leq 2\alpha_{k}\left(L_{k}\hat{f}_{1}(x_{k})c_{k} + \mu\right).
    \end{aligned}
\end{equation*}
Рассмотрим левое неравенство:
$$0\leq (c_{k})^{2}L_{k}\hat{f}_{1}(x_{k}) + c_{k}\left(\mu + L_{k}\hat{f}_{1}(x_{k})\right) + (1 - \eta_{k})^{2}\mu.$$
Введём замену переменных $b_{k} = \frac{L_{k}\hat{f}_{1}(x_{k})}{\mu}$, подставим в неравенства:
$$0\leq b_{k}(c_{k})^{2} + c_{k}(1 + b_{k}) + (1 - \eta_{k})^{2}\leq 2\alpha_{k}(b_{k}c_{k} + 1).$$
Рассмотрим дискриминант полинома второй степени, записанного в центральном неравенстве выше:
$$(1 + b_{k})^{2} - 4b_{k}(1 - \eta_{k})^{2} = b_{k}^{2} + b_{k}(2 - 4(1 - \eta_{k})^{2}) + 1.$$
При тех $\eta_{k}$, при которых дискриминант отрицателен, новые ограничения на $c_{k}$ в левом неравенстве не возникают. При остальных $\eta_{k}$ получаются следующие корни:
$$c_{k} = -\frac{1}{2} - \frac{1}{2b_{k}}\pm\sqrt{\frac{1}{4} + \frac{1}{2b_{k}} - \frac{(1 - \eta_{k})^{2}}{b_{k}} + \frac{1}{4b_{k}^{2}}}.$$
Левый корень не накладывает дополнительные ограничения, так как он отрицателен. Правый корень не накладывает ограничений, потому что он левее $c_{k} = 1$:
\begin{equation*}
    \begin{aligned}
        &-\frac{1}{2} - \frac{1}{2b_{k}} + \sqrt{\frac{1}{4} + \frac{1}{2b_{k}} - \frac{(1 - \eta_{k})^{2}}{b_{k}} + \frac{1}{4b_{k}^{2}}}~\vee~1\Rightarrow
        \sqrt{\frac{1}{4} + \frac{1}{2b_{k}} - \frac{(1 - \eta_{k})^{2}}{b_{k}} + \frac{1}{4b_{k}^{2}}}~\vee~\frac{3}{2} + \frac{1}{2b_{k}}\Rightarrow\\
        &\Rightarrow\frac{1}{4} + \frac{1}{2b_{k}} - \frac{(1 - \eta_{k})^{2}}{b_{k}} + \frac{1}{4b_{k}^{2}}~\vee~\frac{9}{4} + \frac{3}{2b_{k}} + \frac{1}{4b_{k}}\Rightarrow-\frac{(1 - \eta_{k})^{2}}{b_{k}}\leq 2 + \frac{1}{b_{k}};
    \end{aligned}
\end{equation*}
то есть дополнительные ограничения не накладываются:
$$c_{k}\geq\max\left\{1,~-\frac{1}{2} - \frac{1}{2b_{k}} + \sqrt{\frac{1}{4} + \frac{1}{2b_{k}} - \frac{(1 - \eta_{k})^{2}}{b_{k}} + \frac{1}{4b_{k}^{2}}}\right\} = 1.$$
Перейдём к правому неравенству сразу с подстановкой $b_{k} = \frac{L_{k}\hat{f}_{1}(x_{k})}{\mu}$:
\begin{equation*}
    \begin{aligned}
        &b_{k}(c_{k})^{2} + c_{k}(1 + b_{k}) + (1 - \eta_{k})^{2}\leq 2\alpha_{k}(b_{k}c_{k} + 1)\Rightarrow b_{k}(c_{k})^{2} + c_{k}(1 + b_{k} - 2\alpha_{k}b_{k}) + (1 - \eta_{k})^{2} - 2\alpha_{k}\leq 0.
    \end{aligned}
\end{equation*}
Ограничения задаются полиномом второй степени, дискриминант которого равен:
\begin{equation*}
    \begin{aligned}
        &(1 + b_{k} - 2\alpha_{k}b_{k})^{2} - 4b_{k}((1 - \eta_{k})^{2} - 2\alpha_{k}) = \alpha_{k}^{2}(4b_{k}^{2}) + \alpha_{k}(4b_{k}(1 - b_{k})) + ((b_{k} + 1)^{2} - 4b_{k}(1-\eta_{k})^{2}).
    \end{aligned}
\end{equation*}
Дискриминант полинома является параболой по $\alpha_{k}$ с положительным коэффициентом при старшем члене. Рассмотрим значение данной параболы в точке вершины $\alpha_{k} = \frac{b_{k} - 1}{2b_{k}}$:
\begin{equation*}
    \begin{aligned}
        &4b_{k}^{2}\left(\frac{b_{k} - 1}{2b_{k}}\right)^{2} + 4b_{k}\left(\frac{b_{k} - 1}{2b_{k}}\right)(1 - b_{k}) + (b_{k} + 1)^{2} - 4b_{k}(1 - \eta_{k})^{2} = 4b_{k}\eta_{k}(2 - \eta_{k})\geq0,~\eta_{k}\in(0, 2).
    \end{aligned}
\end{equation*}
То есть множество допустимых значений $c_{k}$ непусто. Тогда граничные точки $c_{k}$ равны:
$$c_{k} = \alpha_{k} - \frac{b_{k} + 1}{2b_{k}} \pm \sqrt{\alpha_{k}^{2} + \frac{\alpha_{k}(1 - b_{k})}{b_{k}} + \left(\frac{1 + b_{k}}{2b_{k}}\right)^{2} - \frac{(1 - \eta_{k})^{2}}{b_{k}}}.$$
У параболы свободный член неположителен и равен $(1 - \eta_{k})^{2} - 2\alpha_{k}$, так как по \eqref{eq:th1_first_conditions} $\alpha_{k}\geq\frac{1}{2}$ и $\eta_{k}\in(0, 2)$. Поэтому по теореме Виета некратные ненулевые корни многочлена второй степени обладают разными знаками, сама парабола имеет положительный коэффициент при старшем члене, что по ограничению $c_{k}\geq 1$ для множества допустимых значений составляет отрезок:
\begin{equation*}
    \begin{aligned}
        c_{k}&\in\left[1,~\alpha_{k} - \frac{b_{k} + 1}{2b_{k}} + \sqrt{\alpha_{k}^{2} + \frac{\alpha_{k}(1 - b_{k})}{b_{k}} + \left(\frac{1 + b_{k}}{2b_{k}}\right)^{2} - \frac{(1 - \eta_{k})^{2}}{b_{k}}}\right] =\\
        &= \left[1,~\alpha_{k} - \frac{1}{2} - \frac{\mu}{2L_{k}\hat{f}_{1}(x_{k})} + \sqrt{\alpha_{k}^{2} + \alpha_{k}\left(\frac{\mu}{L_{k}\hat{f}_{1}(x_{k})} - 1\right) + \frac{1}{4}\left(\frac{\mu}{L_{k}\hat{f}_{1}(x_{k})} + 1\right)^{2} - \frac{(1 - \eta_{k})^{2}\mu}{L_{k}\hat{f}_{1}(x_{k})}}\right].
    \end{aligned}
\end{equation*}
Определим для таких $c_{k}$ допустимые значения $\alpha_{k}$:
\begin{equation*}
    \begin{aligned}
        &c_{k} = \alpha_{k} - \frac{b_{k} + 1}{2b_{k}} + \sqrt{\alpha_{k}^{2} + \frac{\alpha_{k}(1 - b_{k})}{b_{k}} + \left(\frac{1 + b_{k}}{2b_{k}}\right)^{2} - \frac{(1 - \eta_{k})^{2}}{b_{k}}}\geq 1\Rightarrow\\
        &\Rightarrow\alpha_{k}^{2} + \frac{\alpha_{k}(1 - b_{k})}{b_{k}} + \left(\frac{1 + b_{k}}{2b_{k}}\right)^{2} - \frac{(1 - \eta_{k})^{2}}{b_{k}}\geq\left(1 + \frac{1 + b_{k}}{2b_{k}} - \alpha_{k}\right)^{2}\Rightarrow\\
        &\Rightarrow\alpha_{k}(1 - b_{k}) - (1 - \eta_{k})^{2}\geq 1 + 2b_{k} - \alpha_{k}(1 + b_{k}) - 2\alpha_{k}b_{k}\Rightarrow2\alpha_{k}(b_{k} + 1)\geq 2b_{k} + 1 + (1 - \eta_{k})^{2}\Rightarrow\\
        &\Rightarrow\alpha_{k}\geq\frac{2b_{k} + 1 + (1- \eta_{k})^{2}}{2(b_{k} + 1)} = \frac{1}{2} + \frac{b_{k} + (1 - \eta_{k})^{2}}{2(b_{k} + 1)}\Rightarrow\alpha_{k}\in\left[\frac{1}{2} + \frac{L_{k}\hat{f}_{1}(x_{k}) + (1 - \eta_{k})^{2}\mu}{2\left(L_{k}\hat{f}_{1}(x_{k}) + \mu\right)},~1\right).
    \end{aligned}
\end{equation*}
Объединяя рассмотренные случаи, получаем искомое \eqref{eq:det_const_cases_l1}:
\begin{equation*}
    \begin{cases}
        \begin{aligned}
            c_{k} \in &\left[\frac{L_{k}\hat{f}_{1}(x_{k}) + (1 - \eta_{k})^{2}\mu}{(2\alpha_{k} - 1)(L_{k}\hat{f}_{1}(x_{k}) + \mu)},~\alpha_{k} - \frac{1}{2} - \frac{\mu}{2L_{k}\hat{f}_{1}(x_{k})} +\right.\\
            &\left.+ \sqrt{\alpha_{k}^{2} + \alpha_{k}\left(\frac{\mu}{L_{k}\hat{f}_{1}(x_{k})} - 1\right) + \frac{1}{4}\left(\frac{\mu}{L_{k}\hat{f}_{1}(x_{k})} + 1\right)^{2} - \frac{(1 - \eta_{k})^{2}\mu}{L_{k}\hat{f}_{1}(x_{k})}}~~\right].
        \end{aligned}\\
        \alpha_{k}\in\left[\frac{1}{2} + \frac{L_{k}
        \hat{f}_{1}(x_{k}) + (1 - \eta_{k})^{2}\mu}{2\left(L_{k}\hat{f}_{1}(x_{k}) + \mu\right)},~1\right).
    \end{cases}
\end{equation*}
Установлена линейная сходимость метода трёх квадратов, что по свойствам минимизируемой нормы приводит итерационный процесс к решению $\lim\limits_{k\rightarrow+\infty}x_{k} = x^{*}: \hat{F}(x^{*}) = \mathbf{0}_{m}$. Данная сходимость носит глобальный характер, так как для $x_{0}\in\mathcal{F}:~\mathcal{L}(\hat{f}_{1}(x_{0}))\subseteq\mathcal{F}$, верно вложение $$x_{k}\in\mathcal{L}(\hat{f}_{1}(x_{k}))\subseteq\mathcal{L}(\hat{f}_{1}(x_{0}))\subseteq\mathcal{F},~k\in\mathbb{Z}_{+}.$$
Из доказательства непосредственно выводится оценка сходимости:
$$\hat{f}_{1}(x_{k})\leq\hat{f}_{1}(x_{0})\prod_{i = 0}^{k - 1}\alpha_{i},~k\in\mathbb{N}.$$
\end{proof}

В теореме \ref{th:DetFlexSublinConvMain} представлены условия сходимости к стационарной точке в случае адаптивного подбора значения $\tau_{k}$ на каждой итерации.

\begin{re:theorem}\label{th:DetFlexSublinConv}
    Пусть выполнено предположение \ref{as:det_hat_F_der_smooth}, $k\in\mathbb{N},~r > 0$. Рассмотрим функции
    $$\varkappa(t) = \frac{t^{2}}{2}\mathds{1}_{\left\{t\in[0, 1]\right\}} + \left(t - \frac{1}{2}\right)\mathds{1}_{\left\{t > 1\right\}}\text{ и }\tilde{\Delta}_{r}(x) \overset{\operatorname{def}}{=} \hat{f}_{1}(x) - \min\limits_{y\in E_{1}}\left\{\phi(x, y):~\|y - x\|\leq r\right\}.$$
    Тогда для метода Гаусса--Ньютона, реализованного по схеме \ref{alg:gen_det_flex_gnm} с $\varepsilon_{k} = \varepsilon \geq 0$, верны следующие оценки:
    \begin{equation*}
        \begin{cases}
            &\frac{8L_{\hat{F}}^{2}}{L}\left(\varepsilon + \frac{\left(\hat{f}_{1}(x_{0}) - \hat{f}_{1}(x_{k})\right)}{k}\right)\geq\min\limits_{i\in\overline{0, k - 1}}\left\{\left\|2L_{\hat{F}}\left(T_{2L_{\hat{F}}, \mathcal{T}_{2L_{\hat{F}}}(x_{i})}(x_{i}) - x_{i}\right)\right\|^{2}\right\};\\[10pt]
            &L_{\hat{F}}\left(\varepsilon + \frac{\left(\hat{f}_{1}(x_{0}) - \hat{f}_{1}(x_{k})\right)}{k}\right)\geq\min\limits_{i\in\overline{0, k - 1}}\left\{2\left(L_{\hat{F}}r\right)^{2}\varkappa\left(\frac{\tilde{\Delta}_{r}(x_{i})}{2L_{\hat{F}}r^{2}}\right)\right\}.
        \end{cases}
    \end{equation*}
\end{re:theorem}
\begin{proof}
Согласно Lemma 2.4 \cite{Nesterov2007} выполнено следующее соотношение:
\begin{equation*}
    \begin{aligned}
        &\hat{f}_{1}(x_{k}) - \psi_{x_{k}, L_{k}, \mathcal{T}_{L_{k}}(x_{k})}(T_{L_{k}, \mathcal{T}_{L_{k}}(x_{k})}(x_{k}))\geq L_{k}r^{2}\varkappa\left(\frac{\tilde{\Delta}_{r}(x_{k})}{L_{k}r^{2}}\right),~k\in\mathbb{Z}_{+},~r > 0.
    \end{aligned}
\end{equation*}
Lemma 2.3 \cite{Nesterov2007} устанавливает другое соотношение:
\begin{equation*}
    \begin{aligned}
        &\hat{f}_{1}(x_{k}) - \psi_{x_{k}, L_{k}, \mathcal{T}_{L_{k}}(x_{k})}(T_{L_{k}, \mathcal{T}_{L_{k}}(x_{k})}(x_{k}))\geq\frac{L_{k}}{2}\left\|T_{L_{k}, \mathcal{T}_{L_{k}}(x_{k})}(x_{k}) - x_{k}\right\|^{2},~k\in\mathbb{Z}_{+}.
    \end{aligned}
\end{equation*}
Используя аргументацию из теоремы \ref{th:DetSublinConv}, добавим и вычтем значение $\psi_{x_{k}, L_{k}, \tau_{k}^{*}}(x_{k + 1})$:
\begin{equation*}
    \begin{cases}
        \begin{aligned}
            \hat{f}_{1}(x_{k}) - \hat{f}_{1}(x_{k + 1}) + \varepsilon&\geq\hat{f}_{1}(x_{k}) + \left(\psi_{x_{k}, L_{k}, \tau_{k}^{*}}(x_{k + 1}) - \psi_{x_{k}, L_{k}, \mathcal{T}_{L_{k}}(x_{k})}(T_{L_{k}, \mathcal{T}_{L_{k}}(x_{k})}(x_{k}))\right) - \psi_{x_{k}, L_{k}, \tau_{k}^{*}}(x_{k + 1})\geq\\
            &\geq\frac{L_{k}}{2}\left\|T_{L_{k}, \mathcal{T}_{L_{k}}(x_{k})}(x_{k}) - x_{k}\right\|^{2};
        \end{aligned}\\[10pt]
        \begin{aligned}
            \hat{f}_{1}(x_{k}) - \hat{f}_{1}(x_{k + 1}) + \varepsilon&\geq\hat{f}_{1}(x_{k}) + \left(\psi_{x_{k}, L_{k}, \tau_{k}^{*}}(x_{k + 1}) - \psi_{x_{k}, L_{k}, \mathcal{T}_{L_{k}}(x_{k})}(T_{L_{k}, \mathcal{T}_{L_{k}}(x_{k})}(x_{k}))\right) - \psi_{x_{k}, L_{k}, \tau_{k}^{*}}(x_{k + 1})\geq\\
            &\geq L_{k}r^{2}\varkappa\left(\frac{\tilde{\Delta}_{r}(x_{k})}{L_{k}r^{2}}\right).
        \end{aligned}
    \end{cases}
\end{equation*}
Усредним данные соотношения для $0,~\dots, k - 1$ итераций и воспользуемся тем, что $L_{k}\geq L$, функции $\left\|T_{L_{i}, \mathcal{T}_{L_{i}}(x_{i})}(x_{i}) - x_{i}\right\|^{2}$ и $L_{i}r^{2}\varkappa\left(\frac{\tilde{\Delta}_{r}(x_{i})}{L_{i}r^{2}}\right)$ монотонно убывают по $L_{i}$ \cite{Nesterov2007}:
\begin{equation*}
    \begin{cases}
        \begin{aligned}
            \varepsilon + \frac{\hat{f}_{1}(x_{0}) - \hat{f}_{1}(x_{k})}{k}&\geq\frac{1}{k}\sum\limits_{i = 0}^{k - 1}\frac{L_{i}}{2}\left\|T_{L_{i}, \mathcal{T}_{L_{i}}(x_{i})}(x_{i}) - x_{i}\right\|^{2}\geq\frac{1}{k}\sum\limits_{i = 0}^{k - 1}\frac{L}{2}\left\|T_{2L_{\hat{F}}, \mathcal{T}_{2L_{\hat{F}}}(x_{i})}(x_{i}) - x_{i}\right\|^{2}\geq\\
            &\geq\min\limits_{i\in\overline{0, k - 1}}\left\{\frac{L}{2}\left\|T_{2L_{\hat{F}}, \mathcal{T}_{2L_{\hat{F}}}(x_{i})}(x_{i}) - x_{i}\right\|^{2}\right\};
        \end{aligned}\\[10pt]
        \begin{aligned}
            \varepsilon + \frac{\hat{f}_{1}(x_{0}) - \hat{f}_{1}(x_{k})}{k}&\geq\frac{1}{k}\sum\limits_{i = 0}^{k - 1}L_{i}r^{2}\varkappa\left(\frac{\tilde{\Delta}_{r}(x_{i})}{L_{i}r^{2}}\right)\geq\frac{1}{k}\sum\limits_{i = 0}^{k - 1}2L_{\hat{F}}r^{2}\varkappa\left(\frac{\tilde{\Delta}_{r}(x_{i})}{2L_{\hat{F}}r^{2}}\right)\geq\\
            &\geq\min\limits_{i\in\overline{0, k - 1}}\left\{2L_{\hat{F}}r^{2}\varkappa\left(\frac{\tilde{\Delta}_{r}(x_{i})}{2L_{\hat{F}}r^{2}}\right)\right\}.
        \end{aligned}
    \end{cases}
\end{equation*}
Домножением приводим оценки к искомому виду.
\end{proof}
\begin{re:th:corollary}\label{th:DetFlexSublinConvCor1}
По аналогии со следствием \ref{th:SubLinConvCor1} существует возможность найти стационарную точку с произвольной точностью при монотонном увеличении точности вычисления $(\tau_{k}^{*}, x_{k + 1})$:
\begin{equation*}
    \begin{cases}
        \begin{aligned}
            \frac{8L_{\hat{F}}^{2}}{kL}&\left((1 + 2\varepsilon)\hat{f}_{1}(x_{0}) -\varepsilon\hat{f}_{1}(x_{k - 1}) - \hat{f}_{1}(x_{k})\right)\geq\min\limits_{i\in\overline{0, k - 1}}\left\{\left\|2L_{\hat{F}}\left(T_{2L_{\hat{F}}, \mathcal{T}_{2L_{\hat{F}}}(x_{i})}(x_{i}) - x_{i}\right)\right\|^{2}\right\};
        \end{aligned}\\[10pt]
        \begin{aligned}
            \frac{L_{\hat{F}}}{k}&\left((1 + 2\varepsilon)\hat{f}_{1}(x_{0}) - \varepsilon\hat{f}_{1}(x_{k - 1}) - \hat{f}_{1}(x_{k})\right)\geq\min\limits_{i\in\overline{0, k - 1}}\left\{2\left(L_{\hat{F}}r\right)^{2}\varkappa\left(\frac{\tilde{\Delta}_{r}(x_{i})}{2L_{\hat{F}}r^{2}}\right)\right\}.
        \end{aligned}
    \end{cases}
\end{equation*}
\end{re:th:corollary}
\begin{re:th:corollary}\label{th:DetFlexSublinConvCor2}
Воспользовавшись аргументацией из следствия \ref{th:SubLinConvCor2}, получаем оценки на вариацию последовательности $\left\{x_{k}\right\}_{k\in\mathbb{Z}_{+}}$:
\begin{equation*}
    \begin{cases}
        \hat{f}_{1}(x_{k}) - \hat{f}_{1}^{*} + \varepsilon\left(\hat{f}_{1}(x_{k - 1}) - \hat{f}_{1}^{*}\right)\geq\frac{L}{2}\left\|T_{2L_{\hat{F}}, \mathcal{T}_{2L_{\hat{F}}}(x_{k})}(x_{k}) - x_{k}\right\|^{2};\\[10pt]
        \hat{f}_{1}(x_{k}) - \hat{f}_{1}^{*} + \varepsilon\left(\hat{f}_{1}(x_{k - 1}) - \hat{f}_{1}^{*}\right)\geq2L_{\hat{F}}r^{2}\varkappa\left(\frac{\tilde{\Delta}_{r}(x_{k})}{2L_{\hat{F}}r^{2}}\right).
    \end{cases}
\end{equation*}
Неравенства выше означают $\lim\limits_{k\rightarrow+\infty}x_{k + 1} = \lim\limits_{k\rightarrow+\infty}T_{2L_{\hat{F}}, \mathcal{T}_{2L_{k}}(x_{k})}(x_{k}) = x^{*}$ и
\begin{equation*}
    \begin{cases}
        \lim\limits_{k\rightarrow+\infty}\left\|x_{k + 1} - x_{k}\right\| = 0;\\
        \lim\limits_{k\rightarrow+\infty}\tilde{\Delta}_{r}(x_{k}) = 0;
    \end{cases}
\end{equation*}
указывая на связность множества стационарных точек $\left\{x^{*}:~x^{*}\in E_{1},~\tilde{\Delta}_{r}(x^{*}) = 0\right\}$ для последовательности $\left\{x_{k}\right\}_{k\in\mathbb{Z}_{+}}$.
\end{re:th:corollary}

В теореме \ref{th:DetFlexlinConvMain} представлены улучшенные оценки сходимости для областей сублинейной и линейной сходимости в случае адаптивного подбора $\tau_{k}$ по сравнению с фиксацией $\tau_{k} = \hat{f}_{1}(x_{k})$.

\begin{re:theorem}\label{th:DetFlexlinConv}
    Допустим выполнение предположений \ref{as:det_hat_F_der_smooth} и \ref{as:det_hat_F_PL_condition} для метода Гаусса--Ньютона со схемой реализации \ref{alg:gen_det_flex_gnm}. Тогда для последовательности $\left\{x_{k}\right\}_{k\in\mathbb{Z}_{+}}$ выполняются следующие соотношения:
    \begin{equation*}
        \hat{f}_{1}(x_{k + 1})\leq\varepsilon_{k} + \begin{sqcases}
            \frac{L_{\hat{F}}}{\mu}\hat{f}_{2}(x_{k})\leq\frac{1}{2}\hat{f}_{1}(x_{k})\text{, если }\hat{f}_{1}(x_{k})\leq\frac{\mu}{2L_{\hat{F}}};\\[5pt]
            \hat{f}_{1}(x_{k}) - \frac{\mu}{4L_{\hat{F}}}\text{, иначе}.
        \end{sqcases}
    \end{equation*}
    Если при генерации последовательности $\left\{x_{k}\right\}_{k\in\mathbb{Z}_{+}}$ была зафиксирована $L_{k} = L_{\hat{F}}$, то данные соотношения выражаются по--другому:
    \begin{equation*}
        \hat{f}_{1}(x_{k + 1})\leq\varepsilon_{k} + \begin{sqcases}
            \frac{L_{\hat{F}}}{2\mu}\hat{f}_{2}(x_{k})\leq\frac{1}{2}\hat{f}_{1}(x_{k})\text{, если }\hat{f}_{1}(x_{k})\leq\frac{\mu}{L_{\hat{F}}};\\[5pt]
            \hat{f}_{1}(x_{k}) - \frac{\mu}{2L_{\hat{F}}}\text{, иначе}.
        \end{sqcases}
    \end{equation*}
\end{re:theorem}
\begin{proof}
Воспользуемся цепочкой рассуждений из теоремы \ref{th:glob_sub_lin_and_lin_conv}:
\begin{equation*}
    \begin{aligned}
        \hat{f}_{1}(x_{k + 1})&\leq\psi_{x_{k}, L_{k}, \tau_{k}^{*}}(x_{k + 1}) = \psi_{x_{k}, L_{k}, \mathcal{T}_{L_{k}}(x_{k})}(T_{L_{k}, \mathcal{T}_{L_{k}}(x_{k})}(x_{k})) + \left(\psi_{x_{k}, L_{k}, \tau_{k}^{*}}(x_{k + 1}) -\right.\\
        &\left.- \psi_{x_{k}, L_{k}, \mathcal{T}_{L_{k}}(x_{k})}(T_{L_{k}, \mathcal{T}_{L_{k}}(x_{k})}(x_{k}))\right)\leq\varepsilon_{k} +\\
        &+ \psi_{x_{k}, L_{k}, \mathcal{T}_{L_{k}}(x_{k})}(T_{L_{k}, \mathcal{T}_{L_{k}}(x_{k})}(x_{k})) = \varepsilon_{k} + \min\limits_{y\in E_{1}}\left\{\phi(x_{k}, x_{k} + y) + \frac{L_{k}}{2}\|y\|^{2}\right\}\leq\\
        &\leq\left\{\text{вместо $y$ подставим }th_{k} = -t\hat{F}^{'}(x_{k})^{*}\left(\hat{F}^{'}(x_{k})\hat{F}^{'}(x_{k})^{*}\right)^{-1}\hat{F}(x_{k}),~t\in[0,~1]\right\}\leq\varepsilon_{k} +\\
        &+ \min\limits_{t\in [0,~1]}\left\{\left\|\hat{F}(x_{k}) + t\hat{F}^{'}(x_{k})h_{k}\right\| + \frac{t^{2}L_{k}}{2}\|h_{k}\|^{2}\right\}\leq\left\{\text{неравенство \eqref{eq:th_glob_lin_conv_eq_1}}\right\}\leq\varepsilon_{k} +\\
        &+ \min\limits_{t\in[0,~1]}\left\{\left\|(1 - t)\hat{F}(x_{k})\right\| + \frac{t^{2}L_{k}}{2\mu}\hat{f}_{2}(x_{k})\right\}\leq\left\{\|\cdot\|\text{ --- выпуклая}\right\}\leq\varepsilon_{k} +\\
        &+ \min\limits_{t\in[0,~1]}\left\{(1 - t)\hat{f}_{1}(x_{k}) + \frac{t^{2}L_{k}}{2\mu}\hat{f}_{2}(x_{k})\right\} = \varepsilon_{k} + \hat{f}_{1}(x_{k}) + \frac{\hat{f}_{2}(x_{k})L_{k}}{\mu}\min\limits_{t\in[0,~1]}\left\{\frac{-t\mu}{\hat{f}_{1}(x_{k})L_{k}} + \frac{t^{2}}{2}\right\} =\\
    \end{aligned}
\end{equation*}
\begin{equation*}
    \begin{aligned}
        &= \varepsilon_{k} + \hat{f}_{1}(x_{k}) - \frac{\hat{f}_{2}(x_{k})L_{k}}{\mu}\max\limits_{t\in[0,~1]}\left\{\frac{t\mu}{\hat{f}_{1}(x_{k})L_{k}} - \frac{t^{2}}{2}\right\} =\\
        &= \left\{\text{\eqref{eq:aux_det_local_decrease_2_eq_1}, лемма \ref{lm:aux_det_local_decrease_2}}\right\} = \varepsilon_{k} + \hat{f}_{1}(x_{k}) - \frac{\hat{f}_{2}(x_{k})L_{k}}{\mu}\varkappa\left(\frac{\mu}{\hat{f}_{1}(x_{k})L_{k}}\right)\leq\left\{\text{монотонное убывание по }L_{k}\right\}\leq\\
        &\leq\varepsilon_{k} + \hat{f}_{1}(x_{k}) - \frac{2\hat{f}_{2}(x_{k})L_{\hat{F}}}{\mu}\varkappa\left(\frac{\mu}{2\hat{f}_{1}(x_{k})L_{\hat{F}}}\right).
    \end{aligned}
\end{equation*}
Явно запишем получившееся неравенство в зависимости от $\varkappa(\cdot)$:
\begin{equation*}
    \begin{aligned}
        &\hat{f}_{1}(x_{k + 1})\leq\varepsilon_{k} + \begin{sqcases}
            \hat{f}_{1}(x_{k}) - \frac{\mu}{4L_{\hat{F}}}\text{, если }\hat{f}_{1}(x_{k})\geq\frac{\mu}{2L_{\hat{F}}};\\[5pt]
            \frac{\hat{f}_{2}(x_{k})L_{\hat{F}}}{\mu}\leq\frac{1}{2}\hat{f}_{1}(x_{k})\text{, если }\hat{f}_{1}(x_{k})\leq\frac{\mu}{2L_{\hat{F}}}.
        \end{sqcases}
    \end{aligned}
\end{equation*}
Для ограничения на $\hat{f}_{1}(x_{k + 1})$ при $L_{k}\equiv L_{\hat{F}}$ представление в зависимости от $\varkappa(\cdot)$ задаётся иначе:
$$\hat{f}_{1}(x_{k + 1})\leq\varepsilon_{k} + \hat{f}_{1}(x_{k}) - \frac{\hat{f}_{2}(x_{k})L_{\hat{F}}}{\mu}\varkappa\left(\frac{\mu}{\hat{f}_{1}(x_{k})L_{\hat{F}}}\right).$$
В явном виде это означает следующее:
\begin{equation*}
    \begin{aligned}
        &\hat{f}_{1}(x_{k + 1})\leq\varepsilon_{k} + \begin{sqcases}
            \hat{f}_{1}(x_{k}) - \frac{\mu}{2L_{\hat{F}}}\text{, если }\hat{f}_{1}(x_{k})\geq\frac{\mu}{L_{\hat{F}}};\\[5pt]
            \frac{\hat{f}_{2}(x_{k})L_{\hat{F}}}{2\mu}\leq\frac{1}{2}\hat{f}_{1}(x_{k})\text{, если }\hat{f}_{1}(x_{k})\leq\frac{\mu}{L_{\hat{F}}}.
        \end{sqcases}
    \end{aligned}
\end{equation*}
\end{proof}
\begin{re:th:corollary}\label{th:DetFlexlinConvCor1}
По аналогии со следствием \ref{th:glob_sub_lin_and_lin_conv_cor} выбор $\varepsilon_{k}\geq 0$ позволяет сколь угодно точно решить задачу \eqref{eq:main_opt_problem} с помощью подбора последовательности величин $\left\{\delta_{k}\right\}_{k\in\mathbb{Z}_{+}}:~\frac{1}{2}\delta_{k} > \delta_{k + 1} > 0$, $\delta_{-1} = 4\delta_{0}$, $\lim\limits_{k\rightarrow+\infty}\delta_{k} = 0$. Введём:
\begin{equation*}
    \begin{aligned}
        &\begin{sqcases}
            1.~\hat{f}_{1}(x_{-1}) \overset{\operatorname{def}}{=} \frac{\mu}{2L_{\hat{F}}},~d = 4\text{, для }L_{k}\in[L,~L_{\hat{F}}];\\[5pt]
            2.~\hat{f}_{1}(x_{-1}) \overset{\operatorname{def}}{=} \frac{\mu}{L_{\hat{F}}},~d = 2\text{, для }L_{k} = L_{\hat{F}};
        \end{sqcases}
    \end{aligned}
\end{equation*}
Обозначив за $N\in\mathbb{Z}_{+}\cup\left\{-1\right\}$ минимальный номер итерации, на которой выполнена одна из двух цепочек неравенств (положим $N = -1$ в случае отсутствия такой итерации):
\begin{equation*}
    \begin{aligned}
        &\begin{sqcases}
            1.~\hat{f}_{1}(x_{N})\geq\frac{\mu}{2L_{\hat{F}}}\geq\hat{f}_{1}(x_{N + 1})\text{, для }L_{k}\in[L,~L_{\hat{F}}];\\[5pt]
            2.~\hat{f}_{1}(x_{N})\geq\frac{\mu}{L_{\hat{F}}}\geq\hat{f}_{1}(x_{N + 1})\text{, для }L_{k} = L_{\hat{F}};
        \end{sqcases}
    \end{aligned}
\end{equation*}
зададим $\varepsilon_{k}$:
\begin{equation*}
    \begin{aligned}
        &\varepsilon_{k} = \begin{sqcases}
            \delta_{0}:~\delta_{0} < \frac{\mu}{dL_{\hat{F}}}\text{, при }k = 0;\\
            \delta_{k - 1} - \delta_{k}\text{, если }0 < k\leq N + 1;\\
            \frac{1}{2}\delta_{k - 1} - \delta_{k}\text{, если }k > N + 1.
        \end{sqcases}
    \end{aligned}
\end{equation*}
Получаем с увеличением номера итерации убывание погрешности поиска $(\tau_{k}^{*}, x_{k + 1})$:
\begin{equation*}
    \begin{aligned}
        &\begin{sqcases}
            \hat{f}_{1}(x_{k})\leq2\delta_{0} - \delta_{k - 1} + \hat{f}_{1}(x_{0}) - \frac{k\mu}{dL_{\hat{F}}},\text{ если }0 < k\leq N + 1;\\[15pt]
            \hat{f}_{1}(x_{k})\leq\left(\frac{1}{2}\right)^{k - N - 1}\hat{f}_{1}(x_{N + 1}) + \delta_{N}\left(\frac{1}{2}\right)^{k - N - 1} - \delta_{k - 1},\text{ если }k > N + 1.
        \end{sqcases}
    \end{aligned}
\end{equation*}
\end{re:th:corollary}
\begin{re:th:corollary}\label{th:DetFlexlinConvCor2}
Применяя рассуждения из следствия \ref{th:glob_sub_lin_and_lin_conv_cor_2}, устанавливаются необходимое количество итераций и максимальное значение погрешности поиска $(\tau_{k}^{*}, x_{k + 1})$ в случае постоянной погрешности $\varepsilon_{k} = \varepsilon > 0$ для достижения уровня функции $\hat{f}_{1}(x_{k})\leq\epsilon$. Для $L_{k}\in[L,~2L_{\hat{F}}]$ условия следующие:
\begin{itemize}
    \item если $\epsilon\geq\frac{\mu}{2L_{\hat{F}}}$, то $k\geq\left\lceil\left(\frac{\mu}{4L_{\hat{F}}} - \varepsilon\right)^{-1}\left(\hat{f}_{1}(x_{0}) - \epsilon\right)\mathds{1}_{\left\{\hat{f}_{1}(x_{0}) > \epsilon\right\}}\right\rceil$, $\varepsilon < \frac{\mu}{4L_{\hat{F}}}$;
    \item если $\epsilon < \frac{\mu}{2L_{\hat{F}}}$, то $k\geq\left\lceil\left(\frac{\mu}{4L_{\hat{F}}} - \varepsilon\right)^{-1}\left(\hat{f}_{1}(x_{0}) - \frac{\mu}{2L_{\hat{F}}}\right)\mathds{1}_{\left\{\hat{f}_{1}(x_{0}) > \frac{\mu}{2L_{\hat{F}}}\right\}} + \log_{2}\left(\frac{\mu}{2r\epsilon L_{\hat{F}}}\right)\right\rceil$, $\varepsilon\leq\frac{(1 - r)\epsilon}{2}$,\\$r\in(0, 1)$.
\end{itemize}
Для точно известного значения $L_{k} = L_{\hat{F}}$ количество необходимых итераций меньше и допустимая погрешность больше:
\begin{itemize}
    \item если $\epsilon\geq\frac{\mu}{L_{\hat{F}}}$, то $k\geq\left\lceil\left(\frac{\mu}{2L_{\hat{F}}} - \varepsilon\right)^{-1}\left(\hat{f}_{1}(x_{0}) - \epsilon\right)\mathds{1}_{\left\{\hat{f}_{1}(x_{0}) > \epsilon\right\}}\right\rceil$, $\varepsilon < \frac{\mu}{2L_{\hat{F}}}$;
    \item если $\epsilon < \frac{\mu}{L_{\hat{F}}}$, то $k\geq\left\lceil\left(\frac{\mu}{2L_{\hat{F}}} - \varepsilon\right)^{-1}\left(\hat{f}_{1}(x_{0}) - \frac{\mu}{L_{\hat{F}}}\right)\mathds{1}_{\left\{\hat{f}_{1}(x_{0}) > \frac{\mu}{L_{\hat{F}}}\right\}} + \log_{2}\left(\frac{\mu}{r\epsilon L_{\hat{F}}}\right)\right\rceil$, $\varepsilon\leq\frac{(1 - r)\epsilon}{2}$, $r\in(0, 1)$.
\end{itemize}
\end{re:th:corollary}

\subsection*{Стохастическая модификация метода Гаусса--Ньютона}

\subsubsection*{Вспомогательные утверждения}

В лемме ниже выводятся основные и часто используемые в данной работе отношения частичного порядка, связанные со спектром симметричных матриц.

\begin{lemma}\label{lm:aux_matrix_power_order}
    Предположим выполнение предположений \ref{as:2} и \ref{as:5}. Тогда для любых $t\geq 0$, $x\in E_{1}$, $B\subseteq\mathcal{B}$, $|B| = b\in\overline{1, \min\{m, n\}}$ выполнены следующие соотношения:
    \begin{equation*}
        \begin{cases}
        &\tau^{t} I_{n}\preceq\left(\hat{G}^{'}(x, B)^{*}\hat{G}^{'}(x, B) + \tau I_{n}\right)^{t}\preceq \left(M_{\hat{G}}^{2} + \tau\right)^{t}I_{n},~\tau\geq 0;\\
        &\frac{1}{\left(M_{\hat{G}}^{2} + \tau\right)^{t}} I_{n}\preceq\left(\hat{G}^{'}(x, B)^{*}\hat{G}^{'}(x, B) + \tau I_{n}\right)^{-t}\preceq \frac{1}{\tau^{t}} I_{n},~\tau > 0;\\
        &\left(\mu + \tau\right)^{t} I_{b}\preceq\left(\hat{G}^{'}(x, B)\hat{G}^{'}(x, B)^{*} + \tau I_{b}\right)^{t}\preceq\left(M_{\hat{G}}^{2} + \tau\right)^{t}I_{b},~\tau\geq 0;\\
        &\frac{1}{\left(M_{\hat{G}}^{2} + \tau\right)^{t}} I_{b}\preceq\left(\hat{G}^{'}(x, B)\hat{G}^{'}(x, B)^{*} + \tau I_{b}\right)^{-t}\preceq \frac{1}{\left(\mu + \tau\right)^{t}} I_{b},~\tau\geq 0.
    \end{cases}
    \end{equation*}
    Отношение порядка <<$\preceq$>> выполнено на конусе неотрицательно определённых матриц.
\end{lemma}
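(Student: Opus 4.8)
The plan is to reduce all four two-sided bounds to a single elementary fact from the functional calculus of self-adjoint operators: if $S$ is self-adjoint with spectrum contained in $[\alpha, \beta]$, i.e. $\alpha I \preceq S \preceq \beta I$, and $g$ is a scalar function that is non-decreasing on $[\alpha, \beta]$, then $g(\alpha) I \preceq g(S) \preceq g(\beta) I$; if $g$ is non-increasing, the two outer bounds swap. This is immediate from the spectral theorem: writing $S = Q \Lambda Q^{*}$ with $Q^{*}Q = I$ and $\Lambda$ diagonal, one has $g(S) = Q\, g(\Lambda)\, Q^{*}$, whose eigenvalues are exactly the $g(\lambda_{i})$, so the scalar bounds $g(\alpha) \leq g(\lambda_{i}) \leq g(\beta)$ transfer directly to the operator ordering.

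First I would pin down the spectra of the two Gram operators using the assumptions. The operator $\hat{G}^{'}(x, B)^{*}\hat{G}^{'}(x, B)$ is self-adjoint and positive semidefinite on $E_{1}$, so its spectrum lies in $[0, \lambda_{\max}]$; by assumption \ref{as:2} together with the identity $\|\hat{G}^{'}(x, B)\|^{2} = \lambda_{\max}(\hat{G}^{'}(x, B)^{*}\hat{G}^{'}(x, B))$ recorded in the notation, we have $\lambda_{\max} \leq M_{\hat{G}}^{2}$, so the spectrum lies in $[0, M_{\hat{G}}^{2}]$. Dually, $\hat{G}^{'}(x, B)\hat{G}^{'}(x, B)^{*}$ is self-adjoint on the $b$-dimensional batch space and shares the same nonzero eigenvalues, so its spectrum is also bounded above by $M_{\hat{G}}^{2}$; assumption \ref{as:5} (available precisely because $b \leq \min\{m, n\}$) supplies the lower bound $\mu$, placing its spectrum in $[\mu, M_{\hat{G}}^{2}]$.

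Next I would apply the shift-and-power observation. Adding $\tau I$ translates the spectrum, so $\hat{G}^{'}(x, B)^{*}\hat{G}^{'}(x, B) + \tau I_{n}$ has spectrum in $[\tau, M_{\hat{G}}^{2} + \tau]$ and $\hat{G}^{'}(x, B)\hat{G}^{'}(x, B)^{*} + \tau I_{b}$ has spectrum in $[\mu + \tau, M_{\hat{G}}^{2} + \tau]$. For the positive powers I take $g(s) = s^{t}$, non-decreasing on $[0, \infty)$ for $t \geq 0$; applying the monotonicity fact to each shifted operator yields the first and third displayed inequalities directly. For the negative powers I take $g(s) = s^{-t}$, non-increasing on $(0, \infty)$ for $t \geq 0$; since the relevant spectra are bounded away from zero ($\tau > 0$ is imposed in the second inequality, while $\mu + \tau > 0$ holds automatically in the fourth because $\mu > 0$), the same argument with the outer bounds interchanged gives the second and fourth inequalities.

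There is no essential obstacle here; the only points demanding care are bookkeeping. One must keep straight that the lower bound $0$ attaches to $\hat{G}^{'}(x, B)^{*}\hat{G}^{'}(x, B)$, which is rank-deficient on $E_{1}$ whenever $b < n$ and therefore genuinely has $0$ in its spectrum, forcing the restriction $\tau > 0$ in the second inequality, whereas the strictly positive lower bound $\mu$ attaches only to $\hat{G}^{'}(x, B)\hat{G}^{'}(x, B)^{*}$ on the batch space, which is what permits $\tau \geq 0$ in the fourth. It is also worth noting that the endpoint $t = 0$ is consistent, since each statement then collapses to $I \preceq I \preceq I$, and that $t$ need not be an integer: the power $S^{t}$ is well defined through the functional calculus for every real $t \geq 0$ on a positive (semi)definite self-adjoint operator, which is exactly the generality in which the lemma is stated.
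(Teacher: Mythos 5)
Your proof is correct and takes essentially the same route as the paper's: both arguments rest on the spectral decomposition of the two Gram operators, the spectral bounds $[0,\,M_{\hat{G}}^{2}]$ and $[\mu,\,M_{\hat{G}}^{2}]$ supplied by assumptions \ref{as:2} and \ref{as:5}, and the transfer of scalar eigenvalue bounds to the operator ordering against multiples of the identity. The only difference is organizational: you state a single monotone spectral-mapping principle and apply it four times, while the paper writes out the Rayleigh-quotient estimates explicitly and invokes the Courant--Fischer--Weyl principle to handle the passage from $t$ to $-t$ --- the same substance with slightly different bookkeeping.
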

\begin{proof}
Предположение \ref{as:2} сверху ограничивает максимальное сингулярное число матрицы\\$\hat{G}^{'}(x, B)$:
\begin{equation*}
    \begin{aligned}
        \left\|\hat{G}^{'}(x, B)\right\| = \sigma_{\max}(\hat{G}^{'}(x, B))\leq M_{\hat{G}}\Leftrightarrow&\hat{G}^{'}(x, B)^{*}\hat{G}^{'}(x, B)\preceq M_{\hat{G}}^{2}I_{n},\\
        &\hat{G}^{'}(x, B)\hat{G}^{'}(x, B)^{*}\preceq M_{\hat{G}}^{2}I_{b}.
    \end{aligned}
\end{equation*}
Предположение \ref{as:5} ограничивает снизу минимальное сингулярное число матрицы $\hat{G}^{'}(x, B)^{*}$:
\begin{equation*}
    \begin{aligned}
        \hat{G}^{'}(x, B)\hat{G}^{'}(x, B)^{*}\succeq\mu I_{b}\Leftrightarrow\sigma_{\min}(\hat{G}^{'}(x, B)^{*})\geq\sqrt{\mu}.
    \end{aligned}
\end{equation*}
Симметричные матрицы $\left(\hat{G}^{'}(x, B)^{*}\hat{G}^{'}(x, B) + \tau I_{n}\right)^{t}$ и $\left(\hat{G}^{'}(x, B)\hat{G}^{'}(x, B)^{*} + \tau I_{b}\right)^{t}$ обладают спектральным разложением с соответствующими диагональными матрицами собственных значений $\Lambda_{1}^{t}$ и $\Lambda_{2}^{t}$, соответствующими ортогональными матрицами собственных векторов $Q_{1}$ и $Q_{2}$. Для произвольного $v\in E_{1}$:
\begin{equation*}
    \begin{aligned}
        &\left\langle\left(\hat{G}^{'}(x, B)^{*}\hat{G}^{'}(x, B) + \tau I_{n}\right)^{t}v,~v\right\rangle=\left\langle Q_{1}\left(\Lambda_{1} + \tau I_{n}\right)^{t}\underbrace{Q_{1}^{*}v}_{\overset{\operatorname{def}}{=}v_{1}},~v\right\rangle=\\
        &=\begin{sqcases}
            \left\langle\underbrace{\left(\Lambda_{1} + \tau I_{n}\right)^{t}}_{\sigma_{\max}(\Lambda_{1})\leq M_{\hat{G}}^{2}}v_{1},~v_{1}\right\rangle\leq\underbrace{\left(M_{\hat{G}}^{2} + \tau\right)^{t}\left\|v_{1}\right\|^{2},~\forall v_{1}\in E_{1}}_{\text{ограничение соотношений Релея}};\\
            \left\langle\underbrace{\left(\Lambda_{1} + \tau I_{n}\right)^{t}}_{\sigma_{\min}(\Lambda_{1})\geq0}v_{1},~v_{1}\right\rangle\geq\underbrace{\tau^{t}\left\|v_{1}\right\|^{2},~\forall v_{1}\in E_{1}}_{\substack{\text{ограничение}\\\text{соотношений}\\\text{Релея}}}.
        \end{sqcases}
    \end{aligned}
\end{equation*}
Аналогичным образом для произвольного $w\in E_{3}^{*},~\dim(E_{3}^{*}) = b$:
\begin{equation*}
    \begin{aligned}
        &\left\langle w,~\left(\hat{G}^{'}(x, B)\hat{G}^{'}(x, B)^{*} + \tau I_{b}\right)^{t}w\right\rangle=\left\langle w,~Q_{2}\left(\Lambda_{2} + \tau I_{b}\right)^{t}\underbrace{Q_{2}^{*}w}_{\overset{\operatorname{def}}{=}w_{1}}\right\rangle=\\
        &=\begin{sqcases}
            \left\langle w_{1},~\underbrace{\left(\Lambda_{2} + \tau I_{b}\right)^{t}}_{\sigma_{\max}(\Lambda_{2})\leq M_{\hat{G}}^{2}}w_{1}\right\rangle\leq\underbrace{\left(M_{\hat{G}}^{2} + \tau\right)^{t}\left\|w_{1}\right\|^{2},~\forall w_{1}\in E_{3}^{*}}_{\text{ограничение соотношений Релея}};\\
            \left\langle w_{1},~\underbrace{\left(\Lambda_{2} + \tau I_{b}\right)^{t}}_{\sigma_{\min}(\Lambda_{2})\geq\mu}w_{1}\right\rangle\geq\underbrace{\left(\mu + \tau\right)^{t}\left\|w_{1}\right\|^{2},~\forall w_{1}\in E_{3}^{*}}_{\substack{\text{ограничение соотношений}\\\text{Релея}}}.
        \end{sqcases}
    \end{aligned}
\end{equation*}
В обоих случаях по принципу минимакса Куранта--Фишера--Вейля при замене $t$ на $-t$ происходит обращение спектра, оценка сверху становится оценкой снизу и наоборот.
Это означает выполнение следующего отношения частичного порядка для матриц со сдвинутым на $\tau$ спектром при $\mu = 0$ (с учётом обращения спектра собственных значений при обращении матриц и возведения в степень собственных значений при возведении в степень матриц):
\begin{equation}\label{eq:as2_matrix_order}
    \begin{cases}
        &\tau^{t} I_{n}\preceq\left(\hat{G}^{'}(x, B)^{*}\hat{G}^{'}(x, B) + \tau I_{n}\right)^{t}\preceq \left(M_{\hat{G}}^{2} + \tau\right)^{t}I_{n},~\tau\geq 0;\\
        &\frac{1}{\left(M_{\hat{G}}^{2} + \tau\right)^{t}} I_{n}\preceq\left(\hat{G}^{'}(x, B)^{*}\hat{G}^{'}(x, B) + \tau I_{n}\right)^{-t}\preceq \frac{1}{\tau^{t}} I_{n},~\tau > 0;\\
        &\tau^{t} I_{b}\preceq\left(\hat{G}^{'}(x, B)\hat{G}^{'}(x, B)^{*} + \tau I_{b}\right)^{t}\preceq\left(M_{\hat{G}}^{2} + \tau\right)^{t}I_{b},~\tau\geq 0;\\
        &\frac{1}{\left(M_{\hat{G}}^{2} + \tau\right)^{t}} I_{b}\preceq\left(\hat{G}^{'}(x, B)\hat{G}^{'}(x, B)^{*} + \tau I_{b}\right)^{-t}\preceq \frac{1}{\tau^{t}} I_{b},~\tau > 0.
    \end{cases}
\end{equation}
\end{proof}

В следующем утверждении выводится липшицевость матриц Якоби $\hat{G}^{'}$ и $\hat{F}^{'}$.

\begin{lemma}\label{lm:aux_lipschitz}
    Пусть выполнено предположение \ref{as:1}. Тогда $\hat{F}^{'}$ и $\hat{G}^{'}$ почти наверно липшиц--непрерывны:
    \begin{equation*}
        \begin{cases}
            \left\|\hat{F}^{'}(x) - \hat{F}^{'}(y)\right\|\leq L_{\hat{F}}\left\|x - y\right\|,~\forall (x, y)\in E_{1}^{2};\\[5pt]
            \left\|\hat{G}^{'}(x, B) - \hat{G}^{'}(y, B)\right\|\leq L_{\hat{F}}\left\|x - y\right\|,~\forall (x, y)\in E_{1}^{2},~\forall B\subseteq\mathcal{B}, |B| = b.
        \end{cases}
    \end{equation*}
    Аналогично функции $\hat{f}_{2}$ и $\hat{g}_{2}$ почти наверно липшиц--непрерывны:
    \begin{equation*}
        \begin{cases}
            \left|\hat{f}_{2}(x) - \hat{f}_{2}(y)\right|\leq l_{\hat{F}}\left\|x - y\right\|,~\forall (x, y)\in E_{1}^{2};\\
            \left|\hat{g}_{2}(x, B) - \hat{g}_{2}(y, B)\right|\leq l_{\hat{F}}\left\|x - y\right\|,~\forall (x, y)\in E_{1}^{2},~\forall B\subseteq\mathcal{B}, |B| = b.
        \end{cases}
    \end{equation*}
\end{lemma}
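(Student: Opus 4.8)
Докажу все четыре утверждения леммы, опираясь на предположение \ref{as:1}. Начну с липшицевости матриц Якоби. Поскольку $\hat{G}^{'}(x, B) = \frac{1}{\sqrt{b}}(\nabla F_{i_{1}}(x),~\dots, \nabla F_{i_{b}}(x))^{*}$ по определению, разность $\hat{G}^{'}(x, B) - \hat{G}^{'}(y, B)$ --- это матрица со строками $\frac{1}{\sqrt{b}}(\nabla F_{i_{j}}(x) - \nabla F_{i_{j}}(y))$. Первым шагом оценю фробениусову норму этой разности через сумму квадратов норм построчных разностей, применив предположение \ref{as:1} к каждой функции $F_{i_{j}}$:
\begin{equation*}
    \left\|\hat{G}^{'}(x, B) - \hat{G}^{'}(y, B)\right\|_{F}^{2} = \frac{1}{b}\sum\limits_{j = 1}^{b}\left\|\nabla F_{i_{j}}(x) - \nabla F_{i_{j}}(y)\right\|^{2}\leq\frac{1}{b}\sum\limits_{j = 1}^{b}L_{\hat{F}}^{2}\|x - y\|^{2} = L_{\hat{F}}^{2}\|x - y\|^{2}.
\end{equation*}
Затем воспользуюсь стандартным соотношением $\|A\|\leq\|A\|_{F}$ (отмеченным во вводных обозначениях работы) для перехода к операторной норме. Случай $\hat{F}^{'}(x) = \hat{G}^{'}(x, \mathcal{B})$ получается как частный при $B = \mathcal{B}$, $b = m$; делитель $\frac{1}{m}$ сократится аналогично. Упоминание <<почти наверно>> существенно лишь для бесконечной генеральной совокупности, где предположение \ref{as:1} выполнено п.н., и я это явно отмечу.

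Для липшицевости функций $\hat{g}_{2}$ и $\hat{f}_{2}$ план состоит в том, чтобы свести задачу к покоординатной оценке. По определению $\hat{g}_{2}(x, B) = \frac{1}{b}\sum_{j = 1}^{b}(F_{i_{j}}(x))^{2}$, поэтому
\begin{equation*}
    \left|\hat{g}_{2}(x, B) - \hat{g}_{2}(y, B)\right|\leq\frac{1}{b}\sum\limits_{j = 1}^{b}\left|\left(F_{i_{j}}(x)\right)^{2} - \left(F_{i_{j}}(y)\right)^{2}\right|\leq\frac{1}{b}\sum\limits_{j = 1}^{b}l_{\hat{F}}\|x - y\| = l_{\hat{F}}\|x - y\|,
\end{equation*}
где на последнем шаге применяется вторая часть предположения \ref{as:1} --- липшицевость $(F_{i}(x))^{2}$ с константой $l_{\hat{F}}$ --- к каждому слагаемому. Неравенство треугольника для модуля и усреднение дают нужную равномерную оценку. Случай $\hat{f}_{2} = \hat{g}_{2}(\cdot, \mathcal{B})$ снова возникает при $b = m$.

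Основная техническая тонкость, которую потребуется аккуратно отследить, --- это согласование делителя $\frac{1}{b}$ (соответственно $\frac{1}{\sqrt{b}}$ для якобианов) с суммированием по $b$ слагаемым: именно сокращение $\frac{1}{b}\cdot b = 1$ гарантирует, что итоговая константа Липшица не зависит от размера батча и совпадает с $L_{\hat{F}}$ (соответственно $l_{\hat{F}}$) как для стохастических величин, так и для детерминированных $\hat{F}^{'}, \hat{f}_{2}$. Существенных препятствий не предвидится: доказательство полностью элементарно и опирается только на предположение \ref{as:1}, определения $\hat{G}, \hat{G}^{'}, \hat{g}_{2}$ и неравенство $\|A\|\leq\|A\|_{F}$. Единственное, за чем стоит следить, --- корректная трактовка случая бесконечной генеральной совокупности, где все оценки понимаются в смысле почти наверно.
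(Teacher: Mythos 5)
Ваше доказательство корректно и по существу совпадает с доказательством в статье: та же оценка операторной нормы через фробениусову с покомпонентным применением предположения \ref{as:1}, и то же неравенство треугольника с липшицевостью $\left(F_{i}(x)\right)^{2}$ для функций $\hat{g}_{2}$ и $\hat{f}_{2}$. Единственное косметическое отличие --- статья записывает случаи $\hat{F}^{'}$ и $\hat{f}_{2}$ через $\mathbb{E}_{q}$, а вы получаете их как частный случай $B = \mathcal{B}$, $b = m$, что эквивалентно.
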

\begin{proof}
    Рассмотрим батч из функций $\hat{G}$, для произвольных $(x, y)\in E_{1}^{2}$ выпишем:
    \begin{equation*}
        \begin{aligned}
            \left\|\hat{G}^{'}(x, B) - \hat{G}^{'}(y, B)\right\| &\leq \left\|\hat{G}^{'}(x, B) - \hat{G}^{'}(y, B)\right\|_{F} = \sqrt{\frac{1}{b}\sum\limits_{j = 1}^{b}\left\|\nabla F_{i_{j}}(x) - \nabla F_{i_{j}}(y)\right\|^{2}}\leq\left\{\text{предположение \ref{as:1}}\right\}\leq\\
            &\leq\sqrt{\frac{1}{b}\sum\limits_{j = 1}^{b}L_{\hat{F}}^{2}\left\|x - y\right\|^{2}} = L_{\hat{F}}\left\|x - y\right\|;\\
            \left\|\hat{F}^{'}(x) - \hat{F}^{'}(y)\right\| &\leq \left\|\hat{F}^{'}(x) - \hat{F}^{'}(y)\right\|_{F} = \sqrt{\mathbb{E}_{q}\left[\left\|\nabla F_{\xi}(x) - \nabla F_{\xi}(y)\right\|^{2}\right]}\leq\left\{\text{предположение \ref{as:1}}\right\}\leq\\
            &\leq\sqrt{\mathbb{E}_{q}\left[L_{\hat{F}}^{2}\left\|x - y\right\|^{2}\right]} = L_{\hat{F}}\left\|x - y\right\|.
        \end{aligned}
    \end{equation*}
    Аналогичным образом поступим с $\hat{g}_{2}$ и $\hat{f}_{2}$, для произвольных $(x, y)\in E_{1}^{2}$ выпишем:
    \begin{equation*}
        \begin{aligned}
            \left|\hat{g}_{2}(x, B) - \hat{g}_{2}(y, B)\right| &= \left|\frac{1}{b}\sum\limits_{j = 1}^{b}\left(F_{i_{j}}(x)\right)^{2} - \frac{1}{b}\sum\limits_{j = 1}^{b}\left(F_{i_{j}}(y)\right)^{2}\right|\leq\frac{1}{b}\sum\limits_{j = 1}^{b}\left|\left(F_{i_{j}}(x)\right)^{2} - \left(F_{i_{j}}(y)\right)^{2}\right|\leq\\
            &\leq\left\{\text{предположение \ref{as:1}}\right\}\leq\frac{1}{b}\sum\limits_{j = 1}^{b}l_{\hat{F}}\left\|x - y\right\| = l_{\hat{F}}\left\|x - y\right\|;\\
        \end{aligned}
    \end{equation*}
    \begin{equation*}
        \begin{aligned}
            \left|\hat{f}_{2}(x) - \hat{f}_{2}(y)\right| &= \left|\mathbb{E}_{q}\left[\left(F_{\xi}(x)\right)^{2} - \left(F_{\xi}(y)\right)^{2}\right]\right|\leq\mathbb{E}_{q}\left[\left|\left(F_{\xi}(x)\right)^{2} - \left(F_{\xi}(y)\right)^{2}\right|\right]\leq\left\{\text{предположение \ref{as:1}}\right\}\leq\\
            &\leq\mathbb{E}_{q}\left[l_{\hat{F}}\left\|x - y\right\|\right] = l_{\hat{F}}\left\|x - y\right\|.
        \end{aligned}
    \end{equation*}
\end{proof}
\begin{lm:corollary}
Утверждение верно и в случае бесконечной генеральной совокупности $\mathcal{B}$.
\end{lm:corollary}

Выведем модель верхней оценки для функции $\hat{g}_{1}(x, B)$.

\begin{lemma}\label{lm:aux_upper_model}
    Пусть $(x, y)\in E_{1}^{2},~B\subseteq\mathcal{B},~L\geq L_{\hat{F}},~\tau > 0,~\hat{g}_{1}(x, B) > 0$ почти наверно и выполнено предположение \ref{as:1}. Тогда
    \begin{equation}\label{eq:aux_upper_model_formula}
        \begin{cases}
            \hat{g}_{1}(y, B)\leq\hat{\psi}_{x,L,\tau}(y, B) = \frac{\tau}{2} + \frac{L}{2}\left\|y - x\right\|^{2} + \frac{1}{2\tau}\left\|\hat{G}(x, B) + \hat{G}^{'}(x, B)(y - x)\right\|^{2};\\[10pt]
            \hat{f}_{1}(y)\leq\psi_{x,L,\tau}(y) \overset{\operatorname{def}}{=} \hat{\psi}_{x, L, \tau}(y, \mathcal{B}) = \frac{\tau}{2} + \frac{L}{2}\left\|y - x\right\|^{2} + \frac{1}{2\tau}\left\|\hat{F}(x) + \hat{F}^{'}(x)(y - x)\right\|^{2}.
        \end{cases}
    \end{equation}
\end{lemma}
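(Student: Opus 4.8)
План состоит в том, чтобы дословно повторить доказательство леммы~\ref{lm:aux_det_upper_model}, заменив полное отображение $\hat{F}$ на батчевое отображение $\hat{G}(\cdot, B)$. Сначала я бы установил стохастический аналог неравенства~\eqref{eq:lm_aux_det_model_bound}, а именно
$$\left\|\hat{G}(y, B) - \hat{G}(x, B) - \hat{G}^{'}(x, B)(y - x)\right\|\leq\frac{L_{\hat{F}}}{2}\|y - x\|^{2}.$$
Для этого я воспользуюсь представлением $\hat{G}(y, B) = \hat{G}(x, B) + \int_{0}^{1}\hat{G}^{'}(x + t(y - x))(y - x)\operatorname{d}t$ (корректным в силу гладкости каждой $F_{i}$), внесу норму под интеграл по неравенству Йенсена и применю липшицевость матрицы Якоби $\hat{G}^{'}$, которая выводится из предположения~\ref{as:1} согласно лемме~\ref{lm:aux_lipschitz}: $\left\|\hat{G}^{'}(z, B) - \hat{G}^{'}(x, B)\right\|\leq L_{\hat{F}}\|z - x\|$.

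Вторым шагом я бы выписал вспомогательное неравенство вида~\eqref{eq:lm_aux_det_model_square}: из разложения полного квадрата $\left(\sqrt{\tau/2} - \frac{1}{\sqrt{2\tau}}\|\hat{G}(x, B) + \hat{G}^{'}(x, B)(y - x)\|\right)^{2}\geq0$ непосредственно следует
$$\frac{\tau}{2} + \frac{1}{2\tau}\left\|\hat{G}(x, B) + \hat{G}^{'}(x, B)(y - x)\right\|^{2}\geq\left\|\hat{G}(x, B) + \hat{G}^{'}(x, B)(y - x)\right\|,~\tau > 0.$$

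Наконец, я объединю оба шага через неравенство треугольника для евклидовой нормы:
\begin{equation*}
\begin{aligned}
\hat{g}_{1}(y, B) &= \left\|\hat{G}(y, B)\right\|\leq\left\|\hat{G}(y, B) - \hat{G}(x, B) - \hat{G}^{'}(x, B)(y - x)\right\| + \left\|\hat{G}(x, B) + \hat{G}^{'}(x, B)(y - x)\right\|\leq\\
&\leq\frac{L_{\hat{F}}}{2}\|y - x\|^{2} + \frac{\tau}{2} + \frac{1}{2\tau}\left\|\hat{G}(x, B) + \hat{G}^{'}(x, B)(y - x)\right\|^{2}\leq\hat{\psi}_{x, L, \tau}(y, B),
\end{aligned}
\end{equation*}
где последний переход использует $L\geq L_{\hat{F}}$. Вторую строку формулы~\eqref{eq:aux_upper_model_formula} для $\hat{f}_{1}$ я получу подстановкой $B = \mathcal{B}$, поскольку $\hat{F} = \hat{G}(\cdot, \mathcal{B})$ и $\hat{F}^{'} = \hat{G}^{'}(\cdot, \mathcal{B})$, а липшицевость $\hat{F}^{'}$ также гарантируется леммой~\ref{lm:aux_lipschitz}.

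Основным (хотя по существу техническим) моментом будет аккуратное обоснование липшицевости именно операторной нормы $\hat{G}^{'}$: предположение~\ref{as:1} даёт покоординатную липшицевость градиентов $\nabla F_{i}$, откуда через оценку фробениусовой нормы и соотношение $\|\cdot\|\leq\|\cdot\|_{F}$ получается требуемая оценка операторной нормы, как в лемме~\ref{lm:aux_lipschitz}. Все выкладки следует проводить в смысле почти наверно, что согласуется с условием $\hat{g}_{1}(x, B) > 0$ почти наверно; в остальном доказательство является прямым переносом детерминированного случая и не содержит новых препятствий.
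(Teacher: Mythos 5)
Ваше предложение корректно и по существу совпадает с доказательством в статье: авторы ограничиваются фразой, что доказательство структурно повторяет лемму~\ref{lm:aux_det_upper_model} с заменой $\hat{F} := \hat{G}$ в рамках одного батча, то есть именно той схемой (интегральное представление с неравенством Йенсена, выделение полного квадрата, неравенство треугольника и переход $L\geq L_{\hat{F}}$), которую вы расписали. Ваше дополнительное обоснование липшицевости операторной нормы $\hat{G}^{'}$ через фробениусову норму в точности воспроизводит лемму~\ref{lm:aux_lipschitz} и лишь делает явным то, что в статье оставлено неявным.
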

\begin{proof}
    Доказательство структурно повторяет лемму \ref{lm:aux_det_upper_model} с $\hat{F} := \hat{G}$ в рамках одного батча $B\subseteq\mathcal{B}$ для произвольных $(x, y)\in E_{1}^{2}$, $L\geq L_{\hat{F}}$, $\tau > 0$.
\end{proof}

Для того, чтобы определить метод трёх стохастических квадратов, необходимо задать правило обновления искомого параметра $x_{k + 1}$. В отличие от \cite{Nesterov2020}, где это правило имеет вид $x_{k + 1} = x_{k} - v_{k},~v_{k}\in E_{1}^{*}$, в данном случае рассматривается масштабированное обновление:
$$x_{k + 1} = x_{k} - \eta_{k}v_{k},~\eta_{k} > 0,~v_{k}\in E_{1}^{*}.$$
Подобное масштабирование может быть полезным в случае, когда на каждой итерации метода оптимизации вспомогательная задача поиска $x_{k + 1}$ должна решаться не слишком точно, чтобы, например, уменьшить переобучение настраиваемой модели в задаче машинного обучения. Также такого вида обновление может быть использовано в задаче мета--обучения, в которой $\eta_{k}$ будет играть роль гиперпараметра, настраиваемого на отдельной валидационной выборке. Ниже определим основные свойства данного обновления $x_{k}$.

\begin{lemma}\label{lm:aux_update}
    Пусть выполнено предположение \ref{as:1}, $x_{k}\in E_{1}$, $\tau_{k} > 0$, $L_{k}\geq L_{\hat{F}}$, $B_{k}\subseteq\mathcal{B}$, $\eta_{k} > 0$, начальное приближение $x_{0}$ выбирается случайно и независимо от $B_{k},~k\in\mathbb{Z}_{+}$. Тогда
    \begin{equation}\label{eq:stoch_general_update_rule}
        \begin{cases}
            x_{k + 1} = x_{k} - \eta_{k}\left(\hat{G}^{'}(x_{k}, B_{k})^{*}\hat{G}^{'}(x_{k}, B_{k}) + \tau_{k} L_{k} I_{n}\right)^{-1}\hat{G}^{'}(x_{k}, B_{k})^{*}\hat{G}(x_{k}, B_{k});\\
            \begin{aligned}
                \hat{g}_{1}(x_{k}&, B_{k}) - \hat{g}_{1}(x_{k + 1}, B_{k})\geq\hat{g}_{1}(x_{k}, B_{k}) - \frac{\tau_{k}}{2} - \frac{\hat{g}_{2}(x_{k}, B_{k})}{2\tau_{k}} +\\
                &+ \frac{\eta_{k}(2 - \eta_{k})}{2\tau_{k}}\left\langle\left(\hat{G}^{'}(x_{k}, B_{k})^{*}\hat{G}^{'}(x_{k}, B_{k}) + \tau_{k} L_{k} I_{n}\right)^{-1}\hat{G}^{'}(x_{k}, B_{k})^{*}\hat{G}(x_{k}, B_{k}),~\hat{G}^{'}(x_{k}, B_{k})^{*}\hat{G}(x_{k}, B_{k})\right\rangle.
            \end{aligned}
        \end{cases}
    \end{equation}
\end{lemma}
\begin{proof}
    Доказательство заключается в использовании цепочки утверждений из теоремы \ref{th:1}, в частности, выражения \eqref{eq:delta_f_2} с подстановкой $\hat{F} := \hat{G}$ в рамках одного батча $B_{k}\subseteq\mathcal{B}$ для произвольных $x_{k}\in E_{1}$, $\tau_{k} > 0$, $L_{k}\geq L_{\hat{F}}$, $\eta_{k} > 0$.
\end{proof}
\begin{lm:corollary}\label{lm:aux_update_rule}
    Если взять $\eta_{k}\in (0, 2),~\tau_{k} = \hat{g}_{1}(x_{k}, B_{k})$, вычисленную на том же батче, формирующем $\hat{G}$, то получится
    \begin{equation*}
        \begin{aligned}
            \hat{g}_{1}(x_{k}, &B_{k}) - \hat{g}_{1}(x_{k + 1}, B_{k})\geq\frac{\eta_{k}(2 - \eta_{k})}{2\hat{g}_{1}(x_{k}, B_{k})}\left\langle\left(\hat{G}^{'}(x_{k}, B_{k})^{*}\hat{G}^{'}(x_{k}, B_{k}) + \hat{g}_{1}(x_{k}, B_{k}) L_{k} I_{n}\right)^{-1}\hat{G}^{'}(x_{k}, B_{k})^{*}\hat{G}(x_{k}, B_{k}),\right.\\
            &\left.~~~~~~~~~~~~~~~~~~~~~~~~~~~~~~~~~~~~~~~~~~~~~~\hat{G}^{'}(x_{k}, B_{k})^{*}\hat{G}(x_{k}, B_{k})\right\rangle\geq 0,
        \end{aligned}
    \end{equation*}
    так как матрица $\left(\hat{G}^{'}(x_{k}, B_{k})^{*}\hat{G}^{'}(x_{k}, B_{k}) + \hat{g}_{1}(x_{k}, B_{k}) L_{k} I_{n}\right)^{-1}$ положительно определённая.
\end{lm:corollary}
\begin{lm:corollary}\label{lm:aux_mean_decrease}
    Из следствия \ref{lm:aux_update_rule} выводится уменьшение значения $\hat{g}_{2}$ в среднем:
    \begin{equation*}
        \begin{aligned}
            \hat{g}_{1}(&x_{k}, B_{k}) - \hat{g}_{1}(x_{k + 1}, B_{k})\geq\frac{\eta_{k}(2 - \eta_{k})}{2\hat{g}_{1}(x_{k}, B_{k})}\left\langle\left(\hat{G}^{'}(x_{k}, B_{k})^{*}\hat{G}^{'}(x_{k}, B_{k}) + \hat{g}_{1}(x_{k}, B_{k}) L_{k} I_{n}\right)^{-1}\hat{G}^{'}(x_{k}, B_{k})^{*}\hat{G}(x_{k}, B_{k}),\right.\\
            &\left.\hat{G}^{'}(x_{k}, B_{k})^{*}\hat{G}(x_{k}, B_{k})\right\rangle\geq 0\Rightarrow\hat{g}_{2}(x_{k}, B_{k}) - \hat{g}_{2}(x_{k + 1}, B_{k})\geq\hat{g}_{2}(x_{k}, B_{k}) - \hat{g}_{1}(x_{k}, B_{k})\hat{g}_{1}(x_{k + 1}, B_{k})\geq\\
            &\geq\frac{\eta_{k}(2 - \eta_{k})}{2}\left\langle\left(\hat{G}^{'}(x_{k}, B_{k})^{*}\hat{G}^{'}(x_{k}, B_{k}) + \hat{g}_{1}(x_{k}, B_{k}) L_{k} I_{n}\right)^{-1}\hat{G}^{'}(x_{k}, B_{k})^{*}\hat{G}(x_{k}, B_{k}),\right.\\
            &\left.~~~~~~~~~~~~~~~~~~~~~\hat{G}^{'}(x_{k}, B_{k})^{*}\hat{G}(x_{k}, B_{k})\right\rangle\geq 0.
        \end{aligned}
    \end{equation*}
    Усреднив по всей случайности при оптимизации (выбор батча на каждой итерации и выбор начального приближения), получаем:
    \begin{equation*}
        \begin{aligned}
            \mathbb{E}&\left[\hat{g}_{2}(x_{k}, B_{k}) - \hat{g}_{2}(x_{k + 1}, B_{k})\right] = \mathbb{E}\left[\hat{f}_{2}(x_{k}) - \hat{g}_{2}(x_{k + 1}, B_{k})\right] = \mathbb{E}\left[\hat{f}_{2}(x_{k})\right] - \mathbb{E}\left[\hat{g}_{2}(x_{k + 1}, B_{k})\right]\geq\\
            &\geq\mathbb{E}\left[\frac{\eta_{k}(2 - \eta_{k})}{2}\left\langle\left(\hat{G}^{'}(x_{k}, B_{k})^{*}\hat{G}^{'}(x_{k}, B_{k}) + \hat{g}_{1}(x_{k}, B_{k}) L_{k} I_{n}\right)^{-1}\hat{G}^{'}(x_{k}, B_{k})^{*}\hat{G}(x_{k}, B_{k}),\right.\right.\\
            &\left.\left.~~~~~~~~~~~~~~~~~~~~~~~~~\hat{G}^{'}(x_{k}, B_{k})^{*}\hat{G}(x_{k}, B_{k})\right\rangle\right]\geq 0.
        \end{aligned}
    \end{equation*}
    Если же провести усреднение только по батчам на итерации получения $x_{k + 1}$, по которым вычисляется  $x_{k + 1}$ (обозначим соответствующий оператор как $\mathbb{E}_{B_{k}}\left[\cdot\right]$), не усредняя по стохастичности с предыдущих итераций, то получатся следующие неравенства между зависимыми случайными величинами:
    \begin{equation*}
        \begin{aligned}
            \sqrt{\mathbb{E}_{B_{k}}\left[\hat{g}_{2}(x_{k}, B_{k})\right]} &= \sqrt{\hat{f}_{2}(x_{k})} = \hat{f}_{1}(x_{k})\geq\sqrt{\mathbb{E}_{B_{k}}\left[\hat{g}_{2}(x_{k + 1}, B_{k})\right]}\geq\left\{\text{неравенство Йенсена}\right\}\geq\\
            &\geq\mathbb{E}_{B_{k}}\left[\sqrt{\hat{g}_{2}(x_{k + 1}, B_{k})}\right]\Rightarrow\hat{f}_{1}(x_{k})\geq\mathbb{E}_{B_{k}}\left[\hat{g}_{1}(x_{k + 1}, B_{k})\right],
        \end{aligned}
    \end{equation*}
    так как значение $x_{k + 1}$ зависит от $B_{k}$.
\end{lm:corollary}
\begin{lm:corollary}[\cite{Nesterov2020}]\label{lm:aux_det_update_rule}
    При замене в условии леммы $\hat{G}$ на $\hat{F}$ и $\hat{g}_{1}$ на $\hat{f}_{1}$ получается
    \begin{equation*}
        \begin{cases}
            x_{k + 1} = x_{k} - \eta_{k}\left(\hat{F}^{'}(x_{k})^{*}\hat{F}^{'}(x_{k}) + \tau_{k} L_{k} I_{n}\right)^{-1}\hat{F}^{'}(x_{k})^{*}\hat{F}(x_{k});\\
            \begin{aligned}
                \hat{f}_{1}(x_{k}) - \hat{f}_{1}(x_{k + 1})&\geq\hat{f}_{1}(x_{k}) - \frac{\tau_{k}}{2} - \frac{\hat{f}_{2}(x_{k})}{2\tau_{k}} +\\
                &+ \frac{\eta_{k}(2 - \eta_{k})}{2\tau_{k}}\left\langle\left(\hat{F}^{'}(x_{k})^{*}\hat{F}^{'}(x_{k}) + \tau_{k} L_{k} I_{n}\right)^{-1}\hat{F}^{'}(x_{k})^{*}\hat{F}(x_{k}),~\hat{F}^{'}(x_{k})^{*}\hat{F}(x_{k})\right\rangle.
            \end{aligned}
        \end{cases}
    \end{equation*}
    При $\eta_{k}\in (0, 2),~\tau_{k} = \hat{f}_{1}(x_{k})$:
    \begin{equation*}
        \begin{aligned}
            &\hat{f}_{1}(x_{k}) - \hat{f}_{1}(x_{k + 1})\geq\frac{\eta_{k}(2 - \eta_{k})}{2\hat{f}_{1}(x_{k})}\left\langle\left(\hat{F}^{'}(x_{k})^{*}\hat{F}^{'}(x_{k}) + \hat{f}_{1}(x_{k}) L_{k} I_{n}\right)^{-1}\hat{F}^{'}(x_{k})^{*}\hat{F}(x_{k}),~\hat{F}^{'}(x_{k})^{*}\hat{F}(x_{k})\right\rangle\geq 0,
        \end{aligned}
    \end{equation*}
    так как матрица $\left(\hat{F}^{'}(x_{k})^{*}\hat{F}^{'}(x_{k}) + \hat{f}_{1}(x_{k}) L_{k} I_{n}\right)^{-1}$ так же положительно определённая.
\end{lm:corollary}
Лемма \ref{lm:aux_update} по сути в своём условии содержит описание метода трёх стохастических квадратов. Для анализа влияния оценки $\hat{f}_{1}$ по батчу из функций $B$ важно понять, насколько из--за введённой стохастики $g_{1}$ неточно оценивает оптимизируемый функционал $\hat{f}_{1}$ в зависимости от итерации метода оптимизации, что как раз рассмотрено в леммах \ref{lm:aux_finite_population_variance} и \ref{lm:aux_bounded_deviation}. 

\begin{lemma}\label{lm:aux_finite_population_variance}
    Пусть выполнено предположение \ref{as:4}. При сэмплировании без возвращения батчей $B\subseteq\mathcal{B}$, $|B| = b$ из равномерного распределения $q$ на подмножества $B$ имеет место следующее неравенство:
    \begin{equation*}
        \mathbb{E}_{q}\left[\left|\hat{g}_{2}(x, B) - \hat{f}_{2}(x)\right|^{2}\right]\leq\frac{\tilde{\sigma}^{2}}{b}\left(1 - \frac{b}{m}\right),~\forall x\in E_{1},
    \end{equation*}
    для некоторого конечного $\tilde{\sigma}\geq\sigma$.
\end{lemma}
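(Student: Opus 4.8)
The plan is to recognize the left-hand side as the variance of a sample mean under simple random sampling without replacement and to invoke the classical finite-population variance formula. First I would fix $x \in E_1$ and abbreviate $Z_i \overset{\operatorname{def}}{=} (F_i(x))^2$, $i \in \overline{1, m}$, so that $\hat{f}_2(x) = \frac{1}{m}\sum_{i=1}^m Z_i$ is the population mean and $\hat{g}_2(x, B) = \frac{1}{b}\sum_{j=1}^b Z_{i_j}$ is the mean of a size-$b$ sample drawn without replacement. With this notation the quantity to be estimated is exactly $\mathbb{E}_q\left[\left(\hat{g}_2(x, B) - \hat{f}_2(x)\right)^2\right]$, the variance of the sample mean. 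The specialization of Assumption \ref{as:4} to $b = 1$ gives the uniform-in-$x$ bound $\sigma_{\mathrm{pop}}^2(x) \overset{\operatorname{def}}{=} \frac{1}{m}\sum_{i=1}^m (Z_i - \hat{f}_2(x))^2 = \mathbb{E}_q\left[(Z_{i_1} - \hat{f}_2(x))^2\right] \leq \sigma^2$, since a single draw $i_1$ is uniform on $\overline{1, m}$.

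The core step is the variance computation. I would write $\mathbb{E}_q\left[\left(\hat{g}_2(x, B) - \hat{f}_2(x)\right)^2\right] = \frac{1}{b^2}\left(\sum_{j=1}^b \mathbb{E}_q\left[(Z_{i_j} - \hat{f}_2(x))^2\right] + \sum_{j \neq k}\mathbb{E}_q\left[(Z_{i_j} - \hat{f}_2(x))(Z_{i_k} - \hat{f}_2(x))\right]\right)$. Each diagonal term equals $\sigma_{\mathrm{pop}}^2(x)$ by marginal uniformity of the draws. The cross terms are all equal by symmetry; their common value follows from specializing the same expansion to $b = m$, where $\hat{g}_2(x, \mathcal{B}) = \hat{f}_2(x)$ holds deterministically and hence the whole expression vanishes, forcing $\mathbb{E}_q\left[(Z_{i_j} - \hat{f}_2(x))(Z_{i_k} - \hat{f}_2(x))\right] = -\frac{\sigma_{\mathrm{pop}}^2(x)}{m - 1}$ for $j \neq k$. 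Substituting back yields $\frac{1}{b^2}\left(b\sigma_{\mathrm{pop}}^2(x) - b(b-1)\frac{\sigma_{\mathrm{pop}}^2(x)}{m-1}\right) = \frac{\sigma_{\mathrm{pop}}^2(x)}{b}\cdot\frac{m - b}{m - 1}$.

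Finally I would rewrite $\frac{m - b}{m - 1} = \frac{m}{m - 1}\left(1 - \frac{b}{m}\right)$ and combine with the uniform bound $\sigma_{\mathrm{pop}}^2(x) \leq \sigma^2$ to obtain $\mathbb{E}_q\left[\left(\hat{g}_2(x, B) - \hat{f}_2(x)\right)^2\right] \leq \frac{\sigma^2 m}{(m - 1)b}\left(1 - \frac{b}{m}\right)$, which is the desired estimate with $\tilde{\sigma} \overset{\operatorname{def}}{=} \sigma\sqrt{\frac{m}{m - 1}} \geq \sigma$. The main obstacle I anticipate is the negative-covariance computation for sampling without replacement: it is the only non-routine probabilistic ingredient, and it is also the source of the finite-population correction factor $\frac{m}{m-1}$, which is precisely why the constant must be inflated to $\tilde{\sigma} \geq \sigma$ rather than kept equal to $\sigma$. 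A minor point to verify is that $\sigma$ in Assumption \ref{as:4} is a constant uniform in $x$, so that the resulting $\tilde{\sigma}$ is genuinely independent of $x$, as required by the statement.
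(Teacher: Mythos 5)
Your proof is correct, and its overall skeleton --- compute the exact variance of the sample mean under sampling without replacement, bound the population variance by Assumption \ref{as:4} specialized to $b=1$, and absorb the resulting factor $\frac{m}{m-1}$ into $\tilde{\sigma}$ --- coincides with the paper's. The computation of the variance itself, however, goes through a genuinely different decomposition. The paper represents $\hat{g}_{2}(x,B) = \frac{1}{b}\sum_{i=1}^{m}(F_{i}(x))^{2}Z_{i}$ with membership indicators $Z_{i}\in\{0,1\}$, derives $\mathbb{V}\left[Z_{i}\right] = \frac{b}{m}\left(1-\frac{b}{m}\right)$ and $\operatorname{Cov}(Z_{i},Z_{j}) = \frac{b(b-1)}{m(m-1)}-\left(\frac{b}{m}\right)^{2}$ from the hypergeometric inclusion probabilities $C_{m-1}^{b-1}/C_{m}^{b}$ and $C_{m-2}^{b-2}/C_{m}^{b}$, and then assembles the variance of this weighted sum of indicators. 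You instead expand over the $b$ draws, use that each draw is marginally uniform on $\overline{1,m}$, and pin down the common cross term $-\sigma_{\mathrm{pop}}^{2}(x)/(m-1)$ by the degeneracy argument at $b=m$, where the sample mean equals the population mean deterministically. That step is valid, but it rests on the fact that the joint law of two draws in distinct positions is uniform over ordered pairs of distinct indices and does not depend on $b$ (exchangeability of sampling without replacement); you use this implicitly when transferring the cross-term value computed at $b=m$ to arbitrary $b\geq 2$, and it deserves an explicit sentence. Both routes yield the identical exact identity $\mathbb{E}_{q}\left[\left|\hat{g}_{2}(x,B)-\hat{f}_{2}(x)\right|^{2}\right] = \frac{\sigma_{\mathrm{pop}}^{2}(x)}{b}\cdot\frac{m-b}{m-1}$ and the same inflated constant $\tilde{\sigma} = \sigma\sqrt{\frac{m}{m-1}}$; yours buys freedom from inclusion-probability combinatorics at the price of the exchangeability justification, while the paper's is fully explicit and self-contained.
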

\begin{proof}
Математическое ожидание $\hat{g}_{2}(x, B)$ по $q$ от сэмпла батча $B$ можно представить с помощью зависимых Бернулли случайных величин $Z_{i}\in\{0, 1\}$, у которых значение 0 обозначает отсутствие $F_{i}$ в батче $B$, а значение 1 --- наличие $F_{i}$ в батче $B$:
\begin{equation*}
    \begin{aligned}
        \mathbb{E}_{q}\left[\hat{g}_{2}(x, B)\right] &= \mathbb{E}_{q}\left[\frac{1}{b}\sum\limits_{j = 1}^{b}\left(F_{i_{j}}(x)\right)^{2}\right] = \frac{1}{b}\mathbb{E}\left[\sum\limits_{i = 1}^{m}\left(F_{i}(x)\right)^{2}Z_{i}\right] = \frac{1}{b}\sum\limits_{i = 1}^{m}\left(F_{i}(x)\right)^{2}\mathbb{E}\left[Z_{i}\right] =\\
        &= \frac{1}{m}\sum\limits_{i = 1}^{m}\left(F_{i}(x)\right)^{2} = \hat{f}_{2}(x),
    \end{aligned}
\end{equation*}
так как вероятность для $F_{i}$ оказаться в сэмпле $B$ согласно определению $q$ составляет
$$\Prob\left(Z_{i} = 1\right) = \frac{C_{m - 1}^{b - 1}}{C_{m}^{b}} = \frac{(m - 1)!}{(m - b)!(b - 1)!}\frac{(m - b)!b!}{m!} = \frac{b}{m},~i\in\overline{1, m}.$$
По определению дисперсии случайной величины, заданной на конечной генеральной совокупности:
\begin{equation*}
    \mathbb{V}_{q}\left[\left(F_{\xi}(x)\right)^{2}\right] = \frac{1}{m}\sum\limits_{i = 1}^{m}\left(\left(F_{i}(x)\right)^{2} - \hat{f}_{2}(x)\right)^{2} = \frac{m - 1}{m}(\sigma(x))^{2}\leq\sigma^{2},
\end{equation*}
$\sigma(x)$ --- квази--дисперсия для сэмпла $B$ с $|B| = 1$ при фиксированном $x\in E_{1}$. По предположению \ref{as:4}: $\sigma(x)\leq\sigma\sqrt{\frac{m}{m - 1}}$.
Дисперсия значения функции $g_{2}$ при фиксированном $x$ равна:
\begin{equation*}
    \begin{aligned}
        \mathbb{V}_{q}\left[\hat{g}_{2}(x, B)\right] &= \mathbb{E}_{q}\left[\left|\hat{g}_{2}(x, B) - \hat{f}_{2}(x)\right|^{2}\right] = \mathbb{V}\left[\frac{1}{b}\sum\limits_{i = 1}^{m}(F_{i}(x))^{2}Z_{i}\right] =\\
        &= \frac{1}{b^{2}}\left(\sum\limits_{i = 1}^{m}(F_{i}(x))^{4}\mathbb{V}\left[Z_{i}\right] + 2\sum\limits_{i = 1}^{m}\sum\limits_{j = i + 1}^{m}(F_{i}(x)F_{j}(x))^{2}\operatorname{Cov}(Z_{i}, Z_{j})\right),
    \end{aligned}
\end{equation*}
а суммирование по пустому множеству индексов приравнено к нулю. $Z_{i},~i\in\overline{1, m}$ распределены по Бернулли, поэтому $\mathbb{V}$ и $\operatorname{Cov}$ определены следующим образом:
\begin{equation*}
    \begin{aligned}
        &\begin{cases}
            \mathbb{V}\left[Z_{i}\right] = \frac{b}{m}\left(1 - \frac{b}{m}\right);\\
            \operatorname{Cov}(Z_{i}, Z_{j}) = \mathbb{E}\left[Z_{i}Z_{j}\right] - \mathbb{E}\left[Z_{i}\right]\mathbb{E}\left[Z_{j}\right] = \frac{C_{m - 2}^{b - 2}}{C_{m}^{b}} - \left(\frac{b}{m}\right)^{2} = \frac{b(b - 1)}{m(m - 1)} - \left(\frac{b}{m}\right)^{2}.
        \end{cases}
    \end{aligned}
\end{equation*}
Подставим полученные значения в выражение $\mathbb{V}_{q}\left[\hat{g}_{2}(x, B)\right]$:
\begin{equation*}
    \begin{aligned}
        \mathbb{V}_{q}\left[\hat{g}_{2}(x, B)\right] &= \frac{1}{b^{2}}\left(\sum\limits_{i = 1}^{m}(F_{i}(x))^{4}\mathbb{V}\left[Z_{i}\right] + 2\sum\limits_{i = 1}^{m}\sum\limits_{j = i + 1}^{m}(F_{i}(x)F_{j}(x))^{2}\operatorname{Cov}(Z_{i}, Z_{j})\right)=\\
        &= \frac{1}{b^{2}}\left(\sum\limits_{i = 1}^{m}(F_{i}(x))^{4}\frac{b}{m}\left(1 - \frac{b}{m}\right) + 2\sum\limits_{i = 1}^{m}\sum\limits_{j = i + 1}^{m}(F_{i}(x)F_{j}(x))^{2}\left(\frac{b(b - 1)}{m(m - 1)} - \left(\frac{b}{m}\right)^{2}\right)\right) =\\
        &= \frac{(m - b)}{mb}\left(\frac{1}{m}\sum\limits_{i = 1}^{m}(F_{i}(x))^{4} - \frac{2}{m(m - 1)}\sum\limits_{i = 1}^{m}\sum\limits_{j = i + 1}^{m}(F_{i}(x)F_{j}(x))^{2}\right) =
    \end{aligned}
\end{equation*}
\begin{equation*}
    \begin{aligned}
        &= \frac{1}{b}\left(1 - \frac{b}{m}\right)\left(\frac{1}{m - 1}\left(\sum\limits_{i = 1}^{m}\left(F_{i}(x)\right)^{2} - \hat{f}_{2}(x)\right)^{2}\right) = \frac{(\sigma(x))^{2}}{b}\left(1 - \frac{b}{m}\right).
    \end{aligned}
\end{equation*}
Введём $\tilde{\sigma} \overset{\operatorname{def}}{=} \sigma\sqrt{\frac{m}{m - 1}}$, тогда получается
\begin{equation*}
    \begin{aligned}
        \mathbb{E}_{q}\left[\left|\hat{g}_{2}(x, B) - \hat{f}_{2}(x)\right|^{2}\right] &= \frac{(\sigma(x))^{2}}{b}\left(1 - \frac{b}{m}\right)\leq\frac{m\sigma^{2}}{b(m - 1)}\left(1 - \frac{b}{m}\right) = \frac{\tilde{\sigma}^{2}}{b}\left(1 - \frac{b}{m}\right),~\forall x\in E_{1}.
    \end{aligned}
\end{equation*}
\end{proof}
\begin{lm:corollary}\label{lm:co:aux_finite_population_variance}
Выведенная оценка также обобщается на случай бесконечной генеральной совокупности, соответствующая этому случаю оценка совпадает с оценкой, получаемой при независимом сэмплировании с возвращением из генеральной совокупности произвольного размера:
\begin{equation*}
    \begin{aligned}
        &\mathbb{E}_{q}\left[\left|\hat{g}_{2}(x, B) - \hat{f}_{2}(x)\right|^{2}\right]\leq\lim\limits_{m\rightarrow+\infty}\left[\frac{\tilde{\sigma}^{2}}{b}\left(1 - \frac{b}{m}\right)\right] = \frac{\tilde{\sigma}^{2}}{b},~\forall x\in E_{1}.
    \end{aligned}
\end{equation*}
Эту оценку можно вывести непосредственно:
\begin{equation*}
    \begin{aligned}
        &\mathbb{V}_{q}\left[\hat{g}_{2}(x, B)\right] = \mathbb{V}_{q}\left[\frac{1}{b}\sum\limits_{j = 1}^{b}\left(F_{i_{j}}(x)\right)^{2}\right] = \frac{1}{b^{2}}\sum\limits_{j = 1}^{b}\mathbb{V}_{q}\left[\left(F_{i_{j}}(x)\right)^{2}\right] \leq \frac{\sigma^{2}}{b}\leq\frac{\tilde{\sigma}^{2}}{b},
    \end{aligned}
\end{equation*}
при этом $\lim\limits_{m\rightarrow+\infty}[\tilde{\sigma}] = \sigma$.
\end{lm:corollary}
\begin{lm:corollary}\label{lm:aux_finite_population_deviation}
Из условий леммы следует ограниченность математического ожидания модуля\\$\left|\hat{g}_{2}(x, B) - \hat{f}_{2}(x)\right|$ при всех $x\in E_{1}$:
\begin{equation*}
    \begin{aligned}
        \mathbb{E}_{q}\left[\left|\hat{g}_{2}(x, B) - \hat{f}_{2}(x)\right|\right] &= \mathbb{E}_{q}\left[\sqrt{\left|\hat{g}_{2}(x, B) - \hat{f}_{2}(x)\right|^{2}}\right]\leq\sqrt{\mathbb{E}_{q}\left[\left|\hat{g}_{2}(x, B) - \hat{f}_{2}(x)\right|^{2}\right]}\leq\tilde{\sigma}\sqrt{\frac{1}{b} - \frac{1}{m}}.
    \end{aligned}
\end{equation*}
\end{lm:corollary}

\begin{lemma}\label{lm:aux_bounded_deviation}
    Пусть выполнены предположения \ref{as:1} и \ref{as:4} и дана последовательность $\{x_{k - 1}\}_{k\in\mathbb{N}}$, $x_{k - 1}\in E_{1}$, вычисляемая по одному из правил: \eqref{eq:stoch_direct_update_rule}, \eqref{eq:double_stoch_direct_update_rule}, \eqref{eq:stoch_approx_general_update_rule}. При независимом сэмплировании без возвращения батчей $B_{k - 1}\subseteq\mathcal{B}$, $|B_{k - 1}| = b$ из равномерного распределения $q$ на подмножества $B_{k - 1}$ для каждого $k\in\mathbb{N}$ имеет место следующее неравенство:
    \begin{equation*}
        \mathbb{E}\left[\left|\hat{f}_{2}(x_{k}) - \hat{g}_{2}(x_{k}, B_{k - 1})\right|\right]\leq2l_{\hat{F}}\mathbb{E}\left[\left\|x_{k} - x_{k - 1}\right\|\right]\mathds{1}_{\left\{b < m\right\}} + \tilde{\sigma}\sqrt{\frac{1}{b} - \frac{1}{m}},
    \end{equation*}
    для некоторого конечного $\tilde{\sigma}\geq\sigma$. Усреднение производится по всем сэмплам $B_{k - 1},~k\in\mathbb{N}$.
\end{lemma}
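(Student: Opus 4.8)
План состоит в том, чтобы воспользоваться независимостью приближения $x_{k-1}$ от батча $B_{k-1}$. Главное препятствие здесь в том, что точка $x_{k}$ вычисляется по $x_{k-1}$ и $B_{k-1}$ (по любому из правил \eqref{eq:stoch_direct_update_rule}, \eqref{eq:double_stoch_direct_update_rule}, \eqref{eq:stoch_approx_general_update_rule}), поэтому $x_{k}$ статистически зависит от $B_{k-1}$, и напрямую применить к паре $(x_{k}, B_{k-1})$ оценку дисперсии из леммы \ref{lm:aux_finite_population_variance} (следствие \ref{lm:aux_finite_population_deviation}), требующую независимости аргумента от сэмплируемого батча, нельзя.

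Сначала я бы вставил промежуточную точку $x_{k-1}$ при помощи неравенства треугольника:
\begin{equation*}
    \begin{aligned}
        \left|\hat{f}_{2}(x_{k}) - \hat{g}_{2}(x_{k}, B_{k-1})\right| &\leq \left|\hat{f}_{2}(x_{k}) - \hat{f}_{2}(x_{k-1})\right| + \left|\hat{f}_{2}(x_{k-1}) - \hat{g}_{2}(x_{k-1}, B_{k-1})\right| +\\
        &+ \left|\hat{g}_{2}(x_{k-1}, B_{k-1}) - \hat{g}_{2}(x_{k}, B_{k-1})\right|.
    \end{aligned}
\end{equation*}
Крайние слагаемые оцениваются липшицевостью $\hat{f}_{2}$ и $\hat{g}_{2}$ из леммы \ref{lm:aux_lipschitz}: каждое не превосходит $l_{\hat{F}}\|x_{k} - x_{k-1}\|$, что в сумме даёт вклад $2l_{\hat{F}}\|x_{k} - x_{k-1}\|$. Среднее же слагаемое содержит именно точку $x_{k-1}$, которая является функцией от $x_{0}, B_{0}, \dots, B_{k-2}$ и потому независима от $B_{k-1}$ в силу независимого сэмплирования батчей на разных итерациях.

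Ключевой шаг --- оценка среднего слагаемого. Поскольку $x_{k-1}$ не зависит от $B_{k-1}$, условное относительно $x_{k-1}$ усреднение позволяет применить следствие \ref{lm:aux_finite_population_deviation} поточечно, при фиксированном $x_{k-1}$:
$$\mathbb{E}_{B_{k-1}}\left[\left|\hat{f}_{2}(x_{k-1}) - \hat{g}_{2}(x_{k-1}, B_{k-1})\right|\right]\leq\tilde{\sigma}\sqrt{\frac{1}{b} - \frac{1}{m}},$$
и по формуле полного математического ожидания эта же граница сохраняется для безусловного среднего. Собирая оценки всех трёх слагаемых и усредняя по всей случайности процесса, я получаю $\mathbb{E}\left[\left|\hat{f}_{2}(x_{k}) - \hat{g}_{2}(x_{k}, B_{k-1})\right|\right]\leq 2l_{\hat{F}}\mathbb{E}\left[\|x_{k} - x_{k-1}\|\right] + \tilde{\sigma}\sqrt{\tfrac{1}{b} - \tfrac{1}{m}}$.

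Наконец, индикатор $\mathds{1}_{\{b < m\}}$ при липшицевом слагаемом возникает из разбора случая $b = m$: тогда батч $B_{k-1}$ совпадает со всей генеральной совокупностью $\mathcal{B}$, так что $\hat{g}_{2}(x_{k}, B_{k-1}) = \hat{g}_{2}(x_{k}, \mathcal{B}) = \hat{f}_{2}(x_{k})$ и левая часть тождественно равна нулю одновременно с обнулением $\sqrt{\tfrac{1}{b} - \tfrac{1}{m}}$; значит, в этом случае обходной путь через $x_{k-1}$ не нужен и липшицев вклад можно отбросить, а при $b < m$ работает описанная выше цепочка. Основную трудность я ожидаю именно в аккуратном обосновании независимости $x_{k-1}$ от $B_{k-1}$ и корректном применении условного математического ожидания --- после этого остальные шаги сводятся к прямому применению уже доказанных лемм \ref{lm:aux_lipschitz} и \ref{lm:aux_finite_population_deviation}.
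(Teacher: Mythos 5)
Ваше доказательство корректно и по существу совпадает с доказательством в статье: то же разложение по неравенству треугольника через промежуточную точку $x_{k-1}$, те же липшицевы оценки крайних слагаемых по лемме \ref{lm:aux_lipschitz}, та же оценка среднего слагаемого через лемму \ref{lm:aux_finite_population_variance} (следствие \ref{lm:aux_finite_population_deviation}) и тот же разбор случая $b = m$ для индикатора. Единственное отличие --- вы явно проговариваете независимость $x_{k-1}$ от $B_{k-1}$ и переход через условное математическое ожидание, что в статье использовано неявно; это уточнение, а не иной путь.
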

\begin{proof}
Выпишем оценку сверху:
\begin{equation}\label{eq:finite_population_deviation}
    \begin{aligned}
        \mathbb{E}&\left[\left|\hat{f}_{2}(x_{k}) - \hat{g}_{2}(x_{k}, B_{k - 1})\right|\right] = \mathbb{E}\left[\left|\hat{f}_{2}(x_{k}) - \hat{f}_{2}(x_{k - 1}) + \hat{f}_{2}(x_{k - 1}) - \hat{g}_{2}(x_{k - 1}, B_{k - 1}) + \hat{g}_{2}(x_{k - 1}, B_{k - 1}) -\right.\right.\\
        &\left.\left.- \hat{g}_{2}(x_{k}, B_{k - 1})\right|\right]\leq\mathbb{E}\left[\left|\hat{f}_{2}(x_{k}) - \hat{f}_{2}(x_{k - 1})\right|\right] + \mathbb{E}\left[\left|\hat{f}_{2}(x_{k - 1}) - \hat{g}_{2}(x_{k - 1}, B_{k - 1})\right|\right] +\\
        &+ \mathbb{E}\left[\left|\hat{g}_{2}(x_{k - 1}, B_{k - 1}) - \hat{g}_{2}(x_{k}, B_{k - 1})\right|\right]\leq\left\{\text{липшицевость $\hat{g}_{2}$ и $\hat{f}_{2}$}\right\}\leq 2l_{\hat{F}}\mathbb{E}\left[\left\|x_{k} - x_{k - 1}\right\|\right] +\\
        &+ \mathbb{E}\left[\sqrt{\left|\hat{f}_{2}(x_{k - 1}) - \hat{g}_{2}(x_{k - 1}, B_{k - 1})\right|^{2}}\right]\leq 2l_{\hat{F}}\mathbb{E}\left[\left\|x_{k} - x_{k - 1}\right\|\right] + \sqrt{\mathbb{E}\left[\left|\hat{f}_{2}(x_{k - 1}) - \hat{g}_{2}(x_{k - 1}, B_{k - 1})\right|^{2}\right]}\leq\\
        &\leq\left\{\text{лемма \ref{lm:aux_finite_population_variance}}\right\}\leq 2l_{\hat{F}}\mathbb{E}\left[\left\|x_{k} - x_{k - 1}\right\|\right] + \sqrt{\frac{\tilde{\sigma}^{2}}{b}\left(1 - \frac{b}{m}\right)} = 2l_{\hat{F}}\mathbb{E}\left[\left\|x_{k} - x_{k - 1}\right\|\right] + \tilde{\sigma}\sqrt{\frac{1}{b} - \frac{1}{m}} =\\
        &= 2l_{\hat{F}}\mathbb{E}\left[\left\|x_{k} - x_{k - 1}\right\|\right]\mathds{1}_{\left\{b < m\right\}} + \tilde{\sigma}\sqrt{\frac{1}{b} - \frac{1}{m}},
    \end{aligned}
\end{equation}
так как при $b = m$ выполнено $\hat{f}_{2}(x_{k}) = \hat{g}_{2}(x_{k}, \mathcal{B}) = \hat{g}_{2}(x_{k}, B_{k - 1})$.
Для конечной генеральной совокупности оценку в \eqref{eq:finite_population_deviation} иногда удобно представить следующим образом:
\begin{equation}\label{eq:deviation_upper_bound}
    \begin{aligned}
        \mathbb{E}\left[\left|\hat{f}_{2}(x_{k}) - \hat{g}_{2}(x_{k}, B_{k - 1})\right|\right]&\leq 2l_{\hat{F}}\mathbb{E}\left[\left\|x_{k} - x_{k - 1}\right\|\right]\mathds{1}_{\left\{b < m\right\}} + \tilde{\sigma}\sqrt{\frac{1}{b} - \frac{1}{m}}\leq\\
        &\leq\sqrt{\frac{1}{b} - \frac{1}{m}}\left(2l_{\hat{F}}\sqrt{m(m - 1)}\mathbb{E}\left[\left\|x_{k} - x_{k - 1}\right\|\right]\mathds{1}_{\left\{b < m\right\}} + \tilde{\sigma}\right).
    \end{aligned}
\end{equation}
\end{proof}
\begin{lm:corollary}
Как и в лемме \ref{lm:aux_finite_population_variance}, выведенная оценка также обобщается на случай бесконечной генеральной совокупности, и так же соответствующая этому случаю оценка совпадает с оценкой, получаемой при независимом сэмплировании с возвращением из генеральной совокупности произвольного размера:
\begin{equation*}
    \begin{aligned}
        \mathbb{E}\left[\left|\hat{f}_{2}(x_{k}) - \hat{g}_{2}(x_{k}, B_{k - 1})\right|\right]&\leq\lim\limits_{m\rightarrow+\infty}\left[2l_{\hat{F}}\mathbb{E}\left[\left\|x_{k} - x_{k - 1}\right\|\right] + \tilde{\sigma}\sqrt{\frac{1}{b} - \frac{1}{m}}\right] =\\
        &= 2l_{\hat{F}}\mathbb{E}\left[\left\|x_{k} - x_{k - 1}\right\|\right] + \frac{\tilde{\sigma}}{\sqrt{b}},~\forall k\in\mathbb{N}.
    \end{aligned}
\end{equation*}
Аналогично с помощью следствия \ref{lm:co:aux_finite_population_variance} можно вывести эту оценку:
\begin{equation*}
    \begin{aligned}
        \mathbb{E}\left[\left|\hat{f}_{2}(x_{k}) - \hat{g}_{2}(x_{k}, B_{k - 1})\right|\right] &\leq\left\{\text{\eqref{eq:finite_population_deviation}}\right\}\leq 2l_{\hat{F}}\mathbb{E}\left[\left\|x_{k} - x_{k - 1}\right\|\right] + \sqrt{\mathbb{E}\left[\left|\hat{f}_{2}(x_{k - 1}) - \hat{g}_{2}(x_{k - 1}, B_{k - 1})\right|^{2}\right]}\leq\\
        &\leq\left\{\text{лемма \ref{lm:aux_finite_population_variance}, следствие \ref{lm:co:aux_finite_population_variance}}\right\}\leq2l_{\hat{F}}\mathbb{E}\left[\left\|x_{k} - x_{k - 1}\right\|\right] + \frac{\tilde{\sigma}}{\sqrt{b}},
    \end{aligned}
\end{equation*}
при этом $\lim\limits_{m\rightarrow+\infty}[\tilde{\sigma}] = \sigma$.
\end{lm:corollary}
\begin{lm:corollary}
При $|B_{k - 1}| = m,~k\in\mathbb{N}$ или при полностью детерминированном вычислении $x_{k}$ относительно (при условии) $(x_{k - 1}, B_{k - 1})$ значение $\mathbb{E}\left[\left|\hat{f}_{2}(x_{k}) - \hat{g}_{2}(x_{k}, B_{k - 1})\right|\right] = 0$
\end{lm:corollary}
Стоит также заметить, что согласно леммам \ref{lm:aux_bounded_deviation}  и \ref{lm:aux_finite_population_variance} оценка $\hat{f}_{2}(x_{k})$, полученная на $(k - 1)$--ой итерации в виде $\hat{g}_{2}(x_{k}, B_{k - 1})$ менее точна, чем $\hat{g}_{2}(x_{k}, B_{k})$, и явно зависит от расстояния между $x_{k}$ и $x_{k - 1}$. Это наблюдение перекликается с оценками неопределённости в теории случайных процессов, что наводит на необходимость проверки гипотезы о пользе масштабирования шага на каждой итерации с помощью $\eta_{k}$ для получения более точного и быстрого решения задачи \eqref{eq:main_opt_problem}.

В следующей лемме выведено представление $\hat{\psi}_{x_{k}, L_{k}, \tau_{k}}(y, B_{k})$ относительно точки $x_{k + 1}$, которая при $\eta_{k} = 1$ является точкой глобального минимума функции $\hat{\psi}_{x_{k}, L_{k}, \tau_{k}}(\cdot, B_{k})$, это представление непосредственно можно использовать для анализа сходимости метода трёх квадратов в случае поиска $x_{k + 1}$ не просто в рамках стохастической оптимизации, но ещё и при использовании \textit{неточного проксимального отображения} (следствие \ref{lm:aux_approx_solution}).
\begin{lemma}\label{lm:aux_value_tolerance}
    Пусть выполнено предположение \ref{as:1} и дана последовательность $\{x_{k}\}_{k\in\mathbb{Z}_{+}}$, $x_{k}\in E_{1}$, вычисляемая по правилу \eqref{eq:stoch_direct_update_rule} с $\tau_{k} > 0$, $L_{k} > 0$, $B_{k}\subseteq\mathcal{B}$, $\eta_{k}\in(0, 2)$. Тогда для произвольного $y\in E_{1}$ верно
    \begin{equation*}
        \begin{aligned}
            \hat{\psi}_{x_{k}, L_{k}, \tau_{k}}(y, B_{k}) &= \hat{\psi}_{x_{k}, L_{k}, \tau_{k}}(x_{k + 1}, B_{k}) + \frac{L_{k}}{2}\left\|y - x_{k + 1}\right\|^{2} +\\
            &+ \frac{1}{2\tau_{k}}\left\|\hat{G}^{'}(x_{k}, B_{k})(y - x_{k + 1})\right\|^{2} + \frac{1 - \eta_{k}}{2\tau_{k}}\left\langle y - x_{k + 1},~\nabla_{x_{k}}\hat{g}_{2}(x_{k}, B_{k})\right\rangle.
        \end{aligned}
    \end{equation*}
\end{lemma}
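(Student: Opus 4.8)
The plan is to exploit the fact that the stochastic local model $\hat\psi_{x_k,L_k,\tau_k}(\cdot,B_k)$ is an explicit quadratic function of $y$, so that an exact second-order Taylor expansion around the point $x_{k+1}$ will close the argument. First I would write out the gradient and Hessian of $y\mapsto\hat\psi_{x_k,L_k,\tau_k}(y,B_k)$ explicitly. From the definition in Lemma \ref{lm:aux_upper_model},
\begin{equation*}
    \nabla_y\hat\psi_{x_k,L_k,\tau_k}(y,B_k) = L_k(y-x_k) + \frac{1}{\tau_k}\hat{G}^{'}(x_k,B_k)^{*}\left(\hat{G}(x_k,B_k)+\hat{G}^{'}(x_k,B_k)(y-x_k)\right),
\end{equation*}
and the Hessian is the constant positive-definite operator $L_kI_n+\frac{1}{\tau_k}\hat{G}^{'}(x_k,B_k)^{*}\hat{G}^{'}(x_k,B_k)$. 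Since the model is exactly quadratic, Taylor's formula is an identity (no remainder):
\begin{equation*}
    \hat\psi_{x_k,L_k,\tau_k}(y,B_k) = \hat\psi_{x_k,L_k,\tau_k}(x_{k+1},B_k) + \left\langle\nabla_y\hat\psi_{x_k,L_k,\tau_k}(x_{k+1},B_k),~y-x_{k+1}\right\rangle + \frac{1}{2}\left\langle\left(L_kI_n+\frac{1}{\tau_k}\hat{G}^{'}(x_k,B_k)^{*}\hat{G}^{'}(x_k,B_k)\right)(y-x_{k+1}),~y-x_{k+1}\right\rangle.
\end{equation*}
The quadratic term already unfolds into $\frac{L_k}{2}\|y-x_{k+1}\|^2+\frac{1}{2\tau_k}\|\hat{G}^{'}(x_k,B_k)(y-x_{k+1})\|^2$, which matches two of the target terms verbatim.

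The remaining work is to evaluate the linear term $\langle\nabla_y\hat\psi_{x_k,L_k,\tau_k}(x_{k+1},B_k),~y-x_{k+1}\rangle$ and show it equals $\frac{1-\eta_k}{2\tau_k}\langle y-x_{k+1},~\nabla_{x_k}\hat g_2(x_k,B_k)\rangle$. The key step is to substitute the update rule \eqref{eq:stoch_direct_update_rule}, i.e. $x_{k+1}-x_k=-\eta_k(\hat{G}^{'}(x_k,B_k)^{*}\hat{G}^{'}(x_k,B_k)+\tau_kL_kI_n)^{-1}\hat{G}^{'}(x_k,B_k)^{*}\hat{G}(x_k,B_k)$, into the gradient formula above. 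Writing $H\eqdef\hat{G}^{'}(x_k,B_k)^{*}\hat{G}^{'}(x_k,B_k)+\tau_kL_kI_n$ and $g\eqdef\hat{G}^{'}(x_k,B_k)^{*}\hat{G}(x_k,B_k)=\frac12\nabla_{x_k}\hat g_2(x_k,B_k)$, one computes $\nabla_y\hat\psi(x_{k+1},B_k)=\frac{1}{\tau_k}\big(g+\frac{1}{\tau_k}H(x_{k+1}-x_k)\big)$ after collecting $L_k(x_{k+1}-x_k)$ with the Hessian-times-displacement part. Plugging $x_{k+1}-x_k=-\eta_k H^{-1}g$ gives $H(x_{k+1}-x_k)=-\eta_k g$, so the gradient collapses to $\frac{1}{\tau_k}(g-\eta_k g)=\frac{1-\eta_k}{\tau_k}g=\frac{1-\eta_k}{2\tau_k}\nabla_{x_k}\hat g_2(x_k,B_k)$, which is exactly the claimed coefficient.

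The main obstacle, such as it is, will be purely bookkeeping: the scaling factor $\frac{1}{\tau_k}$ appears in front of the inner-product term in the model definition, and care is needed to track whether it is absorbed into the Hessian or the gradient, and to keep the factor of $\frac12$ relating $g$ to $\nabla_{x_k}\hat g_2$ straight. I would verify the collapse $H(x_{k+1}-x_k)=-\eta_k g$ by noting it is precisely the normal equation characterizing the scaled step, rather than multiplying out the inverse. No inequality or probabilistic argument is needed here — the statement is a deterministic algebraic identity valid pointwise for each realization of $B_k$ — so predictions \ref{as:1} enters only to guarantee the model is well-defined (in particular that $H$ is invertible and the Jacobian exists), and everything reduces to the exact Taylor expansion of a quadratic together with one substitution of the update rule.
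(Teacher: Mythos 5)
Your proposal is correct and follows essentially the same route as the paper's proof: the paper expands $\hat{\psi}_{x_{k}, L_{k}, \tau_{k}}(y, B_{k})$ by writing $y - x_{k} = (y - x_{k + 1}) + (x_{k + 1} - x_{k})$ — which for a quadratic is exactly your remainder-free Taylor identity around $x_{k+1}$ — and then collapses the cross term via the same normal-equation substitution $\left(\hat{G}^{'}(x_{k}, B_{k})^{*}\hat{G}^{'}(x_{k}, B_{k}) + \tau_{k}L_{k}I_{n}\right)(x_{k + 1} - x_{k}) = -\eta_{k}\hat{G}^{'}(x_{k}, B_{k})^{*}\hat{G}(x_{k}, B_{k})$ that you use. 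The only blemish is the stray extra factor $\frac{1}{\tau_{k}}$ inside your intermediate display of the gradient at $x_{k+1}$ (it should read $\frac{1}{\tau_{k}}\left(g + H(x_{k + 1} - x_{k})\right)$); your final collapse uses the correct formula, so the argument goes through.
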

\begin{proof}
Перепишем $\hat{\psi}_{x_{k},L_{k},\tau_{k}}(y, B_{k})$:
\begin{equation*}
    \begin{aligned}
        &\hat{\psi}_{x_{k},L_{k},\tau_{k}}(y, B_{k}) = \frac{\tau_{k}}{2} + \frac{L_{k}}{2}\left\|y - x_{k}\right\|^{2} + \frac{1}{2\tau_{k}}\left\|\hat{G}(x_{k}, B_{k}) + \hat{G}^{'}(x_{k}, B_{k})(y - x_{k})\right\|^{2} = \frac{\tau_{k}}{2} +\\
        &+ \frac{L_{k}}{2}\left\|(y - x_{k + 1}) + (x_{k + 1} - x_{k})\right\|^{2} + \frac{1}{2\tau_{k}}\left\|\hat{G}(x_{k}, B_{k}) + \hat{G}^{'}(x_{k}, B_{k})((y - x_{k + 1}) + (x_{k + 1} - x_{k}))\right\|^{2} =\\
        &= \frac{\tau_{k}}{2} + \frac{L_{k}}{2}\left\|y - x_{k + 1}\right\|^{2} + L_{k}\left\langle y - x_{k + 1},~x_{k + 1} - x_{k}\right\rangle + \frac{L_{k}}{2}\left\|x_{k + 1} - x_{k}\right\|^{2} +\\
        &+ \frac{1}{2\tau_{k}}\left\|\left(\hat{G}(x_{k}, B_{k}) + \hat{G}^{'}(x_{k}, B_{k})(x_{k + 1} - x_{k})\right) + \hat{G}^{'}(x_{k}, B_{k})(y - x_{k + 1})\right\|^{2} =\\
        &= \left(\frac{\tau_{k}}{2} + \frac{L_{k}}{2}\left\|x_{k + 1} - x_{k}\right\|^{2} + \frac{1}{2\tau_{k}}\left\|\hat{G}(x_{k}, B_{k}) + \hat{G}^{'}(x_{k}, B_{k})(x_{k + 1} - x_{k})\right\|^{2}\right) + \frac{L_{k}}{2}\left\|y - x_{k + 1}\right\|^{2} +\\
        &+ \left\langle y - x_{k + 1},~L_{k}(x_{k + 1} - x_{k})\right\rangle + \frac{1}{\tau_{k}}\left\langle \hat{G}^{'}(x_{k}, B_{k})(y - x_{k + 1}),~\hat{G}(x_{k}, B_{k}) + \hat{G}^{'}(x_{k}, B_{k})(x_{k + 1} - x_{k})\right\rangle +\\
        &+ \frac{1}{2\tau_{k}}\left\|\hat{G}^{'}(x_{k}, B_{k})(y - x_{k + 1})\right\|^{2} = \hat{\psi}_{x_{k}, L_{k}, \tau_{k}}(x_{k + 1}, B_{k}) + \frac{L_{k}}{2}\left\|y - x_{k + 1}\right\|^{2} + \frac{1}{2\tau_{k}}\left\|\hat{G}^{'}(x_{k}, B_{k})(y - x_{k + 1})\right\|^{2} +\\
    \end{aligned}
\end{equation*}
\begin{equation*}
    \begin{aligned}
        &+ \left\langle y - x_{k + 1},\underbrace{L_{k}(x_{k + 1} - x_{k}) + \frac{1}{\tau_{k}}\hat{G}^{'}(x_{k}, B_{k})^{*}\left(\hat{G}(x_{k}, B_{k}) + \hat{G}^{'}(x_{k}, B_{k})(x_{k + 1} - x_{k})\right)}_{ = \nabla_{x_{k + 1}}\hat{\psi}_{x_{k}, L_{k}, \tau_{k}}(x_{k + 1}, B_{k})}\right\rangle = \hat{\psi}_{x_{k}, L_{k}, \tau_{k}}(x_{k + 1}, B_{k}) +\\
        &+ \frac{L_{k}}{2}\left\|y - x_{k + 1}\right\|^{2} + \frac{1}{2\tau_{k}}\left\|\hat{G}^{'}(x_{k}, B_{k})(y - x_{k + 1})\right\|^{2} +\\
        &+ \frac{1}{2\tau_{k}}\left\langle y - x_{k + 1},~2\left(\left(\tau_{k}L_{k}I_{n} + \hat{G}^{'}(x_{k}, B_{k})^{*}\hat{G}^{'}(x_{k}, B_{k})\right)(x_{k + 1} - x_{k}) + \hat{G}^{'}(x_{k}, B_{k})^{*}\hat{G}(x_{k}, B_{k})\right)\right\rangle =\\
        &=\left\{x_{k + 1} - x_{k} = - \eta_{k}\left(\hat{G}^{'}(x_{k}, B_{k})^{*}\hat{G}^{'}(x_{k}, B_{k}) + \tau_{k} L_{k} I_{n}\right)^{-1}\hat{G}^{'}(x_{k}, B_{k})^{*}\hat{G}(x_{k}, B_{k})\right\} = \hat{\psi}_{x_{k}, L_{k}, \tau_{k}}(x_{k + 1}, B_{k}) +\\
        &+ \frac{L_{k}}{2}\left\|y - x_{k + 1}\right\|^{2} + \frac{1}{2\tau_{k}}\left\|\hat{G}^{'}(x_{k}, B_{k})(y - x_{k + 1})\right\|^{2} + \frac{1}{2\tau_{k}}\left\langle y - x_{k + 1},~(1 - \eta_{k})\left(2\hat{G}^{'}(x_{k}, B_{k})^{*}\hat{G}(x_{k}, B_{k})\right)\right\rangle =\\
        &= \hat{\psi}_{x_{k}, L_{k}, \tau_{k}}(x_{k + 1}, B_{k}) + \frac{L_{k}}{2}\left\|y - x_{k + 1}\right\|^{2} + \frac{1}{2\tau_{k}}\left\|\hat{G}^{'}(x_{k}, B_{k})(y - x_{k + 1})\right\|^{2} + \frac{1 - \eta_{k}}{2\tau_{k}}\left\langle y - x_{k + 1},~\nabla_{x_{k}}\hat{g}_{2}(x_{k}, B_{k})\right\rangle.
    \end{aligned}
\end{equation*}
\end{proof}
\begin{lm:corollary}\label{lm:aux_approx_solution}
При $\eta_{k} = 1$ полученное представление $\hat{\psi}_{x_{k}, L_{k}, \tau_{k}}(y, B_{k})$ относительно точки глобального минимума $x_{k + 1}$ позволяет оценить, насколько $y\in E_{1}$ близок к глобальному минимуму (достигается на $x_{k + 1}$ при $\eta_{k} = 1$). А разность $\hat{\psi}_{x_{k}, L_{k}, \tau_{k}}(y, B_{k}) - \hat{\psi}_{x_{k}, L_{k}, \tau_{k}}(x_{k + 1}, B_{k})$ позволяет оценить точность приближённого поиска глобального минимума на $k$--ой итерации, если $\hat{x}_{k + 1}\in E_{1}$ обозначить за приближённое значение $x_{k + 1}$, полученное с погрешностью $\varepsilon_{k} > 0$ по значению локальной модели:
\begin{equation*}
    \begin{aligned}
        0&\leq\hat{\psi}_{x_{k}, L_{k}, \tau_{k}}(\hat{x}_{k + 1}, B_{k}) - \hat{\psi}_{x_{k}, L_{k}, \tau_{k}}(x_{k + 1}, B_{k}) = \frac{L_{k}}{2}\left\|\hat{x}_{k + 1} - x_{k + 1}\right\|^{2} + \frac{1}{2\tau_{k}}\left\|\hat{G}^{'}(x_{k}, B_{k})(\hat{x}_{k + 1} - x_{k + 1})\right\|^{2}\leq\varepsilon_{k},\\
        &x_{k + 1} = \hat{T}_{L_{k}, \tau_{k}}(x_{k}, B_{k}).
    \end{aligned}
\end{equation*}
\end{lm:corollary}
\begin{lm:corollary}
В детерминированном случае при выполнении условий леммы ($\hat{G}(x, \mathcal{B}) = \hat{F}(x),~x\in E_{1}$) представление $\psi_{x_{k}, L_{k}, \tau_{k}}(y),~y\in E_{1}$ выглядит аналогично:
\begin{equation*}
    \begin{aligned}
        \psi_{x_{k}, L_{k}, \tau_{k}}(y) &= \psi_{x_{k}, L_{k}, \tau_{k}}(x_{k + 1}) + \frac{L_{k}}{2}\left\|y - x_{k + 1}\right\|^{2} + \frac{1}{2\tau_{k}}\left\|\hat{F}^{'}(x_{k})(y - x_{k + 1})\right\|^{2} +\\
        &+ \frac{1 - \eta_{k}}{2\tau_{k}}\left\langle y - x_{k + 1},~\nabla\hat{f}_{2}(x_{k})\right\rangle.
    \end{aligned}
\end{equation*}
\end{lm:corollary}

В оценках из лемм \ref{lm:aux_bounded_deviation} и \ref{lm:aux_value_tolerance} явно фигурируют выражения вида норма разности значений $x_{k}$, взятых на соседних итерациях. Для понимания процесса оптимизации в классе методов Гаусса--Ньютона важно иметь представление о множестве значений этого выражения, и следующая лемма предоставляет необходимую информацию об отрезке значений данного выражения.

\begin{lemma}\label{lm:aux_bounded_variation}
    При выполнении предположений \ref{as:1} и \ref{as:2} для последовательности $\{x_{k}\}_{k\in\mathbb{Z}_{+}}$, вычисляемой по правилу \eqref{eq:stoch_direct_update_rule}, $\tau_{k} = \hat{g}_{1}(x_{k}, B_{k})$, $\eta_{k}\in(0, 1]$, $L_{k} > 0$, верна ограниченность вариации членов:
    \begin{equation*}
        \begin{aligned}
            \left\|x_{k + 1} - x_{k}\right\|&\in\left[\frac{\eta_{k}\left\|\nabla_{x_{k}}\hat{g}_{2}(x_{k}, B_{k})\right\|}{2\left(M_{\hat{G}}^{2} + \hat{g}_{1}(x_{k}, B_{k})L_{k}\right)},~\min\left\{\sqrt{\frac{2\hat{g}_{1}(x_{k}, B_{k})}{L_{k}}},~\frac{\eta_{k}M_{\hat{G}}}{L_{k}}\right\}\right],~k\in\mathbb{Z}_{+}.
        \end{aligned}
    \end{equation*}
    В случае использования правила \eqref{eq:double_stoch_direct_update_rule} отрезок значений $\|x_{k + 1} - x_{k}\|$ выглядит следующим образом:
    \begin{equation*}
        \begin{aligned}
            \left\|x_{k + 1} - x_{k}\right\|&\in\left[\frac{\eta_{k}\left\|\nabla_{x_{k}}\hat{g}_{2}(x_{k}, B_{k})\right\|}{2\left(M_{\hat{G}}^{2} + \tilde{\tau}_{k}L_{k}\right)},~\frac{\eta_{k}M_{\hat{G}}\hat{g}_{1}(x_{k}, B_{k})}{\tilde{\tau}_{k}L_{k}}\right],~k\in\mathbb{Z}_{+}.
        \end{aligned}
    \end{equation*}
\end{lemma}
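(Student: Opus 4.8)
Зафиксируем итерацию $k\in\mathbb{Z}_{+}$ и введём сокращения $M_k \overset{\operatorname{def}}{=} \hat{G}^{'}(x_k, B_k)^{*}\hat{G}^{'}(x_k, B_k) + \hat{g}_1(x_k, B_k)L_k I_n$ и $v_k \overset{\operatorname{def}}{=} \hat{G}^{'}(x_k, B_k)^{*}\hat{G}(x_k, B_k) = \tfrac{1}{2}\nabla_{x_k}\hat{g}_2(x_k, B_k)$, так что правило \eqref{eq:stoch_direct_update_rule} при $\tau_k = \hat{g}_1(x_k, B_k)$ записывается как $x_{k+1} - x_k = -\eta_k M_k^{-1}v_k$ и, значит, $\|x_{k+1} - x_k\| = \eta_k\sqrt{\langle M_k^{-2}v_k,~v_k\rangle}$. Первым шагом я выведу нижнюю оценку: применяя лемму \ref{lm:aux_matrix_power_order} с $\tau = \hat{g}_1(x_k, B_k)L_k$ и $t = 2$, получаю $M_k^{-2}\succeq\left(M_{\hat{G}}^{2} + \hat{g}_1(x_k, B_k)L_k\right)^{-2}I_n$, откуда $\langle M_k^{-2}v_k,~v_k\rangle\geq\left(M_{\hat{G}}^{2} + \hat{g}_1(x_k, B_k)L_k\right)^{-2}\|v_k\|^{2}$. Подставляя $\|v_k\| = \tfrac{1}{2}\|\nabla_{x_k}\hat{g}_2(x_k, B_k)\|$, получаю левый конец отрезка. Отмечу, что здесь принципиально использовать именно показатель $t = 2$, поскольку напрямую норма $\|M_k^{-1}v_k\|$ не восстанавливается из соотношения Рэлея для $M_k^{-1}$.

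Для верхней оценки я докажу оба ограничения по отдельности и затем возьму их минимум. Ограничение $\sqrt{2\hat{g}_1(x_k, B_k)/L_k}$ проще всего получить через стохастический аналог леммы \ref{lm:aux_det_local_decrease_1} (следствие \ref{lm:cor_2_aux_det_local_decrease_1} с заменой $\hat{F}:=\hat{G}$): при $\eta_k = 1$ точка $x_{k+1} = \hat{T}_{L_k, \hat{g}_1(x_k, B_k)}(x_k, B_k)$ удовлетворяет $\hat{g}_1(x_k, B_k) - \hat{g}_1(x_{k+1}, B_k)\geq\tfrac{L_k}{2}\|x_{k+1} - x_k\|^{2}$, а из $\hat{g}_1(x_{k+1}, B_k)\geq 0$ следует $\|\hat{T}_{L_k, \hat{g}_1(x_k, B_k)}(x_k, B_k) - x_k\|\leq\sqrt{2\hat{g}_1(x_k, B_k)/L_k}$. Поскольку при $\eta_k\in(0, 1]$ шаг есть масштабирование $x_{k+1} - x_k = \eta_k\left(\hat{T}_{L_k, \hat{g}_1(x_k, B_k)}(x_k, B_k) - x_k\right)$, домножение на $\eta_k\leq 1$ сохраняет оценку. Второе ограничение $\eta_k M_{\hat{G}}/L_k$ получается из чисто операторных неравенств: по лемме \ref{lm:aux_matrix_power_order} $\|M_k^{-1}\|\leq\left(\hat{g}_1(x_k, B_k)L_k\right)^{-1}$, а по предположению \ref{as:2} $\|\hat{G}^{'}(x_k, B_k)\|\leq M_{\hat{G}}$ и $\|\hat{G}(x_k, B_k)\| = \hat{g}_1(x_k, B_k)$, откуда $\|x_{k+1} - x_k\|\leq\eta_k\|M_k^{-1}\|\,\|\hat{G}^{'}(x_k, B_k)\|\,\|\hat{G}(x_k, B_k)\|\leq\eta_k M_{\hat{G}}/L_k$.

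Случай правила \eqref{eq:double_stoch_direct_update_rule} разбирается по той же схеме с матрицей $\tilde{M}_k \overset{\operatorname{def}}{=} \hat{G}^{'}(x_k, \tilde{B}_k)^{*}\hat{G}^{'}(x_k, \tilde{B}_k) + \tilde{\tau}_k L_k I_n$ при том же векторе $v_k = \hat{G}^{'}(x_k, B_k)^{*}\hat{G}(x_k, B_k)$, так что $x_{k+1} - x_k = -\eta_k\tilde{M}_k^{-1}v_k$. Нижняя оценка выводится применением леммы \ref{lm:aux_matrix_power_order} к $\tilde{M}_k$ с $t = 2$ и $\tau = \tilde{\tau}_k L_k$, что даёт $\|x_{k+1} - x_k\|\geq\eta_k\|\nabla_{x_k}\hat{g}_2(x_k, B_k)\|/\left(2(M_{\hat{G}}^{2} + \tilde{\tau}_k L_k)\right)$; верхняя оценка — из $\|\tilde{M}_k^{-1}\|\leq\left(\tilde{\tau}_k L_k\right)^{-1}$ и тех же границ предположения \ref{as:2}, давая $\eta_k M_{\hat{G}}\hat{g}_1(x_k, B_k)/\left(\tilde{\tau}_k L_k\right)$. Здесь аналога ограничения $\sqrt{2\hat{g}_1/L_k}$ нет, поскольку градиент и гессиан оценены на независимых батчах $B_k$ и $\tilde{B}_k$, и неравенство убывания локальной модели из леммы \ref{lm:aux_det_local_decrease_1} уже неприменимо. Основную техническую аккуратность потребует именно этот пункт — корректное отслеживание того, какие оценки переживают разделение батчей и масштабирование шага на $\eta_k < 1$, — тогда как все матричные неравенства прямо поставляются леммой \ref{lm:aux_matrix_power_order}.

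\hfill$\qed$
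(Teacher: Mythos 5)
Ваше доказательство по существу верно и в большей части совпадает с авторским: нижняя граница и граница $\eta_{k}M_{\hat{G}}/L_{k}$ (а также обе границы для правила \eqref{eq:double_stoch_direct_update_rule}) получаются в статье точно так же --- через явную формулу шага и спектральные неравенства \eqref{eq:as2_matrix_order}, причём необходимость перехода к показателю $t = 2$ вы отметили правильно, как и то, что для дважды стохастического шага аналог границы $\sqrt{2\hat{g}_{1}/L_{k}}$ отсутствует из--за разделения батчей. Единственное содержательное расхождение --- вывод границы $\sqrt{2\hat{g}_{1}(x_{k},B_{k})/L_{k}}$: статья получает её из леммы \ref{lm:aux_value_tolerance}, то есть из точного разложения $\hat{\psi}_{x_{k},L_{k},\hat{g}_{1}(x_{k},B_{k})}(y,B_{k})$ вокруг точки минимума при $y = x_{k}$, где слагаемое с множителем $(1-\eta_{k})\eta_{k}$ неотрицательно при $\eta_{k}\in(0,1]$, после чего $\hat{g}_{1}(x_{k},B_{k})\geq\frac{L_{k}}{2}\|x_{k+1}-x_{k}\|^{2}$ следует из неотрицательности модели; ваш путь --- оценка полного шага $\hat{T}_{L_{k},\hat{g}_{1}(x_{k},B_{k})}(x_{k},B_{k})$ по лемме об убывании и масштабирование на $\eta_{k}$ --- даёт даже чуть более сильную оценку $\eta_{k}\sqrt{2\hat{g}_{1}/L_{k}}$.

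Однако именно в этом месте у вас есть техническая неточность. Вы ссылаетесь на следствие \ref{lm:cor_2_aux_det_local_decrease_1}, то есть на вариант со значением функции $\hat{g}_{1}$ в точке $\hat{T}_{L_{k},\hat{g}_{1}(x_{k},B_{k})}(x_{k},B_{k})$, но лемма \ref{lm:aux_det_local_decrease_1} и это следствие доказаны лишь при $L\geq L_{\hat{F}}$, поскольку опираются на мажорирующее свойство локальной модели. Доказываемая же лемма допускает произвольное $L_{k}>0$, и в схемах с бинарным поиском постоянной Липшица случай $L_{k}<L_{\hat{F}}$ реально встречается. Исправление в рамках вашего же пути очевидно: следует использовать первое (безномерное) следствие леммы \ref{lm:aux_det_local_decrease_1}, то есть неравенство $\frac{\tau}{2}+\frac{\hat{g}_{2}(x_{k},B_{k})}{2\tau}-\hat{\psi}_{x_{k},L_{k},\tau}(\hat{T}_{L_{k},\tau}(x_{k},B_{k}),B_{k})\geq\frac{L_{k}}{2}\left\|\hat{T}_{L_{k},\tau}(x_{k},B_{k})-x_{k}\right\|^{2}$ при $\tau=\hat{g}_{1}(x_{k},B_{k})$, верное для всех $L_{k}>0$, вместе с неотрицательностью $\hat{\psi}$ вместо неотрицательности $\hat{g}_{1}$; либо воспользоваться леммой \ref{lm:aux_value_tolerance}, как это сделано в статье. С этой поправкой доказательство полно.
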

\begin{proof}
Согласно правилу построения последовательности \eqref{eq:stoch_direct_update_rule}, положим $\tilde{B}_{k} = B_{k}$, $\tilde{\tau}_{k} = \tau_{k}$:
\begin{equation*}
    \begin{aligned}
        &\left\|x_{k + 1} - x_{k}\right\| = \left\|- \eta_{k}\left(\hat{G}^{'}(x_{k}, \tilde{B}_{k})^{*}\hat{G}^{'}(x_{k}, \tilde{B}_{k}) + \tilde{\tau}_{k} L_{k} I_{n}\right)^{-1}\hat{G}^{'}(x_{k}, B_{k})^{*}\hat{G}(x_{k}, B_{k})\right\| =\\
        &= \eta_{k}\left\|\underbrace{\left(\hat{G}^{'}(x_{k}, \tilde{B}_{k})^{*}\hat{G}^{'}(x_{k}, \tilde{B}_{k}) + \tilde{\tau}_{k} L_{k} I_{n}\right)^{-1}}_{\text{симметричная матрица}}\hat{G}^{'}(x_{k}, B_{k})^{*}\hat{G}(x_{k}, B_{k})\right\| =\\
    \end{aligned}
\end{equation*}
\begin{equation}\label{eq:aux_bounded_variation_lower_bound}
    \begin{aligned}
        &= \eta_{k}\left(\left\langle\left(\hat{G}^{'}(x_{k}, \tilde{B}_{k})^{*}\hat{G}^{'}(x_{k}, \tilde{B}_{k}) + \tilde{\tau}_{k} L_{k} I_{n}\right)^{-2}\hat{G}^{'}(x_{k}, B_{k})^{*}\hat{G}(x_{k}, B_{k}),~\hat{G}^{'}(x_{k}, B_{k})^{*}\hat{G}(x_{k}, B_{k})\right\rangle\right)^{\frac{1}{2}} =\\
        &= \left\{\nabla_{x_{k}}\hat{g}_{2}(x_{k}, B_{k}) = 2\hat{G}^{'}(x_{k}, B_{k})^{*}\hat{G}(x_{k}, B_{k})\right\} =\\
        &= \frac{\eta_{k}}{2}\sqrt{\left\langle\left(\hat{G}^{'}(x_{k}, \tilde{B}_{k})^{*}\hat{G}^{'}(x_{k}, \tilde{B}_{k}) + \tilde{\tau}_{k} L_{k} I_{n}\right)^{-2}\nabla_{x_{k}}\hat{g}_{2}(x_{k}, B_{k}),~\nabla_{x_{k}}\hat{g}_{2}(x_{k}, B_{k})\right\rangle}\geq\left\{\text{предположение \ref{as:2}, \eqref{eq:as2_matrix_order}}\right\}\geq\\
        &\geq\frac{\eta_{k}\left\|\nabla_{x_{k}}\hat{g}_{2}(x_{k}, B_{k})\right\|}{2\left(M_{\hat{G}}^{2} + \tilde{\tau}_{k}L_{k}\right)}\geq\left\{\tilde{\tau}_{k} = \hat{g}_{1}(x_{k}, B_{k})\right\}\geq\frac{\eta_{k}\left\|\nabla_{x_{k}}\hat{g}_{2}(x_{k}, B_{k})\right\|}{2\left(M_{\hat{G}}^{2} + \hat{g}_{1}(x_{k}, B_{k})L_{k}\right)}.
    \end{aligned}
\end{equation}
Полученные формулы выше применимы и для правила \eqref{eq:double_stoch_direct_update_rule}, что позволяет установить нижнюю границу значений $\|x_{k + 1} - x_{k}\|$ при построении $\{x_{k}\}_{k\in\mathbb{Z}_{+}}$ с помощью правила \eqref{eq:double_stoch_direct_update_rule}. Теперь выведем оценку сверху:
\begin{equation}\label{eq:aux_bounded_variation_upper_bound_1}
    \begin{aligned}
        &\hat{\psi}_{x_{k}, L_{k}, \hat{g}_{1}(x_{k}, B_{k})}(x_{k}, B_{k}) = \hat{g}_{1}(x_{k}, B_{k}) = \left\{\text{лемма \ref{lm:aux_value_tolerance}}\right\} = \hat{\psi}_{x_{k}, L_{k}, \hat{g}_{1}(x_{k}, B_{k})}(x_{k + 1}, B_{k}) + \frac{L_{k}}{2}\left\|x_{k} - x_{k + 1}\right\|^{2} +\\
        &+ \frac{1}{2\hat{g}_{1}(x_{k}, B_{k})}\left\|\hat{G}^{'}(x_{k}, B_{k})(x_{k} - x_{k + 1})\right\|^{2} +
        \frac{1 - \eta_{k}}{2\hat{g}_{1}(x_{k}, B_{k})}\left\langle x_{k} - x_{k + 1},~\nabla_{x_{k}}\hat{g}_{2}(x_{k}, B_{k})\right\rangle =\\
        &= \hat{\psi}_{x_{k}, L_{k}, \hat{g}_{1}(x_{k}, B_{k})}(x_{k + 1}, B_{k}) + \frac{L_{k}}{2}\left\|x_{k} - x_{k + 1}\right\|^{2} + \frac{1}{2\hat{g}_{1}(x_{k}, B_{k})}\left\|\hat{G}^{'}(x_{k}, B_{k})(x_{k} - x_{k + 1})\right\|^{2} +\\
        &+ \frac{(1 - \eta_{k})\eta_{k}}{4\hat{g}_{1}(x_{k}, B_{k})}\left\langle \left(\hat{G}^{'}(x_{k}, B_{k})^{*}\hat{G}^{'}(x_{k}, B_{k}) + \tau_{k} L_{k} I_{n}\right)^{-1}\nabla_{x_{k}}\hat{g}_{2}(x_{k}, B_{k}),~\nabla_{x_{k}}\hat{g}_{2}(x_{k}, B_{k})\right\rangle\geq 0.
    \end{aligned}
\end{equation}
Из выражения выше следует неравенство:
\begin{equation*}
    \begin{aligned}
        &\hat{g}_{1}(x_{k}, B_{k})\geq\hat{g}_{1}(x_{k}, B_{k}) - \hat{\psi}_{x_{k}, L_{k}, \hat{g}_{1}(x_{k}, B_{k})}(x_{k + 1}, B_{k})\geq\frac{L_{k}}{2}\|x_{k + 1} - x_{k}\|^{2}\Rightarrow\|x_{k + 1} - x_{k}\|\leq\sqrt{\frac{2\hat{g}_{1}(x_{k}, B_{k})}{L_{k}}}.
    \end{aligned}
\end{equation*}
Также существует другая оценка сверху, положим $\tilde{B}_{k} = B_{k}$, $\tilde{\tau}_{k} = \tau_{k}$:
\begin{equation}\label{eq:aux_bounded_variation_upper_bound_2}
    \begin{aligned}
        &\left\|x_{k + 1} - x_{k}\right\| = \left\|- \eta_{k}\left(\hat{G}^{'}(x_{k}, \tilde{B}_{k})^{*}\hat{G}^{'}(x_{k}, \tilde{B}_{k}) + \tilde{\tau}_{k} L_{k} I_{n}\right)^{-1}\hat{G}^{'}(x_{k}, B_{k})^{*}\hat{G}(x_{k}, B_{k})\right\| =\\
        &=\eta_{k}\left(\left\langle\underbrace{\left(\hat{G}^{'}(x_{k}, \tilde{B}_{k})^{*}\hat{G}^{'}(x_{k}, \tilde{B}_{k}) + \tilde{\tau}_{k} L_{k} I_{n}\right)^{-2}}_{\text{здесь $\tilde{B}_{k}$ можно независимо сэмплировать от $B_{k}$}}\hat{G}^{'}(x_{k}, B_{k})^{*}\hat{G}(x_{k}, B_{k}),~\hat{G}^{'}(x_{k}, B_{k})^{*}\hat{G}(x_{k}, B_{k})\right\rangle\right)^{\frac{1}{2}}\leq\\
        &\leq\frac{\eta_{k}}{\tilde{\tau}_{k}L_{k}}\left\|\hat{G}^{'}(x_{k}, B_{k})^{*}\hat{G}(x_{k}, B_{k})\right\|\leq\frac{\eta_{k}}{\tilde{\tau}_{k}L_{k}}\left\|\hat{G}^{'}(x_{k}, B_{k})^{*}\right\|\left\|\hat{G}(x_{k}, B_{k})\right\|\leq\frac{\eta_{k}M_{\hat{G}}\hat{g}_{1}(x_{k}, B_{k})}{\tilde{\tau}_{k}L_{k}} = \frac{\eta_{k}M_{\hat{G}}}{L_{k}}.
    \end{aligned}
\end{equation}
Выражения в \eqref{eq:aux_bounded_variation_upper_bound_2} применимы и для правила \eqref{eq:double_stoch_direct_update_rule}, благодаря чему устанавливается верхняя граница отрезка $\|x_{k + 1} - x_{k}\|$ при использовании правила \eqref{eq:double_stoch_direct_update_rule}.
\end{proof}
\begin{lm:corollary}\label{lm:aux_double_stoch_variation_upper_bound}
В случае $\tau_{k}\in\left[\tilde{\tau},~\tilde{\mathcal{T}}\right]$, $\tilde{\tau}\in\left(0,~\tilde{\mathcal{T}}\right]$, $L_{k}\in\left[L,~\tilde{\gamma}L_{\hat{F}}\right]$, $L\in\left(0,~\tilde{\gamma}L_{\hat{F}}\right]$, $\tilde{\gamma}\geq1$ и при выполнении предположения \ref{as:3} отрезок значений $\left\|x_{k + 1} - x_{k}\right\|$ принимает следующие значения:
\begin{equation*}
    \begin{aligned}
        &\|x_{k + 1} - x_{k}\|\in\left[\frac{\eta_{k}\left\|\nabla_{x_{k}}\hat{g}_{2}(x_{k}, B_{k})\right\|}{2\left(M_{\hat{G}}^{2} + \tilde{\gamma}\tilde{\mathcal{T}}L_{\hat{F}}\right)},~\min\left\{\sqrt{\frac{1}{L}\left(\tilde{\mathcal{T}} + \frac{P_{\hat{g}_{1}}^{2}}{\tilde{\tau}}\right)},~\frac{\eta_{k}M_{\hat{G}}P_{\hat{g}_{1}}}{\tau_{k}L_{k}}\right\}\right].
    \end{aligned}
\end{equation*}
Нижняя граница получается из \eqref{eq:aux_bounded_variation_lower_bound} в силу монотонного убывания по $\tau_{k}L_{k}$. Верхняя граница $\frac{\eta_{k}M_{\hat{G}}P_{\hat{g}_{1}}}{\tau_{k} L_{k}}$ выводится как верхняя оценка выражения \eqref{eq:aux_bounded_variation_upper_bound_2} по предположениям \ref{as:2} и \ref{as:3}. Верхняя граница
$$\sqrt{\frac{1}{L}\left(\tilde{\mathcal{T}} + \frac{P_{\hat{g}_{1}}^{2}}{\tilde{\tau}}\right)}$$
выводится из представления \eqref{eq:aux_bounded_variation_upper_bound_1} для локальной модели $\hat{\psi}_{x_{k}, L_{k}, \tau_{k}}(\cdot, B_{k})$ в рамках предположения \ref{as:3}:
\begin{equation*}
    \begin{aligned}
        \frac{\tilde{\mathcal{T}}}{2} + \frac{P_{\hat{g}_{1}}^{2}}{2\tilde{\tau}}\geq\hat{\psi}_{x_{k}, L_{k}, \tau_{k}}(x_{k}, B_{k}) &= \frac{\tau_{k}}{2} + \frac{\hat{g}_{2}(x_{k}, B_{k})}{2\tau_{k}}\geq\frac{L_{k}}{2}\left\|x_{k + 1} - x_{k}\right\|^{2}\geq\frac{L}{2}\left\|x_{k + 1} - x_{k}\right\|^{2}.
    \end{aligned}
\end{equation*}
\end{lm:corollary}
\begin{lm:corollary}\label{lm:aux_variation_upper_bound}
При дополнительном выполнении предположения \ref{as:3} и $L_{k}\in[L,~\gamma L_{\hat{F}}]$,~$L\in(0,~\gamma L_{\hat{F}}]$,~$\gamma\geq1$ ограничение на возможные значения $\left\|x_{k + 1} - x_{k}\right\|$ преобразуется следующим образом:
\begin{equation*}
    \left\|x_{k + 1} - x_{k}\right\|\in\left[\frac{\eta_{k}\left\|\nabla_{x_{k}}\hat{g}_{2}(x_{k}, B_{k})\right\|}{2\left(M_{\hat{G}}^{2} + \gamma P_{\hat{g}_{1}}L_{\hat{F}}\right)},~\min\left\{\sqrt{\frac{2P_{\hat{g}_{1}}}{L}},~\frac{\eta_{k}M_{\hat{G}}}{L}\right\}\right],~k\in\mathbb{Z}_{+}.
\end{equation*}
\end{lm:corollary}
\begin{lm:corollary}
В детерминированном случае при $\hat{G}(x_{k}, B_{k}) = \hat{F}(x_{k})$, $B_{k} = \mathcal{B}$ и $\tau_{k} = \hat{f}_{1}(x_{k})$ оценка на вариацию построенной с помощью правила из следствия \ref{lm:aux_det_update_rule} последовательности $\{x_{k}\}_{k\in\mathbb{Z}_{+}}$ следующая:
\begin{equation*}
    \left\|x_{k + 1} - x_{k}\right\|\in\left[\frac{\eta_{k}\left\|\nabla\hat{f}_{2}(x_{k})\right\|}{2\left(M_{\hat{F}}^{2} + \hat{f}_{1}(x_{k})L_{k}\right)},~\min\left\{\sqrt{\frac{2\hat{f}_{1}(x_{k})}{L_{k}}},~\frac{\eta_{k}M_{\hat{G}}}{L_{k}}\right\}\right],~k\in\mathbb{Z}_{+}.
\end{equation*}
\end{lm:corollary}
\begin{lm:corollary}\label{lm:aux_bounded_variation_term_cond}
При постоянном $\eta_{k} = \eta = \operatorname{const}$, $0 < L_{k}\leq \gamma L_{\hat{F}}$, $\gamma\geq1$, $k\in\mathbb{Z}_{+}$ и в рамках предположений \ref{as:1}, \ref{as:2} и \ref{as:3} нижняя граница $\left\|x_{k + 1} - x_{k}\right\|$ пропорциональна норме градиента и может быть использована в качестве критерия останова для достижения нормой градиента уровня $\epsilon > 0$:
\begin{equation*}
    \begin{aligned}
        \mathbb{E}\left[\left\|\nabla\hat{f}_{2}(x_{k})\right\|\right]&\leq\sqrt{\mathbb{E}\left[\left\|\nabla\hat{f}_{2}(x_{k})\right\|^{2}\right]}\leq\sqrt{\mathbb{E}\left[\mathbb{E}\left[\left\|\nabla_{x_{k}}\hat{g}_{2}(x_{k}, B_{k})\right\|^{2}\right]\right]}= \sqrt{\mathbb{E}\left[\left\|\nabla_{x_{k}}\hat{g}_{2}(x_{k}, B_{k})\right\|^{2}\right]} \leq\\
        &\leq\frac{2\left(M_{\hat{G}}^{2} + \gamma P_{\hat{g}_{1}}L_{\hat{F}}\right)}{\eta}\sqrt{\mathbb{E}\left[\left\|x_{k + 1} - x_{k}\right\|^{2}\right]}\leq\epsilon,
    \end{aligned}
\end{equation*}
то есть
$$\mathbb{E}\left[\left\|x_{k + 1} - x_{k}\right\|\right]\leq\sqrt{\mathbb{E}\left[\left\|x_{k + 1} - x_{k}\right\|^{2}\right]}\leq\frac{\epsilon\eta}{2\left(M_{\hat{G}}^{2} + \gamma P_{\hat{g}_{1}}L_{\hat{F}}\right)},$$
где усреднение ведётся по всей стохастике метода (выбор батчей на каждой итерации и начального приближения).
\end{lm:corollary}

В следующей лемме устанавливается липшицевость градиентов ограниченных функций $\hat{f}_{2}$ и $\hat{g}_{2}$.

\begin{lemma}\label{lm:aux_g2_lipschitz_gradient}
Пусть выполнены предположения \ref{as:2}, \ref{as:3}, \ref{as:jacob_smoothness}. Тогда функция $\hat{g}_{2}$ обладает липшицевым градиентом с верхней оценкой постоянной Липшица $l_{\hat{g}_{2}} = 2\left(M_{\hat{G}}^{2} + L_{\hat{F}}P_{\hat{g}_{1}}\right)$.
\end{lemma}
\begin{proof}
Вычислим $l_{\hat{g}_{2}}$ --- верхнюю оценку на лучшую (наименьшую) постоянную Липшица для произвольных $(x, y)\in E_{1}^{2}$ и $B\subseteq\mathcal{B}$:
\begin{equation}\label{eq:upper_lipschitz_property_bound}
    \begin{aligned}
        &\left\|\nabla_{y}\hat{g}_{2}(y, B) - \nabla_{x}\hat{g}_{2}(x, B)\right\| = \left\|2\hat{G}^{'}(y, B)^{*}\hat{G}(y, B) - 2\hat{G}^{'}(x, B)^{*}\hat{G}(x, B)\right\|=\\
        &= 2\left\|\left(\hat{G}^{'}(y, B)^{*}\hat{G}(y, B) - \hat{G}^{'}(x, B)^{*}\hat{G}(y, B)\right) + \left(\hat{G}^{'}(x, B)^{*}\hat{G}(y, B) - \hat{G}^{'}(x, B)^{*}\hat{G}(x, B)\right)\right\|\leq\\
        &\leq2\left(\left\|\hat{G}^{'}(y, B)^{*}\hat{G}(y, B) - \hat{G}^{'}(x, B)^{*}\hat{G}(y, B)\right\| + \left\|\hat{G}^{'}(x, B)^{*}\hat{G}(y, B) - \hat{G}^{'}(x, B)^{*}\hat{G}(x, B)\right\|\right)\leq\\
        &\leq2\left(\left\|\hat{G}^{'}(y, B)^{*} - \hat{G}^{'}(x, B)^{*}\right\|\left\|\hat{G}(y, B)\right\| + \left\|\hat{G}^{'}(x, B)^{*}\right\|\left\|\hat{G}(y, B) - \hat{G}(x, B)\right\|\right)\leq\\
        &\leq2\left(L_{\hat{F}}\|y - x\|P_{\hat{g}_{1}} + M_{\hat{G}}^{2}\|y - x\|\right)\leq\left(2\left(L_{\hat{F}}P_{\hat{g}_{1}} + M_{\hat{G}}^{2}\right)\right)\|y - x\|\Rightarrow l_{\hat{g}_{2}} = 2\left(L_{\hat{F}}P_{\hat{g}_{1}} + M_{\hat{G}}^{2}\right).
    \end{aligned}
\end{equation}
В доказательстве использована липшицевость отображения $\hat{G}$:
\begin{equation*}
    \begin{aligned}
        \left\|\hat{G}(y, B) - \hat{G}(x, B)\right\| &= \left\|\int\limits_{0}^{1}\hat{G}^{'}(x + t(y - x), B)(y - x)\operatorname{d}t\right\|\leq\int\limits_{0}^{1}\left\|\hat{G}^{'}(x + t(y - x), B)\right\|\left\|y - x\right\|\operatorname{d}t\leq\\
        &\leq M_{\hat{G}}\left\|y - x\right\|.
    \end{aligned}
\end{equation*}
\end{proof}
\begin{lm:corollary}\label{lm:aux_f2_lipschitz_gradient}
В случае $B = \mathcal{B}$ функция $\hat{f}_{2}$ обладает липшицевым градиентом и значение оценки постоянной Липшица равно $l_{\hat{f}_{2}} \overset{\operatorname{def}}{=} 2\left(M_{\hat{F}}^{2} + L_{\hat{F}}P_{\hat{f}_{1}}\right)$.
\end{lm:corollary}

В лемме ниже выводится локальная модель квадрата оптимизируемого функционала, используемая в схеме с сэмплированием двух батчей на каждом шаге метода.

\begin{lemma}\label{lm:aux_g2_local_model}
Пусть выполнены предположения \ref{as:2}, \ref{as:3}, \ref{as:jacob_smoothness}. Тогда существует следующая стохастическая локальная модель для функции $\hat{g}_{2}$:
\begin{equation*}
    \begin{aligned}
        &\hat{g}_{2}(y, B)\leq\hat{\varphi}_{x, l}(y, B) = \hat{g}_{2}(x, B) + \left\langle\nabla_{x}\hat{g}_{2}(x, B),~y - x\right\rangle + \frac{l}{2}\left\|y - x\right\|^{2},~\forall l\geq l_{\hat{g}_{2}},~\forall (x, y)\in E_{1}^{2},~\forall B\subseteq\mathcal{B}.
    \end{aligned}
\end{equation*}
\end{lemma}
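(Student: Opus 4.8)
Лемма \ref{lm:aux_g2_local_model} утверждает, что для $\hat{g}_{2}$ верна квадратичная верхняя оценка $\hat{g}_{2}(y, B)\leq\hat{\varphi}_{x, l}(y, B)$ при любом $l\geq l_{\hat{g}_{2}}$.

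The plan is to apply the standard argument bounding a function with Lipschitz--continuous gradient by its quadratic model (лемма о спуске). The entire substance of the estimate is already contained in лемме \ref{lm:aux_g2_lipschitz_gradient}, which under предположениях \ref{as:2}, \ref{as:3}, \ref{as:jacob_smoothness} establishes that $\nabla_{x}\hat{g}_{2}(x, B)$ is Lipschitz--continuous with constant $l_{\hat{g}_{2}} = 2\left(M_{\hat{G}}^{2} + L_{\hat{F}}P_{\hat{g}_{1}}\right)$ for every $B\subseteq\mathcal{B}$. It therefore suffices to deduce the quadratic majorant from this Lipschitz property; the proof will structurally mirror лемму \ref{lm:aux_det_upper_model}, but since $\hat{g}_{2}$ is smooth (a sum of squares) I would work directly with its gradient rather than with the square--root norm.

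First I would introduce the scalar function $t\mapsto\hat{g}_{2}(x + t(y - x), B)$ on $[0, 1]$ and apply the Newton--Leibniz formula to represent the Bregman--type remainder, exactly as in \eqref{eq:lm_aux_det_model_bound}:
\begin{equation*}
    \hat{g}_{2}(y, B) - \hat{g}_{2}(x, B) - \left\langle\nabla_{x}\hat{g}_{2}(x, B),~y - x\right\rangle = \int\limits_{0}^{1}\left\langle\nabla_{x + t(y - x)}\hat{g}_{2}(x + t(y - x), B) - \nabla_{x}\hat{g}_{2}(x, B),~y - x\right\rangle\operatorname{d}t.
\end{equation*}
Then I would bound the integrand from above by the Cauchy--Bunyakovsky--Schwarz inequality and invoke лемму \ref{lm:aux_g2_lipschitz_gradient}, which gives $\left\|\nabla_{x + t(y - x)}\hat{g}_{2}(x + t(y - x), B) - \nabla_{x}\hat{g}_{2}(x, B)\right\|\leq l_{\hat{g}_{2}}t\|y - x\|$, yielding
\begin{equation*}
    \hat{g}_{2}(y, B) - \hat{g}_{2}(x, B) - \left\langle\nabla_{x}\hat{g}_{2}(x, B),~y - x\right\rangle\leq\int\limits_{0}^{1}l_{\hat{g}_{2}}t\|y - x\|^{2}\operatorname{d}t = \frac{l_{\hat{g}_{2}}}{2}\|y - x\|^{2}.
\end{equation*}

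Finally, rearranging gives $\hat{g}_{2}(y, B)\leq\hat{g}_{2}(x, B) + \left\langle\nabla_{x}\hat{g}_{2}(x, B),~y - x\right\rangle + \frac{l_{\hat{g}_{2}}}{2}\|y - x\|^{2} = \hat{\varphi}_{x, l_{\hat{g}_{2}}}(y, B)$, and since the quadratic term is monotonically nondecreasing in the coefficient, the bound persists for every $l\geq l_{\hat{g}_{2}}$, which is the claim. I do not expect a genuine obstacle here: the result is a routine descent lemma once лемма \ref{lm:aux_g2_lipschitz_gradient} is in hand. The only points requiring a little care are verifying the coincidence of the Euclidean norm with its dual (so that the Cauchy--Schwarz step and the operator--norm Lipschitz estimate combine cleanly, as already noted in the discussion surrounding \eqref{eq:lipschitz_upper_bound}) and confirming that the argument goes through verbatim for the full sample $B = \mathcal{B}$, giving the analogous majorant $\hat{\varphi}_{x, l_{\hat{f}_{2}}}$ for $\hat{f}_{2}$ with $l_{\hat{f}_{2}} = 2\left(M_{\hat{F}}^{2} + L_{\hat{F}}P_{\hat{f}_{1}}\right)$ from следствия \ref{lm:aux_f2_lipschitz_gradient}.
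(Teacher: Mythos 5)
Your proof is correct and follows essentially the same route as the paper's: both rest on the Lipschitz constant $l_{\hat{g}_{2}} = 2\left(M_{\hat{G}}^{2} + L_{\hat{F}}P_{\hat{g}_{1}}\right)$ from леммы \ref{lm:aux_g2_lipschitz_gradient}, represent the remainder $\hat{g}_{2}(y, B) - \hat{g}_{2}(x, B) - \left\langle\nabla_{x}\hat{g}_{2}(x, B),~y - x\right\rangle$ via the Newton--Leibniz formula, bound the integrand by Cauchy--Schwarz plus Lipschitz continuity, integrate to obtain $\frac{l_{\hat{g}_{2}}}{2}\|y - x\|^{2}$, and conclude by monotonicity in $l$. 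The only cosmetic difference is that the paper first majorizes the remainder by its absolute value before inserting the integral representation, whereas you bound the signed remainder directly --- mathematically the same argument.
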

\begin{proof}
Рассмотрим верхнюю оценку на $\hat{g}_{2}(x, B)$ для произвольных $(x, y)\in E_{1}^{2}$ и $B\subseteq\mathcal{B}$:
\begin{equation*}
    \begin{aligned}
        \hat{g}_{2}(y, B) &= \hat{g}_{2}(y, B) - \hat{g}_{2}(x, B) - \left\langle\nabla_{x}\hat{g}_{2}(x, B),~y - x\right\rangle + \hat{g}_{2}(x, B) + \left\langle\nabla_{x}\hat{g}_{2}(x, B),~y - x\right\rangle\leq\\
        &\leq\hat{g}_{2}(x, B) + \left\langle\nabla_{x}\hat{g}_{2}(x, B),~y - x\right\rangle + \left|\hat{g}_{2}(y, B) - \hat{g}_{2}(x, B) - \left\langle\nabla_{x}\hat{g}_{2}(x, B),~y - x\right\rangle\right|=\\
        &=\hat{g}_{2}(x, B) + \left\langle\nabla_{x}\hat{g}_{2}(x, B),~y - x\right\rangle +\\
        &+ \left|\int\limits_{0}^{1}\left\langle\nabla_{x + t(y - x)}\hat{g}_{2}(x + t(y - x), B),~y - x\right\rangle\operatorname{d}t - \left\langle\nabla_{x}\hat{g}_{2}(x, B),~y - x\right\rangle\right| =\\
        &= \hat{g}_{2}(x, B) + \left\langle\nabla_{x}\hat{g}_{2}(x, B),~y - x\right\rangle + \left|\int\limits_{0}^{1}\left\langle\nabla_{x + t(y - x)}\hat{g}_{2}(x + t(y - x), B) - \nabla_{x}\hat{g}_{2}(x, B),~y - x\right\rangle\operatorname{d}t\right|\leq\\
        &\leq\hat{g}_{2}(x, B) + \left\langle\nabla_{x}\hat{g}_{2}(x, B),~y - x\right\rangle + \int\limits_{0}^{1}\left|\left\langle\nabla_{x + t(y - x)}\hat{g}_{2}(x + t(y - x), B) - \nabla_{x}\hat{g}_{2}(x, B),~y - x\right\rangle\right|\operatorname{d}t\leq\\
        &\leq\hat{g}_{2}(x, B) + \left\langle\nabla_{x}\hat{g}_{2}(x, B),~y - x\right\rangle + \int\limits_{0}^{1}\left\|\nabla_{x + t(y - x)}\hat{g}_{2}(x + t(y - x), B) - \nabla_{x}\hat{g}_{2}(x, B)\right\|\left\|y - x\right\|\operatorname{d}t\leq\\
        &\leq\hat{g}_{2}(x, B) + \left\langle\nabla_{x}\hat{g}_{2}(x, B),~y - x\right\rangle + \int\limits_{0}^{1}tl_{\hat{g}_{2}}\left\|y - x\right\|^{2}\operatorname{d}t =\\
        &= \underbrace{\hat{g}_{2}(x, B) + \left\langle\nabla_{x}\hat{g}_{2}(x, B),~y - x\right\rangle + \frac{l_{\hat{g}_{2}}}{2}\left\|y - x\right\|^{2}}_{=\hat{\varphi}_{x, l_{\hat{g}_{2}}}(y, B)}\leq\\
        &\leq\hat{g}_{2}(x, B) + \left\langle\nabla_{x}\hat{g}_{2}(x, B),~y - x\right\rangle + \frac{l}{2}\left\|y - x\right\|^{2},~l\geq l_{\hat{g}_{2}}.
    \end{aligned}
\end{equation*}
\end{proof}
\begin{lm:corollary}\label{lm:aux_f2_local_model}
В случае $B = \mathcal{B}$ локальная модель $\hat{g}_{2}$ становится локальной моделью функции $\hat{f}_{2}$:
\begin{equation*}
    \begin{aligned}
        &\hat{f}_{2}(y)\leq\varphi_{x, l}(y) \overset{\operatorname{def}}{=} \hat{f}_{2}(x) + \left\langle\nabla\hat{f}_{2}(x),~y - x\right\rangle + \frac{l}{2}\left\|y - x\right\|^{2},~\forall l\geq l_{\hat{f}_{2}},~\forall (x, y)\in E_{1}^{2}.
    \end{aligned}
\end{equation*}
\end{lm:corollary}

Следующая лемма описывает количественно убывание стохастической локальной модели при использовании правила \eqref{eq:double_stoch_direct_update_rule} вычисления приближения решения $x_{k + 1}$ задачи \eqref{eq:main_opt_problem}.

\begin{lemma}\label{lm:aux_double_stoch_update}
    Пусть выполнены предположения \ref{as:2}, \ref{as:3} и \ref{as:jacob_smoothness}. Рассмотрим правило вычисления $x_{k + 1}$ \eqref{eq:double_stoch_direct_update_rule} в рамках схемы \ref{alg:gen_double_stoch_gnm}. Тогда на каждом шаге метода Гаусса--Ньютона существует фактор шага $\eta_{k}\geq0$, для которого верно следующее соотношение:
    \begin{equation*}
        \begin{aligned}
            \hat{g}_{2}(x_{k + 1}, B_{k})&\leq\hat{g}_{2}(x_{k}, B_{k}) - \frac{\left(\left\langle\nabla_{x}\hat{g}_{2}(x_{k}, B_{k}),~\left(\hat{G}^{'}(x_{k}, \tilde{B}_{k})^{*}\hat{G}^{'}(x_{k}, \tilde{B}_{k}) + \tilde{\tau}_{k}L_{k}I_{n}\right)^{-1}\nabla_{x}\hat{g}_{2}(x_{k}, B_{k})\right\rangle\right)^{2}}{2l_{k}\left\langle\nabla_{x}\hat{g}_{2}(x_{k}, B_{k}),~\left(\hat{G}^{'}(x_{k}, \tilde{B}_{k})^{*}\hat{G}^{'}(x_{k}, \tilde{B}_{k}) + \tilde{\tau}_{k}L_{k}I_{n}\right)^{-2}\nabla_{x}\hat{g}_{2}(x_{k}, B_{k})\right\rangle},\\
            &k\in\mathbb{Z}_{+}.
        \end{aligned}
    \end{equation*}
\end{lemma}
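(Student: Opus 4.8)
Утверждение леммы \ref{lm:aux_double_stoch_update} заключается в выводе оценки убывания значения $\hat{g}_{2}(x_{k+1}, B_{k})$ относительно $\hat{g}_{2}(x_{k}, B_{k})$ при использовании дважды стохастического правила \eqref{eq:double_stoch_direct_update_rule}. Отправной точкой служит локальная модель $\hat{\varphi}_{x_{k}, l_{k}}(\cdot, B_{k})$ из леммы \ref{lm:aux_g2_local_model}, которая при $l_{k}\geq l_{\hat{g}_{2}}$ мажорирует $\hat{g}_{2}$. План состоит в том, чтобы сначала подставить $y = x_{k+1}$ в это неравенство, а затем оптимально подобрать фактор шага $\eta_{k}$, минимизирующий получающуюся верхнюю оценку по $\eta_{k}$.

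Первым шагом я выпишу $\hat{g}_{2}(x_{k+1}, B_{k})\leq\hat{\varphi}_{x_{k}, l_{k}}(x_{k+1}, B_{k}) = \hat{g}_{2}(x_{k}, B_{k}) + \langle\nabla_{x}\hat{g}_{2}(x_{k}, B_{k}),~x_{k+1} - x_{k}\rangle + \frac{l_{k}}{2}\|x_{k+1} - x_{k}\|^{2}$. Далее подставлю явное выражение для шага согласно \eqref{eq:double_stoch_direct_update_rule}, введя для краткости обозначение $d_{k} = \left(\hat{G}^{'}(x_{k}, \tilde{B}_{k})^{*}\hat{G}^{'}(x_{k}, \tilde{B}_{k}) + \tilde{\tau}_{k}L_{k}I_{n}\right)^{-1}\hat{G}^{'}(x_{k}, B_{k})^{*}\hat{G}(x_{k}, B_{k})$, так что $x_{k+1} - x_{k} = -\eta_{k} d_{k}$ и $\hat{G}^{'}(x_{k}, B_{k})^{*}\hat{G}(x_{k}, B_{k}) = \tfrac{1}{2}\nabla_{x}\hat{g}_{2}(x_{k}, B_{k})$. После подстановки правая часть принимает вид квадратичного по $\eta_{k}$ выражения $\hat{g}_{2}(x_{k}, B_{k}) - \eta_{k}\langle\nabla_{x}\hat{g}_{2}(x_{k}, B_{k}),~d_{k}\rangle + \frac{l_{k}\eta_{k}^{2}}{2}\|d_{k}\|^{2}$.

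Второй шаг — минимизация по $\eta_{k}\in\mathbb{R}$ этого одномерного квадратичного трёхчлена с положительным старшим коэффициентом. Вершина достигается при $\eta_{k}^{*} = \frac{\langle\nabla_{x}\hat{g}_{2}(x_{k}, B_{k}),~d_{k}\rangle}{l_{k}\|d_{k}\|^{2}}$, что в точности совпадает с выражением для $\eta_{k}$, заданным в условиях теорем \ref{th:double_stoch_sublin_conv} и \ref{th:double_stoch_lin_conv} (поскольку $\langle\nabla_{x}\hat{g}_{2}, d_{k}\rangle = 2\langle\hat{G}^{'}(x_{k}, B_{k})^{*}\hat{G}(x_{k}, B_{k}),~d_{k}\rangle$ и $\|d_{k}\|^{2}$ раскрывается через вторую степень обратной матрицы). Подставляя $\eta_{k}^{*}$ обратно, получаю убывание на величину $\frac{\langle\nabla_{x}\hat{g}_{2}(x_{k}, B_{k}),~d_{k}\rangle^{2}}{2l_{k}\|d_{k}\|^{2}}$; остаётся только записать $\langle\nabla_{x}\hat{g}_{2}, d_{k}\rangle$ и $\|d_{k}\|^{2}$ через исходные матричные произведения (с помощью самосопряжённости обратной матрицы и тождества $\nabla_{x}\hat{g}_{2}(x_{k}, B_{k}) = 2\hat{G}^{'}(x_{k}, B_{k})^{*}\hat{G}(x_{k}, B_{k})$), чтобы прийти к заявленному виду с первой и второй степенями обращаемой матрицы.

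Основное препятствие здесь скорее техническое, чем концептуальное: необходимо аккуратно проследить, что $d_{k}$ строится с гессианной частью на батче $\tilde{B}_{k}$, а градиентная часть оценивается на батче $B_{k}$, и убедиться, что корректность локальной модели $\hat{\varphi}_{x_{k}, l_{k}}$ требует $l_{k}\geq l_{\hat{g}_{2}}$ (это обеспечивается процедурой бинарного поиска $l_{k}$ в схеме \ref{alg:gen_double_stoch_gnm}). Также следует проверить, что знаменатель $\langle\nabla_{x}\hat{g}_{2}(x_{k}, B_{k}),~d_{k}\rangle$ неотрицателен (что гарантируется положительной определённостью обращаемой матрицы по лемме \ref{lm:aux_matrix_power_order}), так что оптимальный шаг $\eta_{k}^{*}\geq0$ корректно определён и убывание неотрицательно. Никаких существенно новых идей за пределами стандартного анализа градиентного спуска с предобусловливанием здесь не ожидается.
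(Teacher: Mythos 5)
Ваше доказательство верно и совпадает по сути с авторским: та же отправная точка (локальная модель $\hat{\varphi}_{x_{k}, l_{k}}$ из леммы \ref{lm:aux_g2_local_model}), та же подстановка правила \eqref{eq:double_stoch_direct_update_rule} и та же минимизация квадратного трёхчлена по $\eta_{k}$ с подстановкой вершины параболы, дающей в точности $\eta_{k}$ из теорем \ref{th:double_stoch_sublin_conv} и \ref{th:double_stoch_lin_conv}. Ваши дополнительные проверки (неотрицательность $\eta_{k}^{*}$ через положительную определённость обращаемой матрицы, раскрытие $\|d_{k}\|^{2}$ через самосопряжённость) корректны и лишь делают явными шаги, которые в статье опущены.
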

\begin{proof}
Выпишем стохастическую локальную модель функции $\hat{g}_{2}$ в точке $x_{k + 1}$ относительно $x_{k}$ (лемма \ref{lm:aux_g2_local_model}) и прооптимизируем её по $\eta_{k}\geq0$:
\begin{equation*}
    \begin{aligned}
        \hat{g}_{2}(x_{k + 1}, B_{k})&\leq\hat{\varphi}_{x_{k}, l_{k}}(x_{k + 1}, B_{k}) = \hat{g}_{2}(x_{k}, B_{k}) + \left\langle\nabla_{x_{k}}\hat{g}_{2}(x_{k}, B_{k}),~x_{k + 1} - x_{k}\right\rangle + \frac{l_{k}}{2}\left\|x_{k + 1} - x_{k}\right\|^{2} =\\
        &= \left\{\text{правило \eqref{eq:double_stoch_direct_update_rule}}\right\}= \hat{g}_{2}(x_{k}, B_{k}) -\\
        &- 2\eta_{k}\left\langle\hat{G}^{'}(x_{k}, B_{k})^{*}\hat{G}(x_{k}, B_{k}),~\left(\hat{G}^{'}(x_{k}, \tilde{B}_{k})^{*}\hat{G}^{'}(x_{k}, \tilde{B}_{k}) + \tilde{\tau}_{k}L_{k}I_{n}\right)^{-1}\hat{G}^{'}(x_{k}, B_{k})^{*}\hat{G}(x_{k}, B_{k})\right\rangle +\\
        &+ \frac{\eta_{k}^{2}l_{k}}{2}\left\|\left(\hat{G}^{'}(x_{k}, \tilde{B}_{k})^{*}\hat{G}^{'}(x_{k}, \tilde{B}_{k}) + \tilde{\tau}_{k}L_{k}I_{n}\right)^{-1}\hat{G}^{'}(x_{k}, B_{k})^{*}\hat{G}(x_{k}, B_{k})\right\|^{2}\rightarrow\min\limits_{\eta_{k}\geq0}.
    \end{aligned}
\end{equation*}
Правая часть неравенства выше является параболой по $\eta_{k}$ с ветвями, направленными вверх, у которой минимум достигается в точке вершины:
\begin{equation*}
    \begin{aligned}
        &\eta_{k} = \frac{2\left\langle\hat{G}^{'}(x_{k}, B_{k})^{*}\hat{G}(x_{k}, B_{k}),~\left(\hat{G}^{'}(x_{k}, \tilde{B}_{k})^{*}\hat{G}^{'}(x_{k}, \tilde{B}_{k}) + \tilde{\tau}_{k}L_{k}I_{n}\right)^{-1}\hat{G}^{'}(x_{k}, B_{k})^{*}\hat{G}(x_{k}, B_{k})\right\rangle}{l_{k}\left\langle\hat{G}^{'}(x_{k}, B_{k})^{*}\hat{G}(x_{k}, B_{k}),~\left(\hat{G}^{'}(x_{k}, \tilde{B}_{k})^{*}\hat{G}^{'}(x_{k}, \tilde{B}_{k}) + \tilde{\tau}_{k}L_{k}I_{n}\right)^{-2}\hat{G}^{'}(x_{k}, B_{k})^{*}\hat{G}(x_{k}, B_{k})\right\rangle}\geq0,~k\in\mathbb{Z}_{+}.
    \end{aligned}
\end{equation*}
Подставляя это значение в стохастическую локальную модель, получаем искомую оценку:
\begin{equation*}
    \begin{aligned}
        \hat{g}_{2}(x_{k + 1}, B_{k})&\leq\hat{g}_{2}(x_{k}, B_{k}) -\\
        &- \underbrace{\frac{2\left(\left\langle\hat{G}^{'}(x_{k}, B_{k})^{*}\hat{G}(x_{k}, B_{k}),~\left(\hat{G}^{'}(x_{k}, \tilde{B}_{k})^{*}\hat{G}^{'}(x_{k}, \tilde{B}_{k}) + \tilde{\tau}_{k}L_{k}I_{n}\right)^{-1}\hat{G}^{'}(x_{k}, B_{k})^{*}\hat{G}(x_{k}, B_{k})\right\rangle\right)^{2}}{l_{k}\left\langle\hat{G}^{'}(x_{k}, B_{k})^{*}\hat{G}(x_{k}, B_{k}),~\left(\hat{G}^{'}(x_{k}, \tilde{B}_{k})^{*}\hat{G}^{'}(x_{k}, \tilde{B}_{k}) + \tilde{\tau}_{k}L_{k}I_{n}\right)^{-2}\hat{G}^{'}(x_{k}, B_{k})^{*}\hat{G}(x_{k}, B_{k})\right\rangle}}_{\geq 0}\Rightarrow
    \end{aligned}
\end{equation*}
\begin{equation*}
    \begin{aligned}
        \Rightarrow\hat{g}_{2}(x_{k + 1}, B_{k})&\leq\hat{g}_{2}(x_{k}, B_{k}) -\\
        &- \frac{\left(\left\langle\nabla_{x}\hat{g}_{2}(x_{k}, B_{k}),~\left(\hat{G}^{'}(x_{k}, \tilde{B}_{k})^{*}\hat{G}^{'}(x_{k}, \tilde{B}_{k}) + \tilde{\tau}_{k}L_{k}I_{n}\right)^{-1}\nabla_{x}\hat{g}_{2}(x_{k}, B_{k})\right\rangle\right)^{2}}{2l_{k}\left\langle\nabla_{x}\hat{g}_{2}(x_{k}, B_{k}),~\left(\hat{G}^{'}(x_{k}, \tilde{B}_{k})^{*}\hat{G}^{'}(x_{k}, \tilde{B}_{k}) + \tilde{\tau}_{k}L_{k}I_{n}\right)^{-2}\nabla_{x}\hat{g}_{2}(x_{k}, B_{k})\right\rangle},~k\in\mathbb{Z}_{+}.
    \end{aligned}
\end{equation*}
\end{proof}

В следующей лемме используются условие слабого роста и условие Поляка--Лоясиевича для вывода границ значений квадрата нормы градиента.

\begin{lemma}\label{lm:aux_grad_g2_PL_bounds}
    Пусть выполнены предположения \ref{as:2} и \ref{as:5}. Тогда квадрат нормы градиента функции $\hat{g}_{2}$ ограничен с двух сторон значением функции $\hat{g}_{2}$:
    \begin{equation*}
        \begin{aligned}
            &4\mu\hat{g}_{2}(x, B)\leq\left\|\nabla_{x}\hat{g}_{2}(x, B)\right\|^{2}\leq4M_{\hat{G}}^{2}\hat{g}_{2}(x, B),~\forall x\in E_{1},~\forall B\subseteq\mathcal{B}.
        \end{aligned}
    \end{equation*}
\end{lemma}
\begin{proof}
Условия \ref{as:2} и \ref{as:5} задают следующие неравенства на норму градиента:
\begin{equation}\label{eq:grad_norm_bounds}
    \begin{aligned}
        &4\mu\hat{g}_{2}(x, B)\leq\left\{\text{предположение \ref{as:5}}\right\}\leq4\left\|\hat{G}^{'}(x, B)^{*}\hat{G}(x, B)\right\|^{2}= \left\|\nabla_{x}\hat{g}_{2}(x, B)\right\|^{2}\leq\\
        &\leq4\left\|\hat{G}^{'}(x, B)\right\|^{2}\left\|\hat{G}(x, B)\right\|^{2}\leq\left\{\text{предположение \ref{as:2}}\right\}\leq\\
        &\leq4M_{\hat{G}}^{2}\hat{g}_{2}(x, B),~\forall x\in E_{1}, \forall B\subseteq\mathcal{B}\Rightarrow\mu\leq M_{\hat{G}}^{2},~4\mu\hat{g}_{2}(x, B)\leq\left\|\nabla_{x}\hat{g}_{2}(x, B)\right\|^{2}.
    \end{aligned}
\end{equation}
\end{proof}
\begin{lm:corollary}\label{lm:aux_mean_grad_g2_PL_bounds}
При усреднении по батчам $B$ квадрата нормы градиента $\hat{g}_{2}$ выполнено следующее неравенство:
\begin{equation*}
    \begin{aligned}
        &4\mu\hat{f}_{2}(x)\leq\mathbb{E}_{B}\left[\left\|\nabla_{x}\hat{g}_{2}(x, B)\right\|^{2}\right]\leq4M_{\hat{G}}^{2}\hat{f}_{2}(x),~\forall x\in E_{1}.
    \end{aligned}
\end{equation*}
\end{lm:corollary}

\subsubsection*{Основные утверждения}

Теорема \ref{th:3_main} устанавливает условия сходимости к окрестности стационарной точки относительно среднего минимального квадрата нормы градиента функции $\hat{f}_{2}$.

\begin{re:theorem}\label{th:3}
    Пусть выполнены предположения \ref{as:1}, \ref{as:2}, \ref{as:3}, \ref{as:4}. Рассмотрим метод Гаусса--Ньютона со схемой реализации \ref{alg:gen_stoch_gnm}, в которой последовательность $\{x_{k}\}_{k\in\mathbb{Z}_{+}}$ вычисляется по правилу \eqref{eq:stoch_direct_update_rule} с $\tau_{k} = \hat{g}_{1}(x_{k}, B_{k})$,\\$\eta_{k}\in[\eta, 1]$, $\eta\in(0, 1]$. Тогда:
    \begin{equation}\label{eq:stoch_sub_lin_conv_1}
        \begin{aligned}
            \mathbb{E}\left[\min\limits_{i\in\overline{0, k - 1}}\left\|\nabla\hat{f}_{2}(x_{i})\right\|^{2}\right]&\leq\frac{8\left(M_{\hat{G}}^{2} + \gamma P_{\hat{g}_{1}}L_{\hat{F}}\right)}{\eta(2 - \eta)}\left(\frac{\mathbb{E}\left[\hat{f}_{2}(x_{0})\right]}{k} + 2l_{\hat{F}}\min\left\{\sqrt{\frac{2P_{\hat{g}_{1}}}{L}},~\frac{M_{\hat{G}}}{L}\right\}\mathds{1}_{\left\{b < m\right\}} + \tilde{\sigma}\sqrt{\frac{1}{b} - \frac{1}{m}}\right),\\
            k&\in\mathbb{N}.
        \end{aligned}
    \end{equation}
    Оператор математического ожидания $\mathbb{E}\left[\cdot\right]$ усредняет по всей случайности процесса оптимизации.
\end{re:theorem}
\begin{proof}
Согласно правилу вычисления $x_{k + 1}$ \eqref{eq:stoch_direct_update_rule}, \eqref{eq:stoch_general_update_rule}:
\begin{equation}\label{eq:th3_iter_diff}
    \begin{aligned}
        &\hat{g}_{1}(x_{k}, B_{k}) - \hat{g}_{1}(x_{k + 1}, B_{k})\geq\hat{g}_{1}(x_{k}, B_{k}) - \frac{\tau_{k}}{2} - \frac{\hat{g}_{2}(x_{k}, B_{k})}{2\tau_{k}} +\\
        &+ \frac{\eta_{k}(2 - \eta_{k})}{2\tau_{k}}\left\langle\left(\hat{G}^{'}(x_{k}, B_{k})^{*}\hat{G}^{'}(x_{k}, B_{k}) + \tau_{k} L_{k} I_{n}\right)^{-1}\hat{G}^{'}(x_{k}, B_{k})^{*}\hat{G}(x_{k}, B_{k}),~\hat{G}^{'}(x_{k}, B_{k})^{*}\hat{G}(x_{k}, B_{k})\right\rangle\geq\\
        &\geq\left\{\tau_{k} = \hat{g}_{1}(x_{k}, B_{k}),~\eta_{k}\geq\eta,~\nabla_{x_{k}}\hat{g}_{2}(x_{k}, B_{k}) = 2\hat{G}^{'}(x_{k}, B_{k})^{*}\hat{G}(x_{k}, B_{k}),~\text{следствие \ref{lm:aux_update_rule}}\right\}\geq\\
        &\geq\frac{\eta(2 - \eta)}{8\hat{g}_{1}(x_{k}, B_{k})}\left\langle\left(\hat{G}^{'}(x_{k}, B_{k})^{*}\hat{G}^{'}(x_{k}, B_{k}) + \hat{g}_{1}(x_{k}, B_{k})L_{k}I_{n}\right)^{-1}\nabla_{x_{k}}\hat{g}_{2}(x_{k}, B_{k}),~\nabla_{x_{k}}\hat{g}_{2}(x_{k}, B_{k})\right\rangle\geq 0\Rightarrow\\
        &\Rightarrow\hat{g}_{2}(x_{k}, B_{k}) - \hat{g}_{2}(x_{k + 1}, B_{k})\geq\hat{g}_{2}(x_{k}, B_{g}) - \hat{g}_{1}(x_{k}, B_{k})\hat{g}_{1}(x_{k + 1}, B_{k})\geq\\
        &\geq\frac{\eta(2 - \eta)}{8}\left\langle\left(\hat{G}^{'}(x_{k}, B_{k})^{*}\hat{G}^{'}(x_{k}, B_{k}) + \hat{g}_{1}(x_{k}, B_{k})L_{k}I_{n}\right)^{-1}\nabla_{x_{k}}\hat{g}_{2}(x_{k}, B_{k}),~\nabla_{x_{k}}\hat{g}_{2}(x_{k}, B_{k})\right\rangle\geq\\
        &\geq\left\{\text{по предположениям \ref{as:2} и \ref{as:3}, \eqref{eq:as2_matrix_order}}\right\}\geq\frac{\eta(2 - \eta)\left\|\nabla_{x_{k}}\hat{g}_{2}(x_{k}, B_{k})\right\|^{2}}{8\left(M_{\hat{G}}^{2} + P_{\hat{g}_{1}}L_{k}\right)}\geq\left\{L_{k}\leq \gamma L_{\hat{F}}\right\}\geq\\
        &\geq\frac{\eta(2 - \eta)\left\|\nabla_{x_{k}}\hat{g}_{2}(x_{k}, B_{k})\right\|^{2}}{8\left(M_{\hat{G}}^{2} + \gamma P_{\hat{g}_{1}}L_{\hat{F}}\right)}.
    \end{aligned}
\end{equation}
Сложим неравенства выше при различных номерах итерации от $0$ до $k$ и усредним с помощью $\mathbb{E}\left[\cdot\right]$:
\begin{equation*}
    \begin{aligned}
        \mathbb{E}&\left[\sum\limits_{i = 0}^{k - 1}\left(\hat{g}_{2}(x_{i}, B_{i}) - \hat{g}_{2}(x_{i + 1}, B_{i})\right)\right]=\mathbb{E}\left[\sum\limits_{i = 0}^{k - 1}\left(\hat{f}_{2}(x_{i}) - \hat{g}_{2}(x_{i + 1}, B_{i})\right)\right]\geq\mathbb{E}\left[\sum\limits_{i = 0}^{k - 1}\frac{\eta(2 - \eta)\left\|\nabla_{x_{i}}\hat{g}_{2}(x_{i}, B_{i})\right\|^{2}}{8\left(M_{\hat{G}}^{2} + \gamma P_{\hat{g}_{1}}L_{\hat{F}}\right)}\right] =\\
        &= \frac{\eta(2- \eta)}{8\left(M_{\hat{G}}^{2} + \gamma P_{\hat{g}_{1}}L_{\hat{F}}\right)}\sum\limits_{i = 0}^{k - 1}\mathbb{E}\left[\mathbb{E}\left[\left\|\nabla_{x_{i}}\hat{g}_{2}(x_{i}, B_{i})\right\|^{2}\right]\right]\geq\frac{\eta(2 - \eta)}{8\left(M_{\hat{G}}^{2} + \gamma P_{\hat{g}_{1}}L_{\hat{F}}\right)}\sum\limits_{i = 0}^{k - 1}\mathbb{E}\left[\left\|\nabla\hat{f}_{2}(x_{i})\right\|^{2}\right]\geq\\
        &\geq\frac{k\eta(2 - \eta)}{8\left(M_{\hat{G}}^{2} + \gamma P_{\hat{g}_{1}}L_{\hat{F}}\right)}\min\limits_{i\in\overline{0, k - 1}}\left(\mathbb{E}\left[\left\|\nabla\hat{f}_{2}(x_{i})\right\|^{2}\right]\right)\geq\frac{k\eta(2 - \eta)}{8\left(M_{\hat{G}}^{2} + \gamma P_{\hat{g}_{1}}L_{\hat{F}}\right)}\mathbb{E}\left[\min\limits_{i\in\overline{0, k - 1}}\left\|\nabla\hat{f}_{2}(x_{i})\right\|^{2}\right].
    \end{aligned}
\end{equation*}
Перепишем получившееся неравенство следующим образом:
\begin{equation*}
    \begin{aligned}
        &\frac{k\eta(2 - \eta)}{8\left(M_{\hat{G}}^{2} + \gamma P_{\hat{g}_{1}}L_{\hat{F}}\right)}\mathbb{E}\left[\min\limits_{i\in\overline{0, k - 1}}\left\|\nabla\hat{f}_{2}(x_{i})\right\|^{2}\right]\leq\mathbb{E}\left[\hat{f}_{2}(x_{0}) + \sum\limits_{i = 1}^{k - 1}\left(\hat{f}_{2}(x_{i}) - \hat{g}_{2}(x_{i}, B_{i - 1})\right) - \hat{g}_{2}(x_{k}, B_{k - 1})\right]\leq\\
        &\leq\mathbb{E}\left[\hat{f}_{2}(x_{0})\right] + \sum\limits_{i = 1}^{k - 1}\mathbb{E}\left[\hat{f}_{2}(x_{i}) - \hat{g}_{2}(x_{i}, B_{i - 1})\right] = \left|\mathbb{E}\left[\hat{f}_{2}(x_{0})\right] + \sum\limits_{i = 1}^{k - 1}\mathbb{E}\left[\hat{f}_{2}(x_{i}) - \hat{g}_{2}(x_{i}, B_{i - 1})\right]\right|\leq\mathbb{E}\left[\hat{f}_{2}(x_{0})\right] +\\
        &+ \sum\limits_{i = 1}^{k - 1}\mathbb{E}\left[\left|\hat{f}_{2}(x_{i}) - \hat{g}_{2}(x_{i}, B_{i - 1})\right|\right].
    \end{aligned}
\end{equation*}
Согласно лемме \ref{lm:aux_bounded_deviation} выражение выше можно ограничить сверху:
\begin{equation*}
    \begin{aligned}
        &\frac{k\eta(2 - \eta)}{8\left(M_{\hat{G}}^{2} + \gamma P_{\hat{g}_{1}}L_{\hat{F}}\right)}\mathbb{E}\left[\min\limits_{i\in\overline{0, k - 1}}\left\|\nabla\hat{f}_{2}(x_{i})\right\|^{2}\right]\leq\mathbb{E}\left[\hat{f}_{2}(x_{0})\right] + \sum\limits_{i = 1}^{k - 1}\mathbb{E}\left[\left|\hat{f}_{2}(x_{i}) - \hat{g}_{2}(x_{i}, B_{i - 1})\right|\right]\leq\mathbb{E}\left[\hat{f}_{2}(x_{0})\right] +\\
        &+ \sum\limits_{i = 1}^{k - 1}\left(2l_{\hat{F}}\mathbb{E}\left[\left\|x_{i} - x_{i - 1}\right\|\right]\mathds{1}_{\left\{b < m\right\}} + \tilde{\sigma}\sqrt{\frac{1}{b} - \frac{1}{m}}\right)\leq\left\{\text{лемма \ref{lm:aux_bounded_variation}, следствие \ref{lm:aux_variation_upper_bound}},~L_{k}\geq L,~\eta_{k}\leq1\right\}\leq\\
        &\leq\mathbb{E}\left[\hat{f}_{2}(x_{0})\right] + (k - 1)\left(2l_{\hat{F}}\min\left\{\sqrt{\frac{2P_{\hat{g}_{1}}}{L}},~\frac{M_{\hat{G}}}{L}\right\}\mathds{1}_{\left\{b < m\right\}} + \tilde{\sigma}\sqrt{\frac{1}{b} - \frac{1}{m}}\right) = \mathbb{E}\left[\hat{g}_{2}(x_{0}, B_{0})\right] +\\
        &+ (k - 1)\left(2l_{\hat{F}}\min\left\{\sqrt{\frac{2P_{\hat{g}_{1}}}{L}},~\frac{M_{\hat{G}}}{L}\right\}\mathds{1}_{\left\{b < m\right\}} + \tilde{\sigma}\sqrt{\frac{1}{b} - \frac{1}{m}}\right)\leq\\
        &\leq\mathbb{E}\left[\hat{f}_{2}(x_{0})\right] + k\left(2l_{\hat{F}}\min\left\{\sqrt{\frac{2P_{\hat{g}_{1}}}{L}},~\frac{M_{\hat{G}}}{L}\right\}\mathds{1}_{\left\{b < m\right\}} + \tilde{\sigma}\sqrt{\frac{1}{b} - \frac{1}{m}}\right).
    \end{aligned}
\end{equation*}
Таким образом, делением на $\frac{k\eta(2 - \eta)}{8\left(M_{\hat{G}}^{2} + \gamma P_{\hat{g}_{1}}L_{\hat{F}}\right)}$ выводится искомая оценка \eqref{eq:stoch_sub_lin_conv_1}:
\begin{equation*}
    \begin{aligned}
        \mathbb{E}\left[\min\limits_{i\in\overline{0, k - 1}}\left\|\nabla\hat{f}_{2}(x_{i})\right\|^{2}\right]&\leq\frac{8\left(M_{\hat{G}}^{2} + \gamma P_{\hat{g}_{1}}L_{\hat{F}}\right)}{\eta(2 - \eta)}\left(\frac{\mathbb{E}\left[\hat{f}_{2}(x_{0})\right]}{k} + 2l_{\hat{F}}\min\left\{\sqrt{\frac{2P_{\hat{g}_{1}}}{L}},~\frac{M_{\hat{G}}}{L}\right\}\mathds{1}_{\left\{b < m\right\}} + \tilde{\sigma}\sqrt{\frac{1}{b} - \frac{1}{m}}\right),\\
        k&\in\mathbb{N}.
    \end{aligned}
\end{equation*}
\end{proof}

Теорема \ref{th:4_main} в предположении \ref{as:5} показывает сходимость не просто к некоторой окрестности стационарной точки, а уже к области точки решения задачи \eqref{eq:main_opt_problem}, в среднем.

\begin{re:theorem}\label{th:4}
    Пусть выполнены предположения \ref{as:1}, \ref{as:2}, \ref{as:3}, \ref{as:4}, \ref{as:5}. Рассмотрим  метод Гаусса--Ньютона со схемой реализации \ref{alg:gen_stoch_gnm}, в котором последовательность $\{x_{k}\}_{k\in\mathbb{Z}_{+}}$ вычисляется по правилу \eqref{eq:stoch_direct_update_rule} с\\
    $\tau_{k}~=~\hat{g}_{1}(x_{k}, B_{k})$, $\eta_{k}\in[\eta, 1]$, $\eta\in(0, 1]$. Тогда:
    \begin{equation}\label{eq:stoch_lin_conv_1}
        \begin{cases}
            \begin{aligned}
                \mathbb{E}&\left[\left\|\nabla\hat{f}_{2}(x_{k})\right\|^{2}\right]\leq4M_{\hat{G}}^{2}\Delta_{k,b};\\[5pt]
                \mathbb{E}&\left[\hat{f}_{2}(x_{k})\right]\leq\hat{f}_{2}^{*} + \Delta_{k,b};\\
                &\Delta_{k,b} \overset{\operatorname{def}}{=} \mathbb{E}\left[\hat{f}_{2}(x_{0})\right]\exp\left(-\frac{k\eta(2 - \eta)\mu}{2\left(\gamma L_{\hat{F}}P_{\hat{g}_{1}} + \mu\right)}\right) +\\
                &+ 4\left(l_{\hat{F}}\min\left\{\sqrt{\frac{2P_{\hat{g}_{1}}}{L}},~\frac{M_{\hat{G}}}{L}\right\}\mathds{1}_{\left\{b < m\right\}} + \tilde{\sigma}\sqrt{\frac{1}{b} - \frac{1}{m}}\right)\left(\frac{\gamma L_{\hat{F}}P_{\hat{g}_{1}} + \mu}{\eta(2 - \eta)\mu}\right),~k\in\mathbb{Z}_{+},~b\in\overline{1,~\min\{m, n\}}.
            \end{aligned}
        \end{cases}
    \end{equation}
    Оператор математического ожидания $\mathbb{E}\left[\cdot\right]$ усредняет по всей случайности процесса оптимизации.
\end{re:theorem}
\begin{proof}
Согласно правилу обновления $x_{k}$ \eqref{eq:stoch_direct_update_rule}, \eqref{eq:stoch_general_update_rule} выполнено соотношение (следствие \ref{lm:aux_update_rule}):
\begin{equation}\label{eq:th4_prelim_inequality}
    \begin{aligned}
        &\hat{g}_{1}(x_{k}, B_{k}) - \hat{g}_{1}(x_{k + 1}, B_{k})\geq\\
        &\geq\frac{\eta_{k}(2 - \eta_{k})}{2\hat{g}_{1}(x_{k}, B_{k})}\left\langle\left(\hat{G}^{'}(x_{k}, B_{k})^{*}\hat{G}^{'}(x_{k}, B_{k}) + \hat{g}_{1}(x_{k}, B_{k}) L_{k} I_{n}\right)^{-1}\hat{G}^{'}(x_{k}, B_{k})^{*}\hat{G}(x_{k}, B_{k}),\right.\\
        &\left.\hat{G}^{'}(x_{k}, B_{k})^{*}\hat{G}(x_{k}, B_{k})\right\rangle\geq 0\Rightarrow\hat{g}_{2}(x_{k}, B_{k}) - \hat{g}_{2}(x_{k + 1}, B_{k})\geq\hat{g}_{2}(x_{k}, B_{k}) - \hat{g}_{1}(x_{k}, B_{k})\hat{g}_{1}(x_{k + 1}, B_{k})\geq\\
        &\geq\left\{\eta_{k}\geq\eta,~L_{k}\leq \gamma L_{\hat{F}}\right\}\geq\\
        &\geq\frac{\eta(2 - \eta)}{2}\left\langle\left(\hat{G}^{'}(x_{k}, B_{k})^{*}\hat{G}^{'}(x_{k}, B_{k}) + \gamma\hat{g}_{1}(x_{k}, B_{k})L_{\hat{F}} I_{n}\right)^{-1}\hat{G}^{'}(x_{k}, B_{k})^{*}\hat{G}(x_{k}, B_{k}),~\hat{G}^{'}(x_{k}, B_{k})^{*}\hat{G}(x_{k}, B_{k})\right\rangle =\\
        &= \frac{\eta(2 - \eta)}{2}\left\langle\hat{G}^{'}(x_{k}, B_{k})\left(\hat{G}^{'}(x_{k}, B_{k})^{*}\hat{G}^{'}(x_{k}, B_{k}) + \gamma\hat{g}_{1}(x_{k}, B_{k})L_{\hat{F}} I_{n}\right)^{-1}\hat{G}^{'}(x_{k}, B_{k})^{*}\hat{G}(x_{k}, B_{k}), \hat{G}(x_{k}, B_{k})\right\rangle\geq\\
    \end{aligned}
\end{equation}
\begin{equation}
    \begin{aligned}
        &\geq\left\{\text{лемма \ref{lm:aux_matrix_order}}\right\}\geq\frac{\eta(2 - \eta)\left\|\hat{G}(x_{k}, B_{k})\right\|^{2}\mu}{2\left(\gamma L_{\hat{F}}\hat{g}_{1}(x_{k}, B_{k}) + \mu\right)} = \hat{g}_{2}(x_{k}, B_{k})\frac{\eta(2 - \eta)\mu}{2\left(\gamma L_{\hat{F}}\hat{g}_{1}(x_{k}, B_{k}) + \mu\right)}\Rightarrow\\
        &\Rightarrow\hat{g}_{2}(x_{k + 1}, B_{k})\leq\hat{g}_{2}(x_{k}, B_{k})\left(1 - \frac{\eta(2 - \eta)\mu}{2\left(\gamma L_{\hat{F}}\hat{g}_{1}(x_{k}, B_{k}) + \mu\right)}\right).
    \end{aligned}
\end{equation}
В полученном неравенстве прибавим слева и справа $-\hat{f}_{2}^{*}\leq 0$:
\begin{equation}\label{eq:th4_prelim_inference}
    \begin{aligned}
        &\hat{g}_{2}(x_{k + 1}, B_{k}) - \hat{f}_{2}^{*}\leq\left(\hat{g}_{2}(x_{k}, B_{k}) - \hat{f}_{2}^{*}\right) - \hat{g}_{2}(x_{k}, B_{k})\frac{\eta(2 - \eta)\mu}{2\left(\gamma L_{\hat{F}}\hat{g}_{1}(x_{k}, B_{k}) + \mu\right)}\leq\\
        &\leq\left(\hat{g}_{2}(x_{k}, B_{k}) - \hat{f}_{2}^{*}\right)\left(1 - \frac{\eta(2 - \eta)\mu}{2\left(\gamma L_{\hat{F}}\hat{g}_{1}(x_{k}, B_{k}) + \mu\right)}\right) = \left(\hat{g}_{2}(x_{k}, B_{k}) - \hat{f}_{2}(x_{k}) + \hat{f}_{2}(x_{k}) - \hat{g}_{2}(x_{k}, B_{k - 1}) +\right.\\
        &\left.+ \hat{g}_{2}(x_{k}, B_{k - 1}) .- \hat{f}_{2}^{*}\right)\left(1 - \frac{\eta(2 - \eta)\mu}{2\left(\gamma L_{\hat{F}}\hat{g}_{1}(x_{k}, B_{k}) + \mu\right)}\right)\leq\left(\left|\hat{g}_{2}(x_{k}, B_{k}) - \hat{f}_{2}(x_{k}) + \hat{f}_{2}(x_{k}) - \hat{g}_{2}(x_{k}, B_{k - 1})\right| +\right.\\
        &\left.+ \hat{g}_{2}(x_{k}, B_{k - 1}) - \hat{f}_{2}^{*}\right)\left(1 - \frac{\eta(2 - \eta)\mu}{2\left(\gamma L_{\hat{F}}\hat{g}_{1}(x_{k}, B_{k}) + \mu\right)}\right)\leq\left(\left|\hat{g}_{2}(x_{k}, B_{k}) - \hat{f}_{2}(x_{k})\right| + \left|\hat{f}_{2}(x_{k}) - \hat{g}_{2}(x_{k}, B_{k - 1})\right| +\right.\\
        &\left.+ \left(\hat{g}_{2}(x_{k}, B_{k - 1}) - \hat{f}_{2}^{*}\right)\right)\left(1 - \frac{\eta(2 - \eta)\mu}{2\left(\gamma L_{\hat{F}}\hat{g}_{1}(x_{k}, B_{k}) + \mu\right)}\right).
    \end{aligned}
\end{equation}
Усредним полученное неравенство с помощью оператора $\mathbb{E}\left[\cdot\right]$:
\begin{equation*}
    \begin{aligned}
        &\mathbb{E}\left[\hat{g}_{2}(x_{k + 1}, B_{k}) - \hat{f}_{2}^{*}\right]\leq\mathbb{E}\left[\left(\left|\hat{g}_{2}(x_{k}, B_{k}) - \hat{f}_{2}(x_{k})\right| + \left|\hat{f}_{2}(x_{k}) - \hat{g}_{2}(x_{k}, B_{k - 1})\right| +\right.\right.\\
        &\left.\left.+ \left(\hat{g}_{2}(x_{k}, B_{k - 1}) - \hat{f}_{2}^{*}\right)\right)\left(1 - \frac{\eta(2 - \eta)\mu}{2\left(\gamma L_{\hat{F}}\hat{g}_{1}(x_{k}, B_{k}) + \mu\right)}\right)\right]\leq\left\{\text{по предположению \ref{as:3}}\right\}\leq\\
        &\leq\left(\underbrace{\mathbb{E}\left[\left|\hat{g}_{2}(x_{k}, B_{k}) - \hat{f}_{2}(x_{k})\right|\right]}_{\text{ограничено дисперсией по батчам}} + \underbrace{\mathbb{E}\left[\left|\hat{f}_{2}(x_{k}) - \hat{g}_{2}(x_{k}, B_{k - 1})\right|\right]}_{\text{ограничено по лемме \ref{lm:aux_bounded_deviation}}} +\right.\\
        &\left.+ \mathbb{E}\left[\hat{g}_{2}(x_{k}, B_{k - 1}) - \hat{f}_{2}^{*}\right]\right)\left(1 - \frac{\eta(2 - \eta)\mu}{2\left(\gamma L_{\hat{F}}P_{\hat{g}_{1}} + \mu\right)}\right).
    \end{aligned}
\end{equation*}
Выражение выше согласно леммам \ref{lm:aux_finite_population_variance} (следствие \ref{lm:aux_finite_population_deviation}) и \ref{lm:aux_bounded_deviation} ограничено:
\begin{equation}\label{eq:rec_geom_exp}
    \begin{aligned}
        &\mathbb{E}\left[\hat{g}_{2}(x_{k + 1}, B_{k}) - \hat{f}_{2}^{*}\right]\leq\left(\tilde{\sigma}\sqrt{\frac{1}{b} - \frac{1}{m}} + 2l_{\hat{F}}\mathbb{E}\left[\left\|x_{k} - x_{k - 1}\right\|\right]\mathds{1}_{\left\{b < m\right\}} + \tilde{\sigma}\sqrt{\frac{1}{b} - \frac{1}{m}} +\right.\\
        &\left.+ \mathbb{E}\left[\hat{g}_{2}(x_{k}, B_{k - 1}) - \hat{f}_{2}^{*}\right]\right)\left(1 - \frac{\eta(2 - \eta)\mu}{2\left(\gamma L_{\hat{F}}P_{\hat{g}_{1}} + \mu\right)}\right)\leq\left\{\text{следствие \ref{lm:aux_variation_upper_bound}},~\eta_{k}\leq1\right\}\leq\\
        &\leq\left(2\left(l_{\hat{F}}\min\left\{\sqrt{\frac{2P_{\hat{g}_{1}}}{L}},~\frac{M_{\hat{G}}}{L}\right\}\mathds{1}_{\left\{b < m\right\}} + \tilde{\sigma}\sqrt{\frac{1}{b} - \frac{1}{m}}\right) + \mathbb{E}\left[\hat{g}_{2}(x_{k}, B_{k - 1}) - \hat{f}_{2}^{*}\right]\right)\left(1 - \frac{\eta(2 - \eta)\mu}{2\left(\gamma L_{\hat{F}}P_{\hat{g}_{1}} + \mu\right)}\right).
    \end{aligned}
\end{equation}
Выражение \eqref{eq:rec_geom_exp} представляет собой рекуррентную зависимость по итерациям $k\in\mathbb{N}$:
\begin{equation}\label{eq:sub_rec_geom_exp}
    \begin{cases}
        a_{k} &\overset{\operatorname{def}}{=} \mathbb{E}\left[\hat{g}_{2}(x_{k}, B_{k - 1}) - \hat{f}_{2}^{*}\right];\\
        c_{k} &\overset{\operatorname{def}}{=} c = 2\left(l_{\hat{F}}\min\left\{\sqrt{\frac{2P_{\hat{g}_{1}}}{L}},~\frac{M_{\hat{G}}}{L}\right\}\mathds{1}_{\left\{b < m\right\}} + \tilde{\sigma}\sqrt{\frac{1}{b} - \frac{1}{m}}\right);\\
        q &\overset{\operatorname{def}}{=} \left(1 - \frac{\eta(2 - \eta)\mu}{2\left(\gamma L_{\hat{F}}P_{\hat{g}_{1}} + \mu\right)}\right)\leq\exp\left(-\frac{\eta(2 - \eta)\mu}{2\left(\gamma L_{\hat{F}}P_{\hat{g}_{1}} + \mu\right)}\right)\in(0, 1).
    \end{cases}
\end{equation}
В связи с этим для последовательности $\left\{a_{k}\right\}_{k\in\mathbb{N}}$, определённой  в \eqref{eq:sub_rec_geom_exp}, выполнены следующие соотношения:
\begin{equation*}
    \begin{cases}
        a_{0}&\overset{\operatorname{def}}{=}\mathbb{E}\left[\hat{g}_{2}(x_{0}, B_{0})\right];\\
        a_{1}&\leq a_{0}q\leq a_{0}q + cq;\\
        a_{k + 1}&\leq\left(a_{k} + c\right)q,~k\in\mathbb{N}.
    \end{cases}
\end{equation*}
Оценка на $a_{1}$ получена непосредственно из \eqref{eq:th4_prelim_inference}:
\begin{equation*}
    \begin{aligned}
        &\hat{g}_{2}(x_{k + 1}, B_{k}) - \hat{f}_{2}^{*}\leq\hat{g}_{2}(x_{k + 1}, B_{k})\leq\hat{g}_{2}(x_{k}, B_{k})\left(1 - \frac{\eta(2 - \eta)\mu}{2\left(\gamma L_{\hat{F}}\hat{g}_{1}(x_{k}, B_{k}) + \mu\right)}\right)\leq\left\{\text{предположение \ref{as:3}}\right\}\leq\\
        &\leq\hat{g}_{2}(x_{k}, B_{k})\left(1 - \frac{\eta(2 - \eta)\mu}{2\left(\gamma L_{\hat{F}}P_{\hat{g}_{1}} + \mu\right)}\right)\Rightarrow\left\{k = 0\right\}\Rightarrow\mathbb{E}\left[\hat{g}_{2}(x_{1}, B_{0}) - \hat{f}_{2}^{*}\right]\leq\\
        &\leq\mathbb{E}\left[\hat{g}_{2}(x_{0}, B_{0})\right]\left(1 - \frac{\eta(2 - \eta)\mu}{2\left(\gamma L_{\hat{F}}P_{\hat{g}_{1}} + \mu\right)}\right)\leq\mathbb{E}\left[\hat{g}_{2}(x_{0}, B_{0})\right]\exp\left(-\frac{\eta(2 - \eta)\mu}{2\left(\gamma L_{\hat{F}}P_{\hat{g}_{1}} + \mu\right)}\right).
    \end{aligned}
\end{equation*}
Значение $a_{k}$ оценивается сверху суммой:
\begin{equation}\label{eq:th4_geom_sum}
    \begin{aligned}
        &a_{k}\leq\underbrace{(a_{k - 1} + c)q\leq((a_{k - 2} + c)q + c)q}_{\text{частичная сумма геометрического ряда}}\leq\dots\leq a_{0}q^{k} + c\sum\limits_{i = 1}^{k - 1}q^{i} = a_{0}q^{k} + cq\left(\frac{1 - q^{k - 1}}{1 - q}\right)\mathds{1}_{\left\{k > 0\right\}},~k\in\mathbb{Z}_{+}.
    \end{aligned}
\end{equation}
Теперь свяжем оценку на $a_{k}$ с оценкой на $\hat{f}_{2}(x_{k})$:
\begin{equation*}
    \begin{aligned}
        &\hat{f}_{2}(x_{k}) - \hat{f}_{2}^{*} = \hat{f}_{2}(x_{k}) - \hat{g}_{2}(x_{k}, B_{k - 1}) + \hat{g}_{2}(x_{k}, B_{k - 1}) - \hat{f}_{2}^{*}\leq\left|\hat{f}_{2}(x_{k}) - \hat{g}_{2}(x_{k}, B_{k - 1})\right| + \hat{g}_{2}(x_{k}, B_{k - 1}) - \hat{f}_{2}^{*}\Rightarrow\\
        &\Rightarrow\mathbb{E}\left[\hat{f}_{2}(x_{k}) - \hat{f}_{2}^{*}\right]\leq\mathbb{E}\left[\left|\hat{f}_{2}(x_{k}) - \hat{g}_{2}(x_{k}, B_{k - 1})\right|\right] + \mathbb{E}\left[\hat{g}_{2}(x_{k}, B_{k - 1}) - \hat{f}_{2}^{*}\right]\leq\\
        &\leq\left\{\text{по лемме \ref{lm:aux_bounded_deviation} и следствию \ref{lm:aux_variation_upper_bound}},~\eta_{k}\leq1\right\}\leq\\
        &\leq2l_{\hat{F}}\min\left\{\sqrt{\frac{2P_{\hat{g}_{1}}}{L}},~\frac{M_{\hat{G}}}{L}\right\}\mathds{1}_{\left\{b < m\right\}} + \tilde{\sigma}\sqrt{\frac{1}{b} - \frac{1}{m}} + \mathbb{E}\left[\hat{g}_{2}(x_{k}, B_{k - 1}) - \hat{f}_{2}^{*}\right]=\left\{\text{\eqref{eq:sub_rec_geom_exp}}\right\}=\\
        &=2l_{\hat{F}}\min\left\{\sqrt{\frac{2P_{\hat{g}_{1}}}{L}},~\frac{M_{\hat{G}}}{L}\right\}\mathds{1}_{\left\{b < m\right\}} + \tilde{\sigma}\sqrt{\frac{1}{b} - \frac{1}{m}} + a_{k}\leq\left\{\text{\eqref{eq:th4_geom_sum}}\right\}\leq2l_{\hat{F}}\min\left\{\sqrt{\frac{2P_{\hat{g}_{1}}}{L}},~\frac{M_{\hat{G}}}{L}\right\}\mathds{1}_{\left\{b < m\right\}} +\\
        &+ \tilde{\sigma}\sqrt{\frac{1}{b} - \frac{1}{m}} + a_{0}q^{k} + cq\left(\frac{1 - q^{k - 1}}{1 - q}\right)\mathds{1}_{\left\{k > 0\right\}}\leq2\left(l_{\hat{F}}\min\left\{\sqrt{\frac{2P_{\hat{g}_{1}}}{L}},~\frac{M_{\hat{G}}}{L}\right\}\mathds{1}_{\left\{b < m\right\}} + \tilde{\sigma}\sqrt{\frac{1}{b} - \frac{1}{m}}\right) +\\
        &+ a_{0}q^{k} + cq\left(\frac{1 - q^{k - 1}}{1 - q}\right)\mathds{1}_{\left\{k > 0\right\}} = a_{0}q^{k} + c\left(1 + q\left(\frac{1 - q^{k - 1}}{1 - q}\right)\mathds{1}_{\left\{k > 0\right\}}\right),~k\in\mathbb{Z}_{+}.
    \end{aligned}
\end{equation*}
Из получившегося выражения выводится оценка на $\mathbb{E}\left[\hat{f}_{2}(x_{k})\right]$:
\begin{equation}\label{eq:th4_delta_value}
    \begin{aligned}
        \mathbb{E}\left[\hat{f}_{2}(x_{k})\right] \leq \hat{f}_{2}^{*} + a_{0}q^{k} + c\left(1 + q\left(\frac{1 - q^{k - 1}}{1 - q}\right)\mathds{1}_{\left\{k > 0\right\}}\right),~k\in\mathbb{Z}_{+}.
    \end{aligned}
\end{equation}
Выведем оценку на средний квадрат нормы градиента:
\begin{equation*}
    \begin{aligned}
        &\mathbb{E}\left[\left\|\nabla\hat{f}_{2}(x_{k})\right\|^{2}\right]\leq\mathbb{E}\left[\mathbb{E}\left[\left\|\nabla_{x_{k}}\hat{g}_{2}(x_{k}, B_{k})\right\|^{2}\right]\right] = \mathbb{E}\left[\left\|2\hat{G}^{'}(x_{k}, B_{k})^{*}\hat{G}(x_{k}, B_{k})\right\|^{2}\right]\leq\\
        &\leq 4\mathbb{E}\left[\left\|\hat{G}^{'}(x_{k}, B_{k})^{*}\right\|^{2}\left\|\hat{G}(x_{k}, B_{k})\right\|^{2}\right]\leq\left\{\text{предположение \ref{as:2}}\right\}\leq 4M_{\hat{G}}^{2}\mathbb{E}\left[\hat{g}_{2}(x_{k}, B_{k})\right].
    \end{aligned}
\end{equation*}
Рассмотрим следующее выражение:
\begin{equation*}
    \begin{aligned}
        &\hat{g}_{2}(x_{k}, B_{k}) - \hat{f}_{2}^{*} = \hat{g}_{2}(x_{k}, B_{k}) - \hat{f}_{2}(x_{k}) + \hat{f}_{2}(x_{k}) - \hat{g}_{2}(x_{k}, B_{k - 1}) + \hat{g}_{2}(x_{k}, B_{k - 1}) - \hat{f}_{2}^{*}\leq\left|\hat{g}_{2}(x_{k}, B_{k}) - \hat{f}_{2}(x_{k})\right| +\\
        &+ \left|\hat{f}_{2}(x_{k}) - \hat{g}_{2}(x_{k}, B_{k - 1})\right| + \left(\hat{g}_{2}(x_{k}, B_{k - 1}) - \hat{f}_{2}^{*}\right)\Rightarrow\mathbb{E}\left[\hat{g}_{2}(x_{k}, B_{k}) - \hat{f}_{2}^{*}\right]\leq\mathbb{E}\left[\left|\hat{g}_{2}(x_{k}, B_{k}) - \hat{f}_{2}(x_{k})\right|\right] +\\
    \end{aligned}
\end{equation*}
\begin{equation}\label{eq:th4_grad_bound_mult}
    \begin{aligned}
        &+ \mathbb{E}\left[\left|\hat{f}_{2}(x_{k}) - \hat{g}_{2}(x_{k}, B_{k - 1})\right|\right] + \mathbb{E}\left[\hat{g}_{2}(x_{k}, B_{k - 1}) - \hat{f}_{2}^{*}\right]\leq\left\{\text{следствие \ref{lm:aux_finite_population_deviation} и лемма \ref{lm:aux_bounded_deviation}}\right\}\leq\tilde{\sigma}\sqrt{\frac{1}{b} - \frac{1}{m}} +\\
        &+ 2l_{\hat{F}}\mathbb{E}\left[\left\|x_{k} - x_{k - 1}\right\|\right]\mathds{1}_{\left\{b < m\right\}} + \tilde{\sigma}\sqrt{\frac{1}{b} - \frac{1}{m}} + \mathbb{E}\left[\hat{g}_{2}(x_{k}, B_{k - 1}) - \hat{f}_{2}^{*}\right]\leq\left\{\text{следствие \ref{lm:aux_variation_upper_bound}},~\eta_{k}\leq1\right\}\leq\\
        &\leq2\left(l_{\hat{F}}\min\left\{\sqrt{\frac{2P_{\hat{g}_{1}}}{L}},~\frac{M_{\hat{G}}}{L}\right\}\mathds{1}_{\left\{b < m\right\}} + \tilde{\sigma}\sqrt{\frac{1}{b} - \frac{1}{m}}\right) + \mathbb{E}\left[\hat{g}_{2}(x_{k}, B_{k - 1}) - \hat{f}_{2}^{*}\right] = c + a_{k}\leq\left\{\text{\eqref{eq:th4_geom_sum}}\right\}\leq\\
        &\leq a_{0}q^{k} + c\left(1 + q\left(\frac{1 - q^{k - 1}}{1 - q}\right)\mathds{1}_{\left\{k > 0\right\}}\right),~k\in\mathbb{Z}_{+}.
    \end{aligned}
\end{equation}
В силу, вообще говоря, возможности использования произвольной неотрицательной величины вместо $\hat{f}_{2}^{*}$ в \eqref{eq:th4_prelim_inference} и \eqref{eq:th4_grad_bound_mult} вывод при $\hat{f}_{2}^{*} = 0$ в \eqref{eq:th4_prelim_inference} и \eqref{eq:th4_grad_bound_mult} соответствует верхней границе на $\mathbb{E}\left[\left\|\nabla\hat{f}_{2}(x_{k})\right\|^{2}\right]$:
\begin{equation}\label{eq:th4_gradient_bound}
    \begin{aligned}
        &\mathbb{E}\left[\left\|\nabla\hat{f}_{2}(x_{k})\right\|^{2}\right]\leq 4M_{\hat{G}}^{2}\left(a_{0}q^{k} + c\left(1 + q\left(\frac{1 - q^{k - 1}}{1 - q}\right)\mathds{1}_{\left\{k > 0\right\}}\right)\right),~k\in\mathbb{Z}_{+}.
    \end{aligned}
\end{equation}

Упростим вид выражения в \eqref{eq:th4_delta_value} и \eqref{eq:th4_gradient_bound}:
\begin{equation}\label{eq:th4_common_delta_term}
    \begin{aligned}
        &a_{0}q^{k} + c\left(1 + q\left(\frac{1 - q^{k - 1}}{1 - q}\right)\mathds{1}_{\left\{k > 0\right\}}\right)\leq a_{0}q^{k} + c\left(1 + \frac{q}{1 - q}\right) = \mathbb{E}\left[\hat{g}_{2}(x_{0}, B_{0})\right]\left(1 - \frac{\eta(2 - \eta)\mu}{2\left(\gamma L_{\hat{F}}P_{\hat{g}_{1}} + \mu\right)}\right)^{k} +\\
        &+ 2\left(l_{\hat{F}}\min\left\{\sqrt{\frac{2P_{\hat{g}_{1}}}{L}},~\frac{M_{\hat{G}}}{L}\right\}\mathds{1}_{\left\{b < m\right\}} + \tilde{\sigma}\sqrt{\frac{1}{b} - \frac{1}{m}}\right)\left(1 + \left(1 - \frac{\eta(2 - \eta)\mu}{2\left(\gamma L_{\hat{F}}P_{\hat{g}_{1}} + \mu\right)}\right)\frac{2\left(\gamma L_{\hat{F}}P_{\hat{g}_{1}} + \mu\right)}{\eta(2 - \eta)\mu}\right)\leq\\
        &\leq\mathbb{E}\left[\hat{f}_{2}(x_{0})\right]\exp\left(-\frac{k\eta(2 - \eta)\mu}{2\left(\gamma L_{\hat{F}}P_{\hat{g}_{1}} + \mu\right)}\right) + 4\left(l_{\hat{F}}\min\left\{\sqrt{\frac{2P_{\hat{g}_{1}}}{L}},~\frac{M_{\hat{G}}}{L}\right\}\mathds{1}_{\left\{b < m\right\}} + \tilde{\sigma}\sqrt{\frac{1}{b} - \frac{1}{m}}\right)\left(\frac{\gamma L_{\hat{F}}P_{\hat{g}_{1}} + \mu}{\eta(2 - \eta)\mu}\right) =\\
        &= \Delta_{k,b},~k\in\mathbb{Z}_{+},~b\in\overline{1, m}.
    \end{aligned}
\end{equation}
Таким образом, оценки \eqref{eq:th4_delta_value} и \eqref{eq:th4_gradient_bound} с помощью $\Delta_{k,b}$ приведены в соответствие с \eqref{eq:stoch_lin_conv_1}:
\begin{equation*}
    \begin{cases}
        \mathbb{E}\left[\hat{f}_{2}(x_{k})\right] \leq \hat{f}_{2}^{*} + \Delta_{k,b};\\[5pt]
        \mathbb{E}\left[\left\|\nabla\hat{f}_{2}(x_{k})\right\|^{2}\right]\leq 4M_{\hat{G}}^{2}\Delta_{k,b}.
    \end{cases}
\end{equation*}
\end{proof}

В теореме \ref{th:double_stoch_sublin_conv_main} продемонстрировано улучшение оценки сходимости относительно шума батча на каждой итерации при смене правила \eqref{eq:stoch_direct_update_rule} на правило \eqref{eq:double_stoch_direct_update_rule}.

\begin{re:theorem}\label{th:double_stoch_sublin_conv}
    Пусть выполнены предположения \ref{as:1}, \ref{as:2}, \ref{as:3} и \ref{as:4}. Рассмотрим метод Гаусса--Ньютона, реализованный по схеме \ref{alg:gen_double_stoch_gnm} со стратегией вычисления $x_{k + 1}$ \eqref{eq:double_stoch_direct_update_rule}, в которой
    \begin{equation*}
    \begin{aligned}
        \eta_{k} &= \frac{2\left\langle\hat{G}^{'}(x_{k}, B_{k})^{*}\hat{G}(x_{k}, B_{k}),~\left(\hat{G}^{'}(x_{k}, \tilde{B}_{k})^{*}\hat{G}^{'}(x_{k}, \tilde{B}_{k}) + \tilde{\tau}_{k}L_{k}I_{n}\right)^{-1}\hat{G}^{'}(x_{k}, B_{k})^{*}\hat{G}(x_{k}, B_{k})\right\rangle}{l_{k}\left\langle\hat{G}^{'}(x_{k}, B_{k})^{*}\hat{G}(x_{k}, B_{k}),~\left(\hat{G}^{'}(x_{k}, \tilde{B}_{k})^{*}\hat{G}^{'}(x_{k}, \tilde{B}_{k}) + \tilde{\tau}_{k}L_{k}I_{n}\right)^{-2}\hat{G}^{'}(x_{k}, B_{k})^{*}\hat{G}(x_{k}, B_{k})\right\rangle},\\
        &\tilde{\mathcal{T}}\geq\tilde{\tau}_{k}\geq\tilde{\tau} > 0,~L_{k}\geq L > 0,~k\in\mathbb{Z}_{+}.
    \end{aligned}
\end{equation*}
Тогда при независимом сэмплировании $B_{k}$ и $\tilde{B}_{k}$:
\begin{equation*}
    \begin{aligned}
        \mathbb{E}\left[\min\limits_{i\in\overline{0, k - 1}}\left\{\left\|\nabla\hat{f}_{2}(x_{i})\right\|^{2}\right\}\right]&\leq2\gamma l_{\hat{g}_{2}}\left(\frac{M_{\hat{G}}^{2}}{\tilde{\tau}L} + 1\right)^{2}\left(\frac{\mathbb{E}\left[\hat{f}_{2}(x_{0})\right]}{k} +\right.\\
        &\left.+ \frac{4l_{\hat{F}}M_{\hat{G}}P_{\hat{g}_{1}}}{l}\left(\frac{M_{\hat{G}}^{2}}{\tilde{\tau}L} + 1\right)^{2}\mathds{1}_{\left\{b < m\right\}} + \tilde{\sigma}\sqrt{\frac{1}{b} - \frac{1}{m}}\right),~k\in\mathbb{N}.
    \end{aligned}
\end{equation*}
В случае сэмплирования одного батча на каждом шаге ($B_{k}\equiv\tilde{B}_{k}$) оценка сходимости следующая:
\begin{equation*}
    \begin{aligned}
        \mathbb{E}\left[\min\limits_{i\in\overline{0, k - 1}}\left\{\left\|\nabla\hat{f}_{2}(x_{i})\right\|^{2}\right\}\right]&\leq2\gamma l_{\hat{g}_{2}}\left(\frac{M_{\hat{G}}^{2}}{\tilde{\tau}L} + 1\right)^{2}\left(\frac{\mathbb{E}\left[\hat{f}_{2}(x_{0})\right]}{k} +\right.\\
        &\left.+ 2l_{\hat{F}}\min\left\{\sqrt{\frac{1}{L}\left(\tilde{\mathcal{T}} + \frac{P_{\hat{g}_{1}}^{2}}{\tilde{\tau}}\right)},~\frac{2M_{\hat{G}}P_{\hat{g}_{1}}}{l}\left(\frac{M_{\hat{G}}^{2}}{\tilde{\tau}L} + 1\right)^{2}\right\}\mathds{1}_{\left\{b < m\right\}} + \tilde{\sigma}\sqrt{\frac{1}{b} - \frac{1}{m}}\right),\\
        &k\in\mathbb{N}.
    \end{aligned}
\end{equation*}
Оператор математического ожидания $\mathbb{E}\left[\cdot\right]$ усредняет по всей случайности процесса оптимизации.
\end{re:theorem}
\begin{proof}
Условия теоремы соответствуют следующему соотношению, согласно лемме \ref{lm:aux_double_stoch_update}:
\begin{equation*}
    \begin{aligned}
        \hat{g}_{2}(x_{k + 1}, B_{k})&\leq\hat{g}_{2}(x_{k}, B_{k}) - \frac{\left(\left\langle\nabla_{x_{k}}\hat{g}_{2}(x_{k}, B_{k}),~\left(\hat{G}^{'}(x_{k}, \tilde{B}_{k})^{*}\hat{G}^{'}(x_{k}, \tilde{B}_{k}) + \tilde{\tau}_{k}L_{k}I_{n}\right)^{-1}\nabla_{x_{k}}\hat{g}_{2}(x_{k}, B_{k})\right\rangle\right)^{2}}{2l_{k}\left\langle\nabla_{x_{k}}\hat{g}_{2}(x_{k}, B_{k}),~\left(\hat{G}^{'}(x_{k}, \tilde{B}_{k})^{*}\hat{G}^{'}(x_{k}, \tilde{B}_{k}) + \tilde{\tau}_{k}L_{k}I_{n}\right)^{-2}\nabla_{x_{k}}\hat{g}_{2}(x_{k}, B_{k})\right\rangle},\\
        &k\in\mathbb{Z}_{+}.
    \end{aligned}
\end{equation*}
Оценим сверху правую часть неравенства выше:
\begin{equation*}
    \begin{aligned}
        \hat{g}_{2}(x_{k + 1}, B_{k})&\leq\hat{g}_{2}(x_{k}, B_{k}) - \frac{\left(\left\langle\nabla_{x_{k}}\hat{g}_{2}(x_{k}, B_{k}),~\left(\hat{G}^{'}(x_{k}, \tilde{B}_{k})^{*}\hat{G}^{'}(x_{k}, \tilde{B}_{k}) + \tilde{\tau}_{k}L_{k}I_{n}\right)^{-1}\nabla_{x_{k}}\hat{g}_{2}(x_{k}, B_{k})\right\rangle\right)^{2}}{2l_{k}\left\langle\nabla_{x_{k}}\hat{g}_{2}(x_{k}, B_{k}),~\left(\hat{G}^{'}(x_{k}, \tilde{B}_{k})^{*}\hat{G}^{'}(x_{k}, \tilde{B}_{k}) + \tilde{\tau}_{k}L_{k}I_{n}\right)^{-2}\nabla_{x_{k}}\hat{g}_{2}(x_{k}, B_{k})\right\rangle}\leq\\
        &\leq\hat{g}_{2}(x_{k}, B_{k}) - \frac{\left\|\nabla_{x_{k}}\hat{g}_{2}(x_{k}, B_{k})\right\|^{2}\left(\tilde{\tau}_{k}L_{k}\right)^{2}}{2l_{k}\left(M_{\hat{G}}^{2} + \tilde{\tau}_{k}L_{k}\right)^{2}} = \hat{g}_{2}(x_{k}, B_{k}) - \frac{\left\|\nabla_{x_{k}}\hat{g}_{2}(x_{k}, B_{k})\right\|^{2}}{2l_{k}}\underbrace{\left(\frac{\tilde{\tau}_{k}L_{k}}{M_{\hat{G}}^{2} + \tilde{\tau}_{k}L_{k}}\right)^{2}}_{\text{возрастает по }\tilde{\tau}_{k}L_{k}}\leq\\
        &\leq\hat{g}_{2}(x_{k}, B_{k}) - \frac{\left\|\nabla_{x_{k}}\hat{g}_{2}(x_{k}, B_{k})\right\|^{2}}{2l_{k}}\left(\frac{M_{\hat{G}}^{2}}{\tilde{\tau}L} + 1\right)^{-2}\leq\hat{g}_{2}(x_{k}, B_{k}) -\\
        &- \frac{\left\|\nabla_{x_{k}}\hat{g}_{2}(x_{k}, B_{k})\right\|^{2}}{2\gamma l_{\hat{g}_{2}}}\left(\frac{M_{\hat{G}}^{2}}{\tilde{\tau}L} + 1\right)^{-2}\Rightarrow\\
        &\Rightarrow\frac{\left\|\nabla_{x_{k}}\hat{g}_{2}(x_{k}, B_{k})\right\|^{2}}{2\gamma l_{\hat{g}_{2}}}\left(\frac{M_{\hat{G}}^{2}}{\tilde{\tau}L} + 1\right)^{-2}\leq\hat{g}_{2}(x_{k}, B_{k}) - \hat{g}_{2}(x_{k + 1}, B_{k}).
    \end{aligned}
\end{equation*}
Усредним эти неравенства для итераций $0,~\dots, k - 1$ с помощью оператора $\mathbb{E}\left[\cdot\right]$ и просуммируем:
\begin{equation}\label{eq:double_stoch_sublin_conv_eq1}
    \begin{aligned}
        &\frac{1}{2\gamma l_{\hat{g}_{2}}}\left(\frac{M_{\hat{G}}^{2}}{\tilde{\tau}L} + 1\right)^{-2}\sum\limits_{i = 0}^{k - 1}\mathbb{E}\left[\left\|\nabla_{x_{i}}\hat{g}_{2}(x_{i}, B_{i})\right\|^{2}\right]\leq\sum\limits_{i = 0}^{k - 1}\mathbb{E}\left[\hat{g}_{2}(x_{i}, B_{i}) - \hat{g}_{2}(x_{i + 1}, B_{i})\right]\Rightarrow\\
        &\Rightarrow\frac{k}{2\gamma l_{\hat{g}_{2}}}\left(\frac{M_{\hat{G}}^{2}}{\tilde{\tau}L} + 1\right)^{-2}\mathbb{E}\left[\min\limits_{i\in\overline{0, k - 1}}\left\{\left\|\nabla\hat{f}_{2}(x_{i})\right\|^{2}\right\}\right]\leq\frac{1}{2\gamma l_{\hat{g}_{2}}}\left(\frac{M_{\hat{G}}^{2}}{\tilde{\tau}L} + 1\right)^{-2}\mathbb{E}\left[\sum\limits_{i = 0}^{k - 1}\left\{\left\|\nabla\hat{f}_{2}(x_{i})\right\|^{2}\right\}\right]\leq\\
        &\leq\frac{1}{2\gamma l_{\hat{g}_{2}}}\left(\frac{M_{\hat{G}}^{2}}{\tilde{\tau}L} + 1\right)^{-2}\mathbb{E}\left[\sum\limits_{i = 0}^{k - 1}\left\|\mathbb{E}\left[\nabla_{x_{i}}\hat{g}_{2}(x_{i}, B_{i})\right]\right\|^{2}\right]\leq\frac{1}{2\gamma l_{\hat{g}_{2}}}\left(\frac{M_{\hat{G}}^{2}}{\tilde{\tau}L} + 1\right)^{-2}\mathbb{E}\left[\sum\limits_{i = 0}^{k - 1}\left\|\nabla_{x_{i}}\hat{g}_{2}(x_{i}, B_{i})\right\|^{2}\right]\leq\\
        &\leq\mathbb{E}\left[\hat{g}_{2}(x_{0}, B_{0})\right] + \sum\limits_{i = 0}^{k - 2}\mathbb{E}\left[\hat{g}_{2}(x_{i + 1}, B_{i + 1}) - \hat{g}_{2}(x_{i + 1}, B_{i})\right] - \mathbb{E}\left[\hat{g}_{2}(x_{k}, B_{k - 1})\right]\leq\mathbb{E}\left[\hat{f}_{2}(x_{0})\right] +\\
        &+ \sum\limits_{i = 0}^{k - 1}\underbrace{\mathbb{E}\left[\left|\hat{f}_{2}(x_{i + 1}) - \hat{g}_{2}(x_{i + 1}, B_{i})\right|\right]}_{\text{ограничено по лемме \ref{lm:aux_bounded_deviation}}}\leq\mathbb{E}\left[\hat{f}_{2}(x_{0})\right] + \sum\limits_{i = 0}^{k - 1}\left(2l_{\hat{F}}\mathbb{E}\left[\left\|x_{i + 1} - x_{i}\right\|\right]\mathds{1}_{\left\{b < m\right\}} + \tilde{\sigma}\sqrt{\frac{1}{b} - \frac{1}{m}}\right).
    \end{aligned}
\end{equation}
Оценим сверху значение масштаба шага $\eta_{i}$:
\begin{equation}\label{eq:double_stoch_sublin_conv_eq2}
    \begin{aligned}
        \eta_{i} &= \frac{2\left\langle\hat{G}^{'}(x_{i}, B_{i})^{*}\hat{G}(x_{i}, B_{i}),~\left(\hat{G}^{'}(x_{i}, \tilde{B}_{i})^{*}\hat{G}^{'}(x_{i}, \tilde{B}_{i}) + \tilde{\tau}_{i}L_{i}I_{n}\right)^{-1}\hat{G}^{'}(x_{i}, B_{i})^{*}\hat{G}(x_{i}, B_{i})\right\rangle}{l_{i}\left\langle\hat{G}^{'}(x_{i}, B_{i})^{*}\hat{G}(x_{i}, B_{i}),~\left(\hat{G}^{'}(x_{i}, \tilde{B}_{i})^{*}\hat{G}^{'}(x_{i}, \tilde{B}_{i}) + \tilde{\tau}_{i}L_{i}I_{n}\right)^{-2}\hat{G}^{'}(x_{i}, B_{i})^{*}\hat{G}(x_{i}, B_{i})\right\rangle}\leq\frac{2}{l_{i}}\frac{\left(\tilde{\tau}_{i}L_{i} + M_{\hat{G}}^{2}\right)^{2}}{\tilde{\tau}_{i}L_{i}}.
    \end{aligned}
\end{equation}
Рассмотрим случай независимого сэмплирования $B_{k}$ и $\tilde{B}_{k}$:
\begin{equation*}
    \begin{aligned}
        &\frac{k}{2\gamma l_{\hat{g}_{2}}}\left(\frac{M_{\hat{G}}^{2}}{\tilde{\tau}L} + 1\right)^{-2}\mathbb{E}\left[\min\limits_{i\in\overline{0, k - 1}}\left\{\left\|\nabla\hat{f}_{2}(x_{i})\right\|^{2}\right\}\right]\leq\mathbb{E}\left[\hat{f}_{2}(x_{0})\right] +\\
        &+ \sum\limits_{i = 0}^{k - 1}\left(2l_{\hat{F}}\mathbb{E}\left[\left\|x_{i + 1} - x_{i}\right\|\right]\mathds{1}_{\left\{b < m\right\}} + \tilde{\sigma}\sqrt{\frac{1}{b} - \frac{1}{m}}\right)\leq\left\{\text{лемма \ref{lm:aux_bounded_variation}, оценка \eqref{eq:aux_bounded_variation_upper_bound_2}}\right\}\leq\mathbb{E}\left[\hat{f}_{2}(x_{0})\right] +\\
        &+ \sum\limits_{i = 0}^{k - 1}\left(2l_{\hat{F}}\frac{\eta_{i}M_{\hat{G}}P_{\hat{g}_{1}}}{\tilde{\tau}_{i}L_{i}}\mathds{1}_{\left\{b < m\right\}} + \tilde{\sigma}\sqrt{\frac{1}{b} - \frac{1}{m}}\right)\leq\left\{\text{\eqref{eq:double_stoch_sublin_conv_eq2}}\right\}\leq\mathbb{E}\left[\hat{f}_{2}(x_{0})\right] +\\
        &+ \sum\limits_{i = 0}^{k - 1}\left(\frac{4l_{\hat{F}}M_{\hat{G}}P_{\hat{g}_{1}}}{l_{i}}\left(\frac{M_{\hat{G}}^{2}}{\tilde{\tau}_{i}L_{i}} + 1\right)^{2}\mathds{1}_{\left\{b < m\right\}} + \tilde{\sigma}\sqrt{\frac{1}{b} - \frac{1}{m}}\right)\leq\mathbb{E}\left[\hat{f}_{2}(x_{0})\right] + k\left(\frac{4l_{\hat{F}}M_{\hat{G}}P_{\hat{g}_{1}}}{l}\left(\frac{M_{\hat{G}}^{2}}{\tilde{\tau}L} + 1\right)^{2}\mathds{1}_{\left\{b < m\right\}} +\right.\\
        &\left.+ \tilde{\sigma}\sqrt{\frac{1}{b} - \frac{1}{m}}\right)\Rightarrow\mathbb{E}\left[\min\limits_{i\in\overline{0, k - 1}}\left\{\left\|\nabla\hat{f}_{2}(x_{i})\right\|^{2}\right\}\right]\leq2\gamma l_{\hat{g}_{2}}\left(\frac{M_{\hat{G}}^{2}}{\tilde{\tau}L} + 1\right)^{2}\left(\frac{\mathbb{E}\left[\hat{f}_{2}(x_{0})\right]}{k} + \right.\\
        &\left.+ \frac{4l_{\hat{F}}M_{\hat{G}}P_{\hat{g}_{1}}}{l}\left(\frac{M_{\hat{G}}^{2}}{\tilde{\tau}L} + 1\right)^{2}\mathds{1}_{\left\{b < m\right\}} + \tilde{\sigma}\sqrt{\frac{1}{b} - \frac{1}{m}}\right).
    \end{aligned}
\end{equation*}
Теперь в \eqref{eq:double_stoch_sublin_conv_eq1} рассмотрим сэмплирование одного батча на каждом шаге ($B_{k}\equiv\tilde{B}_{k}$):
\begin{equation*}
    \begin{aligned}
        &\frac{k}{2\gamma l_{\hat{g}_{2}}}\left(\frac{M_{\hat{G}}^{2}}{\tilde{\tau}L} + 1\right)^{-2}\mathbb{E}\left[\min\limits_{i\in\overline{0, k - 1}}\left\{\left\|\nabla\hat{f}_{2}(x_{i})\right\|^{2}\right\}\right]\leq\mathbb{E}\left[\hat{f}_{2}(x_{0})\right] +\\
        &+ \sum\limits_{i = 0}^{k - 1}\left(2l_{\hat{F}}\mathbb{E}\left[\left\|x_{i + 1} - x_{i}\right\|\right]\mathds{1}_{\left\{b < m\right\}} + \tilde{\sigma}\sqrt{\frac{1}{b} - \frac{1}{m}}\right)\leq\left\{\text{следствие \ref{lm:aux_double_stoch_variation_upper_bound}},~\tilde{\tau}_{k}\in[\tilde{\tau},~\tilde{\mathcal{T}}]\right\}\leq\\
        &\leq\mathbb{E}\left[\hat{f}_{2}(x_{0})\right] + \sum\limits_{i = 0}^{k - 1}\left(2l_{\hat{F}}\min\left\{\sqrt{\frac{1}{L}\left(\tilde{\mathcal{T}} + \frac{P_{\hat{g}_{1}}^{2}}{\tilde{\tau}}\right)},~\frac{\eta_{i}M_{\hat{G}}P_{\hat{g}_{1}}}{\tilde{\tau}_{i}L_{i}}\right\}\mathds{1}_{\left\{b < m\right\}} + \tilde{\sigma}\sqrt{\frac{1}{b} - \frac{1}{m}}\right)\leq\left\{\text{\eqref{eq:double_stoch_sublin_conv_eq2}}\right\}\leq\\
        &\leq\mathbb{E}\left[\hat{f}_{2}(x_{0})\right] + \sum\limits_{i = 0}^{k - 1}\left(2l_{\hat{F}}\min\left\{\sqrt{\frac{1}{L}\left(\tilde{\mathcal{T}} + \frac{P_{\hat{g}_{1}}^{2}}{\tilde{\tau}}\right)},~\frac{2M_{\hat{G}}P_{\hat{g}_{1}}}{l_{i}}\left(\frac{M_{\hat{G}}^{2}}{\tilde{\tau}_{i}L_{i}} + 1\right)^{2}\right\}\mathds{1}_{\left\{b < m\right\}} + \tilde{\sigma}\sqrt{\frac{1}{b} - \frac{1}{m}}\right)\leq\\
        &\leq\mathbb{E}\left[\hat{f}_{2}(x_{0})\right] + k\left(2l_{\hat{F}}\min\left\{\sqrt{\frac{1}{L}\left(\tilde{\mathcal{T}} + \frac{P_{\hat{g}_{1}}^{2}}{\tilde{\tau}}\right)},~\frac{2M_{\hat{G}}P_{\hat{g}_{1}}}{l}\left(\frac{M_{\hat{G}}^{2}}{\tilde{\tau}L} + 1\right)^{2}\right\}\mathds{1}_{\left\{b < m\right\}} + \tilde{\sigma}\sqrt{\frac{1}{b} - \frac{1}{m}}\right)\Rightarrow\\
        &\Rightarrow\mathbb{E}\left[\min\limits_{i\in\overline{0, k - 1}}\left\{\left\|\nabla\hat{f}_{2}(x_{i})\right\|^{2}\right\}\right]\leq2\gamma l_{\hat{g}_{2}}\left(\frac{M_{\hat{G}}^{2}}{\tilde{\tau}L} + 1\right)^{2}\left(\frac{\mathbb{E}\left[\hat{f}_{2}(x_{0})\right]}{k} +\right.\\
        &\left.+ 2l_{\hat{F}}\min\left\{\sqrt{\frac{1}{L}\left(\tilde{\mathcal{T}} + \frac{P_{\hat{g}_{1}}^{2}}{\tilde{\tau}}\right)},~\frac{2M_{\hat{G}}P_{\hat{g}_{1}}}{l}\left(\frac{M_{\hat{G}}^{2}}{\tilde{\tau}L} + 1\right)^{2}\right\}\mathds{1}_{\left\{b < m\right\}} + \tilde{\sigma}\sqrt{\frac{1}{b} - \frac{1}{m}}\right).
    \end{aligned}
\end{equation*}
\end{proof}
\begin{re:th:corollary}\label{th:double_stoch_sublin_conv_cor1}
При безграничном увеличении $L\rightarrow+\infty$, сохраняя выполнение неравенства $L_{k}\geq L$, в случае сэмплирования одного батча на шаге метода оценка на средний минимальный квадрат нормы градиента уменьшается, сходясь к следующей форме:
\begin{equation*}
    \begin{aligned}
        \mathbb{E}\left[\min\limits_{i\in\overline{0, k - 1}}\left\{\left\|\nabla\hat{f}_{2}(x_{i})\right\|^{2}\right\}\right]&\leq2\gamma l_{\hat{g}_{2}}\left(\frac{\mathbb{E}\left[\hat{f}_{2}(x_{0})\right]}{k} + \tilde{\sigma}\sqrt{\frac{1}{b} - \frac{1}{m}}\right) =\\
        &= 4\gamma\left(M_{\hat{G}}^{2} + L_{\hat{F}}P_{\hat{g}_{1}}\right)\left(\frac{\mathbb{E}\left[\hat{f}_{2}(x_{0})\right]}{k} + \tilde{\sigma}\sqrt{\frac{1}{b} - \frac{1}{m}}\right),~k\in\mathbb{N}.
    \end{aligned}
\end{equation*}
При этом масштабированный шаг метода Гаусса--Ньютона преобразуется в шаг градиентного метода:
\begin{equation*}
    \begin{aligned}
        &x_{k + 1} = x_{k} - \eta_{k}\left(\hat{G}^{'}(x_{k}, \tilde{B}_{k})^{*}\hat{G}^{'}(x_{k}, \tilde{B}_{k}) + \tilde{\tau}_{k}L_{k}I_{n}\right)^{-1}\hat{G}^{'}(x_{k}, B_{k})^{*}\hat{G}(x_{k}, B_{k}) = x_{k} -\\
        &- \frac{\left\langle\hat{G}^{'}(x_{k}, B_{k})^{*}\hat{G}(x_{k}, B_{k}),~\left(\frac{\hat{G}^{'}(x_{k}, \tilde{B}_{k})^{*}\hat{G}^{'}(x_{k}, \tilde{B}_{k})}{\tilde{\tau}_{k}L_{k}} + I_{n}\right)^{-1}\hat{G}^{'}(x_{k}, B_{k})^{*}\hat{G}(x_{k}, B_{k})\right\rangle}{l_{k}\left\langle\hat{G}^{'}(x_{k}, B_{k})^{*}\hat{G}(x_{k}, B_{k}),~\left(\frac{\hat{G}^{'}(x_{k}, \tilde{B}_{k})^{*}\hat{G}^{'}(x_{k}, \tilde{B}_{k})}{\tilde{\tau}_{k}L_{k}} + I_{n}\right)^{-2}\hat{G}^{'}(x_{k}, B_{k})^{*}\hat{G}(x_{k}, B_{k})\right\rangle}\left(\frac{\hat{G}^{'}(x_{k}, \tilde{B}_{k})^{*}\hat{G}^{'}(x_{k}, \tilde{B}_{k})}{\tilde{\tau}_{k}L_{k}} +\right.\\
        &\left.+ I_{n}\right)^{-1}\left(2\hat{G}^{'}(x_{k}, B_{k})^{*}\hat{G}(x_{k}, B_{k})\right)\underset{L_{k}\rightarrow+\infty}{\longrightarrow}x_{k + 1} = x_{k} - \frac{1}{l_{k}}\nabla_{x_{k}}\hat{g}_{2}(x_{k}, B_{k}).
    \end{aligned}
\end{equation*}
\end{re:th:corollary}

Как и в теореме \ref{th:double_stoch_sublin_conv}, теорема \ref{th:double_stoch_lin_conv_main} содержит в себе гибкую относительно шума батчей оценку.

\begin{re:theorem}\label{th:double_stoch_lin_conv}
    Пусть выполнены предположения \ref{as:1}, \ref{as:2}, \ref{as:3}, \ref{as:4} и \ref{as:5}. Рассмотрим метод Гаусса--Ньютона, реализованный по схеме \ref{alg:gen_double_stoch_gnm} со стратегией вычисления $x_{k + 1}$ \eqref{eq:double_stoch_direct_update_rule}, в которой
    \begin{equation*}
    \begin{aligned}
        \eta_{k} &= \frac{2\left\langle\hat{G}^{'}(x_{k}, B_{k})^{*}\hat{G}(x_{k}, B_{k}),~\left(\hat{G}^{'}(x_{k}, \tilde{B}_{k})^{*}\hat{G}^{'}(x_{k}, \tilde{B}_{k}) + \tilde{\tau}_{k}L_{k}I_{n}\right)^{-1}\hat{G}^{'}(x_{k}, B_{k})^{*}\hat{G}(x_{k}, B_{k})\right\rangle}{l_{k}\left\langle\hat{G}^{'}(x_{k}, B_{k})^{*}\hat{G}(x_{k}, B_{k}),~\left(\hat{G}^{'}(x_{k}, \tilde{B}_{k})^{*}\hat{G}^{'}(x_{k}, \tilde{B}_{k}) + \tilde{\tau}_{k}L_{k}I_{n}\right)^{-2}\hat{G}^{'}(x_{k}, B_{k})^{*}\hat{G}(x_{k}, B_{k})\right\rangle},\\
        &\tilde{\mathcal{T}}\geq\tilde{\tau}_{k}\geq\tilde{\tau} > 0,~L_{k}\geq L > 0,~k\in\mathbb{Z}_{+}.
    \end{aligned}
\end{equation*}
Тогда:
\begin{equation*}
    \begin{aligned}
        &\begin{cases}
            \mathbb{E}\left[\left\|\nabla\hat{f}_{2}(x_{k})\right\|^{2}\right]\leq4M_{\hat{G}}^{2}\hat{\Delta}_{k,b};\\[10pt]
            \mathbb{E}\left[\hat{f}_{2}(x_{k})\right]\leq\hat{f}_{2}^{*} + \hat{\Delta}_{k,b};
        \end{cases}
    \end{aligned}
\end{equation*}
где при $k\in\mathbb{Z}_{+}$ и $b\in\overline{1,~\min\{m, n\}}$ оценка $\hat{\Delta}_{k, b}$ определяется следующим образом:
\begin{equation*}
    \begin{aligned}
        \hat{\Delta}_{k, b} &= \mathbb{E}\left[\hat{f}_{2}(x_{0})\right]\exp\left(-\frac{2\mu k}{\gamma l_{\hat{g}_{2}}}\left(\frac{\tilde{\tau}L}{M_{\hat{G}}^{2} + \tilde{\tau}L}\right)^{2}\right) +\\
        &+ \frac{\gamma l_{\hat{g}_{2}}}{\mu}\left(\tilde{\sigma}\sqrt{\frac{1}{b} - \frac{1}{m}} + \frac{2l_{\hat{F}}M_{\hat{G}}P_{\hat{g}_{1}}}{l}\left(\frac{M_{\hat{G}}^{2}}{\tilde{\tau}L} + 1\right)^{2}\mathds{1}_{\left\{b < m\right\}}\right)\left(\frac{M_{\hat{G}}^{2}}{\tilde{\tau}L} + 1\right)^{2}
    \end{aligned}
\end{equation*}
при независимом сэмплировании $B_{k}$ и $\tilde{B}_{k}$ и
\begin{equation*}
    \begin{aligned}
        \hat{\Delta}_{k, b} &= \mathbb{E}\left[\hat{f}_{2}(x_{0})\right]\exp\left(-\frac{2\mu k}{\gamma l_{\hat{g}_{2}}}\left(\frac{\tilde{\tau}L}{M_{\hat{G}}^{2} + \tilde{\tau}L}\right)^{2}\right) +\\
        &+ \frac{\gamma l_{\hat{g}_{2}}}{\mu}\left(\tilde{\sigma}\sqrt{\frac{1}{b} - \frac{1}{m}} + l_{\hat{F}}\min\left\{\sqrt{\frac{1}{L}\left(\tilde{\mathcal{T}} + \frac{P_{\hat{g}_{1}}^{2}}{\tilde{\tau}}\right)},~\frac{2M_{\hat{G}}P_{\hat{g}_{1}}}{l}\left(\frac{M_{\hat{G}}^{2}}{\tilde{\tau}L} + 1\right)^{2}\right\}\mathds{1}_{\left\{b < m\right\}}\right)\left(\frac{M_{\hat{G}}^{2}}{\tilde{\tau}L} + 1\right)^{2}
    \end{aligned}
\end{equation*}
в случае сэмплирования на каждом шаге одного батча ($B_{k}\equiv\tilde{B}_{k}$). Оператор математического ожидания $\mathbb{E}\left[\cdot\right]$ усредняет по всей случайности процесса оптимизации.
\end{re:theorem}
\begin{proof}
Условия теоремы позволяют применить схему доказательства из теоремы \ref{th:double_stoch_sublin_conv} для получения следующего неравенства:
\begin{equation*}
    \begin{aligned}
        &\frac{2\mu\hat{g}_{2}(x_{k}, B_{k})}{\gamma l_{\hat{g}_{2}}}\left(\frac{M_{\hat{G}}^{2}}{\tilde{\tau}L} + 1\right)^{-2}\leq\left\{\text{\eqref{eq:grad_norm_bounds}}\right\}\leq\frac{\left\|\nabla_{x_{k}}\hat{g}_{2}(x_{k}, B_{k})\right\|^{2}}{2\gamma l_{\hat{g}_{2}}}\left(\frac{M_{\hat{G}}^{2}}{\tilde{\tau}L} + 1\right)^{-2}\leq\hat{g}_{2}(x_{k}, B_{k}) - \hat{g}_{2}(x_{k + 1}, B_{k})\Rightarrow\\
        &\Rightarrow\hat{g}_{2}(x_{k + 1}, B_{k})\leq\hat{g}_{2}(x_{k}, B_{k})\left(1 - \frac{2\mu}{\gamma l_{\hat{g}_{2}}}\left(\frac{\tilde{\tau}L}{M_{\hat{G}}^{2} + \tilde{\tau}L}\right)^{2}\right),~k\in\mathbb{Z}_{+}.
    \end{aligned}
\end{equation*}
Полученное выше неравенство позволяет применить рассуждения из теоремы \ref{th:4} со следующей рекуррентной зависимостью:
\begin{equation*}
    \begin{cases}
        a_{k} &\overset{\operatorname{def}}{=} \mathbb{E}\left[\hat{g}_{2}(x_{k}, B_{k - 1}) - \hat{f}_{2}^{*}\right],~\hat{f}_{2}^{*}\geq0,~k\in\mathbb{N};\\
        a_{0}&\overset{\operatorname{def}}{=}\mathbb{E}\left[\hat{g}_{2}(x_{0}, B_{0})\right] = \mathbb{E}\left[\hat{f}_{2}(x_{0})\right];\\
        q &\overset{\operatorname{def}}{=} 1 - \frac{2\mu}{\gamma l_{\hat{g}_{2}}}\left(\frac{\tilde{\tau}L}{M_{\hat{G}}^{2} + \tilde{\tau}L}\right)^{2}\leq\exp\left(-\frac{2\mu}{\gamma l_{\hat{g}_{2}}}\left(\frac{\tilde{\tau}L}{M_{\hat{G}}^{2} + \tilde{\tau}L}\right)^{2}\right);\\
        a_{1}&\leq a_{0}q;\\
        a_{k + 1}&\leq\left(a_{k} + c_{k}\right)q,~k\in\mathbb{N}.
    \end{cases}
\end{equation*}
Условия данной теоремы определяют значение параметра  $c_{k}$ из теоремы \ref{th:4} в зависимости от способа сэмплирования $B_{k}$ и $\tilde{B}_{k}$, $k\in\mathbb{N}$:
\begin{equation*}
    \begin{cases}
        c_{k} &= 2\left(\left(\frac{l_{\hat{F}}\eta_{k}M_{\hat{G}}P_{\hat{g}_{1}}}{\tilde{\tau}_{k}L_{k}}\right)\mathds{1}_{\left\{b < m\right\}} + \tilde{\sigma}\sqrt{\frac{1}{b} - \frac{1}{m}}\right),~B_{k}\text{ и }\tilde{B}_{k}\text{ независимы};\\[10pt]
        c_{k} &= 2\left(l_{\hat{F}}\min\left\{\sqrt{\frac{1}{L}\left(\tilde{\mathcal{T}} + \frac{P_{\hat{g}_{1}}^{2}}{\tilde{\tau}}\right)},~\frac{\eta_{k}M_{\hat{G}}P_{\hat{g}_{1}}}{\tilde{\tau}_{k}L_{k}}\right\}\mathds{1}_{\left\{b < m\right\}} + \tilde{\sigma}\sqrt{\frac{1}{b} - \frac{1}{m}}\right),~B_{k}\equiv\tilde{B}_{k}.
    \end{cases}
\end{equation*}
По аналогии с рассуждениями из теоремы \ref{th:double_stoch_sublin_conv} ограничим сверху значения $c_{k},~k\in\mathbb{Z}_{+}$, используя \eqref{eq:double_stoch_sublin_conv_eq2}:
\begin{equation*}
    \begin{cases}
        c_{k} &\leq c = 2\left(\frac{2l_{\hat{F}}M_{\hat{G}}P_{\hat{g}_{1}}}{l}\left(\frac{M_{\hat{G}}^{2}}{\tilde{\tau}L} + 1\right)^{2}\mathds{1}_{\left\{b < m\right\}} + \tilde{\sigma}\sqrt{\frac{1}{b} - \frac{1}{m}}\right),~B_{k}\text{ и }\tilde{B}_{k}\text{ независимы};\\[10pt]
        c_{k} &\leq c = 2\left(l_{\hat{F}}\min\left\{\sqrt{\frac{1}{L}\left(\tilde{\mathcal{T}} + \frac{P_{\hat{g}_{1}}^{2}}{\tilde{\tau}}\right)},~\frac{2M_{\hat{G}}P_{\hat{g}_{1}}}{l}\left(\frac{M_{\hat{G}}^{2}}{\tilde{\tau}L} + 1\right)^{2}\right\}\mathds{1}_{\left\{b < m\right\}} + \tilde{\sigma}\sqrt{\frac{1}{b} - \frac{1}{m}}\right),~B_{k}\equiv\tilde{B}_{k}.
    \end{cases}
\end{equation*}
Из рекуррентной зависимости $a_{k + 1}\leq(a_{k} + c_{k})q\leq(a_{k} + c)q$ применением рассуждений из теоремы \ref{th:4} выводятся следующие величины:
\begin{equation*}
    \begin{cases}
        \begin{aligned}
            \hat{\Delta}_{k, b} &= \mathbb{E}\left[\hat{f}_{2}(x_{0})\right]\exp\left(-\frac{2\mu k}{\gamma l_{\hat{g}_{2}}}\left(\frac{\tilde{\tau}L}{M_{\hat{G}}^{2} + \tilde{\tau}L}\right)^{2}\right) + \frac{\gamma l_{\hat{g}_{2}}}{\mu}\left(\frac{2l_{\hat{F}}M_{\hat{G}}P_{\hat{g}_{1}}}{l}\left(\frac{M_{\hat{G}}^{2}}{\tilde{\tau}L} + 1\right)^{2}\mathds{1}_{\left\{b < m\right\}} +\right.\\
            &\left.+ \tilde{\sigma}\sqrt{\frac{1}{b} - \frac{1}{m}}\right)\left(\frac{M_{\hat{G}}^{2}}{\tilde{\tau}L} + 1\right)^{2},~B_{k}\text{ и }\tilde{B}_{k}\text{ независимы};
        \end{aligned}\\\\
        \begin{aligned}
            \hat{\Delta}_{k, b} &= \mathbb{E}\left[\hat{f}_{2}(x_{0})\right]\exp\left(-\frac{2\mu k}{\gamma l_{\hat{g}_{2}}}\left(\frac{\tilde{\tau}L}{M_{\hat{G}}^{2} + \tilde{\tau}L}\right)^{2}\right) +\\
            &+ \frac{\gamma l_{\hat{g}_{2}}}{\mu}\left(l_{\hat{F}}\min\left\{\sqrt{\frac{1}{L}\left(\tilde{\mathcal{T}} + \frac{P_{\hat{g}_{1}}^{2}}{\tilde{\tau}}\right)},~\frac{2M_{\hat{G}}P_{\hat{g}_{1}}}{l}\left(\frac{M_{\hat{G}}^{2}}{\tilde{\tau}L} + 1\right)^{2}\right\}\mathds{1}_{\left\{b < m\right\}} + \tilde{\sigma}\sqrt{\frac{1}{b} - \frac{1}{m}}\right)\left(\frac{M_{\hat{G}}^{2}}{\tilde{\tau}L} + 1\right)^{2},\\
            &B_{k}\equiv\tilde{B}_{k}.
        \end{aligned}
    \end{cases}
\end{equation*}
Таким образом, с помощью $\hat{\Delta}_{k,b}$ получены искомые оценки:
\begin{equation*}
    \begin{cases}
        \mathbb{E}\left[\hat{f}_{2}(x_{k})\right] \leq \hat{f}_{2}^{*} + \hat{\Delta}_{k,b};\\[5pt]
        \mathbb{E}\left[\left\|\nabla\hat{f}_{2}(x_{k})\right\|^{2}\right]\leq 4M_{\hat{G}}^{2}\hat{\Delta}_{k,b}.
    \end{cases}
\end{equation*}
Условия сходимости накладывают следующие ограничения:
\begin{equation*}
    \begin{aligned}
        0 &< \frac{2\mu}{\gamma l_{\hat{g}_{2}}}\left(\frac{\tilde{\tau}L}{M_{\hat{G}}^{2} + \tilde{\tau}L}\right)^{2} \leq 1\Rightarrow\gamma\geq\max\left\{1,~\frac{2\mu}{l_{\hat{g}_{2}}}\left(\frac{\tilde{\tau}L}{M_{\hat{G}}^{2} + \tilde{\tau}L}\right)^{2}\right\} =\\
        &= \max\left\{1,~\frac{\mu}{M_{\hat{G}}^{2} + P_{\hat{g}_{1}}L_{\hat{F}}}\left(\frac{\tilde{\tau}L}{M_{\hat{G}}^{2} + \tilde{\tau}L}\right)^{2}\right\} = 1,
    \end{aligned}
\end{equation*}
так как $\mu\leq M_{\hat{G}}^{2}$, согласно \eqref{eq:grad_norm_bounds}.
\end{proof}
\begin{re:th:corollary}\label{th:double_stoch_lin_conv_cor1}
При безграничном увеличении $L\rightarrow+\infty$, сохраняя выполнение неравенства $L\leq L_{k}$, в случае сэмплирования одного батча на каждом шаге метода величина $\hat{\Delta}_{k,b}$ уменьшается, сходясь к следующей форме:
\begin{equation*}
    \begin{aligned}
        \hat{\Delta}_{k, b} &= \mathbb{E}\left[\hat{f}_{2}(x_{0})\right]\exp\left(-\frac{2\mu k}{\gamma l_{\hat{g}_{2}}}\right) + \frac{\tilde{\sigma}\gamma l_{\hat{g}_{2}}}{\mu}\sqrt{\frac{1}{b} - \frac{1}{m}} =\\
        &= \mathbb{E}\left[\hat{f}_{2}(x_{0})\right]\exp\left(-\frac{\mu k}{\gamma\left(M_{\hat{G}}^{2} + L_{\hat{F}}P_{\hat{g}_{1}}\right)}\right) + \frac{2\tilde{\sigma}\gamma\left(M_{\hat{G}}^{2} + L_{\hat{F}}P_{\hat{g}_{1}}\right)}{\mu}\sqrt{\frac{1}{b} - \frac{1}{m}},~k\in\mathbb{Z}_{+},~b\in\overline{1,~\min\left\{m, n\right\}},\\
        &\gamma\geq\max\left\{1,~\frac{\mu}{M_{\hat{G}}^{2} + P_{\hat{g}_{1}}L_{\hat{F}}}\right\} = 1.
    \end{aligned}
\end{equation*}
При этом масштабированный шаг метода Гаусса--Ньютона преобразуется в шаг градиентного метода, как и в случае теоремы \ref{th:double_stoch_sublin_conv}:
\begin{equation*}
    \begin{aligned}
        &x_{k + 1} = x_{k} - \frac{1}{l_{k}}\nabla_{x_{k}}\hat{g}_{2}(x_{k}, B_{k}).
    \end{aligned}
\end{equation*}
\end{re:th:corollary}

В лемме ниже устанавливается верхняя оценка на количество внутренних итераций поиска подходящей локальной постоянной Липшица $L_{k}$ в худшем случае для схем \ref{alg:gen_stoch_unbounded_gnm} и \ref{alg:gen_stoch_flex_gnm} с переменными пределами поиска $L_{k}$.

\begin{lemma}\label{lm:aux_L_search_time_complexity}
    В схемах \ref{alg:gen_stoch_unbounded_gnm} и \ref{alg:gen_stoch_flex_gnm} метода Гаусса--Ньютона количество итераций подбора $L_{k}$ на переменном отрезке
    $$L_{k}\in\left[\max\left\{L,~\frac{L}{s_{k}}\right\},~\max\left\{\tilde{\gamma}L_{\hat{F}},~\frac{\gamma L_{\hat{F}}}{s_{k}}\right\}\right],~\gamma\geq\tilde{\gamma}\geq1,~s_{k} > 0,~L\in\left(0,~\tilde{\gamma}L_{\hat{F}}\right]$$
    сверху ограничено значением $\operatorname{O}\left(\left\lceil\log_{2}\left(\frac{\gamma L_{\hat{F}}}{L}\right)\right\rceil + 1\right)$ на каждом шаге $k\in\mathbb{Z}_{+}$  метода.
\end{lemma}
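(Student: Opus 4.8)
The plan is to recognize the $L_k$-search in both schemes as a capped geometric (doubling) search and to bound the number of internal iterations by the logarithm of the ratio of the two endpoints of the search interval. First I would observe that the failure branch of the search (step~4 of scheme~\ref{alg:gen_stoch_unbounded_gnm}, respectively step~5 of scheme~\ref{alg:gen_stoch_flex_gnm}) replaces the current candidate by $\min\{2L_k, \max\{\tilde{\gamma}L_{\hat{F}}, \gamma L_{\hat{F}}/s_k\}\}$, so that as long as the cap is not yet reached the candidate is at least doubled. Hence the search produces a geometric sequence $L_k, 2L_k, 4L_k, \dots$ that starts from the lower endpoint $\max\{L, L/s_k\}$ returned by the final step of the previous outer iteration (or by the initialization at $k=0$) and never exceeds the upper endpoint $\max\{\tilde{\gamma}L_{\hat{F}}, \gamma L_{\hat{F}}/s_k\}$. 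In particular the starting value is always at least $L$.

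Next I would use Lemma~\ref{lm:aux_upper_model} to guarantee termination inside the interval. For every $L_k \geq L_{\hat{F}}$ the stochastic local model $\hat{\psi}_{x_k, L_k, \tau_k}(\cdot, B_k)$ majorizes $\hat{g}_1(\cdot, B_k)$, so the triggering inequality $\hat{g}_1(x_{k+1}, B_k) > \hat{\psi}_{x_k, L_k, \tau_k}(x_{k+1}, B_k)$ cannot hold and the search stops. Since $\tilde{\gamma} \geq 1$ forces $\tilde{\gamma}L_{\hat{F}} \geq L_{\hat{F}}$, the model-validity threshold $L_{\hat{F}}$ always lies at or below the upper endpoint of the interval; therefore the doubling sequence necessarily reaches a feasible value before or upon hitting the cap, and the number of internal iterations is bounded by the number of doublings needed to traverse the interval from its lower to its upper endpoint.

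The remaining step is to show that this endpoint ratio is uniformly bounded by $\gamma L_{\hat{F}}/L$ regardless of $s_k$. I would split into the cases $s_k \geq 1$ and $0 < s_k < 1$. In the first, $\max\{L, L/s_k\} = L$ while $\max\{\tilde{\gamma}L_{\hat{F}}, \gamma L_{\hat{F}}/s_k\} \leq \gamma L_{\hat{F}}$, using $\gamma \geq \tilde{\gamma}$ together with $s_k \geq 1$, so the ratio is at most $\gamma L_{\hat{F}}/L$. In the second, both endpoints acquire the common factor $1/s_k$, namely $\max\{L, L/s_k\} = L/s_k$ and $\max\{\tilde{\gamma}L_{\hat{F}}, \gamma L_{\hat{F}}/s_k\} = \gamma L_{\hat{F}}/s_k$, so the factor cancels and the ratio is exactly $\gamma L_{\hat{F}}/L$. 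Consequently the doubling search crosses the interval in at most $\lceil \log_2(\gamma L_{\hat{F}}/L)\rceil$ steps, and allowing one additional evaluation for the initial feasibility check (and for the ceiling rounding when the sequence starts strictly below $L_{\hat{F}}$) yields the asserted bound $\operatorname{O}\left(\lceil \log_2(\gamma L_{\hat{F}}/L)\rceil + 1\right)$.

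I expect the main obstacle to be the bookkeeping around the variable interval: one has to verify carefully that the quantity denoted $s_k$ (the value $\tau_k$ produced by $\mathcal{T}(\cdot)$, which itself varies with the iteration and the batch) enters both endpoints through the same factor $1/s_k$, so that the interval is merely a rescaling of a fixed one and its length on the logarithmic scale is independent of $s_k$. A secondary point requiring care is confirming that the lower endpoint supplied by the previous outer iteration is indeed no larger than $\max\{L, L/s_k\}$, so that the doubling starts no higher than the stated lower bound and the count is not silently underestimated; here the fact that the start is always at least $L$ and that termination is already forced once $L_k \geq L_{\hat{F}}$ provides the same $\lceil \log_2(\gamma L_{\hat{F}}/L)\rceil$ bound as a cross-check.
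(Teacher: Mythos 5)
Your proposal is correct and follows essentially the same route as the paper's own proof: a capped doubling search whose iteration count is bounded by the base-2 logarithm of the ratio of the interval endpoints, with a case analysis on $s_{k}$ showing this ratio never exceeds $\frac{\gamma L_{\hat{F}}}{L}$ (your two cases $s_{k}\geq 1$ and $s_{k}<1$ compress the paper's four interval configurations into the same worst-case bound $\left\lceil\log_{2}\left(\frac{\gamma L_{\hat{F}}}{L}\right)\right\rceil + 1$). Your explicit termination argument via Lemma \ref{lm:aux_upper_model} (validity of the model once $L_{k}\geq L_{\hat{F}}$, guaranteed at the cap since $\tilde{\gamma}\geq1$) is a small addition the paper leaves implicit, but it does not change the approach.
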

\begin{proof}
Рассмотрим потенциально возможные случаи для определения отрезка поиска $L_{k}$:
\begin{itemize}
    \item[1.] $L_{k}\in\left[L,~\tilde{\gamma}L_{\hat{F}}\right]$;
    \item[2.] $L_{k}\in\left[\frac{L}{s_{k}},~\frac{\gamma L_{\hat{F}}}{s_{k}}\right]$;
    \item[3.] $L_{k}\in\left[\frac{L}{s_{k}},~\tilde{\gamma}L_{\hat{F}}\right]$;
    \item[4.] $L_{k}\in\left[L,~\frac{\gamma L_{\hat{F}}}{s_{k}}\right]$.
\end{itemize}
Первый случай соответствует достаточно большому значению $s_{k}$: $$s_{k}\geq\frac{\gamma}{\tilde{\gamma}}\geq1.$$
Во втором случае значение $s_{k}$ достаточно малое:
$$0 < s_{k}\leq\min\left\{1,~\frac{\gamma}{\tilde{\gamma}}\right\}.$$
Третий случай на практике вложен в  первые два, так как в этом случае $$s_{k}\in\left[\frac{\gamma}{\tilde{\gamma}},~1\right]\Rightarrow\gamma\in\left(0,~\tilde{\gamma}\right]\cap\left[\tilde{\gamma},~+\infty\right) = \left\{\tilde{\gamma}\right\},~\tilde{\gamma}\geq1.$$
Четвёртый случай порождает следующие отрезки:
$$s_{k}\in\left[1,~\frac{\gamma}{\tilde{\gamma}}\right]\Rightarrow\frac{\gamma L_{\hat{F}}}{s_{k}L}\in\left[\frac{\tilde{\gamma}L_{\hat{F}}}{L},~\frac{\gamma L_{\hat{F}}}{L}\right].$$
Оценим сверху количество итераций подбора $L_{k}$. Наибольшее число неудачных попыток $(i - 1)\in\mathbb{Z}_{+}$ можно получить, если начинать с минимально возможного значения $L_{k}$:
\begin{itemize}
    \item[1.] $L_{k}\in\left[L,~\tilde{\gamma}L_{\hat{F}}\right]\Rightarrow\tilde{\gamma}L_{\hat{F}}\geq2^{i - 1}L,~\log_{2}\left(\frac{\tilde{\gamma}L_{\hat{F}}}{L}\right) + 1\geq i$;
    \item[2.] $L_{k}\in\left[\frac{L}{s_{k}},~\frac{\gamma L_{\hat{F}}}{s_{k}}\right]\Rightarrow\frac{\gamma L_{\hat{F}}}{s_{k}}\geq2^{i - 1}\left(\frac{L}{s_{k}}\right),~\log_{2}\left(\frac{\gamma L_{\hat{F}}}{L}\right) + 1\geq i$;
    \item[3.] $L_{k}\in\left[\frac{L}{s_{k}},~\tilde{\gamma}L_{\hat{F}}\right]\Rightarrow\tilde{\gamma}L_{\hat{F}}\geq2^{i - 1}L,~\log_{2}\left(\frac{\tilde{\gamma}L_{\hat{F}}}{L}\right) + 1\geq i$;
    \item[4.] $L_{k}\in\left[L,~\frac{\gamma L_{\hat{F}}}{s_{k}}\right]\Rightarrow\gamma L_{\hat{F}}\geq\tilde{\gamma}L_{\hat{F}}\geq2^{i - 1}L,~\log_{2}\left(\frac{\gamma L_{\hat{F}}}{L}\right) + 1\geq i$.
\end{itemize}
Верхняя оценка на искомое $i$ (количество итераций подбора $L_{k}$):
$$i\leq\left\lceil\max\left\{\log_{2}\left(\frac{\tilde{\gamma}L_{\hat{F}}}{L}\right),~\log_{2}\left(\frac{\gamma L_{\hat{F}}}{L}\right)\right\}\right\rceil + 1 = \left\lceil\log_{2}\left(\frac{\gamma L_{\hat{F}}}{L}\right)\right\rceil + 1.$$
Стоит заметить, что при $s_{k}\leq\frac{L}{L_{\hat{F}}}$ весь отрезок поиска $L_{k}$ удовлетворяет условиям поиска, что означает на практике не больше двух итераций подбора $L_{k}$, так как значение $\max\left\{\frac{L_{k - 1}}{2},~\max\left\{L,~\frac{L}{s_{k - 1}}\right\}\right\}$ может оказаться вне этого отрезка. А при уменьшении $s_{k}$ значение $x_{k + 1}$ будет меньше отличаться от $x_{k}$ в силу монотонного убывания по $L_{k}$ значения $\|T_{L_{k}, \tau_{k}}(x_{k}) - x_{k}\|$ (согласно следствию \ref{lm:cor_aux_det_local_decrease_1} при замене модели $\psi_{x, L, \tau}(y)$ на $\hat{\psi}_{x, L, \tau}(y, B)$).
\end{proof}

Теорема \ref{th:5_main} рассматривает правило вычисления $x_{k + 1}$ \eqref{eq:stoch_approx_general_update_rule} и выводит оценки сходимости к окрестности стационарной точки в терминах среднего, используя две стратегии определения погрешности проксимального отображения $\varepsilon_{k}$: в первой стратегии погрешность сверху всегда ограничена ненулевой величиной $\frac{\varepsilon}{\hat{g}_{1}(x_{k}, B_{k})}$, во второй стратегии предполагается возможность точного вычисления $x_{k + 1}$.

\begin{re:theorem}\label{th:5}
    Пусть выполнены предположения \ref{as:1}, \ref{as:2}, \ref{as:3}, \ref{as:4}. Рассмотрим метод Гаусса--Ньютона со схемой реализации \ref{alg:gen_stoch_unbounded_gnm}, в котором последовательность $\{x_{k}\}_{k\in\mathbb{Z}_{+}}$ вычисляется по правилу \eqref{eq:stoch_approx_general_update_rule} с $\tau_{k} = \hat{g}_{1}(x_{k}, B_{k})$. Если в схеме \ref{alg:gen_stoch_unbounded_gnm} выбрать следующий отрезок погрешностей $\varepsilon_{k}$:
    \begin{equation*}
        \begin{aligned}
            &0\leq\varepsilon_{k}\leq\frac{\varepsilon}{\hat{g}_{1}(x_{k}, B_{k})},
        \end{aligned}
    \end{equation*}
    то выполнено:
    \begin{equation}\label{eq:stoch_approx_sub_lin_conv}
        \begin{aligned}
            \mathbb{E}\left[\min\limits_{i\in\overline{0, k - 1}}\left\{\left\|\nabla\hat{f}_{2}(x_{i})\right\|^{2}\right\}\right]&\leq8\left(M_{\hat{G}}^{2} + \max\left\{\tilde{\gamma}P_{\hat{g}_{1}}L_{\hat{F}},~\gamma L_{\hat{F}}\right\}\right)\left(\frac{\mathbb{E}\left[\hat{f}_{2}(x_{0})\right]}{k} + \varepsilon +\right.\\
            &\left.+ 2l_{\hat{F}}\left(\sqrt{\frac{2\varepsilon}{L}} + \sqrt{\frac{2P_{\hat{g}_{1}}}{L}}\right)\mathds{1}_{\left\{b < m\right\}} + \tilde{\sigma}\sqrt{\frac{1}{b} - \frac{1}{m}}\right),~k\in\mathbb{N}.
        \end{aligned}
    \end{equation}
    Если в схеме \ref{alg:gen_stoch_unbounded_gnm} ограничения на $\varepsilon_{k}$ выбрать такие:
    \begin{equation*}
        \begin{aligned}
            &0\leq\varepsilon_{k}\leq\frac{\delta\left\|\nabla_{x_{k}}\hat{g}_{2}(x_{k}, B_{k})\right\|^{2}}{8\hat{g}_{1}(x_{k}, B_{k})\left(M_{\hat{G}}^{2} + \hat{g}_{1}(x_{k}, B_{k})L_{k}\right)},~\delta\in[0, 1),
        \end{aligned}
    \end{equation*}
    то оценка сходимости будет следующей:
    \begin{equation}\label{eq:stoch_approx_sub_lin_conv_1}
        \begin{aligned}
            \mathbb{E}\left[\min\limits_{i\in\overline{0, k - 1}}\left\|\nabla\hat{f}_{2}(x_{i})\right\|^{2}\right]&\leq\frac{8\left(M_{\hat{G}}^{2} + \max\left\{\tilde{\gamma}P_{\hat{g}_{1}}L_{\hat{F}},~\gamma L_{\hat{F}}\right\}\right)}{(1 - \delta)}\left(\frac{\mathbb{E}\left[\hat{f}_{2}(x_{0})\right]}{k} +\right.\\
            &\left.+ 2l_{\hat{F}}\left(\sqrt{\frac{\delta P_{\hat{g}_{1}}}{L}} + \sqrt{\frac{2P_{\hat{g}_{1}}}{L}}\right)\mathds{1}_{\left\{b < m\right\}} + \tilde{\sigma}\sqrt{\frac{1}{b} - \frac{1}{m}}\right),~k\in\mathbb{N}.
        \end{aligned}
    \end{equation}
    Оператор математического ожидания $\mathbb{E}\left[\cdot\right]$ усредняет по всей случайности процесса оптимизации.
\end{re:theorem}
\begin{proof}
Свяжем неточный шаг с минимумом стохастической локальной модели:
\begin{equation*}
    \begin{aligned}
        &\hat{g}_{1}(x_{k}, B_{k}) - \hat{g}_{1}(x_{k + 1}, B_{k}) + \varepsilon_{k}\geq\hat{g}_{1}(x_{k}, B_{k}) - \hat{\psi}_{x_{k}, L_{k}, \hat{g}_{1}(x_{k}, B_{k})}(x_{k + 1}, B_{k}) +\\
        &+ \varepsilon_{k}\geq \hat{g}_{1}(x_{k}, B_{k}) - \hat{\psi}_{x_{k}, L_{k}, \hat{g}_{1}(x_{k}, B_{k})}(x_{k + 1}, B_{k}) + \left(\hat{\psi}_{x_{k}, L_{k}, \hat{g}_{1}(x_{k}, B_{k})}(x_{k + 1}, B_{k}) -\right.\\
        &\left.- \hat{\psi}_{x_{k}, L_{k}, \hat{g}_{1}(x_{k}, B_{k})}(\hat{T}_{L_{k}, \hat{g}_{1}(x_{k}, B_{k})}(x_{k}, B_{k}), B_{k})\right) = \hat{g}_{1}(x_{k}, B_{k}) - \hat{\psi}_{x_{k}, L_{k}, \hat{g}_{1}(x_{k}, B_{k})}(\hat{T}_{L_{k}, \hat{g}_{1}(x_{k}, B_{k})}(x_{k}, B_{k}), B_{k}) =\\
        &= \left\{\text{согласно \eqref{eq:aux_upper_model_formula}},~\eta_{k} = 1,~\tau_{k} = \hat{g}_{1}(x_{k}, B_{k})\right\} =\\
        &= \frac{1}{2\hat{g}_{1}(x_{k}, B_{k})}\left\langle\left(\hat{G}^{'}(x_{k}, B_{k})^{*}\hat{G}^{'}(x_{k}, B_{k}) + \hat{g}_{1}(x_{k}, B_{k})L_{k}I_{n}\right)^{-1}\hat{G}^{'}(x_{k}, B_{k})^{*}\hat{G}(x_{k}, B_{k}),\right.\\
        &\left.\hat{G}^{'}(x_{k}, B_{k})^{*}\hat{G}(x_{k}, B_{k})\right\rangle\geq\left\{\text{предположение \ref{as:2},~\eqref{eq:as2_matrix_order}},~\nabla_{x_{k}}\hat{g}_{2}(x_{k}, B_{k}) = 2\hat{G}^{'}(x_{k}, B_{k})^{*}\hat{G}(x_{k}, B_{k})\right\}\geq\\
        &\geq\frac{\left\|\nabla_{x_{k}}\hat{g}_{2}(x_{k}, B_{k})\right\|^{2}}{8\hat{g}_{1}(x_{k}, B_{k})\left(M_{\hat{G}}^{2} + \hat{g}_{1}(x_{k}, B_{k})L_{k}\right)}\Rightarrow\hat{g}_{1}(x_{k}, B_{k})\varepsilon_{k} + \hat{g}_{2}(x_{k}, B_{k}) - \hat{g}_{2}(x_{k + 1}, B_{k})\geq\\
        &\geq\hat{g}_{1}(x_{k}, B_{k})\varepsilon_{k} + \hat{g}_{2}(x_{k}, B_{k}) - \hat{g}_{1}(x_{k}, B_{k})\hat{g}_{1}(x_{k + 1}, B_{k})\geq\\
        &\geq\frac{\left\|\nabla_{x_{k}}\hat{g}_{2}(x_{k}, B_{k})\right\|^{2}}{8\left(M_{\hat{G}}^{2} + \hat{g}_{1}(x_{k}, B_{k})L_{k}\right)}\geq\left\{\text{предположение \ref{as:3}},~\hat{g}_{1}(x_{k}, B_{k})L_{k}\leq \max\left\{\tilde{\gamma}\hat{g}_{1}(x_{k}, B_{k})L_{\hat{F}},~\gamma L_{\hat{F}}\right\}\right\}\geq\\
        &\geq\frac{\left\|\nabla_{x_{k}}\hat{g}_{2}(x_{k}, B_{k})\right\|^{2}}{8\left(M_{\hat{G}}^{2} + \max\left\{\tilde{\gamma}P_{\hat{g}_{1}}L_{\hat{F}},~\gamma L_{\hat{F}}\right\}\right)}.
    \end{aligned}
\end{equation*}
Просуммируем получившиеся неравенства для итераций $0,~\dots, k - 1$:
\begin{equation*}
    \begin{aligned}
        &\hat{g}_{2}(x_{0}, B_{0}) + \sum\limits_{i = 0}^{k - 1}\hat{g}_{1}(x_{i}, B_{i})\varepsilon_{i} + \sum\limits_{i = 0}^{k - 2}\left(\hat{g}_{2}(x_{i + 1}, B_{i + 1}) - \hat{g}_{2}(x_{i + 1}, B_{i})\right) -\\
        &- \hat{g}_{2}(x_{k}, B_{k - 1})\geq\sum\limits_{i = 0}^{k - 1}\frac{\left\|\nabla_{x_{i}}\hat{g}_{2}(x_{i}, B_{i})\right\|^{2}}{8\left(M_{\hat{G}}^{2} + \max\left\{\tilde{\gamma}P_{\hat{g}_{1}}L_{\hat{F}},~\gamma L_{\hat{F}}\right\}\right)} = \frac{\sum\limits_{i = 0}^{k - 1}\left\|\nabla_{x_{i}}\hat{g}_{2}(x_{i}, B_{i})\right\|^{2}}{8\left(M_{\hat{G}}^{2} + \max\left\{\tilde{\gamma}P_{\hat{g}_{1}}L_{\hat{F}},~\gamma L_{\hat{F}}\right\}\right)}\Rightarrow\\
        &\Rightarrow\frac{\sum\limits_{i = 0}^{k - 1}\left\|\nabla_{x_{i}}\hat{g}_{2}(x_{i}, B_{i})\right\|^{2}}{8\left(M_{\hat{G}}^{2} + \max\left\{\tilde{\gamma}P_{\hat{g}_{1}}L_{\hat{F}},~\gamma L_{\hat{F}}\right\}\right)}\leq\hat{g}_{2}(x_{0}, B_{0}) + k\max\limits_{i\in\overline{0, k - 1}}\left\{\hat{g}_{1}(x_{i}, B_{i})\varepsilon_{i}\right\} +\\
        &+ \sum\limits_{i = 0}^{k - 2}\left(\hat{g}_{2}(x_{i + 1}, B_{i + 1}) - \hat{g}_{2}(x_{i + 1}, B_{i})\right).
    \end{aligned}
\end{equation*}
Усредним получившиеся суммы с помощью оператора математического ожидания $\mathbb{E}\left[\cdot\right]$:
\begin{equation*}
    \begin{aligned}
        &\frac{\sum\limits_{i = 0}^{k - 1}\mathbb{E}\left[\left\|\nabla\hat{f}_{2}(x_{i})\right\|^{2}\right]}{8\left(M_{\hat{G}}^{2} + \max\left\{\tilde{\gamma}P_{\hat{g}_{1}}L_{\hat{F}},~\gamma L_{\hat{F}}\right\}\right)} = \frac{\sum\limits_{i = 0}^{k - 1}\mathbb{E}\left[\left\|\mathbb{E}\left[\nabla_{x_{i}}\hat{g}_{2}(x_{i}, B_{i})\right]\right\|^{2}\right]}{8\left(M_{\hat{G}}^{2} + \max\left\{\tilde{\gamma}P_{\hat{g}_{1}}L_{\hat{F}},~\gamma L_{\hat{F}}\right\}\right)}\leq\frac{\sum\limits_{i = 0}^{k - 1}\mathbb{E}\left[\left\|\nabla_{x_{i}}\hat{g}_{2}(x_{i}, B_{i})\right\|^{2}\right]}{8\left(M_{\hat{G}}^{2} + \max\left\{\tilde{\gamma}P_{\hat{g}_{1}}L_{\hat{F}},~\gamma L_{\hat{F}}\right\}\right)}\leq\\
        &\leq\mathbb{E}\left[\hat{f}_{2}(x_{0})\right] + k\mathbb{E}\left[\max\limits_{i\in\overline{0, k - 1}}\left\{\hat{g}_{1}(x_{i}, B_{i})\varepsilon_{i}\right\}\right] + \sum\limits_{i = 0}^{k - 2}\mathbb{E}\left[\hat{g}_{2}(x_{i + 1}, B_{i + 1}) - \hat{g}_{2}(x_{i + 1}, B_{i})\right]\leq\mathbb{E}\left[\hat{f}_{2}(x_{0})\right] +\\
    \end{aligned}
\end{equation*}
\begin{equation}\label{eq:local_approx_descent}
    \begin{aligned}
        &+ k\mathbb{E}\left[\max\limits_{i\in\overline{0, k - 1}}\left\{\hat{g}_{1}(x_{i}, B_{i})\varepsilon_{i}\right\}\right] + \sum\limits_{i = 0}^{k - 2}\underbrace{\mathbb{E}\left[\left|\hat{f}_{2}(x_{i + 1}) - \hat{g}_{2}(x_{i + 1}, B_{i})\right|\right]}_{\text{ограничено по лемме \ref{lm:aux_bounded_deviation}}}\leq\mathbb{E}\left[\hat{f}_{2}(x_{0})\right] + k\varepsilon +\\
        &+ \sum\limits_{i = 0}^{k - 2}\left(2l_{\hat{F}}\mathbb{E}\left[\left\|x_{i + 1} - x_{i}\right\|\right]\mathds{1}_{\{b < m\}} + \tilde{\sigma}\sqrt{\frac{1}{b} - \frac{1}{m}}\right).
    \end{aligned}
\end{equation}
Ограничим сверху значение $\left\|x_{i + 1} - x_{i}\right\|$:
\begin{equation}\label{eq:bounding_approx_x_variance}
    \begin{aligned}
        &\left\|x_{i + 1} - x_{i}\right\| = \left\|x_{i + 1} - \hat{T}_{L_{i}, \tau_{i}}(x_{i}, B_{i}) + \hat{T}_{L_{i}, \tau_{i}}(x_{i}, B_{i}) - x_{i}\right\|\leq\underbrace{\left\|x_{i + 1} - \hat{T}_{L_{i}, \tau_{i}}(x_{i}, B_{i})\right\|}_{\text{ограничено по следствию \ref{lm:aux_approx_solution}}} +\\
        &+ \underbrace{\left\|\hat{T}_{L_{i}, \tau_{i}}(x_{i}, B_{i}) - x_{i}\right\|}_{\text{ограничено по лемме \ref{lm:aux_bounded_variation}}}\leq\sqrt{\frac{2\varepsilon_{i}}{L_{i}}} + \sqrt{\frac{2\hat{g}_{1}(x_{i}, B_{i})}{L_{i}}}\leq\\
        &\leq\left\{\varepsilon_{i}\leq\frac{\varepsilon}{\hat{g}_{1}(x_{i}, B_{i})},~L_{i}\geq\max\left\{L,~\frac{L}{\hat{g}_{1}(x_{i}, B_{i})}\right\},~\hat{g}_{1}(x_{i}, B_{i})\leq P_{\hat{g}_{1}}\right\}\leq\sqrt{\frac{2\varepsilon}{L}} + \sqrt{\frac{2P_{\hat{g}_{1}}}{L}}.
    \end{aligned}
\end{equation}
Подставим результат из \eqref{eq:bounding_approx_x_variance} в \eqref{eq:local_approx_descent}:
\begin{equation*}
    \begin{aligned}
        &\frac{\sum\limits_{i = 0}^{k - 1}\mathbb{E}\left[\left\|\nabla\hat{f}_{2}(x_{i})\right\|^{2}\right]}{8\left(M_{\hat{G}}^{2} + \max\left\{\tilde{\gamma}P_{\hat{g}_{1}}L_{\hat{F}},~\gamma L_{\hat{F}}\right\}\right)}\leq\mathbb{E}\left[\hat{f}_{2}(x_{0})\right] + k\varepsilon + (k - 1)\left(2l_{\hat{F}}\left(\sqrt{\frac{2\varepsilon}{L}} + \sqrt{\frac{2P_{\hat{g}_{1}}}{L}}\right)\mathds{1}_{\{b < m\}} + \tilde{\sigma}\sqrt{\frac{1}{b} - \frac{1}{m}}\right)\leq\\
        &\leq\mathbb{E}\left[\hat{f}_{2}(x_{0})\right] + k\varepsilon + k\left(2l_{\hat{F}}\left(\sqrt{\frac{2\varepsilon}{L}} + \sqrt{\frac{2P_{\hat{g}_{1}}}{L}}\right)\mathds{1}_{\{b < m\}} + \tilde{\sigma}\sqrt{\frac{1}{b} - \frac{1}{m}}\right)\Rightarrow\left\{\text{выведена оценка \eqref{eq:stoch_approx_sub_lin_conv}}\right\}\Rightarrow\\
        &\Rightarrow\mathbb{E}\left[\min\limits_{i\in\overline{0, k - 1}}\left\{\left\|\nabla\hat{f}_{2}(x_{i})\right\|^{2}\right\}\right]\leq\frac{1}{k}\sum\limits_{i = 0}^{k - 1}\mathbb{E}\left[\left\|\nabla\hat{f}_{2}(x_{i})\right\|^{2}\right]\leq\\
        &\leq8\left(M_{\hat{G}}^{2} + \max\left\{\tilde{\gamma}P_{\hat{g}_{1}}L_{\hat{F}},~\gamma L_{\hat{F}}\right\}\right)\left(\frac{\mathbb{E}\left[\hat{f}_{2}(x_{0})\right]}{k} + \varepsilon + 2l_{\hat{F}}\left(\sqrt{\frac{2\varepsilon}{L}} + \sqrt{\frac{2P_{\hat{g}_{1}}}{L}}\right)\mathds{1}_{\left\{b < m\right\}} + \tilde{\sigma}\sqrt{\frac{1}{b} - \frac{1}{m}}\right).
    \end{aligned}
\end{equation*}
Докажем оставшуюся часть теоремы \eqref{eq:stoch_approx_sub_lin_conv_1}. По определению $\hat{\psi}_{x_{k}, L_{k}, \hat{g}_{1}(x_{k}, B_{k})}(\cdot, B_{k})$ \eqref{eq:aux_upper_model_formula}:
\begin{equation}\label{eq:th5_iter_diff}
    \begin{aligned}
        &\hat{g}_{1}(x_{k}, B_{k}) - \hat{g}_{1}(x_{k + 1}, B_{k})\geq\hat{g}_{1}(x_{k}, B_{k}) - \hat{\psi}_{x_{k}, L_{k}, \hat{g}_{1}(x_{k}, B_{k})}(x_{k + 1}, B_{k}) = \hat{g}_{1}(x_{k}, B_{k}) -\\
        &- \hat{\psi}_{x_{k}, L_{k}, \hat{g}_{1}(x_{k}, B_{k})}(\hat{T}_{L_{k}, \hat{g}_{1}(x_{k}, B_{k})}(x_{k}, B_{k}), B_{k}) + \left(\hat{\psi}_{x_{k}, L_{k}, \hat{g}_{1}(x_{k}, B_{k})}(\hat{T}_{L_{k}, \hat{g}_{1}(x_{k}, B_{k})}(x_{k}, B_{k}), B_{k}) -\right.\\
        &\left.- \hat{\psi}_{x_{k}, L_{k}, \hat{g}_{1}(x_{k}, B_{k})}(x_{k + 1}, B_{k})\right)\geq\hat{g}_{1}(x_{k}, B_{k}) - \hat{\psi}_{x_{k}, L_{k}, \hat{g}_{1}(x_{k}, B_{k})}(\hat{T}_{L_{k}, \hat{g}_{1}(x_{k}, B_{k})}(x_{k}, B_{k}), B_{k}) - \varepsilon_{k} =\\
        &= \left\{\text{согласно \eqref{eq:aux_upper_model_formula} с}~\eta_{k} = 1,~\tau_{k} = \hat{g}_{1}(x_{k}, B_{k})\right\} =\\
        &= \frac{1}{2\hat{g}_{1}(x_{k}, B_{k})}\left\langle\left(\hat{G}^{'}(x_{k}, B_{k})^{*}\hat{G}^{'}(x_{k}, B_{k}) + \hat{g}_{1}(x_{k}, B_{k})L_{k}I_{n}\right)^{-1}\hat{G}^{'}(x_{k}, B_{k})^{*}\hat{G}(x_{k}, B_{k}),\right.\\
        &\left.\hat{G}^{'}(x_{k}, B_{k})^{*}\hat{G}(x_{k}, B_{k})\right\rangle - \varepsilon_{k}\geq\left\{\text{предположение \ref{as:2}, \eqref{eq:as2_matrix_order}},~\nabla_{x_{k}}\hat{g}_{2}(x_{k}, B_{k}) = 2\hat{G}^{'}(x_{k}, B_{k})^{*}\hat{G}(x_{k}, B_{k})\right\}\geq\\
        &\geq\frac{\left\|\nabla_{x_{k}}\hat{g}_{2}(x_{k}, B_{k})\right\|^{2}}{8\hat{g}_{1}(x_{k}, B_{k})\left(M_{\hat{G}}^{2} + \hat{g}_{1}(x_{k}, B_{k})L_{k}\right)} - \varepsilon_{k}\geq\frac{(1 - \delta)\left\|\nabla_{x_{k}}\hat{g}_{2}(x_{k}, B_{k})\right\|^{2}}{8\hat{g}_{1}(x_{k}, B_{k})\left(M_{\hat{G}}^{2} + \hat{g}_{1}(x_{k}, B_{k})L_{k}\right)}\Rightarrow\\
        &\Rightarrow\hat{g}_{2}(x_{k}, B_{k}) - \hat{g}_{2}(x_{k + 1}, B_{k})\geq\hat{g}_{2}(x_{k}, B_{k}) - \hat{g}_{1}(x_{k}, B_{k})\hat{g}_{1}(x_{k + 1}, B_{k})\geq\\
        &\geq\frac{(1 - \delta)\left\|\nabla_{x_{k}}\hat{g}_{2}(x_{k}, B_{k})\right\|^{2}}{8\left(M_{\hat{G}}^{2} + \hat{g}_{1}(x_{k}, B_{k})L_{k}\right)}\geq\left\{\text{предположение \ref{as:3}},~\hat{g}_{1}(x_{k}, B_{k})L_{k}\leq\max\left\{\tilde{\gamma}P_{\hat{g}_{1}} L_{\hat{F}},~\gamma L_{\hat{F}}\right\}\right\}\geq\\
        &\geq\frac{(1 - \delta)\left\|\nabla_{x_{k}}\hat{g}_{2}(x_{k}, B_{k})\right\|^{2}}{8\left(M_{\hat{G}}^{2} + \max\left\{\tilde{\gamma}P_{\hat{g}_{1}}L_{\hat{F}},~\gamma L_{\hat{F}}\right\}\right)}.
    \end{aligned}
\end{equation}
Ограничим сверху значение $\left\|x_{i + 1} - x_{i}\right\|$:
\begin{equation}\label{eq:bounding_approx_x_delta_variance}
    \begin{aligned}
        &\varepsilon_{i}\leq\frac{\delta\left\|\nabla_{x_{i}}\hat{g}_{2}(x_{i}, B_{i})\right\|^{2}}{8\hat{g}_{1}(x_{i}, B_{i})\left(M_{\hat{G}}^{2} + \hat{g}_{1}(x_{i}, B_{i})L_{i}\right)}\leq\frac{4\delta\left\|\hat{G}^{'}(x_{i}, B_{i})\right\|^{2}\left\|\hat{G}(x_{i}, B_{i})\right\|^{2}}{8\hat{g}_{1}(x_{i}, B_{i})\left(M_{\hat{G}}^{2} + \hat{g}_{1}(x_{i}, B_{i})L_{i}\right)}\leq\frac{\delta M_{\hat{G}}^{2}\hat{g}_{1}(x_{i}, B_{i})}{2\left(M_{\hat{G}}^{2} + \hat{g}_{1}(x_{i}, B_{i})L_{i}\right)}\leq\\
        &\leq\frac{\delta P_{\hat{g}_{1}}}{2}\Rightarrow\left\|x_{i + 1} - x_{i}\right\|\leq\underbrace{\left\|x_{i + 1} - \hat{T}_{L_{i}, \tau_{i}}(x_{i}, B_{i})\right\|}_{\text{ограничено по следствию \ref{lm:aux_approx_solution}}} + \underbrace{\left\|\hat{T}_{L_{i}, \tau_{i}}(x_{i}, B_{i}) - x_{i}\right\|}_{\text{ограничено по лемме \ref{lm:aux_bounded_variation}}}\leq\sqrt{\frac{2\varepsilon_{i}}{L_{i}}} + \sqrt{\frac{2\hat{g}_{1}(x_{i}, B_{i})}{L_{i}}}\leq\\
        &\leq\left\{\varepsilon_{i}\leq\frac{\delta P_{\hat{g}_{1}}}{2},~L_{i}\geq\max\left\{L,~\frac{L}{\hat{g}_{1}(x_{i}, B_{i})}\right\},~\hat{g}_{1}(x_{i}, B_{i})\leq P_{\hat{g}_{1}}\right\}\leq\sqrt{\frac{\delta P_{\hat{g}_{1}}}{L}} + \sqrt{\frac{2P_{\hat{g}_{1}}}{L}}.
    \end{aligned}
\end{equation}
Выражение в \eqref{eq:th5_iter_diff} соответствует неравенству из \eqref{eq:th3_iter_diff} с заменой $\eta(2 - \eta)$ на $(1 - \delta)$ и $\gamma P_{\hat{g}_{1}}L_{\hat{F}}$ на\\$\max\left\{\tilde{\gamma}P_{\hat{g}_{1}}L_{\hat{F}},~\gamma L_{\hat{F}}\right\}$. Поэтому для выражения \eqref{eq:th5_iter_diff} применима цепочка рассуждений из теоремы \ref{th:3}, использованная для неравенства из \eqref{eq:th3_iter_diff} с учётом оценки из \eqref{eq:bounding_approx_x_delta_variance}, приводящая к искомой оценке:
\begin{equation*}
    \begin{aligned}
        \mathbb{E}\left[\min\limits_{i\in\overline{0, k - 1}}\left\|\nabla\hat{f}_{2}(x_{i})\right\|^{2}\right]&\leq\frac{8\left(M_{\hat{G}}^{2} + \max\left\{\tilde{\gamma}P_{\hat{g}_{1}}L_{\hat{F}},~\gamma L_{\hat{F}}\right\}\right)}{(1 - \delta)}\left(\frac{\mathbb{E}\left[\hat{f}_{2}(x_{0})\right]}{k} +\right.\\
        &\left.+ 2l_{\hat{F}}\left(\sqrt{\frac{\delta P_{\hat{g}_{1}}}{L}} + \sqrt{\frac{2P_{\hat{g}_{1}}}{L}}\right)\mathds{1}_{\left\{b < m\right\}} + \tilde{\sigma}\sqrt{\frac{1}{b} - \frac{1}{m}}\right),~k\in\mathbb{N}.
    \end{aligned}
\end{equation*}
\end{proof}

В теореме \ref{th:6_main} по сравнению с теоремой \ref{th:4} вводится неточное вычисление приближения решения $x_{k + 1}$ по аналогии с предыдущей теоремой и без изменения линейного характера сходимости к области точки минимума в терминах среднего.

\begin{re:theorem}\label{th:6}
    Пусть выполнены предположения \ref{as:1}, \ref{as:2}, \ref{as:3}, \ref{as:4}, \ref{as:5}. Рассмотрим метод Гаусса--Ньютона со схемой реализации \ref{alg:gen_stoch_unbounded_gnm}, в котором последовательность $\{x_{k}\}_{k\in\mathbb{Z}_{+}}$ вычисляется по правилу \eqref{eq:stoch_approx_general_update_rule} с\\$\tau_{k} = \hat{g}_{1}(x_{k}, B_{k})$. Тогда:
    \begin{equation}\label{eq:stoch_approx_lin_conv_1}
        \begin{cases}
            \begin{aligned}
                &\mathbb{E}\left[\left\|\nabla\hat{f}_{2}(x_{k})\right\|^{2}\right]\leq4M_{\hat{G}}^{2}\tilde{\Delta}_{k,b};\\
                &\mathbb{E}\left[\hat{f}_{2}(x_{k})\right]\leq\hat{f}_{2}^{*} + \tilde{\Delta}_{k,b};
            \end{aligned}
        \end{cases}
    \end{equation}
    где 
    \begin{equation*}
        \begin{aligned}
            \tilde{\Delta}_{k,b} &= \mathbb{E}\left[\hat{f}_{2}(x_{0})\right]\exp\left(-\frac{k\mu}{2\left(\max\left\{\tilde{\gamma}L_{\hat{F}}P_{\hat{g}_{1}},~\gamma L_{\hat{F}}\right\} + \mu\right)}\right) + 2\left(\varepsilon + 2\left(l_{\hat{F}}\left(\sqrt{\frac{2\varepsilon}{L}} + \sqrt{\frac{2P_{\hat{g}_{1}}}{L}}\right)\mathds{1}_{\left\{b < m\right\}} +\right.\right.\\
            &\left.\left.+ \tilde{\sigma}\sqrt{\frac{1}{b} - \frac{1}{m}}\right)\right)\left(\frac{\max\left\{\tilde{\gamma}L_{\hat{F}}P_{\hat{g}_{1}},~\gamma L_{\hat{F}}\right\}}{\mu} + 1\right),~k\in\mathbb{Z}_{+},~b\in\overline{1,~\min\{m, n\}}
        \end{aligned}
    \end{equation*}
    в случае 
    \begin{equation*}
        \begin{aligned}
            &0\leq\varepsilon_{k}\leq\frac{\varepsilon}{\hat{g}_{1}(x_{k}, B_{k})},
        \end{aligned}
    \end{equation*}
    и
    \begin{equation*}
        \begin{aligned}
            \tilde{\Delta}_{k,b} &= \mathbb{E}\left[\hat{f}_{2}(x_{0})\right]\exp\left(-\frac{k(1 - \delta)\mu}{2\left(\max\left\{\tilde{\gamma}L_{\hat{F}}P_{\hat{g}_{1}},~\gamma L_{\hat{F}}\right\} + \mu\right)}\right) + 4\left(l_{\hat{F}}\left(\sqrt{\frac{\delta P_{\hat{g}_{1}}}{L}} + \sqrt{\frac{2P_{\hat{g}_{1}}}{L}}\right)\mathds{1}_{\left\{b < m\right\}} +\right.\\
            &\left.+ \tilde{\sigma}\sqrt{\frac{1}{b} - \frac{1}{m}}\right)\left(\frac{\max\left\{\tilde{\gamma}L_{\hat{F}}P_{\hat{g}_{1}},~\gamma L_{\hat{F}}\right\} + \mu}{(1 - \delta)\mu}\right),~k\in\mathbb{Z}_{+},~b\in\overline{1,~\min\{m, n\}}
        \end{aligned}
    \end{equation*}
    в случае
    \begin{equation*}
        \begin{aligned}
            &0\leq\varepsilon_{k}\leq\frac{\delta\hat{g}_{1}(x_{k}, B_{k})\mu}{2\left(L_{k}\hat{g}_{1}(x_{k}, B_{k}) + \mu\right)},~\delta\in[0, 1).
        \end{aligned}
    \end{equation*}
    Оператор математического ожидания $\mathbb{E}\left[\cdot\right]$ усредняет по всей случайности процесса оптимизации.
\end{re:theorem}
\begin{proof}
Докажем теорему для первой стратегии подбора $\varepsilon_{k}$. Выведем нижнюю оценку на приращение локальной модели в результате неточного шага:
\begin{equation*}
    \begin{aligned}
        &\hat{g}_{1}(x_{k}, B_{k}) - \hat{g}_{1}(x_{k + 1}, B_{k}) + \varepsilon_{k}\geq\hat{g}_{1}(x_{k}, B_{k}) - \hat{\psi}_{x_{k}, L_{k}, \hat{g}_{1}(x_{k}, B_{k})}(x_{k + 1}, B_{k}) + \varepsilon_{k} \geq \hat{g}_{1}(x_{k}, B_{k}) -\\
        &- \hat{\psi}_{x_{k}, L_{k}, \hat{g}_{1}(x_{k}, B_{k})}(x_{k + 1}, B_{k}) + \left(\hat{\psi}_{x_{k}, L_{k}, \hat{g}_{1}(x_{k}, B_{k})}(x_{k + 1}, B_{k}) - \hat{\psi}_{x_{k}, L_{k}, \hat{g}_{1}(x_{k}, B_{k})}(\hat{T}_{L_{k}, \hat{g}_{1}(x_{k}, B_{k})}(x_{k}, B_{k}), B_{k})\right) =\\
        &= \hat{g}_{1}(x_{k}, B_{k}) - \hat{\psi}_{x_{k}, L_{k}, \hat{g}_{1}(x_{k}, B_{k})}(\hat{T}_{L_{k}, \hat{g}_{1}(x_{k}, B_{k})}(x_{k}, B_{k}), B_{k}) = \left\{\text{согласно \eqref{eq:aux_upper_model_formula} с}~\eta_{k} = 1,~\tau_{k} = \hat{g}_{1}(x_{k}, B_{k})\right\} =\\
        &=\frac{1}{2\hat{g}_{1}(x_{k}, B_{k})}\left\langle\left(\hat{G}^{'}(x_{k}, B_{k})^{*}\hat{G}^{'}(x_{k}, B_{k}) + \hat{g}_{1}(x_{k}, B_{k})L_{k}I_{n}\right)^{-1}\hat{G}^{'}(x_{k}, B_{k})^{*}\hat{G}(x_{k}, B_{k}),\right.\\
        &\left.\hat{G}^{'}(x_{k}, B_{k})^{*}\hat{G}(x_{k}, B_{k})\right\rangle =\\
        &= \frac{1}{2\hat{g}_{1}(x_{k}, B_{k})}\left\langle\hat{G}^{'}(x_{k}, B_{k})\left(\hat{G}^{'}(x_{k}, B_{k})^{*}\hat{G}^{'}(x_{k}, B_{k}) + \hat{g}_{1}(x_{k}, B_{k})L_{k}I_{n}\right)^{-1}\hat{G}^{'}(x_{k}, B_{k})^{*}\hat{G}(x_{k}, B_{k}),\right.\\
        &\left.\hat{G}(x_{k}, B_{k})\right\rangle\geq\left\{\text{лемма \ref{lm:aux_matrix_order}},~\hat{g}_{1}(x_{k}, B_{k}) = \left\|\hat{G}(x_{k}, B_{k})\right\|\right\}\geq\frac{\hat{g}_{1}(x_{k}, B_{k})\mu}{2\left(L_{k}\hat{g}_{1}(x_{k}, B_{k}) + \mu\right)}\geq\\
        &\geq\left\{\hat{g}_{1}(x_{k}, B_{k})L_{k}\leq\max\left\{\tilde{\gamma}L_{\hat{F}}P_{\hat{g}_{1}},~\gamma L_{\hat{F}}\right\}\right\}\geq\\
        &\geq\frac{\hat{g}_{1}(x_{k}, B_{k})\mu}{2\left(\max\left\{\tilde{\gamma}L_{\hat{F}}P_{\hat{g}_{1}},~\gamma L_{\hat{F}}\right\} + \mu\right)}\Rightarrow\hat{g}_{1}(x_{k}, B_{k})\varepsilon_{k} + \hat{g}_{2}(x_{k}, B_{k}) - \hat{g}_{2}(x_{k + 1}, B_{k})\geq\hat{g}_{1}(x_{k}, B_{k})\varepsilon_{k} + \hat{g}_{2}(x_{k}, B_{k}) -\\
        &- \hat{g}_{1}(x_{k}, B_{k})\hat{g}_{1}(x_{k + 1}, B_{k})\geq\frac{\hat{g}_{2}(x_{k}, B_{k})\mu}{2\left(\max\left\{\tilde{\gamma}L_{\hat{F}}P_{\hat{g}_{1}},~\gamma L_{\hat{F}}\right\} + \mu\right)}\Rightarrow\\
        &\Rightarrow\hat{g}_{2}(x_{k + 1}, B_{k})\leq\hat{g}_{2}(x_{k}, B_{k})\left(1 - \frac{\mu}{2\left(\max\left\{\tilde{\gamma}L_{\hat{F}}P_{\hat{g}_{1}},~\gamma L_{\hat{F}}\right\} + \mu\right)}\right) + \hat{g}_{1}(x_{k}, B_{k})\varepsilon_{k}\leq\\
        &\leq\hat{g}_{2}(x_{k}, B_{k})\left(1 - \frac{\mu}{2\left(\max\left\{\tilde{\gamma}L_{\hat{F}}P_{\hat{g}_{1}},~\gamma L_{\hat{F}}\right\} + \mu\right)}\right) + \varepsilon.
    \end{aligned}
\end{equation*}
Полученное неравенство позволяет применить рассуждения из теоремы \ref{th:4}, положив $\eta_{k} = 1,~k\in\mathbb{Z}_{+}$. В частности, определённая в \eqref{eq:sub_rec_geom_exp} линейная рекуррентная зависимость в данной теореме выглядит следующим образом:
\begin{equation*}
    \begin{cases}
        a_{0}&=\mathbb{E}\left[\hat{g}_{2}(x_{0}, B_{0})\right];\\
        a_{1}&\leq a_{0}q + \varepsilon;\\
        a_{k + 1}&\leq\left(a_{k} + c\right)q + \varepsilon,~k\in\mathbb{N};\\
        q &= 1 - \frac{\mu}{2\left(\max\left\{\tilde{\gamma}L_{\hat{F}}P_{\hat{g}_{1}},~\gamma L_{\hat{F}}\right\} + \mu\right)}.
    \end{cases}
\end{equation*}
Оставшееся непереопределённое значение оценивается с помощью \eqref{eq:bounding_approx_x_variance}:
\begin{equation*}
    \begin{aligned}
        c &= 2\left(l_{\hat{F}}\left(\sqrt{\frac{2\varepsilon}{L}} + \sqrt{\frac{2P_{\hat{g}_{1}}}{L}}\right)\mathds{1}_{\left\{b < m\right\}} + \tilde{\sigma}\sqrt{\frac{1}{b} - \frac{1}{m}}\right).
    \end{aligned}
\end{equation*}
Раскроем рекурсию:
\begin{equation*}
    \begin{aligned}
        &a_{k}\leq\underbrace{(a_{k - 1} + c)q + \varepsilon\leq((a_{k - 2} + c)q + \varepsilon + c)q + \varepsilon}_{\text{частичная сумма геометрического ряда}}\leq\dots\leq a_{0}q^{k} + c\sum\limits_{i = 1}^{k - 1}q^{i} +\\
        &+ \varepsilon\sum\limits_{i = 0}^{k - 1}q^{i}= a_{0}q^{k} + cq\left(\frac{1 - q^{k - 1}}{1 - q}\right)\mathds{1}_{\left\{k > 0\right\}} + \varepsilon\left(\frac{1 - q^{k}}{1 - q}\right)\mathds{1}_{\left\{k > 0\right\}},~k\in\mathbb{Z}_{+}.
    \end{aligned}
\end{equation*}
Поэтому в условиях данной теоремы выражения \eqref{eq:th4_delta_value} и \eqref{eq:th4_gradient_bound} будут содержать следующую общую часть:
\begin{equation*}
    \begin{aligned}
        &a_{0}q^{k} + cq\left(\frac{1 - q^{k - 1}}{1 - q}\right)\mathds{1}_{\left\{k > 0\right\}} + \left(c + \varepsilon\left(\frac{1 - q^{k}}{1 - q}\right)\right)\mathds{1}_{\left\{k > 0\right\}},~k\in\mathbb{Z}_{+}.
    \end{aligned}
\end{equation*}
У полученной общей части в упрощённом виде добавляется слагаемое с $\varepsilon$ по сравнению с \eqref{eq:th4_common_delta_term}:
\begin{equation*}
    \begin{aligned}
        &a_{k}\leq a_{0}q^{k} + c + \frac{cq + \varepsilon}{1 - q},~k\in\mathbb{Z}_{+}.
    \end{aligned}
\end{equation*}
Неравенство выше определяет $\tilde{\Delta}_{k,b}$ в случае $\varepsilon_{k}\leq\frac{\varepsilon}{\hat{g}_{1}(x_{k}, B_{k})}$ аналогично \eqref{eq:th4_common_delta_term}:
\begin{equation*}
    \begin{aligned}
        \tilde{\Delta}_{k,b} &= \mathbb{E}\left[\hat{f}_{2}(x_{0})\right]\exp\left(-\frac{k\mu}{2\left(\max\left\{\tilde{\gamma}L_{\hat{F}}P_{\hat{g}_{1}},~\gamma L_{\hat{F}}\right\} + \mu\right)}\right)
        + 2\left(\varepsilon + 2\left(l_{\hat{F}}\left(\sqrt{\frac{2\varepsilon}{L}} + \sqrt{\frac{2P_{\hat{g}_{1}}}{L}}\right)\mathds{1}_{\left\{b < m\right\}} +\right.\right.\\
        &\left.\left.+ \tilde{\sigma}\sqrt{\frac{1}{b} - \frac{1}{m}}\right)\right)\left(\frac{\max\left\{\tilde{\gamma}L_{\hat{F}}P_{\hat{g}_{1}},~\gamma L_{\hat{F}}\right\} + \mu}{\mu}\right),~k\in\mathbb{Z}_{+},~b\in\overline{1, m}.
    \end{aligned}
\end{equation*}
В данном случае учтено неравенство \eqref{eq:bounding_approx_x_variance}, так как $x_{k + 1}$ вычисляется неточно и напрямую лемма \ref{lm:aux_bounded_variation} не применима. Теперь докажем теорему для другой стратегии подбора $\varepsilon_{k}$. По аналогии с доказательством теоремы \ref{th:5} используем определение $\hat{\psi}_{x_{k}, L_{k}, \hat{g}_{1}(x_{k}, B_{k})}(\cdot, B_{k})$ \eqref{eq:aux_upper_model_formula}:
\begin{equation}\label{eq:th6_iter_diff}
    \begin{aligned}
        &\hat{g}_{1}(x_{k}, B_{k}) - \hat{g}_{1}(x_{k + 1}, B_{k})\geq\hat{g}_{1}(x_{k}, B_{k}) - \hat{\psi}_{x_{k}, L_{k}, \hat{g}_{1}(x_{k}, B_{k})}(x_{k + 1}, B_{k}) = \hat{g}_{1}(x_{k}, B_{k}) -\\
        &- \hat{\psi}_{x_{k}, L_{k}, \hat{g}_{1}(x_{k}, B_{k})}(\hat{T}_{L_{k}, \hat{g}_{1}(x_{k}, B_{k})}(x_{k}, B_{k}), B_{k}) + \left(\hat{\psi}_{x_{k}, L_{k}, \hat{g}_{1}(x_{k}, B_{k})}(\hat{T}_{L_{k}, \hat{g}_{1}(x_{k}, B_{k})}(x_{k}, B_{k}), B_{k}) -\right.\\
        &\left.- \hat{\psi}_{x_{k}, L_{k}, \hat{g}_{1}(x_{k}, B_{k})}(x_{k + 1}, B_{k})\right)\geq\hat{g}_{1}(x_{k}, B_{k}) - \hat{\psi}_{x_{k}, L_{k}, \hat{g}_{1}(x_{k}, B_{k})}(\hat{T}_{L_{k}, \hat{g}_{1}(x_{k}, B_{k})}(x_{k}, B_{k}), B_{k}) - \varepsilon_{k} =\\
        &= \left\{\text{согласно \eqref{eq:aux_upper_model_formula} с}~\eta_{k} = 1,~\tau_{k} = \hat{g}_{1}(x_{k}, B_{k})\right\} =\\
        &=\frac{1}{2\hat{g}_{1}(x_{k}, B_{k})}\left\langle\left(\hat{G}^{'}(x_{k}, B_{k})^{*}\hat{G}^{'}(x_{k}, B_{k}) + \hat{g}_{1}(x_{k}, B_{k})L_{k}I_{n}\right)^{-1}\hat{G}^{'}(x_{k}, B_{k})^{*}\hat{G}(x_{k}, B_{k}),\right.\\
        &\left.\hat{G}^{'}(x_{k}, B_{k})^{*}\hat{G}(x_{k}, B_{k})\right\rangle - \varepsilon_{k}=\\
        &=\frac{1}{2\hat{g}_{1}(x_{k}, B_{k})}\left\langle\hat{G}^{'}(x_{k}, B_{k})\left(\hat{G}^{'}(x_{k}, B_{k})^{*}\hat{G}^{'}(x_{k}, B_{k})
        + \hat{g}_{1}(x_{k}, B_{k})L_{k}I_{n}\right)^{-1}\hat{G}^{'}(x_{k}, B_{k})^{*}\hat{G}(x_{k}, B_{k}),\right.\\
        &\left.\hat{G}(x_{k}, B_{k})\right\rangle - \varepsilon_{k}\geq\left\{\text{лемма \ref{lm:aux_matrix_order}},~\hat{g}_{1}(x_{k}, B_{k}) = \left\|\hat{G}(x_{k}, B_{k})\right\|\right\}\geq\frac{\hat{g}_{1}(x_{k}, B_{k})\mu}{2\left(L_{k}\hat{g}_{1}(x_{k}, B_{k}) + \mu\right)} - \varepsilon_{k}\geq\\
        &\geq\frac{(1 - \delta)\hat{g}_{1}(x_{k}, B_{k})\mu}{2\left(L_{k}\hat{g}_{1}(x_{k}, B_{k}) + \mu\right)}\geq\left\{\hat{g}_{1}(x_{k}, L_{k})L_{k}\leq \max\left\{\tilde{\gamma}L_{\hat{F}}P_{\hat{g}_{1}},~\gamma L_{\hat{F}}\right\}\right\}\geq\frac{(1 - \delta)\hat{g}_{1}(x_{k}, B_{k})\mu}{2\left(\max\left\{\tilde{\gamma}L_{\hat{F}}P_{\hat{g}_{1}},~\gamma L_{\hat{F}}\right\} + \mu\right)}\Rightarrow\\
        &\Rightarrow\hat{g}_{2}(x_{k}, B_{k}) - \hat{g}_{2}(x_{k + 1}, B_{k})\geq\hat{g}_{2}(x_{k}, B_{k}) - \hat{g}_{1}(x_{k}, B_{k})\hat{g}_{1}(x_{k + 1}, B_{k})\geq\frac{(1 - \delta)\hat{g}_{2}(x_{k}, B_{k})\mu}{2\left(\max\left\{\tilde{\gamma}L_{\hat{F}}P_{\hat{g}_{1}},~\gamma L_{\hat{F}}\right\} + \mu\right)}\Rightarrow\\
        &\Rightarrow\hat{g}_{2}(x_{k + 1}, B_{k})\leq\hat{g}_{2}(x_{k}, B_{k})\left(1 - \frac{(1 - \delta)\mu}{2\left(\max\left\{\tilde{\gamma}L_{\hat{F}}P_{\hat{g}_{1}},~\gamma L_{\hat{F}}\right\} + \mu\right)}\right).
    \end{aligned}
\end{equation}
В \eqref{eq:th6_iter_diff} получено неравенство, отличающееся от неравенства в \eqref{eq:th4_prelim_inequality} заменой $\eta(2 - \eta)$ на $(1 - \delta)$ и $\gamma\hat{g}_{1}(x_{k}, B_{k})L_{\hat{F}}$ на $\max\left\{\tilde{\gamma}L_{\hat{F}}P_{\hat{g}_{1}},~\gamma L_{\hat{F}}\right\}$. Поэтому аналогичным образом для выражения \eqref{eq:th6_iter_diff} применима цепочка рассуждений из теоремы \ref{th:4} с использованием \eqref{eq:bounding_approx_x_delta_variance}, применённая для неравенства из \eqref{eq:th4_prelim_inequality}, приводящая к искомой оценке \eqref{eq:stoch_approx_lin_conv_1}:
\begin{equation*}
    \begin{cases}
        \begin{aligned}
            \mathbb{E}\left[\left\|\nabla\hat{f}_{2}(x_{k})\right\|^{2}\right]&\leq4M_{\hat{G}}^{2}\tilde{\Delta}_{k,b};\\
            \mathbb{E}\left[\hat{f}_{2}(x_{k})\right]&\leq\hat{f}_{2}^{*} + \tilde{\Delta}_{k,b}.
        \end{aligned}
    \end{cases}
\end{equation*}
\end{proof}

Теорема \ref{th:gen_stoch_sublin_conv_main} устанавливает общие условия сходимости к окрестности стационарной точки относительно среднего минимума квадрата нормы проксимального градиента при оптимизации неограниченных функционалов $\hat{f}_{1}$.

\begin{re:theorem}\label{th:gen_stoch_sublin_conv}
Предположим выполнение условий \ref{as:jacob_smoothness} и \ref{as:bounded_variance_growth}. Рассмотрим метод Гаусса--Ньютона, реализованный по схеме \ref{alg:gen_stoch_unbounded_gnm} со стратегией обновления приближения решения \eqref{eq:stoch_approx_general_update_rule}, $\tau_{k} = \hat{g}_{1}(x_{k}, B_{k})$, $k\in\mathbb{Z}_{+}$. Тогда выполнена следующая оценка:
\begin{equation*}
    \begin{aligned}
        \left(\gamma L_{\hat{F}}\right)^{2}&\left(\frac{L}{2} - c_{3}\sqrt{\frac{1}{b} - \frac{1}{m}}\right)^{-1}\left(\frac{\mathbb{E}\left[\hat{f}_{2}(x_{0})\right]}{k} + \frac{1}{k}\sum\limits_{i = 0}^{k - 1}\mathbb{E}\left[\varepsilon_{i}\hat{g}_{1}(x_{i}, B_{i})\right]\left(1 + c_{2}\sqrt{\frac{1}{b} - \frac{1}{m}}\right) + c_{1}\sqrt{\frac{1}{b} - \frac{1}{m}}\right)\geq\\
        &\geq\mathbb{E}\left[\min\limits_{i\in\overline{0, k - 1}}\left\{\left\|\gamma L_{\hat{F}}\left(\hat{T}_{\max\left\{\tilde{\gamma}L_{\hat{F}},\frac{\gamma L_{\hat{F}}}{\hat{g}_{1}(x_{i}, B_{i})}\right\}, \hat{g}_{1}(x_{i}, B_{i})}(x_{i}, B_{i}) - x_{i}\right)\right\|^{2}\right\}\right],\\[10pt]
    \end{aligned}
\end{equation*}
\begin{equation*}
    \begin{aligned}
        b&\in\overline{\min\left\{m, \left\lceil\frac{4c_{3}^{2}m}{L^{2}m + 4c_{3}^{2}}\right\rceil + 1\right\},~m},~\tilde{\gamma}L_{\hat{F}}\geq L > 2c_{3}\sqrt{\frac{1}{b} - \frac{1}{m}},~\tilde{\gamma}>\max\left\{1,~\frac{2c_{3}}{L_{\hat{F}}}\sqrt{\frac{1}{b} - \frac{1}{m}}\right\},~\gamma\geq\tilde{\gamma}.
    \end{aligned}
\end{equation*}
\end{re:theorem}
\begin{proof}
Лемма \ref{lm:aux_upper_model} задаёт в условиях данной теоремы локальную модель $\hat{\psi}_{x_{k}, L_{k}, \hat{g}_{1}(x_{k}, B_{k})}$, для которой применима лемма \ref{lm:aux_det_local_decrease_1} с заменой $\hat{f}_{1} := \hat{g}_{1}$ в рамках одного батча, что позволяет применить рассуждения из теоремы \ref{th:DetSublinConv} и получить оценки на убывание $\hat{g}_{1}$ на $k$--ой итерации:
\begin{equation*}
    \begin{aligned}
        \hat{g}_{1}(x_{k}, B_{k}) + \varepsilon_{k} - \hat{g}_{1}(x_{k + 1}, B_{k})\geq\frac{L_{k}}{2}\left\|\hat{T}_{L_{k}, \hat{g}_{1}(x_{k}, B_{k})}(x_{k}, B_{k}) - x_{k}\right\|^{2}.
    \end{aligned}
\end{equation*}
Приведём правую часть к форме проксимального градиента и оценим снизу правую часть неравенства, пользуясь монотонным убыванием по $L_{k}$ нормы в правой части (следствие \ref{lm:cor_aux_det_local_decrease_1} в случае стохастической локальной модели) и тем, что $L_{k}\geq\frac{L}{\hat{g}_{1}(x_{k}, B_{k})}$:
\begin{equation*}
    \begin{aligned}
        &\hat{g}_{1}(x_{k}, B_{k}) + \varepsilon_{k} - \hat{g}_{1}(x_{k + 1}, B_{k})\geq\frac{L_{k}}{2}\left\|\hat{T}_{L_{k}, \hat{g}_{1}(x_{k}, B_{k})}(x_{k}, B_{k}) - x_{k}\right\|^{2}\geq\\
        &\geq\frac{L}{2\hat{g}_{1}(x_{k}, B_{k})\left(\gamma L_{\hat{F}}\right)^{2}}\left\|\gamma L_{\hat{F}}\left(\hat{T}_{\max\left\{\tilde{\gamma}L_{\hat{F}},\frac{\gamma L_{\hat{F}}}{\hat{g}_{1}(x_{k}, B_{k})}\right\}, \hat{g}_{1}(x_{k}, B_{k})}(x_{k}, B_{k}) - x_{k}\right)\right\|^{2}.
    \end{aligned}
\end{equation*}
Домножим неравенство на $\hat{g}_{1}(x_{k}, B_{k})$:
\begin{equation*}
    \begin{aligned}
        \hat{g}_{2}(x_{k}, B_{k}) &+ \varepsilon_{k}\hat{g}_{1}(x_{k}, B_{k}) - \hat{g}_{2}(x_{k + 1}, B_{k})\geq\hat{g}_{2}(x_{k}, B_{k}) + \varepsilon_{k}\hat{g}_{1}(x_{k}, B_{k}) - \hat{g}_{1}(x_{k}, B_{k})\hat{g}_{1}(x_{k + 1}, B_{k})\geq\\
        &\geq\frac{L}{2\left(\gamma L_{\hat{F}}\right)^{2}}\left\|\gamma L_{\hat{F}}\left(\hat{T}_{\max\left\{\tilde{\gamma}L_{\hat{F}},\frac{\gamma L_{\hat{F}}}{\hat{g}_{1}(x_{k}, B_{k})}\right\}, \hat{g}_{1}(x_{k}, B_{k})}(x_{k}, B_{k}) - x_{k}\right)\right\|^{2}.
    \end{aligned}
\end{equation*}
Сложим неравенства для итераций $0,~\dots, k - 1$ и усредним оператором математического ожидания, увеличив левую часть на $\left|\hat{f}_{2}(x_{k}) - \hat{g}_{2}(x_{k}, B_{k - 1})\right|$:
\begin{equation*}
    \begin{aligned}
        &\mathbb{E}\left[\hat{f}_{2}(x_{0})\right] + \sum\limits_{i = 0}^{k - 1}\mathbb{E}\left[\varepsilon_{i}\hat{g}_{1}(x_{i}, B_{i})\right] + \sum\limits_{i = 1}^{k}\mathbb{E}\left[\left|\hat{f}_{2}(x_{i}) - \hat{g}_{2}(x_{i}, B_{i - 1})\right|\right]\geq\mathbb{E}\left[\hat{f}_{2}(x_{0})\right] + \sum\limits_{i = 0}^{k - 1}\mathbb{E}\left[\varepsilon_{i}\hat{g}_{1}(x_{i}, B_{i})\right] +\\
        &+ \sum\limits_{i = 1}^{k - 1}\mathbb{E}\left[\hat{f}_{2}(x_{i}) - \hat{g}_{2}(x_{i}, B_{i - 1})\right] - \mathbb{E}\left[\hat{g}_{2}(x_{k}, B_{k - 1})\right]\geq\\
        &\geq\frac{L}{2\left(\gamma L_{\hat{F}}\right)^{2}}\sum\limits_{i = 0}^{k - 1}\mathbb{E}\left[\left\|\gamma L_{\hat{F}}\left(\hat{T}_{\max\left\{\tilde{\gamma}L_{\hat{F}},\frac{\gamma L_{\hat{F}}}{\hat{g}_{1}(x_{i}, B_{i})}\right\}, \hat{g}_{1}(x_{i}, B_{i})}(x_{i}, B_{i}) - x_{i}\right)\right\|^{2}\right].
    \end{aligned}
\end{equation*}
Применим предположение \ref{as:bounded_variance_growth} к неравенствам выше:
\begin{equation*}
    \begin{aligned}
        &\mathbb{E}\left[\hat{f}_{2}(x_{0})\right] + \sum\limits_{i = 0}^{k - 1}\mathbb{E}\left[\varepsilon_{i}\hat{g}_{1}(x_{i}, B_{i})\right] + \sum\limits_{i = 0}^{k - 1}\left(c_{1} + c_{2}\mathbb{E}\left[\hat{g}_{1}(x_{i}, B_{i})\varepsilon_{i}\right] +\right.
    \end{aligned}
\end{equation*}
\begin{equation*}
    \begin{aligned}
        &\left.+ c_{3}\mathbb{E}\left[\left\|\hat{T}_{\max\left\{\tilde{\gamma}L_{\hat{F}},\frac{\gamma L_{\hat{F}}}{\hat{g}_{1}(x_{i}, B_{i})}\right\}, \hat{g}_{1}(x_{i}, B_{i})}(x_{i}, B_{i}) - x_{i}\right\|^{2}\right]\right)\sqrt{\frac{1}{b} - \frac{1}{m}}\geq\\
        &\geq\frac{L}{2\left(\gamma L_{\hat{F}}\right)^{2}}\sum\limits_{i = 0}^{k - 1}\mathbb{E}\left[\left\|\gamma L_{\hat{F}}\left(\hat{T}_{\max\left\{\tilde{\gamma}L_{\hat{F}},\frac{\gamma L_{\hat{F}}}{\hat{g}_{1}(x_{i}, B_{i})}\right\}, \hat{g}_{1}(x_{i}, B_{i})}(x_{i}, B_{i}) - x_{i}\right)\right\|^{2}\right].
    \end{aligned}
\end{equation*}
Приведём подобные слагаемые:
\begin{equation*}
    \begin{aligned}
        \mathbb{E}&\left[\hat{f}_{2}(x_{0})\right] + \sum\limits_{i = 0}^{k - 1}\mathbb{E}\left[\varepsilon_{i}\hat{g}_{1}(x_{i}, B_{i})\right]\left(1 + c_{2}\sqrt{\frac{1}{b} - \frac{1}{m}}\right) + c_{1}k\sqrt{\frac{1}{b} - \frac{1}{m}}\geq
    \end{aligned}
\end{equation*}
\begin{equation*}
    \begin{aligned}
        &\geq\sum\limits_{i = 0}^{k - 1}\mathbb{E}\left[\frac{1}{\left(\gamma L_{\hat{F}}\right)^{2}}\left(\frac{L}{2} - c_{3}\sqrt{\frac{1}{b} - \frac{1}{m}}\right)\left\|\gamma L_{\hat{F}}\left(\hat{T}_{\max\left\{\tilde{\gamma}L_{\hat{F}},\frac{\gamma L_{\hat{F}}}{\hat{g}_{1}(x_{i}, B_{i})}\right\}, \hat{g}_{1}(x_{i}, B_{i})}(x_{i}, B_{i}) - x_{i}\right)\right\|^{2}\right]\geq\\
        &\geq \frac{k}{\left(\gamma L_{\hat{F}}\right)^{2}}\left(\frac{L}{2} - c_{3}\sqrt{\frac{1}{b} - \frac{1}{m}}\right)\mathbb{E}\left[\min\limits_{i\in\overline{0, k - 1}}\left\{\left\|\gamma L_{\hat{F}}\left(\hat{T}_{\max\left\{\tilde{\gamma}L_{\hat{F}},\frac{\gamma L_{\hat{F}}}{\hat{g}_{1}(x_{i}, B_{i})}\right\}, \hat{g}_{1}(x_{i}, B_{i})}(x_{i}, B_{i}) - x_{i}\right)\right\|^{2}\right\}\right].
    \end{aligned}
\end{equation*}
Домножением на $\frac{\left(\gamma L_{\hat{F}}\right)^{2}}{k}\left(\frac{L}{2} - c_{3}\sqrt{\frac{1}{b} - \frac{1}{m}}\right)^{-1}$ получаем искомую оценку сходимости:
\begin{equation*}
    \begin{aligned}
        \left(\gamma L_{\hat{F}}\right)^{2}\left(\frac{L}{2} - c_{3}\sqrt{\frac{1}{b} - \frac{1}{m}}\right)^{-1}&\left(\frac{\mathbb{E}\left[\hat{f}_{2}(x_{0})\right]}{k} + \frac{1}{k}\sum\limits_{i = 0}^{k - 1}\mathbb{E}\left[\varepsilon_{i}\hat{g}_{1}(x_{i}, B_{i})\right]\left(1 + c_{2}\sqrt{\frac{1}{b} - \frac{1}{m}}\right) + c_{1}\sqrt{\frac{1}{b} - \frac{1}{m}}\right)\geq\\
        &\geq\mathbb{E}\left[\min\limits_{i\in\overline{0, k - 1}}\left\{\left\|\gamma L_{\hat{F}}\left(\hat{T}_{\max\left\{\tilde{\gamma}L_{\hat{F}},\frac{\gamma L_{\hat{F}}}{\hat{g}_{1}(x_{i}, B_{i})}\right\}, \hat{g}_{1}(x_{i}, B_{i})}(x_{i}, B_{i}) - x_{i}\right)\right\|^{2}\right\}\right].
    \end{aligned}
\end{equation*}
Необходимое ограничение $\frac{L}{2} - c_{3}\sqrt{\frac{1}{b} - \frac{1}{m}} > 0$ задаёт минимальный размер батча и нижнюю оценку на $L$:
\begin{equation*}
    \begin{cases}
        b\in\overline{\min\left\{m, \left\lceil\frac{4c_{3}^{2}m}{L^{2}m + 4c_{3}^{2}}\right\rceil + 1\right\},~m};\\[5pt]
        \tilde{\gamma}L_{\hat{F}}\geq L > 2c_{3}\sqrt{\frac{1}{b} - \frac{1}{m}}\Rightarrow\tilde{\gamma}>\max\left\{1,~\frac{2c_{3}}{L_{\hat{F}}}\sqrt{\frac{1}{b} - \frac{1}{m}}\right\}.
    \end{cases}
\end{equation*}
\end{proof}
\begin{re:th:corollary}
В случае  $\varepsilon_{k} = \frac{\delta_{k - 1} - \delta_{k}}{\hat{g}_{1}(x_{k}, B_{k})},~k\in\mathbb{N}$ и $\varepsilon_{0} = \frac{\delta_{0}}{\hat{g}_{1}(x_{0}, B_{0})}$ с $0 < \delta_{k} < \delta_{k - 1},~\lim\limits_{k\rightarrow\infty}\delta_{k} = 0$ оценка сходимости следующая:
\begin{equation*}
    \begin{aligned}
        \left(\gamma L_{\hat{F}}\right)^{2}\left(\frac{L}{2} - c_{3}\sqrt{\frac{1}{b} - \frac{1}{m}}\right)^{-1}&\left(\frac{\mathbb{E}\left[\hat{f}_{2}(x_{0})\right]}{k} + \frac{2\delta_{0} - \delta_{k - 1}}{k}\left(1 + c_{2}\sqrt{\frac{1}{b} - \frac{1}{m}}\right) + c_{1}\sqrt{\frac{1}{b} - \frac{1}{m}}\right)\geq\\
        &\geq\mathbb{E}\left[\min\limits_{i\in\overline{0, k - 1}}\left\{\left\|\gamma L_{\hat{F}}\left(\hat{T}_{\max\left\{\tilde{\gamma}L_{\hat{F}},\frac{\gamma L_{\hat{F}}}{\hat{g}_{1}(x_{i}, B_{i})}\right\}, \hat{g}_{1}(x_{i}, B_{i})}(x_{i}, B_{i}) - x_{i}\right)\right\|^{2}\right\}\right].
    \end{aligned}
\end{equation*}
\end{re:th:corollary}
\begin{re:th:corollary}\label{th:gen_stoch_sublin_conv_cor2}
Если в предположении \ref{as:bounded_variance_growth} $c_{1} = 0$, то оценка сходимости из предыдущего следствия указывает на гипотетическую возможность решить задачу с любой наперёд заданной точностью, лишь увеличивая количество итераций, при этом малый размер батча можно скомпенсировать увеличением $L\geq L_{\hat{F}}$ и установкой $L := \tilde{\gamma} L_{\hat{F}},~\tilde{\gamma}\geq1$.
\end{re:th:corollary}

Теорема \ref{th:gen_stoch_lin_conv_main} использует проксимальное условие Поляка--Лоясиевича (предположене \ref{as:prox_PL_condition}) для вывода общих условий линейной сходимости к области решения задачи \eqref{eq:main_opt_problem} в среднем.

\begin{re:theorem}\label{th:gen_stoch_lin_conv}
Предположим выполнение условий \ref{as:jacob_smoothness}, \ref{as:bounded_variance_growth} и \ref{as:prox_PL_condition}. Рассмотрим метод Гаусса--Ньютона, реализованный по схеме \ref{alg:gen_stoch_unbounded_gnm} со стратегией обновления приближения решения \eqref{eq:stoch_approx_general_update_rule}, $\tau_{k} = \hat{g}_{1}(x_{k}, B_{k})$, $k\in\mathbb{Z}_{+}$. Тогда выполнена следующая оценка
\begin{equation*}
    \begin{aligned}
        \frac{\left(\gamma L_{\hat{F}}\right)^{2}}{\nu}&\left(\frac{L}{2} - c_{3}\sqrt{\frac{1}{b} - \frac{1}{m}}\right)^{-1}\left(c_{1}\sqrt{\frac{1}{b} - \frac{1}{m}} + \left(1 + c_{2}\sqrt{\frac{1}{b} - \frac{1}{m}}\right)\mathbb{E}\left[\max\limits_{i\in\overline{0, k - 1}}\left\{\varepsilon_{i}\hat{g}_{1}(x_{i}, B_{i})\right\}\right]\right) +\\
        &+ \exp\left(-\frac{k\nu}{\left(\gamma L_{\hat{F}}\right)^{2}}\left(\frac{L}{2} - c_{3}\sqrt{\frac{1}{b} - \frac{1}{m}}\right)\right)\mathbb{E}\left[\hat{f}_{2}(x_{0}) - \hat{f}_{2}^{*}\right]\geq\mathbb{E}\left[\hat{f}_{2}(x_{k}) - \hat{f}_{2}^{*}\right],\\[10pt]
        b&\in\overline{\min\left\{m,~\left\lceil\frac{4c_{3}^{2}m}{L^{2}m + 4c_{3}^{2}}\right\rceil + 1\right\},~m},~\tilde{\gamma}L_{\hat{F}}\geq L>2c_{3}\sqrt{\frac{1}{b} - \frac{1}{m}},\\
        \gamma&\geq\max\left\{\tilde{\gamma},~\frac{1}{L_{\hat{F}}}\sqrt{\nu\left(\frac{L}{2} - c_{3}\sqrt{\frac{1}{b} - \frac{1}{m}}\right)}\right\},~\tilde{\gamma} > \max\left\{1,~\frac{2c_{3}}{L_{\hat{F}}}\sqrt{\frac{1}{b} - \frac{1}{m}}\right\}.
    \end{aligned}
\end{equation*}
\end{re:theorem}
\begin{proof}
Применяя рассуждения из теоремы \ref{th:gen_stoch_sublin_conv}, получаем оценки на убывание $\hat{g}_{1}$ на $k$--ой итерации:
\begin{equation*}
    \begin{aligned}
        \hat{g}_{1}(x_{k}, B_{k}) + \varepsilon_{k} - \hat{g}_{1}(x_{k + 1}, B_{k})\geq\frac{L_{k}}{2}\left\|\hat{T}_{L_{k}, \hat{g}_{1}(x_{k}, B_{k})}(x_{k}, B_{k}) - x_{k}\right\|^{2}.
    \end{aligned}
\end{equation*}
По аналогии с доказательством теоремы \ref{th:gen_stoch_sublin_conv} приведём правую часть к форме проксимального градиента и оценим её снизу:
\begin{equation*}
    \begin{aligned}
        &\hat{g}_{1}(x_{k}, B_{k}) + \varepsilon_{k} - \hat{g}_{1}(x_{k + 1}, B_{k})\geq\frac{L_{k}}{2}\left\|\hat{T}_{L_{k}, \hat{g}_{1}(x_{k}, B_{k})}(x_{k}, B_{k}) - x_{k}\right\|^{2}\geq\\
        &\geq\frac{L}{2\hat{g}_{1}(x_{k}, B_{k})\left(\gamma L_{\hat{F}}\right)^{2}}\left\|\gamma L_{\hat{F}}\left(\hat{T}_{\max\left\{\tilde{\gamma}L_{\hat{F}},\frac{\gamma L_{\hat{F}}}{\hat{g}_{1}(x_{k}, B_{k})}\right\}, \hat{g}_{1}(x_{k}, B_{k})}(x_{k}, B_{k}) - x_{k}\right)\right\|^{2}.
    \end{aligned}
\end{equation*}
Домножим неравенство на $\hat{g}_{1}(x_{k}, B_{k})$:
\begin{equation*}
    \begin{aligned}
        \hat{g}_{2}(x_{k}, B_{k}) &+ \varepsilon_{k}\hat{g}_{1}(x_{k}, B_{k}) - \hat{g}_{2}(x_{k + 1}, B_{k})\geq\hat{g}_{2}(x_{k}, B_{k}) + \varepsilon_{k}\hat{g}_{1}(x_{k}, B_{k}) - \hat{g}_{1}(x_{k}, B_{k})\hat{g}_{1}(x_{k + 1}, B_{k})\geq\\
        &\geq\frac{L}{2\left(\gamma L_{\hat{F}}\right)^{2}}\left\|\gamma L_{\hat{F}}\left(\hat{T}_{\max\left\{\tilde{\gamma}L_{\hat{F}},\frac{\gamma L_{\hat{F}}}{\hat{g}_{1}(x_{k}, B_{k})}\right\}, \hat{g}_{1}(x_{k}, B_{k})}(x_{k}, B_{k}) - x_{k}\right)\right\|^{2}.
    \end{aligned}
\end{equation*}
Усредним неравенство с помощью оператора математического ожидания, добавив к обеим частям\\$\hat{f}_{2}(x_{k + 1}) - \hat{f}_{2}^{*}$:
\begin{equation*}
    \begin{aligned}
        &\mathbb{E}\left[\hat{f}_{2}(x_{k}) - \hat{f}_{2}^{*}\right] + \mathbb{E}\left[\varepsilon_{k}\hat{g}_{1}(x_{k}, B_{k})\right] + \mathbb{E}\left[\left|\hat{f}_{2}(x_{k + 1}) - \hat{g}_{2}(x_{k + 1}, B_{k})\right|\right]\geq\mathbb{E}\left[\hat{f}_{2}(x_{k}) - \hat{f}_{2}^{*}\right] + \mathbb{E}\left[\varepsilon_{k}\hat{g}_{1}(x_{k}, B_{k})\right] +\\
        &+ \mathbb{E}\left[\hat{f}_{2}(x_{k + 1}) - \hat{g}_{2}(x_{k + 1}, B_{k})\right]\geq\mathbb{E}\left[\hat{f}_{2}(x_{k + 1}) - \hat{f}_{2}^{*}\right] +\\
        &+ \frac{L}{2\left(\gamma L_{\hat{F}}\right)^{2}}\mathbb{E}\left[\left\|\gamma L_{\hat{F}}\left(\hat{T}_{\max\left\{\tilde{\gamma}L_{\hat{F}},\frac{\gamma L_{\hat{F}}}{\hat{g}_{1}(x_{k}, B_{k})}\right\}, \hat{g}_{1}(x_{k}, B_{k})}(x_{k}, B_{k}) - x_{k}\right)\right\|^{2}\right].
    \end{aligned}
\end{equation*}
Применим предположение \ref{as:bounded_variance_growth}:
\begin{equation*}
    \begin{aligned}
        &\left(c_{1} + c_{2}\mathbb{E}\left[\hat{g}_{1}(x_{k}, B_{k})\varepsilon_{k}\right] + c_{3}\mathbb{E}\left[\left\|\gamma L_{\hat{F}}\left(\hat{T}_{\max\left\{\tilde{\gamma}L_{\hat{F}},\frac{\gamma L_{\hat{F}}}{\hat{g}_{1}(x_{k}, B_{k})}\right\}, \hat{g}_{1}(x_{k}, B_{k})}(x_{k}, B_{k}) - x_{k}\right)\right\|^{2}\right]\right)\sqrt{\frac{1}{b} - \frac{1}{m}} +\\
        &+ \mathbb{E}\left[\varepsilon_{k}\hat{g}_{1}(x_{k}, B_{k})\right] + \mathbb{E}\left[\hat{f}_{2}(x_{k}) - \hat{f}_{2}^{*}\right]\geq\mathbb{E}\left[\hat{f}_{2}(x_{k}) - \hat{f}_{2}^{*}\right] + \mathbb{E}\left[\varepsilon_{k}\hat{g}_{1}(x_{k}, B_{k})\right] + \mathbb{E}\left[\left|\hat{f}_{2}(x_{k + 1}) - \hat{g}_{2}(x_{k + 1}, B_{k})\right|\right]\geq\\
        &\geq\mathbb{E}\left[\hat{f}_{2}(x_{k + 1}) - \hat{f}_{2}^{*}\right] + \frac{L}{2\left(\gamma L_{\hat{F}}\right)^{2}}\mathbb{E}\left[\left\|\gamma L_{\hat{F}}\left(\hat{T}_{\max\left\{\tilde{\gamma}L_{\hat{F}},\frac{\gamma L_{\hat{F}}}{\hat{g}_{1}(x_{k}, B_{k})}\right\}, \hat{g}_{1}(x_{k}, B_{k})}(x_{k}, B_{k}) - x_{k}\right)\right\|^{2}\right]\Rightarrow\\
        &\Rightarrow c_{1}\sqrt{\frac{1}{b} - \frac{1}{m}} + \left(1 + c_{2}\sqrt{\frac{1}{b} - \frac{1}{m}}\right)\mathbb{E}\left[\varepsilon_{k}\hat{g}_{1}(x_{k}, B_{k})\right] + \mathbb{E}\left[\hat{f}_{2}(x_{k}) - \hat{f}_{2}^{*}\right]\geq\mathbb{E}\left[\hat{f}_{2}(x_{k + 1}) - \hat{f}_{2}^{*}\right] +\\
        &+ \frac{1}{\left(\gamma L_{\hat{F}}\right)^{2}}\left(\frac{L}{2} - c_{3}\sqrt{\frac{1}{b} - \frac{1}{m}}\right)\mathbb{E}\left[\left\|\gamma L_{\hat{F}}\left(\hat{T}_{\max\left\{\tilde{\gamma}L_{\hat{F}},\frac{\gamma L_{\hat{F}}}{\hat{g}_{1}(x_{k}, B_{k})}\right\}, \hat{g}_{1}(x_{k}, B_{k})}(x_{k}, B_{k}) - x_{k}\right)\right\|^{2}\right].
    \end{aligned}
\end{equation*}
Применим предположение \ref{as:prox_PL_condition} к неравенствам выше:
\begin{equation*}
    \begin{aligned}
        &c_{1}\sqrt{\frac{1}{b} - \frac{1}{m}} + \left(1 + c_{2}\sqrt{\frac{1}{b} - \frac{1}{m}}\right)\mathbb{E}\left[\varepsilon_{k}\hat{g}_{1}(x_{k}, B_{k})\right] + \mathbb{E}\left[\hat{f}_{2}(x_{k}) - \hat{f}_{2}^{*}\right]\geq\\
        &\geq\mathbb{E}\left[\hat{f}_{2}(x_{k + 1}) - \hat{f}_{2}^{*}\right] + \frac{\nu}{\left(\gamma L_{\hat{F}}\right)^{2}}\left(\frac{L}{2} - c_{3}\sqrt{\frac{1}{b} - \frac{1}{m}}\right)\mathbb{E}\left[\hat{f}_{2}(x_{k}) - \hat{f}_{2}^{*}\right].
    \end{aligned}
\end{equation*}
Приведём подобные слагаемые, использовав свойство
$$\varepsilon_{j}\hat{g}_{1}(x_{j}, B_{j})\leq\max\limits_{i\in\overline{0, k}}\left\{\varepsilon_{i}\hat{g}_{1}(x_{i}, B_{i})\right\},~j\in\overline{0, k}:$$
\begin{equation*}
    \begin{aligned}
        &c_{1}\sqrt{\frac{1}{b} - \frac{1}{m}} + \left(1 + c_{2}\sqrt{\frac{1}{b} - \frac{1}{m}}\right)\mathbb{E}\left[\max\limits_{i\in\overline{0, k}}\left\{\varepsilon_{i}\hat{g}_{1}(x_{i}, B_{i})\right\}\right] +\\
        &+ \left(1 - \frac{\nu}{\left(\gamma L_{\hat{F}}\right)^{2}}\left(\frac{L}{2} - c_{3}\sqrt{\frac{1}{b} - \frac{1}{m}}\right)\right)\mathbb{E}\left[\hat{f}_{2}(x_{k}) - \hat{f}_{2}^{*}\right]\geq\mathbb{E}\left[\hat{f}_{2}(x_{k + 1}) - \hat{f}_{2}^{*}\right].
    \end{aligned}
\end{equation*}
Раскрывая рекуррентное соотношение по аналогии с \eqref{eq:th4_geom_sum} и применяя неравенство $1 + t\leq\exp(t)$, получаем искомую оценку:
\begin{equation*}
    \begin{aligned}
        \frac{\left(\gamma L_{\hat{F}}\right)^{2}}{\nu}\left(\frac{L}{2} - c_{3}\sqrt{\frac{1}{b} - \frac{1}{m}}\right)^{-1}&\left(c_{1}\sqrt{\frac{1}{b} - \frac{1}{m}} + \left(1 + c_{2}\sqrt{\frac{1}{b} - \frac{1}{m}}\right)\mathbb{E}\left[\max\limits_{i\in\overline{0, k - 1}}\left\{\varepsilon_{i}\hat{g}_{1}(x_{i}, B_{i})\right\}\right]\right) +\\
        &+ \exp\left(-\frac{k\nu}{\left(\gamma L_{\hat{F}}\right)^{2}}\left(\frac{L}{2} - c_{3}\sqrt{\frac{1}{b} - \frac{1}{m}}\right)\right)\mathbb{E}\left[\hat{f}_{2}(x_{0}) - \hat{f}_{2}^{*}\right]\geq\mathbb{E}\left[\hat{f}_{2}(x_{k}) - \hat{f}_{2}^{*}\right].
    \end{aligned}
\end{equation*}
Для наличия ожидаемой сходимости необходимо выполнение неравенства $\frac{L}{2} - c_{3}\sqrt{\frac{1}{b} - \frac{1}{m}} > 0$, которое задаёт допустимый размер батча и нижнюю границу на $\tilde{\gamma}$:
\begin{equation*}
    \begin{cases}
        &b\in\overline{\min\left\{m,~\left\lceil\frac{4c_{3}^{2}m}{L^{2}m + 4c_{3}^{2}}\right\rceil + 1\right\},~m};\\[5pt]
        &\tilde{\gamma} > \max\left\{1,~\frac{2c_{3}}{L_{\hat{F}}}\sqrt{\frac{1}{b} - \frac{1}{m}}\right\}.
    \end{cases}
\end{equation*}
Данные неравенства накладывают ограничения и на $L$:
$$\tilde{\gamma}L_{\hat{F}}\geq L>2c_{3}\sqrt{\frac{1}{b} - \frac{1}{m}}.$$
Также для наличия линейной сходимости важно соблюсти следующее условие:
\begin{equation*}
    \begin{aligned}
        &0<\frac{\nu}{\left(\gamma L_{\hat{F}}\right)^{2}}\left(\frac{L}{2} - c_{3}\sqrt{\frac{1}{b} - \frac{1}{m}}\right)\leq1\Rightarrow\gamma\geq\max\left\{\tilde{\gamma},~\frac{1}{L_{\hat{F}}}\sqrt{\nu\left(\frac{L}{2} - c_{3}\sqrt{\frac{1}{b} - \frac{1}{m}}\right)}\right\}.
    \end{aligned}
\end{equation*}
\end{proof}
\begin{re:th:corollary}
В случае  $\varepsilon_{k} = \frac{q\delta_{k - 1} - \delta_{k}}{\hat{g}_{1}(x_{k}, B_{k})},~k\in\mathbb{N}$ и $\varepsilon_{0} = \frac{\delta_{0}}{\hat{g}_{1}(x_{0}, B_{0})}$ с $0 < \delta_{k} < q\delta_{k - 1},~\lim\limits_{k\rightarrow+\infty}\delta_{k} = 0$,$$q = \exp\left(-\frac{\nu}{\left(\gamma L_{\hat{F}}\right)^{2}}\left(\frac{L}{2} - c_{3}\sqrt{\frac{1}{b} - \frac{1}{m}}\right)\right)$$ оценка сходимости следующая:
\begin{equation*}
    \begin{aligned}
        \frac{\left(\gamma L_{\hat{F}}\right)^{2}}{\nu}\left(\frac{L}{2} - c_{3}\sqrt{\frac{1}{b} - \frac{1}{m}}\right)^{-1}&\left(c_{1}\sqrt{\frac{1}{b} - \frac{1}{m}} + \right.\\
        &\left.+ 2\delta_{0}\left(1 + c_{2}\sqrt{\frac{1}{b} - \frac{1}{m}}\right)\exp\left(-\frac{\left(k - 1\right)\nu}{\left(\gamma L_{\hat{F}}\right)^{2}}\left(\frac{L}{2} - c_{3}\sqrt{\frac{1}{b} - \frac{1}{m}}\right)\right)\right) +\\
        &+ \exp\left(-\frac{k\nu}{\left(\gamma L_{\hat{F}}\right)^{2}}\left(\frac{L}{2} - c_{3}\sqrt{\frac{1}{b} - \frac{1}{m}}\right)\right)\mathbb{E}\left[\hat{f}_{2}(x_{0}) - \hat{f}_{2}^{*}\right]\geq\mathbb{E}\left[\hat{f}_{2}(x_{k}) - \hat{f}_{2}^{*}\right].
    \end{aligned}
\end{equation*}
\end{re:th:corollary}
\begin{re:th:corollary}\label{th:gen_stoch_lin_conv_cor2}
Если в предположении \ref{as:bounded_variance_growth} $c_{1} = 0$, то оценка сходимости из предыдущего следствия указывает на гипотетическую возможность решить задачу с любой наперёд заданной точностью, лишь увеличивая количество итераций, при этом малый размер батча можно скомпенсировать увеличением $L\geq L_{\hat{F}}$ и установкой $L := \tilde{\gamma} L_{\hat{F}},~\tilde{\gamma}\geq 1$.
\end{re:th:corollary}

\subsubsection*{Условие слабого роста в методе Гаусса--Ньютона}

Теорема \ref{th:weak_growth_condition_main} использует наличие условия слабого роста для ограниченного оптимизируемого функционала и предположение \ref{as:5} для вывода совместности системы нелинейных уравнений \eqref{eq:smooth_system} и линейной сходимости к её решению с произвольно заданной точностью и с произвольным размером батча при использовании правила \eqref{eq:double_stoch_direct_update_rule}.

\begin{re:theorem}\label{th:weak_growth_condition}
Пусть выполнены предположения \ref{as:2}, \ref{as:3}, \ref{as:5}, \ref{as:jacob_smoothness}. Рассмотрим метод Гаусса--Ньютона со схемой реализации \ref{alg:gen_double_stoch_gnm}, в котором последовательность $\{x_{k}\}_{k\in\mathbb{Z}_{+}}$ вычисляется по правилу \eqref{eq:double_stoch_direct_update_rule} с $\tilde{\tau}_{k}\geq\tilde{\tau} > 0$,\\$L_{k} \geq L > 0$ и $l_{k} \equiv l_{\hat{f}_{2}} = 2\left(L_{\hat{F}}P_{\hat{f}_{1}} + M_{\hat{F}}^{2}\right)$. Тогда для последовательности
\begin{equation*}
    \begin{aligned}
        &\eta_{k} = \frac{\mu\left(\tilde{\tau}_{k}L_{k}\right)^{2}}{\left(M_{\hat{G}}^{2} + \tilde{\tau}_{k}L_{k}\right)\left(L_{\hat{F}}P_{\hat{f}_{1}} + M_{\hat{F}}^{2}\right)M_{\hat{G}}^{2}},~k\in\mathbb{Z}_{+}
    \end{aligned}
\end{equation*}
    верна следующая оценка
    \begin{equation*}
        \begin{aligned}
            &\mathbb{E}\left[\hat{f}_{2}(x_{k})\right]\leq\mathbb{E}\left[\hat{f}_{2}(x_{0})\right]\exp\left(-\frac{k}{\left(L_{\hat{F}}P_{\hat{f}_{1}} + M_{\hat{F}}^{2}\right)M_{\hat{G}}^{2}}\left(\frac{\mu\tilde{\tau}L}{M_{\hat{G}}^{2} + \tilde{\tau}L}\right)^{2}\right),~k\in\mathbb{Z}_{+}.
        \end{aligned}
    \end{equation*}
    В случае $\eta_{k} = 1,~k\in\mathbb{Z}_{+}$ оценка сходимости при использовании правила \eqref{eq:double_stoch_direct_update_rule} в условиях данной теоремы не лучше
    \begin{equation}\label{eq:sgn_approx_conv}
        \begin{cases}
            \mathbb{E}\left[\hat{f}_{2}(x_{k})\right]&\leq\mathbb{E}\left[\hat{f}_{2}(x_{0})\right]\exp\left(-\frac{k\mu^{2}}{M_{\hat{G}}^{2}}\left(\frac{2}{\mu + \left(L_{\hat{F}}P_{\hat{f}_{1}} + M_{\hat{F}}^{2}\right)c} - \frac{1}{\left(L_{\hat{F}}P_{\hat{f}_{1}} + M_{\hat{F}}^{2}\right)c^{2}}\right)\right);\\[10pt]
            &c \overset{\operatorname{def}}{=} \frac{1}{3}\left(1 + 7\sqrt[3]{\frac{2}{47 + 3\sqrt{93}}} + \sqrt[3]{\frac{47 + 3\sqrt{93}}{2}}\right),~k\in\mathbb{Z}_{+}.
        \end{cases}
    \end{equation}
    Оператор математического ожидания $\mathbb{E}\left[\cdot\right]$ усредняет по всей случайности процесса оптимизации.    
\end{re:theorem}
\begin{proof}
Функция $\hat{f}_{2}$ обладает липшицевым градиентом с верхней оценкой на постоянную Липшица $l_{\hat{f}_{2}} = 2\left(L_{\hat{F}}P_{\hat{f}_{1}} + M_{\hat{F}}^{2}\right)$ (следствие \ref{lm:aux_f2_lipschitz_gradient}). Выпишем локальную модель для функции $\hat{f}_{2}$ на $k$--ой итерации в точке $x_{k + 1},~k\in\mathbb{Z}_{+}$ (следствие \ref{lm:aux_f2_local_model}):
\begin{equation*}
    \hat{f}_{2}(x_{k + 1})\leq\hat{f}_{2}(x_{k}) + \left\langle\nabla\hat{f}_{2}(x_{k}),~x_{k + 1} - x_{k}\right\rangle + \frac{l_{\hat{f}_{2}}}{2}\|x_{k + 1} - x_{k}\|^{2}.
\end{equation*}
Правило обновления $x_{k + 1}$ задаётся следующим образом:
\begin{equation*}
    \begin{aligned}
        x_{k + 1} &= x_{k} - \eta_{k}\underbrace{\left(\hat{G}^{'}(x_{k}, \tilde{B}_{k})^{*}\hat{G}^{'}(x_{k}, \tilde{B}_{k}) + \tilde{\tau}_{k}L_{k}I_{n}\right)^{-1}}_{\overset{\operatorname{def}}{=}2H_{k}}\underbrace{\hat{G}^{'}(x_{k}, B_{k})^{*}\hat{G}(x_{k}, B_{k})}_{\overset{\operatorname{def}}{=}\frac{1}{2}\nabla_{x_{k}}\hat{g}_{2}(x_{k}, B_{k})} = x_{k} - \eta_{k}H_{k}\nabla_{x_{k}}\hat{g}_{2}(x_{k}, B_{k}),\\
        &\eta_{k} > 0.
    \end{aligned}
\end{equation*}
$B_{k}, \tilde{B}_{k}\subseteq\mathcal{B}$ --- независимо сэмплированные батчи на $k$--ой итерации. Подставим данное правило в локальную модель:
\begin{equation*}
    \begin{aligned}
        \hat{f}_{2}(x_{k + 1})&\leq\hat{f}_{2}(x_{k}) - \eta_{k}\left\langle\nabla\hat{f}_{2}(x_{k}),~H_{k}\nabla_{x_{k}}\hat{g}_{2}(x_{k}, B_{k})\right\rangle + \frac{\eta_{k}^{2}l_{\hat{f}_{2}}}{2}\|H_{k}\nabla_{x_{k}}\hat{g}_{2}(x_{k}, B_{k})\|^{2}=\\
        &=\hat{f}_{2}(x_{k}) - \eta_{k}\left\langle\nabla\hat{f}_{2}(x_{k}),H_{k}\nabla_{x_{k}}\hat{g}_{2}(x_{k}, B_{k})\right\rangle + \frac{\eta_{k}^{2}l_{\hat{f}_{2}}}{2}\left\langle H_{k}^{2}\nabla_{x_{k}}\hat{g}_{2}(x_{k}, B_{k}),\nabla_{x_{k}}\hat{g}_{2}(x_{k}, B_{k})\right\rangle.
    \end{aligned}
\end{equation*}
Для матрицы $H_{k}$ выполнены следующие соотношения (лемма \ref{lm:aux_matrix_power_order}):
$$\frac{1}{\left(2\left(M_{\hat{G}}^{2} + \tilde{\tau}_{k}L_{k}\right)\right)^{t}}I_{n}\preceq H_{k}^{t}\preceq\frac{1}{\left(2\tilde{\tau}_{k}L_{k}\right)^{t}}I_{n},~k\in\mathbb{Z}_{+},~t\geq0.$$
Усредним неравенство с помощью оператора математического ожидания:
\begin{equation*}
    \begin{aligned}
        \mathbb{E}\left[\hat{f}_{2}(x_{k + 1})\right]&\leq\mathbb{E}\left[\hat{f}_{2}(x_{k})\right] - \eta_{k}\mathbb{E}\left[\left\langle\nabla\hat{f}_{2}(x_{k}),H_{k}\nabla\hat{f}_{2}(x_{k})\right\rangle\right] + \frac{\eta_{k}^{2}l_{\hat{f}_{2}}}{2}\mathbb{E}\left[\left\langle H_{k}^{2}\nabla_{x_{k}}\hat{g}_{2}(x_{k}, B_{k}),\nabla_{x_{k}}\hat{g}_{2}(x_{k}, B_{k})\right\rangle\right]\leq\\
        &\leq\mathbb{E}\left[\hat{f}_{2}(x_{k})\right] - \frac{\eta_{k}\mathbb{E}\left[\left\|\nabla\hat{f}_{2}(x_{k})\right\|^{2}\right]}{2\left(M_{\hat{G}}^{2} + \tilde{\tau}_{k}L_{k}\right)} + \frac{\eta_{k}^{2}l_{\hat{f}_{2}}}{8\left(\tilde{\tau}_{k}L_{k}\right)^{2}}\mathbb{E}\left[\left\|\nabla_{x_{k}}\hat{g}_{2}(x_{k}, B_{k})\right\|^{2}\right].
    \end{aligned}
\end{equation*}
Применим ограничения на норму градиента к локальной модели \eqref{eq:grad_norm_bounds}:
\begin{equation*}
    \begin{aligned}
        \mathbb{E}\left[\hat{f}_{2}(x_{k + 1})\right]&\leq\mathbb{E}\left[\hat{f}_{2}(x_{k})\right] - \frac{\eta_{k}\mathbb{E}\left[\left\|\nabla\hat{f}_{2}(x_{k})\right\|^{2}\right]}{2\left(M_{\hat{G}}^{2} + \tilde{\tau}_{k}L_{k}\right)} + \frac{\eta_{k}^{2}l_{\hat{f}_{2}}}{8\left(\tilde{\tau}_{k}L_{k}\right)^{2}}\mathbb{E}\left[\left\|\nabla_{x_{k}}\hat{g}_{2}(x_{k}, B_{k})\right\|^{2}\right]
        \leq\mathbb{E}\left[\hat{f}_{2}(x_{k})\right] -\\
        &- \frac{2\eta_{k}\mu\mathbb{E}\left[\hat{f}_{2}(x_{k})\right]}{M_{\hat{G}}^{2} + \tilde{\tau}_{k}L_{k}} + \frac{\eta_{k}^{2}M_{\hat{G}}^{2}l_{\hat{f}_{2}}}{2\left(\tilde{\tau}_{k}L_{k}\right)^{2}}\mathbb{E}\left[\hat{f}_{2}(x_{k})\right] = \mathbb{E}\left[\hat{f}_{2}(x_{k})\right]\left(1 - \frac{2\eta_{k}\mu}{M_{\hat{G}}^{2} + \tilde{\tau}_{k}L_{k}} + \frac{l_{\hat{f}_{2}}}{2}\left(\frac{\eta_{k}M_{\hat{G}}}{\tilde{\tau}_{k}L_{k}}\right)^{2}\right) =\\
        &= \mathbb{E}\left[\hat{f}_{2}(x_{k})\right]\left(1 - \frac{2\eta_{k}\mu}{M_{\hat{G}}^{2} + \tilde{\tau}_{k}L_{k}} + \left(L_{\hat{F}}P_{\hat{f}_{1}} + M_{\hat{F}}^{2}\right)\left(\frac{\eta_{k}M_{\hat{G}}}{\tilde{\tau}_{k}L_{k}}\right)^{2}\right).
    \end{aligned}
\end{equation*}
Вычислим оптимальный шаг для каждой итерации:
\begin{equation*}
    \begin{aligned}
        &\eta_{k}^{2}\left(\left(L_{\hat{F}}P_{\hat{f}_{1}} + M_{\hat{F}}^{2}\right)\left(\frac{M_{\hat{G}}}{\tilde{\tau}_{k}L_{k}}\right)^{2}\right) - \eta_{k}\left(\frac{2\mu}{M_{\hat{G}}^{2} + \tilde{\tau}_{k}L_{k}}\right)\rightarrow\min\limits_{\eta_{k} > 0}\Rightarrow\\
        &\Rightarrow\eta_{k} = \frac{\mu\left(\tilde{\tau}_{k}L_{k}\right)^{2}}{\left(M_{\hat{G}}^{2} + \tilde{\tau}_{k}L_{k}\right)\left(L_{\hat{F}}P_{\hat{f}_{1}} + M_{\hat{F}}^{2}\right)M_{\hat{G}}^{2}},~\tilde{\tau}_{k} > 0,~L_{k} > 0,~k\in\mathbb{Z}_{+}.
    \end{aligned}
\end{equation*}
При таком шаге получается линейная скорость сходимости с произвольным размером батча:
\begin{equation*}
    \begin{aligned}
        \mathbb{E}\left[\hat{f}_{2}(x_{k + 1})\right]&\leq\mathbb{E}\left[\hat{f}_{2}(x_{k})\right]\underbrace{\left(1 - \left(\frac{\mu\tilde{\tau}_{k}L_{k}}{M_{\hat{G}}^{2} + \tilde{\tau}_{k}L_{k}}\right)^{2}\frac{1}{\left(L_{\hat{F}}P_{\hat{f}_{1}} + M_{\hat{F}}^{2}\right)M_{\hat{G}}^{2}}\right)}_{\in(0,~1)\text{ так как }0 < \mu\leq\min\{M_{\hat{F}}^{2},~M_{\hat{G}}^{2}\}\text{ и }\tilde{\tau}_{k}L_{k} > 0}\leq\\
        &\leq\mathbb{E}\left[\hat{f}_{2}(x_{0})\right]\exp\left(\frac{-(k + 1)}{\left(L_{\hat{F}}P_{\hat{f}_{1}} + M_{\hat{F}}^{2}\right)M_{\hat{G}}^{2}}\left(\frac{\mu\tilde{\tau}L}{M_{\hat{G}}^{2} + \tilde{\tau}L}\right)^{2}\right),~k\in\mathbb{Z}_{+}.
    \end{aligned}
\end{equation*}
Введём функцию
$$\alpha(t) \overset{\operatorname{def}}{=} 1 - \left(\frac{\mu t}{M_{\hat{G}}^{2} + t}\right)^{2}\frac{1}{\left(L_{\hat{F}}P_{\hat{f}_{1}} + M_{\hat{F}}^{2}\right)M_{\hat{G}}^{2}},$$
для неё найдём минимальное значение и множество значений $t = \tilde{\tau}_{k}L_{k}$, на которых достигается минимум, чтобы гарантировать наискорейшее убывание $\mathbb{E}\left[\hat{f}_{2}(x_{k + 1})\right]$:
$$\mathbb{E}\left[\hat{f}_{2}(x_{k + 1})\right]\leq\alpha(\tilde{\tau}_{k}L_{k})\mathbb{E}\left[\hat{f}_{2}(x_{k})\right].$$
Поиск точек минимума $\alpha(t)$ равносилен поиску точек максимума функции:
$$\beta(t) \overset{\operatorname{def}}{=} \frac{\mu t}{M_{\hat{G}}^{2} + t}.$$
Функция $\beta(t)$ на $\mathbb{R}_{+}$ обладает неотрицательной первой производной и неположительной второй производной:
\begin{equation*}
    \begin{aligned}
        &\beta^{'}(t) = \frac{\mu}{M_{\hat{G}}^{2} + t}\left(1 - \frac{t}{M_{\hat{G}}^{2} + t}\right)\geq 0;\\
        &\beta^{''}(t) = \frac{2\mu}{\left(M_{\hat{G}}^{2} + t\right)^{2}}\left(\frac{t}{M_{\hat{G}}^{2} + t} - 1\right)\leq 0.
    \end{aligned}
\end{equation*}
Значит, чем больше $t$, тем меньше значение $\alpha(t)$, что задаёт диапазон значений:
\begin{equation*}
    1 = \alpha(0)\geq\alpha(t)\geq\lim\limits_{t\rightarrow+\infty}\alpha(t) = 1 - \frac{\mu^{2}}{\left(L_{\hat{F}}P_{\hat{f}_{1}} + M_{\hat{F}}^{2}\right)M_{\hat{G}}^{2}} > 0,~t\in\mathbb{R}_{+},
\end{equation*}
так как $\mu\leq\min\{M_{\hat{G}}^{2},~M_{\hat{F}}^{2}\}$ (предположение \ref{as:2}).
Рассмотрим изменение шага метода в зависимости от $t = \tilde{\tau}_{k}L_{k}$:
\begin{equation*}
    \begin{aligned}
        &\eta_{k}H_{k}\nabla_{x_{k}}\hat{g}_{2}(x_{k}, B_{k}) = \frac{\mu\left(\tilde{\tau}_{k}L_{k}\right)^{2}\left(\hat{G}^{'}(x_{k}, \tilde{B}_{k})^{*}\hat{G}^{'}(x_{k}, \tilde{B}_{k}) + \tilde{\tau}_{k}L_{k}I_{n}\right)^{-1}\hat{G}^{'}(x_{k}, B_{k})^{*}\hat{G}(x_{k}, B_{k})}{\left(M_{\hat{G}}^{2} + \tilde{\tau}_{k}L_{k}\right)\left(L_{\hat{F}}P_{\hat{f}_{1}} + M_{\hat{F}}^{2}\right)M_{\hat{G}}^{2}}\Rightarrow\\
        &\Rightarrow\lim\limits_{t\rightarrow+\infty}\frac{\mu t}{M_{\hat{G}}^{2} + t}\frac{\left(\frac{\hat{G}^{'}(x_{k}, \tilde{B}_{k})^{*}\hat{G}^{'}(x_{k}, \tilde{B}_{k})}{t} + I_{n}\right)^{-1}\hat{G}^{'}(x_{k}, B_{k})^{*}\hat{G}(x_{k}, B_{k})}{\left(L_{\hat{F}}P_{\hat{f}_{1}} + M_{\hat{F}}^{2}\right)M_{\hat{G}}^{2}} =\\
        &= \left(\frac{\mu}{2\left(L_{\hat{F}}P_{\hat{f}_{1}} + M_{\hat{F}}^{2}\right)M_{\hat{G}}^{2}}\right)\nabla_{x_{k}}\hat{g}_{2}(x_{k}, B_{k}).
    \end{aligned}
\end{equation*}
То есть, чем ближе шаг метода Гаусса--Ньютона в данных условиях интерполяции к градиентному, тем быстрее метод сходится к решению системы \eqref{eq:smooth_system}. Примечательно, что нам для сходимости при таком обновлении параметров достаточно потребовать $\tilde{\tau}_{k} > 0,~L_{k} > 0$, грамотно выбирая шаг $\eta_{k}$.

Если же зафиксировать значение $\eta_{k} = 1$, то возникает единственное конечное оптимальное значение произведения $t = \tilde{\tau}_{k}L_{k},~k\in\mathbb{Z}_{+}$, выражающееся из неравенства
$$\mathbb{E}\left[\hat{f}_{2}(x_{k + 1})\right]\leq\mathbb{E}\left[\hat{f}_{2}(x_{k})\right]\left(1 - \frac{2\mu}{M_{\hat{G}}^{2} + t} + \left(L_{\hat{F}}P_{\hat{f}_{1}} + M_{\hat{F}}^{2}\right)\left(\frac{M_{\hat{G}}}{t}\right)^{2}\right).$$
Найдём оптимальный коэффициент линейной сходимости:
\begin{equation*}
    \begin{aligned}
        &\underbrace{\frac{\left(L_{\hat{F}}P_{\hat{f}_{1}} + M_{\hat{F}}^{2}\right)M_{\hat{G}}^{2}}{t^{2}} - \frac{2\mu}{M_{\hat{G}}^{2} + t}}_{\overset{\operatorname{def}}{=} \zeta(t)}\rightarrow\min\limits_{t > 0}.
    \end{aligned}
\end{equation*}
Выпишем условия оптимальности второго порядка для введённой функции $\zeta(t)$:
\begin{equation*}
    \begin{cases}
        \zeta^{'}(t) = \frac{-2\left(L_{\hat{F}}P_{\hat{f}_{1}} + M_{\hat{F}}^{2}\right)M_{\hat{G}}^{2}}{t^{3}} + \frac{2\mu}{\left(M_{\hat{G}}^{2} + t\right)^{2}} = 0;\\
        \zeta^{''}(t) = \frac{6\left(L_{\hat{F}}P_{\hat{f}_{1}} + M_{\hat{F}}^{2}\right)M_{\hat{G}}^{2}}{t^{4}} - \frac{4\mu}{\left(M_{\hat{G}}^{2} + t\right)^{3}} > 0.
    \end{cases}
\end{equation*}
Условие $\zeta^{'}(t) = 0$ порождает кубическое уравнение
$$\mu t^{3} - \left(L_{\hat{F}}P_{\hat{f}_{1}} + M_{\hat{F}}^{2}\right)M_{\hat{G}}^{2}t^{2} - 2\left(L_{\hat{F}}P_{\hat{f}_{1}} + M_{\hat{F}}^{2}\right)M_{\hat{G}}^{4}t - \left(L_{\hat{F}}P_{\hat{f}_{1}} + M_{\hat{F}}^{2}\right)M_{\hat{G}}^{6} = 0,$$
с единственным действительным корнем, имеющим следующее представление по общей формуле вычисления корней кубического уравнения:
$$t^{*} = \frac{\left(L_{\hat{F}}P_{\hat{f}_{1}} + M_{\hat{F}}^{2}\right)M_{\hat{G}}^{2}}{3\mu}\left(1 + 7\sqrt[3]{\frac{2}{47 + 3\sqrt{93}}} + \sqrt[3]{\frac{47 + 3\sqrt{93}}{2}}\right),$$
при данном значении вторая производная $\zeta^{''}(t^{*})$ положительна. Более того, при $t^{*}$ имеет место линейная сходимость метода Гаусса--Ньютона с оценкой \eqref{eq:sgn_approx_conv} и коэффициентом линейной сходимости, принадлежащим интервалу $(0, 1)$:
\begin{equation*}
    \begin{cases}
        0 < 1 - \frac{\mu^{2}}{M_{\hat{G}}^{2}}\left(\frac{2}{\mu + \left(L_{\hat{F}}P_{\hat{f}_{1}} + M_{\hat{F}}^{2}\right)c} - \frac{1}{\left(L_{\hat{F}}P_{\hat{f}_{1}} + M_{\hat{F}}^{2}\right)c^{2}}\right) < 1;\\[10pt]
        c = \frac{1}{3}\left(1 + 7\sqrt[3]{\frac{2}{47 + 3\sqrt{93}}} + \sqrt[3]{\frac{47 + 3\sqrt{93}}{2}}\right) \in (2,~3).
    \end{cases}
\end{equation*}
Неравенства $\mu\leq\min\left\{M_{\hat{G}}^{2},~M_{\hat{F}}^{2}\right\}$ и $3 > c > 2$ обеспечивают нахождение коэффициента линейной сходимости в интервале $(0,~1)$. Если же не фиксировать $\eta_{k} = 1$, а искать оптимальный $\eta_{k}$ для произвольного $t$, то в случае $t\rightarrow+\infty$ коэффициент линейной сходимости будет меньше, чем в случае $\eta_{k} = 1$ при $t = t^{*}$.
\end{proof}

\end{document}